\documentclass[11pt,twoside]{article}
\usepackage{amsmath,amssymb,amsthm,eucal, mathrsfs, mathtools}
\usepackage{xcolor}
\usepackage[all]{xy}

\usepackage[australian]{babel}

\usepackage{slashed}

\usepackage[shortlabels]{enumitem}
\setlist{topsep=4pt plus 2pt minus 2pt,partopsep=0pt,itemsep=2pt plus 2pt minus 2pt,parsep=0.5\parskip}
\setenumerate[1]{label=(\arabic*),font=\upshape}

\title{Analytic index theory and spectral flow in real Hilbert $C^*$-modules}
\author{Chris Bourne${}^{\dag,\,\spadesuit}$, Alan Carey${}^\ddag$, Koen van den Dungen${}^\S$,  Adam Rennie\thanks{email: 
\texttt{cbourne@nagoya-u.jp, alan.carey@anu.edu.au, 
kdungen@uni-bonn.de, renniea@uow.edu.au}
}
\\[2mm]
{\small ${}^\dag$Institute of Liberal Arts and Sciences, Graduate School of Mathematics,}\\ 
{\small Nagoya University, Furo-cho, Chikusa-ku, Nagoya, 464-8601, Japan}\\ [2pt]
{\small $^{\spadesuit}$RIKEN iTHEMS, 2-1 Hirosawa, Wako, Saitama 351-0198, Japan} \\ [2mm]
{\small ${}^\ddag$Mathematical Sciences Institute, Australian National University,}\\
{\small Canberra ACT, 0200, Australia}\\[2mm]
{\small ${}^\S$Mathematisches Institut, Universit\"{a}t Bonn,}\\
{\small Endenicher Allee 60, D-53115 Bonn, Germany}\\[2mm]
{\small ${}^*$School of Mathematics and Physics, University of Wollongong,}\\
{\small Wollongong, 2522, Australia}
}

\advance\topmargin by -\headheight
\advance\topmargin by -\headsep
\evensidemargin=\oddsidemargin
\usepackage{fancyhdr}
\makeatletter
\def\section{\@startsection{section}{1}{\z@}{-3.5ex plus -1ex minus
  -.2ex}{2.3ex plus .2ex}{\large\bf}}
\def\subsection{\@startsection{subsection}{2}{\z@}{-3.25ex plus -1ex
  minus -.2ex}{1.5ex plus .2ex}{\normalsize\bf}}
\makeatother

\let\OLDthebibliography\thebibliography
\renewcommand\thebibliography[1]{
  \addcontentsline{toc}{section}{\refname}
  \OLDthebibliography{#1}
  \setlength{\parskip}{0pt}
  \setlength{\itemsep}{0pt plus 0.3ex}
}

\numberwithin{equation}{section} %% needs `amsmath' package

\theoremstyle{plain} %% needs `amsmath' package
\newtheorem{thm}{Theorem}[section]
\newtheorem{lemma}[thm]{Lemma}
\newtheorem{prop}[thm]{Proposition}
\newtheorem{cor}[thm]{Corollary}

\theoremstyle{definition} %% needs `amsmath' package
\newtheorem{defn}[thm]{Definition}
\newtheorem*{defn*}{Definition}

\newtheorem{assump}[thm]{Assumption}
\newtheorem{notation}[thm]{Notation}

\newtheorem{rmk}[thm]{Remark}
\newtheorem{remark}[thm]{Remark}
\newtheorem{example}[thm]{Example}
\newtheorem*{example*}{Example}
\newtheorem{remarks}[thm]{Remarks}

\DeclareMathOperator{\Coker}{Coker} %% cokernel
\DeclareMathOperator{\Ker}{Ker} %% kernel
\DeclareMathOperator{\Dom}{Dom}   %% domain of an operator
\DeclareMathOperator{\End}{End}   %% endomorphism algebra
\DeclareMathOperator{\Reg}{Reg}   %% regular operators
\DeclareMathOperator{\Id}{Id}     %% identity map
\DeclareMathOperator{\Index}{Index}     %% Fred index
\DeclareMathOperator{\Ind}{Ind}     %% Fred index again
\DeclareMathOperator{\ind}{ind}     %% Fred index again again
\DeclareMathOperator{\relind}{rel-ind}     %% relative index
\newcommand{\relindH}{\operatorname{rel-ind}^{\calH}}     %% relative index on H
\DeclareMathOperator{\spec}{spec}   %% spectrum
\DeclareMathOperator{\supp}{supp} %% support
\DeclareMathOperator{\Tr}{Tr}     %% operator trace
\DeclareMathOperator{\exc}{exc}  %% excision map
\DeclareMathOperator{\ev}{ev} %% evaluation map
\newcommand{\A}{\mathcal{A}}  %% an algebra
\newcommand{\B}{\mathcal{B}}  %% another algebra
\newcommand{\C}{\mathbb{C}}   %% complex numbers
\newcommand{\N}{\mathbb{N}}   %% natural numbers
\newcommand{\ox}{\otimes}     %% tensor product
\newcommand{\hox}{\, \hat\otimes \,} %% Z_2-graded tensor product
\newcommand{\R}{\mathbb{R}}   %% real numbers
\newcommand{\cS}{\mathcal{S}} %%esss
\newcommand{\Z}{\mathbb{Z}} %%integers
\newcommand{\stroke}{\mathbin|}     %% (for `\pair' and such)
\newcommand{\sa}{\textnormal{sa}}
\newcommand{\sk}{\textnormal{sk}}
\newcommand{\shuffle}{\Sigma} 
\newcommand{\Roe}{\mathfrak{R}}
\newcommand{\Kubota}{\Psi}
\newcommand{\Cayley}{\mathfrak{C}}
\newcommand{\altBott}{\wt{\beta}}
\newcommand{\exterior}{{\textstyle\bigwedge^{\!*}}}

\newcommand*{\KO}{K\!O}
\newcommand*{\KR}{K\!R}
\newcommand*{\DK}{D\!K}
\newcommand*{\KK}{K\!K}
\newcommand*{\KKR}{K\!K\!R}
\newcommand*{\KKO}{K\!K\!O}

\def\pairL_#1(#2|#3){{}_{#1}(#2\stroke#3)} %% hermitian pairing _B(s|t)
\def\pairR(#1|#2)_#3{(#1\stroke#2)_{#3}} %% hermitian pairing (s|t)_A
\def\scal<#1|#2>{\langle#1\stroke#2\rangle} %% scalar product <y|z>

\newbox\ncintdbox \newbox\ncinttbox %% noncommutative integral symbols
	\setbox0=\hbox{$-$}
	\setbox2=\hbox{$\displaystyle\int$}
	\setbox\ncintdbox=\hbox{\rlap{\hbox
		to \wd2{\hskip-.125em \box2\relax\hfil}}\box0\kern.1em}
	\setbox0=\hbox{$\vcenter{\hrule width 4pt}$}
	\setbox2=\hbox{$\textstyle\int$}
	\setbox\ncinttbox=\hbox{\rlap{\hbox
		to \wd2{\hskip-.175em \box2\relax\hfil}}\box0\kern.1em}

\renewcommand{\epsilon}{\varepsilon}

\def\calC{\mathcal{C}}

\def\calK{\mathcal{K}}
\def\calB{\mathcal{B}}
\def\calH{\mathcal{H}}

\def\calA{\mathcal{A}}

\def\calM{\mathcal{M}}
\def\calJ{\mathcal{J}}

\def\calQ{\mathcal{Q}}
\def\calV{\mathcal{V}}

\def\calU{\mathcal{U}}

\newcommand{\ol}{\overline}

\theoremstyle{definition}

\DeclareMathOperator{\Mult}{Mult}

\DeclareMathOperator{\Sf}{sf}

\newcommand{\Cl}{{\C\ell}} %% Clifford algebra

\newcommand{\wh}{\ensuremath{\widehat}}
\newcommand{\wt}{\ensuremath{\widetilde}}
\DeclareMathOperator{\Ran}{Ran}

\newcommand{\rs}{{\mathfrak r}}

\newcommand{\frakr}{{\mathfrak r}}

\newcommand{\one}{{\bf 1}}
\DeclareFontEncoding{FMS}{}{}
\DeclareFontSubstitution{FMS}{futm}{m}{n}
\DeclareFontEncoding{FMX}{}{}
\DeclareFontSubstitution{FMX}{futm}{m}{n}
\DeclareSymbolFont{NEWsymbols}{FMS}{futm}{m}{n}%
\DeclareSymbolFont{NEWlargesymbols}{FMX}{futm}{m}{n}%
\DeclareMathDelimiter{\llbracket}{\mathopen}{NEWsymbols}{153}{NEWlargesymbols}{133}
\DeclareMathDelimiter{\rrbracket}{\mathclose}{NEWsymbols}{154}{NEWlargesymbols}{134}
\newcommand{\class}[1]{\big\llbracket #1 \big\rrbracket}

\definecolor{MyBlue}{cmyk}{1,0.13,0,0.63}
\definecolor{MyGreen}{cmyk}{0.91,0,0.88,0.52}
\newcommand{\mylinkcolor}{MyBlue}
\newcommand{\mycitecolor}{MyGreen}
\newcommand{\myurlcolor}{black}

\usepackage{hyperref}
\hypersetup{%
  bookmarksnumbered=true,bookmarksopen=false,%
  plainpages=false,% necessary to prevent duplicate page identifiers
  linktocpage=true,%
  colorlinks=true,breaklinks=true,%
  linkcolor=\mylinkcolor,citecolor=\mycitecolor,urlcolor=\myurlcolor,%
  pdfpagelayout=OneColumn,%
  pageanchor=true,%
}

\newcommand{\vin}{\rotatebox[origin=c]{-90}{$\in$}}

\begin{document}

\maketitle

\begin{abstract}

We consider the analytic index and spectral flow of  Fredholm operators on Hilbert $C^*$-modules. 
Our spaces and algebras are equipped with a real structure, so the analytic index and spectral flow 
takes value in the real $K$-theory group of a $\sigma$-unital $C^*$-algebra.
We use Van Daele $K$-theory, which allows us to treat the eight real $K$-theory groups and the two complex groups  on an
equal footing. 
We provide a general definition of the analytic index for Clifford anti-linear and skew-adjoint Fredholm operators as well as self-adjoint 
and odd Fredholm operators.
Our definition of spectral flow and its basic properties are 
valid for Wahl-continuous paths of Fredholm operators on a real Hilbert $C^*$-module.
We also provide an analytic approach to the spectral flow as a decomposition into a finite sum of relative indices. 
Furthermore, we prove a real version of the Robbin--Salamon theorem, relating the spectral flow to a Fredholm index.
Our description of the index and spectral flow relies on various isomorphisms between Kasparov's $\KKR$-theory and Van Daele $K$-theory, 
which we systematically describe in the Appendix. 
\end{abstract}

\begin{small}
{Keywords: Index theory, spectral flow, Hilbert $C^*$-modules, Clifford algebra, $K$-theory, $\KK$-theory\\
AMS Subject Classification: 19K56, 19K35, 46L80}
\end{small}

\tableofcontents

\parindent=0.0in
\parskip=0.05in

\section{Introduction}

The celebrated paper of Atiyah and Singer~\cite{AS69} provides a comprehensive study on the homotopy theory of the 
space of skew-adjoint Fredholm operators on real or complex Hilbert spaces which are \emph{$C\ell_{0,k}$-anti-linear} (i.e., they anti-commute with the generators 
of an ungraded Clifford algebra $C\ell_{0,k}$ represented on the Hilbert space). 
The skew-adjoint Fredholm index then labels the connected components of the space of such Fredholm operators. 
See  \cite{AS5, LM} for applications to index theory and geometry. 
Motivated by recent applications to topological phases of matter \cite{AMZ, DSB}, the paper \cite{BCLR} (by some of the authors and Matthias Lesch) considered 
the fundamental group and \emph{spectral flow} of $C\ell_{0,k}$-anti-linear skew-adjoint Fredholm operators on real Hilbert spaces.

For Fredholm operators on complex spaces, the extension of index theory and spectral flow from Hilbert spaces to Hilbert $C^*$-modules has already been extensively studied (for a non-exhaustive list, see e.g.~\cite{MisFom, Troitskii, Kasimov, Mi, Wu, DZ96, DZ98, LP, BJS03, Joachim, Wahl07, vdDungen24, NSWSf, NSW2}). 
In this paper, it is our goal to present a general Fredholm theory on \emph{real} Hilbert $C^*$-modules. 
We will provide a comprehensive description of both the Fredholm index and the spectral flow for $C\ell_{0,k}$-anti-linear skew-adjoint Fredholm operators on real Hilbert $C^*$-modules over some real $C^*$-algebra $A$. 

The skew-adjoint Fredholm index of Atiyah and Singer, defined for $C\ell_{0,k-1}$-anti-linear skew-adjoint Fredholm operators on a real Hilbert space, takes values in the real $K$-theory group $\KO^{-k}$ of a point. 
An elegant description of this index can be obtained via the Atiyah--Bott--Shapiro (ABS) isomorphism~\cite{ABS}. 
Indeed, the kernel $\Ker(F)$ of the Fredholm operator $F$ is a finite-dimensional $C\ell_{0,k-1}$-module, and 
thus defines an element in the Grothendieck group $\calM_{0,k-1}$ of (ungraded) finite-dimensional representations of $C\ell_{0,k-1}$. 
The equivalence class of $\Ker(F)$ (modulo $C\ell_{0,k}$-modules) then provides an element in $\calM_{0,k-1}/\calM_{0,k}$. 
Under the ABS isomorphism $\calM_{0,k-1}/\calM_{0,k} \cong \KO^{-k}$, the equivalence class of $\Ker(F)$ then corresponds to the Fredholm index of $F$ in $\KO^{-k}$. 

Similarly, the skew-adjoint spectral flow from \cite{BCLR} can also be described explicitly in terms of the Atiyah--Bott--Shapiro picture of real $K$-theory, and it generalises both the ordinary $\Z$-valued spectral flow as well as the $\Z_2$-valued spectral flow \cite{CPSB, DSB}.

In order to generalise the Fredholm theory from real Hilbert spaces to real Hilbert $C^*$-modules, 
it might then be tempting to first try to generalise the ABS isomorphism from Hilbert spaces to $C^*$-modules. 
Unfortunately, the analogous ABS construction for $C^*$-modules (simply replacing finite-dimensional vector spaces by finite projective $C^*$-modules over $A$) does not agree with the real $K$-theory group $\KO_{k}(A)$. 
For the record, we provide a simple counterexample in Appendix \ref{sec:ABS}.
In the $C^*$-module setting, we therefore need to move away from the ABS picture, and take a different approach.

First, instead of real spaces, we will work with Real spaces (with capitalised R), 
which are complex spaces equipped with \emph{real structures} 
(one may move between the real and Real pictures by complexifying or restricting to elements invariant under the real structure),
 and consequently we replace real $\KO$-groups by Real $\KR$-groups. 
In analogy to \cite{AS69, BCLR}, we then consider $\Cl_{r,s}$-anti-linear skew-adjoint Fredholm operators on a Real Hilbert $C^*$-module over a Real $C^*$-algebra $A$. 
We will show that such Fredholm operators naturally define an element in Kasparov's Real $\KK$-theory group $\KKR(\Cl_{r,s},A)$ (for a brief introduction to Real $\KK$-theory, we refer to Appendix \ref{sec:RealKK}). 

We would like to define the Fredholm index and spectral flow taking values in the Real $K$-theory groups $\KR_k(A)$ (where $k$ depends on the choice of $r,s$). These $K$-theory groups are defined in terms of the $k$-fold suspension of $A$ as $\KR_k(A) := \KR_0(S^kA)$. 
However, we will find it much more convenient to avoid these suspensions and instead use the already present Clifford algebra $\Cl_{r,s}$ to label the degree of the $K$-theory group. 
To be able to do this, we need a description of $K$-theory which allows for $\Z_2$-graded $C^*$-algebras. 
Our receptacle for the Fredholm index and spectral flow will therefore be Van Daele's $K$-theory groups \cite{vanDaele1,vanDaele2}, denoted $\DK(A)$ (for an overview of the definition and properties of Van Daele $K$-theory, we refer to Appendix \ref{sec:appendix_DK}). 
For an ungraded $C^*$-algebra $A$, the Van Daele $K$-theory groups are related to the (usual) Real $K$-theory groups via the isomorphisms 
\(
\Upsilon_A^{r+1,s}: \DK(A \otimes \Cl_{r+1,s}) \xrightarrow{\simeq} \KR_{s-r}(A) 
\)
(see Appendix \ref{subsec:appendix_DK_KR_isos}).

Our description of the Fredholm index and spectral flow in Van Daele $K$-theory will also rely heavily on various isomorphisms between Van Daele $K$-theory and Kasparov's Real $\KK$-theory developed in~\cite{Roe04, Kubota15a, BKR}. 
 Adapting these isomorphisms to the setting with Clifford symmetries requires some care. One reason is that $\KK$-theory is bivariant and 
we have the isomorphism $\Sigma_{\KK}^{r,s}: \KK( A \hox \Cl_{s,r}, B) \xrightarrow{\simeq} \KK(A, B \hox \Cl_{r,s} )$. An analogous `Clifford shuffle' 
map in $\DK$-theory is more subtle (see \S\ref{subsec:Clifford_Fredholm_Shuffle}).
We will provide a detailed study of these Clifford-decorated isomorphisms between $\KKR$-theory and $\DK$-theory in Appendix \ref{sec:appendix_KK_DK_isos}. 
In particular, we carefully show that these isomorphisms are compatible with the notions of Bott periodicity, Morita invariance, and other maps within $\KK$-theory and $\DK$-theory.

Throughout this paper, we will be   explicit by including the notation for all isomorphisms in and between $\KK$-theory and $\DK$-theory. 
Although this makes the notation somewhat more cumbersome, we believe this may help the reader to better understand our definitions and constructions. 
Moreover, whenever possible, we will always work with concrete isomorphisms applied to explicit representatives of equivalence classes,  
ensuring that our constructions can be useful in explicit computations. 

Let us now provide a brief overview of the main contents of this paper. 

\paragraph{The Fredholm index}
The well-known theory of (bounded or unbounded) Fredholm operators on Hilbert $C^*$-modules is adapted from the complex setting to the Real setting in {\S}\ref{sec:Fredholm_Real}.
We  show that a $\Cl_{r,s}$-anti-linear skew-adjoint Fredholm operator yields a well-defined class in $\KKR$-theory, and 
 establish the technical preliminaries to systematically incorporate Clifford symmetries in both $\KKR$- and $\DK$-theory.

In {\S}\ref{sec:Fredholm_index}, we then provide our definition of the \emph{Fredholm index}, 
which is defined for (bounded or unbounded) $\Cl_{r,s}$-anti-linear skew-adjoint Fredholm operators, 
and takes values in the Van Daele $K$-theory group $\DK(A \otimes \Cl_{r+1,s+1})$. 
We show that the Fredholm index factors through the $\KKR$-class of the Fredholm operator and the isomorphism 
between $\KKR$-theory and $\DK$-theory. Thus, in a generalised sense, we can understand 
the Fredholm index on Real $C^*$-modules as providing  the link between $\KKR$-theory and Van Daele $K$-theory.
Furthermore, we show that our Fredholm index agrees with the (even or odd) index on complex $C^*$-modules \cite{Wahl07}, as well as with the skew-adjoint Fredholm index on real Hilbert spaces via the ABS isomorphism \cite{BCLR}. 

\paragraph{The relative index}
{\S}\ref{sec:Van_Daele_skew_unitaries} first gives a presentation of (relative) Van Daele $K$-theory in terms of $\Cl_{r,s}$-anti-linear skew-adjoint unitaries 
(see also~\cite[\S3.3]{AMZ}). 
We use this presentation in {\S}\ref{sec:relative_index} to define the \emph{relative index} for a Fredholm pair of such skew-adjoint unitaries. 
Again, we show that the relative index factors through a $\KKR$-class constructed from the Fredholm pair. 
Moreover, we show that our $\DK$-valued relative index agrees with the (even or odd) relative index on complex $C^*$-modules, as well as with the relative index on real Hilbert spaces considered in~\cite{BCLR}. 

\paragraph{The spectral flow}
In order to introduce the spectral flow of a path of Fredholm operators, we first study families of Fredholm operators in {\S}\ref{sec:families}. 
Given a compact space $\Omega$ and a family $\{D_\omega\}_{\omega\in\Omega}$ of (densely-defined, self-adjoint or skew-adjoint) regular Fredholm operators on a $C^*$-module $Y_B$, we can consider the \emph{family operator} $D_\bullet$ on $C(\Omega,Y_B)_{C(\Omega,B)}$. 
To study such families, we describe the \emph{Wahl topology} on $C^*$-modules. 
This topology was introduced by Wahl for (unbounded) Fredholm operators on Hilbert spaces \cite{Wahl08}. It is weaker than the gap topology, but still strong enough to be suitable for Fredholm theory. 
We show that $D_\bullet$ is \emph{regular and Fredholm} if and only if the family $\{D_\omega\}_{\omega\in\Omega}$ is \emph{Wahl-continuous}. Thus, the Wahl topology provides the appropriate topology for the study of families of Fredholm operators, and in fact it is the weakest topology under which the spectral flow can be defined. 

In {\S}\ref{sec:spectral_flow}, we then provide the (abstract) definition of the $\DK$-valued spectral flow for a Wahl-continuous path of $\Cl_{r,s}$-anti-linear skew-adjoint Fredholm operators with invertible endpoints, by simply combining our $\DK$-valued Fredholm index with Bott periodicity. 
We prove that the spectral flow satisfies the properties of homotopy invariance, additivity under direct sum, additivity under concatenation of paths, as well as a normalisation property relating the spectral flow to the relative index of a Fredholm pair of skew-adjoint unitaries. 
We also show that our spectral flow agrees with the (even or odd) spectral flow on complex $C^*$-modules \cite{Wahl07}. 

In {\S}\ref{sec:analytic_spectral_flow}, we provide an analytic approach to our spectral flow, in analogy to the work of J. Phillips on complex Hilbert spaces  \cite{P1,P2}. 
The latter is constructed from a finite partition of the path of Fredholm operators and yields a finite sum of relative indices of projections. 
In our Real setting, we will replace the relative index of projections by our $\DK$-valued relative index of skew-adjoint unitaries (as in \cite{BCLR}). 
In fact, we provide two versions of the analytic spectral flow. 
First, our Phillips formula is analogous to \cite{P2} and works under the assumption that the path of Fredholm operators is Riesz-continuous. 
Second, our Wahl formula is an adaptation of the approach by Wahl \cite{Wahl07} on complex $C^*$-modules, which works   for Wahl-continuous paths, but requires the assumption of locally trivialising families. 
Our Phillips formula also allows us to prove that the $\DK$-valued spectral flow agrees with the skew-adjoint spectral flow on real Hilbert spaces from \cite{BCLR}. 

\paragraph{`Index = spectral flow'}
The close relation between the Fredholm index and the spectral flow is well-known and has been used already in the work of Atiyah--Patodi--Singer \cite[\S7]{APS76}. 
The Robbin--Salamon Theorem~\cite{RS} shows that the spectral flow of a family of (unbounded) self-adjoint Fredholm operators $\{D_t\}$ on a complex Hilbert space 
equals the Fredholm index of the operator $\partial_t + D_\bullet$. 
In {\S}\ref{sec:Robbin-Salamon}, we prove a generalisation of the Robbin--Salamon Theorem in the setting of Real $C^*$-modules, which relates the $\DK$-valued spectral flow of a path $\{T_t\}_{t\in[0,1]}$ to the $\DK$-valued Fredholm index of an operator $(T_\bullet + \partial_t)_+$. 

\paragraph{Examples from physics}
One of our motivations for the present paper was the discovery of families of Hamiltonians in 
condensed matter physics that fail to be gap continuous but are Wahl-continuous \cite[\S4.5.1]{Thiang21}. 
Furthermore,  
our setting of $\Cl_{r,s}$-anti-linear skew-adjoint Fredholm operators is a 
  natural framework for the study of Hamiltonians with free fermionic symmetries as investigated by Zirnbauer et al.~\cite{KZ16, AMZ}. 
In {\S}\ref{sec:Physics_examples}, we then consider some applications of our $\DK$-valued spectral flow  
to the study of topological phases of matter. 
By introducing an auxiliary algebra of observables $A$, we have an interpretation 
of a relative index of gapped Hamiltonians as a spectral flow taking values in the Van Daele $K$-theory of $A$. 
Hamiltonians on (discrete) half spaces that are translation invariant parallel to the boundary also give rise to loops of 
Fredholm operators on a  $C^*$-module over an algebra describing the boundary, which also has a spectral flow description. 
In many cases of interest, this spectral flow can be directly related to the boundary map of a boundary-free (bulk) system  and so provides 
a  general spectral flow interpretation of the bulk-boundary correspondence.

\paragraph{Appendix} 
As mentioned above, we explain in Appendix \ref{sec:ABS} that the naive extension of the ABS isomorphism to Real Hilbert $C^*$-modules does not hold. 
The mechanisms that underpin our results are Real Kasparov theory, Van Daele $K$-theory, and how these 
two theories are related.  Appendices \ref{sec:RealKK}, \ref{sec:appendix_DK}, and \ref{sec:appendix_KK_DK_isos}, 
respectively review $\KKR$-theory, $\DK$-theory, and their compatibility with each other. Appendix \ref{appendix:isos} gives  
a summary of the maps used throughout the paper (see also the Notation and conventions section below).

\subsection*{Acknowledgements}
C.B. is supported by a KAKENHI Grant-in-Aid (24K06756) from the Japan Society for the Promotion of Science (JSPS) 
and thanks ANU, UOW and the University of Bonn  for hospitality during the production of this work.
A.C. and A.R. were supported by the ARC Discovery grant DP220101196. A.R. thanks the Institute for Advanced Research, Nagoya University, 
for support and hospitality.
K.v.d.D. was supported by the Hausdorff Center for Mathematics, funded by the Deutsche Forschungsgemeinschaft (DFG, German Research Foundation) under Germany's Excellence Strategy -- EXC-2047/1 -- 390685813.

\subsection*{Notation and conventions}

\begin{itemize}
  \item All linear spaces (e.g., Hilbert spaces, $C^*$-algebras, Hilbert $C^*$-modules) are \emph{Real}, which means they are complex and come equipped with a 
  \emph{real structure} $\frakr$, an anti-linear involution. 
  The fixed points under this action will define a linear space over $\R$. 
  When necessary, we write $\rs_Y$ to denote a real structure on the space $Y$. We say $y \in Y$ is Real if $y= y^\frakr$. The real structure on a $C^*$-algebra $A$ satisfies 
  $(ab)^\frakr = a^\frakr b^\frakr $ for all $a,b\in A$. 
  
   \item All $C^*$-algebras will be $\sigma$-unital. We will  consider both $\Z_2$-graded and ungraded $C^*$-algebras, where 
    $B$ generally denotes a (possibly) $\Z_2$-graded $C^*$-algebra, and $A$ an ungraded $C^*$-algebra. 
    If $B$ is $\Z_2$-graded, then  $B=B^0\oplus B^1$, $B^iB^j\subset B^{i+j\!\bmod2}$, and $b\in B$ is called even (resp. odd) if $b\in B^0$ (resp. $B^1$). 
    A graded $*$-homomorphism $\phi:B\to C$ of $\Z_2$-graded $C^*$-algebras satisfies $\phi(B^i)\subset C^i$, $i=0,1$. We also write $\deg(b)=i$ if $b\in B^i$.
  The multiplier algebra of $B$ is denoted $\Mult(B)$, and we denote the Calkin algebra of $B$ by $\calQ_B := \Mult(B)/B$.   

\item The $\Z_2$-graded tensor product of the $\Z_2$-graded $C^*$-algebras $B,C$ is the usual (spatial) tensor product with operations defined on homogeneous elements by
\begin{align}
(a\,\hat\otimes\, c)(b\,\hat\otimes\, d)
&:= (-1)^{\deg(b)\deg( c )}\,(ab \,\hat\otimes\, cd)\nonumber\\ 
(a\,\hat\otimes\, c)^*
&=(-1)^{\deg(a)\deg(c)}(a^*\,\hat\otimes\, c^*)\nonumber\\
\deg(a\,\hat\otimes\, c)
&=\deg(a)+\deg(c),\qquad a,\,b\in B,\ c,\,d\in C,
\label{eq:zed-two-tensors}
\end{align}
see~\cite[\S2.6]{Kasparov80}.
In particular, if $a=a^*$ and $c=c^*$ are both odd, then $a\hat\otimes c$ is skew-adjoint.
  
    \item Clifford algebras $\Cl_{r,s}$ are regarded as graded or ungraded Real algebras depending on context, where the grading is always the one that makes the generators odd. We use the convention 
  \[
     \Cl_{r,s} = \mathrm{span}_\C \Big\{ e_1,\ldots, e_r, f_1, \ldots, f_s \,\Big| \, 
         \begin{matrix}  e_j = e_j^* = e_j^\rs &  \text{for all } j=1,\ldots, r, \\   f_k = -f_k^* = f_k^\rs & \text{for all } k=1,\ldots, s \end{matrix}  \Big\}
  \]
  for ungraded Clifford algebras and 
  \[
     \Cl_{r,s} = \mathrm{span}_\C  \Big\{ \gamma_1,\ldots, \gamma_r, \rho_1, \ldots, \rho_s \, \Big| \, 
        \begin{matrix} \gamma_j= \gamma_j^*  = \gamma_j^\rs & \text{for all } j=1,\ldots, r, \\  \rho_k = -\rho_k^* = \rho_k^\rs & \text{for all }  k=1,\ldots, s \end{matrix} \Big\}
  \]
  for graded Clifford algebras. The real subalgebra of $\rs$-invariant elements   is denoted by $Cl_{r,s}$.
  
  \item In the complex setting, we will denote by $\Cl_{r+s}$ the complex Clifford algebra $\Cl_{r,s}$ without any real structure. In this case, 
  the distinction between self-adjoint and skew-adjoint generators becomes irrelevant, but we sometimes still  emphasise what generators 
  are being used (e.g. writing $\Cl_{1+1}$ to denote $\Cl_{2}$ with a preferred self-adjoint and skew-adjoint generator).
  
  \item We also make use of the $\Z_2$-graded exterior algebra $\exterior \C^n$ as a complex vector space, 
  \[
     \exterior \C^n =  \exterior (\R\otimes \C)^n, \qquad 
     (v_1 \wedge \cdots \wedge v_m)^\frakr = \ol{v_1} \wedge \cdots \wedge \ol{v_m}, \qquad 
     \End\big( \exterior \C^n \big) \cong \Cl_{n,n}
  \]
  and the last isomorphism is $\Z_2$-graded. In particular, we will frequently use 
  $\End( \exterior \C ) \cong \Cl_{1,1} \cong \mathrm{span}_\C\{ \gamma, \rho \}$, which we concretely realise as 
  \[
     \exterior \C \cong \C^2, \qquad \End\big( \exterior \C \big)  \cong M_2(\C), \qquad 
     \gamma = \begin{pmatrix} 0 & 1 \\ 1& 0 \end{pmatrix}, \quad \rho =  \begin{pmatrix} 0 & -1 \\ 1& 0 \end{pmatrix} .
  \]
  Here the real structure on $\C^2$ and $M_2(\C)$ is given by component-wise complex conjugation.

  \item Occasionally we also use the Pauli matrices given by 
  \[
      \sigma_1 = \begin{pmatrix} 0 & 1 \\ 1 & 0 \end{pmatrix}, \qquad \sigma_2 = \begin{pmatrix} 0 & -i \\ i & 0 \end{pmatrix},  
      \qquad -i\sigma_2 = \begin{pmatrix} 0 & -1 \\ 1 & 0 \end{pmatrix}, 
      \qquad \sigma_3 = \begin{pmatrix} 1 & 0 \\ 0 & -1 \end{pmatrix}.
  \]

 \item The $C^*$-module $Y_B=(Y^0\oplus Y^1)_B$ denotes a generic possibly $\Z_2$-graded and countably generated 
 (Real) Hilbert $C^*$-module over $B=B^0\oplus B^1$. In particular, $Y^jB^k\subset Y^{j+k\!\bmod2}$. 
 The sets $\End_B(Y)$ and $\End_B^0(Y)$ denote the 
  adjointable and compact endomorphisms on $Y_B$ respectively. We also denote by 
  $\calQ_B(Y) := \calQ_{\End_B^0(Y)} = \End_B(Y)/ \End_B^0(Y)$ the Calkin algebra, and we let $q_Y \colon \End_B(Y) \to \calQ_B(Y)$ denote the corresponding quotient map. 
  If the context is clear, we will simply write $q$ for the quotient map to $\calQ_B(Y)$.
  The set of (densely-defined) regular operators on $Y_B$ is denoted by $\Reg_B(Y)$. 
  Each of the algebras $\End_B^0(Y)$, $\End_B(Y)$ and $\Reg_B(Y)$ inherit a real structure and $\Z_2$-grading, and so we may speak of Real or  
  even/odd operators on $Y_B$.
  
  \item If the $C^*$-module $Y_B$ comes with a ($\Z_2$-graded) Clifford representation 
  $\Cl_{r,s} \to \End_B(Y)$, then
  $\End_B^{r,s}(Y)$ denotes the \emph{Clifford anti-linear} adjointable operators 
  (i.e., all operators in $\End_B(Y)$ which anti-commute with the generators of $\Cl_{r,s}$). Similarly, $\Reg_B^{r,s}(Y)$ 
  denotes  the (densely-defined) \emph{Clifford anti-linear} regular operators on $Y_B$.   

  We will often assume that 
  the $\Cl_{r,s}$-representation is \emph{ample}, which means that none of the generators acts compactly on $Y_B$, 
  $\Cl_{r,s} \cap \End_A^0(X) = \{0\}$ (in particular, $Y_B$ cannot be finitely generated). In this case, we also obtain a non-zero graded $*$-homomorphism $\Cl_{r,s} \to \calQ_B(Y)$. 

 \item We usually denote by $X_A$ an \emph{ungraded} $C^*$-module over an ungraded $C^*$-algebra $A$. 
Typically, we assume that $X_A$ is \emph{full}, which means that the closed linear span of $\{ \scal<x_1|x_2> \mid x_1,x_2\in X \}$ equals $A$. 
 Usually the $C^*$-module $X_A$ comes equipped with a Clifford representation $\Cl_{r,s} \to \End_A(X)$, and 
we define $\End_A^{r,s}(X)$ and $\Reg_A^{r,s}(X)$ analogously to $Y_B$ above.

\item Suppose that $Y_B$ carries a $\Z_2$-graded representation of $\Cl_{s,r}$ (note the reversed $(s,r)$). 
Then taking a minimal (rank one) even projection $P^{r,s}\in \Cl_{s,r} \hox \Cl_{r,s} \subset \End_{B}(Y) \hox \Cl_{r,s}$ 
and odd self-adjoint unitary $e \in \End_B^{s,r}(Y)$, $\sigma_e^{r,s}$ denotes the map 
  \begin{align*}
  &\sigma_e^{r,s}: \Reg_B^{s,r}(Y) \to    \Reg_B(Y) \hox \Cl_{r,s}   
  &\sigma_e^{r,s}(x) =  P^{r,s}(x \hox 1_{r,s}) + (\one -P^{r,s})(e \hox 1_{r,s}) 
\end{align*}
with $1_{r,s} \in \Cl_{r,s}$ the algebraic unit. 
We observe that $P_{r,s}\Cl_{r+s,r+s}\cong\overline{S}$ where $S$ is the irreducible spinor representation of $\Cl_{r+s,r+s}$.

In the case that $X_A$ is an ungraded module with $\Cl_{r,s} \to \End_A(X)$ an ungraded representation,  
we instead consider a basepoint skew-adjoint unitary $J \in \End_A^{r,s}(X)$ and define the 
map $\sigma_{J \otimes \rho}^{r,s}: \Reg_A^{r,s}(X) \otimes \Cl_{0,1}  \to    \Reg_A(X) \otimes \Cl_{r,s+1} $ such that 
\[
  \sigma_{J \otimes \rho}^{r,s}(T\otimes \rho) =  P^{r,s}(T \otimes \rho) + (\one -P^{r,s})(J \otimes \rho)  ,
\]

where we identify $\rho$ with  $\rho_1 \in \Cl_{r,s+1}$.

\item For the special case of the standard  $C^*$-module, we let $\calH$ be a separable infinite-dimensional Hilbert space, and $\hat{\calH}\cong \calH\oplus\calH$ a $\Z_2$-graded Hilbert space. We write $\hat{\calH}_B = \hat{\calH} \hox B$ and $\calH_A = \calH \otimes A$ 
for the $\Z_2$-graded and ungraded standard modules respectively. 
As $A$ is ungraded, $\hat{\calH}_A \cong \calH_A \otimes \exterior{\C}$.  For all countably generated modules $Y_B$, there is a unitary isomorphism $Y_B\oplus\hat{\calH}_B\to\hat{\calH}_B$, called a (Kasparov) stabilisation map \cite{KasparovStinespring, Kasparov80}.

  \item Given a Banach space $Y$, we let $SY$ denote the space 
  \[
      S Y := C_0\big( (0,1) , Y \big) = \big\{ f \in C( [0,1], Y ) \mid f(0) = f(1) = 0 \big\} .
  \]
  If $Y$ is Real, we define the real structure on $SY$ by $f^{\frakr_{SY}} (t) =  f(t)^{\frakr_Y}$. 
  If $Y_B$ is a $C^*$-module, then so is $S Y_{SB}$. We will freely use the isomorphisms 
  $S(B \hox \Cl_{r,s} ) \cong SB \hox \Cl_{r,s}$ and $\End_{SB}^0(SY) \cong S \End_B^0(Y)$.
\end{itemize}

For the convenience of the reader, Appendix \ref{appendix:isos} provides a list with our notation of various isomorphisms in and between $\KKR$-theory and $\DK$-theory.

\section{Fredholm operators on Real Hilbert modules} \label{sec:Fredholm_Real}

In this section, we consider Fredholm operators on (possibly $\Z_2$-graded) Real $C^*$-modules 
$Y_B$ and their connection to Kasparov's Real $\KKR$-theory. 
For a brief review of $\KKR$-theory, we refer to Appendix \ref{sec:RealKK}.

In the complex setting, bounded Fredholm operators on complex $C^*$-modules were studied, in all generality, by Exel~\cite{Exel} (see also \cite[{\S}4]{BGB}).
In the real setting, real Fredholm operators with closed range were described by Schr\"{o}der~\cite{S}. 
In this section, we adapt the approach of Wahl \cite{Wahl07} on complex $C^*$-modules to study unbounded Fredholm operators on Real $C^*$-modules.

\subsection{Fredholm operators on Hilbert modules} \label{subsec:Fredholm_intro}

Let $Y_B$ be a countably generated and (possibly) $\Z_2$-graded $C^*$-module over a (possibly $\Z_2$-graded) $C^*$-algebra $B$. 

\begin{defn}
\label{defn:fred}
An adjointable operator $T:Y^1_B\to Y^2_B$ between Hilbert $B$-modules is called \emph{Fredholm} if there exists an adjointable operator
$S:Y^2_B \to Y^1_B$ such that $TS- \one_{Y^2} \in \End_B^0(Y^2)$ and $ST- \one_{Y^1} \in\End^0_B(Y^1)$.
For $T \in \End_B(Y)$, this means that $T$ is Fredholm if 
$q_Y(T) \in \calQ_B(Y)= \End_B(Y)/ \End_B^0(Y)$ is invertible.

A (densely-defined) regular operator $D\in\Reg_B(Y)$ is called \emph{Fredholm} if 
the bounded transform $F_D := D(\one + D^*D)^{-1/2} \in \End_B(Y)$ is Fredholm. 
\end{defn}

Following Wahl \cite{Wahl07}, we recall below that any self-adjoint regular Fredholm operator gives rise to a (bounded) Kasparov $\C$-$B$-module 
and hence to a class in $\KK(\C,B)$, and we will explain how this approach can be adapted to skew-adjoint operators as well. 

\begin{prop}[{\cite[Proposition 2.1, Corollary 2.2]{Wahl07}}] \label{prop:Fredholm_properties}
For a self-adjoint operator $D \in \Reg_B(Y)$, the following statements are equivalent:
\begin{enumerate}[label=(\roman*)]
  \item $D$ is Fredholm.
  \item There is an $\varepsilon >0$ such that for all $\varphi \in C_c(-\varepsilon, \varepsilon)$, $\varphi(D) \in \End_B^0(Y)$.
  \item There is an $\varepsilon >0$ such that for any continuous function $\chi:\R \to \R$ such that $\chi |_{(-\infty, -\varepsilon]} = -1$ 
  and $\chi |_{[\varepsilon, \infty)} = +1$, $\chi(D)^2 - \one  \in \End_B^0(Y)$.
\end{enumerate}
Analogously, for a \emph{skew-adjoint} operator $T \in \Reg_B(Y)$, the following statements are equivalent:
\begin{enumerate}[label=(\roman*')]
  \item $T$ is Fredholm.
  \item There is an $\varepsilon >0$ such that for all $\varphi \in C_c(-i\varepsilon, i\varepsilon)$, $\varphi(T) \in \End_B^0(Y)$.
  \item There is an $\varepsilon >0$ such that for any continuous function $\chi:i\R \to i\R$ such that $\chi |_{(-i\infty, -i\varepsilon]} = -i$ 
  and $\chi |_{[i\varepsilon, i\infty)} = +i$, $\chi(T)^2 + \one  \in \End_B^0(Y)$. 
\end{enumerate}
\end{prop}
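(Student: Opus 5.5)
The self-adjoint part is \cite[Proposition 2.1, Corollary 2.2]{Wahl07}, so I will only indicate its structure. The skew-adjoint part is then reduced to it by the substitution $T = iD$. The real structure plays no role anywhere: Fredholmness, compactness and the continuous functional calculus are all defined on the underlying complex module.

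For the self-adjoint case, write $F_D = D(\one+D^2)^{-1/2} = f(D)$ with $f(x)=x(1+x^2)^{-1/2}$. Since $F_D$ is self-adjoint, it is Fredholm if and only if $q(F_D)$ is invertible in $\calQ_B(Y)$. Invertibility of $q(F_D)$ is equivalent to $0\notin\spec(q(F_D))$, which holds if and only if $\spec(q(F_D))$ avoids some interval $(-\delta,\delta)$.

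\textbf{(i)$\Rightarrow$(ii).} Choose $\varepsilon$ with $f(\varepsilon)<\delta$, and let $\varphi\in C_c(-\varepsilon,\varepsilon)$. Then $\varphi(D)=(\varphi\circ f^{-1})(F_D)$, and $\varphi\circ f^{-1}$ is supported in $(-\delta,\delta)$. Its image under $q$ is $(\varphi\circ f^{-1})(q(F_D))=0$, so $\varphi(D)$ is compact.

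\textbf{(ii)$\Rightarrow$(iii).} Given $\chi$ as in (iii), the function $\chi^2-1$ is compactly supported in $[-\varepsilon,\varepsilon]$. Apply (ii) with $\varepsilon$ replaced by any $\varepsilon'>\varepsilon$; equivalently, shrink the $\varepsilon$ used in (iii) to half of the one supplied by (ii).

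\textbf{(iii)$\Rightarrow$(i).} Fix one such $\chi$, taken odd and with $|\chi|\le 1$. Then $q(\chi(D))^2=\one$, so $q(\chi(D))$ is invertible. Now $\chi(D)$ and $F_D$ are both functions of $D$, and there is a bounded continuous $g$ with $\chi(x)=g(x)f(x)$. For instance take $g=\chi/f$ for $|x|\ge\varepsilon$, extended continuously across $0$ once we choose $\chi$ vanishing to the same order as $x$, e.g.\ $\chi$ linear near $0$. Then $q(\chi(D))=q(g(D))q(F_D)=q(F_D)q(g(D))$, so $q(F_D)$ has a left and a right inverse and is therefore invertible.

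Here $f$ restricts to a homeomorphism $\R\to(-1,1)$, so continuous functions of $D$ vanishing at infinity correspond exactly to continuous functions of $F_D$ on $(-1,1)$. I expect this point, the only analytic subtlety since $D$ is unbounded and regular, to be the main obstacle in the self-adjoint case. It is handled by the regular functional calculus $C_0(\R)\to\End_B(Y)$, $\varphi\mapsto\varphi(D)$, whose properties are assumed here.

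For the skew-adjoint case, put $D:=-iT$. Then $D$ is self-adjoint and regular with $D^*D=T^*T$, so $F_T=iF_D$ and $T$ is Fredholm if and only if $D$ is. Given $\varphi\in C_c(-i\varepsilon,i\varepsilon)$, we have $\varphi(T)=\tilde\varphi(D)$ with $\tilde\varphi(x)=\varphi(ix)\in C_c(-\varepsilon,\varepsilon)$. Likewise, for $\chi$ as in (iii'), $\tilde\chi(x):=-i\chi(ix)$ satisfies the hypotheses of (iii), and
\[
\chi(T)^2+\one=-\tilde\chi(D)^2+\one .
\]
Hence (i')--(iii') are literally (i)--(iii) for $D$, and the skew-adjoint statement follows from the self-adjoint one.
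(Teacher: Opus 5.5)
Your proposal is correct and takes essentially the paper's route: the paper proves nothing itself, citing Wahl's Proposition 2.1 and Corollary 2.2 for the self-adjoint case and asserting the skew-adjoint case ``analogously''. Your substitution $D=-iT$, with $F_T=iF_D$, $\tilde\varphi(x)=\varphi(ix)$ and $\tilde\chi(x)=-i\chi(ix)$, is a clean and valid way of making that ``analogously'' precise.
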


\begin{defn} \label{def:Reg_fn}
Let $D \in \Reg_B(Y)$ be self-adjoint. 
We say that a smooth odd non-decreasing function $\chi: \R \to \R$ is a \emph{normalising function} for 
$D$ if $\chi'(0)>0$, $\lim_{x\to \infty}\chi(x) = 1$ and $\chi(D)^2 - \one \in \End^0_B(Y)$.

If $T \in \Reg_B(Y)$ is skew-adjoint, then a smooth odd non-decreasing function $\chi: i\R \to i\R$ is a \emph{normalising function} for 
$T$ if $\chi$ is strictly increasing in a neighbourhood of $0$, $\lim_{ix\to \infty}\chi(ix) = i$ and $\chi(T)^2 + \one \in \End^0_B(Y)$.
\end{defn}

\begin{remark}
Normalising functions are guaranteed to exist for Fredholm operators by Proposition \ref{prop:Fredholm_properties}. Furthermore, if $Y_B$ is a 
Real $C^*$-module and $T = T^\rs$, then $\chi(T)^\rs = \chi(T)$.
\end{remark}

\begin{lemma}[{cf. \cite[Lemma 2.7]{Wahl07}}] \label{lemma:bdd_unbdd_Fred_condition}
\begin{enumerate}
  \item A skew-adjoint operator $F \in \End_B(Y)$ with $\|F\|\leq 1$ is Fredholm if  and only if
  \[
     \big\| \one + q_Y(F)^2 \big\|_{\calQ_B(Y)} < 1.
  \]
  \item A skew-adjoint operator $T \in \Reg_B(Y)$ is Fredholm if and only if
  \[
     \big\| q_Y\big((\one + T)^{-1}\big) \big\|_{\calQ_B(Y)} < 1.
  \]
\end{enumerate}
\end{lemma}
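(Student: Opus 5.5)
Both parts reduce to continuous functional calculus for normal elements in the $C^*$-algebra $\calQ_B(Y)$. The real structure and the grading play no role: Fredholmness is defined only through invertibility in $\calQ_B(Y)$.

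\emph{Part (1).} Put $f := q_Y(F) \in \calQ_B(Y)$.
\begin{itemize}
  \item Since $F^* = -F$ and $\|F\| \le 1$, $f$ is skew-adjoint (hence normal) with $\|f\| \le 1$.
  \item Therefore $-f^2 = f^*f$ satisfies $0 \le f^*f \le \one$, and $\one + f^2 = \one - f^*f$ is positive with spectrum contained in $[0,1]$.
  \item A normal element is invertible iff $f^*f$ is invertible, i.e.\ iff $\spec(f^*f) \subset [\delta, 1]$ for some $\delta > 0$.
  \item That holds iff $\spec(\one - f^*f) \subset [0, 1-\delta]$, which is equivalent to $\|\one + f^2\| < 1$.
\end{itemize}
Since $F$ is Fredholm exactly when $f$ is invertible, this proves (1). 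In the degenerate case $\calQ_B(Y) = 0$, both sides hold trivially.

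\emph{Part (2).} I reduce to part (1) via the bounded transform.
\begin{itemize}
  \item For skew-adjoint regular $T$ we have $T^*T = -T^2$. So $F := F_T = T(\one - T^2)^{-1/2}$ is skew-adjoint with $\|F\| \le 1$.
  \item A direct computation gives $\one + F^2 = (\one - T^2)^{-1}$, hence $T = F(\one + F^2)^{-1/2}$.
  \item Moreover $\one + T$ is invertible, because $\spec(T) \subset i\R$. Regular functional calculus (as in Wahl's setting) then gives $(\one + T)^{-1} = g(F)$, where
  \[
    g(ix) = \Big(1 + \tfrac{ix}{\sqrt{1-x^2}}\Big)^{-1} = \frac{\sqrt{1-x^2}}{\sqrt{1-x^2} + ix}, \qquad x \in (-1,1).
  \]
  \item $g$ extends continuously to $i[-1,1]$ with $g(\pm i) = 0$. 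This extension is what justifies the identity even when $\pm i \in \spec(F)$.
\end{itemize}
Since $q_Y$ is a $*$-homomorphism, $q_Y(g(F)) = g(q_Y(F))$, and $\spec(q_Y(F)) \subset \spec(F) \subset i[-1,1]$. By the spectral mapping theorem for the normal element $q_Y(F)$, and since $|g(ix)|^2 = 1 - x^2$,
\[
  \big\| q_Y\big((\one + T)^{-1}\big) \big\|^2 = \sup_{ix \in \spec(q_Y(F))} (1 - x^2) = \big\| \one + q_Y(F)^2 \big\|_{\calQ_B(Y)} ,
\]
where the last equality uses $\spec(\one + q_Y(F)^2) = \{ 1 - x^2 : ix \in \spec(q_Y(F)) \}$. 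By Definition \ref{defn:fred}, $T$ is Fredholm iff $F_T$ is Fredholm. Part (1) applied to $F_T$ therefore yields (2).

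The main point needing care is the identity $(\one + T)^{-1} = g(F_T)$. It requires regular (unbounded) functional calculus on Hilbert $C^*$-modules, namely the correspondence between continuous functions of $T$ and of its bounded transform. I would cite this from the complex theory (Lance, Wahl), which applies verbatim after forgetting the real structure.
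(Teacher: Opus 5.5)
Your proof is correct and follows the paper's route: part (1) by functional calculus for the skew-adjoint element $q_Y(F)$, and part (2) by reducing to part (1) through $F_T$ and the identity $\one + F_T^2 = (\one - T^2)^{-1}$. The only difference is that the paper gets $\|q_Y((\one+T)^{-1})\|^2 = \|\one + q_Y(F_T)^2\|$ directly from the $C^*$-identity, since $((\one+T)^{-1})^*(\one+T)^{-1} = (\one-T)^{-1}(\one+T)^{-1} = (\one-T^2)^{-1}$; your detour through $(\one+T)^{-1} = g(F_T)$ and regular functional calculus, the step you flagged as needing care, is therefore unnecessary.
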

\begin{proof}
(1) 
Since $F^2 \leq 0$ and $\|F\| \leq 1$, the condition $\big\| \one + q_Y(F)^2 \big\|_{\calQ_B(Y)} < 1$ holds if and only if $q_Y(F)$ 
is invertible, which by definition means $F$ is Fredholm. 

(2)
The operator $T$ is Fredholm if and only if $q_Y(F_T) \in \calQ_B(Y)$ is invertible, where we recall that $F_T =T (\one - T^2)^{-1/2}$. 
We compute 
$
 \one +  F_T^2  = (\one-T^2)^{-1} = \big((\one + T)(\one-T)\big)^{-1}.
$
Using part (1), it follows $q_Y(F_T)$ is invertible if and only if 
$\big\| q_Y\big((\one + T)^{-1}\big) \big\|_{\calQ_B(Y)}^2 = \big\| \one +  q_Y(F_T)^2 \big\|_{\calQ_B(Y)} < 1$.
\end{proof}

\begin{prop}[{\cite[Lemma 2.2]{Joachim}, \cite[Proposition 2.14]{vdDungen19}}] \label{prop:Fred_to_KasMod}
Let $D \in\Reg_B(Y)$ be a Real odd self-adjoint Fredholm operator. 
For any normalising function $\chi$ for $D$, the triple 
$\big( \C, \, Y_B, \, \chi(D) \big)$ is a Real Kasparov module. 
The $\KK$-class $[D] := [\chi(D)] \in \KKR(\C, B)$ is independent of the choice of normalising function.
\end{prop}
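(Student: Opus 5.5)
We check the axioms of a Real Kasparov $\C$-$B$-module for $F := \chi(D)$ directly, and then connect any two normalising functions by an operator homotopy.

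\emph{Kasparov module axioms.} Since $\chi$ is real-valued, odd and bounded (it is non-decreasing with $\lim_{x\to\pm\infty}\chi(x)=\pm1$), continuous functional calculus for the self-adjoint regular operator $D$ gives $\chi(D)\in\End_B(Y)$ with $\chi(D)=\chi(D)^*$ and $\|\chi(D)\|\le 1$. The left action of $\C$ is by scalars, so the remaining conditions reduce to four properties of $F$.
\begin{enumerate}[label=(\alph*)]
  \item $F$ is odd. Since $D$ is odd, the grading operator $\Gamma$ satisfies $\Gamma D\Gamma=-D$. Functional calculus commutes with this unitary conjugation, so $\Gamma\chi(D)\Gamma=\chi(-D)=-\chi(D)$ because $\chi$ is odd.
  \item $F$ is Real. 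Since $D=D^\rs$ and $\chi$ is real-valued, $\chi(D)^\rs=\chi(D)$, as noted in the Remark after Definition \ref{def:Reg_fn}. One can see this by approximating $\chi$ uniformly on $\R$ by real-coefficient functions of the resolvents $(D\pm i)^{-1}$, using $((D+i)^{-1})^\rs=(D-i)^{-1}$.
  \item $F^2-\one\in\End_B^0(Y)$. This is exactly the normalising-function assumption in Definition \ref{def:Reg_fn}.
  \item $F-F^*=0$, so this condition holds trivially.
\end{enumerate}
Hence $(\C,Y_B,\chi(D))$ is a Real Kasparov module.

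\emph{Independence of $\chi$.} Let $\chi_0,\chi_1$ be two normalising functions and set $\chi_t=(1-t)\chi_0+t\chi_1$ for $t\in[0,1]$. Each $\chi_t$ is again smooth, odd and non-decreasing, with $\chi_t'(0)>0$ and limit $1$ at $+\infty$. The path $t\mapsto\chi_t(D)$ is norm continuous, since $\|\chi_t(D)-\chi_s(D)\|\le|t-s|\,\|\chi_1-\chi_0\|_\infty$, and each $\chi_t(D)$ is odd, Real and self-adjoint by the argument above.

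The remaining point is that $\chi_t(D)^2-\one$ is compact for every $t$. This is the key step.
\begin{itemize}
  \item Choose $\varepsilon>0$ as in Proposition \ref{prop:Fredholm_properties}(ii), so that $\varphi(D)$ is compact for all $\varphi\in C_c(-\varepsilon,\varepsilon)$. By density, $\varphi(D)$ is then compact for all $\varphi\in C_0(\R)$ that vanish outside $(-\varepsilon,\varepsilon)$.
  \item The functions $\chi_0^2-1$ and $\chi_1^2-1$ lie in $C_0(\R)$, since $\chi_j\to\pm1$ at $\pm\infty$. Hence $\chi_t^2-1=(1-t)^2(\chi_0^2-1)+t^2(\chi_1^2-1)+2t(1-t)(\chi_0\chi_1-1)$ lies in $C_0(\R)$ as well.
  \item Pick a cutoff $\psi\in C_c(-\varepsilon,\varepsilon)$ with $0\le\psi\le1$. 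Then $\chi_t^2-1=\psi(\chi_t^2-1)+(1-\psi)(\chi_t^2-1)$, and the first term gives a compact operator by the first bullet.
  \item The main obstacle is the second term. It is not automatic that $(\chi_0\chi_1-1)(D)$ is compact away from $0$: the normalising conditions only give compactness of $\chi_j^2-1$, not of the cross term $\chi_0\chi_1-1$.
\end{itemize}

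To handle the cross term, use positivity. The function $g=(\chi_0-\chi_1)^2$ satisfies
\[
0\le g=(\chi_0^2-1)+(\chi_1^2-1)-2(\chi_0\chi_1-1),
\]
so it suffices to show that $g(D)$ is compact.
\begin{itemize}
  \item The product $(1-\chi_j^2)(1+\chi_j^2)=1-\chi_j^4$ shows that $1-\chi_j^4$ gives a compact operator. Hence $h_j=1-\chi_j^2$ gives a compact operator, since it belongs to the ideal generated.
  \item Since $\chi_0$ and $\chi_1$ have the same sign as $x$, $|\chi_0-\chi_1|\le\bigl||\chi_0|-1\bigr|+\bigl||\chi_1|-1\bigr|\le h_0+h_1$ pointwise, because $\bigl||\chi|-1\bigr|\le 1-\chi^2$ when $|\chi|\le1$.
  \item Therefore $0\le g\le(h_0+h_1)^2$.
  \item Functions dominated in this way by functions giving compact operators also give compact operators: in a $C^*$-algebra, if $0\le a\le b$ and $b$ lies in a closed ideal, then so does $a$. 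Here the ideal is the preimage of $\End_B^0(Y)$ in $C_0(\R)$ under $f\mapsto f(D)$, and this preimage is a closed ideal.
\end{itemize}
Hence $g(D)$ is compact, so $(\chi_0\chi_1-1)(D)$ is compact, and therefore $\chi_t(D)^2-\one$ is compact for every $t$.

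Consequently $t\mapsto\chi_t(D)$ is an operator homotopy of Real Kasparov modules, and $[\chi_0(D)]=[\chi_1(D)]\in\KKR(\C,B)$.
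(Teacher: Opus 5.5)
Your proof is correct. The paper gives no argument of its own for this proposition; it only cites Joachim and van den Dungen. Yours is the standard self-contained one: verify the axioms directly, then join two normalising functions by the straight line $\chi_t=(1-t)\chi_0+t\chi_1$. The only substantive point is compactness of the cross term $(\chi_0\chi_1-1)(D)$.

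You handle that point via the closed, hence hereditary, ideal $\{f\in C_0(\R) : f(D)\in\End_B^0(Y)\}$ and the bound $0\le(\chi_0-\chi_1)^2\le(h_0+h_1)^2$. This is exactly what the paper's Definition~\ref{def:Reg_fn} requires, since there $\chi^2-1$ need only give a compact operator, not be supported near $0$. Equivalently, that ideal is $C_0(\R\setminus Z)$ for a closed set $Z$ with $0\notin Z$. Both $\chi_j$ equal $\mathrm{sgn}$ on $Z$, so $\chi_0-\chi_1$ already lies in the ideal.

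Two small corrections.

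First, the cutoff $\psi$ and the detour through $1-\chi_j^4$ are superfluous. The operator $h_j(D)=\one-\chi_j(D)^2$ is compact by hypothesis.

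Second, your justification of Realness does not work as written. The unital $C^*$-algebra generated by $(x\pm i)^{-1}$ consists of functions with equal limits at $\pm\infty$, so $\chi$ is not a uniform limit of such functions. Two fixes:
\begin{itemize}
\item Write $\chi=b+f$ with $b(x)=x(1+x^2)^{-1/2}$ and $f\in C_0(\R)$ real-valued, and use that the bounded transform $b(D)=D(\one+D^2)^{-1/2}$ is Real.
\item Alternatively, note that $\chi(D)$ is the pointwise limit of the Real operators $\big(\chi\cdot(1+x^2/n)^{-1}\big)(D)$, and that $\rs_Y$ is isometric.
\end{itemize}
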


\subsection{Clifford anti-linear Fredholm operators} \label{subsec:Clifford_Fredholm}

Proposition \ref{prop:Fred_to_KasMod} shows that an odd Real self-adjoint Fredholm operator defines a class in the Real $K$-theory group $\KKR(\C,B) \cong \KR_0(B)$. 
We shall more generally be interested in describing elements of the higher $\KR$-theory groups $\KKR(\Cl_{r,s},B) \cong \KR_{r-s}(B)$ in terms of Fredholm operators. 
For this purpose, we consider Fredholm operators which anti-commute with a given representation of the Clifford algebra $\Cl_{r,s}$. 
In the Real setting, it will be more convenient to work with skew-adjoint (instead of self-adjoint) operators. 

So let $X_A$ be a full countably generated and ungraded Real Hilbert $C^*$-module over an ungraded Real $C^*$-algebra $A$. 
  In addition we assume that there is an ungraded and ample representation $\Cl_{r,s} \to \End_A(X)$ 
  (meaning that $\Cl_{r,s} \cap \End_A^0(X) =\{0\}$) 
  with ungraded generators $\{e_1,\ldots, e_r, f_1,\ldots, f_s\}$. 
  We define the \emph{Clifford anti-linear} adjointable operators
  \[
    \End_A^{r,s}(X) = \Big\{ S \in \End_A(X) \, \Big| \, \begin{matrix}  Se_j = -e_j S &  \text{for all } j=1,\ldots, r, \\   S f_k = -f_k S & \text{for all } k=1,\ldots, s \end{matrix}  \Big\},
  \]
  and similarly the \emph{Clifford anti-linear} regular operators
  \[
    \Reg_A^{r,s}(X) = \Big\{ D \in \End_A(X) \, \Big| \, \begin{matrix}  e_j\cdot\Dom D \subset \Dom D \text{ and } D e_j = -e_j D &  \text{for all } j=1,\ldots, r, \\ f_k\cdot\Dom D \subset \Dom D \text{ and }   D f_k = -f_k D & \text{for all } k=1,\ldots, s \end{matrix}  \Big\}.
  \]

Proposition \ref{prop:Fred_to_KasMod} can be adapted straightforwardly to the setting of Clifford anti-linear skew-adjoint Fredholm operators, and we thus obtain:
\begin{cor} \label{cor:skew_Fred_to_KasMod}
Let $T \in \Reg_A^{r,s}(X)$ be a (bounded or unbounded) Real skew-adjoint Fredholm operator on $X_A$. 
For any normalising function $\chi$ for $T$, the triple 
\[
    \Big( \Cl_{s+1, r}, \  X_A \otimes \exterior \C, \   \chi(T) \otimes \rho  \Big)
\]
is a Real Kasparov module, 
where $\big\{ \one \otimes \gamma, f_1 \otimes \rho, \ldots, f_s \otimes \rho, e_1 \otimes \rho, \ldots, e_r \otimes \rho \big\}$
are the generators of the graded $\Cl_{s+1, r}$ representation. 
The resulting $\KK$-class 
\[
\class{T} := \big[ \chi(T) \otimes \rho \big] \in \KKR( \Cl_{s+1, r}, A) 
\]
is independent of the choice of normalising function. 
\end{cor}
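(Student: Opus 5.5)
We verify the axioms of a Real Kasparov module directly for $F := \chi(T)\otimes\rho$ on the graded module $Y_A := X_A\otimes\exterior\C$, where the grading is induced by $\exterior\C\cong\C^2$ (so $\gamma$ and $\rho$ are odd). We then show independence of $\chi$ by an operator homotopy.

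\textbf{Step 1: algebraic relations.} Since $T$ is Real, $\chi(T)^\rs=\chi(T)$, and $\rho^\rs=\rho$, so $F$ is Real. $F$ is odd because $\rho$ is odd. $\chi$ is odd and maps $i\R\to i\R$, so $\chi(T)$ is skew-adjoint. With $\rho^*=-\rho$ this gives
\[
(\chi(T)\otimes\rho)^*=(-\chi(T))\otimes(-\rho)=F ,
\]
so $F$ is self-adjoint; here $X_A$ is ungraded, so no graded-tensor signs enter. Next check the Clifford generators. The element $\one\otimes\gamma$ is odd, self-adjoint, Real and squares to $\one$. The elements $f_k\otimes\rho$ are odd with $(f_k\otimes\rho)^*=(-f_k)(-\rho)=f_k\otimes\rho$ and $(f_k\otimes\rho)^2=f_k^2\otimes\rho^2=(-1)(-1)=1$, so they are self-adjoint and square to $1$. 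The elements $e_j\otimes\rho$ are skew-adjoint and square to $-1$. Distinct generators anticommute, because $\gamma\rho=-\rho\gamma$ and the $e_j,f_k$ anticommute among themselves. This gives a graded Real representation of $\Cl_{s+1,r}$: $s+1$ self-adjoint generators and $r$ skew-adjoint ones. For the Kasparov module we need $F$ to graded-commute with these odd generators, i.e. anticommute with them. For $\one\otimes\gamma$ this follows from $\rho\gamma=-\gamma\rho$. For $f_k\otimes\rho$ we compute
\[
(\chi(T)\otimes\rho)(f_k\otimes\rho)=\chi(T)f_k\otimes\rho^2=-f_k\chi(T)\otimes\rho^2=-(f_k\otimes\rho)(\chi(T)\otimes\rho),
\]
using that $\chi(T)$ anticommutes with $f_k$. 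The same computation works for $e_j\otimes\rho$.

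\textbf{Step 2: the one analytic point.} We must justify that $\chi(T)$ anticommutes with $e_j$ and $f_k$. Conjugation by the unitary $f_k$ (or $e_j$, each unitary up to sign) sends $T$ to $-T$, and $\Dom T$ is preserved by the definition of $\Reg_A^{r,s}(X)$. Functional calculus for regular operators is natural under unitary conjugation, so $f_k\chi(T)f_k^{-1}=\chi(-T)=-\chi(T)$, using that $\chi$ is odd. The Kasparov conditions then reduce to
\[
F^2-\one=(\chi(T)^2+\one)\otimes\one\in\End^0_A(Y),
\]
since $\rho^2=-1$. This holds by Definition~\ref{def:Reg_fn}, and normalising functions exist by Proposition~\ref{prop:Fredholm_properties}. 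Also $F-F^*=0$. So the triple is a Real Kasparov module.

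\textbf{Step 3: independence of $\chi$.} Given two normalising functions $\chi_0,\chi_1$, we will show that the straight-line path $\chi_t=(1-t)\chi_0+t\chi_1$ consists of normalising functions. Each $\chi_t$ is odd, smooth, non-decreasing and strictly increasing near $0$, and $\chi_t(ix)\to i$. The only nontrivial requirement is $\chi_t(T)^2+\one$ compact, and this is the step I expect to need care. I would argue via Proposition~\ref{prop:Fredholm_properties}(iii'). Write $\chi_0-\chi_1=\psi$. Each $\chi_i$ has $\chi_i(T)^2+\one$ compact, which forces $\chi_i(T)^2+\one=\varphi_i(T)$ with $\varphi_i := \chi_i^2+1 \in C_0(i\R)$. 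Compactness of $\varphi(T)$ for such $\varphi$ supported near the spectral gap region, together with the condition in (ii'), shows that $\psi(T)$ is compact: $\psi\in C_0(i\R)$, and $\psi$ is dominated by the $\varphi_i$ away from a small neighbourhood of $0$. Hence
\[
\chi_t(T)^2+\one=(\chi_0(T)-t\psi(T))^2+\one \equiv \chi_0(T)^2+\one \equiv 0 \quad\text{mod compacts}.
\]
The path $t\mapsto\chi_t(T)\otimes\rho$ is norm continuous, so it is an operator homotopy of Real Kasparov modules, and the classes in $\KKR(\Cl_{s+1,r},A)$ coincide. Alternatively, this whole step is exactly the argument of Proposition~\ref{prop:Fred_to_KasMod} (cf.\ \cite{vdDungen19}) applied to the odd self-adjoint regular operator $T\otimes\rho$, which is Fredholm and anticommutes with the Clifford generators. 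One can simply cite that argument, since $\chi(T)\otimes\rho=\tilde\chi(T\otimes\rho)$ with $\tilde\chi(x)=-i\chi(ix)$ a real normalising function.
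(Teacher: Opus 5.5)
Your proposal is correct and takes the same approach as the paper. The paper's proof just observes that $\chi(T)$ anticommutes with the Clifford generators because $\chi$ is odd, and then invokes the Clifford-equivariant form of Proposition~\ref{prop:Fred_to_KasMod}. Your Steps~1--3 unpack that citation, and your straight-line homotopy of normalising functions does close: pointwise on $i\R$ one has $|\chi_0-\chi_1|\le(\chi_0^2+1)+(\chi_1^2+1)$, and $\{f\in C_0(i\R): f(T)\in\End_A^0(X)\}$ is a closed ideal, so $\psi(T)$ is compact. There is one harmless sign slip: since $\rho^2=-\one$, $F^2-\one=-(\chi(T)^2+\one)\otimes\one$.
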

\begin{proof}
Observe that, since $\chi$ is an odd function, $\chi(T)$ also anti-commutes with the Clifford generators $\{e_1,\ldots, e_r, f_1,\ldots, f_s\}$. The statement then follows from Proposition \ref{prop:Fred_to_KasMod}. 
\end{proof}

For a $\Z_2$-graded $C^*$-module $Y_B$ with a $\Z_2$-graded representation $\Cl_{r,s} \to \End_B(Y)$, we can analogously define 
$\End_B^{r,s}(Y)$ and $\Reg_B^{r,s}(Y)$.

\subsection{The Clifford shuffle map for Clifford anti-linear operators} \label{subsec:Clifford_Fredholm_Shuffle}

Let us fix a $\Z_2$-graded $C^*$-module $Y_B$ with a $\Z_2$-graded Clifford representation $\Cl_{s,r} \to \End_B(Y)$ 
(note the reversed $(s,r)$). 
In order to define a Van Daele $K$-theory index for Clifford anti-linear Fredholm operators, we need a systematic procedure to 
relate Clifford anti-linear operators in $\End_B^{s,r}(Y)$ to the tensor product $\End_B(Y) \hox \Cl_{r,s}$. 
Note that in particular $\Cl_{s,r} \hox \Cl_{r,s} \cong M_{2^{r+s}} \subset \End_{B}(Y) \hox \Cl_{r,s}$.

\begin{notation} \label{notation:P^rs_and_sigma_e^rs}
Let $Y_B$ be a $\Z_2$-graded Hilbert $C^*$-module with $\Cl_{s,r} \to \End_B(Y)$ a $\Z_2$-graded Clifford representation. 
\begin{itemize}
  \item $P^{r,s}\in \Cl_{s,r} \hox \Cl_{r,s} \subset \End_B(Y) \hox \Cl_{r,s}$ denotes a minimal (rank one) even projection. 
  \item If $e \in \End_B^{s,r}(Y)$ is an odd self-adjoint unitary (OSU), we define 
  \begin{align} \label{eq:sigma_e-rs_defn}
  &\sigma_e^{r,s}: \Reg_B^{s,r}(Y) \to    \Reg_B(Y) \hox \Cl_{r,s}   
  &\sigma_e^{r,s}(x) =  P^{r,s}(x \hox 1_{r,s}) + (\one -P^{r,s})(e \hox 1_{r,s}) .
\end{align}
\end{itemize}
\end{notation}

Note that $\sigma_e^{r,s}(e) = e \hox 1_{r,s}$.
If  $\End^{s,r}_B(Y)$ has no odd self-adjoint unitaries, then we can  consider
$\End^{s,r}_B(Y)\hox\Cl_{1,1}=\End^{s,r}_B(Y\hox \exterior{\C})$, which contains $\one \hox\gamma$.  
This procedure is consistent with the definition of Van Daele $K$-theory for algebras which are not balanced graded: see Appendix \ref{sec:appendix_DK}.

\begin{rmk}
\label{rmk:AMZ-proj}
An explicit construction of a minimal even projection $P^{r,s}$ is given by \cite[\S3.3]{AMZ}. 
Writing the generators of $\Cl_{s,r} \subset \End_B(Y)$ and $\Cl_{r,s}$ as $\tilde{\gamma}_j,\, \tilde{\rho}_k$ and  $\gamma_j, \,\rho_k$ respectively, we set
\[
    2p_j - \one = \tilde{\rho}_j \hox  {\gamma}_j,  \quad \qquad 2q_k - \one = \tilde{\gamma}_k \hox  {\rho}_k, \qquad 
    j=1,\ldots, r, \, k = 1,\ldots, s.
\]
and define 
$P^{r,s} = p_1\ldots p_r q_1 \ldots q_s \in\Cl_{s,r} \hox \Cl_{r,s} \subset \End_B(Y) \hox \Cl_{r,s}$.
\end{rmk}

The following lemma, which is a  special case of the general results in \cite[\S3.3]{AMZ}, 
guarantees that we can map odd self-adjoint unitaries in $\End_B^{s,r}(Y)$ to odd self-adjoint unitaries 
in $\End_B(Y) \hox \Cl_{r,s}$ without loss of topological information. Odd self-adjoint unitaries in 
$\End_B(Y) \hox \Cl_{r,s}$ can then be  used to construct Van Daele $K$-theory classes. 
See Appendix \ref{sec:appendix_DK} for an overview of Van Daele $K$-theory.

\begin{lemma}[{\cite[Lemma 4, 5, 6]{AMZ}}] \label{lem:AMZ-OSU-iso}
Let $B$ be a $\Z_2$-graded algebra and $\Cl_{s,r} \to \Mult(B) = \End_B(B)$ a $\Z_2$-graded Clifford representation. 
Fix also a basepoint odd self-adjoint unitary $e \in \End_B^{s,r}(B)$. 
\begin{enumerate}
  \item The map 
  \[
      \End_B^{s,r}(B) \ni x \mapsto P^{r,s}(x \hox 1_{r,s}) \in P^{r,s}( \Mult(B) \hox \Cl_{r,s} )P^{r,s}
  \]
  gives a bijective correspondence between odd self-adjoint unitiaries.
  \item There is a $\Z_2$-graded isomorphism $M_{2^{r+s}} \big( P^{r,s}( \Mult(B) \hox \Cl_{r,s} ) P^{r,s} \big) \cong \Mult(B) \hox \Cl_{r,s}$.
  \item Let $x \in \End_B^{s,r}(B)$ be an odd self-adjoint unitary with $x - e \in B$. 
  Then applying the maps from parts (1) and (2), there is a well-defined Van Daele $K$-theory class in $\DK(B \hox \Cl_{r,s})$, which is represented by 
  \[
        \big[ \sigma_{e}^{r,s} (x) \big] - \big[ e \hox 1_{r,s} \big] = \big[ P^{r,s}(x \hox 1_{r,s}) + (\one -P^{r,s})(e \hox 1_{r,s}) \big] - \big[ e \hox 1_{r,s} \big]  
        \in \DK( B \hox \Cl_{r,s} ).
  \]
 \end{enumerate}
 \end{lemma}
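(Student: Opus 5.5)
The plan rests on the structure of the matrix algebra $\Cl_{s,r}\hox\Cl_{r,s}\cong M_{2^{r+s}}(\C)$ (graded) and the projection $P=P^{r,s}$ of Remark~\ref{rmk:AMZ-proj}. Write $\tilde c$ for a generator of $\Cl_{s,r}\subset\Mult(B)$ and $c$ for the matching generator of $\Cl_{r,s}$, so that each factor of $P$ has the form $\tfrac12(\one+\tilde c\hox c)$.

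\textbf{Key commutation.} If $x\in\End^{s,r}_B(B)$ is odd, then $x\hox 1$ commutes with every $\tilde c\hox c$. Indeed, $x$ anticommutes with $\tilde c$, and the graded sign from moving the odd $x$ past the odd $c$ contributes another $-1$. Hence $x\hox1$ commutes with $P$.

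\textbf{Part (1).} By the key commutation, $x\mapsto P(x\hox1)=P(x\hox1)P$ is well defined. It preserves odd self-adjoint unitaries, since $P$ is an even projection commuting with $x\hox1$.

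For bijectivity I would build the inverse explicitly. The $p_j,q_k$ commute and generate $2^{r+s}$ orthogonal minimal even projections $P_\epsilon$. These are permuted transitively by conjugation with the unitaries $\one\hox c$ (or $\tilde c\hox 1$), and such conjugations preserve the relevant structure. An element $y\in P(\Mult(B)\hox\Cl_{r,s})P$ therefore extends uniquely to an element $\hat y=\sum_\epsilon u_\epsilon y u_\epsilon^*$ commuting with all $\tilde c\hox c$. The commutant of $\Cl_{s,r}\hox\Cl_{r,s}$, twisted appropriately, should be identified with $\End^{s,r}_B(B)\hox 1$. This identification is the main obstacle: showing that the elements of $\Mult(B)\hox\Cl_{r,s}$ commuting with every $\tilde c\hox c$ are exactly $\{x\hox1: x\in\End^{s,r}_B(B)\}$, with the correct grading signs. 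I would prove it by expanding a general element in the basis $\{1\hox c_I\}$ of $\Cl_{r,s}$ and checking generator by generator that only the $c_I=1$ component survives, with coefficient Clifford anti-linear. I would use the graded version of the ``$M_n\otimes$ commutant'' argument, following \cite{AMZ}.

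\textbf{Part (2).} The projections $P_\epsilon$ sum to $\one$ and are mutually equivalent via even partial isometries built from products of generators. This gives a system of even matrix units, and hence the standard graded isomorphism $\Mult(B)\hox\Cl_{r,s}\cong M_{2^{r+s}}(P(\Mult(B)\hox\Cl_{r,s})P)$. Here $M_{2^{r+s}}$ is trivially graded, because the matrix units are even.

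\textbf{Part (3).} Note that $\sigma_e^{r,s}(x)-e\hox1=P((x-e)\hox1)\in B\hox\Cl_{r,s}$. Since $P$ commutes with both $x\hox1$ and $e\hox1$, $\sigma_e^{r,s}(x)$ is an odd self-adjoint unitary, being a direct sum of two such on $P$ and $\one-P$.

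Both $\sigma_e^{r,s}(x)$ and $e\hox1$ lie in $\Mult(B)\hox\Cl_{r,s}$ and agree modulo $B\hox\Cl_{r,s}$. I would therefore pass to the unitisation, or use the relative/multiplier description of $\DK$ from Appendix~\ref{sec:appendix_DK}. In that description, a pair of odd self-adjoint unitaries with difference in the ideal defines a class $[\sigma_e^{r,s}(x)]-[e\hox1]$ in $\DK(B\hox\Cl_{r,s})$, with $e\hox1$ as basepoint. Well-definedness, i.e. independence of the choice of minimal $P$, follows because any two such projections are conjugate by an even unitary in $\Cl_{s,r}\hox\Cl_{r,s}$, which is homotopic to $\one$ within the even unitaries commuting with $e\hox1$.
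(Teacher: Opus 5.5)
The paper gives no proof of this lemma; it quotes \cite[Lemmas 4--6]{AMZ}. Your argument therefore has to stand on its own. The strategy is sound: split off $\mathcal{C}=\Cl_{s,r}\hox\Cl_{r,s}\cong\End(\exterior\C^{r+s})$ and use the projections $P_\epsilon$ generated by the $p_j,q_k$. Your key commutation and your construction in part (3) are correct. However, the step you identify as the main obstacle in part (1) is false as stated.

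\textbf{Part (1).} The $\tilde c_j\hox c_j=2p_j-\one$ generate $\mathrm{span}\{P_\epsilon\}$. Hence the elements of $\Mult(B)\hox\Cl_{r,s}$ commuting with every $\tilde c\hox c$ form the whole block-diagonal algebra $\bigoplus_\epsilon P_\epsilon(\Mult(B)\hox\Cl_{r,s})P_\epsilon$. For instance, $\tilde c_1\hox c_1$ itself lies there. So $\hat y$ is not a unique extension, and the check that ``only the $c_I=1$ component survives'' cannot succeed.

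Your untwisted average also gives the wrong element. Take $u_\epsilon=\prod_{j\in\epsilon}(\one\hox c_j)$ and $x$ odd. Then $u_\epsilon(x\hox1)u_\epsilon^*=(-1)^{|\epsilon|}\,x\hox1$, so $\sum_\epsilon u_\epsilon P(x\hox1)u_\epsilon^*=\prod_j(\tilde c_j\hox c_j)\,(x\hox1)\neq x\hox1$.

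The repair is to use the graded commutant of all of $\mathcal{C}$, together with the sign-twisted average $\hat y=\sum_\epsilon(-1)^{\deg(y)|\epsilon|}u_\epsilon y u_\epsilon^*$ for homogeneous $y\in P(\Mult(B)\hox\Cl_{r,s})P$.
\begin{itemize}
\item Then $P\hat y=y$, and $\hat y$ graded-commutes with each $\one\hox c_j$.
\item The graded commutant of $\one\hox\Cl_{r,s}$ in $\Mult(B)\hox\Cl_{r,s}$ is $\Mult(B)\hox1$ (expand in the monomial basis). Hence $\hat y=x\hox1$.
\item Block-diagonality of $\hat y$ then forces $[x,\tilde c]_\pm=0$.
\end{itemize}
Thus $x\mapsto P(x\hox1)$ is a graded $*$-isomorphism from the graded commutant of $\Cl_{s,r}$ in $\Mult(B)$ onto the corner, and odd self-adjoint unitaries correspond. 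The even part of that commutant consists of Clifford-\emph{linear} elements. So $\End^{s,r}_B(B)\hox1$ is the correct target only on odd elements, which is all that part (1) needs.

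\textbf{Part (2)} has a parity error. Each $P_\epsilon$ projects onto a homogeneous vector of $\exterior\C^{r+s}$, and the odd unitaries $\one\hox c_j$ flip its parity. Hence even elements of $\mathcal{C}$ only connect $P_\epsilon$ and $P_{\epsilon'}$ with $|\epsilon|\equiv|\epsilon'|$ mod $2$. For example, when $r+s=1$ the even part of $\mathcal{C}$ is just $\C P\oplus\C(\one-P)$. Matrix units built from products of generators therefore have mixed parity, so $M_{2^{r+s}}$ is graded as $\Cl_{r+s,r+s}$, not trivially.

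Even matrix units require the basepoint $e$. The element $e\hox1$ is odd and commutes with every $P_\epsilon$. So $w_\epsilon=u_\epsilon(e\hox1)^{|\epsilon|}$ is an even (Real) unitary with $w_\epsilon Pw_\epsilon^*=P_\epsilon$, and the $w_\epsilon P w_{\epsilon'}^*$ are even matrix units.

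\textbf{Part (3).} Delete your last sentence. The lemma concerns a fixed $P^{r,s}$, and independence of $P$ is false: minimal even projections of opposite parity are not even-conjugate in $\mathcal{C}$. Take $r=1$, $s=0$. Conjugating $\sigma^{P}_e(x)\oplus\sigma^{\one-P}_e(x)$ by a rotation in the $(\one-P)$-components shows the classes from $P$ and $\one-P$ sum to $[x\hox1]-[e\hox1]$. This class vanishes: rotate both entries to $\one\hox\gamma$ along $\cos\theta\,(\cdot)+\sin\theta\,(\one\hox\gamma)$. So the two choices give opposite classes; this is exactly $D_+$ versus $D_-$ in Proposition~\ref{prop:Graded_ungraded_index_comparison}. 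Separately, Real even unitaries of $\mathcal{C}$ need not be homotopic to $\one$, though inner conjugation acts trivially on $\DK$ regardless.
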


\begin{cor} \label{cor:sigma_also_gives_DK_hom}
Let $B$ be a $\Z_2$-graded algebra, $\Cl_{s,r} \to \Mult(B)$ a $\Z_2$-graded Clifford representation,  
and $e \in \End_B^{r,s}(B)$ an odd self-adjoint unitary. 
There is a well-defined homomorphism of basepointed Van Daele $K$-theory groups  
$(\sigma_e^{r,s})_\ast : \DK_e( B ) \to \DK_{e\hox  1_{r,s}}(  B \hox \Cl_{r,s} )$.
\end{cor}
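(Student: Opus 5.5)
The plan is to recall how the basepointed group $\DK_e(B)$ is built: it is the Grothendieck-type group of homotopy classes of odd self-adjoint unitaries $x$ in $M_n(\widetilde{B})$ (or in $\Mult(B)$ after stabilisation) with $x - e \in M_n(B)$. We then check that $\sigma_e^{r,s}$ respects each ingredient of this construction: the class of odd self-adjoint unitaries, the basepoint relation, homotopies, direct sums, and matrix amplification. We work throughout with $x\in\End^{s,r}_B(B)$, since $\DK_e(B)$ is defined relative to the Clifford anti-linear basepoint $e$. If necessary, we first tensor with $\Cl_{1,1}$ as in the remark before Remark \ref{rmk:AMZ-proj}, so that odd self-adjoint unitaries exist.

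\textbf{Step 1 (unitaries go to unitaries).} Let $x$ be an odd self-adjoint unitary anti-commuting with $\Cl_{s,r}$. Since $P^{r,s}$ is even and built from products of $\tilde\gamma_k\hox\rho_k$ and $\tilde\rho_j\hox\gamma_j$, a Clifford anti-linear odd operator $x\hox 1_{r,s}$ commutes with $P^{r,s}$: the sign from anti-commuting with $\tilde\gamma_k$ is cancelled by the graded sign from passing the odd $\rho_k$. Hence $\sigma_e^{r,s}(x)=P^{r,s}(x\hox1)+(\one-P^{r,s})(e\hox1)$ is a sum of two commuting-with-$P^{r,s}$ pieces supported on complementary corners. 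Each piece is an odd self-adjoint unitary in its corner by Lemma \ref{lem:AMZ-OSU-iso}(1), so $\sigma_e^{r,s}(x)$ is an odd self-adjoint unitary. Moreover $\sigma_e^{r,s}(x)-e\hox1 = P^{r,s}((x-e)\hox 1)\in B\hox\Cl_{r,s}$, so the basepoint condition is preserved. This is exactly the content of Lemma \ref{lem:AMZ-OSU-iso}(3).

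\textbf{Step 2 (homotopy and stabilisation).} The map $\sigma_e^{r,s}$ is affine and norm-continuous in $x$. Therefore a norm-continuous path of basepointed odd self-adjoint unitaries $x_t$ (with $x_t-e\in B$) is sent to such a path with $\sigma_e^{r,s}(x_t)-e\hox1$ in $B\hox\Cl_{r,s}$. For matrix amplification, we apply the construction entrywise via $M_n(B)\hox\Cl_{r,s}\cong M_n(B\hox\Cl_{r,s})$, using the diagonal representation of $\Cl_{s,r}$ and the diagonal basepoint $e\oplus\cdots\oplus e$. Then $\sigma_{e\oplus e}^{r,s}(x\oplus y)=\sigma_e^{r,s}(x)\oplus\sigma_e^{r,s}(y)$. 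This holds provided $P^{r,s}$ is taken diagonally, which is legitimate because $P^{r,s}$ lies in $\Cl_{s,r}\hox\Cl_{r,s}$ and the Clifford action is diagonal. Consequently $\sigma_e^{r,s}$ is additive, sends the basepoint class $[e]$ to $[e\hox1_{r,s}]$, and descends to a monoid map on homotopy classes. By the universal property it then induces a group homomorphism on the Grothendieck completions $\DK_e(B)\to\DK_{e\hox1_{r,s}}(B\hox\Cl_{r,s})$.

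\textbf{Main obstacle.} The delicate point is the compatibility with stabilisation in $\DK$. Van Daele's groups identify $x$ with $x\oplus e$ and use the chosen basepoint in larger matrix sizes. We must check that enlarging by $e$ commutes with $\sigma$, and that the choice of minimal projection $P^{r,s}$ does not matter up to homotopy. For the first, $\sigma_e^{r,s}(e)=e\hox1_{r,s}$ resolves it. For the second, any two minimal even projections in $\Cl_{s,r}\hox\Cl_{r,s}\cong M_{2^{r+s}}$ are conjugate by an even unitary in this finite-dimensional algebra commuting with $x\hox 1$ and $e\hox1$. The even unitary group of $\Cl_{s,r}\hox\Cl_{r,s}$ is connected in the complex case, and in the Real case we may pass through a $\Cl_{1,1}$-stabilisation to connect to the identity. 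This gives a homotopy between the two versions of $\sigma$, so the induced map is independent of this choice.
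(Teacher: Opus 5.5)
\textbf{Gap: the domain.} Your Steps 1 and 2 are the right argument, and they are essentially what the paper intends: the paper gives no separate proof and simply reads the corollary off Lemma~\ref{lem:AMZ-OSU-iso}. The problem is the sentence ``we work throughout with $x\in\End^{s,r}_B(B)$, since $\DK_e(B)$ is defined relative to the Clifford anti-linear basepoint $e$''. That does not follow. $\DK_e(B)$ is built from homotopy classes of \emph{all} odd self-adjoint unitaries in $M_k(B)$, and its homotopy relation allows paths that leave the anti-linear set. Along such paths $x_t\hox 1_{r,s}$ no longer commutes with $P^{r,s}$, and $\sigma^{r,s}_e(x_t)$ need not be an odd self-adjoint unitary. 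Your Step 2 only treats anti-linear paths, so it does not show that $[x]\mapsto[\sigma^{r,s}_e(x)]$ respects the relation defining $\DK_e(B)$.

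This cannot be repaired. Take $r=0$, $s=1$, $B=\Cl_{2,1}$ with self-adjoint generators $\gamma_1,\gamma_2$ and skew-adjoint generator $\rho_1$. Let $\Cl_{1,0}$ act through $\gamma_1$, and put $e=\gamma_2$.
\begin{itemize}
\item The odd Real self-adjoint elements of $B$ are $a\gamma_1+b\gamma_2+c\,\gamma_1\gamma_2\rho_1$ with $a,b,c\in\R$. Both $\gamma_1$ and $\gamma_1\gamma_2\rho_1$ commute with $\gamma_1$, so the only anti-linear Real OSUs are $\pm\gamma_2$.
\item The path $\theta\mapsto\cos(\theta)\gamma_2+\sin(\theta)\gamma_1$, $\theta\in[0,\pi]$, consists of Real OSUs and joins $\gamma_2$ to $-\gamma_2$. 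Hence $[-\gamma_2]-[\gamma_2]=0$ in $\DK_e(B)$.
\item With $P^{0,1}=\tfrac12(\one+\gamma_1\hox\rho)$, the corner $P^{0,1}(B\hox\Cl_{0,1})P^{0,1}$ is the copy of $\Cl_{1,1}$ generated by $P^{0,1}(\gamma_2\hox 1_{0,1})$ and $P^{0,1}(\rho_1\hox 1_{0,1})$.
\item By Lemma~\ref{lem:AMZ-OSU-iso}(1)--(2), $[\sigma^{0,1}_e(-\gamma_2)]-[\gamma_2\hox 1_{0,1}]$ corresponds to $[-\gamma]-[\gamma]\in\DK(\Cl_{1,1})$. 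The map $\Upsilon^{1,1}_\C$ sends this to the class of the Real unitary $-1$, the non-zero element of $\KO_1(\R)\cong\Z_2$.
\end{itemize}
So the formula does not descend to $\DK_e(B)$ as literally defined.

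What your argument, and the paper's, actually proves is a homomorphism on the Grothendieck group of homotopy classes of $\Cl_{s,r}$-anti-linear odd self-adjoint unitaries, with homotopies taken inside the anti-linear ones. This is the group $G^{s,r}(B)$ the paper introduces right after the corollary. It is also how the map is used, for instance in Lemma~\ref{lemma:sigma^rs_commutes_with_boundary}, where anti-linearity of the representatives is checked explicitly. With that domain your Steps 1--2 are a complete proof, and you should state the result that way.

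\textbf{Secondary error in your last paragraph.} That paragraph is not needed, since $P^{r,s}$ is fixed in the statement, and its key claim is false. A rank-one even projection onto an even vector of $\exterior\C^{r+s}$ and one onto an odd vector are not conjugate by any even unitary. One passes between them by conjugating with a Real odd self-adjoint unitary $u\in\Cl_{s,r}\hox\Cl_{r,s}$. Such $u$ anticommutes with $x\hox 1_{r,s}$ and $e\hox 1_{r,s}$, which gives $\sigma'_e(x)=-u\,\sigma^{r,s}_e(x)\,u^*$. Since $y\mapsto -uyu^*$ negates $\DK$-classes, this flips the sign of the induced map. For two projections of the same parity, independence does hold. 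The clean reason is the rotation homotopy $w\oplus w^*\sim_h\one\oplus\one$, which works for Real unitaries, not connectedness of the even unitary group. In the Real case that group is a product of two orthogonal groups and is disconnected.
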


Lemma \ref{lem:AMZ-OSU-iso} and Corollary \ref{cor:sigma_also_gives_DK_hom} suggest that one could define an abelian 
group $G^{s,r}(B)$ using homotopy classes of odd-self-adjoint unitaries in $\End^{s,r}_{M_n(B)}(M_n(B))$, and that the induced map
$(\sigma_e^{r,s})_\ast:G^{s,r}(B)\to \DK_{e\hox  1_{r,s}}(  B \hox \Cl_{r,s} )$ would then be an isomorphism. We do not pursue this line of thought here, but
see~\cite[Theorem 3]{AMZ} 
for a closely related construction.

The map $\sigma_e^{r,s}\colon \Reg_B^{s,r}(Y) \to    \Reg_B(Y) \hox \Cl_{r,s}$ is also related to 
the Clifford shuffle isomorphisms in $\KKR$-theory,   $\shuffle_{\KK}^{r,s}:\KKR(\Cl_{s,r},B) \xrightarrow{\simeq} \KKR(\C,B\hox\Cl_{r,s})$, 
see Eq. \eqref{eq:Clifford_shuffle} in the appendix. We will consider this map in the $\Z_2$-graded self-adjoint and 
ungraded skew-adjoint setting.

\begin{lemma}
\label{lem:cliff-shuffle}
\begin{enumerate}
  \item Let $Y_B$ be a $\Z_2$-graded Hilbert $C^*$-module with $\Cl_{s,r} \to \End_B(Y)$ a $\Z_2$-graded Clifford representation. 
If  $D\in\Reg^{s,r}_B(Y)$ is odd, self-adjoint, and Fredholm, then for any normalising function $\chi$ for $D$, 
$\big( \Cl_{s,r}, \, Y_B, \, \chi(D) \big)$ is a Kasparov module and for any basepoint self-adjoint unitary $e \in \End_B^{s,r}(Y)$, 
\begin{align*}
\shuffle^{r,s}_{\KK}\big( \big[ \big( \Cl_{s,r}, \, Y_B, \, \chi(D) \big) \big] \big) &=
    \big[ \big(\C, \,  (Y \hox \Cl_{r,s})_{B \hox \Cl_{r,s}}, \, \sigma^{r,s}_e( \chi(D) ) \big) \big]  \\
  &= \big[ \big(\C, \, P^{r,s}(Y \hox \Cl_{r,s})_{B \hox \Cl_{r,s}}, \, P^{r,s}( \chi(D) \hox 1_{r,s} ) \big) \big].
\end{align*}
 \item Let $X_A$ be an ungraded module with $\Cl_{r,s} \to \End_A(X)$ an ungraded Clifford representation. 
 If  $T\in \Reg^{r,s}_A(X)$ is skew-adjoint, Real, and Fredholm, then for any basepoint skew-adjoint unitary 
 $J \in \End_A^{r,s}(X)$, 
 \begin{align*}
    \shuffle^{r,s+1}_{\KK}\big( \class{T} \big)  & =  \big[ \big(\C, \,  (X_A\ox\Cl_{r,s+1})_{A\otimes \Cl_{r,s+1}}, \, \sigma^{r,s}_{J \otimes \rho}( \chi(T)\ox\rho )\big) \big] \\
      &= \big[ \big(\C, \, P^{r,s} (X_A\ox\Cl_{r,s+1})_{A\otimes \Cl_{r,s+1}}, \,  P^{r,s}( \chi(T) \otimes \rho)\big) \big],
 \end{align*}
where $\class{T} \in \KKR(\Cl_{s+1,r}, A)$ is  from Corollary \ref{cor:skew_Fred_to_KasMod} and $\chi$ is any normalising function for $T$. 
\end{enumerate}
\end{lemma}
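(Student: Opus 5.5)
Part (1) is proved by making $\shuffle^{r,s}_{\KK}$ explicit and then performing two homotopies/decompositions at the Kasparov-module level; part (2) is then obtained by reducing to (1).

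\emph{Step 1: the Kasparov module.} Since $\chi$ is odd, $\chi(D)$ anticommutes with the generators of $\Cl_{s,r}$. Proposition~\ref{prop:Fred_to_KasMod} gives $\chi(D)^2-\one\in\End_B^0(Y)$. Hence $(\Cl_{s,r},Y_B,\chi(D))$ is a Real Kasparov module, with $\chi(D)$ graded-commuting with the Clifford action exactly as required.

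\emph{Step 2: the shuffle.} Recall from the appendix (Eq.~\eqref{eq:Clifford_shuffle}) that $\shuffle^{r,s}_{\KK}$ is given by exterior product with $1_{\Cl_{r,s}}$. This sends $[(\Cl_{s,r},Y,F)]$ to $[(\Cl_{s,r}\hox\Cl_{r,s},\,Y\hox\Cl_{r,s},\,F\hox 1)]$. One then applies the Morita equivalence $\Cl_{s,r}\hox\Cl_{r,s}\cong M_{2^{r+s}}\sim_M\C$, implemented by the minimal even projection $P^{r,s}$. Concretely, the image is $[(\C,\,P^{r,s}(Y\hox\Cl_{r,s}),\,P^{r,s}(F\hox1))]$. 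Here $P^{r,s}$ commutes with $F\hox 1_{r,s}$: it lies in the algebra generated by $\Cl_{s,r}\hox\Cl_{r,s}$ and is a product of factors $\tfrac12(\one+\tilde\rho_j\hox\gamma_j)$ etc., each of which commutes with $F\hox1$ because $F$ anticommutes with $\tilde\rho_j$ and $1$ graded-anticommutes with nothing. So the compression is a Kasparov module, which is the second equality.

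\emph{Step 3: the first equality.} Decompose
\[
(Y\hox\Cl_{r,s},\sigma_e^{r,s}(\chi(D)))=(P^{r,s}(Y\hox\Cl_{r,s}),P^{r,s}(\chi(D)\hox1))\oplus((\one-P^{r,s})(Y\hox\Cl_{r,s}),(\one-P^{r,s})(e\hox1)).
\]
The second summand is degenerate, since $e\hox 1$ is an odd self-adjoint unitary commuting with $P^{r,s}$. Its class therefore vanishes, which gives the first equality.

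\emph{Step 4: part (2).} Apply (1) with $Y=X\otimes\exterior\C$, $B=A$ (trivially graded), the $\Cl_{s+1,r}$-representation of Corollary~\ref{cor:skew_Fred_to_KasMod}, and $D=\chi(T)\otimes\rho$. A generic basepoint is $e=J\otimes\rho$. One identifies $X\otimes\exterior\C\hox\Cl_{r,s+1}$ and the projection $P^{r,s+1}$ with $X\otimes\Cl_{r,s+1}$ and $P^{r,s}$ via $\exterior\C$ and the extra generators. The $\gamma$/$\rho$ pair pairs off with $\rho_1$, which absorbs the $\exterior\C$ factor.

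\emph{Main obstacle.} The hard step is this bookkeeping: the isomorphism $(X\otimes\exterior\C)\hox\Cl_{r,s+1}$ restricted by the $q_1$-factor $\tfrac12(\one+(\one\otimes\gamma)\hox\rho_1)$ must reduce to the ungraded module $X\otimes\Cl_{r,s+1}$. Under this identification $\chi(T)\otimes\rho\hox1$ must become $\chi(T)\otimes\rho_1$ and $J\otimes\rho$ must become $J\otimes\rho_1$, with real structures preserved. I would check this on the $2\times2$ matrices explicitly, using $\gamma,\rho$ as given.
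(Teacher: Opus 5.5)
\textbf{Verdict.} Your proposal is correct. Part (1) follows the paper, and part (2) reaches the same result by a slightly different organisation.

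\textbf{Part (1).} This is the paper's argument: exterior product with $\mathrm{Id}_{\Cl_{r,s}}$, Morita reduction via the rank-one module $P^{r,s}\Cl_{r+s,r+s}$, then adding the degenerate summand on $(\one-P^{r,s})(Y\hox\Cl_{r,s})$ carrying $e\hox 1_{r,s}$.

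\textbf{Part (2): the two routes.} The paper writes $\shuffle^{r,s+1}_{\KK}=\shuffle^{r,s}_{\KK}\circ\shuffle^{0,1}_{\KK}$, stated without proof. It computes $\shuffle^{0,1}_{\KK}$ by hand from $P^{1,1}=\tfrac12(\one+\gamma\hox\tilde\rho)$ and a unitary $P^{1,1}(\exterior\C\hox\Cl_{0,1})\cong\Cl_{0,1}$, reaching $(\Cl_{s,r},\,X\otimes\Cl_{0,1},\,\chi(T)\otimes\rho)$. It then applies (1) over the graded coefficient algebra $A\otimes\Cl_{0,1}$.

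You instead apply (1) once, to the $\Cl_{s+1,r}$-module $X\otimes\exterior\C$ over trivially graded $A$, with basepoint $J\otimes\rho$. This is indeed an odd self-adjoint unitary anticommuting with $\one\otimes\gamma$, $f_k\otimes\rho$ and $e_j\otimes\rho$. You then split $P^{r,s+1}=qP'$ with $q=\tfrac12(\one+(\one\otimes\gamma)\hox\rho_1)$. This works, for three reasons.
\begin{enumerate}
\item Each odd operator in play ($\chi(T)\otimes\rho\hox1$, $J\otimes\rho\hox1$, $(e_j\otimes\rho)\hox1$, $(f_k\otimes\rho)\hox1$, $1\hox\gamma_j$, $1\hox\rho_{k+1}$) anticommutes with both $(\one\otimes\gamma)\hox1$ and $1\hox\rho_1$. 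Hence each commutes with $q$.
\item On the range of $\one-q$, the operator $\sigma^{r,s+1}_{J\otimes\rho}(\chi(T)\otimes\rho)$ is just $J\otimes\rho\hox1$. This is a degenerate summand, which you should discard explicitly to get the first equality.
\item An even Real unitary from the range of $q$ onto $X\otimes\Cl_{r,s+1}$ carries $P'$ to the $P^{r,s}$ of the statement. Note that $X\otimes\Cl_{r,s+1}$ is graded via $\Cl_{r,s+1}$, not ungraded as you wrote.
\end{enumerate}

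\textbf{What each route buys.} Yours needs neither the composition law for shuffles nor the identification $\Cl_{s+1,r}\cong\Cl_{s,r}\hox\Cl_{1,0}$; your factorisation $P^{r,s+1}=qP'$ is exactly that law at the level of cycles. The paper's route keeps the hand computation to a single Clifford generator and reuses (1) verbatim.

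\textbf{Commutation in Step 2.} $F\hox1_{r,s}$ commutes with $\tilde\rho_j\hox\gamma_j$ because it anticommutes with \emph{both} $\tilde\rho_j\hox1$ and $1\hox\gamma_j$. The second anticommutation comes from the Koszul sign, since $F$ and $\gamma_j$ are both odd. Your phrase about the identity factor reads as if that sign were absent, and without it you would get anticommutation instead.

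\textbf{Sign in the deferred $2\times2$ check.} Take the first basis vector $v_0$ of $\exterior\C\cong\C^2$ to be even, as for $\Lambda^0\C$.
\begin{itemize}
\item The range of $q$ is then the image of $x\otimes c\mapsto\tfrac{1}{\sqrt2}\,x\otimes(v_0\hox c+v_1\hox\rho_1c)$.
\item On it, $(\one\otimes\rho)\hox1$ acts as $\one\otimes(-\rho_1)$, and $(e_j\otimes\rho)\hox\gamma_j$ acts as $-e_j\otimes\rho_1\gamma_j$.
\item So you land on the formula as written only after replacing $q$ by $\tfrac12(\one-(\one\otimes\gamma)\hox\rho_1)$, or after reversing the grading of $\exterior\C$. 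That is, the minimal projection (the Morita convention implicit in $\shuffle_{\KK}$) must be fixed compatibly.
\end{itemize}
The paper's step $UP^{1,1}(\rho\hox1)P^{1,1}U^*=\tilde\rho$ rests on the same convention, so this is not a defect of your route relative to theirs.
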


\begin{proof}
(1) We let $F = \chi(D)$ for brevity. The proof that $\big( \Cl_{s,r}, \, Y_B, \, F \big)$ is a Kasparov module is
analogous to Corollary \ref{cor:skew_Fred_to_KasMod}.
Following the definition of $\shuffle_{\KK}^{r,s}$ from Eq. \eqref{eq:Clifford_shuffle}, 
the  external product of $\big(\Cl_{s,r},\, Y_B, \, F\big)$ with $\big(\Cl_{r,s}, \, \Cl_{r,s}, \, 0\big)$, which represents 
$\mathrm{Id}_{\Cl_{r,s}} \in \KKR(\Cl_{r,s}, \Cl_{r,s} )$, yields
\[
 \big( \Cl_{r+s,r+s}, \, Y_B\hox \Cl_{r,s}, \, F \hox 1_{r,s} \big).
\]
Taking the Kasparov product with the Morita equivalence provided by the (dual of the) spinor representation $S$, 
$\big(\C, \, \bar{S}_{\Cl_{r+s,r+s}}, \, 0)= \big(\C, \, P^{r,s}\Cl_{r+s,r+s}, \,0 \big)$, yields
\[
\big(\C, \, P^{r,s}(Y \hox \Cl_{r,s})_{B \hox \Cl_{r,s}}, \, P^{r,s}(F \hox 1_{r,s}) \big) .
\]
At the level of $\KK$-classes, 
\begin{align*}
  &\big[ \big(\C, \, P^{r,s}(Y \hox \Cl_{r,s})_{B \hox \Cl_{r,s}}, \, P^{r,s}(F \hox 1_{r,s}) \big)  \big] \\
  &\qquad = \big[\big(\C, \, (Y \hox \Cl_{r,s})_{B \hox \Cl_{r,s}}, \, P^{r,s}(F \hox 1_{r,s}) + (\one - P^{r,s} ) (e \hox 1_{r,s} ) \big) \big] 
\end{align*}
as  $\big( \C, \, (\one -P^{r,s}) ( Y \hox \Cl_{r,s} )_{B \hox \Cl_{r,s} }, \,  (\one - P^{r,s} ) (e \hox 1_{r,s} ) \big)$ is a degenerate Kasparov module, see Definition \ref{defn:real-kasmod}.

(2) We use the fact that $\shuffle^{r,s+1}_{\KK} = \shuffle^{r,s}_{\KK} \circ \shuffle^{0,1}_{\KK}$ to apply the shuffle map in stages. 
To apply $\shuffle^{0,1}_{\KK}$, we let $P^{1,1}=\frac{1}{2}(\one+\gamma\hox \tilde{\rho})$, which acts on $\exterior{\C}\hox\Cl_{0,1}$ and with $\tilde{\rho}$  
the generator of $\Cl_{0,1}$. 
As $P^{1,1}$ commutes with $\rho \hox 1_{0,1}$,  there is a unitary isomorphism $U:P^{1,1}(\exterior{\C}\hox\Cl_{0,1})\to \Cl_{0,1}$ of right $\Cl_{0,1}$ modules such that
\[
UP^{1,1}(\rho\hox 1_{0,1})P^{1,1}U^*=\tilde{\rho}.
\]
Changing notation for the generator of $\Cl_{0,1}$ from $\tilde{\rho}$ to $\rho$ yields 
\[
  \shuffle^{0,1}_{\KK}\big( \class{T} \big)  = \big[ \big(\Cl_{s,r}, \, (X_A \otimes \Cl_{0,1})_{A \otimes \Cl_{0,1 }} , \,  \chi(T)\ox\rho \big) \big].
\]
We now use part (1) to apply $\shuffle^{r,s}_{\KK}$ to this class, where we obtain  
\begin{align*}
 \shuffle^{r,s}_{\KK}\circ  \shuffle^{0,1}_{\KK}\big( \class{T} \big) 
 &= \big[ \big( \C, \, P^{r,s} ( X_A \otimes \Cl_{0,1} \hox \Cl_{r,s} )_{A \otimes \Cl_{0,1} \hox \Cl_{r,s}}, \, P^{r,s}( \chi(T) \otimes \rho \hox 1_{r,s}) \big) \big] \\
 &= \big[ \big( \C, \,  (X \otimes \Cl_{r,s+1} )_{A \otimes \Cl_{r,s+1}} , \, P^{r,s}( \chi(T) \otimes \rho )+ (\one -P^{r,s})( J \otimes \rho) \big) \big] ,
 \end{align*}
where we have applied the isomorphism  $\Cl_{0,1} \hox \Cl_{r,s} \cong \Cl_{r,s+1}$, written $\rho$ for $\rho_1= \rho \hox 1_{r,s}$,  
and added a degenerate Kasparov module.
\end{proof}

The Clifford shuffle map $\sigma_{e}^{r,s}$ also behaves well with respect to the boundary map in $\DK$-theory (see \S\ref{subsec:DK_bdry}).

\begin{lemma} \label{lemma:sigma^rs_commutes_with_boundary}
Let $Y_B$ be a $\Z_2$-graded Hilbert $C^*$-module with $\Cl_{s,r} \to \End_B(Y)$ a $\Z_2$-graded and ample Clifford representation. 
If $x \in \End_B^{s,r}(Y)$ is odd and self-adjoint with $\one -x^2 \in \End_B^0(Y)$, then for any basepoint odd self-adjoint unitary $e \in \End_B^{s,r}(Y)$, 
\begin{equation} \label{eq:shuffle_boundary_commutes}
     \delta \circ (\sigma_{q(e)}^{r,s})_\ast \big( [q(x)] - [q(e)] \big) = (\sigma_{\one \hox \gamma}^{r,s})_\ast \circ \delta \big( [q(x)] - [q(e)] \big) 
     \in \DK\big( \End_B^0(Y) \hox \Cl_{r+1,s} \big),
\end{equation}
where the boundary map on the left-hand side is $\delta\colon \DK\big( \calQ_B(Y) \hox \Cl_{r,s}  \big) \to \DK\big(\End^0_B(Y) \hox \Cl_{r+1,s}\big)$ 
and the boundary map on the right-hand side is
$\delta \colon \DK( \calQ_B(Y) ) \to \DK\big( \End_B^0(Y ) \hox \Cl_{1,0} \big)$.
\end{lemma}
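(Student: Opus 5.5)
We compute both sides explicitly using the concrete description of the Van Daele boundary map (\S\ref{subsec:DK_bdry}), and reduce the comparison to the observation that $P^{r,s}$ lies in the even part of $\Cl_{s,r}\hox\Cl_{r,s}$ and commutes with everything that appears. The Van Daele boundary map $\delta\colon \DK(\calQ)\to\DK(J\hox\Cl_{1,0})$ is given by lifting. Take a class $[q(x)]-[q(e)]$, represented by the odd self-adjoint lift $x$ with $\one-x^2$ compact. One sends it to a class built from the exponential-type expression
\[
x\hox 1 + \sqrt{\one-x^2}\,\hox\gamma ,
\]
or a suitable variant such as $\cos(\tfrac{\pi}{2}x)$-type functional calculus, taken relative to the corresponding basepoint built from $e$. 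I will use whichever formula the appendix fixes. The key features are that it uses only odd self-adjoint functional calculus of the lift, and that it is natural with respect to graded $*$-homomorphisms and to corner embeddings.

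For the left-hand side, first note that $(\sigma_{q(e)}^{r,s})_\ast$ sends the class to $[P^{r,s}(q(x)\hox 1)+(\one-P^{r,s})(q(e)\hox1)]-[q(e)\hox1]$. A natural odd self-adjoint lift is $\sigma_e^{r,s}(x)=P^{r,s}(x\hox1)+(\one-P^{r,s})(e\hox1)$. Its square is $P^{r,s}(x^2\hox1)+(\one-P^{r,s})$, because $P^{r,s}$ is even and commutes with $x\hox1$ and $e\hox1$. This commutation uses the Clifford anti-linearity of $x,e$ together with the sign rule of the graded tensor product: $P^{r,s}$ is built from products $\tilde\rho_j\hox\gamma_j$, and each factor commutes with $x\hox1$ since $x$ anticommutes with $\tilde\rho_j$ and the odd $\gamma_j$ contributes a second sign. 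Hence $\one-\sigma_e^{r,s}(x)^2=P^{r,s}((\one-x^2)\hox1)$ is compact, so the lift is admissible. Functional calculus then splits over the orthogonal decomposition $P^{r,s}\oplus(\one-P^{r,s})$. On the $(\one-P^{r,s})$ summand the unitary $e$ contributes exactly the basepoint, so $\sqrt{\one-e^2}=0$. Consequently the boundary representative equals $P^{r,s}$ applied to the boundary representative of $x$ (tensored with $1_{r,s}$), plus $(\one-P^{r,s})$ times the basepoint.

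For the right-hand side, $\delta([q(x)]-[q(e)])$ is represented in $\End_B^0(Y)\hox\Cl_{1,0}$ by the expression above, relative to $\one\hox\gamma$ as the basepoint. Since $x,e$ anticommute with $\Cl_{s,r}$ and $\gamma$ is the new odd generator, these elements anticommute with the $\Cl_{s,r}$ action. Under $\Cl_{s,r}\hox\Cl_{1,0}$ and the identification $\Cl_{1,0}\hox\Cl_{r,s}\cong\Cl_{r+1,s}$, they lie in the domain of $\sigma^{r,s}_{\one\hox\gamma}$. Applying $\sigma_{\one\hox\gamma}^{r,s}$ gives $P^{r,s}(\,\cdot\,\hox1_{r,s})+(\one-P^{r,s})(\one\hox\gamma\hox1_{r,s})$, which matches the left-hand side term by term once the basepoints are compared.

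The main obstacle is bookkeeping rather than analysis. One must check that the two orderings of the graded tensor factors ($\Cl_{r,s}\hox\Cl_{1,0}$ versus $\Cl_{1,0}\hox\Cl_{r,s}$) and the placement of the new generator $\gamma$ yield the same identification with $\Cl_{r+1,s}$, up to a graded isomorphism that is homotopic to the identity or acts trivially on $\DK$. One must also check that the basepoints $(\one-P^{r,s})(e\hox1)$ and $(\one-P^{r,s})(\one\hox\gamma\hox1)$ on the complementary corner define the same class. I would handle the basepoint issue with the straight-line homotopy $\cos\theta\,(e\hox1)+\sin\theta\,(\gamma\text{-term})$ on $\one-P^{r,s}$. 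This is a path of odd self-adjoint unitaries because $e$ and the $\gamma$ term anticommute, so it contributes zero. Ampleness ensures that $q(e)$ and the corners are nondegenerate, so the basepointed groups are well defined.
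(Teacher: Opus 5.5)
Your approach is the paper's: compute both sides from the explicit lift formula for $\delta$, using that the even projection $P^{r,s}$ commutes with $x\hox 1_{r,s}$, $e\hox 1_{r,s}$ and the new generator $\gamma$, so the functional calculus splits over $P^{r,s}$ and $\one-P^{r,s}$ and the $e$-corner contributes only the basepoint. The formula to use is Lemma~\ref{lemma:delta_formula}, $-\exp(\pi\tilde x\hox\gamma)(\one\hox\gamma)=-\cos(\pi x)\hox\gamma-\sin(\pi x)\hox 1_{1,0}$, which sends $e$ to exactly $\one\hox\gamma$, so the basepoint rotation in your last paragraph is unnecessary; your tentative $x\hox 1+\sqrt{\one-x^2}\hox\gamma$ cannot replace it, because modulo compacts it differs from $e\hox 1$ by the non-compact $(x-e)\hox 1$ and so does not define a class in $\DK\big(\End^0_B(Y)\hox\Cl_{1,0}\big)$.
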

\begin{proof}
Let $\big\{\tilde{\gamma}_1 , \ldots, \tilde{\gamma}_s,  \tilde{\rho}_1\ , \ldots, \tilde{\rho}_r  \big\}$ 
denote the $\Cl_{s,r}$-generators in $\End_B(Y)$.
Starting with the right-hand side of Eq. \eqref{eq:shuffle_boundary_commutes}, applying Lemma \ref{lemma:delta_formula}, and simplifying by the Taylor expansion,
\begin{align*}
  \delta\big( [q(x)] - [q(e)] \big) &= \big[ -\exp( \pi  {x} \hox \gamma )(\one \hox \gamma) \big] - \big[ -\exp( \pi e \hox \gamma )(\one \hox \gamma) \big] \\
    &= \big[ -(\cos( \pi  {x} ) \hox 1_{1,0} - \sin( \pi {x} ) \hox \gamma)(\one \hox \gamma) \big] - [ -(\cos(\pi) + \sin(\pi)(e \hox \gamma) ) (\one \hox \gamma)] \\
    &= \big[ -\cos( \pi {x} ) \hox \gamma - \sin( \pi {x} )\hox 1_{1,0} \big] - [  \one \hox \gamma] .
\end{align*}
We see that $-\cos( \pi  {x} ) \hox \gamma - \sin( \pi {x} )\hox 1_{1,0}$ and $\one \hox \gamma$  are $\Cl_{s,r}$-anti-linear with respect to 
$\tilde{\gamma}_j\hox 1_{1,0}$ and $\tilde{\rho}_k \hox 1_{1,0}$ for $j=1,\ldots, s, \, k=1,\ldots, r$. 
In particular,  
$(\sigma_{\one \hox \gamma}^{r,s})_\ast \circ \delta \big( [q(x)] - [q(e)] \big)$ is well-defined.

For the left-hand side of Eq. \eqref{eq:shuffle_boundary_commutes}, we  use
that $P^{r,s}$  commutes with $x \hox 1_{r,s} \hox \gamma$ and $e \hox 1_{r,s}\hox \gamma$. Then 
\begin{align*}
   \delta \circ (\sigma_{q(e)}^{r,s})_\ast \big( [q(x)] - [q(e)] \big)  &=
    \big[ -e^{ \pi P^{r,s}(x \hox 1_{r,s} \hox \gamma) + \pi( \one - P^{r,s})(e \hox 1_{r,s} \hox \gamma) }(\one \hox 1_{r,s} \hox \gamma) \big] \\
   &\hspace{6cm}   - \big[ -e^{\pi e \hox 1_{r,s} \hox \gamma }(\one \hox 1_{r,s} \hox \gamma) \big] \\
  &\hspace{-4.8cm}= \big[ - P^{r,s}\big( \cos( \pi x) \hox 1_{r,s} ) \hox \gamma + \sin( \pi x )\hox 1_{r,s}  \hox 1_{1,0} \big) - (\one - P^{r,s})(\one \hox 1_{r,s} \hox \gamma) \big] \
    - \big[ \one \hox 1_{r,s} \hox \gamma \big] \\
  &\hspace{-4.8cm}= \big[ - P^{r,s}\big( \cos( \pi x)  \hox \gamma + \sin( \pi x )\hox 1_{r+1,s}   \big) - (\one - P^{r,s})(\one \hox \gamma) \big] - \big[ \one   \hox \gamma \big] \\
  &\hspace{-4.8cm}= (\sigma_{\one \hox \gamma}^{r,s})_\ast \circ \delta \big( [q(x)] - [q(e)] \big),
\end{align*}
where we have identified  $\Cl_{r,s} \hox \Cl_{1,0}  \cong\Cl_{r+1, s}$ and written $\gamma$ for $\gamma_1 =   1_{r,s} \hox \gamma$. 
\end{proof}

\section{Index theory on Real Hilbert modules} \label{sec:Fredholm_index}

In this section, we will define the Fredholm index of a Real skew-adjoint Fredholm operator, taking values in Van Daele $K$-theory (or $\DK$-theory, for short). 
An introduction to Van Daele $K$-theory is given in Appendix \ref{sec:appendix_DK}, 
and various concrete isomorphisms between Kasparov's Real $\KKR$-theory and Van Daele's $\DK$-theory are presented in Appendix \ref{sec:appendix_KK_DK_isos}.
As in \S\ref{subsec:Clifford_Fredholm}, we are mainly interested in Real skew-adjoint Fredholm operators which anti-commute with the generators of a Clifford representation.

\subsection{The complex analytic index in \texorpdfstring{$\DK$}{DK}-theory} \label{subsec:Complex_index_and_DK}

We first recall the construction of the Fredholm index in complex $K$-theory. 
For a Fredholm operator on a complex Hilbert space, one defines $\Index_0(T) := \dim\Ker(T) - \dim\Coker(T) \in \Z$. 
Using the identification $K_0(\C) \simeq \Z$, we may rewrite the index as a formal difference of projections
\[
\Index_0(T) := \big[ P_{\Ker(T)} \big] - \big[ P_{\Coker(T)} \big] \in K_0(\C) .
\]
Generalising the index to complex $C^*$-modules is complicated by the fact that the kernel and cokernel of a Fredholm operator on a 
$C^*$-module might fail to be complemented (in which case, the projections onto the kernel or cokernel do not exist). 

\paragraph{Even index}
The work of Mingo \cite{Mi} and Exel \cite{Exel} shows that, given an adjointable Fredholm operator 
$T$ on a $C^*$-module $X_A$, there exists a suitable compact perturbation $\widetilde{T}$ of $T$ on $X_A\oplus A^n$ for some $n$ such that 
$\widetilde{T}$ has closed range. Having closed range ensures that the kernel and cokernel of $\widetilde{T}$ are complemented, and one may then define the Fredholm index as 
\begin{equation}
\Index_0(T) := \big[ P_{\Ker(\widetilde{T})} \big] - \big[ P_{\Coker(\widetilde{T})} \big] \in K_0(A) ,
\label{eq:index-proj}
\end{equation}
which turns out to be independent of the choice of perturbation $\tilde{T}$ and so is well-defined. 

An alternative definition which avoids the technicalities above is to observe that the image $q(T)\in \calQ_A(X)$ of $T$ in the Calkin algebra is invertible. By taking the unitary phase $q(T)|q(T)|^{-1}$ we obtain a class $[q(T)]\in K_1(\calQ_A(X))$. Applying the boundary map $\delta:K_1(\calQ_A(X))\to K_0(\End^0_A(X))$ and the Morita invariance 
$\calM_X^K:K_0(\End^0_A(X)) \xrightarrow{\simeq} K_0(A)$ yields a class
\begin{equation}
\Index_0(T)=\calM_X^K\circ\delta([q(T)]).
\label{eq:index-scheme}
\end{equation}
Since we may assume without loss of generality that $T$ has closed range, a standard argument, e.g. \cite[Proposition 4.8.10]{HR}, 
shows that \eqref{eq:index-proj} and \eqref{eq:index-scheme} agree.
Below we interpret \eqref{eq:index-scheme} in Van Daele $K$-theory for complex algebras, and how \eqref{eq:index-scheme} also yields the odd index in complex and Van Daele $K$-theory.

\paragraph{The even  index in \texorpdfstring{$\DK$}{DK}-theory}
To define the index of $T:X_A\to X_A$ in Van Daele $K$-theory, described by classes of odd self-adjoint unitaries, we 
need to consider $\Z_2$-graded spaces and operators. 
We take the $\Z_2$-graded module   $X_A \otimes \exterior{\C}$ and 
$\tilde{T} = \tfrac12(T+T^*) \otimes \gamma + \tfrac12(T- T^*) \otimes \rho \in \End_A(X_A) \otimes \Cl_{1+1}$, 
which is odd and self-adjoint. We also note that by the isomorphism $\Cl_{1+1} \cong M_2(\C)$ (with $\gamma \cong \sigma_1$ and $\rho \cong -i\sigma_2$),
\[
\tilde{T} = \tfrac12(T+T^*) \otimes \sigma_1 + \tfrac12(T- T^*)\otimes (-i\sigma_2) = \begin{pmatrix} 0 & T^* \\ T & 0 \end{pmatrix}.
\]
We now follow a  procedure analogous to \eqref{eq:index-scheme} to define $\Index_{2,1}(T) \in DK(A \otimes \Cl_{2+1})$. The image of $\tilde{T}$ in the Calkin algebra
$q(\tilde{T}) \in \calQ_A(X) \otimes \Cl_{1+1}$ is invertible, self-adjoint, odd, and so defines an element 
$[q(\tilde{T}) ] - [\one \otimes \gamma] \in DK( \calQ_A(X) \otimes \Cl_{1+1})$
(where $[q(\tilde{T}) ]$ is represented by the odd self-adjoint unitary $q(\tilde{T}) \big|q(\tilde{T})\big|^{-1}$).

We now compose with the boundary map $\delta: \DK( \calQ_A(X) \otimes \Cl_{1+1}) \to \DK( \End_A^0(X) \otimes \Cl_{2+1})$ (Lemma \ref{lem:other-bdry})
and the Morita invariance $\calM^{\DK}_X: \DK\big( \End_A^0(X)  \otimes \Cl_{2+1} \big) \xrightarrow{\simeq} \DK\big( A  \otimes \Cl_{2+1} \big)$ 
(cf. Appendix \ref{sec:DK_Morita})
to define 
\[
   \Index_{2,1} ( \tilde{T} ) := \calM^{\DK}_X \circ \delta \big( [q(\tilde{T}) ] - [\one \otimes \gamma]  \big) \in \DK(A \otimes \Cl_{2+1} ).
\]

To show the $\DK$-valued index represents the same information as our $K_0(A)$-valued index, we recall the natural isomorphisms
from $\Upsilon_A^{r,s} : \DK(A \otimes \Cl_{r,s}) \to K_{1+s-r}(A)$ from  Appendix \ref{subsec:appendix_DK_KR_isos}.

\begin{lemma} 
Let $T \in \End_A(X)$ be Fredholm. Then 
the isomorphism $\Upsilon_A^{2,1} : \DK(A \otimes \Cl_{2,1} ) \to K_0(A)$ is such that 
\[
   \Upsilon_A^{2,1} \big(  \Index_{2,1} ( \tilde{T} ) \big) = \Index_0(T) .
\]
\end{lemma}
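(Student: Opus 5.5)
The plan is to use naturality of $\Upsilon$ to reduce the statement to a compatibility of boundary maps, and then to check that compatibility on the Calkin extension. The relevant maps are the boundary maps $\delta$ in complex $K$-theory and in $\DK$-theory, the Morita maps $\calM^K_X$ and $\calM^{\DK}_X$, and the isomorphisms $\Upsilon$.

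\textbf{Step 1 (Morita).} The isomorphisms $\Upsilon$ from Appendix \ref{subsec:appendix_DK_KR_isos} are natural in the algebra, so they commute with the Morita maps:
\[
\Upsilon^{2,1}_A\circ \calM^{\DK}_X=\calM^K_X\circ \Upsilon^{2,1}_{\End^0_A(X)} .
\]
Both Morita maps are induced by the same corner embedding $A\hookrightarrow \End^0_A(X\oplus A)\hookleftarrow \End^0_A(X)$ (cf. Appendix \ref{sec:DK_Morita}), so this reduces to naturality of $\Upsilon$ under $*$-homomorphisms. The claim then becomes
\[
\Upsilon^{2,1}_{\End^0_A(X)}\circ\delta\big([q(\tilde T)]-[\one\otimes\gamma]\big)=\delta_K([q(T)]) \quad\text{in } K_0(\End^0_A(X)),
\]
where $\delta_K$ denotes the boundary map $K_1(\calQ_A(X))\to K_0(\End^0_A(X))$ used in \eqref{eq:index-scheme}.

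\textbf{Step 2 (boundary maps).} I would show that, for the Calkin extension $0\to\End^0_A(X)\to\End_A(X)\to\calQ_A(X)\to0$, the following square commutes up to the appropriate sign convention:
\[
\Upsilon^{2,1}\circ\delta=\delta_K\circ\Upsilon^{1,1} \colon \DK(\calQ_A(X)\otimes\Cl_{1+1})\to K_0(\End^0_A(X)).
\]
Here $\Upsilon^{1,1}\colon \DK(\calQ\otimes\Cl_{1+1})\to K_1(\calQ)$ sends the class of an odd self-adjoint unitary $\begin{pmatrix}0&u^*\\u&0\end{pmatrix}$ to $[u]$. I expect this compatibility to be available in the appendix, where the $\DK$ and $K$ isomorphisms are shown to be compatible with the six-term/boundary structure. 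If it is not, I would check it directly on generators as follows.

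\textbf{Direct check.} Use Lemma \ref{lemma:delta_formula}, in the form of Lemma \ref{lem:other-bdry}, for the $\DK$ boundary map. The lift $x=\tilde T|\tilde T|^{-1}$, with $\tilde T$ possibly perturbed to have $\|\tilde T\|\le1$, gives
\[
\delta=\big[-\exp(\pi x\otimes\gamma')(\one\otimes\gamma')\big]-\big[\text{basepoint}\big],
\]
with $\gamma'$ the new generator of $\Cl_{2+1}$. Write this out in $2\times2$ blocks with $\cos(\pi\tilde T)$ and $\sin(\pi\tilde T)$. Since $\tilde T^2=\mathrm{diag}(T^*T,TT^*)$, the resulting odd self-adjoint unitary is, up to the isomorphism $\Upsilon^{2,1}$ (which takes an OSU $w$ to the spectral projection of the grading-adjusted element, compared against the basepoint), exactly the standard Milnor/Bott-type idempotent representing $\delta_K[u]$ for $u=q(T)|q(T)|^{-1}$. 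That idempotent is
\[
\begin{pmatrix}\cos^2 & \ast\\ \ast & \sin^2\end{pmatrix}\text{-type expression in } T,
\]
cf. \cite[Proposition 4.8.10]{HR}.

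\textbf{Step 3 (identify the two $K$-classes).} Conclude using $\Upsilon^{1,1}([q(\tilde T)]-[\one\otimes\gamma])=[q(T)]\in K_1(\calQ_A(X))$. This holds because $q(\tilde T)$ is, after taking its unitary phase, the off-diagonal matrix with entry $q(T)|q(T)|^{-1}$, while the basepoint $\one\otimes\gamma$ corresponds to $[\one]=0$. Combining with \eqref{eq:index-scheme} finishes the proof.

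\textbf{Main obstacle.} The hardest step is the boundary-map compatibility in Step 2. This includes tracking the sign and orientation conventions of $\Upsilon^{r,s}$ and which Clifford generator is added by $\delta$. A wrong convention would give $-\Index_0(T)$. To fix this, I would test on the unilateral shift: its $\Index_0$ is $-1$, and both sides must agree on it.
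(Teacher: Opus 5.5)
\textbf{Verdict.} Your approach is legitimate but not carried out, and there is a genuine gap at Step 2, plus a circular treatment of the sign.

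\textbf{Steps 1 and 3 are fine.} Step 1 is what the paper gets from Lemma \ref{lemma:Upsilon_Morita_commute_complex} together with $\Upsilon^{2,1}_A=\Upsilon^{1,0}_A\circ(\calM^{\DK}_{1,1})^{-1}$. Step 3 is Proposition \ref{prop:iso_DK_odd}.

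\textbf{Step 2 is the gap, and it carries the whole content of the lemma.} The appendix proves compatibility of the maps $\Upsilon$ only with Bott periodicity and Morita invariance, not with boundary maps. So you must do the direct check, and your sketch is not yet an argument.
\begin{itemize}
\item The lift $\tilde T|\tilde T|^{-1}$ does not exist when $T$ has kernel.
\item More seriously, the paper defines $\Upsilon^{2,1}$ only as $\Upsilon^{1,0}\circ(\calM^{\DK}_{1,1})^{-1}$. This is explicit only on block-diagonal classes $[\mathrm{diag}(x,y)]-[\one\hox\gamma]$, and $-\cos(\pi x)\hox\gamma_2-\sin(\pi x)\hox\one$ is not of that form when $\sin(\pi x)\neq 0$.
\item Your ``spectral projection of the grading-adjusted element'' is a different formula. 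You would have to prove it agrees with $\Upsilon^{1,0}\circ(\calM^{\DK}_{1,1})^{-1}$, orientation included. You would then have to match the resulting $\cos^2/\sin^2$ idempotent exactly with the standard index-map projection.
\end{itemize}
The paper avoids all of this by reducing to a partial isometry. It homotopes $q(T)$ to its phase and, after stabilising, makes a finite-rank perturbation so that $T$ is a partial isometry with $\one-T^*T$ and $\one-TT^*$ compact. Then $\tilde T$ is a self-adjoint partial isometry, so $\sin(\pi\tilde T)=0$ and $\cos(\pi\tilde T)$ is diagonal. Now $(\calM^{\DK}_{1,1})^{-1}$ and $\Upsilon^{1,0}$ can be read off directly, with no general boundary-map compatibility needed.

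\textbf{Your handling of the sign is circular.} The conventions are already fixed: the formula for $\delta$ in Lemma \ref{lemma:delta_formula} with basepoint $[\one\hox\gamma]$, the map $\calM^{\DK}_{1,1}$, and $\Upsilon^{1,0}$. There is nothing left to choose, and ``both sides must agree on the shift'' presupposes the conclusion. The shift test is a computation you must actually do, and it is not innocuous:
\begin{itemize}
\item For a self-adjoint partial isometry, $\cos(\pi\tilde T)=\one-2\tilde T^2$. Hence $-\cos(\pi\tilde T)=\mathrm{diag}(2T^*T-\one,\,2TT^*-\one)$, not $\mathrm{diag}(\one-2T^*T,\,\one-2TT^*)$ as displayed in the paper's proof.
\item Use the matching of $\Cl_{1,1}\hox\Cl_{1,0}$ with the target of $\calM^{\DK}_{1,1}$ that the paper uses there (diagonal blocks matched, $\gamma_2$ matched with the generator of $\Cl_{1,0}$). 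Then $(\calM^{\DK}_{1,1})^{-1}$ followed by $\Upsilon^{1,0}$ gives $[T^*T]-[TT^*]$, i.e.\ $[\Ker T^*]-[\Ker T]$ after Morita.
\item For the unilateral shift $S$ this gives $+1$, whereas $\Index_0(S)=-1$.
\end{itemize}
So the sign genuinely depends on the normalisation of $\delta$ and on these Clifford identifications. It must be tracked through the definitions, and as things stand reconciled with the paper's conventions (either the displayed identity or the sign in Lemma \ref{lemma:delta_formula} needs attention). It cannot be fixed after the fact by checking one example.
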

\begin{proof}
We first note that $q(T) \in \calQ_A(X)$ is homotopic to $q(T)|q(T)|^{-1}$, which will not affect 
$\Index_0(T)$ or $\Index_{2,1}(\tilde{T}))$. So we can assume without loss of generality that $T$ is an essential unitary, 
$\one - T^*T, \one -TT^* \in \End_A^0(X)$. Passing to matrices and taking a finite rank perturbation if 
necessary, we can furthermore reduce to the case that $T$ is a partial isometry with $T^*T$ and $TT^*$ projections in $\one + \End_A^0(X)$. 
With these simplifications, the complex $K$-theory index can be easily computed:
\[
   \Index_0(T)  = \calM^K_X \big( \big[ \one - T^*T \big] - [ \one - TT^* ]  \big) \in K_0(A),
\]
 see~\cite[\S17.3.12]{WeggeOlsen} for example.
Considering the Van Daele index and $\tilde{T}$, a similar computation as in the proof of Lemma \ref{lemma:sigma^rs_commutes_with_boundary}   gives that 
\[
   \Index_{2,1} ( \tilde{T} )=  \calM^{\DK}_X \big( \big[ -\cos\big(\pi \tilde{T} \big) \hox \gamma_2 - \sin\big(\pi  \tilde{T} \big)  \hox \one \big] - \big[ \one \hox \gamma_2 \big] \big) 
    \in \DK( A \otimes \Cl_{2,1} ).
\]
Because $T$ is a partial isometry, 
\[
   -\cos\big(\pi \tilde{T} \big)  = \begin{pmatrix} \one - 2T^*T & 0 \\ 0 & \one - 2TT^* \end{pmatrix}, \qquad  -\sin \big(\pi \tilde{T} \big) = 0
\]
so that 
\[
   \Index_{2,1} ( \tilde{T} )= \calM^{\DK}_X \left( \left[ \begin{pmatrix} \one - 2T^*T & 0 \\ 0 & \one - 2TT^* \end{pmatrix} \hox \gamma_2 \right]  - \big[ \one \hox \gamma_2 \big]  \right).
\]
Next we note that $\Upsilon_A^{2,1} = \Upsilon_A^{1,0} \circ (\calM_{1,1}^{\DK})^{-1}$ with 
$\calM_{1,1}^{\DK}: \DK( A \otimes \Cl_{1,0} ) \xrightarrow{\simeq} \DK( A \otimes \Cl_{2,1} )$ the Clifford stability of $\DK$ 
from Lemma \ref{lemma:DK_Clifford_stability}, where 
\begin{align*}
   (\calM^{\DK}_{1,1})^{-1} \left( \left[ \begin{pmatrix} \one - 2T^*T & 0 \\ 0 & \one - 2TT^* \end{pmatrix} \hox \gamma_2 \right]  - \big[ \one \hox \gamma_2 \big]  \right) 
     = \big[ (\one - 2T^*T) \otimes \gamma \big] - \big[ (\one - 2TT^*) \otimes \gamma \big] .
\end{align*}
Finally, we use the compatibility of the isomorphism $\Upsilon^{r,s}$ with Morita invariance (Lemma \ref{lemma:Upsilon_Morita_commute_complex})  
and apply $\Upsilon^{1,0}_{\End_A^0(X)}$, where $\Upsilon^{1,0}_{\End_A^0(X)} \big([ (2p-\one) \otimes \gamma ] - [ (2q-\one)\otimes \gamma] \big) = [p]-[q]$. Therefore 
\begin{align*}
    \Upsilon_A^{2,1} \big(  \Index_{2,1} ( \tilde{T} ) \big) &= \Upsilon_A^{1,0} \circ (\calM_{1,1}^{\DK})^{-1} \circ 
     \calM_X^{K} \left( \left[ \begin{pmatrix} \one - 2T^*T & 0 \\ 0 & \one - 2TT^* \end{pmatrix} \hox \gamma_2 \right]  - \big[ \one \hox \gamma_2 \big]  \right)  \\
     &=  \calM_X^{K} \circ \Upsilon_{\End_A^0(X)}^{1,0} \big( \big[ (\one - 2T^*T) \otimes \gamma \big] - \big[ (\one - 2TT^*) \otimes \gamma \big] \big) \\
      &= \calM^K_X \big(  \big[ \one - T^*T \big] - \big[ \one - TT^* \big] \big) = \Index_0(T).  \qedhere
\end{align*}
\end{proof}

\paragraph{Odd index}

Although less common in the literature (largely due to the fact that $K_1(\C)$ is trivial),   one may similarly define an \emph{odd index} 
of self-adjoint and regular Fredholm operators  $D:X_A\to X_A$. 
Given a normalising function $\chi$ for $D$, we find that
$q\big( \chi(D) \big) \in \calQ_A(X)$ is a self-adjoint unitary and the $K$-theory class 
$\big[ \tfrac{1}{2}( \one + q( \chi(D) ) ) \big] \in K_0( \calQ_A(X))$ is well-defined. Analogous to the even index, 
the  odd $K$-theory index can be defined by the boundary map and Morita invariance, 
\[
   \Index_1(D) = \calM^K_X  \circ \delta \big( \big[ \tfrac{1}{2}( \one + q( \chi(D) ) ) \big]  \big) 
     = \calM^K_X \big( \big[ \exp( i \pi (\chi(D) + 1) ) \big] \big) \in K_1(A),
\]
see~\cite[Proposition 17.5.6]{Blackadar}, for example.

To describe this index in Van Daele $K$-theory, we again need to consider graded modules and operators. 
Also, as preparation for the real indices, we will consider skew-adjoint Fredholm operators. Namely, 
we take $(X_A \otimes \Cl_{0+1})_{A \otimes \Cl_{0+1}}$ and $iD \otimes \rho$, which is self-adjoint and Fredholm. 
Then $q( i \chi( D))  \otimes \rho \in \calQ_A(X) \otimes \Cl_{0+1}$  
is an odd self-adjoint unitary and 
\begin{align} \label{eq:complex_DK_odd_index}
   \Index_{1,1} (i D) &:= \calM^{\DK}_X \circ \delta \big( \big[ q( i \chi( D))  \otimes \rho \big] - [ \one \otimes i \rho] \big) \nonumber \\
      &\hspace{-1cm}=  \calM^{\DK}_X \left( \left[ \begin{pmatrix} 0 & -\exp( -i  \pi \chi(D) ) \\ -\exp( i \pi \chi(D)) & 0 \end{pmatrix} \right] 
       - \left[ \begin{pmatrix} 0 & \one \\ \one & 0 \end{pmatrix} \right] \right)  \in \DK(A \otimes \Cl_{1+1}) ,
\end{align}
see the proof of Lemma \ref{lem:index-sinh-cosh} below.
Applying the isomorphism $\Upsilon_A^{1,1} : \DK(A \otimes \Cl_{1+1}) \to K_{1}(A)$ from Proposition \ref{prop:iso_DK_odd}, 
which commutes with Morita invariance (Lemma \ref{lemma:Upsilon_Morita_commute_complex}), we have that 
\begin{align} \label{eq:K_1-DK-index-equality}
   \Upsilon_A^{1,1} \big( \Index_{1,1}(iD) \big) 
   &= \calM^K_X \circ \Upsilon^{1,1}_{\End_A^0(X)} \left( \left[ \begin{pmatrix} 0 & -\exp( -i \pi \chi(D) ) \\ -\exp( i \pi \chi(D)) & 0 \end{pmatrix} \right] 
       - \left[ \begin{pmatrix} 0 & \one \\ \one  & 0 \end{pmatrix} \right] \right) \nonumber  \\
    &= \calM^K_X \big( \big[  -\exp( i \pi \chi( D) ) \big] \big)
    = \calM^K_X \big( \big[  \exp( i \pi (\chi( D) + 1) ) \big] \big)  
    =  \Index_1(D).
\end{align}

We summarise this subsection.

\begin{prop} \label{prop:complex_index}
\begin{enumerate}
 \item Let $T:X_A\to X_A$ be a Fredholm operator  with closed range. Then 
\[
   \Index_0(T) = \calM^K_X \circ \delta \big( \big[ q(T) \big] \big) = \big[ \Ker(T) \big] - \big[ \Ker(T^*) \big]   =  \Upsilon_A^{2,1} \big( \Index_{2,1}( \tilde{T} )\big) \in K_0(A).
\]
\item Let $D:X_A\to X_A$ be a self-adjoint regular Fredholm operator. Then 
\[
   \Index_1(D) =  \calM^K_X  \circ \delta \big( \big[ \tfrac{1}{2}( \one + q( \chi(D) ) ) \big]  \big) = \Upsilon_A^{1,1} \big( \Index_{1,1}(iD) \big) \in K_1( A).
\]
\end{enumerate}
\end{prop}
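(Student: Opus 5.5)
This proposition only collects the computations already made in this subsection, so the plan is to assemble them, supplying the one identification not yet justified in the text.

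For part (1), the first equality is the definition \eqref{eq:index-scheme}. The last equality is the preceding lemma, $\Upsilon_A^{2,1}(\Index_{2,1}(\tilde T)) = \Index_0(T)$, which holds for any Fredholm $T$. What remains is the middle equality $\calM^K_X\circ\delta([q(T)]) = [\Ker T]-[\Ker T^*]$ when $T$ has closed range.

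For the middle equality I would use the standard argument of \cite[Proposition 4.8.10]{HR}.
\begin{itemize}
\item Closed range makes $\Ker T$ and $\Ker T^* = \Ran(T)^\perp$ complemented, so the projections $P_{\Ker T}$ and $P_{\Ker T^*}$ exist.
\item Since $T$ is Fredholm, these projections are compact. Indeed, if $S$ is a parametrix, then $P_{\Ker T} = (\one - ST)P_{\Ker T}\in\End^0_A(X)$, and similarly on the cokernel side.
\item The polar decomposition $T=V|T|$ then exists with $V$ a partial isometry satisfying $\one - V^*V = P_{\Ker T}$ and $\one - VV^* = P_{\Ker T^*}$.
\item The path $T|T|_\epsilon^{-1}$, with $|T|_\epsilon := |T| + P_{\Ker T}$ invertible, joins $q(T)$ to the unitary $q(V)$ within the invertibles of $\calQ_A(X)$.
\end{itemize}
The boundary map on a partial-isometry lift is then the standard formula $\delta[q(V)] = [\one - V^*V]-[\one - VV^*]$ \cite[\S17.3.12]{WeggeOlsen}, with the sign convention used in the lemma above. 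Applying $\calM^K_X$ identifies this with $[\Ker T]-[\Ker T^*]\in K_0(A)$, where the finitely generated projective modules $\Ker T$ and $\Ker T^*$ are viewed as $K_0(A)$-classes via Morita equivalence.

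For part (2), the first equality is the definition of $\Index_1(D)$. The second equality is the computation \eqref{eq:K_1-DK-index-equality}, which in turn rests on the explicit form \eqref{eq:complex_DK_odd_index} of the DK boundary. To make this self-contained, I would verify \eqref{eq:complex_DK_odd_index} directly in three steps.
\begin{itemize}
\item Apply the exponential formula for $\delta$ (Lemma \ref{lemma:delta_formula}, or Lemma \ref{lem:other-bdry}) to the lift $i\chi(D)\otimes\rho$.
\item Expand $\exp(\pi\, i\chi(D)\otimes\rho\hox\gamma)$ into cosine and sine terms, as in the proof of Lemma \ref{lemma:sigma^rs_commutes_with_boundary}.
\item Conjugate via $\Cl_{1+1}\cong M_2(\C)$ to obtain the off-diagonal matrix built from $\exp(\pm i\pi\chi(D))$.
\end{itemize}
Then $\Upsilon^{1,1}$ sends this off-diagonal odd self-adjoint unitary to the class of its lower-left entry $-\exp(i\pi\chi(D))$ (Proposition \ref{prop:iso_DK_odd}). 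Finally, $-\exp(i\pi\chi(D)) = \exp(i\pi(\chi(D)+1))$, which gives $\Index_1(D)$. Compatibility with Morita invariance comes from Lemma \ref{lemma:Upsilon_Morita_commute_complex}.

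The main obstacle is sign and convention bookkeeping. Specifically:
\begin{itemize}
\item the orientation of $\delta$, which determines kernel minus cokernel versus its negative;
\item the basepoint $[\one\otimes i\rho]$;
\item which off-diagonal entry $\Upsilon^{1,1}$ reads off.
\end{itemize}
I would fix these by checking against the unilateral shift in part (1) and against a multiplication operator with a single spectral crossing in part (2).
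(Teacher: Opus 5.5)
Your proposal is correct and is essentially what the paper does: the proposition is stated as a summary of the subsection. The middle equality of (1) is deferred to the standard argument of \cite[Proposition 4.8.10]{HR}, the last equality of (1) is the preceding lemma, and (2) is \eqref{eq:complex_DK_odd_index} together with \eqref{eq:K_1-DK-index-equality}; you simply make these steps explicit. One cosmetic point: with $|T|_\epsilon = |T| + P_{\Ker T}$ one has $T|T|_\epsilon^{-1} = V$ exactly, so this is not itself a path. The homotopy you intend is, e.g., $t\mapsto q(T)\,q\big((1-t)\one + t|T|_\epsilon\big)^{-1}$, which runs from $q(T)$ to $q(V)$ through invertibles of $\calQ_A(X)$.
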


In the next subsection, we will generalise the prescription
\[
\Index(T)=\calM\circ\delta[q(T)]
\]
to accommodate Clifford anti-symmetries in a systematic way.

\subsection{The real analytic index} \label{subsec:Real_index_defn}

 We now consider the Real setting, which allows for eight different indices taking values in the Real $K$-theory groups $\KR_p(A)$ ($p=0,\ldots,7$). 
In order to accommodate these eight versions of the Fredholm index, 
we will consider Fredholm operators which anti-commute with a Clifford representation, 
and we will replace $\KR$-theory by Van Daele $K$-theory ($\DK$-theory) as the receptacle for our Fredholm index.
Due to the impossibility of extending the ABS isomorphism to  $C^*$-modules (cf. Appendix \ref{sec:ABS}), 
the index is defined in terms of boundary maps. 

\paragraph{Settings and assumptions}
Here and in the sequel, $X_A$  is a full countably generated and ungraded Real Hilbert $C^*$-module over 
an ungraded Real $C^*$-algebra $A$. 
  We also assume that there is an ungraded and ample representation $\pi:\Cl_{r,s} \to \End_A(X)$, where the adjective ``ample'' means that  
  $\Cl_{r,s} \cap \End_A^0(X) =\{0\}$, with   generators $\{e_1,\ldots, e_r, f_1,\ldots, f_s\}$. Using the ampleness assumption, we abuse notation by denoting the generators of the representation $q\circ\pi:\Cl_{r,s} \to \calQ_A(X)$ by the same symbols.
  
  We further assume that there exists a basepoint skew-adjoint unitary $J\in\End^{r,s}_A(X) \setminus \End_A^0(X)$. 
  This assumption is equivalent to the representation $\pi$ extending to an ample representation $\tilde{\pi}:\Cl_{r,s+1}\to\End_A(X)$, where $\tilde{\pi}(\rho_{s+1})=J$.
  
  Let $\rho$ denote the odd generator of $\Cl_{0,1}$. Then the map 
  $T \mapsto T \otimes \rho$ gives   a one-to-one equivalence between elements in $\Reg^{r,s}_A(X)$ 
  and elements in $\Reg^{s,r}_{A \otimes \Cl_{0,1} }(  X \otimes \Cl_{0,1} )$, where the ($\Z_2$-graded) $\Cl_{s,r}$-representation 
  is generated by $\{ f_1 \otimes \rho, \ldots, f_s \otimes \rho, e_1 \otimes \rho, \ldots, e_r \otimes \rho\}$.
  Following the Clifford shuffle construction from \S\ref{subsec:Clifford_Fredholm_Shuffle}, we take the even minimal projection 
  \begin{equation}  \label{eq:ExplicitP^rs}
    P^{r,s} = \frac{1}{2^{r+s}} \prod_{j=1}^r \big( \one + e_j \otimes \rho\gamma_j\big) \prod_{k=1}^s \big( \one + f_k \otimes \rho \rho_{1+k} \big) 
     \in    \End_A(X)\otimes \Cl_{r,s+1},
  \end{equation}
  where we identify $\rho \cong \rho_1 \in \Cl_{r,s+1}$. Then recalling Lemma \ref{lem:AMZ-OSU-iso}, the map 
  \begin{equation} \label{eq:skew_sigma^{r,s}}    
    \Reg^{r,s}_A(X) \otimes \Cl_{0,1} \ni T \otimes \rho  \xmapsto{\sigma_{J \otimes \rho}^{r,s}} P^{r,s} (T \otimes \rho) + (\one - P^{r,s})(J \otimes \rho)  
    \in \Reg_A(X) \otimes \Cl_{r,s+1}
  \end{equation}
  sends odd self-adjoint unitaries to odd-self adjoint unitaries in a way that is compatible with $\DK$-theory. 
  Taking the quotient map to the Calkin algebra, 
\[
   q_X \otimes \one: \End_{A}(X) \otimes \Cl_{r,s+1} \to \calQ_A(X) \otimes \Cl_{r,s+1} , \qquad 
   (q_X\otimes \one)( P^{r,s} ) = P^{r,s}.
\]
Therefore, if $T \in \Reg_A^{r,s}(X)$ is skew-adjoint and Fredholm, $q_X(\chi(T)) \otimes \rho$ is a $\Cl_{s,r}$-anti-linear odd self-adjoint unitary. 
By Lemma \ref{lem:AMZ-OSU-iso} we obtain a well-defined  Van Daele  $K$-theory class 
\begin{align*}
    &\big[ (q_X \otimes \one) \circ  \sigma_{J\otimes \rho}^{r,s}\big( \chi(T) \otimes \rho \big) \big] - \big[  q_X(J) \otimes \rho \big] \\
   &\quad = \big[ P^{r,s}( q_X( \chi(T) ) \otimes \rho ) + (\one - P^{r,s})( q_X(J) \otimes \rho) \big]  
   - \big[ q_X(J) \otimes \rho \big]  \in \DK(\calQ_A(X) \otimes \Cl_{r,s+1} ).
\end{align*}
Hence we can apply the boundary map $\delta: \DK(\calQ_A(X) \otimes \Cl_{r,s+1} ) \to \DK(\End_A^0(X) \otimes \Cl_{r+1,s+1} )$  associated to the short exact sequence 
\[
  0 \to \End_A^0(X) \otimes \Cl_{r,s+1} \to \End_A(X) \otimes \Cl_{r,s+1} \to \calQ_A(X) \otimes \Cl_{r,s+1} \to 0.
\]

\begin{defn} \label{defn:index-of-Fred}
Let $T \in \Reg^{r,s}_A(X)$ be a (bounded or unbounded) Real skew-adjoint Fredholm operator on $X_A$, $\chi$  a normalising function for $T$, 
and $J \in \End_A^{r,s}(X)$ a skew-adjoint unitary in $\End_A^{r,s}(X)$.
We define the \emph{Fredholm index} of $T$ as  
\begin{align*}
   \Index_{r+1,s+1}(T) &=\calM_{X}^{\DK}\circ\delta^{\DK} 
   \big( \big[ (q_X \otimes \one) \circ  \sigma_{J\otimes \rho}^{r,s}\big( \chi(T) \otimes \rho \big) \big] - \big[  q_X(J) \otimes \rho \big] \big) 
   \in  \DK(A \otimes \Cl_{r+1,s+1} )
\end{align*}
where $\sigma^{r,s}_{J\otimes \rho}: \Reg_A^{r,s}(X)\otimes \Cl_{0,1} \to \Reg_{A}(X) \otimes \Cl_{r,s+1}$ 
is from Eq. \eqref{eq:skew_sigma^{r,s}}, $q_X: \End_A(X) \to \calQ_A(X)$ is the quotient by compacts, and 
$\calM_X^{\DK}: \DK\big( \End_A^0(X) \otimes \Cl_{r+1,s+1} \big) \xrightarrow{\simeq} \DK( A \otimes \Cl_{r+1,s+1})$ 
denotes Morita invariance 
(see Appendix \ref{sec:DK_Morita}). 
\end{defn}

We start by giving a first computation of the index, which also shows that $\Index_{r+1,s+1}(T)$ does not depend on the choice 
of basepoint skew-adjoint unitary $J \in \End_A^{r,s}(X)$.

\begin{lemma}
\label{lem:index-sinh-cosh}
Let $T \in \Reg^{r,s}_A(X)$ be a (bounded or unbounded) Real skew-adjoint Fredholm operator on $X_A$, and let $\chi$ be a normalising function for $T$. 
Then $ \cosh( \pi \chi(T) ) \otimes \gamma +\sinh( \pi \chi(T) ) \otimes \rho$ is an odd self-adjoint unitary in $\End_A^0(X)^{\sim} \otimes \Cl_{1,1}$ that 
is  $\Cl_{s,r}$-anti-linear with respect to the generators $\{ f_1\otimes \rho, \ldots, f_s \otimes \rho, e_1\otimes \rho, \ldots, e_r\otimes \rho \}$, and 
$ \Index_{r+1,s+1}( T )  \in \DK( A \otimes \Cl_{r+1,s+1} )$ is such that 
\begin{align*}
  \Index_{r+1,s+1}( T ) &= \calM_X^{\DK} \big( \big[ \sigma^{r,s}_{\one \otimes \gamma} \big( - \cosh( \pi \chi(T) ) \otimes \gamma - \sinh( \pi \chi(T) ) \otimes \rho \big) \big] - \big[ \one \otimes \gamma \big]  \big) \\
  &\hspace{-2.5cm} =  \calM_{X}^{\DK} \big( \big[ -P^{r,s}(\cosh(\pi \chi(T)) \otimes \gamma + \sinh( \pi \chi(T)) \otimes \rho ) - (\one - P^{r,s})(\one \otimes \gamma) \big] 
   - \big[ \one \otimes \gamma \big]  \big),
\end{align*}
where $\sigma^{r,s}_{\one \otimes \gamma}: \End_A^{s,r}(X \otimes \Cl_{1,1} ) \to \End_A(X) \otimes \Cl_{r+1,s+1}$ is the map from 
Eq. \eqref{eq:sigma_e-rs_defn}.
 Identifying $\End_A^0(X) \otimes \Cl_{1,1} \cong \End_A^0(X) \otimes  M_2(\C)$ with 
 $\gamma \cong \sigma_1$ and $\rho \cong -i\sigma_2$, 
we can write
\begin{align*}
    \Index_{r+1,s+1}( T ) &= \calM_{X}^{\DK} \left(   \left[  \sigma^{r,s}_{\one \otimes \sigma_1} \begin{pmatrix} 0 & - \exp(-\pi \chi(T) ) \\ - \exp(\pi \chi(T)) & 0 \end{pmatrix} \right] - 
    \left[ \begin{pmatrix} 0 & \one \\ \one & 0 \end{pmatrix} \right] \right) \\
     &\hspace{-2cm}=\calM_{X}^{\DK} \left(   \left[ -P^{r,s} \begin{pmatrix} 0 & \exp(-\pi \chi(T) )  \\ \exp(\pi \chi(T)) & 0 \end{pmatrix} - (\one - P^{r,s})\begin{pmatrix} 0 & \one \\ \one & 0 \end{pmatrix} \right] - 
          \left[ \begin{pmatrix} 0 & \one \\ \one & 0 \end{pmatrix} \right]  \right).
\end{align*}
\end{lemma}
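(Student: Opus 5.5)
We follow the computation in the proof of Lemma \ref{lemma:sigma^rs_commutes_with_boundary}, now in the ungraded skew-adjoint setting. Write $F=\chi(T)$; it is skew-adjoint, Real, $\Cl_{r,s}$-anti-linear, and satisfies $\one+F^2\in\End_A^0(X)$. The plan has three steps.

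\textbf{Step 1: the operator is an admissible odd self-adjoint unitary.} Put $V=\cosh(\pi F)\otimes\gamma+\sinh(\pi F)\otimes\rho$. Since $F$ is skew-adjoint, $\cosh(\pi F)=\cos(\pi|F|)$ is self-adjoint and $\sinh(\pi F)$ is skew-adjoint, and the two commute. With $\gamma^2=\one$, $\rho^2=-\one$ and $\gamma\rho=-\rho\gamma$, we get $V^2=\cosh^2(\pi F)-\sinh^2(\pi F)=\one$ and $V=V^*$.
\begin{itemize}
\item \emph{Oddness} is clear, since $\gamma$ and $\rho$ are odd.
\item \emph{Modulo compacts:} the spectrum of $q(F)$ lies in $\{\pm i\}$, so $\cosh(\pi q(F))=\cos(\pi)=-\one$ and $\sinh(\pi q(F))=i\sin(\pi)\cdot(\cdots)=0$. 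Hence $V\in -\one\otimes\gamma+\End_A^0(X)\otimes\Cl_{1,1}$, i.e.\ $V$ lies in the unitisation.
\item \emph{Clifford anti-linearity:} $\cosh$ is even and $\sinh$ is odd. So $\cosh(\pi F)$ commutes with $e_j,f_k$, and $\sinh(\pi F)$ anti-commutes with them. Check the two pieces separately against $e_j\otimes\rho$. For the first, $(e_j\otimes\rho)(C\otimes\gamma)=e_jC\otimes\rho\gamma=-(C\otimes\gamma)(e_j\otimes\rho)$. For the second, $(e_j\otimes\rho)(S\otimes\rho)=e_jS\otimes\rho^2=-Se_j\otimes\rho^2=-(S\otimes\rho)(e_j\otimes\rho)$. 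The same holds for $f_k\otimes\rho$.

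Consequently $-V$ and $\one\otimes\gamma$ are both admissible, and $\sigma^{r,s}_{\one\otimes\gamma}$ applies to them by Lemma \ref{lem:AMZ-OSU-iso}.
\end{itemize}

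\textbf{Step 2: compute the boundary map.} Use the explicit formula for $\delta$ (Lemma \ref{lemma:delta_formula}/Lemma \ref{lem:other-bdry}): a self-adjoint odd lift $\tilde x$ of a class $[x]$ is sent to $[-\exp(\pi\tilde x\otimes\gamma')(\one\otimes\gamma')]$, where $\gamma'$ is the new self-adjoint generator. The lift of $q(\sigma^{r,s}_{J\otimes\rho}(F\otimes\rho))$ is taken to be $P^{r,s}(F\otimes\rho)+(\one-P^{r,s})(J\otimes\rho)$. $P^{r,s}$ commutes with this lift and with $\gamma'$, since $P^{r,s}$ is even and built from generators that $F\otimes\rho$ anti-commutes with in pairs. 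So the exponential splits over $P^{r,s}$ and $\one-P^{r,s}$.
\begin{itemize}
\item On $\one-P^{r,s}$: $J\otimes\rho$ is an OSU, so $\exp(\pi J\otimes\rho\gamma')=\cos\pi+\sin\pi(\cdots)=-\one$. Its contribution is $-(\one\otimes\gamma')$, which matches the basepoint term. The basepoint class $[q(J)\otimes\rho]$ maps to $[\one\otimes\gamma']$ by the same computation. This is where independence of $J$ appears.
\item On $P^{r,s}$: the operator $F\otimes\rho\gamma'$ squares to $F^2\otimes\rho\gamma'\rho\gamma'=F^2\otimes\rho^2\gamma'^2\cdot(-1)=F^2$. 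Taylor expansion then gives $\exp(\pi F\otimes\rho\gamma')=\cosh(\pi F)+\sinh(\pi F)\otimes\rho\gamma'$. Multiplying by $-(\one\otimes\gamma')$ gives $-\cosh(\pi F)\otimes\gamma'-\sinh(\pi F)\otimes\rho$.
\end{itemize}

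\textbf{Step 3: re-identify Clifford factors and rewrite as matrices.} Identify $\Cl_{r,s+1}\hox\Cl_{1,0}\cong\Cl_{r+1,s+1}$, and regard $\{\gamma',\rho\}$ as spanning a $\Cl_{1,1}$ factor so that the result reads $\sigma^{r,s}_{\one\otimes\gamma}(-V)$, as in the statement. Then apply $\calM^{\DK}_X$. The matrix form follows by substituting $\gamma=\sigma_1$ and $\rho=-i\sigma_2$ into $V$: the off-diagonal entries are $\cosh\mp\sinh=\exp(\mp\pi F)$.

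\textbf{Main obstacle.} The main obstacle is the sign and ordering bookkeeping in the graded tensor product. This includes whether $\gamma'$ is placed first or last when identifying $\Cl_{r,s+1}\hox\Cl_{1,0}$ with $\Cl_{r+1,s+1}$, and checking that $P^{r,s}$ commutes with $\gamma'$ and with the lift. I would fix conventions exactly as in Lemma \ref{lemma:sigma^rs_commutes_with_boundary} and verify each commutation on generators. Homotoping $\chi(T)$ to the lift is harmless, because $\delta$ is independent of the choice of lift.
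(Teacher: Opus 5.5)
Your proof is essentially the paper's. Both compute $\delta$ via the exponential formula of Lemma~\ref{lemma:delta_formula}, use $(F\otimes\rho\gamma)^2=F^2\otimes\one$ to Taylor-expand $-\exp(\pi F\otimes\rho\gamma)(\one\otimes\gamma)$ into $-\cosh(\pi F)\otimes\gamma-\sinh(\pi F)\otimes\rho$, and then apply $\calM^{\DK}_X$. The organisational difference is that the paper first quotes Lemma~\ref{lemma:sigma^rs_commutes_with_boundary} to move $\delta$ past the shuffle. It therefore only evaluates $\delta$ on $[q_X(F)\otimes\rho]-[q_X(J)\otimes\rho]\in\DK(\calQ_A(X)\otimes\Cl_{0,1})$ and then applies $(\sigma^{r,s}_{\one\otimes\gamma})_\ast$. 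You instead inline that lemma by splitting the exponential over $P^{r,s}$ and $\one-P^{r,s}$. You also check the first assertion directly from $q(F)^2=-\one$, whereas the paper reads it off from the output of $\delta$.

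There is one sign slip in your Step~2, on the block $\one-P^{r,s}$.
\begin{itemize}
\item You correctly find $\exp(\pi J\otimes\rho\gamma')=-\one$, but the corresponding block of $-\exp(\cdots)(\one\otimes\gamma')$ is then $+(\one-P^{r,s})(\one\otimes\gamma')$, not $-(\one-P^{r,s})(\one\otimes\gamma')$. This is the same computation that you say sends the basepoint to $[\one\otimes\gamma']$.
\item Only with the plus sign does the result equal $-P^{r,s}V+(\one-P^{r,s})(\one\otimes\gamma)=\sigma^{r,s}_{\one\otimes\gamma}(-V)$, as you assert in Step~3.
\item With your sign, the element would differ from $\one\otimes\gamma$ by $-2(\one-P^{r,s})(\one\otimes\gamma)$ modulo compacts. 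For an ample representation with $r+s\geq 1$ this is not compact, so it would not lie in $\End_A^0(X)^\sim\otimes\Cl_{r+1,s+1}$ and would not define a class in $\DK(\End_A^0(X)\otimes\Cl_{r+1,s+1})$.
\end{itemize}
The expanded second expressions in the statement, written with $-(\one-P^{r,s})$, contain the same slip and should be read with a plus sign. The forms written via $\sigma^{r,s}_{\one\otimes\gamma}$ are the correct ones.
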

\begin{proof}
We write $F := \chi(T)$ for brevity.
By Lemma \ref{lemma:sigma^rs_commutes_with_boundary}, 
\[
   \delta \circ ( \sigma^{r,s}_{q_X(J) \otimes \rho} )_\ast \big(  [ q_X(F) \otimes \rho ] -  [ q_X(J) \otimes \rho ] \big) = 
   (\sigma_{\one \otimes \gamma})_\ast \circ \delta \big(  [ q_X(F) \otimes \rho ] -  [ q_X(J) \otimes \rho ] \big)
\]
and it suffices to work with the simpler class $[ q_X(F) \otimes \rho ] -  [ q_X(J) \otimes \rho ] \in \DK\big( \calQ_A(X) \otimes \Cl_{0,1} \big)$.  
Applying the boundary map and   Lemma \ref{lemma:delta_formula},  
\begin{align*}
   \delta\big( [ q_X(F) \otimes \rho ] -  [ q_X(J) \otimes \rho ] \big) 
   &= \big[ -\exp\big( \pi (F \otimes \rho\gamma) \big) (\one \otimes \gamma) \big] - \big[ - \exp\big( \pi  (J\ox\rho \gamma) \big) (\one \otimes \gamma)  \big] \\
   &= \big[ -\exp\big( \pi (F \otimes \rho\gamma) \big) (\one \otimes \gamma) \big] -  [   (\one \otimes \gamma)   ]
\end{align*}
Using that $(F \otimes \rho\gamma)^{2n} = F^{2n} \otimes \one$ and 
$(F \otimes \rho\gamma)^{2n+1} = F^{2n+1} \otimes \rho\gamma$, we can simplify
\begin{align*}
   \exp\big( \pi (F \otimes \rho\gamma) \big) &= \big( \cosh(\pi F) \otimes 1_{1,1} + \sinh( \pi F) \otimes \rho\gamma \big)(\one \otimes \gamma) 
   = \cosh( \pi F) \otimes \gamma + \sinh(\pi F) \otimes \rho.
\end{align*}
Thus we can represent 
\[
    \delta\big( [ q_X(F) \otimes \rho ] -  [ q_X(J) \otimes \rho ] \big) 
    = \big[ - \cosh(\pi F) \otimes \gamma - \sinh(\pi F) \otimes \rho \big] - [ \one \otimes \gamma],
\]
where $ - \cosh( \pi \chi(T) ) \otimes \gamma - \sinh( \pi \chi(T) ) \otimes \rho$ and $\one \otimes \gamma$ are odd self-adjoint unitaries 
in $\End_A^0(X)^{\sim} \otimes \Cl_{1,1}$ that anti-commute
with the generators $\{ f_1\otimes \rho, \ldots, f_s \otimes \rho, e_1\otimes \rho, \ldots, e_r\otimes \rho \}$ of a $\Z_2$-graded $\Cl_{s,r}$-representation. 
Hence 
\begin{align*}
   \Index_{r+1,s+1}( T )  &= \calM_{X}^{\DK} \circ  (\sigma_{\one \otimes \gamma}^{r,s})_\ast \circ \delta \big(  [ q_X(F) \otimes \rho ] -  [ q_X(J) \otimes \rho ] \big)  \\
   &= \calM_X^{\DK}   \big( \big[ \sigma^{r,s}_{\one \otimes \gamma} \big( - \cosh( \pi \chi(T) ) \otimes \gamma - \sinh( \pi \chi(T) ) \otimes \rho \big) \big] - \big[ \one \otimes \gamma \big]  \big). \qedhere
\end{align*}
\end{proof}

\begin{example}
If $r=s=0$, the map $\sigma_{\one\otimes \gamma}^{r,s}$ is not necessary and we can directly write 
\begin{align*}
  \Index_{1,1}(T) &= \calM_X^{\DK} \big( \big[    - \cosh( \pi \chi(T) ) \otimes \gamma - \sinh( \pi \chi(T) ) \otimes \rho   \big] - \big[ \one \otimes \gamma \big]  \big) \\
    &=  \calM^{\DK}_X \left( \left[ \begin{pmatrix} 0 & -\exp(-\pi \chi(T) )  \\ -\exp(\pi \chi(T)) & 0 \end{pmatrix}\right] 
       - \left[ \begin{pmatrix} 0 & \one \\ \one & 0 \end{pmatrix} \right] \right)  \in \DK(A \otimes \Cl_{1+1}) .
\end{align*}
If $X_A$ is complex, $T= iD$, then recalling  Eq. \eqref{eq:complex_DK_odd_index} and Proposition \ref{prop:complex_index} we recover the complex odd index. 
The Clifford shuffle map $\sigma^{r,s}_{\one\otimes \gamma}$ is used to access the other degrees.
\end{example}

The Clifford shuffle map is responsible for much of the notation, but can not in general be removed. 
The next example shows the effect of the Clifford shuffle when the Clifford representation ``decouples'' from the other data as much as possible.

\begin{example}
Let $r,s$ be such that the spinor representation $S$ of $\Cl_{r,s}$ is $\Z_2$-graded. 
We then take $Y = S \otimes \calH_A$ as a $\Z_2$-graded $C^*$-module, where $A$ is still ungraded, 
and $\Cl_{r,s}$ acts by the spinor representation on $S$ tensored by the identity on $\calH_A$.
 Suppose further that $T=\Gamma\ox\tilde{T}$ where $\Gamma$ is the grading operator of $S$ and $\tilde{T} \in \Reg_A(\calH_A)$ 
 is skew-adjoint and Fredholm. 
 Then $T \in \Reg_A^{r,s}(Y)$ is skew-adjoint and Fredholm.
 In this particular example, 
 \begin{align*}
 &P^{r,s}(Y_A \ox\Cl_{r,s+1}) = \calH_A\ox\Cl_{0,1}\hox\overline{S}, 
 &&P^{r,s}( \Gamma \otimes \tilde{T} \otimes \rho) = \tilde{T} \otimes \rho \hox \one_{\ol{S}} ,
 \end{align*}
  and 
 the Clifford 
 information decouples from the rest of the data. 
 Taking $J \in \End_A(\calH)$ a skew-adjoint unitary, 
 the index can then be computed to be 
\begin{align*}
     \Index_{r+1,s+1}(T) &= \calM^{\DK}_X \circ \delta \big( \big[ q_{\calH_A}( \chi(T) ) \otimes \rho \hox \one_{\ol{S}}  \big] - \big[ q_{\calH_A}(J) \otimes \rho \hox \one_{\ol{S}} \big] \big) \\
       &= \calM^{\DK}_X  \Big( \big[ - \big(\cosh( \pi \chi(T) ) \otimes \gamma + \sinh( \pi \chi(T) ) \otimes \rho\big) \hox \one_{\ol{S}} \big] - \big[ \one_{\calH_A} \otimes \gamma \hox \one_{\ol{S}} \big] \Big).
\end{align*}
\end{example}

\subsection{The index, stabilisation, and \texorpdfstring{$\KKR$}{KKR}-theory}
\label{subsec:ISK}

Recall from Corollary \ref{cor:skew_Fred_to_KasMod} that any Real skew-adjoint Fredholm operator $T \in \Reg^{r,s}_A(X)$ also 
gives a Real Kasparov class $\class{T} := \big[ \chi(T) \otimes \rho \big] \in \KKR( \Cl_{s+1, r}, A)$. 
We will prove that, if $T$ and $T'$ are homotopic in Kasparov theory (i.e., $\class{T} = \class{T'}$), then $T$ and $T'$ have the same Fredholm index. 
It then follows in particular that the Fredholm index is homotopy-invariant. 

First we must address the problem that two such Fredholm operators $T$ and $T'$ may be defined on different $C^*$-modules. In particular $q(T)$ and $q(T')$ will lie in different Calkin algebras, and we will only be able to make meaningful comparison after applying boundary and Morita maps. To get around this issue, we  
 use Kasparov's stabilisation theorem in order to compare operators on the standard $C^*$-module $\calH_A$. 

To understand the analytic index in terms of the $\KKR$-classes $\class{T} = \class{T'} \in \KKR( \Cl_{s+1, r}, A)$, we also 
need to understand the passage between $\KKR$-theory and $\DK$-theory. 
The first and simplest connection is given by the  functorial extension of the quotient map $q:\End_B(\hat\calH_B )\to\calQ_B(\hat\calH_B)$.

\begin{lemma}[{\cite[{\S}2]{Roe04}}] \label{lemma:Phi}
Let $B$ be a $\Z_2$-graded and $\sigma$-unital Real $C^{*}$-algebra. Then for any basepoint 
odd self-adjoint unitary $e \in \Mult( B \hox \calK)$, there is a natural isomorphism $\Phi_B: \KKR(\C, B) \to \DK_e( \calQ_{B \hox \calK} )$ satisfying
\[
\Phi_B\big( [ (\C, \hat{\calH} \hox B, S) ] \big) = [q(S)] \in \DK_e( \calQ_{B \hox \calK} ).
\]
\end{lemma}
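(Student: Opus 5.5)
The map is well defined in three stages; the substance is in the inverse, i.e.\ surjectivity and injectivity.

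\emph{Well-definedness and homomorphism.} By Kasparov stabilisation, and by absorbing a degenerate module via $Y\oplus\hat{\calH}_B\cong\hat{\calH}_B$, every class in $\KKR(\C,B)$ is represented by a module $(\C,\hat{\calH}\hox B,S)$ with $S$ odd, self-adjoint and Real. The conditions $S^2-\one\in B\hox\calK$ and $S-S^*\in B\hox\calK$ say exactly that $q(S)$ is an odd self-adjoint unitary in $\calQ_{B\hox\calK}$. It is Real, so $[q(S)]\in\DK_e(\calQ_{B\hox\calK})$ makes sense, where we fix $e$ to be the image of the basepoint. I then check invariance under each relation defining $\KKR$.
\begin{itemize}
\item \emph{Compact perturbations} do not change $q(S)$.
\item \emph{Unitary equivalence} by an even Real unitary $U$ of $\hat{\calH}\hox B$ gives $q(U)q(S)q(U)^*$. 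Since $\Mult(B\hox\calK)$ has even unitary group path connected (Kuiper--Mingo type result, valid in the Real graded setting after stabilisation), this is homotopic to $q(S)$.
\item \emph{Operator homotopies} descend to norm-continuous paths of odd self-adjoint unitaries.
\item \emph{Degenerate modules} (where $S^2=\one$ exactly) must be handled when they are added on. Using $\hat{\calH}\hox B\oplus\hat{\calH}\hox B\cong\hat{\calH}\hox B$, direct sums become block sums, and $\DK$ addition is also block sum up to the standard identification. This is also where the dependence on $e$ enters, as the neutral element.
\end{itemize}
General homotopies in $\KKR$ are given by modules over $C([0,1],B)$. For these I use the standard equivalence (for $\sigma$-unital $B$) of Kasparov homotopy with operator homotopy modulo degenerates. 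Alternatively, I apply the construction to $B[0,1]$ and use $\calQ_{B[0,1]\hox\calK}\to C([0,1],\calQ_{B\hox\calK})$ composed with evaluation.

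\emph{Inverse map.} Given an odd self-adjoint unitary $u\in M_n(\calQ_{B\hox\calK})$, first absorb $M_n$ via $\calK\otimes M_n\cong\calK$, keeping the gradings even. Then lift $u$ to an odd self-adjoint Real element $S\in\Mult(B\hox\calK)$. Such a lift exists: take any lift and apply the projections onto the odd, self-adjoint and Real parts, each of which commutes with $q$. Then $(\C,\hat{\calH}\hox B,S)$ is a Kasparov module. Different lifts differ by compacts, and homotopies of unitaries lift to operator homotopies. For the homotopy step I use the contractibility of the lift fibre, or equivalently the Bartle--Graves-type continuous section for graded Real $C^*$-quotients.

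\emph{Main obstacle.} The real difficulty is matching the equivalence relations: stable homotopy in $\DK$, which allows adding $e$-blocks and passing to matrices, against addition of degenerate modules in $\KKR$. I would handle this by showing that both groups are the Grothendieck-free quotient of the same monoid of homotopy classes, with $\DK_e$ already a group for unital algebras. This amounts to checking that the class of $e$ corresponds to the class of degenerate modules, and that $[e\oplus u]=[u]$ in $\DK_e$ precisely matches $[S\oplus S_{\mathrm{deg}}]=[S]$. Naturality in $B$ is immediate, since graded Real $*$-homomorphisms act compatibly on $\Mult$, on the Calkin algebra and on Kasparov modules.
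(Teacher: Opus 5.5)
Your argument is essentially the paper's: send $[S]\mapsto[q(S)]$, get surjectivity by lifting an odd self-adjoint unitary of the Calkin algebra to an odd self-adjoint Real multiplier, and get injectivity by lifting a stable homotopy between $q(S)$ and the basepoint to an operator homotopy that ends at a degenerate module. Your reduction of Kasparov homotopies to operator homotopies modulo degenerates (and your handling of unitary equivalence) is what actually makes $t\mapsto q(S_t)$ norm-continuous, a point the paper's short proof passes over.

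Drop your stated alternative, however. There is no $*$-homomorphism $\calQ_{B[0,1]\hox\calK}\to C([0,1],\calQ_{B\hox\calK})$ compatible with the evaluations, because a strictly continuous family can jump modulo compacts. For example, let $P$ project onto the first vector of each two-dimensional block, and let $U_t$ rotate the $n$-th block by the angle $\tfrac{\pi}{2}\min(1,nt)$. Then $P_t=U_tPU_t^*$ is strictly continuous, but $q(P_t)=q(\one-P)$ for $t>0$ while $q(P_0)=q(P)$.
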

Abusing notation, given a basepoint odd self-adjoint unitary $e \in \Mult(B \hox \calK)$, we also denote by $e$ the basepoint 
odd self-adjoint unitary in $\calQ_{B \hox \calK}$.
\begin{proof}
By Kasparov stabilisation,  any class in $\KKR(\C, B)$ can be represented by a Kasparov module 
$\big( \C, \, \hat{\calH}_B, \, S \big)$, where $S=S^* \in \Mult(B \hox \calK)$ is odd and such that  $\one - S^2 \in B \hox \calK$. 
Then $q(S) \in \calQ_{B \hox \calK}$ is an odd self-adjoint unitary, and we claim that the map 
$\Phi_B\big( [ (\C, \hat{\calH} \hox B, S) ] \big) = [q(S)] \in \DK_e( \calQ_{B \hox \calK} )$ is well-defined. 
Indeed, given a homotopy of normalised Kasparov modules $\big( \C, \, \hat{\calH}_B \ox C([0,1]), \, S_\bullet \big)$, 
we obtain a homotopy $q(S_\bullet)$ of odd self-adjoint unitaries and so $[q(S_0)]=[q(S_1)]$.

The map is surjective as  any odd self-adjoint unitary $x \in \calQ_{B \hox \calK}$ will define a Kasparov module $( \C, \, \hat{\calH} \hox B, \tilde{x} )$ 
with $\tilde{x} \in \Mult(B \hox \calK)$ a lift of $x$. 
For injectivity, any trivial odd self-adjoint unitary $x \in \calQ_{B \hox \calK}$ will be stably homotopic to an odd self-adjoint unitary $x_0$ such that 
there exists a lift $\tilde{x}_0 \in \Mult(B \hox \calK)$ which is also an odd self-adjoint unitary. Such a homotopy will then induce a homotopy from 
$(\C, \, \hat{\calH} \hox B , \, \tilde{x} )$ to $(\C, \, \hat{\calH} \hox B , \, \tilde{x}_0 )$, a degenerate Kasparov module.
\end{proof}
\begin{rmk}
\label{rmk:Phi-revealed}
Using the relative rather than base-pointed picture of Van Daele theory, 
the isomorphism $\Phi_B$ sends a Kasparov class $\big[ (\C, \hat{\calH} \hox B, S) \big] \in \KKR(\C,B)$ to the Van Daele class $[q(S)] - [e] \in \DK( \calQ_{B \hox \calK} )$. 
\end{rmk}

Using the Clifford shuffle isomorphism $\shuffle^{r,s+1}_{\KK}: KKR( \Cl_{s+1, r}, A) \xrightarrow{ \simeq} \KKR( \C, A \otimes \Cl_{r, s+1} ) $ 
from Eq. \eqref{eq:Clifford_shuffle}, we adapt the isomorphism $\Phi_B$ to our setting of interest.

\begin{defn} \label{def:Indexed_KK_isomorphisms}
Let $A$ be an ungraded $\sigma$-unital Real $C^*$-algebra.
We define 
\[
\Phi_A^{r,s+1}:\KKR(\Cl_{s+1,r},A)\to \DK(\calQ_{A\ox \calK}  \otimes \Cl_{r,s+1})
\] 
as the composition 
  of isomorphisms
\begin{align*}
  &\Phi_A^{r,s+1}: \KKR( \Cl_{s+1, r}, A) \xrightarrow{ \shuffle^{r,s+1}_{\KK} } \KKR( \C, A \otimes \Cl_{r, s+1} ) \xrightarrow{\Phi_{A \otimes \Cl_{r, s+1} } } 
      \DK( \calQ_{A  \otimes \calK} \otimes \Cl_{r,s+1} ) ,
\end{align*}
where $\shuffle^{r,s+1}_{\KK}$ is from Eq. \eqref{eq:Clifford_shuffle} (see also Lemma \ref{lem:cliff-shuffle}) and 
 we have used that $\DK( \calQ_{A \otimes  \Cl_{r,s+1} \otimes \calK} ) \cong \DK( \calQ_{A  \otimes \calK} \otimes \Cl_{r,s+1} )$. 
\end{defn}

\begin{lemma} \label{lemma:Calkin_class}
Let $F \in \End_A^{r,s}(X)$ be Real and skew-adjoint, such that $\one + F^2 \in \End_A^0(X)$, and let $\class{F} := [F\otimes\rho] \in \KKR(\Cl_{s+1,r},A)$ denote the resulting $\KK$-class (cf.\ Corollary \ref{cor:skew_Fred_to_KasMod}).
Then, for any skew-adjoint unitary $J \in \End_A^{r,s}(\calH_A)$ and stabilisation unitary $U \colon X_A \oplus \calH_A \xrightarrow{ \simeq } \calH_A$,
\begin{align*}
  \Phi_A^{r,s+1}\big( \class{F} \big) 
    &= \big[ P^{r,s} \big( q_{\calH_A}( U(F \oplus J)U^*) \otimes \rho  \big) + (\one -P^{r,s})( q_{\calH_A}(J) \otimes \rho ) \big] - \big[ q_{\calH_A}( J) \otimes \rho  \big] \\
    &= \big[ (q_{\calH_A} \otimes \one) \circ \sigma_{J\otimes \rho}^{r,s}( U(F\oplus J)U^* \otimes \rho) \big] - \big[ q_{\calH_A}( J) \otimes \rho \big] \in \DK\big( \calQ_A(\calH_A) \otimes \Cl_{r,s+1} \big). 
\end{align*}

\end{lemma}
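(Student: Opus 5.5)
The plan is to unwind the definition $\Phi_A^{r,s+1} = \Phi_{A\otimes\Cl_{r,s+1}}\circ\shuffle^{r,s+1}_{\KK}$ on an explicit representative, in three steps: stabilise, shuffle, pass to the Calkin algebra.

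\emph{Step 1 (stabilisation).} Replace the Kasparov module $(\Cl_{s+1,r}, X_A\otimes\exterior\C, F\otimes\rho)$ by one on the standard module. Because $J\in\End_A^{r,s}(\calH_A)$ is a Real skew-adjoint unitary anticommuting with the Clifford generators, the module $(\Cl_{s+1,r},\calH_A\otimes\exterior\C, J\otimes\rho)$ is degenerate: $(J\otimes\rho)^2=\one$, and it graded-commutes with the $\Cl_{s+1,r}$ generators $\one\otimes\gamma$, $f_k\otimes\rho$, $e_j\otimes\rho$. Adding it does not change the class, so
\[
\class{F}=\big[(F\oplus J)\otimes\rho\big]=\big[U(F\oplus J)U^*\otimes\rho\big],
\]
where $\calH_A$ carries the $\Cl_{r,s}$-representation transported by $U$. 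Here I assume, as the statement implicitly does, that $U$ intertwines the Clifford representations on $X_A\oplus\calH_A$ and $\calH_A$. If it does not, I would first use ampleness and Kasparov stabilisation in the Clifford-equivariant form to choose such a $U$. The conjugation by $U$ is a unitary equivalence of Kasparov modules, so it preserves the class.

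\emph{Step 2 (shuffle).} Apply Lemma~\ref{lem:cliff-shuffle}(2) with $T=U(F\oplus J)U^*$, which is bounded, and with the normalising data already built in. This represents $\shuffle^{r,s+1}_{\KK}(\class{F})$ by the module
\[
\Big(\C,\,(\calH_A\otimes\Cl_{r,s+1})_{A\otimes\Cl_{r,s+1}},\,\sigma^{r,s}_{J\otimes\rho}\big(U(F\oplus J)U^*\otimes\rho\big)\Big).
\]
Since $\calH_A\otimes\Cl_{r,s+1}$ is isomorphic to the graded standard module over $A\otimes\Cl_{r,s+1}$, the class now has the form required in Lemma~\ref{lemma:Phi}.

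\emph{Step 3 (Calkin algebra).} Apply $\Phi_{A\otimes\Cl_{r,s+1}}$ in the relative picture of Remark~\ref{rmk:Phi-revealed}, with basepoint $e=J\otimes\rho$. This sends the class to $[q(\cdot)]-[q(J)\otimes\rho]$. Using $(q\otimes\one)(P^{r,s})=P^{r,s}$, the image of $\sigma^{r,s}_{J\otimes\rho}(\cdots)$ in the Calkin algebra is the stated expression, which gives both displayed formulas. The operator is an odd self-adjoint element that is unitary modulo compacts, and its image in $\calQ$ is a genuine odd self-adjoint unitary, so no further normalisation is needed.

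\emph{Main obstacle.} The delicate point is identifying $\calQ_{(A\otimes\Cl_{r,s+1})\hox\calK}$ with $\calQ_A(\calH_A)\otimes\Cl_{r,s+1}$ compatibly with the basepoint. We need the basepoint used in Lemma~\ref{lemma:Phi} to correspond exactly to $q(J)\otimes\rho$. Since $\DK_e$ classes in the relative picture are independent of the basepoint (Remark~\ref{rmk:Phi-revealed}), I would fix $e=J\otimes\rho$, whose lift is an odd self-adjoint unitary in the multiplier algebra, and check naturality of this identification under the isomorphism $\DK(\calQ_{A\otimes\Cl_{r,s+1}\otimes\calK})\cong\DK(\calQ_{A\otimes\calK}\otimes\Cl_{r,s+1})$ used in Definition~\ref{def:Indexed_KK_isomorphisms}.
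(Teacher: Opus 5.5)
Your proposal is correct and follows the paper's proof step for step. You absorb the degenerate module $(\Cl_{s+1,r},\calH_A\otimes\exterior\C,J\otimes\rho)$ and conjugate by $U$, with the Clifford representation on $\calH_A$ transported by $U$ exactly as in the paper; you apply Lemma~\ref{lem:cliff-shuffle}(2) to reach $\sigma^{r,s}_{J\otimes\rho}(U(F\oplus J)U^*\otimes\rho)$; and you then apply $\Phi_{A\otimes\Cl_{r,s+1}}$ with basepoint $J\otimes\rho$.

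The basepoint concern you flag is harmless, but the right justification is not Remark~\ref{rmk:Phi-revealed} itself. Any two basepoints that lift to odd self-adjoint unitaries in the multiplier algebra give the same relative class, because $\DK(\Mult(B\hox\calK))=0$ (Lemma~\ref{lemma:SQ_and_QS}).
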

\begin{proof}
We first choose a stabilisation unitary $U \colon X_A \oplus \calH_A \xrightarrow{ \simeq } \calH_A$,
an ample representation $\pi = U( \pi_{X_A} \oplus \pi_{\calH_A} )U^*: \Cl_{r,s} \to \End_A(\calH_A)$ that commutes with the projection $P_X \colon \calH_A \to X_A$ 
and  a basepoint skew-adjoint unitary $J \in \End_A^{r,s}(\calH_A)$.
Since $J\otimes\rho$ gives rise to a degenerate Kasparov module, the class $\class{F} := [F \otimes\rho] \in \KKR(\Cl_{s+1,r},A)$ can equivalently be described by 
\[
  \Big[ \Big( \Cl_{s+1, r}, \ X_A \otimes \exterior \C, \ 
  F \otimes \rho \Big) \Big]= \Big[ \Big( \Cl_{s+1, r}, \ \calH_A \otimes \exterior \C, \ 
  \big( U (F\oplus J) U^* \big) \otimes \rho \Big) \Big] .
\]
To apply $\Phi_A^{r,s+1}$, we need to apply the Clifford shuffle $\shuffle_{\KK}^{r,s+1}$. Lemma \ref{lem:cliff-shuffle} yields
\begin{align*}
\shuffle_{\KK}^{r,s+1}(\class{F})&=\shuffle_{\KK}^{r,s+1}\Big[ \Big( \Cl_{s+1, r}, \ \calH_A \otimes \exterior \C, \ 
  \big( U (F\oplus J) U^* \big) \otimes \rho \Big) \Big]\\
   &= \Big[ \Big( \C, \,  \calH_A \otimes \Cl_{r,s+1} , \, 
    P^{r,s}\big(U(F \oplus J)U^* \ox \rho_1 \big) + (\one - P^{r,s})( J \otimes \rho_1) \Big) \Big]
\end{align*}
as an element in $ \KKR(\C, A \otimes \Cl_{r,s+1} )$. 
The   skew-adjoint unitary $J \in \End_A^{r,s}(\calH_A)$ gives a basepoint odd self-adjoint unitary $J \otimes \rho \in \End_A(\calH_A) \otimes \Cl_{r,s+1}$, 
so applying $\Phi_{A \otimes \Cl_{r,s+1} }$  
\begin{align*}
  \Phi_A^{r,s+1}\big( \class{F} \big) 
    &= \big[ P^{r,s} \big( q_{\calH_A}( U(F \oplus J)U^*) \otimes \rho  \big) + (\one -P^{r,s})( q_{\calH_A}(J) \otimes \rho ) \big] - \big[ q_{\calH_A}( J) \otimes \rho  \big]. \qedhere
\end{align*}
\end{proof}

Using $\Phi^{r,s+1}_A$ from Definition \ref{def:Indexed_KK_isomorphisms}, we also recall  the {\em Roe isomorphism} (cf. Appendix \ref{subsec:bdd_KK_to_DK})
\[
\Roe_A^{r+1,s+1} = \calM^{\DK}_{\calH_A} \circ \delta \circ \Phi_A^{r,s+1} \colon \KKR(\Cl_{s+1,r}, A) \xrightarrow{\simeq} \DK( A \otimes \Cl_{r+1,s+1} ) .
\]

\begin{thm} \label{thm:index-of-Fred}
Let $T \in \Reg^{r,s}_A(X)$ be a (bounded or unbounded) Real skew-adjoint Fredholm operator.  
Then 
 \[
 \Index_{r+1,s+1}(T) = \Roe_A^{r+1,s+1}(\class{T})=  \calM^{\DK}_{\calH_A} \circ \delta    \circ \Phi_A^{r,s+1}(\class{T}) .
 \]
\end{thm}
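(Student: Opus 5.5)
The plan is to reduce everything to the standard module $\calH_A$ via Kasparov stabilisation and then compare both sides using Lemma \ref{lemma:Calkin_class}. Write $F=\chi(T)$, so that $\class{T}=\class{F}=[F\otimes\rho]$ by Corollary \ref{cor:skew_Fred_to_KasMod}; $F\in\End^{r,s}_A(X)$ is Real, skew-adjoint, and $\one+F^2\in\End^0_A(X)$. Choose a stabilisation unitary $U\colon X_A\oplus\calH_A\to\calH_A$ and a basepoint skew-adjoint unitary $J_{\calH}\in\End^{r,s}_A(\calH_A)$ for an ample representation on $\calH_A$, with the representation on $\calH_A$ transported by $U$ as in the proof of Lemma \ref{lemma:Calkin_class}. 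That lemma gives
\[
\Roe_A^{r+1,s+1}(\class{T})=\calM^{\DK}_{\calH_A}\circ\delta\Big(\big[(q_{\calH_A}\otimes\one)\circ\sigma^{r,s}_{J_{\calH}\otimes\rho}\big(U(F\oplus J_{\calH})U^*\otimes\rho\big)\big]-\big[q_{\calH_A}(J_{\calH})\otimes\rho\big]\Big).
\]
So it suffices to show that the index of $F$ computed on $X_A$ with basepoint $J$ agrees with the index of $F':=U(F\oplus J_{\calH})U^*$ computed on $\calH_A$ with basepoint $J_{\calH}$.

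First, I would use Lemma \ref{lem:index-sinh-cosh}, which shows that $\Index_{r+1,s+1}$ is independent of the basepoint $J$ and is given by the explicit element $\big[\sigma^{r,s}_{\one\otimes\gamma}(-\cosh(\pi F)\otimes\gamma-\sinh(\pi F)\otimes\rho)\big]-[\one\otimes\gamma]$ in the unitisation of $\End^0_A(X)\otimes\Cl_{r+1,s+1}$. Running the same computation (Lemma \ref{lemma:delta_formula} together with Lemma \ref{lemma:sigma^rs_commutes_with_boundary}) for $F'$ on $\calH_A$ expresses $\Roe_A^{r+1,s+1}(\class{T})$ by the analogous formula with $F'$ in place of $F$. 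Second, I would observe that $\cosh(\pi F')=U(\cosh(\pi F)\oplus\cosh(\pi J_{\calH}))U^*$, and similarly for $\sinh$. Since $J_{\calH}^2=-\one$, we get $\cosh(\pi J_{\calH})=\cos(\pi)=-\one$ and $\sinh(\pi J_{\calH})=J_{\calH}\sin\pi=0$, so the $\calH_A$-summand contributes exactly $\one\otimes\gamma$, which is the basepoint. Because $P^{r,s}$ is built from the Clifford generators and $U$ intertwines the representations, the whole odd self-adjoint unitary decomposes as $U\big(x_X\oplus(\one\otimes\gamma)\big)U^*$, where $x_X$ is the representative on $X$.

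Third, I need the compatibility of Morita invariance with the corner inclusion $\End^0_A(X)\hookrightarrow\End^0_A(X\oplus\calH_A)\cong\End^0_A(\calH_A)$. This is where the main obstacle lies. The required statement is $\calM^{\DK}_{\calH_A}\circ\mathrm{Ad}_U\circ\iota=\calM^{\DK}_X$, where $\iota$ is the map $x\mapsto x\oplus(\one\otimes\gamma)$, and this must hold at the level of basepointed or relative $\DK$-classes. I would try to derive it from the naturality of Morita equivalence in Appendix \ref{sec:DK_Morita}: the inclusion of the full corner $P_X\End^0_A(\calH_A)P_X$, composed with the Morita isomorphism for $\calH_A$, is the Morita isomorphism for $X$. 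Inner automorphisms by the unitary $U$ act trivially, and adding the basepoint on the complementary summand is the standard stabilisation in $\DK$, which is also compatible with the basepoints.

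Combining these three steps gives $\Index_{r+1,s+1}(T)=\Roe_A^{r+1,s+1}(\class{T})$. The last equality in the statement is simply the definition of $\Roe_A^{r+1,s+1}$.
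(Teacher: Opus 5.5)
Your proposal is correct and follows essentially the same route as the paper. You pass to $\calH_A$ via Lemma \ref{lemma:Calkin_class}, commute $\delta$ past the Clifford shuffle (Lemma \ref{lemma:sigma^rs_commutes_with_boundary}) to get the $\cosh/\sinh$ representative, use that the $J$-summand becomes the basepoint $\one\otimes\gamma$ and that $P^{r,s}$ splits along $U$, and finish with Morita compatibility.

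The Morita compatibility you flagged as the main obstacle is exactly part (1) of Lemma \ref{lemma:DK_Morita_compat_new}, namely $\calM^{\DK}_{\hat\calH_B}\circ(\mathrm{Ad}_\nu)_\ast=\calM^{\DK}_Y$. The paper invokes it directly, since $(\mathrm{Ad}_\nu)_\ast$ on unitisations is precisely your map $x\mapsto U\bigl(x\oplus(\one\otimes\gamma)\bigr)U^*$.
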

\begin{proof}
We write $F := \chi(T)$ for brevity. 
Using Lemma \ref{lemma:Calkin_class} and Lemma \ref{lemma:sigma^rs_commutes_with_boundary}, we have that 
\begin{align*}
  \Roe^{r+1,s+1}_A( \class{T} ) &= \calM_{\calH_A}^{\DK} \circ \delta \circ \Phi_A^{r,s+1}( \class{T}) \\
   &=   \calM_{\calH_A}^{\DK} \circ \delta \circ ( \sigma_{q_{\calH_A}(J) \otimes \rho}^{r,s} )_\ast \big( \big[ q_{\calH_A}( U(F\oplus J)U^* ) \otimes \rho \big] - [ q_{\calH_A}( J) \otimes \rho ] \big) \\
   &=  \calM_{\calH_A}^{\DK} \circ ( \sigma_{\one \otimes \gamma}^{r,s} )_\ast \circ \delta \big( \big[ q_{\calH_A}( U(F\oplus J)U^* ) \otimes \rho \big] - [ q_{\calH_A}( J) \otimes \rho ] \big) .
\end{align*}
Computing analogously to Lemma \ref{lem:index-sinh-cosh}, 
\begin{align*}
  \delta \circ \Phi^{r,s}( \class{T} ) &= \big[ \sigma^{r,s}_{\one \otimes \gamma}\big( \cosh( \pi U(F \oplus J)U^*) \otimes \gamma  - \sinh( \pi U(F \oplus J)U^*) \otimes \rho \big)  \big] 
     - \big[ \one \otimes \gamma \big] \\
  &\hspace{-1.7cm}= \big[ - P^{r,s}\big( \cosh( \pi U(F \oplus J)U^*) \otimes \gamma  - \sinh( \pi U(F \oplus J)U^*) \otimes \rho \big) - (\one - P^{r,s} )(\one \otimes \gamma) \big] - \big[ \one \otimes \gamma \big] 
\end{align*}
and we can simplify 
\begin{align*}
   & P^{r,s}\big( \cosh( \pi U(F \oplus J)U^*) \otimes \gamma  - \sinh( \pi U(F \oplus J)U^*) \otimes \rho \big)  \\
   &\qquad = (U \otimes 1_{r+1,s+1}) P^{r,s} \Big( \big( \cosh(\pi F) \otimes \gamma + \sinh(\pi F) \otimes \rho \big) \oplus (\one \otimes \gamma) \Big) (U^* \otimes 1_{r+1,s+1}).
\end{align*}
Denote by $\nu \colon X_A \hookrightarrow \calH_A$ the inclusion corresponding to the stabilisation $U \colon X_A\oplus\calH_A \xrightarrow{\simeq} \calH_A$. 
Because the Clifford representation $\Cl_{r,s} \to \End_A(\calH_A)$ commutes with the projection $P_X: \calH_A \to X_A$, we 
have that $P_X P^{r,s}_{\calH_A} P_X = P^{r,s}_{X}$ recovers the  minimal projection in $\End_A(X) \otimes \Cl_{r+1,s+1}$ 
used to define $\Index_{r+1,s+1}(T)$. 
Therefore  
composing with the Morita invariance 
\begin{align*}
  &\Roe_A^{r+1,s+1}(\class{T}) \\
 & = \calM_{\calH_A}^{\DK} \Big( \Big[- \mathrm{Ad}_{U \otimes 1 } \Big( P^{r,s}_{\calH_A} \big(  \cosh(\pi F) \otimes \gamma + \sinh(\pi F) \otimes \rho \big) \oplus (\one \otimes \gamma) 
     - (\one - P^{r,s}_{\calH_A}) \big( \one \otimes \gamma )\Big) \Big] \\
   &\qquad   \qquad \qquad  -   \big[ \one \otimes \gamma \big] \Big) \\
 &  = \calM_{\calH_A}^{\DK}  \circ (\mathrm{Ad}_{\nu\otimes 1})_\ast \Big( \big[  -P^{r,s}_{X_A}   \big( \cosh(\pi F) \otimes \gamma + \sinh(\pi F) \otimes \rho \big) - (\one - P^{r,s}_{X_A} )( \one \otimes \gamma) \big] 
 - [ \one \otimes \gamma ] \Big) \\
 &  =  \calM_{X}^{\DK} \Big( \big[ - P^{r,s}_{X_A}   \big( \cosh(\pi F) \otimes \gamma + \sinh(\pi F) \otimes \rho \big) - (\one - P^{r,s}_{X_A} )( \one \otimes \gamma) \big] 
 - [ \one \otimes \gamma ] \Big) \\
  &  = \Index_{r+1,s+1}(T),
\end{align*}
where we have used part (1) of Lemma \ref{lemma:DK_Morita_compat_new} adapted to the setting where 
$X_A$ is ungraded but $X_A \otimes \Cl_{r+1,s+1}$ is $\Z_2$-graded. The final equality is Lemma \ref{lem:index-sinh-cosh}.
\end{proof}

Since the index factors through $\KKR$, we can immediately conclude its basic stability properties. 
To consider homotopy invariance of the index, we use the Wahl topology of operators on $C^*$-modules,  
which is introduced in \S\ref{sec:families} below (see Corollary \ref{cor:Wahl_family_to_KK_htpy} in particular).

\begin{cor}
\label{cor:index-properties}
\begin{enumerate}
 \item Let $[0,1]\ni t\mapsto T_t \in \Reg^{r,s}_A(X)$ be a Wahl-continuous path of (bounded or unbounded) Real skew-adjoint Fredholm operators on $X_A$ 
 (see \S\ref{sec:families}). Let $K\in\End_A^{r,s}(X) \cap \End_A^0(X)$ be a compact endomorphism. Then 
\[
  \Index_{r+1,s+1}(T_0)=\Index_{r+1,s+1}(T_1) = \Index_{r+1,s+1}(T_0+K).
\]
 \item If $S, T \in \Reg^{r,s}_A(X)$ are (bounded or unbounded) Real skew-adjoint Fredholm operators, then 
 $S \oplus T \in \Reg^{r,s}_A(X\oplus X)$ and 
 \[
   \Index_{r+1,s+1}\big( S \oplus T \big) =  \Index_{r+1,s+1}(S) +  \Index_{r+1,s+1}(T).
 \]  
 with addition in $\DK(A \otimes \Cl_{r+1,s+1})$.
\end{enumerate}
\end{cor}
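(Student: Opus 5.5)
The approach is to reduce every claim to equalities of Kasparov classes, using Theorem~\ref{thm:index-of-Fred}: $\Index_{r+1,s+1}(T)=\Roe_A^{r+1,s+1}(\class{T})$, where $\Roe_A^{r+1,s+1}$ is a well-defined homomorphism (indeed an isomorphism) on $\KKR(\Cl_{s+1,r},A)$. So it suffices to prove the corresponding identities for the classes $\class{T}\in\KKR(\Cl_{s+1,r},A)$ of Corollary~\ref{cor:skew_Fred_to_KasMod}.

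For part (1), homotopy invariance, I will invoke Corollary~\ref{cor:Wahl_family_to_KK_htpy} from \S\ref{sec:families}, which I take to say the following: a Wahl-continuous path $\{T_t\}$ yields a family operator $T_\bullet$ on $C([0,1],X)_{C([0,1],A)}$ that is regular, skew-adjoint and Fredholm. Since each $T_t$ is Real and $\Cl_{r,s}$-anti-linear, so is $T_\bullet$. Corollary~\ref{cor:skew_Fred_to_KasMod} then gives a Real Kasparov module $\big(\Cl_{s+1,r},\, C([0,1],X)\otimes\exterior\C,\, \chi(T_\bullet)\otimes\rho\big)$ over $C([0,1],A)$. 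Evaluating at $t=0,1$ recovers the modules for $T_0,T_1$, with $\chi(T_\bullet)$ evaluated at $t$ equal to $\chi(T_t)$ by functional calculus. This is a homotopy, so $\class{T_0}=\class{T_1}$.

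Also in part (1), for compact perturbations I will not use unboundedness directly. The first step is to show that $\chi(T_0+K)-\chi(T_0)$ is compact for some common normalising function $\chi$; I expect this to be the main obstacle. In the bounded case it follows from a polynomial approximation argument. In general I would argue via resolvents: $(T_0+K-\lambda)^{-1}-(T_0-\lambda)^{-1}=-(T_0+K-\lambda)^{-1}K(T_0-\lambda)^{-1}$ is compact, and one passes to $\chi$ through an integral formula for $\chi(T)$ in terms of resolvents. A simpler alternative is the linear path $T_0+tK$, $t\in[0,1]$. Each $T_0+tK$ is Real, $\Cl_{r,s}$-anti-linear and Fredholm, since by Lemma~\ref{lemma:bdd_unbdd_Fred_condition}(2) the Calkin image of its resolvent is unchanged. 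Such bounded perturbations are continuous in the gap topology, hence Wahl-continuous, so the first half of part (1) applies. If only the bounded-transform version is easier, then $F=\chi(T_0)$ and $F'=\chi(T_0+K)$ differ by a compact operator, and the straight-line path $F+t(F'-F)$ gives an operator homotopy of Kasparov modules. In either case $\class{T_0}=\class{T_0+K}$.

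For part (2), take the same normalising function for $S$, $T$ and $S\oplus T$; one exists because there is a common $\varepsilon$ for both in Proposition~\ref{prop:Fredholm_properties}. Then $\chi(S\oplus T)=\chi(S)\oplus\chi(T)$. The Kasparov module of $S\oplus T$ is therefore the direct sum of the two modules, with the diagonal Clifford representation, so $\class{S\oplus T}=\class{S}+\class{T}$. Additivity of the index then follows from $\Roe_A^{r+1,s+1}$ being a group homomorphism.
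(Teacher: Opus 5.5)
Your proposal is correct and follows the paper's own (very brief) argument. The index factors through $\KKR$ via Theorem~\ref{thm:index-of-Fred}, so homotopy invariance (Corollary~\ref{cor:Wahl_family_to_KK_htpy}, with Theorem~\ref{thm:Wahl_cts_iff_Fredholm} supplying Fredholmness of the family operator), invariance under compact perturbation, and additivity under direct sums are all inherited from Kasparov classes, since $\Roe_A^{r+1,s+1}$ is a homomorphism. Your linear path $T_0+tK$ is the cleanest way to handle the compact perturbation and makes the resolvent-integral estimate unnecessary; note that it implicitly takes $K$ Real and skew-adjoint, which is needed for $T_0+K$ to have an index at all.
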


In the special case that $T \in \Reg_A^{r,s}(X)$ is skew-adjoint and has compact resolvent, $(\one + T)^{-1} \in \End_A^0(X)$, we 
can also define an equivalent expression for the analytic index via the Cayley transform, $  U_T = (T-\one)(T+\one)^{-1}$, 
which is such that $\one - U_T \in \End_A^0(X)$ (see Appendix \ref{subsec:Cayley}).  An advantage of the Cayley transform 
is that we can work directly with the (unbounded) operator $T$ and do not have to take a normalising function. 
The use of the Cayley transform as a passage between (unbounded) $\KK$-theory and $K$-theory was previously considered 
in \cite{BKR}. We review this work and adapt it to the Clifford anti-linear setting in Appendix \ref{subsec:Cayley}.

\begin{prop} \label{prop:DK_index_Cayley}
Let $T \in \Reg^{r,s}_A(X)$ be a Real skew-adjoint operator with compact resolvent.  
Then  $U_T = (T-\one)(T+\one)^{-1} \in \End_A^0(X)$ is a Real unitary with 
$\one - U_T \in \End_A^0(X)$ and such that 
$\tfrac12 (U_T + U_T^*) \otimes \gamma + \tfrac12 ( U_T - U_T^* ) \otimes \rho$ is Clifford anti-linear 
with respect to a graded $\Cl_{s,r}$-representation. Identifying $\Cl_{1,1} \cong M_2(\C)$ with $\gamma \cong \sigma_1$ and $\rho \cong -i\sigma_2$, we have that 
\[
\Index_{r+1,s+1}(T) =  
\calM_{X}^{\DK} \left( \left[ P^{r,s} \begin{pmatrix} 0 &  U_T^* \\  U_T & 0 \end{pmatrix}  + (\one -P^{r,s})  \begin{pmatrix} 0 & \one \\ \one & 0 \end{pmatrix}\right] 
- \left[\begin{pmatrix} 0 & \one \\ \one & 0 \end{pmatrix} \right] \right) .
\]
\end{prop}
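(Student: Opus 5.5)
The plan is to start from Lemma \ref{lem:index-sinh-cosh}, which already expresses $\Index_{r+1,s+1}(T)$ through the odd self-adjoint unitary
\[
W_F:=\begin{pmatrix} 0 & \exp(-\pi F) \\ \exp(\pi F) & 0\end{pmatrix}, \qquad F=\chi(T),
\]
and then deform $W_F$, by a homotopy of $\Cl_{s,r}$-anti-linear odd self-adjoint unitaries in $\End^0_A(X)^\sim\otimes M_2(\C)$, into the Cayley matrix
\[
W_U:=\begin{pmatrix} 0 & U_T^* \\ U_T & 0\end{pmatrix}.
\]
The minus sign on the unitary block in Lemma \ref{lem:index-sinh-cosh} has to be absorbed along the way.

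The first step is to record the elementary properties of the Cayley transform. $U_T$ is Real, because $T$ and $\one$ are Real. It is unitary, because $T$ is skew-adjoint. One has $\one-U_T=2(T+\one)^{-1}\in\End^0_A(X)$ by the compact resolvent assumption. Since $T$ anticommutes with the generators $e_j,f_k$, we get $e_jU_Te_j^{-1}=(-T-\one)(-T+\one)^{-1}=U_T^{-1}=U_T^*$. Hence $\tfrac12(U_T+U_T^*)$ commutes with the generators, while $\tfrac12(U_T-U_T^*)$ anticommutes with them. Consequently $\tfrac12(U_T+U_T^*)\otimes\gamma+\tfrac12(U_T-U_T^*)\otimes\rho$ anticommutes with $e_j\otimes\rho$ and $f_k\otimes\rho$. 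The same computation applies to $\cosh(\pi F)$ and $\sinh(\pi F)$, so all the matrices involved live in the domain of $\sigma^{r,s}_{\one\otimes\sigma_1}$.

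Next, both $-\exp(\pi F)$ and $U_T$ are obtained by continuous functional calculus from $T$. Write $T=i\lambda$ and choose the normalising function $\chi(i\lambda)=i\,\tilde\chi(\lambda)$. Then
\[
-\exp(\pi F)=g_0(T),\quad g_0(i\lambda)=e^{i\pi(\tilde\chi(\lambda)+1)}, \qquad U_T=g_1(T),\quad g_1(i\lambda)=\frac{i\lambda-1}{i\lambda+1}.
\]
Both $g_0$ and $g_1$ are maps $\R\to S^1$. Each is nonzero except on the domain of $\lambda$, tends to $1$ as $\lambda\to\pm\infty$, takes the value $-1$ at $\lambda=0$, and winds once around the circle, monotonically in the same direction. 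This last point, that the windings agree, is a sign check. I would then take the homotopy
\[
g_t(i\lambda)=e^{i\theta_t(\lambda)}, \qquad \theta_t=(1-t)\theta_0+t\theta_1,
\]
where $\theta_0,\theta_1\colon\R\to(0,2\pi)$ are continuous lifts with limits $0$ and $2\pi$ at the two ends. Each $g_t(T)$ is then unitary, differs from $\one$ by a compact operator (because $T$ has compact resolvent and $g_t-1\in C_0(i\R)$), and satisfies the symmetry $g_t(-z)=\overline{g_t(z)}$. By the previous step, this symmetry makes $\begin{pmatrix}0&g_t(T)^*\\ g_t(T)&0\end{pmatrix}$ Clifford anti-linear. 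Each $g_t$ is also Real, since $\bar{g_t}(\bar z)=g_t(z)$ holds for functions of $T$ with $T^\frakr=T$ and $\theta_t$ real. Applying $\sigma^{r,s}_{\one\otimes\sigma_1}$ to this path gives a norm-continuous path of odd self-adjoint unitaries in $\End_A^0(X)^\sim\otimes\Cl_{r+1,s+1}$ that all differ from the basepoint by a compact operator. This equates the Van Daele classes before $\calM^{\DK}_X$ is applied.

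The main obstacle is the symmetry and winding bookkeeping. One must verify that $\theta_0$ and $\theta_1$ wind in the same direction; if they do not, replace $U_T$ by $U_T^*$ and check this against the conventions $\gamma\cong\sigma_1$, $\rho\cong-i\sigma_2$. One must also verify that the linear interpolation of the lifts preserves the relation $\theta_t(-\lambda)=2\pi-\theta_t(\lambda)$, which is needed for Clifford anti-linearity. This relation holds for $\theta_0$ and $\theta_1$ because $\tilde\chi$ is odd and because of the explicit Cayley formula, and it is preserved under convex combination. Finally, continuity of $t\mapsto g_t(T)$ in norm follows from uniform continuity of $g_t$ in $t$ on the one-point compactification of the spectrum.
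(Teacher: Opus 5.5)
Your route is the paper's route. You start from Lemma~\ref{lem:index-sinh-cosh} and deform $-\exp(\pi\chi(T))$ into $U_T$ by continuous functional calculus; the paper's one-line proof cites exactly such a homotopy from the proof of Theorem~\ref{thm:Roe_and_Cayley_comparison}. Several elementary parts of your proposal are fine:
\begin{enumerate}
\item Reality and unitarity of $U_T$, and compactness of $\one-U_T=2(T+\one)^{-1}$;
\item Clifford anti-linearity via $g(-z)=\overline{g(z)}$;
\item preservation of $\theta(-\lambda)=2\pi-\theta(\lambda)$ under convex combination;
\item norm continuity in $t$.
\end{enumerate}
The gap is the winding claim, which you assert in your third paragraph and then defer as a ``sign check''. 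It is false. Write $T=i\Lambda$ and $\chi(i\lambda)=i\tilde\chi(\lambda)$. Then $g_0(\lambda)=-e^{i\pi\tilde\chi(\lambda)}$ lies in the closed lower half-plane for $\lambda>0$. On the other hand,
\[
g_1(\lambda)=\frac{i\lambda-1}{i\lambda+1}=-e^{-2i\arctan\lambda},\qquad g_1(1)=i,
\]
lies in the open upper half-plane for $\lambda>0$. The lifts with value $\pi$ at $\lambda=0$ are $\theta_0=\pi+\pi\tilde\chi$, running from $0$ to $2\pi$, and $\theta_1=\pi-2\arctan\lambda$, running from $2\pi$ to $0$. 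So $g_0$ and $g_1$ have opposite degrees as maps $\R\cup\{\infty\}\to S^1$. Your interpolation $\theta_{1/2}$ tends to $\pi$ at both ends, so $g_{1/2}(T)+\one$ is compact rather than $g_{1/2}(T)-\one$, and the path leaves the relative $\DK$-setting. No homotopy of this type exists at all. In fact, for the admissible choice $\tilde\chi=\tfrac{2}{\pi}\arctan$ one has exactly $-\exp(\pi\chi(T))=(T+\one)(T-\one)^{-1}=U_T^*$.

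Consequently, Lemma~\ref{lem:index-sinh-cosh} gives the formula with $\begin{pmatrix}0&U_T\\U_T^*&0\end{pmatrix}$ in place of $\begin{pmatrix}0&U_T^*\\U_T&0\end{pmatrix}$. Since $U_{-T}=U_T^*$, this is the stated right-hand side evaluated at $-T$, and it genuinely differs from the stated one. To see this, take $r=s=0$ in the complex case with $T=iD$. Then $U_T=(D+i)(D-i)^{-1}$, which is the inverse of the usual Cayley transform. So $\Upsilon^{1,1}_A$ sends the stated right-hand side to $\calM^K_X[U_T]=-\calM^K_X[-e^{i\pi\chi(D)}]$. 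By contrast, Eq.~\eqref{eq:K_1-DK-index-equality} gives $\Upsilon^{1,1}_A(\Index_{1,1}(iD))=\calM^K_X[-e^{i\pi\chi(D)}]$. These coincide only when this class is $2$-torsion.

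Your fallback of replacing $U_T$ by $U_T^*$ therefore proves a corrected identity, not the one posed. The paper's own proof relies on the same homotopy, so the problem is a sign mismatch between two conventions: the one in Lemma~\ref{lemma:delta_formula} (and hence Lemma~\ref{lem:index-sinh-cosh}) and the Cayley transform $U_T=(T-\one)(T+\one)^{-1}$. With the definitions as written, the statement needs $U_T$ and $U_T^*$ interchanged, or $\delta$ with the opposite sign. No cleverer homotopy within your approach can close this gap.
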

\begin{proof}
The Van Daele class for $U_T$ is well-defined by Proposition \ref{prop:CliffordCayley_computation} and 
the proof of Theorem \ref{thm:Roe_and_Cayley_comparison} gives an explicit homotopy 
from $-e^{\pi \chi(T) }$ to $U_T$ within the unitaries in $\End_A^0(X)^\sim$.
\end{proof}

\subsection{A \texorpdfstring{$\Z_2$}{Z2}-graded index}

For completeness we also define a $\DK$-valued analytic index in 
the setting of $\Z_2$-graded (Real) $C^*$-modules over $\Z_2$-graded $C^*$-algebras $B$ that can be more 
general than $B = A \otimes \Cl_{r,s}$. An advantage of working with generic graded algebras is that 
we do not need to employ the Clifford shuffle maps that appear in the  Clifford anti-linear setting. 
The proof of the following is analogous to Theorem \ref{thm:index-of-Fred}.

\begin{thm} \label{thm:Z_2-graded_Fred_index}
Let $B$ be a $\Z_2$-graded Real $C^*$-algebra and $Y_B$ a countably generated full $C^*$-module. If $D$ is a Real 
odd self-adjoint regular Fredholm operator on $Y_B$, then for any normalising function $\chi$ of $D$,
\begin{align*}
    \wh{\Index}_{1,0}(D) &= \calM_Y^{\DK} \big( \big[ -\cos\big(\pi \chi(D) \big) \hox \gamma - \sin\big(\pi \chi(D) \big)  \hox 1_{1,0} \big] - \big[ \one \hox \gamma \big] \big) \\
    &=\calM^{\DK}\circ\delta\circ\Phi_B \big( \big[( \C, \, Y_B , \, D) \big] \big)\\
    &=\Roe_B( [D] )
    \in DK( B \hox \Cl_{1,0} )
\end{align*}
is well-defined. Here $\Roe_B$ is the $\Z_2$-graded  Roe isomorphism from Theorem \ref{thm:graded_Roe_and_Kubota},
\[
  \mathfrak{R}_B:  \KKR( \C, B) \xrightarrow{\Phi_B} \DK( \calQ_{B \hox \calK} ) \xrightarrow{\delta} \DK( B \hox \calK \hox \Cl_{1,0}) \xrightarrow{\calM^{\DK} } \DK( B \hox \Cl_{1,0}).
\]
\end{thm}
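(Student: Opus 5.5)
The plan follows Theorem \ref{thm:index-of-Fred}, but without any Clifford shuffle. First, let $F=\chi(D)$. By the self-adjoint part of Proposition \ref{prop:Fredholm_properties} and Proposition \ref{prop:Fred_to_KasMod}, $F$ is a Real odd self-adjoint operator with $\one-F^2\in\End_B^0(Y)$, and $(\C,Y_B,F)$ is a Real Kasparov module representing $[D]\in\KKR(\C,B)$. Hence $q_Y(F)$ is an odd self-adjoint unitary in $\calQ_B(Y)$. If $\calQ_B(Y)$ has no odd self-adjoint unitary basepoint, we first replace $Y$ by $Y\hox\exterior\C$ and $F$ by $F\oplus\gamma$, as indicated after Notation \ref{notation:P^rs_and_sigma_e^rs}. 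Fix a basepoint $e$; this gives a class $[q(F)]-[q(e)]\in\DK(\calQ_B(Y))$. Applying the boundary map and the formula of Lemma \ref{lemma:delta_formula}, the same Taylor expansion as in the proof of Lemma \ref{lemma:sigma^rs_commutes_with_boundary} gives
\[
\delta\big([q(F)]-[q(e)]\big)=\big[-\cos(\pi F)\hox\gamma-\sin(\pi F)\hox 1_{1,0}\big]-[\one\hox\gamma].
\]
Since $\cos(\pi F)+\one$ and $\sin(\pi F)$ are compact, this is an honest class in $\DK(\End_B^0(Y)\hox\Cl_{1,0})$. Applying $\calM_Y^{\DK}$ gives the first expression.

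For well-definedness, we must show independence of $\chi$ and of $e$.

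\emph{Independence of $e$:} the basepoint term $[-\exp(\pi e\hox\gamma)(\one\hox\gamma)]$ collapses to $[\one\hox\gamma]$ for every $e$, since $\cos(\pi e)=-\one$ and $\sin(\pi e)=0$.

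\emph{Independence of $\chi$:} for two normalising functions $\chi_0,\chi_1$, the convex combination $\chi_t=(1-t)\chi_0+t\chi_1$ is again a normalising function. This holds because, by Proposition \ref{prop:Fredholm_properties}(ii), $\chi_t(D)^2-\one$ is compact whenever all $\chi_t$ equal $\pm1$ outside a common interval; otherwise one first deforms each $\chi_i$ to such a function, which is a norm-continuous path of odd self-adjoint essential unitaries. This gives a norm-continuous path of odd self-adjoint unitaries in the unitisation of $\End_B^0(Y)\hox\Cl_{1,0}$, so the $\DK$-class is constant.

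For the equality with $\calM^{\DK}\circ\delta\circ\Phi_B$, we stabilise as in Lemma \ref{lemma:Calkin_class}. Take $U\colon Y\oplus\hat\calH_B\to\hat\calH_B$ and $e\in\Mult(B\hox\calK)$ an odd self-adjoint unitary. Then $[(\C,Y_B,F)]=[(\C,\hat\calH_B,U(F\oplus e)U^*)]$, because $(\C,\hat\calH_B,e)$ is degenerate. By Lemma \ref{lemma:Phi} and Remark \ref{rmk:Phi-revealed}, $\Phi_B$ sends this class to $[q(U(F\oplus e)U^*)]-[q(e)]$. Applying $\delta$ and the computation above gives $\mathrm{Ad}_U$ of the direct sum of the $Y$-expression with $-\cos(\pi e)\hox\gamma-\sin(\pi e)\hox1=\one\hox\gamma$. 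The $\hat\calH_B$ summand cancels against the basepoint. Compatibility of Morita invariance with the corner inclusion $\nu\colon Y\hookrightarrow\hat\calH_B$, i.e.\ part (1) of Lemma \ref{lemma:DK_Morita_compat_new}, then gives $\calM^{\DK}_{\hat\calH_B}\circ(\mathrm{Ad}_{\nu\hox1})_\ast=\calM^{\DK}_Y$, and the second equality follows. The third equality is the definition of $\Roe_B$ in Theorem \ref{thm:graded_Roe_and_Kubota}, once $\DK(B\hox\calK\hox\Cl_{1,0})$ is identified with $\DK(B\hox\Cl_{1,0})$ via $\calM^{\DK}$. Independence of all choices also follows from this identification, since $\Roe_B([D])$ depends only on the class $[D]$, which is independent of $\chi$ by Proposition \ref{prop:Fred_to_KasMod}.

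The main obstacle I expect is the Morita step. Here $Y$ and $B$ are graded, and fullness of $Y$ is needed for $\calM_Y^{\DK}$ to be an isomorphism. We must also check that the unitary $U$ can be chosen even and Real, and that the corner embedding respects the grading on $\End_B^0(Y)\hox\Cl_{1,0}$. I would invoke Kasparov's graded stabilisation theorem for the even Real $U$ and then cite Lemma \ref{lemma:DK_Morita_compat_new} directly.
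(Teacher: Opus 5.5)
Your proposal is correct and follows the paper's route. The paper's proof just reruns Theorem \ref{thm:index-of-Fred} without the Clifford shuffle: stabilise to $U(F\oplus e)U^*$ on $\hat\calH_B$, evaluate $\delta$ via Lemma \ref{lemma:delta_formula}, pass back to $Y$ with part (1) of Lemma \ref{lemma:DK_Morita_compat_new}, and take independence of $\chi$ from Proposition \ref{prop:Fred_to_KasMod}.

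Your stated reason why convex combinations of normalising functions are normalising is wrong as written: a common interval outside which $\chi_t=\pm1$ gives compactness only if it lies inside $(-\varepsilon,\varepsilon)$. The correct reason is that $\chi_0(D)-\chi_1(D)$ is compact, since $|\chi_0-\chi_1|\le(1-\chi_0^2)+(1-\chi_1^2)$ pointwise. Your fallback through $\Roe_B([D])$ already makes this point moot.
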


In the special case of a \emph{complex} $\Z_2$-graded $C^*$-module over an \emph{ungraded} algebra, we can 
relate the $\Z_2$-graded indices with those we have previously studied.

\begin{prop} \label{prop:Graded_ungraded_index_comparison}
Let $T = iD \in \Reg^{1,0}_A(\calH_A)$ be a skew-adjoint Fredholm operator on the complex standard $C^*$-module $\calH_A$ with an (ungraded) 
$\Cl_{1,0}$-representation generated by $e \in \End_A(\calH_A)$. 
Consider $\hat{\calH}_A \cong \calH_A^+ \oplus \calH_A^- \cong \calH^+_A \otimes \C^2$ 
as a $\Z_2$-graded $C^*$-module, graded by the self-adjoint unitary $e$. Then $D = \begin{pmatrix} 0 & D_- \\ D_+ & 0    \end{pmatrix} $ is an odd 
self-adjoint Fredholm operator on $\calH^+_A \otimes \C^2$ and
\[
   \Index_{2,1}( iD ) = \calM_{1,1}^{\DK}  \left(  \wh{\Index}_{1,0}(D) \right) \in \DK( A \otimes \Cl_{2,1} )
\]
with $\calM_{1,1}^{\DK}: \DK( A \otimes \Cl_{1,0} ) \xrightarrow{\simeq} \DK( A \otimes \Cl_{2,1} )$ the Clifford stability
of $\DK$-theory (Lemma \ref{lemma:DK_Clifford_stability}).
\end{prop}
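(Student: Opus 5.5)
Both indices can be written as explicit Van Daele classes of compact perturbations of the identity, and I would compare those representatives directly, inserting the Clifford stability isomorphism $\calM_{1,1}^{\DK}$ at the end. Equivalently, since both indices factor through $\KKR$, the statement is an identity of natural isomorphisms. Concretely, Theorem~\ref{thm:index-of-Fred} gives $\Index_{2,1}(iD)=\Roe_A^{2,1}(\class{iD})$ with $\class{iD}\in \KKR(\Cl_{1,1},A)$, and Theorem~\ref{thm:Z_2-graded_Fred_index} gives $\wh{\Index}_{1,0}(D)=\Roe_A([D])$ with $[D]\in\KKR(\C,A)$. The argument therefore reduces to two checks: that $\class{iD}$ and $[D]$ correspond under the (complex) Clifford/Morita isomorphism $\KK(\Cl_{1,1},A)\cong \KK(\C,A)$, and that the Roe isomorphisms intertwine this with $\calM_{1,1}^{\DK}$. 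I would nevertheless carry out the explicit route, since it is more concrete.

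\textbf{Step 1.} Fix a normalising function $\chi$ and set $F=\chi(D)$, odd with respect to $e$ (because $\chi$ is odd and $D$ anticommutes with $e$). Then $\chi_{i}(iD)=i\chi(D)$ is a normalising function for $T$.

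\textbf{Step 2.} By Lemma~\ref{lem:index-sinh-cosh} with $r=1,s=0$,
\[
\Index_{2,1}(iD)=\calM^{\DK}\big([-P^{1,0}(\cos(\pi F)\otimes\gamma+i\sin(\pi F)\otimes\rho)-(\one-P^{1,0})(\one\otimes\gamma)]-[\one\otimes\gamma]\big),
\]
where $P^{1,0}=\tfrac12(\one+e\otimes\rho\gamma_1)$. Here I use $\cosh(i\pi F)=\cos(\pi F)$ and $\sinh(i\pi F)=i\sin(\pi F)$.

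\textbf{Step 3.} Show that compressing by $P^{1,0}$ realises exactly the inverse Clifford stability map $(\calM^{\DK}_{1,1})^{-1}$ of Lemma~\ref{lemma:DK_Clifford_stability}. The projection $P^{1,0}$ identifies $P^{1,0}(\calH_A\otimes\Cl_{2,1})$ with the graded module $\hat\calH_A\hox\Cl_{1,0}$, with grading given by $e$. Under this identification, $P^{1,0}(\one\otimes\rho)$ corresponds to the grading-twisted odd generator, and $i\sin(\pi F)\otimes\rho$ becomes $\sin(\pi F)\hox 1_{1,0}$. Multiplying by $\rho$ is the step where $e\otimes\rho\gamma_1$ is used to convert $i\rho$ into the grading operator times the identity, and this is what produces the $1_{1,0}$ term in the graded formula. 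After this identification the representative becomes
\[
[-\cos(\pi F)\hox\gamma-\sin(\pi F)\hox 1_{1,0}]-[\one\hox\gamma],
\]
which is the formula for $\wh{\Index}_{1,0}(D)$ in Theorem~\ref{thm:Z_2-graded_Fred_index}. The complementary summand $(\one-P^{1,0})(\one\otimes\gamma)$ is precisely the basepoint padding that $\calM_{1,1}^{\DK}$ adds.

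\textbf{Step 4.} Check that the Morita maps $\calM_X^{\DK}$ on the two sides commute with these identifications, using Lemma~\ref{lemma:DK_Morita_compat_new}.

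The main obstacle is Step 3. It requires matching the explicit form of $\calM_{1,1}^{\DK}$ in the appendix with the compression by $P^{1,0}$ and tracking the factors of $i$ and the signs. Complex scalars are needed here to convert $i\sin\otimes\rho$ into $\sin\hox 1$, so the argument genuinely uses that the setting is complex rather than Real. If the sign conventions do not match, I would fall back on the $\KK$ route and verify only the generator-level identity $\shuffle^{1,1}_{\KK}\class{iD}=[D]$ under $A\otimes\Cl_{1,1}\cong A\otimes M_2$.
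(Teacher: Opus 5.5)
Steps 1 and 2 are fine, and the overall mechanism matches the paper's: compress by $P^{1,0}$, identify the corner with the graded picture on $\hat{\calH}_A$, remove the factor $i$ using complex scalars, and let a comparison of Morita maps produce $\calM_{1,1}^{\DK}$. The paper, however, compresses the \emph{Calkin-level} class before applying $\delta$. There it only has to match $P^{1,0}(q(\chi(iD))\otimes\rho)$ with $\left(\begin{smallmatrix}0 & -iq(\chi(D)_-)\\ iq(\chi(D)_+) & 0\end{smallmatrix}\right)$ on $\calH_A^+\oplus\calH_A^-$; naturality of $\delta$ and the right-hand square of \eqref{eq:graded_to_ungraded_diagram} then do the rest. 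Your Step 3 carries all the content of your version, and as stated it does not work.

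\textbf{The sign.} With your $P^{1,0}=\tfrac12(\one+e\otimes\rho\gamma_1)$ and $\gamma_1\cong\sigma_1$, $\rho\cong-i\sigma_2$, you have $\rho\gamma_1=-\sigma_3$. Hence $P^{1,0}\cong\mathrm{diag}\big(\tfrac12(\one-e),\tfrac12(\one+e)\big)$, so your corner is $\calH_A^-\oplus\calH_A^+$, and $\chi(D)_+$ and $\chi(D)_-$ trade places relative to the paper's representative. In the paper's matching you therefore get the graded index for the grading $-e$, not $e$. The paper's proof uses the complementary projection $\tfrac12(\one+e\otimes\gamma\rho)\cong\mathrm{diag}\big(\tfrac12(\one+e),\tfrac12(\one-e)\big)$.

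This choice is not cosmetic. Write $\sigma_P(x)=Px+(\one-P)(J\otimes\rho)$. The two projections are exchanged by the odd unitary $u=\one\otimes\gamma_1$, which anticommutes with $\chi(T)\otimes\rho$ and $J\otimes\rho$, so $\sigma_{\one-P}(x)=-u\,\sigma_P(x)\,u^*$. Since $u(J\otimes\rho)$ is an even unitary homotopic to $\one$, this yields the inverse class. So your claimed identification cannot hold for both choices. You must align $P^{1,0}$ with the conventions behind $\calM_{1,1}^{\DK}$ and with $\hat{\calH}_A\cong\calH_A^+\otimes\C^2$, or carry the sign explicitly. Falling back on the $\KK$ route does not escape this, because $\shuffle_{\KK}$ is built from the same kind of spinor-module choice $P^{r,s}\Cl_{r+s,r+s}$.

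\textbf{The mechanics.} Two more statements in Step 3 are wrong.
\begin{enumerate}
\item $\one\otimes\rho$ anticommutes with $P^{1,0}$, so $P^{1,0}(\one\otimes\rho)P^{1,0}=0$, and there is no ``grading-twisted generator'' in the corner. What actually happens is that $\sin(\pi F)\otimes\rho$ commutes with $P^{1,0}$, because $\sin(\pi F)$ anticommutes with $e$. Its compression is $w\sin(\pi F)w^*$, tensored with the identity of $\Cl_{1,0}$, where $w=\mathrm{diag}(\one,i)$ in the corner's block decomposition. Since $w$ is even and commutes with $\cos(\pi F)$ and with the boundary generator, a path of even unitaries from $w$ to $\one$ removes the $i$. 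This is the paper's ``factor of $i$'' step.
\item Compression does not by itself ``realise $(\calM_{1,1}^{\DK})^{-1}$''. It identifies $\DK$ of $P^{1,0}(\End_A^0(\calH_A)\otimes\Cl_{2,1})P^{1,0}$ with $\DK(\End_A^0(\hat{\calH}_A)\hox\Cl_{1,0})$, two algebras of the same Clifford size. Moreover, $\calM_{1,1}^{\DK}$ pads through the Clifford-only corner $\one\hox\tfrac12(\one\pm\gamma\rho)$, not through the $e$-dependent $P^{1,0}$. So $\calM_{1,1}^{\DK}$ only enters when you compare $\calM^{\DK}_{\calH_A}$ with $\calM^{\DK}_{\hat{\calH}_A}$, which is exactly the right-hand square of \eqref{eq:graded_to_ungraded_diagram}.
\end{enumerate}
Lemma \ref{lemma:DK_Morita_compat_new} concerns stabilisation isometries and Bott periodicity, and does not supply that comparison. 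So Step 4 also lacks its justification.
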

\begin{proof}
Letting $\hat{\calH}_A \cong \calH^+_A \otimes \C^2 \cong \calH_A \otimes \C^2$, we have that $\End_A(\hat{\calH}_A) \cong \End_A(\calH_A)\otimes \Cl_{1,1}$ with 
analogous identities for $\calQ_A(\hat{\calH}_A)$ and $\End_A^0(\hat{\calH}_A)$. The isomorphism 
$\calM_{1,1}^{\DK}$ is such that the diagram 
\begin{equation} \label{eq:graded_to_ungraded_diagram}
  \xymatrix{
    \DK\big( \calQ_A( \hat{\calH}_A )\big) \ar[rr]^{\delta}  \ar[d]^{\simeq}  & & \DK\big( \End_A^0(\hat{\calH}_A) \hox \Cl_{1,0} \big)  \ar[rr]^{\calM_{\hat{\calH}_A}^{\DK}} \ar[d]^{\simeq} 
    & & \DK(A \otimes \Cl_{1,0} ) \ar[d]^{\calM_{1,1}^{\DK}} \\
    \DK\big( \calQ_A( \calH_A) \otimes \Cl_{1,1} \big) \ar[rr]^{\delta} & & \DK\big( \End_A^0(\calH_A) \otimes \Cl_{2,1} \big) \ar[rr]^{\calM_{\calH_A}^{\DK}} &  &
    \DK( A \otimes \Cl_{2,1} )
  }
\end{equation}
commutes. The class $\wh{\Index}_{1,0}(D)$ comes from applying the top row of the above diagram to the Van Daele class 
\begin{equation} \label{eq:graded_Phi_matrix_representative}
   \left[  \begin{pmatrix} 0 & q(\chi(D)_-) \\ q(\chi(D)_+)  & 0    \end{pmatrix} \right] - \left[ \begin{pmatrix} 0 & \one \\ \one & 0 \end{pmatrix} \right] 
   \in \\DK\big( \calQ_A( \hat{\calH}_A )\big) , \qquad \chi(D)_\pm = \chi(D) \tfrac{1}{2}( \one \pm e).
\end{equation}
For the ungraded index, we let $U: \calH_A \otimes \C^2  \xrightarrow{\simeq} \calH_A$ be an (ungraded) stabilisation 
map and take $J = U (\one \otimes -i\sigma_2 )U^*$ a basepoint skew-adjoint unitary that will anti-commute with the $\Cl_{1,0}$ generator.  
The skew-adjoint index 
$\Index_{2,1}(iD)$ comes from applying the bottom row of the diagram \eqref{eq:graded_to_ungraded_diagram} to the 
Van Daele class 
$\big[ P^{1,0}( q( \chi(iD) ) \otimes \rho) + (\one -P^{1,0}) q(J) \otimes \rho) \big] - [q(J) \otimes \rho] \in \DK\big( \calQ_A(\calH_A) \otimes \Cl_{1,1}\big)$ 
with  $P^{1,0} =  \tfrac{1}{2}(\one+ e \otimes \gamma\rho)$. 
Using the isomorphism $\Cl_{1,1} \cong M_2(\C)$, $P^{1,0} \cong \mathrm{diag}\big( \tfrac{1}{2}(\one + e), \, \tfrac{1}{2}(\one- e) \big)$ and 
\begin{align*}
    P^{1,0}( q( \chi(iD) ) \otimes \rho) 
    \cong \begin{pmatrix} 0 & -iq(\chi(D)) \tfrac{1}{2}(\one - e) \\ iq(\chi(D)) \tfrac{1}{2}(\one + e) \end{pmatrix} = 
    \begin{pmatrix} 0 & -iq(\chi(D)_-) \\ i q(\chi(D)_+) & 0 \end{pmatrix}.
\end{align*}
Using the 
unitary $U: \calH_A \otimes \C^2  \xrightarrow{\simeq} \calH_A$, we can simplify
$P^{1,0}\big( \calQ_A(\calH_A) \otimes \Cl_{1,1}\big)P^{1,0} \cong \calQ_A(\calH_A) \otimes \Cl_{1,1} \cong \calQ_A\big( \hat{\calH}_A \big)$,  
which induces the map 
\[
  \big[ P^{1,0}( q(\chi(iD)) \otimes \rho) + (\one - P^{1,0})(q(J) \otimes \rho) \big] - \big[ q(J) \otimes \rho\big]  \xmapsto{\simeq}   
  \left[  \begin{pmatrix} 0 & -iq(\chi(D)_-) \\ i q(\chi(D)_+) & 0 \end{pmatrix} \right] - \left[ \begin{pmatrix} 0 & \one \\ \one & 0 \end{pmatrix} \right] 
\]
that represents the inverse of the left vertical arrow from Eq. \eqref{eq:graded_to_ungraded_diagram}. Because we are 
in complex spaces, the factor of $i$ will not change the $\DK$ class (more precisely, we can exchange $\sigma_2 \leftrightarrow \sigma_1$). 
Hence the image is the same as 
the Van Daele class in Eq. \eqref{eq:graded_Phi_matrix_representative}.
  Because the bottom row of 
\eqref{eq:graded_to_ungraded_diagram} computes $\Index_{2,1}(T)$ and the diagram commutes, the result follows.
\end{proof}

\begin{cor} 
If $\hat{\calH}_A$ is the graded standard complex $C^*$-module and  $D \in \Reg_A( \hat{\calH}_A)$ is self-adjoint, odd, and Fredholm, 
then 
\[
  \Upsilon_A^{1,0} \big( \wh{\Index}_{1,0}(D) \big) = \Index_0\big( D_+ ), \qquad D \simeq \begin{pmatrix} 0 & D_- \\ D_+ & 0 \end{pmatrix}
\]
\end{cor}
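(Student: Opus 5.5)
The plan is to chain Proposition \ref{prop:Graded_ungraded_index_comparison} with the comparison between the $\DK$-valued and $K_0$-valued indices established earlier in this section. First I reduce to the standard situation. By Kasparov stabilisation, and since $\hat{\calH}_A \cong \calH^+_A \otimes \C^2$, I write the grading as $e=\mathrm{diag}(\one,-\one)$ and the odd operator $D$ in off-diagonal form with $D_- = D_+^*$. I then set $T = iD$ on $\calH_A\otimes\C^2$, viewed as a skew-adjoint Fredholm operator in $\Reg^{1,0}_A$ that anti-commutes with $e$. Proposition \ref{prop:Graded_ungraded_index_comparison} gives
\[
\wh{\Index}_{1,0}(D) = (\calM^{\DK}_{1,1})^{-1}\big(\Index_{2,1}(iD)\big).
\]
Since $\Upsilon_A^{2,1} = \Upsilon_A^{1,0}\circ(\calM^{\DK}_{1,1})^{-1}$, as recorded in the proof of the lemma comparing $\Index_{2,1}(\tilde T)$ with $\Index_0(T)$, it follows that $\Upsilon_A^{1,0}(\wh{\Index}_{1,0}(D)) = \Upsilon_A^{2,1}(\Index_{2,1}(iD))$.

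Next I identify $\Index_{2,1}(iD)$ with $\Index_{2,1}(\tilde S)$ for $S = D_+$, or alternatively with $\Index_{2,1}$ of the bounded transform of $D_+$. The cleanest way is to bypass $iD$ and compute $\wh{\Index}_{1,0}(D)$ directly. By Theorem \ref{thm:Z_2-graded_Fred_index}, $\wh{\Index}_{1,0}(D)$ is $\calM^{\DK}\circ\delta$ applied to the class
\[
\left[\begin{pmatrix}0& q(\chi(D)_-)\\ q(\chi(D)_+)&0\end{pmatrix}\right]-\left[\begin{pmatrix}0&\one\\ \one&0\end{pmatrix}\right].
\]
With $F_+ := \chi(D)_+$ restricted to $\calH^+_A\to\calH^-_A$, this matrix is exactly $q(\tilde F_+)$ in the notation of the even index paragraph, because $\chi(D)$ self-adjoint forces $\chi(D)_- = F_+^*$. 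Its image under the bottom-row maps of diagram \eqref{eq:graded_to_ungraded_diagram} is therefore $\Index_{2,1}(\tilde F_+)$, so $\wh{\Index}_{1,0}(D) = (\calM^{\DK}_{1,1})^{-1}\Index_{2,1}(\tilde F_+)$.

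Applying the earlier lemma gives $\Upsilon_A^{2,1}(\Index_{2,1}(\tilde F_+)) = \Index_0(F_+)$. It remains to show $\Index_0(F_+) = \Index_0(D_+)$, where $\Index_0(D_+)$ means the index of the regular operator through its bounded transform. This holds because $\chi(D)$ is homotopic to the bounded transform $F_D$ through the straight-line path of normalising-type functions. That path keeps $q(\cdot)$ invertible, and it preserves the off-diagonal form because $\chi$ is odd. Homotopy invariance of $K_1$ classes in the Calkin algebra then gives equality.

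The main obstacle is the bookkeeping in the middle step: checking that the identification $\End_A(\hat\calH_A)\cong\End_A(\calH_A)\otimes\Cl_{1,1}$ takes the graded representative to $q(\tilde F_+)$ itself, not to a variant twisted by $i$ or with the roles of $\gamma$ and $\rho$ exchanged. I would handle this as in the proof of Proposition \ref{prop:Graded_ungraded_index_comparison}: $\tilde F_+ = \tfrac12(F_++F_+^*)\otimes\gamma+\tfrac12(F_+-F_+^*)\otimes\rho$ is literally the matrix $\begin{pmatrix}0&F_+^*\\F_+&0\end{pmatrix}$, which matches the graded representative directly. In the complex setting any residual phase can be removed by a homotopy.
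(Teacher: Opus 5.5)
Your proposal is correct and takes essentially the paper's route. The paper's proof is the one-line combination of $\Upsilon_A^{1,0} = \Upsilon_A^{2,1}\circ\calM_{1,1}^{\DK}$ with Proposition~\ref{prop:Graded_ungraded_index_comparison} and the even-index comparison behind Proposition~\ref{prop:complex_index}. Your version also makes explicit two identifications that the paper leaves implicit: reading the graded representative directly as $q(\widetilde{\chi(D)_+})$ via the diagram \eqref{eq:graded_to_ungraded_diagram}, and the straight-line homotopy from $\chi(D)_+$ to the bounded transform of $D_+$.
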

\begin{proof}
For $A$ ungraded $\Upsilon_A^{1,0} = \Upsilon_{A}^{2,1} \circ \calM_{1,1}^{\DK}$, so the result follows from 
Propositions \ref{prop:Graded_ungraded_index_comparison} and \ref{prop:complex_index}.
\end{proof}

As we have done in the Clifford anti-linear setting, we also give a description of the $\Z_2$-graded Fredholm index in 
the case where $D$ has compact resolvent. We use the excision map $\exc_Y:\DK(\End_B(Y),\calQ_B(Y))\to  \DK(\End^0_B(Y))$.

\begin{prop} [cf. Theorem \ref{thm:Graded_Cayley_iso}]
Let $Y_B$ be a countably generated full $C^*$-module with a $\Z_2$-graded $\Cl_{1,0}$-representation 
generated by $\gamma \in \End_B(Y)$. If  $D \in \Reg_B^{1,0}(Y)$ is Real, odd, self-adjoint, and 
Fredholm with   $(D + i)^{-1} \in \End_B^0(Y)$, 
then $\calC_\gamma(D) = \gamma( D + \gamma)(D - \gamma)^{-1} \in \End_B(Y)$ is an odd-self 
adjoint unitary such that $\calC_\gamma(D) - \gamma \in \End_B^0(D)$ and the index 
\[
  \wh{\Index}( D)  =  \calM_Y^{\DK} \circ \exc_Y \big( \big[ \calC_\gamma(D) \big] - [\gamma] \big) \in \DK( B )
\]
is well-defined.
\end{prop}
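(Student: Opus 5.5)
The plan has two parts. First I establish the operator-theoretic facts about $\calC_\gamma(D)$. Then I obtain well-definedness, and implicitly agreement with $\wh{\Index}_{1,0}(D)$ up to Clifford stability, by reducing to the Cayley-transform comparison of Theorem \ref{thm:Graded_Cayley_iso} in the appendix. This mirrors how Proposition \ref{prop:DK_index_Cayley} was deduced from Theorem \ref{thm:Roe_and_Cayley_comparison}.

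\textbf{Step 1: operator-theoretic properties of $\calC_\gamma(D)$.} Since $D \in \Reg_B^{1,0}(Y)$ is odd and anticommutes with the odd self-adjoint unitary $\gamma$, the operator $\gamma D$ is even and skew-adjoint. Indeed, $(\gamma D)^* = D\gamma = -\gamma D$, and it is regular on the same domain. Consequently $\gamma D \pm \one$ are invertible with adjointable inverses. I rewrite
\[
D - \gamma = \gamma(\gamma D - \one), \qquad D + \gamma = \gamma(\gamma D + \one),
\]
which gives
\[
\calC_\gamma(D) = \gamma\,\gamma(\gamma D+\one)(\gamma D-\one)^{-1}\gamma = (\gamma D+\one)(\gamma D - \one)^{-1}\gamma .
\]
This is the classical Cayley transform of the skew-adjoint operator $\gamma D$, multiplied on the right by $\gamma$, so it is a unitary. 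It is odd because the first factor is even and $\gamma$ is odd. It is Real because $D$ and $\gamma$ are Real.

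For self-adjointness I commute $\gamma$ through using $\gamma(\gamma D)\gamma = D\gamma = -\gamma D$. This gives $\gamma\,\calC_\gamma(D) = (\one - \gamma D)(-\gamma D - \one)^{-1}$, which equals the adjoint $\big((\gamma D+\one)(\gamma D-\one)^{-1}\big)^*$, and so $\calC_\gamma(D)^* = \calC_\gamma(D)$.

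Finally, I compute
\[
\calC_\gamma(D) - \gamma = 2(\gamma D - \one)^{-1}\gamma .
\]
The resolvent $(\gamma D - \one)^{-1} = (D-\gamma)^{-1}\gamma$ is compact. To see this, write $(D-\gamma)^{-1} = (D+i)^{-1}(D+i)(D-\gamma)^{-1}$ and note that $(D+i)(D-\gamma)^{-1}$ is bounded adjointable, because $\gamma$ is bounded. Hence $\calC_\gamma(D)-\gamma \in \End_B^0(Y)$.

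\textbf{Step 2: well-definedness of the class.} Since $\calC_\gamma(D) - \gamma \in \End_B^0(Y)$, the pair $[\calC_\gamma(D)] - [\gamma]$ defines a class in relative Van Daele theory $\DK(\End_B(Y),\calQ_B(Y))$. Its image in the quotient is the trivial pair $([\gamma],[\gamma])$. Applying the excision map $\exc_Y$ and then Morita invariance $\calM_Y^{\DK}$ yields a class in $\DK(B)$, so the expression in the statement is well-defined.

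The class is also independent of inessential choices. A homotopy of such $D$ (for example $D$ rescaled by $\lambda>0$) gives a norm-continuous path of odd self-adjoint unitaries that stay compact perturbations of $\gamma$. This follows from continuity of the resolvents.

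\textbf{Step 3 (main obstacle): identification with the Roe index.} I would invoke Theorem \ref{thm:Graded_Cayley_iso}, which is presumably the graded counterpart of Theorem \ref{thm:Roe_and_Cayley_comparison}. It should state that $\exc\big([\calC_\gamma(D)]-[\gamma]\big)$ agrees with $\delta\circ\Phi_B[D]$ after the Clifford stability identification $\DK(B\hox\Cl_{1,0})\cong\DK(B)$ coming from the $\Cl_{1,0}$-representation.

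If that comparison has to be done by hand, I would first choose the normalising function $\chi(x) = x(1+x^2)^{-1/2}$. I would then write down a homotopy between the two unitaries $-\exp(\pi\chi(\gamma D)\,\cdot)\gamma$ and $\calC_\gamma(D)$ inside $\gamma + \End_B^0(Y)$. The explicit formula from Lemma \ref{lemma:delta_formula} produces the first of these. The homotopy comes from interpolating between the functions $ix \mapsto -e^{\pi\chi(ix)}$ and the Cayley function $ix \mapsto (ix+1)/(ix-1)$ on $i\R$. Both functions are unimodular, both equal $-1$ at $0$, and both tend to $1$ at $\pm i\infty$. Because $\gamma D$ has compact resolvent, functional calculus for this path lands in $\one + \End_B^0(Y)$.

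The most delicate part is matching the graded boundary map with the excision isomorphism, including the bookkeeping of $\gamma$ as basepoint. I expect to handle it by the same argument as Proposition \ref{prop:DK_index_Cayley}.
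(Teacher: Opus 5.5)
Steps 1 and 2 are correct and already prove the proposition. The identity $\calC_\gamma(D)-\gamma = 2(\gamma D-\one)^{-1}\gamma = 2(D-\gamma)^{-1}$ is compact because $(D+i)^{-1}$ is. This is exactly the fact behind the Cayley isomorphism $\mathfrak{C}_B$ of Theorem \ref{thm:Graded_Cayley_iso}, which the paper cites instead of giving a proof. Relative $\DK$-theory, $\exc_Y$ and $\calM_Y^{\DK}$ then produce the class, as you say.

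You should drop Step 3, and the remark in your opening about agreement with $\wh{\Index}_{1,0}(D)$. Nothing in the statement needs them, and the claimed identification is false:
\begin{itemize}
\item There is no Clifford-stability identification $\DK(B\hox\Cl_{1,0})\cong\DK(B)$; Clifford stability involves $\Cl_{1,1}$.
\item The class of $D$ in $\KKR(\C,B)$, obtained by forgetting $\gamma$, is zero. Since $\chi$ is odd, $\chi(D)$ anticommutes with $\gamma$, so $\cos(t)\chi(D)+\sin(t)\gamma$ for $t\in[0,\pi/2]$ is an operator homotopy ending at the degenerate operator $\gamma$. Hence $\delta\circ\Phi_B[D]=0$.
\item By contrast, $\wh{\Index}(D)=\mathfrak{C}_B[D]$ with $[D]\in\KKR(\Cl_{1,0},B)$, and this is in general nonzero.
\end{itemize}
The correct relation, stated in the paper right after the proposition, goes through Bott periodicity: $\beta_{\DK}(\wh{\Index}(D)) = \wh{\Index}_{1,0}(\altBott_{\KK}[D])$.
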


Note that $\wh{\Index}( D)$ and $\wh{\Index}_{1,0}( D)$ have  different assumptions on the odd self-adjoint 
Fredholm operator $D$. The former requires a Clifford generator and defines a class in $\KKR(\Cl_{1,0}, B)$, 
whereas the latter does not include additional Clifford data and defines a class in $\KKR(\C, B)$.  While the domain and ranges of these indices are different, 
 Proposition \ref{prop:Kubota_Roe_Cayley_compat} in the Appendix gives that 
\[
   \beta_{\DK} \Big( \wh{\Index}(D) \Big) = \wh{\Index}_{1,0} \Big( \altBott_{\KK}\big( [ D] \big) \Big),
\]
where $\beta_{DK}: \DK(B) \xrightarrow{\simeq} \DK(SB \hox \Cl_{1,0})$ and 
$\altBott_{KK}: \KKR( \Cl_{1,0}, B) \xrightarrow{ \simeq} \KKR( \C, SB )$ are the Bott isomorphisms in Van Daele and 
$\KK$-theory (Theorem \ref{thm:DK_Bott_isos} and Corollary \ref{cor:KK_Bott_alternative}).

\subsection{Comparison with Real Hilbert spaces} \label{subsec:Index_Real_Hilbert_Space}

Let us fix a separable Real Hilbert space $\calH$ with an ample $\Cl_{r,s}$-representation  in $\calB(\calH)$ generated 
by $\{e_1,\ldots, e_r, f_1,\ldots, f_s\}$. 
We can always add more Clifford generators  by considering 
$\calH^{\oplus 2}$ with $e_{r+1} = \sigma_1$ and $f_{s+1} = -i\sigma_2$. We will therefore assume 
that $r,s \geq 1$. 
 The set $\Reg^{r,s}(\calH)$ denotes the closed operators on $\calH$ that anti-commute with 
the Clifford generators.  In~\cite{AS69}, Atiyah and Singer studied the index theory of the space of skew-adjoint 
Fredhom operators in $\Reg^{0,k}(\calH)$.\footnote{See~\cite{BCLR} for the case of general $(r,s)$.} 
If $T \in \Reg^{r,s}(\calH)$
is skew-adjoint 
and Fredholm, then $\Ker(T)$ is a (finite-dimensional) $\Cl_{r,s}$-module and we can use the 
Atiyah--Bott--Shapiro isomorphism \cite{ABS} to define 
\[
   \Index_{1+s-r}^\mathrm{ABS}( T ) = \big[ \Ker(T) \big] \in \calM_{r,s} / \calM_{r,s+1} \cong \KO_{1+s-r}(\R),
\]
where $ \calM_{r,s} $   denotes the Grothendieck group of finite-dimensional  $\Cl_{r,s}$-modules (with addition by direct sum).  

The skew-adjoint Fredholm operator $T \in \Reg^{r,s}(\calH)$ also determines a Kasparov module 
and class $\class{T} = [\chi(T) \otimes \rho] \in \KKR( \Cl_{s+1, r} , \C)$ by Corollary \ref{cor:skew_Fred_to_KasMod}, where 
\[
   \KKR( \Cl_{s+1,r}, \C) \ni \class{T}
   \mapsto  \Index_{1+s-r}^\mathrm{ABS}( T ) \in \KO_{1+s-r}(\R)
\]
is an isomorphism of groups (see~\cite[Chapter 2]{S} for example). Using that the 
$\DK$-valued index $\Index_{r+1,s+1}(T) \in \DK(\Cl_{r+1,s+1})$ can also be described as 
the isomorphism $\Roe_\C^{r+1,s+1}$ applied to  $\class{T} $, 
there is an induced isomorphism $\xi_\C^{r+1,s+1}: \DK( \Cl_{r+1,s+1} ) \xrightarrow{ \simeq} \KO_{1+s-r}(\R)$ such that  
\[
  \xymatrix{ 
      \KKR( \Cl_{s+1, r}, \C) \ar[rrr]^{\Roe_\C^{r+1,s+1}}  \ar[drrr]_{\Index_{1+s-r}^\mathrm{ABS}} &  & & \DK( \Cl_{r+1,s+1} ) \ar[d]^{\xi_\C^{r+1,s+1} } \\ & & & \KO_{1+s-r}(\R)
  }
\]
commutes. 
The map $\xi_\C^{r+1,s+1}$ is the same as the isomorphism $\Upsilon_\C^{r+1,s+1}$ considered in Appendix \ref{subsec:appendix_DK_KR_isos}, 
which we can check directly.
In degree $0$ with $T$ Fredholm (and not skew-adjoint), both $\Index_{2,1}$ and the ABS index compute the usual 
Fredholm index, as we have seen 
in {\S}\ref{subsec:Complex_index_and_DK}. This also shows the case of degree $4$, where $\KO_4(\R) \simeq \KO_0(\mathbb{H} )$-valued 
indices are described by the Fredholm index on a quaternionic Hilbert space.  
Lastly the groups $\KO_1(\R)$ and $\KO_2(\R)$ are $\Z_2$ and any two isomorphisms of $\Z_2$ must agree.

\begin{remark}[Numerical indices]
In addition to Fredholm operators on Hilbert spaces and the ABS index, 
we can also define numerical indices via the Kasparov product.
If $\big( A \otimes \Cl_{0,d}, \, \calH, \, G \big)$ is a Real Fredholm module 
with class  $[G] \in \KKR(A \otimes \Cl_{0,d}, \C)$,  the Kasparov 
product induces a map $(\cdot) \otimes_A [G]: \KKR( \Cl_{s+1,r}, A) \to \KKR( \Cl_{s+1,r+d}, \C)$. 
We can then define a homomorphism on Van Daele $K$-theory by
\[
    \phi_G: \DK(A \otimes \Cl_{r+1,s+1}) \to \DK( \Cl_{r+d+1,s+1} ), \quad 
    \phi_G\big( \Index_{r+1,s+1}(F) \big) := \Roe_\C^{r+d+1,s+1}\big( \class{F} \otimes_A [G] \big).
\]
We can then apply $\xi_\C^{r+d+1,s+1}: \DK( \Cl_{r+d+1,s+1}) \to \KO_{s+1-r-d}(\R)$  to obtain a 
numerical index pairing of $\Index_{r+1,s+1}(F)$ with the Fredholm module $[G]$. In practice, we often need 
unbounded representatives of $\class{F}$ and $[G]$ to obtain explicit representatives of the Kasparov product 
whose numerical index has  geometric or physical meaning. Pairings of $\DK$-theory with cyclic cohomology 
have also been considered in~\cite{Kellendonk19}.
\end{remark}

\section{Van Daele \texorpdfstring{$K$}{K}-theory via skew-adjoint unitaries} \label{sec:Van_Daele_skew_unitaries}

In order to define the relative index in $\DK$-theory in {\S}\ref{sec:relative_index}, 
we use the Clifford shuffle map from \S\ref{subsec:Clifford_Fredholm_Shuffle} to give a presentation of $\DK$-theory via Clifford anti-linear skew-adjoint unitaries. 
The approach to $K$-theory via ungraded and skew-adjoint unitaries goes back to the work of Wood~\cite{Wood}.

\begin{defn}
\label{defn:complex-structure}
Let $A$ be a $\sigma$-unital and ungraded Real $C^{\ast}$-algebra  such that $\Mult(A)$ contains the generators 
$\{e_1,\ldots, e_r,  f_1,\ldots, f_s\}$ of an ungraded $\Cl_{r,s}$-representation. 
The set 
\[
   \calJ_{A}^{r,s} = \big\{ J \in \Mult(A) \,\mid \, J^\frakr = J, \, J^* = -J = J^{-1}, \, Je_j = -e_j J, \, J f_k, = -f_k J \text{ for all } j,k\big\}.
\]
denotes Real skew-adjoint  unitaries in $\Mult(A)$ that are $\Cl_{r,s}$-anti-linear.
\end{defn}

\begin{lemma}[{cf. \cite[Proposition 5.2.6]{WeggeOlsen}}]  \label{lemma:close_J_are_unitary_equiv}
 If $J_0, J_1 \in  \calJ_{A}^{r,s}$ are such that $\|J_0 - J_1\| < 2$, then $J_0$ and $J_1$ are unitarily 
 equivalent, and $J_0 \sim_h J_1$ in $\calJ_{A}^{r,s}$.
\end{lemma}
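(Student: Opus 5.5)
We follow the standard argument of \cite[Proposition 5.2.6]{WeggeOlsen}, adapted so that every operator we build is Real and commutes with the Clifford generators; the key object is the intertwiner $Z = \one - J_1 J_0$. Since $J_i^2 = -\one$, we compute
\[
J_1 Z = J_1 + J_0 = Z J_0 ,
\]
so $Z$ intertwines $J_0$ and $J_1$.

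We next check that $Z$ has the right symmetries. Each $J_i$ anti-commutes with every $e_j$ and $f_k$, so the product $J_1J_0$ commutes with all Clifford generators, and hence so does $Z$. Because the real structure is multiplicative and both $J_i$ are Real, $Z$ is Real as well.

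Next we show $Z$ is invertible. Write $Z = -J_1(J_1 + J_0)$. Since $J_1$ is unitary, it suffices to show $J_0 + J_1$ is invertible. Now
\[
J_0 + J_1 = 2J_1 + (J_0 - J_1) = 2J_1\big(\one + \tfrac12 J_1^{-1}(J_0 - J_1)\big).
\]
Here $\|\tfrac12 J_1^{-1}(J_0-J_1)\| < 1$, so the Neumann series makes this invertible.

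From $Z$ we pass to a unitary by polar decomposition, $W = Z|Z|^{-1}$. To see that $W$ still intertwines, we compute $Z^*Z$. Using $J_i^* = -J_i$ we get $Z^* = \one - J_0J_1$, and hence
\[
Z^*Z = 2\,\one - J_0J_1 - J_1J_0.
\]
This expression commutes with both $J_0$ and $J_1$: for instance $J_0(J_0J_1 + J_1J_0) = -J_1 + J_0J_1J_0 = (J_0J_1 + J_1J_0)J_0$. Therefore $|Z|$ commutes with $J_0$ by functional calculus, and $W J_0 W^* = J_1$. Moreover $|Z|$ is Real and Clifford-linear, being a continuous function of the Real, Clifford-linear element $Z^*Z$, so $W$ is a Real unitary commuting with the Clifford generators. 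This proves the unitary equivalence. All of these operators live in $\Mult(A)$.

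For the homotopy, the main obstacle is to stay inside $\calJ_A^{r,s}$, and the straight line $(1-t)J_0 + tJ_1$ need not be unitary. The natural first attempt is to normalise it. Set
\[
K_t = (1-t)J_0 + tJ_1 .
\]
Each $K_t$ is skew-adjoint, Real and Clifford-anti-linear. It is also invertible, since $\|K_t - J_1\| = (1-t)\|J_0 - J_1\| < 2$ and the Neumann argument above applies with $J_1$ in place of $J_1$ and $K_t$ in place of $J_0$ (the factor $1-t \le 1$ keeps the norm below $2$). Being skew-adjoint, $K_t$ is normal, so we may define
\[
J_t = K_t |K_t|^{-1} .
\]
Then $J_t$ is a skew-adjoint unitary, with $J_t^2 = -\one$.

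It remains to check that $J_t$ keeps the symmetries. We have $|K_t|^2 = -K_t^2$, and $K_t^2$ commutes with every generator because $K_t$ anti-commutes with each of them. Hence $|K_t|^{-1}$ is Real and Clifford-linear, and so $J_t$ is Real and anti-commutes with the generators. The path $t \mapsto J_t$ is norm-continuous, because inversion and the continuous functional calculus are continuous in norm. Thus $J_t$ is a path in $\calJ_A^{r,s}$ joining $J_0$ to $J_1$, giving $J_0 \sim_h J_1$.
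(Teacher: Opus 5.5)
Everything in your proposal checks out except the justification that $K_t=(1-t)J_0+tJ_1$ is invertible. The Neumann argument you invoke, with $K_t$ in place of $J_0$, writes $K_t+J_1=2J_1\big(\one+\tfrac12 J_1^{*}(K_t-J_1)\big)$. So it proves that $K_t+J_1$ is invertible, not $K_t$. The bound $\|K_t-J_1\|<2$ alone cannot give invertibility of $K_t$: the operator $0$ satisfies $\|0-J_1\|=1$. A Neumann series around the unitary $J_1$ needs distance $<1$, not $<2$.

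The claim is nevertheless true, and the repair is one line. For $t\ge\tfrac12$ one has $\|K_t-J_1\|=(1-t)\|J_0-J_1\|<1$, so $K_t=J_1\big(\one+J_1^{*}(K_t-J_1)\big)$ is invertible; for $t\le\tfrac12$, argue the same way around $J_0$. Alternatively, expand to get $\one+K_t^2=t(1-t)(J_0-J_1)^*(J_0-J_1)$. Then $\|\one+K_t^2\|\le\tfrac14\|J_0-J_1\|^2<1$, hence $K_t^*K_t=-K_t^2\ge(1-\tfrac14\|J_0-J_1\|^2)\one$. This is exactly the estimate the paper uses, here and in Lemma \ref{lemma:Fredholm_pair_straight_line_path}.

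With that fix your proof is correct, and it differs from the paper's in two places.

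\emph{The intertwiner.} The paper takes the skew-adjoint, Clifford-anti-linear $Z=\tfrac12(J_0+J_1)$ and gets invertibility from $\|\one+Z^2\|=\tfrac14\|J_0-J_1\|^2<1$, so that $w=Z|Z|^{-1}$ itself lies in $\calJ_A^{r,s}$. Your $\one-J_1J_0=-2J_1\cdot\tfrac12(J_0+J_1)$ produces the Clifford-linear unitary $W=-J_1w$. Conjugation by $W$ is manifestly $\Cl_{r,s}$-equivariant, which is a small advantage.

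\emph{The homotopy.} You normalise the straight line from $J_0$ to $J_1$ directly, which is shorter. The paper instead takes $w_t$ to be the sign of the straight line from $J_0$ to $\tfrac12(J_0+J)$; in your notation this is $w_t=J_{t/2}$, the first half of your path. It then uses the conjugation path $w_tJ_0w_t^*$. What this buys is a homotopy implemented by unitaries with $w_0=J_0$. That form is used in Lemma \ref{lemma:homotopic_to_lifting}, where $u_t=-J_0w_t$ is a Clifford-linear unitary path starting at $\one$ and can therefore be lifted. Your path establishes $J_0\sim_h J_1$ but does not directly supply such an implementing unitary path.
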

\begin{proof}
We let $Z_{J_0, J_1} = \frac{1}{2}(J_0 + J_1)$, which is skew-adjoint and anti-commutes with the Clifford generators. Also, $J_0Z=ZJ_1$, $J_1Z=ZJ_0$. Then 
\begin{align*}
   \one + Z_{J_0,J_1}^2 &= \frac{1}{4}( 2+ J_0J_1 + J_1J_0) = \frac{1}{4} (J_1 - J_0) (J_0 - J_1)
\end{align*}
and hence 
\[
   \big\| \one + Z^2_{J_0, J_1} \big\| = \frac{1}{4} \| J_0 - J_1\|^2 < 1 
\]
since $\| J_1 - J_0\| < 2$ by assumption. Therefore    $Z_{J_0, J_1}$ is invertible. 
As $Z^*Z$ commutes with $J_0$ and $J_1$, we find that  $w_{J_0, J_1} = Z_{J_0, J_1} \big|Z_{J_0, J_1}\big|^{-1}$ is a skew-adjoint unitary 
anti-commuting with the Clifford generators, and $w_{J_0, J_1} J_0 w_{J_0, J_1}^* = J_1$. 

Now fix $J_0$ and consider the set $\calJ_{J_0} = \{ J \in \calJ_{A}^{r,s} \mid \| J_0 - J\| < 2\}$. 
For any $J \in \calJ_{J_0}$ we define  $Z_J := \frac{1}{2} (J_0 + J)$ and $w_J := Z_J |Z_J|^{-1}$. 
Since the map $Z \mapsto Z|Z|^{-1}$ is norm-continuous for invertible $Z$, we see that 
the map $\calJ_{J_0} \ni J \mapsto w_J \in \calJ_{A}^{r,s}$ is norm-continuous. 

We now consider  $Z_t = J_0 + \frac{t}{2} (J - J_0)$, $t\in [0,1]$,  which is a Clifford anti-linear straight-line homotopy 
  from  $Z_0 = J_0$ to $Z_1 = \frac{1}{2} (J_0 + J) = Z_J$. 
We see that for any $t \in [0,1]$, 
\begin{align*}
   \big\| \one + Z_t^2 \big\| &= \big\| \tfrac{t}{2} \big( J_0 J + JJ_0 + 2 + \tfrac{t}{2} (J - J_0)^2  \big)\big\| 
   = \big\| \tfrac{t}{2} \big(  - (J - J_0)^2 + \tfrac{t}{2} (J - J_0)^2 \big) \big\|  \\
   &= \big| \tfrac{ t}{2} ( 1 - \tfrac{t}{2} ) \big| \,  \big\| (J - J_0)^* (J - J_0) \big\| \leq \tfrac{1}{4} \| J - J_0 \|^2,
\end{align*}
so $Z_t$ is invertible if $\|J - J_0 \| < 2$. Putting this together, 
for any  $J \in \calJ_{A}^{r,s}$ such that $\| J_0 - J\| < 2$, there is a homotopy 
$[0,1] \ni t \mapsto w_{t} J_0 w_t^* \in \calJ_A^{r,s}$ from $J_0$ to $J$, where $w_t = Z_t |Z_t|^{-1} \in \calJ_{A}^{r,s}$ .
\end{proof}

Any (ungraded) skew-adjoint unitary $J \in \calJ_{A}^{r,s}$ gives an odd self-adjoint unitary $J \otimes \rho \in \Mult(A) \otimes \Cl_{0,1}$ 
that is $\Cl_{s,r}$-anti-linear with respect to the generators $\{f_1\otimes \rho,\ldots, f_s \otimes \rho, e_1\otimes \rho,\ldots, e_r\otimes \rho\}$.
Fixing a basepoint $J_\mathrm{ref} \in \calJ_{A}^{r,s}$, we can apply Lemma \ref{lem:AMZ-OSU-iso} to obtain an odd self-adjoint unitary 
\begin{equation} \label{eq:sigma-rs_J_map}
    \sigma_{J_\mathrm{ref}\otimes \rho}^{r,s}( J\otimes \rho) = P^{r,s} ( J \otimes \rho) + (\one - P^{r,s})(J_\mathrm{ref} \otimes \rho) \in \Mult(A) \otimes \Cl_{r,s+1}
\end{equation}
Note that for $J_0, J_1 \in \calJ_{A}^{r,s}$, $\big\| \sigma_{J_\mathrm{ref}\otimes \rho}^{r,s}( J_1\otimes \rho)   - \sigma_{J_\mathrm{ref}\otimes \rho}^{r,s}( J_0\otimes \rho)   \big\| \leq \| J_1 - J_0 \|$.
To define a Van Daele $K$-theory class in $A \otimes \Cl_{r,s+1}$, we consider pairs of elements and excision.

\begin{lemma}[cf. Lemma \ref{lem:relative_DK}] \label{lem:Fred_pair_via_rel_DK}
If $J_0, J_1 \in \calJ_{A}^{r,s}$ are such that $\big\|q(J_1) - q(J_0)\big\|_{\calQ_A}<2$, then 
for any basepoint $J_\mathrm{ref} \in \calJ_{A}^{r,s}$, the map \eqref{eq:sigma-rs_J_map}
gives a well-defined relative class 
\[
    \big[ \sigma_{J_\mathrm{ref}\otimes \rho}^{r,s}( J_0\otimes \rho)  \big] - \big[  \sigma_{J_\mathrm{ref}\otimes \rho}^{r,s}( J_1\otimes \rho)   \big] 
       \in \DK \big( \Mult(A)\otimes   \Cl_{r,s+1}, \calQ_A \otimes  \Cl_{r,s+1} \big) .
\]
To be precise, there exists a $\tilde{J}_1 \in \calJ_{A}^{r,s}$ such that $\tilde{J}_1 \sim_{h} J_1$ in $\calJ_{A}^{r,s}$,  $\tilde{J}_1 - J_0 \in A$, and  
\[
    \big[ \sigma_{J_\mathrm{ref}\otimes \rho}^{r,s}( J_0\otimes \rho)  \big] - \big[  \sigma_{J_\mathrm{ref}\otimes \rho}^{r,s}( J_1\otimes \rho)   \big] 
 =   \big[ \sigma_{J_\mathrm{ref}\otimes \rho}^{r,s}( J_0\otimes \rho)  \big] - \big[  \sigma_{J_\mathrm{ref}\otimes \rho}^{r,s}( \tilde{J}_1\otimes \rho)   \big]  . 
\]
The excision isomorphism (see Eq.\ \eqref{eq:DK_excision}) then yields a well-defined element
\[
      \exc\big(   \big[ \sigma_{J_\mathrm{ref}\otimes \rho}^{r,s}( J_0\otimes \rho)  \big] - \big[  \sigma_{J_\mathrm{ref}\otimes \rho}^{r,s}( J_1\otimes \rho)   \big] \big) \in \DK( A \otimes \Cl_{r,s+1} ).
\]
\end{lemma}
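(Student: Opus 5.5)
The plan is to reproduce, at the level of the Calkin algebra, the argument of Lemma \ref{lemma:close_J_are_unitary_equiv}, and then lift the resulting unitary to $\Mult(A)$. Set $Z=\tfrac12(J_0+J_1)\in\Mult(A)$. This element is Real, skew-adjoint and $\Cl_{r,s}$-anti-linear, and it satisfies $J_0Z=ZJ_1$. The computation in Lemma \ref{lemma:close_J_are_unitary_equiv} gives
\[
\one+Z^2=\tfrac14(J_1-J_0)(J_0-J_1),
\]
so that $\|q(\one+Z^2)\|=\tfrac14\|q(J_1)-q(J_0)\|^2<1$. It follows that $q(Z)$ is invertible. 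Since $Z^*Z=-Z^2$ commutes with $J_0$, $J_1$ and the Clifford generators, $q(w)=q(Z)|q(Z)|^{-1}$ is a Real skew-adjoint unitary in $\calQ_A$. It anti-commutes with the generators and intertwines $q(J_1)$ with $q(J_0)$.

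The next step is to lift $q(w)$ to an honest unitary in $\Mult(A)$, and I expect this to be the main obstacle, since unitaries in the Calkin algebra do not lift in general. I would avoid the lifting problem by working instead with the path $Z_t=J_1+\tfrac t2(J_0-J_1)$, which lies in $\Mult(A)$. Exactly as in the estimate in Lemma \ref{lemma:close_J_are_unitary_equiv}, $\|q(\one+Z_t^2)\|<1$ for all $t\in[0,1]$, so each $Z_t$ is Fredholm. Rather than take polar parts in $\Mult(A)$, where $Z_t$ need not be invertible, I would conjugate $J_1$ directly.

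The conjugation uses the Cayley/exponential trick. Because $q(J_0)q(J_1)$ is a unitary with $\|q(J_0)q(J_1)+\one\|<2$, it has $-1$ outside its essential spectrum in a norm-controlled way. One then chooses a continuous branch $\log$ on the circle minus a neighbourhood of $1$, placing the cut so that the relevant spectrum avoids it; this plays the role of $\log(-\,\cdot)$. Take the lift $h=\tfrac12\log\bigl(-J_0J_1\bigr)$, made sense of after replacing $-J_0J_1$ by a self-adjoint functional-calculus lift. Concretely, I would set
\[
K=\tfrac14\bigl(J_1J_0-J_0J_1\bigr),
\]
which is Real, skew-adjoint and commutes with the Clifford generators. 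This $K$ anti-commutes with $J_0$ and $J_1$ up to the essential-norm control, and the aim is to define $\tilde J_1=e^{\theta K'}J_1e^{-\theta K'}$ for a suitable function $K'$ of $K$ and angle $\theta$. The point of this choice is that $u_t=e^{t\theta K'}$ is a genuine path of Real, Clifford-commuting unitaries in $\Mult(A)$. Consequently $t\mapsto u_tJ_1u_t^*$ stays in $\calJ_A^{r,s}$, which gives $\tilde J_1\sim_h J_1$. The function $K'$ is chosen so that modulo $A$ the rotation carries $q(J_1)$ exactly onto $q(J_0)$, giving $\tilde J_1-J_0\in A$. This works because in $\calQ_A$ the pair $q(J_0),q(J_1)$ generates the standard two-projection picture, in which $q(J_0)q(J_1)$ is a rotation by an angle bounded away from $\pi$. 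The delicate point is to check that the functional calculus for $K'$ commutes with $\frakr$ and with the Clifford generators; this holds because $K$ does and the chosen functions are real and odd.

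Once $\tilde J_1$ is constructed, the rest is formal. Applying \eqref{eq:sigma-rs_J_map} and the norm estimate $\|\sigma(J)-\sigma(J')\|\le\|J-J'\|$, the path $u_tJ_1u_t^*$ gives a homotopy of odd self-adjoint unitaries $\sigma_{J_\mathrm{ref}\otimes\rho}^{r,s}(u_tJ_1u_t^*\otimes\rho)$. Hence the two relative classes with $J_1$ and $\tilde J_1$ coincide. Since $\sigma(J_0\otimes\rho)-\sigma(\tilde J_1\otimes\rho)=P^{r,s}((J_0-\tilde J_1)\otimes\rho)\in A\otimes\Cl_{r,s+1}$, the pair defines a class in $\DK(\Mult(A)\otimes\Cl_{r,s+1},\calQ_A\otimes\Cl_{r,s+1})$ as in Lemma \ref{lem:relative_DK}. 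Independence of the choice of $\tilde J_1$ follows because any two such choices are joined by a path. Finally, the excision isomorphism \eqref{eq:DK_excision} produces the element of $\DK(A\otimes\Cl_{r,s+1})$.
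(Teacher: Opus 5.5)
The gap is in the construction of the conjugating unitary from a function of $K=\tfrac14(J_1J_0-J_0J_1)$. Your claim that $K'$ can be chosen so that, modulo $A$, the rotation carries $q(J_1)$ onto $q(J_0)$ is false in general.

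Put $V:=-J_0J_1$. This is a Real unitary commuting with the Clifford generators, with $J_1VJ_1^*=V^*$ and $J_0-J_1=J_0(\one-V)$. So the hypothesis says exactly $\|\one-q(V)\|<2$, i.e.\ $\spec q(V)\subset\{e^{i\vartheta}:|\vartheta|\le\vartheta_0\}$ with $\vartheta_0<\pi$. (Thus it is $+1$, not $-1$, that is absent from $\spec q(J_0J_1)$.)

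But $K=\tfrac14(V-V^*)$ only records $\tfrac{i}{2}\sin\vartheta$. Since $K$ anti-commutes with $J_1$ exactly, the even part of $K'=f(K)$ commutes with $J_1$ and drops out, leaving $e^{\theta K'}J_1e^{-\theta K'}=e^{2\theta K'_{\mathrm{odd}}}J_1$. You therefore need $e^{2\theta q(K'_{\mathrm{odd}})}=q(V)$, i.e.\ a function of $\sin\vartheta$ that returns $e^{i\vartheta}$ on $\spec q(V)$. This is impossible as soon as $\spec q(V)$ contains two distinct points $e^{i\vartheta}$ and $e^{i(\pi-\vartheta)}$, which the hypothesis allows once $\|q(J_0)-q(J_1)\|>\sqrt2$.

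Here is a concrete counterexample with $r=s=0$ and $A=\calK$. Let $J_k=(2P_k-\one)\otimes j$ on $\bigoplus_n\C^2\otimes\C^2$, with $j=\left(\begin{smallmatrix}0&-1\\1&0\end{smallmatrix}\right)$ acting on the second factor. In each summand, $P_0$ projects onto the first coordinate line of the first factor and $P_1$ onto the real line at angle $\phi$ from it. Take $\phi=\alpha/2$ in infinitely many summands and $\phi=(\pi-\alpha)/2$ in infinitely many others, with $0<\alpha<\pi/2$. Then $\|q(J_0)-q(J_1)\|=2\cos(\alpha/2)<2$. Yet $K=-\tfrac12\sin\alpha\,(j\otimes\one)$ on every summand, so each $e^{\theta K'}$ rotates all summands by the same angle, and $e^{\theta K'}J_1e^{-\theta K'}-J_0$ is never compact. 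Your construction only covers the range $\|q(J_0)-q(J_1)\|\le\sqrt2$.

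What is missing is to take a logarithm of $V$ itself rather than a function of $K$. Your remark about a branch of $\log$ points this way, but it is never turned into an element of $\Mult(A)$, and $\log(-J_0J_1)$ itself is not defined there because $-1$ may lie in $\spec V$. Choose a continuous $\ell\colon\mathbb{T}\to i\R$ with $\ell(\bar z)=-\ell(z)$ and $\ell(e^{i\vartheta})=i\vartheta$ for $|\vartheta|\le\vartheta_0$, modifying $\log$ only near $-1$. Then $L=\ell(V)\in\Mult(A)$ is Real and skew-adjoint, commutes with the Clifford generators, and anti-commutes with $J_1$. Hence $u_t=e^{tL/2}$ gives a norm-continuous path $u_tJ_1u_t^*$ in $\calJ_A^{r,s}$ from $J_1$ to $\tilde J_1=e^{L}J_1$, and $q(\tilde J_1)=q(V)q(J_1)=q(J_0)$.

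This is a lifting-free alternative to the paper's argument (Lemma \ref{lem:relative_DK}, in the form of Lemma \ref{lemma:homotopic_to_lifting}). The paper instead takes the polar parts $w_t$ of the straight-line path $Z_t$ inside $\calQ_A$. It notes that $-q(J_0)w_t$ is a path of Real, Clifford-commuting unitaries starting at $\one$, and lifts it to such a unitary in $\Mult(A)$, choosing the lift invariant under the finite group generated by $\mathrm{Ad}_{e_j},\mathrm{Ad}_{f_k}$.

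Once $\tilde J_1$ is available, the rest of your argument goes through: the homotopy through $\sigma^{r,s}_{J_\mathrm{ref}\otimes\rho}(u_tJ_1u_t^*\otimes\rho)$, and the fact that $P^{r,s}((J_0-\tilde J_1)\otimes\rho)\in A\otimes\Cl_{r,s+1}$.
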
 

If we take $J_\mathrm{ref} = J_0$, for example, then $\sigma_{J_0 \otimes \rho}^{r,s}( J_0\otimes \rho)  = J_0 \otimes \rho$.
When $r,s =0$, we simply take $ \exc\big(  [J_0 \otimes \rho] - [ J_1 \otimes \rho] \big) \in \DK( A \otimes \Cl_{0,1} )$ and 
do not require   the basepoint $J_\mathrm{ref}$ and map $\sigma^{r,s}_{J_\mathrm{ref}}$.

\subsection{Liftings of skew-adjoint unitaries}

Let $A$ be a unital $C^*$-algebra with $I \subset A$ a closed two-sided ideal. 
Fix ungraded Clifford generators  $\{e_1,\ldots, e_r, f_1,\ldots, f_s\} \subset A \setminus I$, so that 
we have generators of ungraded Clifford representations in both $A$ and $A/ I$. Hence the sets 
$\calJ_{A}^{r,s}$ and $\calJ_{A/I}^{r,s}$ of Clifford anti-linear skew-adjoint unitaries are well-defined.

For the construction of the analytic spectral flow in {\S}\ref{sec:analytic_spectral_flow}, 
it will be useful to know when a skew-adjoint unitary $J \in  \calJ_{A/I}^{r,s}$ has a 
lift $\wt{J} \in  \calJ_{A}^{r,s}$.

\begin{lemma} \label{lemma:homotopic_to_lifting}
Let $J_0, J_1 \in \calJ_{A/I}^{r,s}$ and 
suppose that $J_0 \in A/ I$ can be lifted to a skew-adjoint unitary  $\wt{J}_0 \in \calJ_{A}^{r,s}$. 
\begin{enumerate}
  \item If $\|J_0 - J_1\|_{A/I} < 2$, then $J_1$ can also be lifted to a skew-adjoint unitary $\wt{J}_1 \in \calJ_{A}^{r,s}$.
  \item If $J_0 \sim_h J_1$ in $\calJ_{A/I}^{r,s}$, then  $J_1$ can also be lifted to a skew-adjoint unitary $\wt{J}_1 \in \calJ_{A}^{r,s}$.
\end{enumerate}

\end{lemma}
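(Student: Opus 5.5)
For part (1), the plan is to transport the lift $\wt J_0$ along the unitary $w_{J_0,J_1}$ constructed in the proof of Lemma \ref{lemma:close_J_are_unitary_equiv}, after lifting that unitary itself. Let $q\colon A\to A/I$ be the quotient map. Since $\|J_0-J_1\|_{A/I}<2$, the element $Z=\tfrac12(J_0+J_1)\in A/I$ is invertible and skew-adjoint. It anti-commutes with the Clifford generators and is Real. Moreover, $w:=Z|Z|^{-1}$ is a Real, Clifford anti-linear skew-adjoint unitary with $wJ_0w^*=J_1$.

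The first step is to lift $Z$ to an element of $A$ with the same symmetries. Choose any lift $y\in A$ of $J_1$ and replace it by its average over the symmetries. That is, take the skew-adjoint part, the $\rs$-invariant part, and then the projection onto the Clifford anti-linear elements. The last projection is the average $y\mapsto 2^{-(r+s)}\sum_{g}\epsilon(g)\, g\, y\, g^{-1}$ over products $g$ of generators, with the appropriate signs. These are norm-one idempotent operations which commute with $q$, because the generators lie in $A$. This produces a lift $\wt Y$ of $J_1$ in the same symmetry class. Set $\wt Z=\tfrac12(\wt J_0+\wt Y)$.

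Invertibility of $\wt Z$ is the main obstacle, since a lift of an invertible element need not be invertible. I would avoid it by working with an invertible element built from unitaries instead. Note that $J_0J_1$ and $wJ_0$ are related, but the cleanest route is to lift $w$ directly. The element $u:=-J_0 w$ commutes with the generators, is Real, and is unitary. In fact $w$ is homotopic to $J_0$ through the path $Z_t|Z_t|^{-1}$ from the proof of Lemma \ref{lemma:close_J_are_unitary_equiv}, so $u$ is connected to $\one$ inside the unitaries of the symmetric commutant subalgebra. Unitaries of a quotient that lie in the identity component lift to unitaries: write $u$ as a finite product of exponentials $e^{h_i}$, and lift each skew-adjoint $h_i$ to a symmetric skew-adjoint $\wt h_i$ by the averaging above, applied now inside the Real commutant of the Clifford generators.

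One then sets $\wt w=-\wt J_0\wt u$, which is a Clifford anti-linear Real skew-adjoint unitary lifting $w$. The desired lift is $\wt J_1:=\wt w\,\wt J_0\,\wt w^*$. This lies in $\calJ^{r,s}_A$, because it is a conjugate of $\wt J_0$ by an element commuting with the generators up to sign, the sign appearing twice. Its image under $q$ is $wJ_0w^*=J_1$.

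For part (2), a homotopy $t\mapsto J_t$ in $\calJ^{r,s}_{A/I}$ is uniformly continuous. Choose a partition $0=t_0<\dots<t_n=1$ with $\|J_{t_{i+1}}-J_{t_i}\|<2$, and apply part (1) inductively.
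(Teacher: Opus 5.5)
Your argument is correct and is essentially the paper's proof. You lift the Real unitary $u=-J_0w$, which commutes with the Clifford generators and is joined to $\one$ through $-J_0w_t$, to a unitary $\wt u\in A$ with the same symmetries; your exponential-and-averaging construction just spells out the paper's ``we may assume $\wt u_t\in A^G$''. You then conjugate $\wt J_0$ by $\wt J_0\wt u$ and iterate over a fine partition for part (2), exactly as the paper does.

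One harmless slip: $-\wt J_0\wt u$ lifts $-w$ rather than $w$, and it need not be skew-adjoint. This does not matter, because conjugating $\wt J_0$ by any Real Clifford anti-linear unitary lifting $\pm w$ still gives a lift of $wJ_0w^*=J_1$ lying in $\calJ^{r,s}_A$.
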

\begin{proof}
(1) By Lemma \ref{lemma:close_J_are_unitary_equiv} there is a norm-continuous path $[0,1] \ni t \mapsto J_t = w_t J_0 w_t^* \in \calJ_{A/I}^{r,s}$
that connects $J_0$ with $J_1 = w_1 J_0 w_1^*$. In particular,  $w_0 = J_0$ and $w_1 = \mathrm{sgn}\big( \tfrac{ 1}{2} (J_0 + J_1) \big)$.
Then $t \mapsto u_t = -J_0 w_t$ is a path of (Real) unitaries from $u_0 = \one$ to $u_1 = -J_0 w_1$. 
Hence $u_t$ lifts to a path $\wt{u}_t \in A$ in the connected component of the identity. 

Because $w_t$ and $J_0$ anti-commute with the Clifford generators, $u_t$ commutes with the Clifford generators. Thus each $u_t$ lies in the fixed point algebra $(A/I)^G$, where the finite group $G$ is generated by the automorphisms 
$\mathrm{Ad}_{e_j}$ and $\mathrm{Ad}_{f_k}$ for all $j=1,\ldots, r$ and $k = 1,\ldots, s$. Hence we may assume that $\wt{u}_t\in A^G$, and so commutes with the Clifford generators as well.

We now consider the lift $\wt{J}_0 \in \calJ_{A}^{r,s}$ of $J_0$ and define 
$\wt{J}_1 = (\wt{J}_0 \wt{u}_1) \wt{J}_0 ( \wt{J}_0 \wt{u}_1 )^*$, which by construction is 
 a skew-adjoint unitary such that
\[
  q( \wt{J}_1 ) = J_0 u_1 J_0 ( J_0 u_1)^* = J_0 (-J_0) w_1 J_0 ( J_0 (-J_0) w_1)^* = w_1 J_0 w_1^* = J_1 .
\]
Furthermore, $\wt{J}_1$ anti-commutes with the Clifford generators, and so 
is indeed a lift in $\calJ_{A}^{r,s}$.

(2) We take a partition $0=t_0 < t_1<\cdots < t_n = 1$ such that $\|J_{t_j} - J_{t_{j-1}} \|_{A/I} < 2$ and repeatedly  apply part (1).
\end{proof}

\section{The relative index of skew-adjoint unitaries} \label{sec:relative_index}

In this section we introduce a (Van Daele) $K$-theoretic relative index for a pair of skew-adjoint unitaries 
$J_0, J_1 \in \End_A^{r,s}(X)$ satisfying $\big\| q(J_0) - q(J_1) \big\|_{\calQ_A(X)} < 2$. 
The index is a generalisation of the relative index of skew-adjoint unitaries on real Hilbert spaces \cite[\S4]{BCLR}, 
which in turn is an adaptation of   the well-known relative index of a Fredholm pair of projections, first considered 
 in the setting of pairs of subspaces of Banach spaces by Kato \cite[{\S}IV.4.1]{Kato} and then rediscovered and 
popularised in the Hilbert space setting by Avron--Seiler--Simon  \cite[\S4]{ASS}.

Given two projections $P$ and $Q$ on a (complex) Hilbert space $\calH$ such that $\big\| q(P) - q(Q) \big\|_{\calQ(\calH)} < 1$, 
it follows that $Q\colon\Ran(P)\to\Ran(Q)$ is Fredholm \cite{BCPRSW}, and we may define the \emph{relative index} of $(P,Q)$ as
\[
\relind(P,Q) := \Index\big( Q\colon\Ran(P)\to\Ran(Q) \big) \in K_0(\C) \simeq \Z .
\]

For two projections on the standard complex $C^*$-module $\calH_A$, a $K_0(A)$-valued generalisation of this index 
was given by Wahl~\cite[\S3]{Wahl07} and Ng--Sutradhar--Wang~\cite[\S4]{NSW2}. Briefly, if $P, Q \in \End_A(\calH_A)$ are
projections with $\big\| q(P) -q(Q) \big\|_{\calQ_A(\calH)} < 1$,
then $QP$ is Fredholm in the sense of Exel \cite{Exel}, and the elements 
$q(PQP) \in \calQ_{A}(P \calH_A)$ and $q(QPQ) \in \calQ_A(Q \calH_A)$ are invertible. Taking 
isometries $R,S \in M_2\big( \End_A(\calH_A)\big)$ such that $RR^*= Q \oplus \one$ and $SS^* = P \oplus \one$, 
$q\big( R^*( QP \oplus \one) S \big) \in M_2\big( \calQ_{A}(\calH_A) \big)$ is invertible, and we can define 
\[
   \relind(P,Q) :=  \calM_{\calH_A}^K \circ  \delta \big( \big[ q\big( R^*( QP \oplus \one) S \big) \big] \big), \qquad 
   \delta: K_1\big( \calQ_A(\calH) \big) \xrightarrow{\simeq} K_0\big( \End_A^0(\calH) \big).
\]
When $P, Q \in \End_A^0(\calH_A)$, this definition simplifies to 
$\calM_{\calH_A}^K \big( [P] - [Q] \big) \in K_0(A)$~\cite[Proposition 4.7]{NSW2}. 

Wahl axiomatically characterises a $K_0(A)$-valued relative index for projections $P, Q \in \End_A(\calH_A)$ with $P-Q \in \End_A^0(\calH_A)$. 
Because the index is uniquely characterised by its properties~\cite[Proposition 3.11]{Wahl07}, Wahl's index coincides 
with $\relind(P,Q)$ when $P-Q \in \End_A^0(\calH_A)$. Uniqueness of the relative index also implies that we can write 
\begin{equation} \label{eq:relind_in_relK}
\relind(P,Q) = \calM_X^K \circ \exc_X \big( [P] - [Q] \big) ,
\end{equation}
where $\big\| q(P) -q(Q) \big\|_{\calQ_A(\calH)} < 1$ and we apply Lemma \ref{lem:relative_DK} (adapted to the 
setting of complex $K$-theory).
Wahl also defines a $K_1(A)$-valued 
\emph{odd} relative index \cite[\S8]{Wahl07} (which is necessarily trivial on complex Hilbert spaces, since $K_1(\C)=0$).

In this section, we will introduce an analogous definition for the relative index on \emph{Real} $C^*$-modules, replacing projections by skew-adjoint unitaries, 
generalising the relative index defined on real Hilbert space introduced in \cite[\S4]{BCLR}. 
We show that our relative index agrees with the case of complex $C^*$-modules as described by Wahl \cite[\S3 \& \S8]{Wahl07} 
and Ng--Sutradhar--Wang~\cite{NSW2}.
Similarly to the construction of the analytic spectral flow by Phillips \cite{P2}, we will use our 
relative index of skew-adjoint unitaries to define the analytic spectral flow on Real $C^*$-modules in {\S}\ref{sec:analytic_spectral_flow}.

\subsection{Definition and properties of the relative index} \label{subsec:relative_index_defn}

As before, let $X_A$ be a full countably generated and ungraded Real Hilbert $C^*$-module over 
an ungraded Real $C^*$-algebra $A$, 
equipped with an ungraded and ample representation $\Cl_{r,s} \to \End_A(X)$ 
generated by $\{e_1, \ldots, e_r, f_1,\ldots, f_s\}$. We also recall the minimal even projection 
$P^{r,s}$ from Eq. \eqref{eq:ExplicitP^rs}.

\begin{defn}[Relative index] \label{def:Ind_of_pair_of_complex_structures_DK}
We say that two skew-adjoint unitaries $J_0, J_1 \in \End_A^{r,s}(X)$ form a \emph{Fredholm pair} if $\| q(J_0) - q(J_1) \|_{\calQ_A(X)} < 2$. 
Given a Fredholm pair $(J_0, J_1)$ and basepoint skew-adjoint unitary $J_\mathrm{ref} \in \End_A^{r,s}(X)$, 
we define $\relind_{r,s+1}(J_0, J_1) \in \DK(A \otimes \Cl_{r,s+1}),$ as the Van Daele class
\begin{align*}
      \relind_{r,s+1}(J_0, J_1) &=  \calM_{X}^{\DK} \circ \exc_{X} \big(  \big[ \sigma_{J_\mathrm{ref}\otimes \rho}^{r,s}( J_0\otimes \rho)  \big] - \big[  \sigma_{J_\mathrm{ref}\otimes \rho}^{r,s}( J_1\otimes \rho)   \big]  \big) \\
      &\hspace{-3cm} =
      \calM_{X}^{\DK} \circ \exc_{X} \big(  \big[P^{r,s}( J_0 \otimes \rho_1 ) + (\one - P^{r,s})(J_\mathrm{ref} \otimes \rho_1) \big] 
      - \big[P^{r,s}( J_1 \otimes \rho_1 ) + (\one - P^{r,s})(J_\mathrm{ref} \otimes \rho_1) \big]  \big) ,
\end{align*}
where $\big[ \sigma_{J_\mathrm{ref}\otimes \rho}^{r,s}( J_0\otimes \rho)  \big] - \big[  \sigma_{J_\mathrm{ref}\otimes \rho}^{r,s}( J_1\otimes \rho)   \big] \in \DK\big( \End_{A}(X) \otimes \Cl_{r,s+1} , \calQ_{A}(X)\otimes \Cl_{r,s+1} \big)$ is the relative 
Van Daele class from Lemma \ref{lem:Fred_pair_via_rel_DK}, 
\[
   \exc_X:  \DK\big( \End_A(X) \otimes \Cl_{r,s+1}, \calQ_A(X) \otimes \Cl_{r,s+1} \big) \xrightarrow{\simeq} \DK\big( \End_A^0(X) \otimes \Cl_{r,s+1}\big) 
 \]
is the 
excision isomorphism (Eq.\ \eqref{eq:DK_excision}), and 
$\calM_{X}^{\DK}: \DK\big( \End_A^0(X) \otimes \Cl_{r,s+1} \big) \xrightarrow{\simeq} \DK( A \otimes \Cl_{r,s+1})$ is the Morita invariance isomorphism 
(Eq.\ \eqref{eq:DK_Morita}). When $r=s=0$, we directly define the relative index
$\relind_{0,1}(J_0, J_1) = \calM_{X}^{\DK} \circ \exc_{X} \big( [ J_0 \otimes \rho] - [ J_1 \otimes \rho] \big) \in \DK(A \otimes \Cl_{0,1})$.
\end{defn}

We can take $J_0$ or $J_1$ as the basepoint skew-adjoint unitary. Taking $J_1$ for example, 
\[
  \relind_{r,s+1}(J_0, J_1) = \calM_X^{\DK} \circ \exc_X\big(  \big[ P^{r,s} (J_0 \otimes \rho) + (\one - P^{r,s})(J_1 \otimes \rho) \big] - [ J_1 \otimes \rho ] \big).
\]
We assemble some basic properties of the relative index.

\begin{lemma} \label{lemma:addition_for_Ind_J0J1}
\begin{enumerate}
  \item Let $J_0, J_1 \in \End_A^{r,s}(X)$ be a Fredholm pair of skew-adjoint unitaries. Then  
  \[\relind_{r,s+1}(J_1, J_0) = - \relind_{r,s+1}(J_0, J_1) \in \DK( A\otimes \Cl_{r, s+1}  ).
  \]
  \item If $J_0, J_1, J_2 \in \End_A^{r,s}(X)$ are skew-adjoint unitaries such that both
  $\|q(J_0 - J_1)\|_{\calQ_A(X)} < 1$ and $\|q(J_1 - J_2)\|_{\calQ_A(X)} < 1$, then 
  \[
     \relind_{r,s+1}(J_0, J_2) = \relind_{r,s+1}(J_0, J_1) + \relind_{r,s+1}(J_1, J_2).
  \]
    \item If $J_0, J_1 \in \End_A^{r,s}(X)$ are skew-adjoint unitaries such that $\| J_0 - J_1\| < 2$, then the relative index $\relind_{r,s+1}(J_0, J_1) $ is trivial.
    \item Let $\{J_{0,\lambda}\}_{\lambda\in [0,1]}$ and $\{J_{1,\lambda}\}_{\lambda\in [0,1]}$ be two norm-continuous paths of skew-adjoint unitaries in 
$\End_A^{r,s}(X)$ such that $\big\| q(J_{0,\lambda}) - q( J_{1,\lambda}) \big\|_{\calQ_{A}(X)} < 2$ for all $\lambda \in [0,1]$. Then  
$\relind_{r,s+1}(J_{0,\lambda}, J_{1,\lambda} )$ is well-defined and constant for all $\lambda \in [0,1]$.
\end{enumerate}
\end{lemma}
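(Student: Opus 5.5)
The plan is to reduce everything to elementary properties of relative Van Daele classes, using Lemma \ref{lem:Fred_pair_via_rel_DK} to replace $J_1$ by a homotopic $\tilde J_1$ with $\tilde J_1 - J_0 \in \End_A^0(X)$. Since the relative class is unchanged under this replacement, and $\exc_X$ and $\calM_X^{\DK}$ are group isomorphisms, each property can be checked at the level of the relative group $\DK(\End_A(X)\otimes\Cl_{r,s+1},\calQ_A(X)\otimes\Cl_{r,s+1})$. Throughout I write $x_j := \sigma^{r,s}_{J_\mathrm{ref}\otimes\rho}(J_j\otimes\rho)$. The shuffle map $\sigma^{r,s}$ satisfies $\|x_j - x_k\| \le \|J_j - J_k\|$ and respects compact differences, because $P^{r,s}$ is built from Clifford data.

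\emph{Parts (1) and (2).} For (1), the relative class $[x_1]-[x_0]$ is the negative of $[x_0]-[x_1]$ directly from the group structure of relative $\DK$-theory. Concretely, after arranging $x_1 - x_0$ compact, the excised classes are $[x_0]-[x_1]$ and $[x_1]-[x_0]$ in $\DK$ of the unitisation, which are negatives of one another.

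For (2), the hypothesis $<1$ gives $\|q(J_0-J_2)\|<2$, so all three pairs are Fredholm. Using Lemma \ref{lemma:homotopic_to_lifting}/Lemma \ref{lem:Fred_pair_via_rel_DK} successively, I would first replace $J_1$ by a homotopic $\tilde J_1$ with $\tilde J_1 - J_0$ compact. I would then replace $J_2$ by $\tilde J_2$ with $\tilde J_2-\tilde J_1$ compact, hence also $\tilde J_2 - J_0$ compact. The main subtlety is to ensure that these homotopies preserve the Fredholm-pair conditions along the way, so that each relative class is unchanged; the norm bounds $<1$ are chosen precisely to leave room for this via the triangle inequality. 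Once all differences are compact, excision gives classes in $\DK(\End^0_A(X)\otimes\Cl_{r,s+1})$, and the identity becomes the telescoping
\[
([x_0]-[x_1])+([x_1]-[x_2])=[x_0]-[x_2],
\]
valid in Van Daele $K$-theory of the unitisation.

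\emph{Part (3).} If $\|J_0-J_1\|<2$, then Lemma \ref{lemma:close_J_are_unitary_equiv} gives a norm-continuous path $J_t$ in $\calJ^{r,s}$ from $J_0$ to $J_1$. In particular $x_0$ and $x_1$ are homotopic through odd self-adjoint unitaries in $\End_A(X)\otimes\Cl_{r,s+1}$, via $\sigma^{r,s}(J_t\otimes\rho)$. A relative class $[x_0]-[x_1]$ in which $x_1$ is homotopic to $x_0$ within the full multiplier-level algebra is trivial; equivalently, it is the relative class $[x_0]-[x_0]$, which is zero. This follows from the homotopy invariance built into the relative group, or from the explicit form of the excision map.

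\emph{Part (4).} Here I would use homotopy invariance of relative classes. The pair of paths $(x_{0,\lambda}, x_{1,\lambda})$ defines a relative cycle over $C([0,1])\otimes(\cdot)$, since $\|q(J_{0,\lambda})-q(J_{1,\lambda})\|<2$ uniformly on the compact interval. I would apply Lemma \ref{lem:Fred_pair_via_rel_DK} to the algebra $C([0,1],\End_A(X))$, or equivalently argue locally on a partition of $[0,1]$. This yields a single class whose evaluations at each $\lambda$ are the $\relind_{r,s+1}(J_{0,\lambda},J_{1,\lambda})$, and so they are constant.

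Alternatively, and more concretely, I would choose a partition on which $\|J_{i,\lambda}-J_{i,\lambda'}\|<1$ for $i=0,1$. Combining (1), (2) and (3) then gives
\[
\relind(J_{0,\lambda},J_{1,\lambda}) = \relind(J_{0,\lambda},J_{0,\lambda'})+\relind(J_{0,\lambda'},J_{1,\lambda'})+\relind(J_{1,\lambda'},J_{1,\lambda}),
\]
where the outer two terms vanish by (3). This needs a version of (2) with mixed norm hypotheses, which I would obtain from the same lifting argument. That is the step I expect to require the most care.
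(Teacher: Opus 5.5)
The gap is in part (3); your parts (1), (2) and your first argument for (4) follow the paper's route. That route is: pass to compact-difference representatives via Lemma~\ref{lem:Fred_pair_via_rel_DK}, telescope, and use homotopy invariance of the relative class along a family of Fredholm pairs. You make the last step precise by running Lemma~\ref{lem:relative_DK} over $C([0,1],\End_A(X)\otimes\Cl_{r,s+1})$, where $\sup_\lambda\|q(J_{0,\lambda})-q(J_{1,\lambda})\|<2$ by compactness, and evaluating at $\lambda$. Your remark that the bound $<1$ in (2) is what makes the connecting Calkin-algebra paths compatible is justified. Every convex combination of $q(J_0),q(J_1),q(J_2)$ lies within distance $<1$ of the unitary $q(J_1)$. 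Hence the path from $q(J_0)$ to $q(J_2)$ is homotopic rel endpoints to the concatenation through $q(J_1)$.

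The principle you invoke in (3) is false. It says a relative class $[x_0]-[x_1]$ is trivial as soon as $x_1$ is homotopic to $x_0$ through odd self-adjoint unitaries of the ambient algebra $\End_A(X)\otimes\Cl_{r,s+1}$. Homotopy invariance in $\DK(B,B/I)$ only applies to homotopies through relative cycles, and along your path $x_0-x_t$ is not compact. The relative class remembers the connecting path in the Calkin algebra up to homotopy rel endpoints. An ambient homotopy whose Calkin image is a non-trivial loop can therefore change it.

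Here is a concrete counterexample. Take the complex case $r=s=0$, $A=\C$, $X=\ell^2(\Z)$, and $J_k=i(2P_k-\one)$, with $P_0,P_1$ the projections onto $\overline{\mathrm{span}}\{e_n:n\geq 0\}$ and $\overline{\mathrm{span}}\{e_n:n\geq 1\}$.
\begin{itemize}
\item The bilateral shift conjugates $J_0$ to $J_1$ and lies in the connected unitary group, so $J_0\otimes\rho\sim_h J_1\otimes\rho$.
\item Yet $\relind_{0,1}(J_0,J_1)$ corresponds to the generator $\relind_0^K(P_0,P_1)=[P_0-P_1]$ of $K_0(\C)$, by Proposition~\ref{prop:complex_relative_index}.
\end{itemize}
Here $\|J_0-J_1\|=2$, so (3) itself is not contradicted, but your principle would kill this class too.

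What actually makes (3) work is that $\|J_0-J_1\|<2$ holds in $\End_A(X)$, not merely in $\calQ_A(X)$. Then $w_t=\mathrm{sgn}\big(J_0+\tfrac t2(J_1-J_0)\big)$ already exists in $\End_A^{r,s}(X)$. So in the construction behind Lemma~\ref{lem:Fred_pair_via_rel_DK} (cf.\ the proof of Lemma~\ref{lem:relative_DK}), the path of unitaries to be lifted from the Calkin algebra is already the path $w_tJ_0^*$ in $\End_A(X)$, which commutes with the Clifford generators. Using it as the lift gives $\tilde J_1=(w_1J_0^*)^*J_1(w_1J_0^*)=J_0$, so the class is $[x_0]-[x_0]=0$.

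Equivalently, the path $J_t=w_tJ_0w_t^*$ of Lemma~\ref{lemma:close_J_are_unitary_equiv} satisfies $\|J_0-J_t\|\leq\|J_0-J_1\|<2$. So $(J_0,J_t)$ is a Fredholm pair for every $t$, and your first argument for (4), which does not use (3), applies. This is how the paper's one-line appeal to Lemma~\ref{lemma:close_J_are_unitary_equiv} should be read.

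Finally, drop your second argument for (4). It needs an additivity statement with mixed norm hypotheses that (2) does not provide. It would also be circular if (3) were justified through (4).
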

\begin{proof}
We write $[J \otimes \rho] =   \big[ \sigma_{J_\mathrm{ref}\otimes \rho}^{r,s}( J \otimes \rho)  \big] $ for brevity.
Part (1) is immediate from the definition. 
For part (2), we have seen in Lemma \ref{lem:Fred_pair_via_rel_DK} that all three of $[J_1\ox\rho]-[J_0\ox\rho]$, 
$[J_2\ox\rho]-[J_1\ox\rho]$, $[J_2\ox\rho]-[J_0\ox\rho]$ give well-defined classes in the relative Van Daele group
$\DK\big(\End_{A}(X)\otimes\Cl_{r,s+1},\calQ_{A}(X)\otimes \Cl_{r,s+1}\big)$. Then we  clearly have 
\[
[J_0\ox\rho]-[J_2\ox\rho]=[J_0\ox\rho]-[J_1\ox\rho]+[J_1\ox\rho]-[J_2\ox\rho].
\]
Part (3) follows from Lemma \ref{lemma:close_J_are_unitary_equiv}. 
For part (4), the conditions on the path 
ensure that we obtain  homotopies of odd self-adjoint unitaries $J_{0,\bullet} \otimes \rho$ 
and $J_{1,\bullet} \otimes \rho$ that will leave the relative $\DK$-class constant.
\end{proof}

We remark that $\relind_{r,s+1}(J_0, J_1) $ also possesses a stronger homotopy invariance property (Corollary \ref{cor:Ind(J0,J1)_htpy_invariant}), 
but its proof first requires a connection of the relative index with $\KKR$-theory.

\subsection{The relative index as a Kasparov class}

We will show here that the relative index of a Fredholm pair $(J_0,J_1)$ can be described as the image of a Kasparov class 
under the Kubota isomorphism $\Kubota_A^{r,s+1}: \KKR(\Cl_{s+1,r}, SA) \to \DK( A \otimes \Cl_{r,s+1} )$ 
described in Appendix \ref{subsec:bdd_KK_to_DK} (Definition \ref{def:Indexed_KK_isomorphisms}), 
where $SA$ denotes the suspension of $A$. 
In particular, this implies that the relative index inherits the properties of $\KK$-theory.

\begin{lemma} \label{lemma:Fredholm_pair_straight_line_path}
Let $J_0, J_1 \in \End_A^{r,s}(X)$ be a Fredholm pair of skew-adjoint unitaries. Define the family 
$\{F_t\}_{t\in [0,1]} \subset \End^{r,s}_A(X)$ by $F_t  = (1-t)J_0 + t J_1$.
Then $F_\bullet$ is a skew-adjoint 
Fredholm operator on $SX_{SA}$. 
Consequently, the triple 
\[
  \Big( \Cl_{s+1, r}, \,  SX_{SA} \otimes \exterior \C, \,  \chi(F_\bullet) \otimes \rho \Big)
\]
is a Real Kasparov module with Clifford generators 
  $ \big\{  \one \otimes \gamma, \, f_1 \otimes \rho, \ldots, f_s \otimes \rho, e_1 \otimes \rho, \ldots, e_r \otimes \rho \big\}$.
\end{lemma}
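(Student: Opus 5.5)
Our plan is to show that $F_\bullet$, viewed as an adjointable operator on $SX_{SA} = C_0((0,1),X)$, hmm — note $F_0 = J_0$ and $F_1 = J_1$ are not zero at the endpoints, so $F_\bullet$ acts by pointwise multiplication as an element of $\End_{SA}(SX) \cong \Mult(S\End^0_A(X))$. It is strictly continuous and bounded, hence adjointable. Skew-adjointness, Reality and Clifford anti-linearity hold pointwise, since $J_0,J_1$ are Real skew-adjoint and anti-commute with the $e_j,f_k$. Fredholmness we will verify through Lemma \ref{lemma:bdd_unbdd_Fred_condition}(1): since $\|F_t\|\le 1$, it suffices to show $\|\one + q(F_\bullet)^2\| < 1$ in $\calQ_{SA}(SX)$. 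Once this is shown, the Kasparov module statement follows directly from Corollary \ref{cor:skew_Fred_to_KasMod}, applied with $X_A$ replaced by $SX_{SA}$ (the Clifford representation acts constantly and is still ample).

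The key computation mirrors Lemma \ref{lemma:close_J_are_unitary_equiv}. Using $J_i^2=-\one$, we obtain
\begin{equation*}
\one + F_t^2 = t(1-t)\big(2 + J_0J_1 + J_1J_0\big) = t(1-t)(J_1-J_0)^*(J_1-J_0).
\end{equation*}
Since $(J_1-J_0)^*(J_1-J_0)$ is independent of $t$, write $K := \one + F_\bullet^2 = g\cdot (J_1-J_0)^*(J_1-J_0)$ with $g(t)=t(1-t)$, so that $\sup g = 1/4$. We must bound the norm of $K$ modulo $\End^0_{SA}(SX) = S\End^0_A(X) = C_0((0,1),\End^0_A(X))$.

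This estimate is the main obstacle: we need the essential norm of $g\cdot M$, where $M$ is a constant multiplier, computed in the quotient by $C_0((0,1),\End^0_A(X))$. By hypothesis $\|q(M)\| = \|q(J_1-J_0)\|^2 < 4$, so we can pick $k\in\End^0_A(X)$ with $\|M - k\| < 4$. Then $g\cdot k \in C_0((0,1),\End^0_A(X))$, because $g$ vanishes at the endpoints (this is exactly where the suspension, rather than $C([0,1],X)$, is essential). It follows that $\|q(K)\| \le \|g(M-k)\| \le \tfrac14\|M-k\| < 1$. The remaining bookkeeping is to justify the identification $\End_{SA}(SX)\supseteq C_b((0,1),\End_A(X))_{\text{strict}}$ acting pointwise, and that $C_0((0,1),\End^0_A(X))$ is contained in the compacts. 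Both are standard via $\End^0_{SA}(SX)\cong S\End^0_A(X)$, as stated in the notation section.

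Finally, with $F_\bullet$ Fredholm, we choose a normalising function $\chi$ as in Definition \ref{def:Reg_fn}; its existence is guaranteed by Proposition \ref{prop:Fredholm_properties}. Since $\chi$ is odd, $\chi(F_\bullet)$ is still Real, skew-adjoint and Clifford anti-linear. Corollary \ref{cor:skew_Fred_to_KasMod} then gives the Real Kasparov module $\big(\Cl_{s+1,r},\, SX_{SA}\otimes\exterior\C,\, \chi(F_\bullet)\otimes\rho\big)$ with the stated generators.
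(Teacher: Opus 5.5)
Your proof is correct. It uses the same identity $\one+F_t^2=t(1-t)(J_1-J_0)^*(J_1-J_0)$ and the same two ingredients (Lemma \ref{lemma:bdd_unbdd_Fred_condition}(1) and Corollary \ref{cor:skew_Fred_to_KasMod}) as the paper. The genuine difference is how you pass to the Calkin algebra of $SX_{SA}$.

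The paper bounds the pointwise essential norms, $\|\one+q(F_t)^2\|_{\calQ_A(X)}\le\tfrac14\|q(J_0)-q(J_1)\|^2$, and concludes in one line that $\|\one+q(F_\bullet)^2\|_{\calQ_{SA}(SX)}<1$. That step is not automatic: the essential norm of a family always dominates the supremum of the pointwise essential norms, not conversely. Here it is justified for two reasons. First, $F_\bullet$ is norm-continuous, so $C([0,1],\End_A(X))/C_0((0,1),\End_A^0(X))$ embeds isometrically into $C([0,1],\calQ_A(X))\oplus\End_A(X)\oplus\End_A(X)$ via $f\mapsto(q\circ f,f(0),f(1))$. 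Second, $\one+F_0^2=\one+F_1^2=0$.

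You sidestep this by exploiting that $\one+F_\bullet^2=g\cdot M$ with $M$ independent of $t$. A single compact $k$ with $\|M-k\|<4$ then gives the explicit compact perturbation $g\cdot k\in C_0((0,1),\End_A^0(X))$ and the bound $\tfrac14\|M-k\|<1$ directly. This is more elementary, and it shows exactly where the vanishing of $g$ at the endpoints (i.e.\ the suspension) is used.

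The paper's pointwise route, in exchange, does not rely on the product structure of the straight-line path. It is the form of the estimate reused for general norm-continuous paths, as in Lemma \ref{lemma:sf_small_perturbations}.
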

\begin{proof}
Clearly $t \mapsto F_t$ is norm-continuous and for any  $t \in [0,1]$,
we have that 
\begin{align*}
  \big\| \one + F_t^2 \big\| &= \big\|\one + \big( (1-t)J_0 + tJ_1 \big)^2 \big\| = |t(1-t) | \big\|2+J_0J_1 + J_1J_0 \big\| \\
  &= |t(1-t) | \,  \big\| (J_0 -J_1)(J_1-J_0) \big\| = |t(1-t) |  \, \big\| J_0 -J_1 \big\|^2.
\end{align*}
Because $\big|t (t-1)\big| \leq \frac{1}{4}$ with equality at $t=1/2$, 
 if $(J_0, J_1)$ is a Fredholm pair, then $F_t$ is invertible in $\End_A(X)$ for all $t \in [0,1/2) \cup (1/2, 1]$ since $\Vert J_0-J_1\Vert^2\leq 4$. 
 Furthermore, $\big\| \one + q(F_t)^2 \big\|_{\calQ_A(X)} < 1$ and $F_t$ is Fredholm for all $t\in[0,1]$ by Lemma \ref{lemma:bdd_unbdd_Fred_condition}. Hence $\sup_{t\in(0,1)}\big\| \one + q(F_t)^2 \big\|_{\calQ_A(X)} < 1$ and so
  $\big\| \one + q( F_\bullet)^2 \big\|_{\calQ_{SA}(SX)}<1$. 
This shows, by Lemma \ref{lemma:bdd_unbdd_Fred_condition}, that  $F_\bullet$ is Fredholm on $SX_{SA}$. 
The Kasparov module is then obtained from Corollary \ref{cor:skew_Fred_to_KasMod}. 
\end{proof}

\begin{remark} \label{rk:rewriting_Ind(J0,J1)}
  If $J_0, J_1 \in \End_A^{r,s}(X)$ are a Fredholm pair of skew-adjoint unitaries with $J_0 - J_1 \in \End_A^0(X)$, then 
  the operator $F_\bullet \otimes \rho$ from Lemma \ref{lemma:Fredholm_pair_straight_line_path} is not only Fredholm but in fact already defines a Kasparov module (i.e., we do not need to consider a normalising function). 
 For a general Fredholm pair of skew-adjoint unitaries $(J_0, J_1)$, 
 we can always replace $J_1$ by a skew-adjoint unitary $\tilde{J}_1 \in \End_A^{r,s}(X)$ 
  such that $\tilde{J}_1 - J_0 \in \End_A^0(X)$. 
  Indeed, $\tilde{J}_1$ is obtained from $J_1$ by Lemma \ref{lem:Fred_pair_via_rel_DK}, and the homotopy from $J_1$ to $\tilde{J}_1$ also yields a homotopy from $\{F_t := (1-t)J_0+tJ_1\}_{t\in[0,1]}$ to $\{\tilde{F}_t := (1-t)J_0+t\tilde{J}_1\}_{t\in[0,1]}$. 
  Hence the $\KKR$-class $\big[ \chi(F_\bullet) \otimes \rho \big]$ of Lemma \ref{lemma:Fredholm_pair_straight_line_path} can equivalently be described as $\big[ \tilde{F}_\bullet \otimes \rho \big]$. 
\end{remark}

\begin{prop} \label{prop:KK_DK_ComplexStrucIndex_agrees}
Let $J_0, J_1 \in \End_{A}^{r,s}(X)$ be a Fredholm pair of skew-adjoint unitaries.
Consider the family $\{F_t\}_{t\in [0,1]} \subset \End_A^{r,s}(X)$ given by $F_t  = (1-t)J_0 + t J_1$.
Then the Kubota isomorphism $\Kubota_A^{r,s+1}:\KKR(\Cl_{s+1,r},SA)\to \DK(A\ox\Cl_{r,s+1})$ 
maps the $\KK$-class $\class{F_\bullet} := [\chi(F_\bullet)\otimes\rho] \in \KKR(\Cl_{s+1,r}, SA)$ to the relative index of $(J_1,J_0)$ in $\DK( A \otimes \Cl_{r,s+1} )$, 
\[
\Kubota_A^{r,s+1} \big( \class{F_\bullet} \big) = \relind_{r,s+1}(J_1,J_0) = -\relind_{r,s+1}(J_0,J_1) .
\]
\end{prop}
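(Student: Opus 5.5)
The plan is to compute $\Kubota_A^{r,s+1}$ directly on a convenient representative of $\class{F_\bullet}$ and recognise the result as the relative Van Daele class defining $\relind_{r,s+1}(J_1,J_0)$. The second equality, $\relind_{r,s+1}(J_1,J_0)=-\relind_{r,s+1}(J_0,J_1)$, is part (1) of Lemma \ref{lemma:addition_for_Ind_J0J1}, so only the first equality needs proof.

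\emph{Step 1: reduce to a compact difference.} By Remark \ref{rk:rewriting_Ind(J0,J1)} we may replace $J_1$ by $\tilde J_1\sim_h J_1$ with $\tilde J_1-J_0\in\End_A^0(X)$. This changes neither side:
\begin{itemize}
\item the $\KK$-class of the straight-line family is unchanged, by the homotopy described in that remark;
\item the relative index is unchanged, by Lemma \ref{lem:Fred_pair_via_rel_DK} (equivalently part (4) of Lemma \ref{lemma:addition_for_Ind_J0J1}).
\end{itemize}
So assume $J_1-J_0$ is compact. Then $\one+F_t^2=t(1-t)(J_0-J_1)(J_1-J_0)$ is compact for each $t$, and it vanishes at $t=0,1$. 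Hence $\class{F_\bullet}=[F_\bullet\otimes\rho]$ with no normalising function needed. Moreover $F_\bullet\otimes\rho$ is a path in $\End_A(X)\otimes\Cl_{0,1}$ that is an odd self-adjoint unitary at the endpoints and essentially unitary throughout.

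\emph{Step 2: shuffle and stabilise.} Apply $\shuffle^{r,s+1}_{\KK}$ pointwise in $t$ using Lemma \ref{lem:cliff-shuffle}(2), with basepoint $J_{\mathrm{ref}}=J_0$. This represents the shuffled class by the path
\[
\sigma^{r,s}_{J_0\otimes\rho}(F_t\otimes\rho)=P^{r,s}(F_t\otimes\rho)+(\one-P^{r,s})(J_0\otimes\rho)
\]
over $SA\otimes\Cl_{r,s+1}$, whose endpoints are $J_0\otimes\rho$ and $\sigma^{r,s}_{J_0\otimes\rho}(J_1\otimes\rho)$. As in Lemma \ref{lemma:Calkin_class}, we then stabilise to $\calH_A$ via $U(\,\cdot\,\oplus J)U^*$, so that we are working in the standard module where $\Kubota$ is defined.

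\emph{Step 3: unwind the Kubota map.} Next we unwind the definition of $\Kubota_A^{r,s+1}$ from Appendix \ref{subsec:bdd_KK_to_DK}. I expect it to be the composition of $\Phi$ for the suspended algebra with the identification of $\DK$ of the cone/suspension of the Calkin algebra with the relative group, followed by excision and Morita. Concretely, a normalised $SA$-Kasparov module given by a path $S_t$ of essentially unitary odd self-adjoint operators with unitary endpoints should be sent to
\[
\calM^{\DK}\circ\exc\big([S_1]-[S_0]\big).
\]
Applied to our path this yields
\[
\calM^{\DK}_{\calH_A}\circ\exc\big([\sigma(\tilde J_1\otimes\rho)\oplus\cdot]-[J_0\otimes\rho\oplus\cdot]\big).
\]

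\emph{Step 4: destabilise.} Finally, remove the stabilising summand $J\otimes\rho$, which contributes trivially since it cancels in the relative difference. We do this using compatibility of excision and Morita invariance with the corner inclusion $\nu:X_A\hookrightarrow\calH_A$, namely Lemma \ref{lemma:DK_Morita_compat_new}, exactly as in the proof of Theorem \ref{thm:index-of-Fred}. What remains is $\relind_{r,s+1}(J_1,J_0)$ as in Definition \ref{def:Ind_of_pair_of_complex_structures_DK}, with basepoint $J_0$.

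The main obstacle is Step 3. We must identify precisely how the appendix's Kubota isomorphism acts on an explicit path with invertible endpoints, and in particular which endpoint carries the plus sign, since the sign decides between $\relind(J_1,J_0)$ and $\relind(J_0,J_1)$. If $\Kubota$ is instead defined as $\beta_{\DK}^{-1}\circ\Roe_{SA}$, I would first establish the endpoint-evaluation formula as a lemma. To do so I would compare the boundary-map formula of Lemma \ref{lemma:delta_formula}, applied to the suspended family, with the Bott map applied to the relative class $[S_1]-[S_0]$. The sign would be fixed by a sanity check in the scalar case $A=\C$, $r=s=0$ with a rank-one difference.
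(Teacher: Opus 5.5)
Your proposal is correct and follows the paper's proof. The paper also passes to $\tilde J_1$ with $\tilde J_1-J_0\in\End_A^0(X)$ via Lemma \ref{lem:Fred_pair_via_rel_DK} and Remark \ref{rk:rewriting_Ind(J0,J1)}, but then simply cites Proposition \ref{prop:concrete_Kubota_computation} in place of your Steps 2--4. That proposition's proof is exactly your shuffle-and-stabilise, boundary map via Proposition \ref{prop:suspension_bdry_graded}, and destabilise argument, and it already fixes the sign you were worried about: the $t=1$ endpoint enters positively, so $\calM_X^{\DK}\circ\exc_X\big([\tilde J_1\otimes\rho]-[J_0\otimes\rho]\big)=\relind_{r,s+1}(J_1,J_0)$ and no scalar sanity check is needed.
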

\begin{proof}
We write $[J \otimes \rho] =   \big[ \sigma_{J_\mathrm{ref}\otimes \rho}^{r,s}( J \otimes \rho)  \big] $ for brevity.
Given a Fredholm pair of skew-adjoint unitaries, the relative index is given by 
\[
  \relind_{r,s+1}(J_1, J_0) = \calM_X^{\DK} \circ \exc_X \big( [J_1 \otimes \rho] - [J_0 \otimes \rho] \big)
   = \calM_X^{\DK} \circ \exc_X\big( [\tilde{J}_1 \otimes \rho] - [J_0 \otimes \rho] \big),
\]
where $\tilde{J}_1 = v J_1v^*$ is a skew-adjoint unitary homotopic to $J_1$  such that  $q(\tilde{J}_1) = q(J_0)$ 
(see Lemma \ref{lem:Fred_pair_via_rel_DK}). By  Remark \ref{rk:rewriting_Ind(J0,J1)}, 
the $\KK$-class $\class{F_\bullet}$ can equivalently be described by the Kasparov module 
\[
 \Big( \Cl_{s+1, r}, \,  SX_{SA} \otimes \exterior \C, \,   \tilde{F}_\bullet \otimes \rho \Big) , \qquad \tilde{F}_t = (1-t)J_0 + t\tilde{J}_1.
\]
The latter satisfies the hypotheses of Proposition \ref{prop:concrete_Kubota_computation}, 
which we can apply to obtain 
\begin{align*}
     \Kubota_A^{r,s+1}   \big( [\tilde{F}_\bullet \otimes \rho] \big) 
     &= \calM^{\DK}_{X} \circ \exc_{X} \big( [  \tilde{F}_1  \otimes \rho ] - [  \tilde{F}_0  \otimes \rho ] \big) \\
       &= \calM_{X}^{\DK} \circ \exc_{X}\big( [ \tilde{J}_1 \otimes \rho ] - [ J_0 \otimes \rho ] \big) 
       = \relind_{r,s+1}(J_1, J_0) .  \qedhere
\end{align*}
\end{proof}

We can therefore use the homotopy invariance of $\KK$-theory to obtain a stronger homotopy invariance 
of  $\relind_{r,s+1}(J_0, J_1)$.

\begin{cor} \label{cor:Ind(J0,J1)_htpy_invariant}
Let $\{J_{0,\lambda}\}_{\lambda\in [0,1]}$ and $\{J_{1,\lambda}\}_{\lambda\in [0,1]}$ be strongly continuous paths of skew-adjoint unitaries in 
$\End_A^{r,s}(X)$ such that $\big\| q(J_{0,\bullet}) - q( J_{1,\bullet}) \big\|_{\calQ_{C([0,1],A)}(C([0,1],X))} < 2$. Then  
$\relind_{r,s+1}(J_{0,\lambda}, J_{1,\lambda} )$ is well-defined and constant for all $\lambda \in [0,1]$.
\end{cor}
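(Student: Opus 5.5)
The plan is to lift the whole family to the module $C([0,1],X)$ over $C([0,1],A)$ and apply Proposition \ref{prop:KK_DK_ComplexStrucIndex_agrees} there. Evaluating at each $\lambda$ then identifies the pointwise relative indices with the image of one $\KKR$-class under a fixed evaluation homomorphism, which forces them to be constant.

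\textbf{Step 1: the family operators.} Set $\tilde X := C([0,1],X)$ over $\tilde A := C([0,1],A)$. Strong continuity of $\lambda\mapsto J_{i,\lambda}$, together with the fact that the $J_{i,\lambda}$ are unitaries (hence uniformly bounded), gives adjointable operators $J_{i,\bullet}\in\End_{\tilde A}(\tilde X)$. These are Real skew-adjoint unitaries and anti-commute with the constant Clifford generators, so they lie in $\End^{r,s}_{\tilde A}(\tilde X)$. The hypothesis says precisely that $(J_{0,\bullet},J_{1,\bullet})$ is a Fredholm pair on $\tilde X_{\tilde A}$.

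Applying Lemma \ref{lemma:Fredholm_pair_straight_line_path} on $\tilde X$ to $F_{t,\bullet}=(1-t)J_{0,\bullet}+tJ_{1,\bullet}$ gives a Kasparov class $\class{F_{\bullet,\bullet}}\in\KKR(\Cl_{s+1,r},S\tilde A)$. Since $S\tilde A\cong C([0,1],SA)$, this class is a homotopy in $\KKR(\Cl_{s+1,r},SA)$ between its evaluations at $\lambda=0$ and $\lambda=1$.

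\textbf{Step 2: evaluation and well-definedness.} For each $\lambda$, the pushforward $(\ev_\lambda)_*\class{F_{\bullet,\bullet}}$ is represented by $SX\otimes\exterior\C$ with operator $\chi(F_{\bullet,\lambda})\otimes\rho$. Pointwise, $\|q(J_{0,\lambda})-q(J_{1,\lambda})\|\le\|q(J_{0,\bullet})-q(J_{1,\bullet})\|<2$, because $\ev_\lambda$ descends to a contraction of Calkin algebras: it maps $\End^0_{\tilde A}(\tilde X)$ into $\End^0_A(X)$. Hence each pair $(J_{0,\lambda},J_{1,\lambda})$ is a Fredholm pair and $\relind_{r,s+1}(J_{0,\lambda},J_{1,\lambda})$ is well-defined.

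Naturality of the normalising-function construction gives $\ev_\lambda(\chi(F))=\chi(F_\lambda)$, so $(\ev_\lambda)_*\class{F_{\bullet,\bullet}}=\class{F_{\bullet,\lambda}}$. By Proposition \ref{prop:KK_DK_ComplexStrucIndex_agrees},
\[
\relind_{r,s+1}(J_{0,\lambda},J_{1,\lambda})=-\Kubota_A^{r,s+1}\big(\class{F_{\bullet,\lambda}}\big).
\]
Homotopy invariance of $\KKR$ shows that $\class{F_{\bullet,\lambda}}$ does not depend on $\lambda$, and therefore neither does the relative index.

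\textbf{Main obstacle.} The delicate part is Step 1 in the merely strongly continuous setting. One must check that the family of Calkin norms is controlled as stated, and that Lemma \ref{lemma:Fredholm_pair_straight_line_path} applies verbatim over $\tilde A$. That lemma uses only algebraic identities and Lemma \ref{lemma:bdd_unbdd_Fred_condition}, so it should transfer, provided the family operators are genuinely adjointable, which must be checked for pointwise-continuous bounded families. If this fails, I would first reduce via Lemma \ref{lem:Fred_pair_via_rel_DK} (Remark \ref{rk:rewriting_Ind(J0,J1)}) to families whose differences $J_{1,\bullet}-J_{0,\bullet}$ are compact, so that no normalising function is needed.
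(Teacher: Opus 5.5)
Your proposal is correct and follows essentially the same route as the paper: the two-parameter straight-line family $F_{t,\lambda}=(1-t)J_{0,\lambda}+tJ_{1,\lambda}$, viewed over $C([0,1],SA)$, is a homotopy of Kasparov modules in $\KKR(\Cl_{s+1,r},SA)$. Proposition \ref{prop:KK_DK_ComplexStrucIndex_agrees} then turns this constant class into the relative index. Your check that $\ev_\lambda$ descends to a contraction of Calkin algebras, which gives pointwise well-definedness, fills in a step the paper leaves implicit.
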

\begin{proof}
The conditions ensure that 
\[
   \{F_{t,\lambda}\}_{(t,\lambda) \in [0,1]\times[0,1]} \subset \End_A^{r,s}(X), \qquad 
   F_{t,\lambda} = J_{0,\lambda} + t(J_{1,\lambda} - J_{0,\lambda} )
\]
gives a  homotopy of Kasparov modules in $\KKR(\Cl_{s+1,r}, SA)$, \cite[\S17.2]{Blackadar}. Indeed, 
$(t,\lambda) \mapsto F_{t,\lambda}$ is strongly continuous and 
\begin{align*}
  \big\| \one + q(F_{t,\bullet})^2 \big\|_{\calQ_{C([0,1],A)}(C([0,1],X))}  &= |t(1-t)| \, \big\| 2 + q(J_{0,\bullet}J_{1,\bullet}) + q(J_{1,\bullet}J_{0,\bullet}) \big\|_{\calQ_{C([0,1],A))}(C([0,1],X)} \\
  &= |t(1-t)| \, \big\| q(J_{1,\bullet}) - q(J_{0,\bullet}) \big\|_{\calQ_{C([0,1],A)}(C([0,1],X))}^2 < 1
\end{align*}
for all $t\in [0,1]$. Thus we get a homotopy of Kasparov modules by Corollary \ref{cor:Wahl_family_to_KK_htpy} below, and the statement follows from Proposition \ref{prop:KK_DK_ComplexStrucIndex_agrees}. 
\end{proof}

\subsection{A  \texorpdfstring{$\Z_2$}{Z2}-graded relative index}

As we have done with the Real Fredhom index (Theorem \ref{thm:Z_2-graded_Fred_index}), we can also define 
a relative index for odd self-adjoint unitaries on $\Z_2$-graded $C^*$-modules. This assumption puts us directly in the 
setting of Van Daele $K$-theory, which is defined by odd self-adjoint unitaries, and  the Clifford shuffle 
map   $\sigma^{r,s}_{J_\mathrm{ref}\otimes \rho}$ is not required.

\begin{defn}
Let $Y_B$ be a $\Z_2$-graded $C^*$-module and $G_0, G_1 \in \End_B(Y)$ odd self-adjoint Real unitaries. We say that 
$(G_0, G_1)$ are a Fredholm pair if $\big\| q(G_0) - q(G_1) \big\|_{\calQ_B(Y) } < 2$ and define 
\[
   \wh{\relind}(G_0, G_1) = \calM_Y^{\DK} \circ \exc_Y \big( [G_0] - [G_1] \big) \in \DK( B ).
\]
\end{defn}

The index is well-defined by Lemma \ref{lem:relative_DK}.
The analogue of Lemma \ref{lemma:Fredholm_pair_straight_line_path} and 
Proposition \ref{prop:KK_DK_ComplexStrucIndex_agrees} in this setting is the following (the proof is the same with 
minor adjustments).

\begin{prop} \label{prop:Z_2-rel-index-properties}
Let $Y_B$ be a $\Z_2$-graded full $C^*$-module and $G_0, G_1 \in \End_B(Y)$ a Fredholm pair of odd self-adjoint Real unitaries.
\begin{enumerate}
  \item The triple 
  \[
    \big( \C, \, SY_{SB}, \, \chi( G_\bullet ) \big), \qquad G_t = (1-t)G_0 + t G_1
  \]
  is a Real Kasparov module with class $[G_\bullet] \in \KKR(\C, SB)$.
  \item The $\Z_2$-graded Kubota isomorphism $\Psi_B: \KKR(\C, SB) \xrightarrow{\simeq} \DK( B )$ 
  from Theorem \ref{thm:graded_Roe_and_Kubota} is such that 
  \[
    \Psi_B\big( [G_\bullet] \big) = \wh{\relind}(G_1, G_0) = -\wh{\relind}(G_0, G_1) . 
  \]
\end{enumerate}
\end{prop}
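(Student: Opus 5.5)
Part (1) follows the proof of Lemma \ref{lemma:Fredholm_pair_straight_line_path}, with odd self-adjoint unitaries in place of skew-adjoint ones. Since $G_0,G_1$ are odd, self-adjoint and Real, so is $G_t=(1-t)G_0+tG_1$, and $t\mapsto G_t$ is norm-continuous. Using $G_0^2=G_1^2=\one$ we get
\[
\one - G_t^2 = t(1-t)\big(2 - G_0G_1 - G_1G_0\big) = t(1-t)\,(G_0-G_1)^2 ,
\]
so $\|\one-q(G_t)^2\|\le \tfrac14\|q(G_0)-q(G_1)\|^2<1$ uniformly in $t$. The self-adjoint analogue of Lemma \ref{lemma:bdd_unbdd_Fred_condition}(1), i.e.\ $\|\one-q(F)^2\|<1$ together with $\|F\|\le1$, then shows that $q(G_t)$ is invertible. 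This holds uniformly in $t$, so $G_\bullet$ is Fredholm on $SY_{SB}$. At the endpoints $G_0,G_1$ are unitaries, and this is what makes the suspension module work: after a normalising function $\chi$, the operator $\one-\chi(G_\bullet)^2$ vanishes at $t=0,1$. Proposition \ref{prop:Fred_to_KasMod} then gives a Real Kasparov module $(\C,SY_{SB},\chi(G_\bullet))$, and its class does not depend on $\chi$.

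For part (2), I first reduce to a compact difference, as in Lemma \ref{lem:Fred_pair_via_rel_DK} and Remark \ref{rk:rewriting_Ind(J0,J1)}, adapted to odd self-adjoint unitaries. Put $Z=\tfrac12(G_0+G_1)$. It is odd and self-adjoint, $q(Z)$ is invertible, and $G_0 Z = Z G_1$. Following Lemma \ref{lemma:close_J_are_unitary_equiv} in the quotient and lifting via Lemma \ref{lemma:homotopic_to_lifting} (graded version), we obtain an odd self-adjoint Real unitary $\tilde G_1$ with three properties: it is homotopic to $G_1$, it keeps $\|q(G_0)-q(\tilde G_1)\|<2$ along the homotopy, and $\tilde G_1-G_0\in\End_B^0(Y)$. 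The homotopy of $G_1$ to $\tilde G_1$ induces a homotopy of the straight-line paths. Hence $[G_\bullet]=[\tilde G_\bullet]$, where $\tilde G_t=(1-t)G_0+t\tilde G_1$, and by Lemma \ref{lem:relative_DK} the relative class is unchanged: $[G_1]-[G_0]=[\tilde G_1]-[G_0]$. Moreover $\one-\tilde G_t^2=t(1-t)(\tilde G_1-G_0)^2$ is compact, so $\tilde G_\bullet$ itself defines the Kasparov module and no normalising function is needed.

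It remains to compute $\Psi_B([\tilde G_\bullet])$. The plan is to invoke the $\Z_2$-graded version of Proposition \ref{prop:concrete_Kubota_computation}: for a path of odd self-adjoint operators with compact defect that are unitaries at the endpoints, it gives
\[
\Psi_B([\tilde G_\bullet])=\calM_Y^{\DK}\circ\exc_Y\big([\tilde G_1]-[\tilde G_0]\big)=\calM_Y^{\DK}\circ\exc_Y\big([\tilde G_1]-[G_0]\big)=\wh{\relind}(G_1,G_0).
\]
The antisymmetry $\wh{\relind}(G_1,G_0)=-\wh{\relind}(G_0,G_1)$ is immediate from the definition of the relative class.

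The main obstacle is this last step. The result must be compatible with how $\Psi_B$ is built in Theorem \ref{thm:graded_Roe_and_Kubota}, which involves stabilisation to $\hat{\calH}_B$, the Bott map and Morita invariance. One also has to check that the orientation convention there yields $[G_1]-[G_0]$ rather than its negative. If the graded concrete computation is not directly available, I would derive it from the ungraded one: apply the Clifford shuffle and the identifications used in Proposition \ref{prop:KK_DK_ComplexStrucIndex_agrees}, noting that the case $r=s=0$ there is literally the same computation with $J\otimes\rho$ playing the role of $G$.
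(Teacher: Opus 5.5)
Your proposal is correct and is essentially the paper's own argument, which simply says that the proofs of Lemma \ref{lemma:Fredholm_pair_straight_line_path} and Proposition \ref{prop:KK_DK_ComplexStrucIndex_agrees} carry over: reduce to a compact difference as in Remark \ref{rk:rewriting_Ind(J0,J1)}, then run the graded analogue of Proposition \ref{prop:concrete_Kubota_computation} (stabilise, compute $\partial$ via Proposition \ref{prop:suspension_bdry_graded}, then apply Lemma \ref{lemma:DK_Morita_compat_new}(1)), which indeed gives $[\tilde G_1]-[G_0]$ with the sign you want. The one weak point is your fallback of deriving the graded case from the ungraded one via the Clifford shuffle: that only reaches $B$ of the form $A\otimes\Cl_{r,s}$, not a general graded $B$, so the direct graded computation is the one to use.
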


In the special case of $\Z_2$-graded \emph{complex} $C^*$-modules over ungraded $C^*$-algebras, we 
can relate the  $\Z_2$-graded and non-graded relative index.

\begin{lemma} \label{lem:Rel_ind_graded_to_ungraded}
Let $\calH_A$ be the standard ungraded  complex $C^*$-module and 
$J_0= i (2P_0 - \one), \, J_1= i(2P_1 - \one)  \in \End_A^{1,0}(\calH_A)$  a Fredholm pair 
of skew-adjoint unitaries.
Using the self-adjoint Clifford generator $e \in \End_A(\calH_A)$, consider 
$\calH_A \cong \calH_A^+ \oplus \calH_A^- \cong \calH_A^+ \otimes \exterior{\C}$ 
as a $\Z_2$-graded $C^*$-module graded by $e$. Then  $2P_0-\one, 2P_1-\one \in   \End_{A}(\calH_A) \otimes \Cl_{1,1} $
is a Fredholm pair of odd self-adjoint unitaries such that 
\[
    \relind_{1,1}( J_0, J_1) =   \wh{ \relind} (2P_0-\one, 2P_1-\one) \in \DK( A \otimes \Cl_{1,1} ).
\]
\end{lemma}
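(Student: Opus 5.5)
The approach mirrors the proof of Proposition \ref{prop:Graded_ungraded_index_comparison}. The idea is to show that, after the identification $P^{1,0}(\End_A(\calH_A)\otimes\Cl_{1,1})P^{1,0}\cong \End_A(\hat{\calH}_A)$, the relative class defining $\relind_{1,1}(J_0,J_1)$ is carried to the class $[2P_0-\one]-[2P_1-\one]$ defining $\wh{\relind}$.

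First I would check the hypotheses. Since $J_k$ anti-commutes with $e$, so does $2P_k-\one$, which is therefore odd for the grading by $e$. It is also a self-adjoint unitary. Moreover $\|q(2P_0-\one)-q(2P_1-\one)\| = \|q(J_0)-q(J_1)\|<2$, so $(2P_0-\one,2P_1-\one)$ is a Fredholm pair in the graded sense.

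Next, choose the basepoint. Take $J_\mathrm{ref}=J_1$, so that
\[
\relind_{1,1}(J_0,J_1)=\calM^{\DK}_{\calH_A}\circ\exc\big([P^{1,0}(J_0\otimes\rho)+(\one-P^{1,0})(J_1\otimes\rho)]-[J_1\otimes\rho]\big),
\]
with $P^{1,0}=\tfrac12(\one+e\otimes\rho\gamma)$.

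Then use $\Cl_{1,1}\cong M_2(\C)$ with $\gamma\cong\sigma_1$ and $\rho\cong-i\sigma_2$. Under this identification $P^{1,0}\cong\mathrm{diag}(\tfrac12(\one+e),\tfrac12(\one-e))$. Exactly as in the proof of Proposition \ref{prop:Graded_ungraded_index_comparison}, $P^{1,0}(J_k\otimes\rho)P^{1,0}$ becomes the off-diagonal matrix with entries $\mp i(2P_k-\one)_\mp$, where $(\cdot)_\pm$ denotes the restriction to $\calH^\pm_A$. Via a stabilisation unitary, the corner $P^{1,0}(\End_A(\calH_A)\otimes\Cl_{1,1})P^{1,0}$ is the graded algebra $\End_A(\hat{\calH}_A)$. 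This identification preserves compacts and the Calkin quotient, so it induces compatible isomorphisms on the relative groups, on $\exc$ and on Morita maps; these are the vertical arrows of a diagram analogous to \eqref{eq:graded_to_ungraded_diagram}, with excision in place of the boundary map. Under Lemma \ref{lem:AMZ-OSU-iso}(2), the complement $(\one-P^{1,0})(J_1\otimes\rho)$ contributes the same summand to both terms, so it cancels in the relative class.

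This leaves the class $[\,\mathrm{odd}(-i(2P_0-\one)_-,\,i(2P_0-\one)_+)\,]-[\,\text{same for }P_1\,]$. Conjugating by the even unitary $\mathrm{diag}(\one,i)$, or by a path of such unitaries, removes the factors of $\pm i$. This step uses the complex structure in an essential way, as in the proof of Proposition \ref{prop:Graded_ungraded_index_comparison}. The result is $[2P_0-\one]-[2P_1-\one]$, and applying $\calM^{\DK}\circ\exc$ gives $\wh{\relind}$.

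The main obstacle is bookkeeping rather than analysis. The relative class is built from lifts $\tilde J_1$ with $\tilde J_1-J_0$ compact (Lemma \ref{lem:Fred_pair_via_rel_DK}), and one must check that the corner identification, the conjugation by $\mathrm{diag}(\one,i)$, and the replacement of $J_1$ by $\tilde J_1$ all commute with the excision and Morita isomorphisms. For this I would use naturality of $\exc$ and $\calM^{\DK}$ under graded $*$-isomorphisms that preserve the ideal, together with the fact that a single even unitary conjugation acts trivially on relative $\DK$-classes.
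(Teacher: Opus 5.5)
Your proposal is correct and follows essentially the same route as the paper. You compress $J_k\otimes\rho$ by $P^{1,0}$, read off the off-diagonal matrices with entries $-iU_k^*$ and $iU_k$ (where $U_k=(2P_k-\one)\tfrac12(\one+e)$), and remove the factors of $\pm i$ using the complex structure. Your extra bookkeeping (basepoint $J_\mathrm{ref}=J_1$, cancellation of the common complement, invariance of relative classes under even unitary conjugation) fills in steps the paper leaves implicit.

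There are two harmless sign slips:
\begin{itemize}
\item Since $\rho\gamma\cong-\sigma_3$, your projection $\tfrac12(\one+e\otimes\rho\gamma)$ is $\mathrm{diag}(\tfrac12(\one-e),\tfrac12(\one+e))$, not $\mathrm{diag}(\tfrac12(\one+e),\tfrac12(\one-e))$. The paper's proof uses $\tfrac12(\one+e\otimes\gamma\rho)$; either choice is a minimal even projection.
\item Conjugating by $\mathrm{diag}(\one,i)$ turns the compressed operator into $-(2P_k-\one)=e(2P_k-\one)e$, not $2P_k-\one$. Either conjugate by $\mathrm{diag}(\one,-i)$ instead, or let your even-unitary invariance absorb the extra conjugation by $e$.
\end{itemize}
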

Note that because $A$ is ungraded, the range of the graded relative index is $\DK( A \otimes \Cl_{1,1} )$.
\begin{proof}
The proof is similar to Proposition \ref{prop:Graded_ungraded_index_comparison}. 
Using the isomorphism $\Cl_{1,1} \simeq M_2(\C)$, the $\Z_2$-graded relative index can be written 
\[
   \wh{ \relind} (2P_0-\one, 2P_1-\one) =  \calM^{\DK}_{\calH_A} \circ \exc_{\calH_A} \left(   \left[  \begin{pmatrix} 0 & U_0^* \\  U_0 & 0    \end{pmatrix} \right] - 
     \left[ \begin{pmatrix} 0 & U_1^* \\ U_1 & 0 \end{pmatrix} \right]  \right)
   \in \DK( A \otimes \Cl_{1,1} ) , 
\]
where $U_j = (2P_j - \one) \tfrac{1}{2}( \one + e )$ for $j = 0,1$. 
For the ungraded index, we consider 
$P^{1,0}\big(i(2P_j - \one) \otimes \rho\big)$ with $j=0,1$ and  
$P^{1,0} =  \tfrac{1}{2}( \one + e\otimes \gamma \rho) \in \End_A(\calH_A) \otimes \Cl_{1,1}$.
We again relate $\Cl_{1,1} \cong M_2(\C)$ and can write 
\begin{align*}
   P^{1,0}\big( i(2P_j - \one)\otimes \rho\big) 
   = \begin{pmatrix} 0 & -i (2P_j - \one) \frac{1}{2}(\one - e) \\  i(2P_j - \one) \frac{1}{2}(\one + e)  & 0 \end{pmatrix} 
   =  \begin{pmatrix} 0 & -iU_j^* \\  iU_j & 0    \end{pmatrix}.
\end{align*}
The factor of $\pm i$ will not affect the complex Van Daele class and therefore
\begin{align*}
    \relind_{1,1}(J_0, J_1) &=  \calM^{\DK}_{\calH_A} \circ \exc_{\calH_A} \left(   \left[  \begin{pmatrix} 0 & -iU_0^* \\  iU_0 & 0    \end{pmatrix} \right] - 
     \left[ \begin{pmatrix} 0 & -iU_1^* \\ iU_1 & 0 \end{pmatrix} \right]  \right) \\
     &=  \wh{ \relind} (2P_0-\one, 2P_1-\one).   \qedhere
\end{align*}
\end{proof}

In the setting of Lemma \ref{lem:Rel_ind_graded_to_ungraded}  we can also 
apply the isomorphism $\Upsilon_{A}^{1,1}: \DK(A \otimes \Cl_{1,1} ) \xrightarrow{\simeq} K_1(A)$, where by Lemma \ref{lemma:Upsilon_Morita_commute_complex}
\begin{align}  \label{eq:Rel_index_graded_ungraded_complex}
   \Upsilon_{A}^{1,1}\big(  \relind_{1,1}( J_0, J_1) \big) &=   \Upsilon_{A}^{1,1}\big(   \wh{ \relind} (2P_0-\one, 2P_1-\one) \big) \nonumber \\
   &= \calM^{K}_{\calH_A} \circ \exc_{\calH_A} \big( [U_0] - [U_1] \big) \in K_1(A).
\end{align}

\subsection{Comparison with complex Hilbert modules} \label{subsec:complex_J0J1_index}

We now make a comparison with the relative index on the standard complex $C^*$-module $\calH_A$ considered by 
Wahl and Ng--Sutradhar--Wang (\cite[\S3, \S8]{Wahl07} and \cite[\S4]{NSW2}). 
If $(P_0, P_1)$ are a Fredholm pair of projections, $\big\| q(P_0) - q(P_1) \big\|_{\calQ_A(\calH_A) } < 1$, then 
the even relative index in complex $K$-theory is given in Eq. \eqref{eq:relind_in_relK} by 
\[
\relind_0^K(P_0,P_1) = \calM_{\calH_A}^K \circ \exc_{\calH_A} \big( [P_0] - [P_1] \big) \in K_0(A) ,
\]
If $\hat{\calH}_A \simeq \calH_A \otimes \C^2$ is a $\Z_2$-graded module  such that $2P_0 - \one$ and $2P_1 - \one$ are odd, then 
 we may define the \emph{odd relative index} in complex $K$-theory by 
\[
\relind_1^K(P_0,P_1) := \calM_{\calH_A}^K \circ \exc_{\calH_A} \big( [U_0] - [U_1] \big) \in K_1(A) , \qquad 
 (2P_j - \one) = \begin{pmatrix} 0 & U_j^* \\ U_j & 0 \end{pmatrix}, \ j= 0,1.
\]
If $U_0-U_1 \in \End_A^0(\calH_A)$, then $\exc_{\calH_A} \big( [U_0] - [U_1] \big) = [U_0 U_1^*] \in K_1\big( \End_A^0(\calH_A) \big)$. 
This definition of the odd relative index therefore agrees with \cite[\S8.1]{Wahl07} (see also \cite[Appendix A.2]{vdDungen24}).

\begin{prop} \label{prop:complex_relative_index}
Let $A$ be an ungraded complex $C^*$-algebra, $\calH_A$  the standard ungraded complex 
$C^*$-module over $A$, and $\hat{\calH}_A$ the standard graded complex $C^*$-module over $A$.
\begin{enumerate}
\item Let $(J_0,J_1)$ be a Fredholm pair of skew-adjoint unitaries in $\End_A(\calH_A)$ 
and $P_k \in \End_A(\calH_A)$ such that $J_k = i(2P_k - \one)$, $k=0,1$. 
Then, under the isomorphism $\Upsilon_A^{1,0}: \DK(A \otimes \Cl_{1+0} ) \xrightarrow{\simeq} K_0(A)$ from Proposition \ref{prop:iso_DK_even}, we have 
\[
\relind_{0,1}(J_0,J_1) \xmapsto{\Upsilon_A^{1,0}} \relind_0^K(P_0,P_1) \in K_0(A) .
\]
\item Let $(G_0, G_1) = (2P_0 - \one, 2P_1 - \one)$ be a Fredholm pair of odd self-adjoint unitaries in 
$\End_A(\hat{\calH}_A)$. 
Then the isomorphism $\Upsilon_A^{1,1}: \DK(A \otimes \Cl_{1+1} ) \to K_1(A)$ is such that 
\[
    \Upsilon_A^{1,1} \big( \wh{\relind}( 2P_0 - \one, 2P_1 - \one ) \big) = \Upsilon_{A}^{1,1}\big( \relind_{1,1}( i(2P_0 - \one), i(2P_1 - \one) ) \big) 
    = \relind_1^K(P_0,P_1).
\]
\end{enumerate}
\end{prop}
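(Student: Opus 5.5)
Part (2) is immediate from what has already been shown: the first equality is Lemma \ref{lem:Rel_ind_graded_to_ungraded} with $J_k = i(2P_k-\one)$, and the second is Eq.~\eqref{eq:Rel_index_graded_ungraded_complex} together with the definition of $\relind_1^K$. The one point to check is that the grading conventions agree. In Lemma \ref{lem:Rel_ind_graded_to_ungraded} the module is graded by $e$, and $U_j = (2P_j-\one)\tfrac12(\one+e)$ is exactly the off-diagonal entry of $2P_j-\one$ in the decomposition $\hat\calH_A \cong \calH_A^+\oplus\calH_A^-$. Identifying $\calH_A^\pm \cong \calH_A$ through a fixed unitary, this is the entry in the definition of $\relind_1^K$. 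Any two such identifications differ by conjugation by a unitary, and this does not change $\calM^K_{\calH_A}\circ\exc_{\calH_A}$.

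For part (1) I will first reduce to the case $J_1 - J_0 \in \End_A^0(\calH_A)$, using Lemma \ref{lem:Fred_pair_via_rel_DK}. It gives $\tilde J_1 = vJ_1v^*$, homotopic to $J_1$, with $\tilde J_1 - J_0$ compact. Setting $\tilde P_1 = \tfrac12(\one - i\tilde J_1)$, the projections $P_1$ and $\tilde P_1$ are homotopic through projections whose images in the Calkin algebra stay within distance $<1$ of $q(P_0)$. This holds because $\|q(J_0)-q(J_1)\| = 2\|q(P_0)-q(P_1)\|$, and the homotopy of Lemma \ref{lemma:close_J_are_unitary_equiv} is built from straight lines. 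Hence both relative classes are unchanged (Lemma \ref{lemma:addition_for_Ind_J0J1}(4) and its $K$-theory analogue).

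Once $P_0 - P_1$ is compact, excision reduces to the formal difference of classes in the unitisation. On the Van Daele side, $\exc\big([J_0\otimes\rho]-[J_1\otimes\rho]\big)$ is represented by the odd self-adjoint unitaries $J_k\otimes\rho \in \End_A^0(\calH_A)^\sim\otimes\Cl_{0,1}$. Over $\C$ the algebra $\Cl_{0,1}$ is $\C\oplus\C$ with odd generator $\rho$, and $i\rho$ is self-adjoint, so $J_k\otimes\rho = (2P_k-\one)\otimes i\rho$, which is of the form $(2p-\one)\otimes\gamma$ after renaming the complex generator. The identification of $\DK(\,\cdot\,\otimes\Cl_{1+0})$ with $\DK(\,\cdot\,\otimes\Cl_{0+1})$ in the complex case is the step I expect to be the main obstacle. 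It must match the convention used to define $\Upsilon^{1,0}$ in Proposition \ref{prop:iso_DK_even}, and a sign or orientation error here would send the class to its negative.

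Assuming that identification, the formula $\Upsilon^{1,0}\big([(2p-\one)\otimes\gamma]-[(2q-\one)\otimes\gamma]\big) = [p]-[q]$, already used in the proof of the even index lemma, gives $[P_0]-[P_1] \in K_0(\End_A^0(\calH_A))$. Finally, Lemma \ref{lemma:Upsilon_Morita_commute_complex} commutes $\Upsilon$ past $\calM_{\calH_A}$ and, applied in the relative/excision form, past $\exc_{\calH_A}$. This yields $\calM^K_{\calH_A}\circ\exc_{\calH_A}([P_0]-[P_1]) = \relind_0^K(P_0,P_1)$.
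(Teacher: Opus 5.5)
Your argument is essentially the paper's. Part (2) is exactly Lemma \ref{lem:Rel_ind_graded_to_ungraded} plus Eq.~\eqref{eq:Rel_index_graded_ungraded_complex}. For part (1), the paper likewise writes $J_k\otimes\rho=(2P_k-\one)\otimes i\rho$ with $i\rho$ as the self-adjoint generator of $\Cl_{1+0}$; this is precisely the identification the paper uses throughout, so no hidden sign arises. It then applies $\Upsilon^{1,0}$ to the excised relative class and uses Lemma \ref{lemma:Upsilon_Morita_commute_complex} for Morita invariance.

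Your reduction to $J_1-J_0$ compact is unnecessary. Even after that reduction, $J_k\otimes\rho$ need not lie in $\End_A^0(\calH_A)^\sim\otimes\Cl_{0,1}$, since $J_k$ need not be scalar plus compact. What is really used, as in the paper, is that $\Upsilon^{1,0}$, given by the same formula on relative classes, commutes with $\exc_{\calH_A}$ by naturality. Lemma \ref{lemma:Upsilon_Morita_commute_complex} does not give this; it only treats the Morita map $\calM$.
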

\begin{proof}
(1) We first note that $\| q(P_1) - q(P_0) \|_{\calQ_A(\calH_A)} = \tfrac{1}{2} \| q(J_1) - q(J_0) \|_{\calQ_A(\calH_A)} < 1$, which means that 
the class $\exc_{\calH_A}\big( [P_1] - [P_0] \big) \in K_0\big( \End_A^0(\calH_A) \big)$ is well-defined.
Because the space is complex, we can write 
\[
 J_k \otimes \rho =   i(2P_k - \one) \otimes \rho = (2P_k - \one) \otimes i \rho \in  \End_A(\calH_A) \otimes \Cl_{1+0}, 
 \quad k= 0,1,
\]
with $i\rho$ a self-adjoint Clifford generator of $\Cl_{1+0}$. The isomorphism 
\[
\Upsilon_{\End_A^0(\calH_A)}^{1,0}: \DK\big( \End_A^0(\calH_A) \otimes \Cl_{1+0}\big) \xrightarrow{\simeq} K_0 \big( \End_A^0(\calH_A)\big)
\] 
is such that 
\[
   \Upsilon_{\End_A^0(\calH_A)}^{1,0}\circ  \exc_{\calH_A} \big( [(2P_1 - \one) \otimes i\rho] - [(2P_0 - \one)  \otimes i\rho] \big) =
    \exc_{\calH_A}\big( [P_1] - [P_0] \big) \in K_0 \big( \End_A^0(\calH_A) \big).
\]
The result follows as the isomorphism $\Upsilon^{1,0}$ is compatible with Morita invariance in $K$-theory (Lemma \ref{lemma:Upsilon_Morita_commute_complex}).
Part (2) is a restatement of Eq. \eqref{eq:Rel_index_graded_ungraded_complex}.
\end{proof}

\subsection{Comparison with real Hilbert spaces} \label{subsec:HSpace_J0J1_index}

We now show that our $\DK$-theoretic relative index of a Fredholm pair of skew-adjoint unitaries 
agrees with the index previously studied in~\cite[\S4]{BCLR} in the case of a real Hilbert space.
Let us therefore fix an ungraded Hilbert space $\calH$ with real structure $\frakr$ and an ample and ungraded Real $\Cl_{r,s}$-representation 
(we assume $s \geq 1$ by adding more generators if necessary). 
All operators of interest will be invariant under $\frakr$ and so our results apply to real Hilbert spaces 
as considered in~\cite{BCLR}.
We denote by $\calB^{r,s}(\calH)$ the bounded operators on $\calH$ anti-commuting with 
the generators of this $\Cl_{r,s}$-representation and $\calJ^{r,s}(\calH)$ the (Real) skew-adjoint unitaries in $\calB^{r,s}(\calH)$.

If $J_0, J_1 \in \calJ^{r,s}(\calH)$ are a Fredholm pair, 
 the operator $F = \frac{1}{2}(J_0+J_1)$ is skew-adjoint 
and Fredholm: 
indeed, $\| \one + q(F)^2 \|_{\calQ(\calH)}= \frac{1}{4} \| q(J_0) - q(J_1)\|_{\calQ(\calH)}^2 < 1$ so $q(F) \in \calQ(\calH)$ is invertible. 
Hence $\Ker( J_0 + J_1)$ 
is a finite-dimensional subspace that is invariant under the real structure on $\calH$. The relation $J_1( J_0 + J_1) = (J_0 + J_1)J_0$ 
implies that $J_0 \cdot \Ker(J_0 + J_1) \subset \Ker(J_0 + J_1)$, and so $\Ker(J_0+J_1)$ is a $\Cl_{r,s+1}$-module with 
generators $\{e_1,\ldots, e_r, f_1,\dots, f_{s}, J_0\}$. Then recalling that $\calM_{r,s+1}$ denotes the 
Grothendieck group of ungraded $\Cl_{r,s+1}$-modules,
\[
   \Ind_{r,s+2}^\calH( J_0, J_1) = \big[ \Ker(J_0 + J_1) \big] \in \calM_{r,s+1}/ \calM_{r,s+2} \cong \KO_{2+s-r}(\R),
\]
is the relative index of a Fredholm pair of skew-adjoint unitaries on Hilbert spaces~\cite[Theorem 4.5]{BCLR}.
We now show the compatibility of $\Ind_{r,s+2}^\calH( J_0, J_1)$ with 
$\relind_{r,s+1}(J_0, J_1) \in \DK(\Cl_{r,s+1})$.

\begin{prop} \label{prop:Fred_pair_index_agrees_with_hspace}
Let $J_0, J_1 \in \calJ^{r,s}(\calH)$ be a Fredholm pair of skew-adjoint unitaries in $\calH$. 
Then there is an  isomorphism $\Upsilon^{r,s+1}_{\C} \colon \DK(  \Cl_{r,s+1}) \xrightarrow{\simeq} \KO_{s+2-r}(\R)$ such that  
\[
   \relind_{r,s+1}(J_0, J_1) \xmapsto{\Upsilon^{r,s+1}_{\C}} \big[ \Ker(J_0+J_1) \big] = \Ind_{r,s+2}^\calH( J_0, J_1)  \in \calM_{r,s+1}/ \calM_{r,s+2}.
\]
\end{prop}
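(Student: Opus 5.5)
Since $\calH$ is a Hilbert space, $\End_A^0(X)=\calK(\calH)$, and I would first reduce the Fredholm pair to one whose difference is finite rank, so the relative class is explicit. Put $F=\tfrac12(J_0+J_1)$ and $V=\Ker(J_0+J_1)$. As shown above, $V$ is finite-dimensional and Real, and it is invariant under $J_0$, $J_1$ and all Clifford generators. On $V^\perp$ the operator $F$ is invertible, and the polar part $w=F|F|^{-1}$ on $V^\perp$ conjugates $J_0$ to $J_1$ there, as in the proof of Lemma \ref{lemma:close_J_are_unitary_equiv}. Conjugating by a path of Clifford-commuting unitaries on $V^\perp$ that is constant modulo compacts, I can homotope $J_1$ within Fredholm pairs to $\tilde J_1=J_0|_{V^\perp}\oplus J_1|_V$. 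On $V$ we have $J_1=-J_0$, so $\tilde J_1=J_0(\one-2P_V)$ with $P_V$ the finite-rank projection onto $V$. By Lemma \ref{lemma:addition_for_Ind_J0J1}(4), or Corollary \ref{cor:Ind(J0,J1)_htpy_invariant}, $\relind_{r,s+1}(J_0,J_1)=\relind_{r,s+1}(J_0,\tilde J_1)$.

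Next I take $J_\mathrm{ref}=J_0$. Since $J_0-\tilde J_1=2J_0P_V$ is finite rank, the excision class reduces to
\[
\calM^{\DK}\big([J_0\otimes\rho]-[P^{r,s}(\tilde J_1\otimes\rho)+(\one-P^{r,s})(J_0\otimes\rho)]\big).
\]
The two unitaries agree on $V^\perp$, so the compression to $V\otimes\Cl_{r,s+1}$ gives the class
\[
[J_0|_V\otimes\rho]-[\sigma^{r,s}_{J_0\otimes\rho}(-J_0|_V\otimes\rho)]\in\DK(\End(V)\otimes\Cl_{r,s+1}).
\]
This class is independent of the complement, and Morita invariance sends it into $\DK(\Cl_{r,s+1})$. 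Here $V$ is a $\Cl_{r,s+1}$-module, with the extra generator $J_0|_V$. In this form the class depends only on the module $V$. It is additive in $V$, and it vanishes when $V$ extends to a $\Cl_{r,s+2}$-module $\tilde V$. In that case a further skew generator $f$ anticommutes with $J_0$, and the path $\cos(\theta)J_0+\sin(\theta)f$ for $\theta\in[0,\pi]$ is a homotopy of Clifford anti-linear skew-adjoint unitaries from $J_0$ to $-J_0$ on $V$. So the assignment $V\mapsto$ class factors through a homomorphism $\calM_{r,s+1}/\calM_{r,s+2}\to\DK(\Cl_{r,s+1})$.

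Finally, I define $\Upsilon^{r,s+1}_\C$ to be the inverse of this homomorphism. The remaining task is to show that the homomorphism is an isomorphism and that it agrees with the map from Appendix \ref{subsec:appendix_DK_KR_isos}. I expect this to be the main obstacle, and I would handle it as in \S\ref{subsec:Index_Real_Hilbert_Space}. Proposition \ref{prop:KK_DK_ComplexStrucIndex_agrees} identifies $\relind_{r,s+1}(J_0,J_1)$ with $-\Kubota^{r,s+1}_\C$ of the $\KKR$-class of the straight-line family $F_\bullet$ in $\KKR(\Cl_{s+1,r},S\C)$. Under Bott periodicity and the ABS isomorphism this class corresponds to $[\Ker(J_0+J_1)]$, because the family is invertible away from $t=1/2$ and its kernel there is exactly $V$, with the Clifford action including $J_0$. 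The composite of these isomorphisms then gives the desired $\Upsilon^{r,s+1}_\C$. The identification in the generator degrees ($\KO_0$, $\KO_1$, $\KO_2$, $\KO_4$) can be checked on explicit single-module examples, as done for the Fredholm index above.
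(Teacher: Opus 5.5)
Your first two steps are correct, and they take a different route from the paper. The paper defines $\Upsilon^{r,s+1}_\C$ by $\relind_{r,s+1}(J_0,J_1)\mapsto[\Ker(J_0+J_1)]$. It then argues well-definedness and injectivity, the latter via Theorem~4.5 of \cite{BCLR} and a rather loose ``topologically trivial'' step. You instead build a map in the opposite direction, $\Theta\colon\calM_{r,s+1}/\calM_{r,s+2}\to\DK(\Cl_{r,s+1})$, from the finite-rank model $\tilde J_1=J_0(\one-2P_V)$. Your rotation $\cos(\theta)J_0+\sin(\theta)f$ then shows directly that $\Theta$ kills $\calM_{r,s+2}$. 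This is exactly the paper's injectivity statement, obtained more cleanly. The phrase ``constant modulo compacts'' is not accurate, but the reduction still holds. Indeed $(J_0-J_1)^*(J_0-J_1)+(J_0+J_1)^*(J_0+J_1)=4\cdot\one$ and $J_0+J_1$ is invertible on $V^\perp$, so $\|J_1-\tilde J_1\|<2$. Now apply Lemma~\ref{lemma:addition_for_Ind_J0J1}(3), together with the proof of part~(2), which only needs all three pairs to be Fredholm pairs.

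The gap is the last step, which is where the remaining content lies. You need $\Theta$ to be bijective. Your only argument is that $\altBott_{\KK}^{-1}$ followed by the ABS isomorphism sends $\class{F_\bullet}$ to $[\Ker(J_0+J_1)]$ ``because the family is invertible away from $t=1/2$ and has kernel $V$ there''. That is a heuristic, not a proof. Since $\altBott_{\KK}^{-1}$ is the Kasparov product with $[\slashed{\partial}]$, you would have to compute the kernel of an operator like $(T_\bullet+\partial)_+$ from \S\ref{sec:Robbin-Salamon} as a Clifford module and match it with $V$ modulo $\calM_{r,s+2}$. That amounts to proving a real index $=$ spectral flow normalisation. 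Moreover, because of the sign in Proposition~\ref{prop:KK_DK_ComplexStrucIndex_agrees}, your composite as written would send $\relind_{r,s+1}(J_0,J_1)$ to $-[\Ker(J_0+J_1)]$. The generator check is only announced, not carried out.

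You do not need any of this. Since $\Theta([\Ker(J_0+J_1)])=\relind_{r,s+1}(J_0,J_1)$, it is enough to know that every class in $\DK(\Cl_{r,s+1})\cong\DK(\calK\otimes\Cl_{r,s+1})$ is the relative index of some Fredholm pair, on a possibly stabilised $\calH$. This follows from excision and Lemma~\ref{lem:AMZ-OSU-iso}(1),(2), cf.\ \cite[Theorem 3]{AMZ}. Then $\Theta$ is a surjective homomorphism between two groups that are each abstractly isomorphic to $\KO_{s+2-r}(\R)\in\{0,\Z,\Z_2\}$, by ABS and Appendix~\ref{subsec:appendix_DK_KR_isos}. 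A surjection between such groups is injective, so $\Upsilon^{r,s+1}_\C:=\Theta^{-1}$ does the job. Agreement with the appendix maps is not required by the statement.
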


\begin{proof}
If $(J_0, J_1)$ is a Fredholm pair, then $\relind_{r,s+1}(J_0, J_1) \in \DK(  \Cl_{r,s+1}) $ 
and $ \big[ \Ker(J_0+J_1) \big] \in \calM_{r,s+1}/ \calM_{r,s+2}$ are well-defined. Furthermore, 
any class in $\KO_{s+2-r}(\R)$ can be represented by $ \big[ \Ker(J_0+J_1) \big]$ for some 
Fredholm pair \cite[\S4]{BCLR}.  
We just have to show 
that the induced map  $\Upsilon_\C^{r,s+1}\big( \relind_{r,s+1}(J_0, J_1) \big) = \big[ \Ker(J_0+J_1) \big]$ is well-defined 
and injective.

 If $\relind_{r,s+1}(J_0, J_1)  = \relind_{r,s+1}(J_0', J_1')  \in \DK(  \Cl_{r,s+1})$, 
 then  there are homotopies in $\calJ^{r,s}(\calH)$ from $J_0$ to $J_0'$ and 
 $J_1$ to $J_1'$ such that 
 $(J_{0,t}, J_{1,t})$ is a Fredholm pair for all $t \in [0,1]$ (and we have absorbed the 
 matrix degrees of freedom into $\calH$). We can write 
 \[
   \relind_{r,s+1}(J_0', J_1') = \relind_{r,s+1}(J_0', J_0) + \relind_{r,s+1}(J_0, J_1) + \relind_{r,s+1}(J_1, J_1') 
 \]
 with $\relind_{r,s+1}(J_0', J_0)$ and $ \relind_{r,s+1}(J_1, J_1')$ trivial. Similarly, by the homotopy invariance 
 and addition property of the relative index on Hilbert spaces, 
 \[
   \Ind_{r,s+2}^\calH( J_0', J_1') = \Ind_{r,s+2}^\calH( J_0', J_0)   + \Ind_{r,s+2}^\calH( J_0, J_1)  + \Ind_{r,s+2}^\calH( J_1, J_1' ) = \Ind_{r,s+2}^\calH( J_0, J_1)  
 \]
and the map is well defined.

If $\big[\Ker(J_0+J_1) \big] \in \calM_{r,s+1}/ \calM_{r,s+2}$ is trivial, then by~\cite[Theorem 4.5]{BCLR},
\[
  \phi_{J_0, J_1} :[0,1] \to  \big\{ J \in \calJ^{r,s-1}(\calH) \, \big| \,  \big\| q(J) - q(f_s) \big\|_{\calQ(\calH)} < 2 \big\}, \quad 
  \phi_{J_0, J_1}(t) = -f_s \exp\big( \pi( (1-t)J_0 + tJ_1) f_s\big)
\]
 is a contractible loop.
This then implies that the path of skew-adjoint 
Fredholm operators $F_t := (1-t)J_0 + t J_1$ will also be topologically trivial and so $[F_\bullet\otimes\rho] \in \KKR(\Cl_{s+1,r}, SA)$ is trivial. 
By Proposition \ref{prop:KK_DK_ComplexStrucIndex_agrees},  $\relind_{r,s+1}(J_1, J_0)$ and 
hence $\relind_{r,s+1}(J_0, J_1)$ will also trivial.  Therefore the map $\Upsilon^{r,s+1}_{\C}$ is injective.
\end{proof}

\begin{remark}
Using the results from~\cite[\S4.2]{BCLR}, it follows that our relative index is compatible with the usual relative index $\Ind( P_0, P_1)$ of a pair of projections on a real Hilbert space:
\[
   \Upsilon^{2,1}_\C \big( \relind_{2,1}(J_0, J_1) \big) = \Ind( P_0, P_1) \in \KO_0(\R), \qquad J_l = \begin{pmatrix} 0 & -(2P_l - \one) \\ 2P_l - \one & 0 \end{pmatrix}, \ \   l=0,1.
\]
Similarly, in the complex case,
\[
   \Upsilon^{0,1}_\C \big( \relind_{0+1}(J_0, J_1) \big) = \Ind( P_0, P_1) \in K_0(\C), \qquad J_l = i(2P_l - \one), \ \  l = 0,1.
\]
\end{remark}

\section{Families of Fredholm operators} \label{sec:families}

As in \S\ref{sec:Fredholm_Real}, $Y_B$ is a (possibly $\Z_2$-graded) countably generated $C^*$-module over a (possibly $\Z_2$-graded) $C^*$-algebra.
In this section, we will consider families of operators parametrised by a compact Hausdorff space $\Omega$. 
If $\{S_\omega\}_{\omega\in\Omega}$ is such a family of operators on $Y_B$, then we define the \emph{family operator} $S_\bullet$ on $C(\Omega,Y_B)_{C(\Omega,B)}$ 
by $(S_\bullet y)(\omega) := S_\omega y(\omega)$ for $y\in C(\Omega,Y_B)$. 
To ensure that $S_\bullet$ is a well-defined adjointable/compact operator (in the bounded case) or regular operator (in the unbounded case), we need the family $\{S_\omega\}_{\omega\in\Omega}$ to be suitably continuous, in the following precise sense. 

\begin{lemma} \label{lemma:strong_cts_regular}
Let $\Omega$ be a compact Hausdorff space. 
\begin{enumerate}
 \item A family $\{S_\omega\}_{\omega \in \Omega} \subset \End_B(Y)$ defines an adjointable operator 
  $S_\bullet \in \End_{C(\Omega, B)}\big( C(\Omega, Y_B)\big)$ 
  if and only if $\omega \mapsto S_\omega$ and 
  $\omega \mapsto S_\omega^*$ are strongly continuous.
 \item A family $\{S_\omega\}_{\omega \in \Omega} \subset \End_B^0(Y)$ defines a compact operator 
  $S_\bullet \in \End_{C(\Omega, B)}^0\big( C(\Omega, Y_B)\big)$ 
  if and only if $\omega \mapsto S_\omega$ is norm-continuous.
 \item A self-adjoint family $\{D_\omega\}_{\omega\in\Omega} \subset \Reg_B(Y)$ defines 
  a regular self-adjoint operator $D_\bullet \in \Reg_{C(\Omega,B)}\big( C(\Omega,Y_B) \big)$ on 
  \[
  \Dom(D_\bullet) 
  := \big\{ y \in C(\Omega,Y_B) : y(\omega)\in\Dom(D_\omega) \text{ and } D_\bullet y \in C(\Omega,Y_B) \big\} 
  \subset C(\Omega, Y_B)_{C(\Omega, B)}
  \]
  if and only if $\omega\mapsto ( i \pm D_\omega)^{-1}$ is strongly continuous.
 \item A skew-adjoint family $\{D_\omega\}_{\omega\in\Omega} \subset \Reg_B(Y)$ defines 
  a regular skew-adjoint operator $T_\bullet \in \Reg_{C(\Omega,B)}\big( C(\Omega,Y_B) \big)$ on 
  \[
  \Dom(T_\bullet) 
  := \big\{ y \in C(\Omega,Y_B) : y(\omega)\in\Dom(T_\omega) \text{ and } T_\bullet y \in C(\Omega,Y_B) \big\} 
  \subset C(\Omega, Y_B)_{C(\Omega, B)}
  \]
  if and only if $\omega\mapsto ( \one \pm T_\omega)^{-1}$ is strongly continuous.
\end{enumerate}
\end{lemma}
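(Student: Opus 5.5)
We argue part by part, working throughout with the identification $C(\Omega,Y_B)\cong C(\Omega)\otimes Y_B$ (as Hilbert $C(\Omega,B)$-modules), with inner product $\langle y_1|y_2\rangle(\omega)=\langle y_1(\omega)|y_2(\omega)\rangle$. For (1), suppose first that $\omega\mapsto S_\omega$ and $\omega\mapsto S^*_\omega$ are strongly continuous. By uniform boundedness, $\sup_\omega\|S_\omega\|<\infty$. For $y\in C(\Omega,Y)$, continuity of $\omega\mapsto S_\omega y(\omega)$ follows from the estimate
\[
\|S_\omega y(\omega)-S_{\omega_0}y(\omega_0)\|\le \sup\|S\|\,\|y(\omega)-y(\omega_0)\|+\|(S_\omega-S_{\omega_0})y(\omega_0)\|.
\]
The same holds for $S^*_\bullet$, and the pointwise adjoint relation then shows that $S_\bullet$ is adjointable with adjoint $S^*_\bullet$. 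Conversely, if $S_\bullet$ is adjointable, we apply $S_\bullet$ and $S_\bullet^*$ to constant sections $y(\omega)=y_0$; this gives strong continuity, once we check that $(S_\bullet^*y)(\omega)=S_\omega^*y(\omega)$. That identity follows by pairing against constant sections and evaluating pointwise, since $C(\Omega,Y)$ is a $C(\Omega)$-module and evaluation at $\omega$ is a surjection onto $Y$.

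For (2), the forward direction uses that compact operators on $C(\Omega,Y)$ are norm limits of finite sums of $\theta_{y_1,y_2}$. Each such operator gives a norm-continuous family $\omega\mapsto\theta_{y_1(\omega),y_2(\omega)}$, and norm limits preserve this. For the converse, $\End^0_{C(\Omega,B)}(C(\Omega,Y))\cong C(\Omega,\End^0_B(Y))$; we prove it by a partition of unity. Approximate $S_\omega$ uniformly by $\sum_i\varphi_i(\omega)K_{\omega_i}$ with $K_{\omega_i}$ finite rank, then note that $\varphi_i\theta_{y_1,y_2}=\theta_{\varphi_iy_1,y_2}$ is compact on $C(\Omega,Y)$.

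For (3) and (4), the plan is to reduce to (1) via resolvents. A regular self-adjoint $D$ is determined by the adjointable operators $(i\pm D)^{-1}$, which have dense range. If the resolvents are strongly continuous, (1) produces adjointable $R_\pm=(i\pm D_\omega)^{-1}_\bullet$ with $R_\pm^*=R_\mp$. We then define $D_\bullet$ on $\Ran R_+$ by $D_\bullet R_+y=y-iR_+y$ (pointwise $D_\omega$). This is the domain stated, because $y(\omega)\in\Dom D_\omega$ with $D_\bullet y$ continuous gives $y=R_+((i+D_\bullet)y)$. Self-adjointness and regularity come from the standard criterion: a densely defined closed symmetric operator $D$ with $i\pm D$ surjective, these surjections having adjointable inverses, is regular and self-adjoint.

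The main obstacle is \emph{density} of $\Ran R_+$ in $C(\Omega,Y)$. We handle it with the fact that $\Ran R_+\supset R_+(C(\Omega)\cdot y_0)$. We would show that $\Ran(R_\pm)$ is dense by proving that $nR_\pm$-type approximate units, such as $in(in\pm D_\omega)^{-1}$, converge to the identity. This convergence holds pointwise, and uniformly in $\omega$ on constant sections by a Dini/compactness argument that uses strong continuity together with the uniform bound $1$. Alternatively, $\Ran R_+$ is a $C(\Omega)$-submodule whose fibres are dense, and a partition of unity gives density. The converse directions in (3) and (4) follow by evaluating the adjointable resolvent $(i\pm D_\bullet)^{-1}$ on constant sections, as in (1). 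Part (4) is identical, with $i\pm D$ replaced by $\one\pm T$, which are invertible for skew-adjoint $T$.
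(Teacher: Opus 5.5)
Your proposal is correct. For parts (1) and (2) it fills in exactly what the paper leaves implicit. The paper only says that (1) is immediate from the definition of the strict topology, and that (2) holds because compact operators are norm limits of finite-rank ones. For (3) and (4), however, the paper gives no argument: it cites Wahl's Proposition 2.5 and calls (4) an easy modification.

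You instead give a self-contained proof of (3), in four steps:
\begin{enumerate}
\item identify the stated domain with $\Ran R_+$;
\item check that $D_\bullet$ is symmetric and that $i\pm D_\bullet$ are surjective;
\item prove density of $\Ran R_+$ directly;
\item conclude with the module criterion: a densely defined symmetric operator with $\Ran(D\pm i)$ equal to the whole module is self-adjoint and regular.
\end{enumerate}
You are right that density is the crux. In a Hilbert $C^*$-module it cannot be deduced from triviality of $(\Ran R_+)^\perp$, as one would in Hilbert space. Your partition-of-unity argument settles it cleanly. $\Ran R_+$ is a $C(\Omega)$-submodule, its fibre at $\omega$ is the dense subspace $\Dom D_\omega$, and every point of that fibre is attained by $R_+$ applied to a constant section. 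Your route buys independence from the citation; the paper's buys brevity.

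Three small remarks.
\begin{itemize}
\item The adjoint is $R_+^*=-R_-$, not $R_-$, since $((i+D)^{-1})^*=(D-i)^{-1}$.
\item The alternative Dini route needs two facts that the uniform bound $1$ does not supply. First, $\omega\mapsto(in\pm D_\omega)^{-1}$ must be strongly continuous for $n>1$; this follows from functional calculus or a Neumann series, not directly from the hypothesis. Second, $\|in(in+D_\omega)^{-1}y_0-y_0\|$ must decrease in $n$; this holds because $x^2/(n^2+x^2)$ decreases in $n$. The partition-of-unity argument makes this route unnecessary.
\item Part (4) is literally a corollary of (3). Take $D_\omega=iT_\omega$; then $(\one\pm T_\omega)^{-1}=i(i\pm D_\omega)^{-1}$ and $T_\bullet=-iD_\bullet$ on the same domain.
\end{itemize}
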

\begin{proof}
Part (1) is immediate from the definition of the strong/strict topology for adjointable operators on $C^*$-modules. 
Part (2) follows from the observation that a compact operator is a norm-limit of finite-rank operators. 
Part (3) is proven in \cite[Proposition 2.5]{Wahl07}, and part (4) is an easy modification of part (3).
\end{proof}

Given a family $\{F_\omega\}_{\omega \in \Omega} \subset \End_B(Y)$ of \emph{Fredholm} operators, such that 
$\omega \mapsto F_\omega$ and $\omega \mapsto F_\omega^*$ are strongly continuous, 
it is not guaranteed that the family operator $F_\bullet \in \End_{C(\Omega, B)}\big( C(\Omega, Y_B)\big)$ is also Fredholm 
(a counterexample can be found in \cite[page 10]{Wahl07}).  
Hence some care needs to be taken to ensure that the family operator is again Fredholm. 

Combining Lemma \ref{lemma:strong_cts_regular} with Lemma \ref{lemma:bdd_unbdd_Fred_condition}, we obtain the following characterisation for the regularity and Fredholmness of the family operator.
\begin{prop}[cf.\ {\cite[Lemma 2.7]{Wahl07}}] \label{prop:Family_Fredholm_condition}
A family $\{T_\omega\}_{\omega\in\Omega} \subset \Reg_B(Y)$ of regular skew-adjoint Fredholm operators on $Y_B$ defines 
a regular skew-adjoint Fredholm operator $T_\bullet$ on $C(\Omega, Y_B)_{C(\Omega, B)}$ if and only if 
\[
   \omega \mapsto (\one \pm T_\omega)^{-1} \,\, \text{ is strongly continuous, and}\quad 
   \big\|  q\big( (\one + T_\bullet)^{-1} \big) \big\|_{\calQ_{C(\Omega, B)} (C(\Omega, Y_B))} < 1.
\]
An analogous statement holds for a family of self-adjoint regular Fredholm operators.
\end{prop}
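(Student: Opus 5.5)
The proof combines Lemma \ref{lemma:strong_cts_regular}(4) for regularity with Lemma \ref{lemma:bdd_unbdd_Fred_condition}(2), applied on the module $C(\Omega,Y_B)_{C(\Omega,B)}$, for the Fredholm property.

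First, by Lemma \ref{lemma:strong_cts_regular}(4), the family operator $T_\bullet$ on the domain $\Dom(T_\bullet)$ is a regular skew-adjoint operator if and only if $\omega\mapsto(\one\pm T_\omega)^{-1}$ is strongly continuous. So the strong continuity condition is equivalent to $T_\bullet$ being regular and skew-adjoint, and it is necessary in the ``only if'' direction.

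Second, we identify the resolvent of the family operator pointwise. For $y\in C(\Omega,Y_B)$ set $z(\omega):=(\one+T_\omega)^{-1}y(\omega)$. By strong continuity $z$ lies in $C(\Omega,Y_B)$. Moreover $z(\omega)\in\Dom(T_\omega)$ and $T_\bullet z = y - z$ is continuous, so $z\in\Dom(T_\bullet)$. Hence $\one+T_\bullet$ is surjective from $\Dom(T_\bullet)$. Injectivity holds pointwise, since $\one+T_\omega$ is injective for skew-adjoint $T_\omega$. Therefore
\[
(\one+T_\bullet)^{-1}=\big((\one+T_\omega)^{-1}\big)_{\omega\in\Omega}
\]
as adjointable operators, where the adjoint is the family $(\one-T_\omega)^{-1}$, which is again strongly continuous.

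Third, once $T_\bullet$ is known to be regular and skew-adjoint, Lemma \ref{lemma:bdd_unbdd_Fred_condition}(2) applies verbatim on the module $C(\Omega,Y_B)$. It says that $T_\bullet$ is Fredholm if and only if $\|q((\one+T_\bullet)^{-1})\|<1$ in $\calQ_{C(\Omega,B)}(C(\Omega,Y_B))$.

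Combining the three steps gives both directions of the equivalence. The only subtle point is in the second step: the inverse of $\one+T_\bullet$ computed in $\Reg_{C(\Omega,B)}$ must coincide with the pointwise family of resolvents. This needs care with the maximal domain $\Dom(T_\bullet)$, and it is handled by the surjectivity and injectivity argument above.

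The hypothesis that each $T_\omega$ is Fredholm is not actually used beyond motivation. The self-adjoint case follows the same scheme: use Lemma \ref{lemma:strong_cts_regular}(3) and the self-adjoint analogue of Lemma \ref{lemma:bdd_unbdd_Fred_condition}(2) from \cite[Lemma 2.7]{Wahl07}, with the norm condition replaced by $\|q((i+D_\bullet)^{-1})\|<1$.
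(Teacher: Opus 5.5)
Your proposal is correct and takes the same route as the paper. The paper's proof is simply Lemma~\ref{lemma:strong_cts_regular} (regularity of $T_\bullet$ is equivalent to strong continuity of the resolvents) combined with Lemma~\ref{lemma:bdd_unbdd_Fred_condition}(2) applied to $T_\bullet$ on $C(\Omega,Y_B)_{C(\Omega,B)}$. Your explicit check that $(\one+T_\bullet)^{-1}$ is the pointwise family of resolvents is a detail the paper leaves implicit.
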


\subsection{The Wahl topology on a Hilbert module} \label{subsec:Wahl_topology}

In {\S}\ref{sec:spectral_flow}, we will define the spectral flow for a family of Real skew-adjoint Fredholm operators $\{T_t\}_{t\in[0,1]} \subset \Reg^{r,s}_A(X)$ with invertible endpoints. 
For this purpose, we will need the family to be ``sufficiently continuous'', such that the family operator $T_\bullet \in \Reg^{r,s}_{SA}(SX)$ is also regular and Fredholm (note that $T_\bullet$ is automatically Real and skew-adjoint). 
We have already seen in Lemma \ref{lemma:strong_cts_regular} that the regularity of $T_\bullet$ requires the family $\{T_t\}_{t\in[0,1]}$ to have strongly continuous resolvents. 

The Fredholmness of $T_\bullet$ additionally requires that we can find a ``suitably continuous'' parametrix $Q_\bullet = \{Q_t\}_{t\in[0,1]}$ for $T_\bullet$. 
We will see that, to obtain the Fredholm property, the appropriate notion of continuity is given by the \emph{Wahl topology}. 
This topology was introduced by Wahl \cite{Wahl08} for self-adjoint Fredholm operators on Hilbert spaces. 
As remarked in~\cite{Wahl08}, this topology and its basic properties can be naturally extended to operators on Hilbert $C^*$-modules. 
Here, we will review the construction of the Wahl topology for both self-adjoint and skew-adjoint Fredholm operators on $C^*$-modules.
(If $Y_B$ is a Real $C^*$-module, we will silently assume all operators to be Real as well.)

Consider a family of regular self-adjoint (resp.\ skew-adjoint) Fredholm operators $\{D_\omega\}_{\omega\in\Omega}$, such that the resolvents are strongly continuous (cf.\ Lemma \ref{lemma:strong_cts_regular}). 
From Proposition \ref{prop:Fredholm_properties} we know that, for each $\omega\in\Omega$, 
there is an $\varepsilon >0$ such that for all $\varphi \in C_c(-\varepsilon, \varepsilon)$ (resp.\ $\varphi \in C_c(-i\varepsilon, i\varepsilon)$), $\varphi(D_\omega)$ is compact. 
The basic idea of the Wahl topology is then to ensure that we can choose such an $\varepsilon$ \emph{uniformly}, such that $\omega \mapsto \varphi(D_\omega)$ is \emph{norm-continuous}. 

We consider the sets 
\begin{align*}
   \Reg_B^{\sa}(Y) &= \big\{ \text{self-adjoint } D \in \Reg_B(Y) \big\} , &
   \Reg_B^{\sk}(Y) &= \big\{ \text{skew-adjoint } T \in \Reg_B(Y) \big\}.
\end{align*}

\begin{defn}[Wahl topology]
Let $\phi \in C_c^\infty(\R)$ be a non-negative even function with $\supp(\phi) = [-1,1]$, and $\phi'(x) > 0$ for $x \in (-1,0)$. 
Define $\phi_n\in C_c^\infty(\R)$ by $\phi_n(x) := \phi(nx)$ for $1\leq n\in\N$. 
We also define $\psi,\psi_n \in C_c^\infty( i\R)$ such that $\psi(ix) = i\phi(x)$  and $\psi_n(ix) = i\phi_n(x)$. 
Let $\mathfrak{S}^{\sa}_n(Y_B)$ and $\mathfrak{S}_n^{\sk}(Y_B)$ denote the sets 
$\Reg_B^{\sa}(Y)$ and $\Reg_B^{\sk}(Y)$, respectively, endowed with the 
weakest topology such that, for all $y \in Y_B$, the following maps are continuous:
\begin{align*}
   &\Reg_B^{\sa}(Y) \to Y_B, \quad D \mapsto (i \pm D)^{-1} y, 
   &&\Reg_B^{\sa}(Y) \to \End_B(Y), \quad D \mapsto \phi_n(D), \\
   &\Reg_B^{\sk}(Y) \to Y_B, \quad T \mapsto (\one \pm T)^{-1} y, 
   &&\Reg_B^{\sk}(Y) \to \End_B(Y), \quad T \mapsto \psi_n(T).
\end{align*}
For $m\leq n$, we have continuous inclusions $\mathfrak{S}^{\sa}_m(Y_B) \hookrightarrow \mathfrak{S}^{\sa}_n(Y_B)$ 
and $\mathfrak{S}^{\sk}_m(Y_B) \hookrightarrow \mathfrak{S}^{\sk}_n(Y_B)$. 
We then define $\mathfrak{S}^{\sa}_B(Y)$ and $\mathfrak{S}^{\sk}_B(Y)$ 
to be the sets $\Reg_B^{\sa}(Y)$ and $\Reg_B^{\sk}(Y)$, respectively, 
endowed with the direct limit topology.
We will call this direct limit topology on $\Reg_B^{\sa}(Y)$ and $\Reg_B^{\sk}(Y)$ the \emph{Wahl topology}. 

If $\Omega$ is a compact Hausdorff space, 
then a family $\{D_\omega\}_{\omega\in\Omega} \subset \Reg_B^{\sa}(Y)$ (resp.\ $\{T_\omega\}_{\omega\in\Omega} \subset \Reg_B^{\sk}(Y)$) is called \emph{Wahl-continuous}, 
if the map $\omega \mapsto D_\omega \in \mathfrak{S}^{\sa}_B(Y)$ (resp.\ $\omega \mapsto T_\omega \in \mathfrak{S}^{\sk}_B(Y)$) is continuous. 
By definition of the Wahl topology, this means that the resolvents $\omega \mapsto (i \pm D_\omega)^{-1}$ (resp.\ $\omega \mapsto (\one \pm T_\omega)^{-1}$) are strongly continuous, and there exists an $n\in\N$ such that $\omega \mapsto \phi_n(D_\omega)$ (resp.\ $\omega \mapsto \phi_n(T_\omega)$) is norm-continuous. 
\end{defn}

\begin{defn}[Wahl topology for Fredholm operators]
We define the spaces $\mathfrak{F}_n^{\sa}(Y_B)$ and $\mathfrak{F}_n^{\sk}(Y_B)$ to be the sets 
\begin{align*}
  &\mathfrak{F}_n^{\sa}(Y_B) = \big\{ D \in \Reg_B^{\sa}(Y) \mid \phi_n(D) \in \End_B^0(Y) \big\}, 
  &&\mathfrak{F}_n^{\sk}(Y_B) = \big\{ T \in \Reg_B^{\sk}(Y) \mid \psi_n(T) \in \End_B^0(Y) \big\}
\end{align*}
with the subspace topology of $\mathfrak{S}^{\sa}_n(Y_B)$ and $\mathfrak{S}^{\sk}_n(Y_B)$, respectively.
We define the direct limits $\mathfrak{F}^{\sa}_B(Y) = \varinjlim  \mathfrak{F}_n^{\sa}(Y_B)$ and $\mathfrak{F}^{\sk}_B(Y) = \varinjlim  \mathfrak{F}_n^{\sk}(Y_B)$. 
\end{defn}

Note that the spaces $\mathfrak{F}^{\sa}_B(Y)$ and $\mathfrak{F}^{\sk}_B(Y)$ consist precisely of the Fredholm operators in $\Reg_B^{\sa}(Y)$ and $\Reg_B^{\sk}(Y)$, respectively, equipped with the Wahl topology. 
We highlight some useful properties of the Wahl topology. The proofs in the skew-adjoint case are 
simple modifications of the results in~\cite{Wahl08}.

Let $n \in \N$,  $D_0 \in \mathfrak{F}_n^{\sa}(Y_B)$ and $T_0 \in \mathfrak{F}_n^{\sk}(Y_B)$. Then for any 
 $\varepsilon >0$, the sets 
\begin{align*}
  U(n, \varepsilon, D_0) &= \big\{ D \in \Reg_B^{\sa}(Y) \, \big| \, \| \phi_n(D) - \phi_n(D_0) \| < \varepsilon \big\}, \\
  U(n, \varepsilon, T_0) &= \big\{ D \in \Reg_B^{\sk}(Y) \, \big| \, \| \psi_n(T) - \psi_n(T_0) \| < \varepsilon \big\}
\end{align*}
are  open neighbourhoods of $D_0$ and $T_0$ respectively.

\begin{lemma} \label{lemma:Wahl_invertibles_open}
The invertible operators in $\Reg_B^{\sa}(Y)$ (resp.\ $\Reg_B^{\sk}(Y)$) form an open subset of $\mathfrak{F}^{\sa}_B(Y)$ (resp.\ $\mathfrak{F}^{\sk}_B(Y)$). 
\end{lemma}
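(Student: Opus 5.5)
Fix an invertible $D_0 \in \Reg_B^{\sa}(Y)$; the skew-adjoint case is identical with $\psi_n$ and $(\one\pm T)^{-1}$ in place of $\phi_n$ and $(i\pm D)^{-1}$. The plan is to find $n$ and $\varepsilon>0$ such that every $D$ in the basic neighbourhood $U(n,\varepsilon,D_0)\cap\mathfrak{F}_n^{\sa}(Y_B)$ is invertible. Since the Wahl topology is a direct limit and these sets are open in $\mathfrak{F}_n^{\sa}(Y_B)$, this gives openness in $\mathfrak{F}^{\sa}_B(Y)$. To be precise, one shows that the set of invertibles intersected with each $\mathfrak{F}_m^{\sa}(Y_B)$ is open. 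For this, apply the argument at every invertible point with an index $n\ge m$, using that $\mathfrak{F}_m^{\sa}\hookrightarrow\mathfrak{F}_n^{\sa}$ is continuous.

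\textbf{Step 1: a spectral gap gives a vanishing bump.} Since $D_0$ is invertible and self-adjoint, there is a $\delta>0$ with $\spec(D_0)\cap(-\delta,\delta)=\emptyset$. This uses functional calculus for regular self-adjoint operators, where the spectrum is defined via the bounded transform or resolvent. Choose $n$ with $1/n<\delta$. Since $\supp\phi_n=[-1/n,1/n]$, we get $\phi_n(D_0)=0$.

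\textbf{Step 2: the neighbourhood.} Take $\varepsilon<\phi(1/2)$, noting that $\phi(1/2)>0$ by the assumptions on $\phi$ (even, strictly increasing on $(-1,0)$, support exactly $[-1,1]$). If $\|\phi_n(D)\|<\varepsilon$, then $\phi_n(x)\ge\phi(1/2)$ for $|x|\le 1/(2n)$, because $\phi$ is even and decreasing on $[0,1]$. Hence
\[
\sup\{\phi_n(\lambda):\lambda\in\spec(D)\}=\|\phi_n(D)\|<\phi(1/2),
\]
so $\spec(D)\cap[-1/(2n),1/(2n)]=\emptyset$. Here I use the spectral mapping identity $\|f(D)\|=\sup_{\spec D}|f|$ for the continuous functional calculus of regular self-adjoint operators on Hilbert $C^*$-modules. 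This can be justified by applying the $C^*$-functional calculus to the bounded transform $F_D$ and using that $f\mapsto f(D)$ is an injective $*$-homomorphism from $C_0(\spec D)$.

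\textbf{Step 3: invertibility.} Since $0\notin\spec(D)$, $D$ has bounded adjointable inverse $g(D)$ with $g(x)=1/x$ on $\spec(D)$, so $D$ is invertible in $\Reg_B^{\sa}(Y)$.

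The main obstacle is making Step 2 rigorous for unbounded regular operators on Hilbert $C^*$-modules, in particular confirming that $\|f(D)\|=\sup_{\spec(D)}|f|$ and that a spectral gap yields a bounded adjointable inverse. I would handle both by passing to $F_D$, whose spectrum in $[-1,1]$ corresponds to that of $D$ via $x\mapsto x(1+x^2)^{-1/2}$, and then working in the $C^*$-algebra $\End_B(Y)$. The remaining steps are elementary.
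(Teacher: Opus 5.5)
Your proof is correct and is essentially the paper's argument. A spectral gap of $D_0$ gives $\phi_n(D_0)=0$ for large $n$, and smallness of $\|\phi_n(D)\|=\sup_{\spec(D)}|\phi_n|$ then forces $D$ to have a gap around $0$. The paper takes $\varepsilon<\frac12\phi_n(0)$ and only excludes $0$ from the spectrum, whereas your $\varepsilon<\phi(1/2)$ gives the uniform gap $[-\frac{1}{2n},\frac{1}{2n}]$.

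Your direct-limit bookkeeping (choosing $n\ge m$ and pulling back along the continuous inclusion $\mathfrak{F}_m^{\sa}(Y_B)\hookrightarrow\mathfrak{F}_n^{\sa}(Y_B)$) is more careful than the paper. The paper simply asserts that $U(n,\varepsilon,D_0)$ is an open neighbourhood of $D_0$ in $\mathfrak{F}^{\sa}_B(Y)$.
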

\begin{proof}
Let $D_0 \in \Reg_B^{\sa}(Y)$ be invertible (the proof for $T_0 \in \Reg_B^{\sk}(Y)$ is similar). Then there exists a $\delta>0$ such that for all $\varphi\in C_c^\infty(-\delta,\delta)$ we have $\varphi(D_0) = 0$. 
Choose $n\in\N$ such that $\frac1n<\delta$, and choose $\varepsilon>0$ such that $\varepsilon < \frac12\phi_n(0)$. 
Then, for all $D \in U(n, \varepsilon, D_0)$ we have 
\[
\| \phi_n(D) \| = \| \phi_n(D) - \phi_n(D_0) \| < \varepsilon < \tfrac12 \phi_n(0) .
\]
This implies that $0$ is not in the spectrum of $D$, since otherwise the spectral theorem yields the contradiction 
$\phi_n(0) \leq \sup_{\lambda\in\spec(D)} |\phi_n(\lambda)| = \|\phi_n(D)\| < \frac12 \phi_n(0)$. 
\end{proof}

\begin{thm} \label{thm:Wahl_cts_iff_Fredholm}
Let $\Omega$ be a compact Hausdorff space and let $\{D_\omega\}_{\omega\in\Omega}$
be a family of regular self-adjoint (or skew-adjoint) Fredholm operators on $Y_B$. 
Then the following are equivalent:
\begin{enumerate}[label=(\roman*)]
\item The family $\{D_\omega\}_{\omega\in\Omega}$ 
is Wahl-continuous.
\item The family operator $D_\bullet \colon \Dom(D_\bullet) \to C(\Omega, Y_B)_{C(\Omega, B)}$ is regular and Fredholm. 
\end{enumerate}
\end{thm}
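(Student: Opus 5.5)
We treat the self-adjoint case; the skew-adjoint case is the same argument with $(\one\pm T)^{-1}$ and $\psi_n$ in place of $(i\pm D)^{-1}$ and $\phi_n$. By Lemma \ref{lemma:strong_cts_regular}(3), regularity of $D_\bullet$ is equivalent to strong continuity of $\omega\mapsto(i\pm D_\omega)^{-1}$. This is also part of the definition of Wahl continuity, so in both directions we may assume $D_\bullet$ is regular. It then remains to prove, under this assumption, that $D_\bullet$ is Fredholm if and only if there is an $n$ with $\omega\mapsto\phi_n(D_\omega)$ norm-continuous.

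For the direction (i)$\Rightarrow$(ii), fix $\omega_0$. The operator $D_{\omega_0}$ is Fredholm, so Proposition \ref{prop:Fredholm_properties} gives some $m$ with $\phi_m(D_{\omega_0})$ compact. After enlarging $n$ so that it serves both the continuity and this compactness, we use the norm-continuity of $\omega\mapsto\phi_n(D_\omega)$ to get a neighbourhood of $\omega_0$ on which $\|\phi_n(D_\omega)-\phi_n(D_{\omega_0})\|$ is small. Compactness of $\Omega$ and a partition of unity then let us build a parametrix. The cleaner route is to show directly that $\phi_N(D_\bullet)$ is compact on $C(\Omega,Y_B)$ for a suitable $N$, and then apply Proposition \ref{prop:Fredholm_properties}(ii) to $D_\bullet$.

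The key step is as follows. For $\omega$ near $\omega_0$, the operator $\phi_n(D_\omega)$ is norm-close to a compact operator. Choose $g$ with $g(1/2)=0$ and $g=1$ near $1$, so that $g(\phi_n)$ is supported where $\phi_n>1/2$. Then $g(\phi_n(D_\omega))$ is close to $g(\phi_n(D_{\omega_0}))$, which is compact because it lies in $C^*(\phi_n(D_{\omega_0}))$ and vanishes at $0$. Since $g$ is continuous, $\omega\mapsto g(\phi_n(D_\omega))$ is norm-continuous. Choosing $N$ with $\phi_N=h\circ(g\circ\phi_n)$ for some function $h$, we get that $\omega\mapsto\phi_N(D_\omega)$ is norm-continuous with compact values. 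By Lemma \ref{lemma:strong_cts_regular}(2), $\phi_N(D_\bullet)$ is then compact. The compactness of each $\phi_N(D_\omega)$ requires care: we aim to show that the set of $\omega$ for which it is compact is open and closed, using norm-continuity and the closedness of the compacts. Alternatively, we can shrink the support so that every $\phi_N(D_\omega)$ is a norm limit of elements of $C^*(\phi_n(D_\omega))$ that are approximated uniformly.

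For the direction (ii)$\Rightarrow$(i), if $D_\bullet$ is regular and Fredholm, Proposition \ref{prop:Fredholm_properties} applied to $D_\bullet$ gives $\varepsilon$ with $\varphi(D_\bullet)$ compact for every $\varphi\in C_c(-\varepsilon,\varepsilon)$. Take $n>1/\varepsilon$. Functional calculus is compatible with evaluation, so $\phi_n(D_\bullet)_\omega=\phi_n(D_\omega)$; this holds for resolvents and hence for all $C_0$-functions by Stone--Weierstrass. Since $\phi_n(D_\bullet)\in\End^0_{C(\Omega,B)}(C(\Omega,Y_B))$, Lemma \ref{lemma:strong_cts_regular}(2) shows that $\omega\mapsto\phi_n(D_\omega)$ is norm-continuous, which is Wahl continuity.

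The main obstacle is the forward direction, specifically getting a single $n$ for which the functional calculus simultaneously yields compact operators at every $\omega$ and remains norm-continuous. We would handle this as in \cite{Wahl08}, using the spectral estimate from the proof of Lemma \ref{lemma:Wahl_invertibles_open} to control $\phi_N(D_\omega)$ through $\phi_n(D_\omega)$.
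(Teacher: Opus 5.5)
Your direction (ii)$\Rightarrow$(i) is correct and is the paper's argument. The direction (i)$\Rightarrow$(ii) has a genuine gap, at exactly the point you flag.

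Under your reading of Wahl continuity (strongly continuous resolvents plus some $n$ with $\omega\mapsto\phi_n(D_\omega)$ norm-continuous), you must produce a single $N$ with $\phi_N(D_\omega)\in\End_B^0(Y)$ for all $\omega$. None of the arguments you offer achieves this:
\begin{itemize}
\item That $g(\phi_n(D_\omega))$ is norm-close to the compact operator $g(\phi_n(D_{\omega_0}))$ does not make it compact.
\item The open-and-closed argument fails. Openness of $\{\omega:\phi_N(D_\omega)\in\End_B^0(Y)\}$ for fixed $N$ is precisely what is unknown, since a small perturbation of a compact operator need not be compact. In any case $\Omega$ is not assumed connected.
\item Writing $\phi_N=h\circ(g\circ\phi_n)$ is impossible: $g\circ\phi_n$ is constant near $0$, while $\phi_N$ is strictly monotone in $|x|$ there.
\end{itemize}

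The missing idea is to make the spectral estimate in the Calkin algebra. Put $c=\tfrac12\phi(0)$. Suppose $\phi_n(D_{\omega_0})$ is compact (after enlarging $n$, as you do) and $\|\phi_n(D_\omega)-\phi_n(D_{\omega_0})\|<c$. Then $\|q(\phi_n(D_\omega))\|<c$.

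Take a continuous $g\geq0$ vanishing on $[0,c]$ and positive on $(c,\phi(0)]$. Then $q\big(g(\phi_n(D_\omega))\big)=g\big(q(\phi_n(D_\omega))\big)=0$, so $g(\phi_n(D_\omega))$ is compact.

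Now take $N$ so large that $[-1/N,1/N]\subset\{\phi_n>c\}$. Then $\phi_N=k\cdot(g\circ\phi_n)$ with $k$ bounded and continuous. Hence $\phi_N(D_\omega)=k(D_\omega)\,g(\phi_n(D_\omega))$ is compact on a neighbourhood of $\omega_0$. Separately, $\phi_N=h\circ\phi_n$ with $h$ continuous and $h(0)=0$ gives norm-continuity.

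Cover $\Omega$ by finitely many such neighbourhoods; this uses compactness, not connectedness. Take the largest $N$; passing to it via $\phi_{N'}=h'\circ\phi_N$ with $h'(0)=0$ preserves both compactness and norm-continuity. Then Lemma \ref{lemma:strong_cts_regular}(2) and Proposition \ref{prop:Fredholm_properties} finish the argument as you intended. Your closing remark about the estimate from Lemma \ref{lemma:Wahl_invertibles_open} points the right way, but the estimate must be applied to $q(\phi_n(D_\omega))$, not to $\phi_n(D_\omega)$.

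The paper avoids this step altogether. It takes Wahl continuity of a family of Fredholm operators to mean continuity into $\mathfrak{F}^{\sa}_B(Y)=\varinjlim\mathfrak{F}^{\sa}_n(Y_B)$. This directly supplies a single $n$ with every $\phi_n(D_\omega)$ compact and $\omega\mapsto\phi_n(D_\omega)$ norm-continuous, so that direction takes two lines.
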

\begin{proof}
Assume $\{D_\omega\}_{\omega\in\Omega}$ is Wahl-continuous. 
Since Wahl-continuous families have strongly continuous resolvents, the regularity of $D_\bullet$ is given by Lemma \ref{lemma:strong_cts_regular}. 
Furthermore, there exists an $n\in\N$ such that $\omega \mapsto \phi_n(D_\omega) \in \mathfrak{F}^{\sa}_n(Y_B)$ (resp.\ $\in \mathfrak{F}^{\sk}_n(Y_B)$) is compact and norm-continuous. 
In particular, $\phi_n(D_\bullet)$ is compact, and it follows from Proposition \ref{prop:Fredholm_properties} that $D_\bullet$ is Fredholm. 

Conversely, assume $D_\bullet$ is regular and Fredholm. Then the regularity implies the strong continuity of the resolvents (Lemma \ref{lemma:strong_cts_regular}). 
Furthermore, by Proposition \ref{prop:Fredholm_properties}, 
the Fredholm property of $D_\bullet$ implies that there exists an $n\in\N$ such that $\phi_n(D_\bullet)$ is compact, 
which means that $\phi_n(D_\omega)$ is compact and depends norm-continuously on $\omega$. 
Thus $\{D_\omega\}_{\omega\in\Omega}$ is Wahl-continuous. 
\end{proof}

For our construction of the spectral flow for a \emph{path} of Fredholm operators (i.e., for $\Omega=[0,1]$) in {\S}\ref{sec:spectral_flow}, we will need to consider paths with invertible endpoints, in which case we have: 

\begin{cor} \label{cor:Wahl_cts_path_Fredholm}
Let $\{D_t\}_{t\in[0,1]}$ be a family of regular self-adjoint (or skew-adjoint) Fredholm operators on $Y_B$. 
Then the following are equivalent:
\begin{enumerate}[label=(\roman*)]
\item $D_0$ and $D_1$ are invertible, and the family $\{D_t\}_{t\in[0,1]}$ is Wahl-continuous.
\item The family operator $D_\bullet \colon \Dom(D_\bullet) \to SY_{SB}$ is regular and Fredholm. 
\end{enumerate}
\end{cor}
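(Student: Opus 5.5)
The plan is to reduce the corollary to Theorem \ref{thm:Wahl_cts_iff_Fredholm} with $\Omega=[0,1]$, and then to account for the fact that $SY_{SB}=C_0((0,1),Y_B)$ is an ideal submodule of $C([0,1],Y_B)_{C([0,1],B)}$ rather than the full module of continuous sections. The only new ingredient is that the endpoint fibres must contribute nothing to the Calkin-algebra obstruction. Concretely, I will compare Fredholmness of $D_\bullet$ on $SY_{SB}$ with Fredholmness of $D_\bullet$ on $C([0,1],Y_B)$, and show that the two differ exactly by invertibility of $D_0$ and $D_1$.

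For (i)$\Rightarrow$(ii), I would start from Wahl-continuity. By Lemma \ref{lemma:strong_cts_regular} the resolvents $(i\pm D_t)^{-1}$ (resp.\ $(\one\pm D_t)^{-1}$) are strongly continuous, so $D_\bullet$ is regular on $C([0,1],Y_B)$. It restricts to a regular operator on $SY_{SB}$, because the resolvents preserve the vanishing of sections at $0$ and $1$.

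Next I fix $n$ such that $t\mapsto\phi_n(D_t)$ is norm-continuous with compact values. By Lemma \ref{lemma:Wahl_invertibles_open}, invertibility at the endpoints lets me enlarge $n$ so that $\phi_n(D_0)=\phi_n(D_1)=0$. The step needing care is enlarging $n$ while keeping norm-continuity. For this I will use that $\phi_m=\phi_m\cdot g$ for $m\ge n$, where $g$ is a fixed function that is constant near $[-1/m,1/m]$, together with the fact that functional calculus of a norm-continuous compact family by $C_0$-functions vanishing outside a small interval stays norm-continuous. This is Wahl's argument in \cite{Wahl08}, which I would import.

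With this choice, $\phi_n(D_\bullet)$ is a norm-continuous compact-valued family vanishing at $t=0,1$. By Lemma \ref{lemma:strong_cts_regular}(2), restricted to $C_0$, it lies in $\End^0_{SB}(SY)\cong S\End^0_B(Y)$. Proposition \ref{prop:Fredholm_properties} then gives that $D_\bullet$ is Fredholm on $SY_{SB}$.

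For (ii)$\Rightarrow$(i), regularity of $D_\bullet$ on $SY$ gives strong continuity of the resolvents on the open interval. Fredholmness together with Proposition \ref{prop:Fredholm_properties} gives an $n$ with $\phi_n(D_\bullet)\in S\End^0_B(Y)$. This means $t\mapsto\phi_n(D_t)$ is norm-continuous on $(0,1)$ and extends continuously by $0$ to the endpoints.

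The main obstacle is passing from this limit to the actual endpoint operators. The difficulty is that $SY$ contains no information at $t=0,1$, so I must use the hypothesis that the given family is defined on all of $[0,1]$. I would use the fact that strong resolvent continuity of the family up to the endpoints is part of the standing hypothesis; otherwise I would build it into (ii), since a regular operator on $C([0,1],Y)$ restricting to $SY$ is the natural reading. Then $\phi_n(D_t)\to\phi_n(D_0)$ strongly. Combined with $\|\phi_n(D_t)\|\to0$, this forces $\phi_n(D_0)=0$, and likewise $\phi_n(D_1)=0$. Since $\phi_n>0$ on $(-1/n,1/n)$, the spectral theorem gives $\spec(D_0)\cap(-1/n,1/n)=\emptyset$, so $D_0$ and $D_1$ are invertible. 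Norm-continuity on the closed interval then holds, so the family is Wahl-continuous. The skew-adjoint case is identical with $\psi_n$ in place of $\phi_n$.
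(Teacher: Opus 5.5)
Your proposal is correct and follows the same route as the paper. In one direction you enlarge $n$ so that $\phi_n$ (resp.\ $\psi_n$) annihilates the invertible endpoints, which gives $\phi_n(D_\bullet)\in S\End^0_B(Y)$; in the other you read off $\phi_n(D_0)=\phi_n(D_1)=0$ from compactness on $SY_{SB}$.

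You are right that this converse step tacitly needs strong resolvent continuity up to $t=0,1$; the paper asserts it without comment. The monotonicity in $n$ that you worry about is already built into the Wahl topology through the continuous inclusions $\mathfrak{S}^{\sa}_m(Y_B)\hookrightarrow\mathfrak{S}^{\sa}_n(Y_B)$ for $m\le n$. Your sketch of that fact should read $\phi_m=(\phi_m/\phi_n)\,\phi_n$ (a strictly continuous factor times a norm-continuous compact one), not $\phi_m=\phi_m\cdot g$.
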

\begin{proof}
We need to adapt the equivalent statements of Theorem \ref{thm:Wahl_cts_iff_Fredholm} from the compact space $\Omega = [0,1]$ to the noncompact space $(0,1)$. 
Assuming Wahl-continuity of $\{D_t\}_{t\in[0,1]}$ and invertibility of $D_0$ and $D_1$, we can now choose $n\in\N$ large enough, such that $\omega \mapsto \phi_n(D_\omega)$ is compact and norm-continuous with $\phi_n(D_0) = \phi_n(D_1) = 0$. This means that $\phi_n(D_\bullet)$ is compact on $SY_{SB}$. 
Conversely, if $\phi_n(D_\bullet)$ is compact on $SY_{SB}$, then this in particular means $\phi_n(D_0)=\phi_n(D_1)=0$, which shows that $D_0$ and $D_1$ are invertible. 
\end{proof}

For completeness, we also examine Wahl-continuous families of Fredholm operators at the level of Kasparov modules and $\KK$-theory.

\begin{cor} \label{cor:Wahl_family_to_KK_htpy}
Let $\Omega$ be a compact Hausdorff space, $Y_B$ a $\Z_2$-graded Hilbert module 
and $D: \Omega \to \mathfrak{F}^{sa}_B(Y)$ a Wahl-continuous family of 
self-adjoint odd Fredholm operators.  Then:
\begin{enumerate}
  \item The triple $\big( \C, \, C(\Omega, Y_B)_{C(\Omega, B)}, \, \chi(D_\bullet) \big)$ is a 
Kasparov module for any normalising function $\chi$ of $D_\bullet$,
  \item If $\tilde{D}:\Omega \times [0,1]\to \mathfrak{F}^{sa}_B(Y)$ is  a Wahl-continuous family of 
self-adjoint  odd Fredholm operators on $Y_B$, then for any normalising function $\chi$ of $\tilde{D}_\bullet$,
\[
 \big( \C, \, C( \Omega \times [0,1], Y_B)_{C(\Omega \times [0,1], B)} , \, \chi\big( \tilde{D}_\bullet\big) \big)
\]
is a homotopy of Kasparov modules.
\end{enumerate}
\end{cor}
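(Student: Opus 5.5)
Both parts follow from Theorem~\ref{thm:Wahl_cts_iff_Fredholm} together with Proposition~\ref{prop:Fred_to_KasMod}.

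For part (1), Theorem~\ref{thm:Wahl_cts_iff_Fredholm} applied to the Wahl-continuous family $\{D_\omega\}_{\omega\in\Omega}$ shows that the family operator $D_\bullet$ is a regular self-adjoint Fredholm operator on $C(\Omega,Y_B)_{C(\Omega,B)}$. It is odd because each $D_\omega$ is odd and the grading on $C(\Omega,Y_B)$ is pointwise. It is Real whenever the $D_\omega$ are Real. Proposition~\ref{prop:Fred_to_KasMod}, applied with $B$ replaced by $C(\Omega,B)$, then shows that $(\C, C(\Omega,Y_B), \chi(D_\bullet))$ is a Kasparov module for any normalising function $\chi$ of $D_\bullet$, and that its class does not depend on $\chi$. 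A normalising function exists by Proposition~\ref{prop:Fredholm_properties}.

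For part (2), I would apply part (1) with the compact space $\Omega\times[0,1]$. This gives a Kasparov module over $C(\Omega\times[0,1],B)\cong C([0,1],C(\Omega,B))$, which is exactly a homotopy of Kasparov $\C$-$C(\Omega,B)$-modules.

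It remains to identify the endpoints of this homotopy. Evaluation at $t=0,1$ is the pushforward along $\ev_t\colon C([0,1],C(\Omega,B))\to C(\Omega,B)$. The pushed-forward module is $C(\Omega\times[0,1],Y_B)\otimes_{\ev_t}C(\Omega,B)\cong C(\Omega,Y_B)$, with operator $\chi(\tilde D_\bullet)$ restricted to $\Omega\times\{t\}$. By functional calculus this restriction equals $\chi(\tilde D_{\bullet,t})$. Here $\tilde D_{\bullet,t}$ is the family operator of $\{\tilde D_{\omega,t}\}_{\omega\in\Omega}$, and the identity holds because resolvents restrict compatibly. Moreover, $\chi$ is still a normalising function for $\tilde D_{\bullet,t}$: the image of the compact operator $\chi(\tilde D_\bullet)^2-\one$ under evaluation is again compact.

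The only step needing care is this compatibility of functional calculus with evaluation, i.e. $\ev_t(f(\tilde D_\bullet))=f(\tilde D_{\bullet,t})$. I would check it first for resolvents, where it holds by the definition of the family operator in Lemma~\ref{lemma:strong_cts_regular}, and then extend to all $f\in C_0(\R)$, and hence to $\chi$, by density.
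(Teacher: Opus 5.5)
Your proposal is correct and follows the paper's route: the paper gives no separate proof, presenting the corollary as a direct consequence of Theorem~\ref{thm:Wahl_cts_iff_Fredholm} combined with Proposition~\ref{prop:Fred_to_KasMod}, with part (2) obtained by applying part (1) to the compact space $\Omega\times[0,1]$. Your extra identification of the endpoints is a welcome addition. One small correction there: since $\chi$ does not vanish at $\pm\infty$, the step from $C_0(\R)$ to $\chi$ should go through nondegeneracy of the functional calculus (equivalently, strict density), not norm density.
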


\begin{remark}[The Kasparov topology of essential unitaries] \label{rk:Wahl_and_Kas_topology}
We say that $F \in \End_B(Y)$ is an essential unitary if $\one - F^*F,\, \one-FF^* \in \End_B^0(Y)$. Hence 
essential unitaries are precisely the Fredholm operators that give rise to Kasparov modules.
In the papers~\cite{BJS03, Joachim}, Bunke, Joachim, and Stolz consider a topology on the space of essentially unitary operators 
on a $C^*$-module $Y_B$, which has many similarities with the Wahl topology. 
Namely, they define the Kasparov topology as the weakest topology such that, 
for all $y \in Y_B$, the maps 
\begin{align*}
   &F\mapsto \one - F^*F \in \End_B^0(Y), &&F\mapsto \one - FF^* \in \End_B^0(Y), 
   &F\mapsto Fy \in Y_B, &&F\mapsto F^*y \in Y_B, 
\end{align*}
are continuous.

Let $KC^{\sa}_B(Y)$ and $KC^{\sk}_B(Y)$ be the space of essentially unitary and self-adjoint/skew-adjoint operators on $Y_B$ equipped with the Kasparov topology. 
Then there are continuous inclusions
\[
    KC^{\sa}_B(Y)  \to \mathfrak{F}^{\sa}_B(Y), \qquad \qquad KC^{\sk}_B(Y)  \to \mathfrak{F}^{\sk}_B(Y).
\]
In particular, the invertible operators in $KC^{\sa}_B(Y)$ and $KC^{\sk}_B(Y)$ are open by Lemma \ref{lemma:Wahl_invertibles_open}.
Conversely, given any Wahl-continuous family $\{T_\omega\}_{\omega\in\Omega} \subset \mathfrak{F}^{\sk}_B(Y)$ and any normalising function $\chi$ for $T_\bullet$, 
we see that $\Omega \ni \omega \mapsto \chi(T_\omega) \in  KC^{\sk}_B(Y)$ is continuous in the Kasparov topology 
(an analogous result holds for self-adjoint families). 

A thorough analysis of the various topologies on Fredholm operators in the Hilbert space setting can be found in the book~\cite{SpecFlowBook}.
\end{remark}

\section{Spectral flow on Real Hilbert modules} \label{sec:spectral_flow}

In this section, we will introduce the spectral flow for paths of skew-adjoint Fredholm 
operators on Real Hilbert $C^*$-modules with invertible endpoints. 
We provide an abstract definition of $\DK$-valued spectral flow by simply combining 
our $\DK$-valued Fredholm index with Bott periodicity. 
By observing that (like the Fredholm index) the spectral flow factors through $\KK$-theory, 
we derive several elementary properties of the spectral flow (such as additivity under concatenation of paths). 
We also prove a normalisation property for the spectral flow: 
given a Fredholm pair of skew-adjoint unitaries $(J_0,J_1)$, the spectral flow of the straight line path from $J_0$ to $J_1$
 equals the (inverse of the) relative index from {\S}\ref{sec:relative_index}. 
Finally, in \S\ref{subsec:Sf_complex}, we compare our $\DK$-valued spectral flow with the (even or odd) spectral flow in complex $K$-theory as described by Wahl \cite{Wahl07}.

\subsection{Abstract definition}

As always, let $X_A$ be a full ungraded and countably generated $C^*$-module with an ungraded and ample $\Cl_{r,s}$-representation generated by $\{e_1,\ldots, e_r, f_1, \ldots f_s\}$.
We aim to define the spectral flow for a family of Real skew-adjoint Fredholm operators $\{T_t\}_{t\in[0,1]} \subset \Reg^{r,s}_A(X)$ with invertible endpoints. 
For this purpose, we will need to require that the family operator $T_\bullet$ on $SX_{SA}$ is also regular and Fredholm. 
As we have seen in Corollary \ref{cor:Wahl_cts_path_Fredholm}, this is equivalent to the requirements that the endpoints $T_0$ and $T_1$ are invertible and the family $\{T_t\}_{t\in[0,1]}$ is Wahl-continuous. 

To obtain an abstract definition of the spectral flow of a path $\{T_t\}_{t\in[0,1]} \subset \Reg^{r,s}_A(X)$, we can then simply apply the Fredholm index (taking values in the $\DK$-theory of the suspension $SA$) to the single operator $T_\bullet \in \Reg^{r,s}_{SA}(SX)$, and then compose with Bott periodicity. 
This definition immediately emphasises the close relationship between the Fredholm index and the spectral flow. 

\begin{defn}[Abstract spectral flow] \label{def:DK_spec_flow}
Consider a Wahl-continuous family of Real skew-adjoint Fredholm operators $\{T_t\}_{t\in[0,1]} \subset \Reg^{r,s}_A(X)$ with invertible endpoints. 
Then we define the \emph{spectral flow} to be 
\[
   \Sf_{r,s+1}\big( \{T_t\}_{t\in[0,1]} \big) := \beta_{\DK}^{-1}\big( \Index_{r+1,s+1}( T_\bullet ) \big) 
   \in \DK( A \otimes \Cl_{r,s+1} ) ,
\]
where $\Index_{r+1,s+1}$ is the Fredholm index from Definition \ref{defn:index-of-Fred}, 
and $\beta_{\DK}: \DK(A \otimes \Cl_{r,s+1}) \xrightarrow{\simeq} \DK(SA \otimes \Cl_{r+1,s+1})$ is the Bott periodicity in Van Daele $K$-theory (see Theorem \ref{thm:DK_Bott_isos}).
\end{defn}

We note that the spectral flow is well-defined, since by Corollary \ref{cor:Wahl_cts_path_Fredholm}, the family operator $T_\bullet \in \Reg^{r,s}_{SA}(SX)$ is indeed Fredholm. 
Recall from Corollary \ref{cor:skew_Fred_to_KasMod} that $T_\bullet$ defines a $\KK$-class $\class{T_\bullet} \equiv \big[ \chi(T_\bullet)\otimes\rho \big] \in \KKR( \Cl_{s+1, r}, SA)$ given by the Real Kasparov module 
\[
    \Big( \Cl_{s+1, r}, \  SX_{SA} \otimes \exterior \C, \   \chi(T_\bullet) \otimes \rho  \Big) .
\]
Furthermore,  Theorem \ref{thm:index-of-Fred} shows that the Fredholm index can be expressed as the 
Roe isomorphism $\Roe_A^{r+1,s+1}: \KKR(\Cl_{s+1,r}, A) \xrightarrow{\simeq} \DK( A\otimes \Cl_{r+1,s+1} )$ applied to the $\KK$-class. 
It thus follows that the spectral flow also factors through $\KK$-theory, and can be expressed in terms of the Roe isomorphism and the Kubota isomorphism $\Kubota_A^{r,s+1}: \KKR(\Cl_{s+1,r}, SA) \xrightarrow{\simeq} \DK( A \otimes \Cl_{r,s+1})$ (see Definition \ref{def:Indexed_KK_isomorphisms}), as follows. 

\begin{prop}  \label{prop:KK_and_DK_flow_compatible}
Let $\{T_t\}_{t\in[0,1]} \subset \Reg^{r,s}_A(X)$ be a Wahl-continuous family of Real skew-adjoint Fredholm operators with invertible endpoints. 
Then, for any normalising function $\chi$ for $T_\bullet$, we have 
\begin{align*}
  \Sf_{r,s+1} \big( \{T_t\}_{t\in[0,1]} \big) 
  &= \beta_{\DK}^{-1}\circ \Roe_{SA}^{r+1,s+1} \big( \big[ \chi(T_\bullet)\otimes\rho \big] \big) \\ 
  &= \Kubota_A^{r,s+1} \big( \big[ \chi(T_\bullet)\otimes\rho \big] \big).
\end{align*}
\end{prop}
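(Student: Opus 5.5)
The plan is to unwind Definition \ref{def:DK_spec_flow} and then use the compatibility results for the $\KK$--$\DK$ isomorphisms collected in the appendix. Two equalities are needed. The first is that $\Index_{r+1,s+1}(T_\bullet) = \Roe_{SA}^{r+1,s+1}(\class{T_\bullet})$. The second is that $\beta_{\DK}^{-1}\circ\Roe_{SA}^{r+1,s+1} = \Kubota_A^{r,s+1}$ as maps $\KKR(\Cl_{s+1,r},SA)\to\DK(A\otimes\Cl_{r,s+1})$.

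For the first equality, I will apply Theorem \ref{thm:index-of-Fred} with $A$ replaced by the Real $\sigma$-unital algebra $SA$ and $X_A$ replaced by $SX_{SA}$. By Corollary \ref{cor:Wahl_cts_path_Fredholm}, $T_\bullet$ is a regular, Real, skew-adjoint Fredholm operator on $SX_{SA}$. It is Clifford anti-linear for the constant $\Cl_{r,s}$-representation, and this representation is still ample on $SX$: a constant nonzero Clifford element is not in $S\End_A^0(X)$, because it would then have to be compact at each point. Several standing hypotheses need routine checks. $SX$ is full over $SA$. A basepoint $J\in\End^{r,s}_{SA}(SX)$ exists, namely the constant extension of $J$, which is adjointable on $SX$ since it is strictly continuous. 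The normalising function $\chi$ is chosen for $T_\bullet$ and not pointwise. With these in place, Theorem \ref{thm:index-of-Fred} gives
\[
\Index_{r+1,s+1}(T_\bullet)=\Roe_{SA}^{r+1,s+1}\big(\class{T_\bullet}\big),\qquad \class{T_\bullet}=\big[\chi(T_\bullet)\otimes\rho\big]\in\KKR(\Cl_{s+1,r},SA),
\]
and applying $\beta_{\DK}^{-1}$ yields the first line of the statement.

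For the second equality, I will use the appendix relation between the Roe and Kubota isomorphisms and Bott periodicity. It takes the form $\beta_{\DK}\circ\Kubota_A^{r,s+1}=\Roe_{SA}^{r+1,s+1}$ (the indexed version of Theorem \ref{thm:graded_Roe_and_Kubota} and its Clifford-shuffled analogue from Definition \ref{def:Indexed_KK_isomorphisms}). If the appendix states this only in the graded setting $\Psi_B$, $\Roe_B$ with $B=A\otimes\Cl_{r,s+1}$, I will transport it through the Clifford shuffle $\shuffle_{\KK}^{r,s+1}$. Both indexed maps are defined by precomposing with this shuffle, and $\sigma^{r,s}$ commutes with the boundary map by Lemma \ref{lemma:sigma^rs_commutes_with_boundary}. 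I also need that the identifications $S(A\otimes\Cl_{r,s+1})\cong SA\otimes\Cl_{r,s+1}$ used in $\beta_{\DK}$ are the ones used in defining $\Roe_{SA}$.

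The main obstacle is this second step. It requires that the Bott map $\beta_{\DK}$ of Theorem \ref{thm:DK_Bott_isos} be exactly the map intertwining $\Kubota$ and $\Roe$, with no sign and with the same Clifford conventions. If only $\beta_{\DK}\circ\Kubota=\pm\Roe$ is available, I would fix the sign by testing on a generator. The natural test is the straight-line path between a Fredholm pair $J_0,J_1$ with $J_1-J_0$ compact: Proposition \ref{prop:concrete_Kubota_computation} computes $\Kubota$ explicitly there, and Lemma \ref{lem:index-sinh-cosh} computes $\Roe$ through the $\cosh/\sinh$ formula.
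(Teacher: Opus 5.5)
Your proposal is correct and follows the paper's proof: the first equality is Theorem~\ref{thm:index-of-Fred} applied to $T_\bullet$ on $SX_{SA}$, and the second is exactly part~(4) of Theorem~\ref{prop:Roe_Kubota_Bott_compatibility}, namely $\beta_{\DK}\circ\Kubota_A^{r,s+1}=\Roe_{SA}^{r+1,s+1}$. That identity is already stated there in Clifford-indexed form and without a sign, so you do not need the fallback of transporting the graded version through $\shuffle_{\KK}^{r,s+1}$ or of fixing a sign on a test class.
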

\begin{proof}
The first equality is Theorem \ref{thm:index-of-Fred} and the second is Theorem \ref{prop:Roe_Kubota_Bott_compatibility}.
\end{proof}

Thus, just as the Roe isomorphism $\Roe$ computes the Fredholm index of a $\KK$-class over $A$,  the Kubota isomorphism $\Kubota$ computes the spectral flow of a $\KK$-class over the suspension $SA$.

\subsection{Properties of the spectral flow}
Because the spectral flow factors through $\KK$-theory by Proposition \ref{prop:KK_and_DK_flow_compatible}, the spectral flow immediately inherits various properties from $\KK$-theory, which we will list here. 

\begin{prop} \label{prop:sf_properties}
The spectral flow satisfies the following properties of the \emph{zero axiom (Z)}, \emph{homotopy invariance (H)}, and \emph{additivity (A)}:
\begin{enumerate}
  \item[(Z)] If $\{T_t\}_{t\in[0,1]} \subset \Reg^{r,s}_A(X)$ is a Wahl-continuous family of Real skew-adjoint \emph{invertible} operators, then $\Sf_{r,s+1}\big( \{T_t\}_{t\in[0,1]} \big) = 0$. 
  \item[(H)] If $\{T_{t,\lambda}\}_{(t,\lambda)\in[0,1]\times[0,1]} \subset \Reg^{r,s}_A(X)$ is a Wahl-continuous family of Real skew-adjoint Fredholm operators, such that $T_{0,\lambda}$ and $T_{1,\lambda}$ are invertible for all $\lambda\in[0,1]$, then $\Sf_{r,s+1}\big( \{T_{t,0}\}_{t\in[0,1]} \big) = \Sf_{r,s+1}\big( \{T_{t,1}\}_{t\in[0,1]} \big)$. 
  \item[(A)] For $j =1,2$, if $\{T_t^{(j)}\}_{t\in[0,1]} \subset \Reg^{r,s}_A(X^{(j)})$ are two Wahl-continuous families of Real skew-adjoint Fredholm operators with invertible endpoints, then $\Sf_{r,s+1}\big( \{T_t^{(1)} \oplus T_t^{(2)}\}_{t\in[0,1]} \big) = \Sf_{r,s+1}\big( \{T_t^{(1)}\}_{t\in[0,1]} \big) + \Sf_{r,s+1}\big( \{T_t^{(2)}\}_{t\in[0,1]} \big)$. 
\end{enumerate}
\end{prop}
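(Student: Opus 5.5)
The plan is to reduce all three properties to statements about $\KK$-classes, using Proposition \ref{prop:KK_and_DK_flow_compatible}. It gives $\Sf_{r,s+1}(\{T_t\}) = \Kubota_A^{r,s+1}\big(\class{T_\bullet}\big)$ with $\class{T_\bullet} = [\chi(T_\bullet)\otimes\rho] \in \KKR(\Cl_{s+1,r},SA)$. Since $\Kubota_A^{r,s+1}$ is a group isomorphism, it suffices to prove each property at the level of these classes.

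For (Z), I would argue that if every $T_t$ is invertible, then the family operator $T_\bullet$ on $SX_{SA}$ is invertible. The resolvent family $t\mapsto (\one\pm T_t)^{-1}$ is strongly continuous, and Wahl continuity together with Lemma \ref{lemma:Wahl_invertibles_open} gives a uniform spectral gap $(-i\delta,i\delta)$ over the compact interval: the invertible set is open, and compactness of $[0,1]$ yields a single $n$ with $\psi_n(T_t)=0$ for all $t$. I then choose the normalising function $\chi$ with $\chi(ix)=i\,\mathrm{sgn}(x)$ for $|x|\ge\delta$, which Definition \ref{def:Reg_fn} permits up to smoothing inside the gap. With this choice $\chi(T_\bullet)^2=-\one$, so the Kasparov module $(\Cl_{s+1,r}, SX_{SA}\otimes\exterior\C, \chi(T_\bullet)\otimes\rho)$ is degenerate. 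Its class vanishes, and by Corollary \ref{cor:skew_Fred_to_KasMod} the class does not depend on the choice of $\chi$. I expect this uniform-gap step to be the main (mild) obstacle. If it causes trouble, an alternative is to observe that $\psi_n(T_\bullet)=0$, so Proposition \ref{prop:Fredholm_properties} directly gives $\chi(T_\bullet)^2+\one=0$ for a suitable $\chi$.

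For (H), I would apply Corollary \ref{cor:Wahl_cts_path_Fredholm} to the two-parameter family, viewed as a family over $(0,1)$ parametrised by $\lambda\in[0,1]$. Wahl continuity in $(t,\lambda)$ together with invertibility at $t=0,1$ shows that $T_{\bullet,\bullet}$ is regular and Fredholm on $C([0,1],SX)_{C([0,1],SA)}$; the compactness argument goes through for $\Omega=[0,1]$ exactly as in Theorem \ref{thm:Wahl_cts_iff_Fredholm}. The skew-adjoint analogue of Corollary \ref{cor:Wahl_family_to_KK_htpy}, tensoring with $\rho$ as in Corollary \ref{cor:skew_Fred_to_KasMod}, then yields a homotopy of Real Kasparov modules between $\class{T_{\bullet,0}}$ and $\class{T_{\bullet,1}}$. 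These classes are therefore equal in $\KKR(\Cl_{s+1,r},SA)$.

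For (A), a normalising function for $T^{(1)}_\bullet\oplus T^{(2)}_\bullet$ can be taken common to both summands, by choosing the gap small enough for both. Then $\chi(T^{(1)}_\bullet\oplus T^{(2)}_\bullet)\otimes\rho=(\chi(T^{(1)}_\bullet)\otimes\rho)\oplus(\chi(T^{(2)}_\bullet)\otimes\rho)$, and the direct sum of Kasparov modules represents the sum in $\KKR$. Additivity of $\Kubota_A^{r,s+1}$ finishes the proof. The direct sum is taken on $X^{(1)}\oplus X^{(2)}$ with the diagonal Clifford action, which is again ample.
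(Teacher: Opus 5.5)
Your proposal is correct and is essentially the paper's argument: the paper gives no separate proof beyond noting that, by Proposition~\ref{prop:KK_and_DK_flow_compatible}, $\Sf_{r,s+1}(\{T_t\}) = \Kubota_A^{r,s+1}(\class{T_\bullet})$, so (Z), (H) and (A) are inherited from degenerate Kasparov modules, the homotopies of Corollary~\ref{cor:Wahl_family_to_KK_htpy}, and direct sums in $\KKR$. The one step to tighten is the uniform gap in (Z), and likewise along the edges $t=0,1$ in (H): openness of the invertibles alone does not give it, but the estimate in the proof of Lemma~\ref{lemma:Wahl_invertibles_open} does, since $\|\psi_n(T_t)\|<\tfrac12\phi(0)$ forces $\spec(T_t)$ to avoid a neighbourhood of $0$ of size proportional to $1/n$; hence the gap is locally bounded below, and compactness then gives a single $n$ with $\psi_n(T_t)=0$ for all $t$.
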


Recall from Definition \ref{def:Ind_of_pair_of_complex_structures_DK} that $\relind_{r,s+1}( J_0, J_1)$ denotes the relative index of a Fredholm pair of skew-adjoint unitaries.

\begin{prop} \label{prop:sf_normalisation}
The spectral flow satisfies the following \emph{normalisation} property: 
\begin{enumerate}
\item[(N)] For a Fredholm pair $(J_0,J_1)$ of skew-adjoint unitaries in $\End_A^{r,s}(X)$, we have 
\end{enumerate}
\[
   \Sf_{r,s+1}\big( \big\{ (1-t) J_0 + t J_1 \big\}_{t\in[0,1]} \big) = \relind_{r,s+1}( J_1, J_0) \in \DK( A \otimes \Cl_{r,s+1}).
\]
\end{prop}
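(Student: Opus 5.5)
The plan is to combine Proposition \ref{prop:KK_and_DK_flow_compatible} with Proposition \ref{prop:KK_DK_ComplexStrucIndex_agrees}; essentially all the work has already been done in those two results, and what remains is to check that their hypotheses match.

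First, I would verify that the straight-line path $F_t = (1-t)J_0 + tJ_1$ is an admissible input for the spectral flow. Its endpoints $F_0 = J_0$ and $F_1 = J_1$ are unitaries, hence invertible. The path is norm-continuous, so the resolvents $(\one \pm F_t)^{-1}$ depend norm-continuously on $t$, and $t\mapsto \psi_n(F_t)$ is norm-continuous for every $n$. By Lemma \ref{lemma:Fredholm_pair_straight_line_path}, each $F_t$ is Fredholm, and in fact the family operator $F_\bullet$ is Fredholm on $SX_{SA}$. Corollary \ref{cor:Wahl_cts_path_Fredholm} then shows that the family is Wahl-continuous with invertible endpoints, so $\Sf_{r,s+1}(\{F_t\})$ is well-defined by Definition \ref{def:DK_spec_flow}. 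Each $F_t$ is also Real, skew-adjoint and $\Cl_{r,s}$-anti-linear, since $J_0$ and $J_1$ are.

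Second, Proposition \ref{prop:KK_and_DK_flow_compatible} gives
\[
\Sf_{r,s+1}(\{F_t\}) = \Kubota_A^{r,s+1}\big([\chi(F_\bullet)\otimes\rho]\big)
\]
for any normalising function $\chi$ of $F_\bullet$. This is precisely the class $\class{F_\bullet}\in\KKR(\Cl_{s+1,r},SA)$ from Lemma \ref{lemma:Fredholm_pair_straight_line_path}.

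Third, Proposition \ref{prop:KK_DK_ComplexStrucIndex_agrees} evaluates exactly this quantity:
\[
\Kubota_A^{r,s+1}\big(\class{F_\bullet}\big) = \relind_{r,s+1}(J_1,J_0).
\]
This yields (N).

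The only point requiring care is to make sure that the two results refer to the same $\KK$-class with the same conventions. The $\Cl_{s+1,r}$ generators, the grading operator $\one\otimes\gamma$, and the orientation of the path from $J_0$ at $t=0$ to $J_1$ at $t=1$ must agree in both statements, because reversing the orientation would flip the sign of the result. Both statements use the same Kasparov module from Corollary \ref{cor:skew_Fred_to_KasMod} applied on $SX_{SA}$, and the class is independent of the choice of $\chi$, so they do match.
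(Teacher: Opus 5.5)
Your proposal is correct and is the paper's proof: the paper also just composes Proposition~\ref{prop:KK_and_DK_flow_compatible} with Proposition~\ref{prop:KK_DK_ComplexStrucIndex_agrees}, obtaining $\Sf_{r,s+1} = \Kubota_A^{r,s+1}\big([\chi(J_\bullet)\otimes\rho]\big) = \relind_{r,s+1}(J_1,J_0)$. Your extra check that the straight-line path is Wahl-continuous with invertible endpoints, and that both results use the same Kasparov module and orientation, is left implicit in the paper but is harmless and correct.
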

\begin{proof}
From Propositions \ref{prop:KK_and_DK_flow_compatible} and \ref{prop:KK_DK_ComplexStrucIndex_agrees} we obtain, for any normalising function $\chi$, 
\[
\Sf_{r,s+1}\big( \big\{ (1-t) J_0 + t J_1 \big\}_{t\in[0,1]} \big) 
= \Kubota_A^{r,s+1} \big( [ \chi(J_\bullet)\otimes\rho ] \big)
= \relind_{r,s+1}( J_1, J_0) .
\qedhere
\]
\end{proof}

\begin{remark}[Comparison with Hilbert space normalisation] \label{Rk:BLCR_sign_error}
In the paper \cite{BCLR} a $KO_{s+2-r}(\R)$-valued spectral flow is defined with the following normalisation: if 
$V$ is a finite-dimensional $\Cl_{r,s+1}$-module with generators $\{e_1,\ldots, e_r, f_1,\ldots, f_s,f_{s+1}\}$, 
then $[0,1]\ni t \mapsto F_t =  (1-2t) f_{s+1}$ is a path of skew-adjoint (Fredholm) operators that anti-commute with the 
$\Cl_{r,s}$-generators. One can then compute a Clifford module valued spectral flow
\[
  \Sf^\calH_{r,s+2}(  F_t ) = \big[ \Ker( F_{1/2} )  \big] = \big[ \Ker\big( f_{s+1} + (-f_{s+1}) \big) \big] = [V] \in \calM_{r,s+1}/\calM_{r,s+2} \cong KO_{s+2-r}(\R),
\]
where $\Ker\big( f_{s+1} + (-f_{s+1}) \big) = V$ is a $\Cl_{r,s+1}$-module with generators $\{e_1,\ldots, e_r, f_1,\ldots, f_s,f_{s+1}\}$. 
More generally, given a Fredholm pair of skew-adjoint unitaries $(J_0, J_1) \in \calJ^{r,s}(\calH)$, the Clifford module valued 
spectral flow is such that 
\[
   \Sf^\calH_{r,s+2}\big( \big\{ (1-t) J_0 + t J_1 \big\}_{t\in[0,1]} \big)  =  \Ind_{r,s+2}^\calH( J_0, J_1) = -  \Ind_{r,s+2}^\calH( J_1, J_0) ,
\]
which is inconsistent with Proposition \ref{prop:sf_normalisation}. While the results in  \cite{BCLR} are internally consistent, they 
do not recover the standard normalisation of spectral flow in complex Hilbert spaces.
To see this, suppose that $V$ is a complex (finite-dimensional)
$\Cl_1$-module with generator $e=2P- \one$. Then using the normalisation from \cite{BCLR} and the isomorphism 
$\calM^\C_1 / \calM^\C_2 \cong \Z$ from \cite[\S2.4.2]{BCLR}, 
\begin{align*}
   \Sf^\calH_{r,s+2}\big( \big\{ (1-2t)(2P-\one) \big\}_{t\in[0,1]} \big) &= \dim\Ker( (2P-\one) - \one) - \dim\Ker\big( (2P-\one) + \one \big) \\
     &= \Tr(P) - \Tr( \one - P ) \in \Z.
\end{align*}
However, this integer counts the spectrum traveling from the $(+1)$-eigenspace to the $(-1)$-eigenspace minus the 
spectrum traveling from the $(-1)$-eigenspace to the $(+1)$-eigenspace, which is the negative of the usual method of computing 
spectral flow. We have therefore used the normalisation of Proposition \ref{prop:sf_normalisation}, which will give the negative 
of the Clifford module spectral flow on real Hilbert spaces (see \S\ref{subsec:Sf_Real_HS}).
\end{remark}

To show that the spectral flow is also additive under concatenation of paths, we need to be able to decompose paths 
$F_\bullet \in \End_{C([0,2],A)}^{r,s}(C( [0,2], X))$ with $F_1$ invertible 
into a sum of paths in $\End_{C([0,1],A)}^{r,s}(C( [0,1], X))$ and $\End_{C([1,2],A)}^{r,s}(C( [1,2], X))$. 
The next result addresses this question.

\begin{lemma}
\label{lemma:MV}
Let $\{F_t\}_{t\in [0,2]} \subset  \End^{r,s}_A(X)$ be a family of Real skew-adjoint Fredholm operators 
with $F_0^2 = F_1^2=F_2^2 =-\one$, 
such that $[0,2] \ni t \mapsto \one + F_t^2 \in \End^{0}_A(X)$ is norm-continuous. 
Let $\cS_1=C_0(0,1)$, $\cS_2=C_0(1,2)$, $\cS=C_0(0,2)$, and $F^{01}_\bullet=F_\bullet|_{[0,1]}$, $F^{12}_\bullet=F_\bullet|_{[1,2]}$. 
Let $\iota=\iota_1+\iota_2:\cS_1\oplus\cS_2\hookrightarrow\cS$ be the inclusion, 
and define
\begin{align*}
\class{F_{\textnormal{gap}}} &= 
 \Big[\Big(\Cl_{s+1,r}, \, (\cS_1\oplus\cS_2) X_{(\cS_1\oplus\cS_2) A} \otimes \exterior \C,  \, F_\bullet \otimes \rho \Big) \Big], \\
 \class{F} &= \Big[ \Big(\Cl_{s+1,r}, \, \cS X_{\cS A}\otimes \exterior \C, \, F_\bullet \otimes \rho \Big) \Big].
\end{align*}
Then $\class{F}=\iota_*(\class{F_{\textnormal{gap}}})=\iota_{1*}(\class{F^{01}_\bullet})\oplus \iota_{2*}(\class{F^{12}_\bullet}) \in \KKR\big( \Cl_{s+1,r} , \cS A \big)$.
\end{lemma}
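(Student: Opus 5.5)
The plan is to prove the two equalities separately. The second one, $\iota_*(\class{F_{\textnormal{gap}}})=\iota_{1*}(\class{F^{01}_\bullet})\oplus\iota_{2*}(\class{F^{12}_\bullet})$, is formal. The $C^*$-algebra $(\cS_1\oplus\cS_2)A$ is the direct sum $\cS_1A\oplus\cS_2A$, and the module $(\cS_1\oplus\cS_2)X$ splits as $\cS_1X\oplus\cS_2X$. The operator $F_\bullet\otimes\rho$ is diagonal for this splitting, with components $F^{01}_\bullet\otimes\rho$ and $F^{12}_\bullet\otimes\rho$. So $\class{F_{\textnormal{gap}}}=\class{F^{01}_\bullet}\oplus\class{F^{12}_\bullet}$ under $\KKR(\Cl_{s+1,r},\cS_1A\oplus\cS_2A)\cong\KKR(\Cl_{s+1,r},\cS_1A)\oplus\KKR(\Cl_{s+1,r},\cS_2A)$, and pushing forward along $\iota=\iota_1+\iota_2$ gives the stated sum.

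First I check that all the triples are Kasparov modules. Norm-continuity of $t\mapsto\one+F_t^2\in\End_A^0(X)$ gives, by Lemma \ref{lemma:strong_cts_regular}(2), that $\one+F_\bullet^2$ is compact on $C([0,2],X)$. Since $F_0^2=F_1^2=F_2^2=-\one$, this compact operator vanishes at $t=0,1,2$. It is therefore compact on $\cS X$, and on $(\cS_1\oplus\cS_2)X$, which is the Hilbert module of sections vanishing at $0,1,2$. Strong continuity of $F_\bullet$ follows from its boundedness and continuity in the family, as in the earlier constructions. The Clifford anti-linearity is inherited pointwise, exactly as in Corollary \ref{cor:skew_Fred_to_KasMod}.

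For the main equality $\class{F}=\iota_*(\class{F_{\textnormal{gap}}})$, I would describe $\iota_*$ concretely. For a Kasparov module $(\Cl_{s+1,r},E,G)$ over $\cS_1A\oplus\cS_2A$, the pushforward is $(\Cl_{s+1,r},E\otimes_{\iota}\cS A,G\otimes1)$. For $E=(\cS_1\oplus\cS_2)X$ the interior tensor product $E\otimes_\iota\cS A$ is the closed submodule $\overline{(\cS_1\oplus\cS_2)X\cdot\cS A}\subseteq\cS X$, which equals $(\cS_1\oplus\cS_2)X\subset\cS X$ viewed as a (non-full) right $\cS A$-module. So $\iota_*\class{F_{\textnormal{gap}}}$ is represented by $F_\bullet\otimes\rho$ acting on the submodule $E_0$ of $\cS X$ consisting of sections vanishing at $t=1$.

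It remains to compare $F_\bullet\otimes\rho$ on $E_0$ with the same operator on $\cS X$; this is the main obstacle. I would try a homotopy of modules that pinches at $t=1$. Consider the $C([0,1],\cS A)$-module $\mathcal E=\{\xi\in C([0,1],\cS X)\,:\,\xi(0)(1)=0\}$, with the operator $F_\bullet\otimes\rho$ acting fibrewise. Its fibre at $\lambda=0$ is $E_0$ and its fibre at $\lambda=1$ is $\cS X$. Compactness of $\one+F_\bullet^2$ on $\mathcal E$ should follow from the compactness on $\cS X$ together with the fact that $\one+F_1^2=0$. The compact operator on $\cS X$ that is constant in $\lambda$ must be shown to restrict to a compact operator on $\mathcal E$. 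To see this, approximate it by operators of the form $\theta_{\xi,\eta}$ with $\xi,\eta$ vanishing near $t=1$. Such an approximation is possible because the compact operator vanishes at $t=1$: cut off with a function $h$ that equals $0$ near $1$, and use $\|(1-h)(\one+F_\bullet^2)\|\to0$ by continuity.

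If this pinched-module homotopy becomes messy, the fallback is to show that the complement of $E_0$ carries only degenerate data. The operator $F_\bullet\otimes\rho$ is an odd self-adjoint unitary modulo compacts. Near $t=1$ the operator $F_t$ is close to the invertible $F_1$, since $\one+F_t^2$ is small there. So I would deform $F$ on a neighbourhood of $1$ to be exactly $F_1$, and then cite the standard fact that the inclusion of an ideal module is an isomorphism in $\KK$ when the operator is invertible on the quotient. The quotient here is $X$ at $t=1$, where $F_1\otimes\rho$ is an odd self-adjoint unitary, hence degenerate.
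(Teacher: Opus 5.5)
Your proposal is correct and follows the paper's proof. Both arguments split $\iota_*\class{F_{\textnormal{gap}}}$ formally over $\cS_1X\oplus\cS_2X$, identify the pushforward module with the submodule of $\cS X$ vanishing at $t=1$, and use the pinched module $\{\xi\in C([0,1],\cS X):\xi(0)(1)=0\}$ as the homotopy (the paper pinches at the other endpoint, $f(1)(1)=0$). Your cut-off argument showing $\one+F_\bullet^2$ is compact on the pinched module fills in a detail the paper leaves implicit. One correction: strong continuity of $t\mapsto F_t$ does not follow from the stated hypotheses (a family jumping from $J$ to $-J$ satisfies them). It is an implicit standing assumption, needed for $F_\bullet$ to be adjointable, and the paper also uses it tacitly.
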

\begin{proof}
First, applying the inclusion $\iota$ yields
\begin{align*}
& \iota_*(\class{F_{\textnormal{gap}}}) =\Big[\Big(\Cl_{s+1,r}, \, (\cS_1\oplus\cS_2) X_{(\cS_1\oplus\cS_2) A} \otimes \exterior \C , \,  F_\bullet \otimes \rho \Big) \Big] \ox_{(\cS_1\oplus\cS_2) A}
    [( (\cS_1\oplus\cS_2) A, \, \cS A_{\cS A} , \, 0)]\\
&\qquad=\Big[\Big(\Cl_{s+1,r}, \, (\cS_1\oplus\cS_2) X_{\cS A}  \otimes \exterior \C,  \, F_\bullet \otimes \rho \Big) \Big] \\
&\qquad=\Big[\Big( \Cl_{s+1,r},\cS_1 X_{\cS A} \otimes \exterior \C , \, F^{01}_\bullet \otimes \rho) \Big) \Big]
     \oplus \Big[\Big(\Cl_{s+1,r},\cS_2 X_{\cS A}\otimes \exterior \C , \, F^{12}_\bullet \otimes \rho \Big) \Big]  \\
&\qquad=\iota_{1*}(\class{F^{01}_\bullet}) \oplus \iota_{2*}(\class{F^{12}_\bullet}).
\end{align*}
Second, consider the $C^*$-module $Y := \big\{ f \in C([0,1],\cS X_{\cS A}) : f(1)(1)=0 \big\}$ over the $C^*$-algebra $C([0,1],\cS A)$. Then $\ev_0(Y) = \cS X_{\cS A}$ and $\ev_1(Y) = (\cS_1\oplus\cS_2) X_{\cS A}$, so that the Kasparov module 
\[
\Big(\Cl_{s+1,r}, \, Y_{C([0,1],\cS A)}\otimes \exterior \C, \, F_\bullet \otimes \rho \Big)
\]
yields a homotopy between $\class{F}$ and $\iota_*(\class{F_{\textnormal{gap}}})$. 
\end{proof}

\begin{prop} \label{prop:Path_additivity_KK}
The spectral flow satisfies \emph{path additivity under concatenation}:
\begin{enumerate}
  \item[(C)] If $\{T_t\}_{t\in[0,2]} \subset \Reg^{r,s}_A(X)$ is a Wahl-continuous family of Real skew-adjoint Fredholm operators with invertible endpoints $T_0,T_2$ and invertible midpoint $T_1$, then 
  \[
  \Sf_{r,s+1}\big( \{T_t\}_{t\in[0,2]} \big) = \Sf_{r,s+1}\big( \{T_t\}_{t\in[0,1]} \big) + \Sf_{r,s+1}\big( \{T_t\}_{t\in[1,2]} \big) . 
  \]
\end{enumerate}
\end{prop}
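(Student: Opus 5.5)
The plan is to push everything into $\KKR$-theory via Proposition \ref{prop:KK_and_DK_flow_compatible}, and then apply Lemma \ref{lemma:MV}. By Corollary \ref{cor:Wahl_cts_path_Fredholm}, the family operator $T_\bullet$ on $C_0((0,2),X)$ is regular and Fredholm. Its restrictions to $[0,1]$ and $[1,2]$ are Wahl-continuous with invertible endpoints, so they too are regular and Fredholm over $\cS_1 A$ and $\cS_2 A$. Since $\cS_1$, $\cS_2$ and $\cS$ are all identified with $SA$ after a linear reparametrisation of the interval, we have
\[
\Sf_{r,s+1}(\{T_t\}_{t\in[0,2]}) = \Kubota_A^{r,s+1}(\class{T_\bullet}),
\]
and similarly $\Sf_{r,s+1}(\{T_t\}_{t\in[0,1]}) = \Kubota_A^{r,s+1}(\class{T^{01}_\bullet})$ and $\Sf_{r,s+1}(\{T_t\}_{t\in[1,2]}) = \Kubota_A^{r,s+1}(\class{T^{12}_\bullet})$. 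The right-hand identifications are made via the reparametrisation isomorphisms $\cS_j\cong S$.

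The first step is to pass to bounded operators satisfying the hypotheses of Lemma \ref{lemma:MV}. Choose $n$ with $\psi_n(T_t)$ compact and norm-continuous in $t$, and with $\psi_n(T_0)=\psi_n(T_1)=\psi_n(T_2)=0$; this is possible because the three points are invertible and the invertibles are open (Lemma \ref{lemma:Wahl_invertibles_open}). Take a normalising function $\chi$ that equals $\pm i$ outside $(-i/n,i/n)$ and set $F_t=\chi(T_t)$. Then $F_0^2=F_1^2=F_2^2=-\one$ for a suitable $\chi$. Moreover $t\mapsto \one+F_t^2$ is a function of $\psi_n(T_t)$-type data, so it is norm-continuous and compact-valued. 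By Corollary \ref{cor:skew_Fred_to_KasMod}, $\class{T_\bullet}=[F_\bullet\otimes\rho]$, and likewise for the restricted families.

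\textbf{Main obstacle.} The main difficulty is justifying that $\one+\chi(T_t)^2$ is norm-continuous when we only know that $\psi_n(T_t)$ is. I would resolve it by choosing $\chi$ so that $\chi^2+1=g\circ\psi_n$ for a continuous $g$ with $g(0)=0$. This is possible because $\psi_n$ is monotone on each half-line near $0$ and $\chi$ is odd, so $\chi^2$ is even and can be written as a function of the even function $\phi_n$. Continuity of the functional calculus then transfers the norm-continuity.

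With this in hand, Lemma \ref{lemma:MV} gives
\[
\class{T_\bullet}=\iota_{1*}\class{T^{01}_\bullet}+\iota_{2*}\class{T^{12}_\bullet}.
\]
Finally, for $j=1,2$ one checks that $\Kubota_A^{r,s+1}\circ\iota_{j*}$, under the identification $\cS\cong S$, agrees with $\Kubota_A^{r,s+1}$ under $\cS_j\cong S$. This holds because the inclusion $C_0(0,1)\hookrightarrow C_0(0,2)$ is homotopic to the reparametrisation isomorphism through the inclusions $C_0(0,1)\hookrightarrow C_0(0,1+\lambda)$ rescaled to the fixed interval, and Bott maps are natural. Additivity of $\Kubota$ then gives (C).
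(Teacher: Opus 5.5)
Your proposal is correct and follows the paper's own route: choose a normalising function $\chi$ so that $F_\bullet=\chi(T_\bullet)$ satisfies the hypotheses of Lemma \ref{lemma:MV}, then transfer the resulting decomposition of the $\KKR$-class to the spectral flow via $\Kubota_A^{r,s+1}$ (Proposition \ref{prop:KK_and_DK_flow_compatible}) and its additivity. Two steps the paper leaves implicit are supplied correctly in your version: writing $\chi^2+1$ as a continuous function of $\psi_n$ vanishing at $0$, which gives norm-continuity and compactness of $\one+F_t^2$, and the homotopy identifying $\iota_{j*}$ with the reparametrisations $\cS_j\cong S\cong\cS$.
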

\begin{proof}
We can take a normalising function such that $F_\bullet = \chi(T_\bullet)$  will satisfy the hypothesis of Lemma \ref{lemma:MV}, 
and the statement then follows using additivity (A) of the spectral flow from Proposition \ref{prop:KK_and_DK_flow_compatible}.
\end{proof}

\subsection{Spectral flow of odd self-adjoint regular Fredholm operators}

Like we have done for the analytic index and relative index,  we also provide a definition of spectral flow 
for Wahl-continuous paths of odd self-adjoint Fredholm operators $\{D_t\}_{t \in [0,1]} \subset \Reg_B(Y)$ with invertible endpoints.

\begin{defn}[Abstract $\Z_2$-graded spectral flow] \label{def:Odd_DK_spec_flow}
Consider a Wahl-continuous family of Real self-adjoint odd Fredholm operators $\{D_t\}_{t\in[0,1]} \subset \Reg^{r,s}_B(Y)$ with invertible endpoints. 
Then we define the {spectral flow} to be 
\[
   \wh{\Sf}\big( \{D_t\}_{t\in[0,1]} \big) := \beta_{\DK}^{-1}\big( \wh{\Index}_{1,0}( D_\bullet ) \big) 
   \in \DK( B) ,
\]
where $\wh{\Index}_{1,0}$ is the $\Z_2$-graded   index from Theorem \ref{thm:Z_2-graded_Fred_index}
and $\beta_{\DK}: \DK(B ) \xrightarrow{\simeq} \DK(SB\hox \Cl_{1,0})$ 
is the Bott periodicity isomorphism in Van Daele $K$-theory (see Theorem \ref{thm:DK_Bott_isos}).
\end{defn}

Analogously to Proposition \ref{prop:KK_and_DK_flow_compatible}, we can relate the $\Z_2$-graded spectral flow 
to the $\Z_2$-graded Roe and Kubota isomorphisms (Theorem \ref{thm:graded_Roe_and_Kubota}), where Theorem \ref{thm:Z_2-graded_Fred_index} and 
Eq. \eqref{eq:Graded_Bott_compat} give that
\[
  \wh{\Sf}\big( \{D_t\}_{t\in[0,1]} \big) = \beta_{\DK}^{-1} \circ \Roe_{SB} \big( [ \chi(D_\bullet) ] \big) 
    = \Psi_B \big( [ \chi(D_\bullet) ] \big).
\]

\begin{prop}
The $\Z_2$-graded spectral flow satisfies the  \emph{zero axiom (Z)}, \emph{homotopy invariance (H)}, \emph{additivity (A)}, and 
\emph{path additivity under concatenation (C)}. Furthermore, the  spectral flow satisfies the following \emph{normalisation} property: 
\begin{enumerate}
\item[(N)] If $(G_0, G_1)$ are a Fredholm 
pair of odd self-adjoint unitaries in $\End_B(Y)$, then 
\end{enumerate}
\[
   \wh{\Sf} \big( \big\{ (1-t) G_0 + t G_1 \big\}_{t\in[0,1]} \big) = \wh{ \relind}( G_1, G_0) \in \DK( B).
\]
\end{prop}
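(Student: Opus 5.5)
The plan is to copy the ungraded argument of Propositions \ref{prop:KK_and_DK_flow_compatible}, \ref{prop:sf_properties}, \ref{prop:Path_additivity_KK} and \ref{prop:sf_normalisation}. The basic tool is the identity already recorded above,
\[
\wh{\Sf}\big( \{D_t\}_{t\in[0,1]} \big) = \Psi_B\big( [\chi(D_\bullet)] \big), \qquad [\chi(D_\bullet)] \in \KKR(\C, SB).
\]
Here $\Psi_B$ is the $\Z_2$-graded Kubota isomorphism. The class $[\chi(D_\bullet)]$ is well-defined because $D_\bullet$ is regular and Fredholm on $SY_{SB}$ by Corollary \ref{cor:Wahl_cts_path_Fredholm}, and the normalising function can be chosen freely by Proposition \ref{prop:Fred_to_KasMod}. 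Every property should therefore reduce to a statement about $\KKR(\C,SB)$ followed by an application of the group isomorphism $\Psi_B$.

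\emph{(Z), (H), (A).}
\begin{itemize}
\item For (Z): if every $D_t$ is invertible, I pick $\chi$ equal to $\pm1$ outside a small interval around $0$ that contains no spectrum of any $D_t$. Such a uniform gap exists by compactness of $[0,1]$ and Wahl-continuity, using Lemma \ref{lemma:Wahl_invertibles_open}. Then $\chi(D_\bullet)$ is an odd self-adjoint unitary, the Kasparov module is degenerate, and its class is $0$.
\item For (H): I apply Corollary \ref{cor:Wahl_family_to_KK_htpy}(2) to the two-parameter family, restricted in the $t$-variable to $(0,1)$ as in Corollary \ref{cor:Wahl_cts_path_Fredholm}. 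This is possible because the endpoints $D_{0,\lambda}$ and $D_{1,\lambda}$ are invertible for every $\lambda$. It gives a homotopy of Kasparov modules over $C([0,1], SB)$, so the classes at $\lambda=0$ and $\lambda=1$ agree.
\item For (A): the direct sum of Kasparov modules represents the sum in $\KKR$, and $\Psi_B$ is additive.
\end{itemize}

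\emph{(C).} I redo Lemma \ref{lemma:MV} in the graded self-adjoint setting.
\begin{itemize}
\item Choose $\chi$ so that $F_t=\chi(D_t)$ satisfies $F_0^2=F_1^2=F_2^2=\one$ and $t\mapsto \one-F_t^2$ is norm-continuous into the compacts. This uses the same uniform-gap choice as in (Z), now at the invertible points $D_0$, $D_1$, $D_2$.
\item The interpolating module $Y=\{f\in C([0,1],\cS Y): f(1)(1)=0\}$ gives a homotopy from $[F_\bullet]\in \KKR(\C,\cS B)$ to $\iota_{1*}[F^{01}_\bullet]+\iota_{2*}[F^{12}_\bullet]$.
\item It remains to check that $\Psi$ is natural for the inclusions $C_0(0,1)\hookrightarrow C_0(0,2)$ and $C_0(1,2)\hookrightarrow C_0(0,2)$, after reparametrising intervals. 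Here I will invoke naturality of the Kubota isomorphism from Theorem \ref{thm:graded_Roe_and_Kubota}, together with the fact that the induced maps on $\DK(B)$ are the identity. That fact holds because these inclusions are homotopic to the orientation-preserving identifications used to define $\Psi$.
\end{itemize}

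\emph{(N).} This is Proposition \ref{prop:Z_2-rel-index-properties}(2) applied to $G_t=(1-t)G_0+tG_1$. It gives $\wh{\Sf}=\Psi_B([\chi(G_\bullet)])=\wh{\relind}(G_1,G_0)$. The straight-line path is admissible: the computation of Lemma \ref{lemma:Fredholm_pair_straight_line_path} becomes
\[
\one-G_t^2=t(1-t)(G_0-G_1)^2,
\]
which shows the endpoints are invertible and $G_\bullet$ is Fredholm on $SY_{SB}$.

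I expect the main obstacle to be (C), specifically the naturality of $\Psi$ under the interval inclusions, with orientation kept consistent. If Theorem \ref{thm:graded_Roe_and_Kubota} does not state naturality in this form, my first attempt will be to unwind $\Psi_B=\beta_{\DK}^{-1}\circ\Roe_{SB}$ and use naturality of the boundary map and of Bott periodicity under $C_0$-inclusions.
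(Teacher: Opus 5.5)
Your proposal is correct and is essentially the paper's proof. The paper obtains (Z), (H), (A) and (C) by factoring $\wh{\Sf}$ through $\KKR(\C,SB)$ via $\wh{\Sf}(\{D_t\}_{t\in[0,1]}) = \Psi_B([\chi(D_\bullet)])$ and repeating the Clifford anti-linear arguments (including Lemma \ref{lemma:MV} for (C)), and it gets (N) directly from Proposition \ref{prop:Z_2-rel-index-properties}. The naturality issue you flag in (C) is harmless: after reparametrising to $(0,1)$, the maps $\iota_{j*}$ are induced by orientation-preserving $*$-endomorphisms of $SB$ homotopic to the identity, so they act as the identity on $\KKR(\C,SB)$ and only additivity of $\Psi_B$ is needed.
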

\begin{proof}
Properties (Z), (H), (A), and (C) follow by the same argument as the Clifford anti-linear setting. 
For the normalisation, we use the $\Z_2$-graded Kubota isomorphism $\Psi_B: \KKR(\C, SB) \xrightarrow{\simeq} B$, where 
\begin{align*}
   \wh{\Sf} \big( \big\{ (1-t) G_0 + t G_1 \big\}_{t\in[0,1]} \big) &= \Psi_B \big( [ G_\bullet ] \big) 
   = \wh{ \relind}( G_1, G_0),
\end{align*}
with the last equality by Proposition \ref{prop:Z_2-rel-index-properties}.
\end{proof}

Recalling Proposition \ref{prop:Graded_ungraded_index_comparison} and Lemma \ref{lem:Rel_ind_graded_to_ungraded}, 
we  can relate the spectral flow of odd self-adjoint 
Fredholm operators with the spectral flow of $\Cl_{1,0}$-anti-linear skew-adjoint Fredholm operators in the complex 
standard $C^*$-module over an ungraded algebra.

\begin{prop} \label{prop:graded_to_ungraded_sf}
Let $\{T_t\}_{t\in[0,1]} = \{iD_t\}_{t\in [0,1]} \subset \Reg^{1,0}_A(\calH_A)$ be a Wahl-continuous family of 
skew-adjoint Fredholm operators with invertible endpoints. Using the $\Cl_{1,0}$-generator, consider 
$\calH_A \cong \calH_A^+ \oplus \calH_A^- \cong \calH_A^+ \otimes \exterior{\C}$ a $\Z_2$-graded $C^*$-module 
with $D_t = \begin{pmatrix} 0 & (D_t)_- \\ (D_t)_+ & 0 \end{pmatrix}$ an odd self-adjoint operator on $\calH_A \otimes \C^2$. 
Then 
\[
 \Sf_{1,1} \big( \{iD_t\}_{t\in[0,1]} \big) = \wh{\Sf}\big( \{D_t\}_{t\in[0,1]} \big) \in \DK(A \otimes \Cl_{1,1} ).
\]
\end{prop}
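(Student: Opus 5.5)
The identity follows by unwinding both definitions and applying the index comparison, Proposition \ref{prop:Graded_ungraded_index_comparison}, over the suspension, together with compatibility of the Bott maps with Clifford stability. By Definition \ref{def:DK_spec_flow},
\[
\Sf_{1,1}\big(\{iD_t\}\big) = \beta_{\DK}^{-1}\big(\Index_{2,1}(iD_\bullet)\big),
\]
where $iD_\bullet\in\Reg^{1,0}_{SA}(S\calH_A)$ is the family operator. By Corollary \ref{cor:Wahl_cts_path_Fredholm}, $iD_\bullet$ is regular, skew-adjoint and Fredholm on $S\calH_A$, and it anticommutes with the $\Cl_{1,0}$-generator $e\otimes 1$. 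Likewise $\wh{\Sf}(\{D_t\}) = \beta_{\DK}^{-1}\big(\wh{\Index}_{1,0}(D_\bullet)\big)$, with $D_\bullet$ the odd self-adjoint family operator on $S\calH_A$ graded by $e$.

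The first step is to check that Proposition \ref{prop:Graded_ungraded_index_comparison} still applies when $A$ is replaced by $SA$ and $\calH_A$ by $S\calH_A\cong \calH_{SA}$. That proof used only the identifications $\End(\hat\calH)\cong \End(\calH)\otimes\Cl_{1,1}$ and a stabilisation unitary $U$, with $J=U(\one\otimes -i\sigma_2)U^*$. Both are constant in $t$, so they pass to the suspension. Alternatively, one can argue at the level of $\KKR$-classes using Theorem \ref{thm:index-of-Fred} and Theorem \ref{thm:Z_2-graded_Fred_index}. Either route gives
\[
\Index_{2,1}(iD_\bullet) = \calM^{\DK}_{1,1}\big(\wh{\Index}_{1,0}(D_\bullet)\big) \in \DK(SA\otimes \Cl_{2,1}).
\]

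Applying the appropriate inverse Bott maps, it then remains to show that the square
\[
\xymatrix{
\DK(A\otimes\Cl_{0,0}\hox\Cl_{1,1}) \ar[r]^{\beta_{\DK}} & \DK(SA\otimes \Cl_{1,0}) \ar[d]^{\calM^{\DK}_{1,1}}\\
\DK(A\otimes \Cl_{1,1}) \ar@{=}[u] \ar[r]^{\beta_{\DK}} & \DK(SA\otimes\Cl_{2,1})
}
\]
commutes. Here the top Bott map is the graded one for $B=A\otimes\Cl_{1,1}$, landing in $\DK(SB\hox\Cl_{1,0})$, and $SB\hox\Cl_{1,0}\cong SA\otimes\Cl_{2,1}$, while the bottom map is the ungraded $\beta_{\DK}$ from Definition \ref{def:DK_spec_flow} with $(r,s)=(1,0)$. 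I expect this to be the main obstacle, since it depends on how $\beta_{\DK}$ and $\calM^{\DK}_{1,1}$ are defined in the appendix (Theorem \ref{thm:DK_Bott_isos}, Lemma \ref{lemma:DK_Clifford_stability}). The point to verify is that the Clifford stability isomorphism $\calM^{\DK}_{1,1}$, in the form it takes for $SA$, is implemented by tensoring with a fixed $\Cl_{1,1}$ and conjugating by the matrix unitary. The Bott map, by contrast, adds the suspension generator in a separate tensor factor, so the two operations act on independent factors and commute, up to the reordering isomorphism $\Cl_{1,0}\hox\Cl_{1,1}\cong \Cl_{1,1}\hox\Cl_{1,0}$.

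If a direct check is awkward, I would fall back on the $\KK$-side. By Proposition \ref{prop:KK_and_DK_flow_compatible}, both spectral flows equal Kubota isomorphisms applied to the respective $\KKR$-classes: $\Kubota^{1,1}_A([\chi(iD_\bullet)\otimes\rho])$ and $\Psi_{A\otimes\Cl_{1,1}}([\chi(D_\bullet)])$. I would then show these agree using Lemma \ref{lem:Rel_ind_graded_to_ungraded} and Proposition \ref{prop:concrete_Kubota_computation}. By homotopy invariance (H) and additivity (C), one can reduce to straight-line paths between skew-adjoint unitaries $J_k=i(2P_k-\one)$, for which both spectral flows are relative indices by the normalisation property (N). Lemma \ref{lem:Rel_ind_graded_to_ungraded} then identifies $\relind_{1,1}(J_1,J_0)$ with $\wh{\relind}(2P_1-\one,2P_0-\one)$, which gives the claim.
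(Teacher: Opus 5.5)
Your proposal is correct and is essentially the paper's argument: the paper also reruns the Calkin-level comparison of Proposition \ref{prop:Graded_ungraded_index_comparison} over $SA$ (the $P^{1,0}$-compressed ungraded class agrees with the graded one up to the harmless factor $\pm i$) and then applies $\beta_{\DK}^{-1}$. The Bott/Clifford-stability square you flag as the main obstacle is not checked separately in the paper; it is absorbed by the convention $\DK(A):=\DK(A\otimes\Cl_{1,1})$ for ungraded $A$, under which the graded Bott map on $\DK(A)$ is the one for $A\otimes\Cl_{1,1}$ composed with $(\calM^{\DK}_{1,1})^{-1}$ (your prose description of that top arrow is slightly inconsistent with its drawn target $\DK(SA\otimes\Cl_{1,0})$, but the identity you need is the right one), and your $\KK$-side fallback is unnecessary---its reduction of a general Wahl-continuous path to straight-line segments would in any case need a Phillips-type decomposition, which the paper only proves for Riesz-continuous paths.
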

Unlike Proposition \ref{prop:Graded_ungraded_index_comparison}, the Clifford stability map 
$\calM_{1,1}^{\DK} = \DK(B) \xrightarrow{\simeq} \DK(B \hox \Cl_{1,1})$ is not needed here as 
$\DK(A) := \DK(A \otimes \Cl_{1,1})$ for $A$ ungraded.
\begin{proof}
The $\Z_2$-graded spectral flow of $\{D_t\}$ can be written as the composition 
\[
 \beta_{DK}^{-1} \circ \calM_{\calH_{SA}}^{\DK} \circ \delta \left(   \left[  \begin{pmatrix} 0 & q\big(\chi(D_\bullet)\big)_- \\  q\big(\chi(D_\bullet)\big)_+  & 0    \end{pmatrix} \right] 
 -  \left[  \begin{pmatrix} 0 & \one \\  \one & 0    \end{pmatrix} \right] \right)
\]
with $ \big[ q(\chi(D_\bullet)) \big] - [ \one \otimes \gamma] \in \DK(\calQ_A(\calH_A) \otimes \Cl_{1,1} )$ and we identify 
$M_2(\C) \cong \Cl_{1,1}$. Arguing analogously to the proof of   Lemma \ref{lem:Rel_ind_graded_to_ungraded}, 
the Clifford anti-linear spectral flow is the composition 
\begin{align*}
    &\beta_{DK}^{-1} \circ \calM_{\calH_{SA}}^{\DK} \circ \delta 
      \left(   \left[  \begin{pmatrix} 0 & -i \big( \chi(D_\bullet)\big) \tfrac{1}{2}(\one - e) \\   i  q\big(\chi(D_\bullet)\big) \tfrac{1}{2}(\one+ e)  & 0    \end{pmatrix} \right] 
 -  \left[  \begin{pmatrix} 0 & \one \\  \one & 0    \end{pmatrix} \right] \right)  \\
    &\quad = \beta_{DK}^{-1} \circ \calM_{\calH_{SA}}^{\DK} \circ \delta \left(   \left[  \begin{pmatrix} 0 & q\big( \chi(D_\bullet) \big)_- \\  q\big(\chi(D_\bullet)\big)_+  & 0    \end{pmatrix} \right] 
 -  \left[  \begin{pmatrix} 0 & \one \\  \one & 0    \end{pmatrix} \right] \right),
\end{align*}
where $e\in \End_A(\calH_A)$ is the $\Cl_{1,0}$-generator. The result follows. 
\end{proof}

\subsection{Comparison with complex Hilbert modules} \label{subsec:Sf_complex}

We finish our discussion of the abstract spectral flow by comparing our $\DK$-valued spectral flow to the $K_\ast(A)$-valued spectral flow 
of \emph{self-adjoint} Fredholm operators on \emph{complex} $C^*$-modules as described by Wahl~\cite{Wahl07} 
(as in \cite{Wahl07}, we restrict our attention to the standard $C^*$-module).
We will again make use of the isomorphisms $\Upsilon_{A}^{r,s}: \DK(A \otimes \Cl_{r+s} ) \to K_{1+s-r}(A)$ from 
Appendix \ref{subsec:appendix_DK_KR_isos}.

\begin{prop} \label{prop:Wahl_sf_even_odd}
Let $A$ be an ungraded complex $C^*$-algebra, $\calH_A$  the standard ungraded complex 
$C^*$-module over $A$, and $\hat{\calH}_A$ the standard graded complex $C^*$-module over $A$.
\begin{enumerate}
\item 
Let $\{D_t\}_{t\in[0,1]} \subset \Reg_A(\calH_A)$ be a Wahl-continuous family of self-adjoint Fredholm operators with invertible endpoints. 
Then, under the isomorphism $\Upsilon_A^{0,1}: \DK(A \otimes \Cl_{0+1}) \xrightarrow{\simeq} K_2(A) \cong K_0(A)$, 
\[
  \Upsilon_A^{0,1}\big(  \Sf_{0,1}( \{iD_t\}_{t\in[0,1]} ) \big) = \Sf^{W}( \{D_t\}_{t\in[0,1]} ) \in K_0(A),
\] 
where $\Sf^{W} \in K_0(A)$ is the spectral flow due to Wahl \cite[\S4]{Wahl07}.
\item 
Let $\{D_t\}_{t\in[0,1]} \subset \Reg_A(\hat{\calH}_A)$ be a Wahl-continuous family of odd self-adjoint Fredholm operators with invertible endpoints. 
Then under the isomorphism $\Upsilon_A^{1,1}: \DK(A \otimes \Cl_{1+1}) \xrightarrow{\simeq} K_1(A)$,
\[
  \Upsilon_A^{1,1} \big( \Sf_{1,1}( \{iD_t\}_{t\in[0,1]} ) \big) =  \Upsilon_A^{1,1} \big( \wh{\Sf}( \{D_t\}_{t\in[0,1]} ) \big)  =  \Sf_\mathrm{odd}^{W}( \{D_t\}_{t\in[0,1]} ) \in K_1(A),
\] 
where $\Sf^{W}_\mathrm{odd} \in K_1(A)$ is the odd spectral flow due to Wahl \cite[\S8]{Wahl07}.
\end{enumerate}
\end{prop}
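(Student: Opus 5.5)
The strategy is to reduce both statements to uniqueness results for spectral flow that are already available. On the $\DK$ side, Propositions \ref{prop:sf_properties}, \ref{prop:Path_additivity_KK} and \ref{prop:sf_normalisation} show that $\Sf_{0,1}$ satisfies (Z), (H), (A), (C) and the normalisation (N). On the complex side, Wahl \cite[\S4]{Wahl07} characterises $\Sf^W$ axiomatically by the analogous properties, normalised by the relative index of projections. Since $\Upsilon_A^{0,1}$ is a group isomorphism, $\Upsilon_A^{0,1}\circ\Sf_{0,1}(\{i\,\cdot\,\})$ inherits (Z), (H), (A) and (C) directly.

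For part (1), it remains to check the normalisation. Wahl normalises by a path connecting $2P_0-\one$ to $2P_1-\one$ for projections $P_0,P_1$ with $P_0-P_1$ compact. The corresponding skew-adjoint path is $i\big((1-t)(2P_0-\one)+t(2P_1-\one)\big)=(1-t)J_0+tJ_1$ with $J_k=i(2P_k-\one)$. By Proposition \ref{prop:sf_normalisation} its $\DK$-spectral flow is $\relind_{0,1}(J_1,J_0)$. Proposition \ref{prop:complex_relative_index}(1) then maps this under $\Upsilon_A^{1,0}$ (equivalently $\Upsilon_A^{0,1}$, via the standard identification $K_2\cong K_0$) to $\relind_0^K(P_1,P_0)$. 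The plan is to compare this with Wahl's convention, namely $\Sf^W$ of that path equals $\relind(P_1,P_0)$ (up to her ordering convention). The main obstacle is exactly this bookkeeping: the $\Upsilon^{0,1}$ versus $\Upsilon^{1,0}$ identification, the Bott map and any signs. If a sign mismatch appears, I would resolve it by evaluating both sides on a rank-one example, $\{(1-2t)(2P-\one)\}$ with $P$ rank one in $\calH_A$, computing each side explicitly. Once the normalisations agree, Wahl's uniqueness result \cite[Theorem 4.?]{Wahl07} gives equality for all Wahl-continuous paths. Applying uniqueness requires that paths of the form Wahl considers can be deformed, via (H) and (C), into normalised ones; this is Wahl's own argument, and we only need our map to satisfy her axioms.

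For part (2), the first equality is Proposition \ref{prop:graded_to_ungraded_sf}. For the second, I would run the same argument using Wahl's odd spectral flow axioms from \cite[\S8]{Wahl07}, normalised by the odd relative index. The $\Z_2$-graded spectral flow satisfies (Z), (H), (A), (C) and (N), with $\wh{\Sf}$ of the straight-line path between $2P_0-\one$ and $2P_1-\one$ equal to $\wh{\relind}(2P_1-\one,2P_0-\one)$. Proposition \ref{prop:complex_relative_index}(2) sends this under $\Upsilon_A^{1,1}$ to $\relind_1^K(P_1,P_0)$, which is Wahl's odd normalisation; here again the main point to verify is the orientation/sign convention. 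Uniqueness of the odd spectral flow then concludes the proof.
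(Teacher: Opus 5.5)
There is a genuine gap: the two steps that carry all the content of your route are asserted rather than carried out.

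\textbf{The uniqueness theorem.} You never identify it; the citation is ``Theorem 4.?''. For part (2) the situation is worse. Wahl's odd spectral flow \cite[Definition 8.4]{Wahl07} is \emph{defined} by applying $\beta_K^{-1}\circ\Index_0$ to the class $[D_\bullet]$ of the family operator, not by axioms. To run your argument you would first have to prove an axiomatic uniqueness theorem for $K_1(A)$-valued spectral flow, including existence of locally trivialising families in the graded setting. That is more work than the proposition itself.

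\textbf{The normalisation comparison.} Matching normalisations means checking the orientation of the relative index, Wahl's ordering convention, and which identification $K_2(A)\cong K_0(A)$ is built into $\Upsilon_A^{0,1}$. (Proposition \ref{prop:complex_relative_index} is stated for $\Upsilon_A^{1,0}$; the statement uses $\Upsilon_A^{0,1}$, whose defining feature here is compatibility with Bott maps.) On your route this comparison is essentially the whole content of the proposition, and you defer it. Your fallback of evaluating a rank-one example cannot ``resolve'' a mismatch. It can only detect one, and a detected mismatch would mean the identity as stated is false.

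\textbf{What is missing.} No uniqueness theorem or sign-chasing is needed, because both sides already have the form ``inverse Bott map applied to an index of the family operator on $S\calH_{SA}$''.
\begin{itemize}
\item By Definition \ref{def:DK_spec_flow}, $\Sf_{0,1}(\{iD_t\})=\beta_{\DK}^{-1}\big(\Index_{1,1}(iD_\bullet)\big)$.
\item By \cite[Proposition 4.2]{Wahl07}, $\Sf^W(\{D_t\})=\beta_K^{-1}\big(\Index_1(D_\bullet)\big)$, using $\chi(D_\bullet)\otimes\gamma=\chi(iD_\bullet)\otimes\rho$ with $\gamma=i\rho$.
\item Lemma \ref{lemma:DK-complexK-respects-Bott} gives $\Upsilon_A^{0,1}\circ\beta_{\DK}^{-1}=\beta_K^{-1}\circ\Upsilon_{SA}^{1,1}$.
\item Proposition \ref{prop:complex_index}(2), applied over $SA$, gives $\Upsilon_{SA}^{1,1}\big(\Index_{1,1}(iD_\bullet)\big)=\Index_1(D_\bullet)$.
\end{itemize}
Chaining these proves (1).

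For (2), take the first equality from Proposition \ref{prop:graded_to_ungraded_sf}. Then run the same chain with $\Index_{2,1}$, $\Upsilon_{SA}^{2,1}$ and Proposition \ref{prop:complex_index}(1). This lands on $\beta_K^{-1}\big(\Index_0((D_\bullet)_+)\big)$, which is Wahl's definition of $\Sf^W_{\mathrm{odd}}$. All convention issues are absorbed into these two compatibility results, which is why the paper's proof takes only a few lines.
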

\begin{proof}
(1) By~\cite[Proposition 4.2]{Wahl07}, $\Sf^{W}( \{D_t\}_{t\in[0,1]} )$ can be realised by applying the odd index to the 
Kasparov module $\big[ \big( \Cl_{1}, \, S\calH_{SA} \otimes \exterior \C, \, \chi(D_\bullet) \otimes \gamma \big) \big]$ and then composing with the inverse Bott isomorphism:
\[
\KK(\Cl_{1}, SA) \xrightarrow{\Index_1} K_1(SA) \xrightarrow{\beta_K^{-1}} K_0(A) .
\]
We can write the Clifford generator $\gamma = i \rho$ and  $\chi(D_\bullet) \otimes \gamma = \chi( i D_\bullet) \otimes \rho$. Then 
using the compatibility of the isomorphisms $\Upsilon_A^{r,s}: \DK(A \otimes \Cl_{r,s} ) \to K_{1+s-r}(A)$ with the Bott 
map (Lemma \ref{lemma:DK-complexK-respects-Bott}),
\begin{align*}
  \Upsilon_A^{0,1}\big( \Sf_{0,1}( \{iD_t\}_{t\in[0,1]} ) \big) &= \Upsilon_A^{0,1} \circ \beta_{DK}^{-1} \big( \Index_{1,1}( iD_\bullet )  \big) \\
    &= \beta_K^{-1} \circ \Upsilon_{SA}^{1,1}  \big( \Index_{1,1}( iD_\bullet )  \big) \\
    &= \beta_K^{-1}\big(  \Index_1( D_\bullet ) \big) = \Sf^{W}( \{D_t\}_{t\in[0,1]} ) \in K_0(A),
\end{align*}
where we have used part (2) of Proposition \ref{prop:complex_index}.

(2) Because $\hat{\calH}_A$ is graded with $D_t$ odd, to define $\Sf_{1,1}$ (which does not consider graded operators),  
we  consider $\{iD_t\}_{t\in [0,1]} \subset \Reg_A^{1,0}(\hat{\calH}_A)$ with 
$\Cl_{1,0}$ generator given by the grading operator. Then the equality $ \Sf_{1,1}( \{iD_t\}_{t\in[0,1]} )  = \wh{\Sf}( \{D_t\}_{t\in[0,1]} )$ 
was shown in Proposition \ref{prop:graded_to_ungraded_sf}. 
The odd spectral flow (\cite[Definition 8.4]{Wahl07}) is defined by 
applying the map $\Index_1 \circ \altBott_{\KK}^{-1} = \beta_K^{-1} \circ \Index_0$ to the 
$\KK$-class $[D_\bullet] := \big[ \big( \C, \, S\hat{\calH}_{SA} , \, \chi( D_\bullet) \big) \big] \in \KK(\C, SA)$. 
Because $A$ is ungraded,
\[
   S\hat{\calH}_{SA} \cong \calH_{SA} \otimes \C^2, \qquad D_\bullet = \begin{pmatrix} 0 & (D_\bullet)_- \\ (D_\bullet)_+ & 0 \end{pmatrix}
\]
where $(D_\bullet)_+$ is a Fredholm operator on $\calH_{SA}$. Arguing analogously to part (1), 
\begin{align*}
   \Upsilon_{A}^{1,1} \big(  \Sf_{1,1}( \{iD_t\}_{t\in[0,1]} ) \big) &= \Upsilon_A^{1,1} \circ \beta_{DK}^{-1} \big( \Index_{2,1}( iD_\bullet )  \big) \\
    &= \beta_K^{-1} \circ \Upsilon_{SA}^{2,1}  \big( \Index_{2,1}( iD_\bullet )  \big) \\
    &=  \beta_K^{-1}\big(  \Index_0( (D_\bullet)_+ ) \big) =\Sf_\mathrm{odd}^{W}( \{D_t\}_{t\in[0,1]} ) \in K_1(A),
\end{align*}
where we have used part (1) of Proposition \ref{prop:complex_index}.
\end{proof}

\section{Analytic spectral flow} \label{sec:analytic_spectral_flow}

In this section, we provide a more concrete description of spectral flow that is closer in spirit to the idea of counting eigenvalue 
crossings through zero for paths of Fredholm operators. 
First, in {\S}\ref{subsec:sf_small_perturbations}, we show that the normalisation property of the spectral flow (relating the spectral flow of the straight line path between two skew-adjoint unitaries to their relative index, see Proposition \ref{prop:sf_normalisation}) can be generalised to norm-continuous paths of ``small perturbations''. 

In {\S}\ref{subsec:Phillips}, 
we then prove a formula for the \emph{analytic spectral flow} in analogy to the analytic approach to spectral flow on complex Hilbert spaces due to J. Phillips~\cite{P2}. 
This analytic spectral flow of a path $\{D_t\}_{t\in[0,1]}$ of self-adjoint Fredholm operators is given by considering a partition $0=t_0<t_1,\ldots<t_n=1$ of the unit interval, taking the positive spectral projections $P_+\big( \chi(D_{t_i})\big)$, and then computing a (finite) sum of relative indices of these projections. 

In the $C^*$-module setting, the construction of this analytic spectral flow is complicated by the absence of Borel functional calculus on $C^*$-modules (in general, positive spectral projections can not always be constructed via continuous functional calculus). 
In {\S}\ref{subsec:Phillips}, we deal with this issue by considering only \emph{Riesz-continuous} paths. 
In our Real setting, we replace projections by skew-adjoint unitaries, and define the analytic spectral flow in analogy with \cite{P2}. We prove that this analytic spectral flow agrees with the (abstract) spectral flow of {\S}\ref{sec:spectral_flow}. 
In the special setting of a Hilbert space, 
our analytic formula for the spectral flow shows that the 
$\DK$-valued spectral flow reduces to the (inverse of the) skew-adjoint spectral flow on real Hilbert spaces considered in~\cite{BCLR} (see {\S}\ref{subsec:Sf_Real_HS}).

In {\S}\ref{subsec:analytic_Wahl}, we adapt the approach of Wahl \cite{Wahl07} to our Real setting, and prove a second formula for the analytic spectral flow. 
Wahl's approach works for Wahl-continuous families of regular (unbounded) Fredholm operators, but requires the additional assumption that 
there exist  \emph{locally trivialising families}.

\subsection{Norm-continuous paths of ``small perturbations''} \label{subsec:sf_small_perturbations}

Let $(J_0,J_1)$ be a Fredholm pair of skew-adjoint unitaries in $\End_A^{r,s}(X)$. 
The normalisation property of the spectral flow (Proposition \ref{prop:sf_normalisation}) shows that the spectral flow of the straight line path from $J_0$ to $J_1$ is given by the relative index of $(J_1,J_0)$. 
We can generalise this result as follows: instead of the straight line path, we may take \emph{any} norm-continuous path of (skew-adjoint) essential unitaries, as long as we stay inside the range of ``small perturbations'' (given by compact perturbations and perturbations which are small in norm). 

\begin{lemma} \label{lemma:sf_small_perturbations}
Let $\{F_t\}_{t\in [0,1]} \subset \End^{r,s}_A(X)$ be a norm-continuous family of Real skew-adjoint operators 
such that $F_0$ and $F_1$ are unitary (i.e., $F_0^2 = F_1^2 = -\one$) 
and $F_t$ is essentially unitary (i.e., $\one + F_t^2 \in \End_A^0(X)$) for all $t\in [0,1]$. 
Suppose also that $\big\| q(F_0) - q(F_t) \big\|_{\calQ_A(X)} < \sqrt{2}$ and $\big\| q(F_1) - q(F_t) \big\|_{\calQ_A(X)} < \sqrt{2}$ for all $t \in [0,1]$. 
Then 
\[
   \Sf_{r,s+1} ( \{F_t\}_{t\in[0,1]} ) =  \Sf_{r,s+1}\big( (1-t) F_0 + t F_1 \big) = \relind_{r,s+1}( F_1, F_0) \in \DK( A \otimes \Cl_{r,s+1}) .
\]
\end{lemma}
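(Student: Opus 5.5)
The second equality is exactly the normalisation property (N) of Proposition~\ref{prop:sf_normalisation}. The pair $(F_0,F_1)$ is Fredholm, since
\[
\|q(F_0)-q(F_1)\|_{\calQ_A(X)}<\sqrt2<2
\]
(take $t=1$ in the hypothesis). So the content is the first equality. Since $F_0,F_1$ are unitary, hence invertible, and $t\mapsto F_t$ is norm-continuous with $\one+F_t^2$ compact, the family operator $F_\bullet$ is a skew-adjoint essential unitary on $SX_{SA}$. It therefore defines a Kasparov module $(\Cl_{s+1,r}, SX_{SA}\otimes\exterior\C, F_\bullet\otimes\rho)$ directly, without a normalising function. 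By Proposition~\ref{prop:KK_and_DK_flow_compatible} the spectral flow equals $\Kubota_A^{r,s+1}$ of this class. It suffices to show that $F_\bullet$ and the straight line path $L_t=(1-t)F_0+tF_1$ define the same class in $\KKR(\Cl_{s+1,r},SA)$.

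I would build an explicit two-parameter homotopy with fixed endpoints:
\[
G_{t,\lambda}=(1-\lambda)F_t+\lambda L_t .
\]
This family is norm-continuous in $(t,\lambda)$, skew-adjoint, Real and $\Cl_{r,s}$-anti-linear. It satisfies $G_{0,\lambda}=F_0$ and $G_{1,\lambda}=F_1$, which are unitary. The remaining issue is Fredholmness of each $G_{t,\lambda}$, uniformly. I would use Lemma~\ref{lemma:bdd_unbdd_Fred_condition}(1) after rescaling, or directly show that $q(G_{t,\lambda})$ is invertible in $\calQ_A(X)$ with a uniform bound on the inverse. Once this holds, the homotopy version of Corollary~\ref{cor:Wahl_family_to_KK_htpy}(2), together with the invertible endpoints (Corollary~\ref{cor:Wahl_cts_path_Fredholm}), gives a homotopy of Kasparov modules over $SA$. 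Homotopy invariance (H) of Proposition~\ref{prop:sf_properties} then finishes the argument.

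The main obstacle is the invertibility estimate, and this is where the constant $\sqrt2$ enters. In $\calQ_A(X)$ write $u_t=q(F_t)$, a skew-adjoint unitary, and $a=q(F_0)$, $b=q(F_1)$. Then $q(G_{t,\lambda})$ is a convex combination of $u_t$, $a$ and $b$ with coefficients $(1-\lambda)$, $\lambda(1-t)$ and $\lambda t$.

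I would show invertibility via $\mathrm{Re}(-u_t^{*}q(G_{t,\lambda}))$, or better via a common reference, by bounding $\|x-y\|<\sqrt2$ for pairs among $\{u_t,a,b\}$. For skew-adjoint unitaries $x,y$,
\[
\|x-y\|^2=\|2+xy+yx\| ,
\]
so $\|x-y\|<\sqrt2$ means $-\tfrac12(xy+yx)=\mathrm{Re}(x^*y)\ge c\one$ for some $c>0$. Note that $u_t$ need not be unitary in $\calQ_A(X)$ for intermediate $t$ beyond being an essential unitary, but $q(F_t)$ is genuinely unitary there because $\one+F_t^2$ is compact.

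The hypotheses give $\mathrm{Re}(a^*u_t)>0$ and $\mathrm{Re}(b^*u_t)>0$. To control the $ab$ cross-term I would instead test against $u_t$: compute $\mathrm{Re}(u_t^*q(G_{t,\lambda}))\ge(1-\lambda)+\lambda(1-t)c+\lambda tc>0$, which gives left invertibility, and similarly right invertibility. By compactness of $[0,1]^2$ and continuity, $c$ can be chosen uniformly.

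If this positivity argument has a gap, the fallback is to split the path at small steps and use the normalisation and additivity properties (Proposition~\ref{prop:Path_additivity_KK}, Lemma~\ref{lemma:addition_for_Ind_J0J1}(2)) to telescope relative indices.
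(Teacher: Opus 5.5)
Your proposal is correct, and its skeleton matches the paper's proof. Both use the same linear homotopy $G_{t,\lambda}=(1-\lambda)F_t+\lambda\big((1-t)F_0+tF_1\big)$ with fixed unitary endpoints, then homotopy invariance (H), then the normalisation (N). The difference is only in how you prove that the intermediate operators are Fredholm.

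The paper expands the square. Using $q(F_t)^2=-\one$ it obtains
\[
\one+q(G_{t,\lambda})^2=\lambda\,q(\one+L_t^2)-\lambda(1-\lambda)\,q(F_t-L_t)^2,\qquad q(\one+L_t^2)=t(1-t)\,q(F_0-F_1)^*q(F_0-F_1).
\]
It then bounds the norm by $\tfrac14\|q(F_0)-q(F_1)\|^2+\tfrac14\|q(F_t)-q(L_t)\|^2<1$, using the convexity estimate $\|q(F_t)-q(L_t)\|<\sqrt2$. This plugs directly into the criterion $\|\one+q(F)^2\|<1$ of Lemma~\ref{lemma:bdd_unbdd_Fred_condition}, which the paper uses throughout.

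You instead take $u_t=q(F_t)$ as a reference point and use the identity $\mathrm{Re}(x^*y)=\one-\tfrac12(x-y)^*(x-y)\ge\big(1-\tfrac12\|x-y\|^2\big)\one$ for unitaries. This gives $\mathrm{Re}\big(u_t^*q(G_{t,\lambda})\big)\ge\min(1,c)\one$ with $c=1-\tfrac12\sup_t\max\big(\|u_t-a\|^2,\|u_t-b\|^2\big)>0$. What this buys you:
\begin{itemize}
\item the whole convex hull of $\{u_t,a,b\}$ consists of invertibles;
\item it makes clear that $\sqrt2$ is exactly the threshold for $\mathrm{Re}(x^*y)>0$;
\item it gives an explicit uniform bound $\|q(G_{t,\lambda})^{-1}\|\le 1/\min(1,c)$, which is the uniform spectral gap needed to apply (H) to the two-parameter family. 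The paper's pointwise estimate also yields this, but only after taking suprema over $[0,1]$.
\end{itemize}

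Two small clean-ups. First, the step from $\|x-y\|^2=\|2+xy+yx\|$ to positivity of $-\tfrac12(xy+yx)$ uses that $2+xy+yx=(x-y)^*(x-y)\ge0$; say so explicitly. Second, $\mathrm{Re}(y)\ge c'\one$ with $c'>0$ already gives two-sided invertibility, since $\mathrm{Re}(y^*)=\mathrm{Re}(y)$. So no separate right-invertibility argument is needed, and the telescoping fallback is unnecessary.
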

\begin{proof}
We consider a homotopy from $\{F_t\}_{t\in[0,1]}$ to the straight line path $\{\hat{F}_t := (1-t) F_0 + t F_1\}_{t\in[0,1]}$ given by 
\begin{align*}
  \{F_{s,t}\}_{s,t \in [0,1]}, \qquad F_{s,t} := (1-s) F_t + s \hat{F}_t .
\end{align*}
In order to check that $F_{s,t}$ is Fredholm for all $s,t\in[0,1]$, we compute (using $q(F_t)^2=-\one$)
\begin{align*}
   \big\| \one + q(F_{s,t})^2 \big\|_{\calQ_A(X)} 
   &= \big\| \one - (1-s)^2 \one + s^2 q(\hat{F}_t)^2 + s(1-s) q(F_t \hat{F}_t + \hat{F}_t F_t) \big\|_{\calQ_A(X)}  \\
   &= \big\| s^2 q(\one + \hat{F}_t^2) + s(1-s) q(2 + F_t \hat{F}_t + \hat{F}_t F_t) \big\|_{\calQ_A(X)}  \\
   &= \big\| s^2 q(\one + \hat{F}_t^2) + s(1-s) q\big( \one + \hat{F}_t^2 - (F_t-\hat{F}_t)^2 \big) \big\|_{\calQ_A(X)}  \\
   &= \big\| s q(\one + \hat{F}_t^2) - s(1-s) q(F_t-\hat{F}_t)^2 \big\|_{\calQ_A(X)}  .
\end{align*}
We observe that 
\[
\| q(F_t) - q(\hat{F}_t) \|_{\calQ_A(X)}  
\leq (1-t) \|q(F_t)-q(F_0)\|_{\calQ_A(X)}  + t \|q(F_t)-q(F_1)\|_{\calQ_A(X)} 
< \sqrt{2} .
\]
Moreover, we can rewrite 
\[
q(\one + \hat{F}_t^2) 
= t(t-1) q\big( 2 + F_0 F_1 + F_1 F_0 \big) 
= - t(t-1) q(F_0-F_1)^2 .
\]
Since $\sup_{s\in[0,1]} \big| s(1-s) \big| = \frac14$, we can therefore estimate 
\begin{align*}
\big\| \one + q(F_{s,t})^2 \big\|_{\calQ_A(X)} 
&\leq \frac14 \big\| q(F_0)-q(F_1) \big\|_{\calQ_A(X)}^2 + \frac14 \| q(F_t) - q(\hat{F}_t) \|_{\calQ_A(X)}^2 
< \frac14 \cdot \sqrt{2}^2 + \frac14 \cdot \sqrt{2}^2 
= 1 ,
\end{align*}
so $q(F_{s,t})$ is indeed invertible for all $s,t \in [0,1]$. 
From the homotopy invariance of the spectral flow (Proposition \ref{prop:sf_properties}), it follows that 
\[
    \Sf_{r,s+1}\big( \{F_t\}_{t\in[0,1]} \big) = \Sf_{r,s+1} \big( (1-t) F_0 + t F_1\big).
\]
Since $F_0$ and $F_1$ are skew-adjoint unitaries with $\big\| q(F_0) - q(F_1) \big\|_{\calQ_A(X)} < \sqrt{2}$, 
the second equality in the statement is given by Proposition \ref{prop:sf_normalisation}.
\end{proof}

\subsection{The Phillips formula for Riesz-continuous paths}
\label{subsec:Phillips}

Recall that the Riesz topology on the 
set $\Reg_A^{\sk}(X)$ 
is defined to be the weakest topology such that the following map is continuous:
\begin{align*}
\Reg_A^{\sk}(X) &\to \End_A(X) , \qquad T \mapsto F_T := T(\one -T^2)^{-\frac12} .
\end{align*}
It then follows that also the map $T \mapsto (\one-T^2)^{-1}$ is continuous. 
Since the $C^*$-algebra 
\[
C_\infty(i\R) := \big\{ f \in C(i\R) : \lim_{x\to\pm i\infty} f(x) \text{ exists} \big\} 
\]
is generated by the functions $x \mapsto i$, $x \mapsto x(1-x^2)^{-\frac12}$, and $x \mapsto (1-x^2)^{-1}$, 
we see that $T \mapsto f(T)$ is continuous for any $f \in C_\infty(i\R)$. 
In particular, the map 
$T \mapsto \chi(T)$ 
is continuous for any normalising function $\chi$. 

In this subsection, we consider a Riesz-continuous path $\{T_t\}_{t\in [0,1]}$ of Real skew-adjoint Fredholm operators in $\Reg_A^{r,s}(X)$ with invertible endpoints. 
We also assume that a normalising function $\chi$ is chosen such that $\chi(T_0)$ and $\chi(T_1)$ are skew-adjoint unitaries. 
We then consider the norm-continuous path $\{F_t\}_{t\in[0,1]}$ given by $F_t := \chi(T_t)$. 

The starting point for the construction of the analytic spectral flow is the following observation, which is based on the lifting results for skew-adjoint unitaries from Lemma \ref{lemma:homotopic_to_lifting}. 

\begin{lemma} \label{lemma:pointwise_skew_unitary}
For every $t \in [0,1]$ there exists a skew-adjoint unitary $J_t \in \End_A^{r,s}(X)$ such that $J_t - F_t \in \End_A^0(X)$.
\end{lemma}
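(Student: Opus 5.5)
We apply Lemma \ref{lemma:homotopic_to_lifting} to the unital algebra $\End_A(X)$ with ideal $\End_A^0(X)$. The Clifford generators $e_j,f_k$ lie in $\End_A(X)\setminus\End_A^0(X)$ by ampleness, so the sets $\calJ^{r,s}_{\End_A(X)}$ and $\calJ^{r,s}_{\calQ_A(X)}$ are well-defined. For each $t$, the element $q(F_t)\in\calQ_A(X)$ is Real and skew-adjoint, anti-commutes with the (images of the) Clifford generators because $\chi$ is odd, and satisfies $q(F_t)^2=-\one$ since $F_t$ is essentially unitary. Hence $q(F_t)\in\calJ^{r,s}_{\calQ_A(X)}$. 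The claim is then exactly that $q(F_t)$ lifts to some $J_t\in\calJ^{r,s}_{\End_A(X)}$, because any such lift satisfies $J_t-F_t\in\End_A^0(X)$.

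The argument has four steps.

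\begin{enumerate}
\item At $t=0$ there is nothing to prove, since $F_0=\chi(T_0)$ is already a skew-adjoint unitary in $\End_A^{r,s}(X)$. So $q(F_0)$ has the lift $J_0:=F_0$.
\item Riesz-continuity of $t\mapsto T_t$ implies that $t\mapsto F_t=\chi(T_t)$ is norm-continuous. Consequently $t\mapsto q(F_t)$ is a norm-continuous path in $\calJ^{r,s}_{\calQ_A(X)}$, and in particular a homotopy from $q(F_0)$ to $q(F_t)$ inside that set.
\item Part (2) of Lemma \ref{lemma:homotopic_to_lifting}, applied to the restricted path $[0,t]\ni u\mapsto q(F_u)$, produces a lift $\wt J_t\in\calJ^{r,s}_{\End_A(X)}$ of $q(F_t)$. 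Concretely, we choose a partition of $[0,t]$ fine enough that consecutive points are within distance $<2$ in $\calQ_A(X)$, and apply part (1) of that lemma repeatedly. Setting $J_t:=\wt J_t$ gives $J_t-F_t\in\End_A^0(X)$.
\item Real-ness must be preserved. In the proof of Lemma \ref{lemma:homotopic_to_lifting}, the unitary path $u_t$ is Real and lies in the fixed-point algebra of the finite group generated by $\mathrm{Ad}_{e_j},\mathrm{Ad}_{f_k}$. Its lift can be averaged over this group and also over the real structure, since averaging over the real structure commutes with the $G$-action. This yields a Real lift commuting with the generators. Conjugating the Real lift $F_0$ by $\wt J_0\wt u$ then keeps $J_t$ Real, skew-adjoint, unitary and Clifford anti-linear.
\end{enumerate}

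The main obstacle is the lifting step inside Lemma \ref{lemma:homotopic_to_lifting}(1), which requires that a unitary path in the connected component of the identity of the Real, $G$-fixed part of $\calQ_A(X)$ lift to a unitary in the corresponding part of $\End_A(X)$. The plan is to rely on that lemma as stated, checking only that its hypotheses hold here. Those hypotheses are the existence of an initial lift (step 1) and norm-continuity of the path in the quotient (step 2). Both follow directly from the assumption that $\chi(T_0)$ is a skew-adjoint unitary and from Riesz-continuity.
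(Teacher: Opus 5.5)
Your proposal is correct and is the paper's argument: $u\mapsto q(F_u)$ is a homotopy in $\calJ^{r,s}_{\calQ_A(X)}$ from $q(F_0)$, which lifts to $F_0$, so Lemma~\ref{lemma:homotopic_to_lifting}(2) yields $J_t$ with $q(J_t)=q(F_t)$. Your step 4 is unnecessary, because that lemma already produces lifts in $\calJ^{r,s}$, which are Real by definition; if you do want to justify it internally, apply averaging to show the quotient map on Real $G$-fixed parts is surjective (or to Real $G$-invariant skew-adjoint logarithms), not to the unitary lift itself, whose average need not be unitary.
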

\begin{proof}
We have a norm-continous path $t\mapsto q(F_t) \in \calQ_A(X)$ of skew-adjoint unitaries. 
For any $t \in [0,1]$, $q(F_t)$ is therefore homotopic (in $\calQ_A(X)$) to the skew-adjoint unitary $q(F_0)$ which has a skew-adjoint unitary lift $F_0 \in \End_A^{r,s}(X)$. 
Applying Lemma \ref{lemma:homotopic_to_lifting}, there exists a skew-adjoint unitary $J_t \in \End_A^{r,s}(X)$ such that $q(F_t) = q(J_t)$. 
\end{proof}

\begin{defn}[Analytic spectral flow I] \label{defn:spectral_flow_Phillips}
Given the Riesz-continuous path $\{T_t\}_{t\in [0,1]}$ and writing $F_t := \chi(T_t)$, we choose a partition $0 = t_0 < t_1 < \dots < t_n = 1$ such that 
$\big\| q(F_t) - q(F_{t_j}) \big\|_{\calQ_A(X)} < 1$ for all $t \in [t_{j},t_{j+1}]$ and $j=0,\ldots,n-1$. 
For each $j=1,\ldots,n-1$, we choose a skew-adjoint unitary $J_j \in \End_A^{r,s}(X)$ such that $q(F_{t_j}) = q(J_j)$, and we set $J_0 := F_0$ and $J_n := F_1$. 
Then we define 
\[
\Sf_{r,s+1}^\mathrm{Ph}\big( \{T_t\}_{t\in[0,1]} \big) 
:= \bigoplus_{j=0}^{n-1} \relind_{r,s+1}(J_{j+1},J_{j}) .
\]
\end{defn}
\begin{lemma}
The analytic spectral flow $\Sf_{r,s+1}^\mathrm{Ph}$ is well-defined. 
\end{lemma}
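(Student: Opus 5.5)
The proof has three parts: check that each term in Definition \ref{defn:spectral_flow_Phillips} is defined, then show the sum does not depend on the chosen lifts $J_j$, then show it does not depend on the partition. Each step reduces to the antisymmetry and additivity of the relative index (Lemma \ref{lemma:addition_for_Ind_J0J1}). Additivity in turn comes from the identity $[a]-[c]=([a]-[b])+([b]-[c])$ in the relative group $\DK\big(\End_A(X)\otimes\Cl_{r,s+1},\calQ_A(X)\otimes\Cl_{r,s+1}\big)$ of Lemma \ref{lem:Fred_pair_via_rel_DK}.

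\textbf{Well-definedness of each term.} The lifts $J_j$ exist by Lemma \ref{lemma:pointwise_skew_unitary}. Since $q(J_j)=q(F_{t_j})$ and $t_{j+1}\in[t_j,t_{j+1}]$, we get $\|q(J_{j+1})-q(J_j)\|_{\calQ_A(X)}=\|q(F_{t_{j+1}})-q(F_{t_j})\|_{\calQ_A(X)}<1<2$. So each $(J_{j+1},J_j)$ is a Fredholm pair and each summand $\relind_{r,s+1}(J_{j+1},J_j)$ is defined by Definition \ref{def:Ind_of_pair_of_complex_structures_DK}.

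\textbf{Independence of the lifts.} Suppose $J_j'$ is another skew-adjoint unitary with $q(J_j')=q(F_{t_j})$, for some $1\le j\le n-1$. Then $\|q(J_j-J_j')\|=0<1$, and the neighbouring pairs satisfy the norm bound $<1$ from the previous step. Lemma \ref{lemma:addition_for_Ind_J0J1}(2) therefore gives
\[
\relind_{r,s+1}(J_{j+1},J_j')=\relind_{r,s+1}(J_{j+1},J_j)+\relind_{r,s+1}(J_j,J_j') ,
\]
\[
\relind_{r,s+1}(J_j',J_{j-1})=\relind_{r,s+1}(J_j',J_j)+\relind_{r,s+1}(J_j,J_{j-1}) .
\]
By Lemma \ref{lemma:addition_for_Ind_J0J1}(1), the two correction terms $\relind_{r,s+1}(J_j,J_j')$ and $\relind_{r,s+1}(J_j',J_j)$ cancel. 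Changing one lift at a time, the sum is independent of all choices. The endpoints $J_0=F_0$ and $J_n=F_1$ are fixed by the definition.

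\textbf{Independence of the partition.} Any two admissible partitions have a common refinement, and a refinement of an admissible partition is again admissible. So it suffices to insert one point $t'\in(t_j,t_{j+1})$, with a lift $J'$ of $q(F_{t'})$. The key identity to establish is
\[
\relind_{r,s+1}(J_{j+1},J_j)=\relind_{r,s+1}(J_{j+1},J')+\relind_{r,s+1}(J',J_j).
\]
We have $\|q(J')-q(J_j)\|<1$. We do not know $\|q(J_{j+1})-q(J')\|<1$, only the bound
\[
\|q(J_{j+1})-q(J')\|\le\|q(F_{t_{j+1}})-q(F_{t_j})\|+\|q(F_{t_j})-q(F_{t'})\|<2 .
\]
So all three pairs are Fredholm pairs. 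By Lemma \ref{lem:Fred_pair_via_rel_DK}, all three relative classes lie in the same relative $\DK$-group. There the identity $[a]-[c]=([a]-[b])+([b]-[c])$ holds tautologically, exactly as in the proof of Lemma \ref{lemma:addition_for_Ind_J0J1}(2), whose argument only needs each pair to define a relative class. Applying $\calM_X^{\DK}\circ\exc_X$ gives the identity.

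\textbf{Expected main obstacle.} The delicate point is this last step: Lemma \ref{lemma:addition_for_Ind_J0J1}(2) is stated only under the stronger $<1$ hypotheses, so it has to be checked that the weaker $<2$ bounds suffice. This means confirming that the three relative classes can be realised simultaneously, that is, that one compact-perturbation representative can be chosen compatibly for all three pairs. If that turns out to fail, the fallback is to insert the point $t'$ in two stages so that every pair used satisfies the $<1$ hypothesis.
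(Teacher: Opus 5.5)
Your treatment of the individual summands and of independence from the lifts is correct and is essentially the paper's argument. You are also right that the partition condition only gives $\|q(J_{j+1})-q(J')\|_{\calQ_A(X)}<2$; the paper's proof asserts $<1$ ``by assumption'', which does not follow.

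The gap is in how you justify $\relind_{r,s+1}(J_{j+1},J_j)=\relind_{r,s+1}(J_{j+1},J')+\relind_{r,s+1}(J',J_j)$. The identity $[a]-[c]=([a]-[b])+([b]-[c])$ is not tautological for Fredholm pairs. By Lemma~\ref{lem:relative_DK}, the class of a pair at quotient distance $<2$ is only defined after conjugating one entry by a lift of a specific connecting unitary in the Calkin algebra. Additivity for a triple is therefore equivalent to the loop formed by the three connecting paths having trivial image in the relative group.

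For a merely pairwise Fredholm triple this can fail. Over a circle, choose three continuous families of rank-one projections in $M_2(\C)$, pairwise never orthogonal, whose geodesic triangles on the Bloch sphere sweep it out once. Lift them to projections in $M_2$ of the Toeplitz algebra. The three relative indices then fail to add up, and the defect is $\pm1$, the Toeplitz index of the holonomy phase around the triangles. So the ``$<2$ version'' of Lemma~\ref{lemma:addition_for_Ind_J0J1}(2) you invoke is false in general, and your fallback is left unspecified.

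Your claim that a refinement of an admissible partition is again admissible is also false, because the condition is anchored at left endpoints. After inserting $t'$, the interval $[t',t_{j+1}]$ need not satisfy $\|q(F_t)-q(F_{t'})\|<1$, so single insertions cannot simply be iterated.

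Both points are fixed by anchoring at $J_j$ and using Lemma~\ref{lemma:addition_for_Ind_J0J1} exactly as stated. Take any subdivision $t_j=u_0<u_1<\cdots<u_m=t_{j+1}$ with skew-adjoint unitary lifts $K_i$ of $q(F_{u_i})$, where $K_0=J_j$ and $K_m=J_{j+1}$. Each $q(K_i)$ lies within distance $<1$ of $q(J_j)$. So part (2) applies to the triple $(K_{i+1},J_j,K_i)$ with $J_j$ in the middle, and together with part (1) it gives
\[
\relind_{r,s+1}(K_{i+1},K_i)=\relind_{r,s+1}(K_{i+1},J_j)-\relind_{r,s+1}(K_i,J_j).
\]
Summing over $i$ telescopes to $\relind_{r,s+1}(J_{j+1},J_j)$, since $\relind_{r,s+1}(J_j,J_j)=0$ by part (3); the case $m=2$ is exactly your key identity. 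Now take the common refinement of two admissible partitions, with one choice of lifts at all partition points. Applying the telescoping interval by interval, once relative to each of the two partitions, shows they give the same sum.
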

\begin{proof}
We note that such a partition exists because $t \mapsto q(F_t)$ is norm-continuous, and that such skew-adjoint unitaries exist by Lemma \ref{lemma:pointwise_skew_unitary}. 
Furthermore, by our choice of normalising function $\chi$, $J_0=F_0$ and $J_n=F_1$ are also skew-adjoint unitaries. 
Since $q(J_j)=q(F_{t_j})$ for each $j$, we have $\big\| q(J_j) - q(J_{j+1}) \big\| = \big\| q(F_{t_j}) - q(F_{t_{j+1}}) \big\| < 1$ by assumption. 
Hence each relative index $\relind_{r,s+1}(J_{j+1},J_{j})$ is well-defined (see Definition \ref{def:Ind_of_pair_of_complex_structures_DK}). 

To show that the definition is independent of the choice of partition, it suffices to show that we can always add an intermediate point $t_j < t' < t_{j+1}$. 
Let $J'$ be a skew-adjoint unitary lift of $q(F_{t'})$. By assumption, we have $\big\| q(J') - q(J_j) \big\| < 1$ as well as $\big\| q(J') - q(J_{j+1}) \big\| < 1$, 
so by additivity of the relative index (Lemma \ref{lemma:addition_for_Ind_J0J1}) we have 
\[
\relind_{r,s+1}(J_{j+1},J_{j}) = \relind_{r,s+1}(J_{j+1},J') + \relind_{r,s+1}(J',J_{j}) .
\]

Finally, we show that the definition is independent of the choices of $J_j$. 
For $j=1,\ldots,n-1$, let $J_j'$ be skew-adjoint unitaries such that $q(J_j') = q(J_j)$ (we set $J_0'=J_0$ and $J_n'=J_n$). 
Then, again using additivity of the relative index,  
\[
\relind_{r,s+1}(J_{j+1}',J_{j}') = \relind_{r,s+1}(J_{j+1}',J_{j+1}) + \relind_{r,s+1}(J_{j+1},J_{j}) + \relind_{r,s+1}(J_{j},J_{j}') .
\]
Since $\relind_{r,s+1}(J_j',J_j) + \relind_{r,s+1}(J_j,J_j') = 0$, we see that the first and third terms on the right hand side disappear after summing over $j$, and we conclude 
\begin{align*}
\bigoplus_{j=0}^{n-1} \relind_{r,s+1}(J_{j+1}',J_{j}') 
&= \bigoplus_{j=0}^{n-1} \relind_{r,s+1}(J_{j+1},J_{j}) .
\qedhere
\end{align*}
\end{proof}

\begin{thm} \label{thm:Phillips_Riesz}
Let $\{T_t\}_{t\in [0,1]}$ be a Riesz-continuous path of Real skew-adjoint Fredholm operators 
in $\Reg_A^{r,s}(X)$ with invertible endpoints. 
Then 
\[
\Sf_{r,s+1}^\mathrm{Ph}\big( \{T_t\}_{t\in[0,1]} \big) = \Sf_{r,s+1}\big( \{T_t\}_{t\in[0,1]} \big) .
\]
\end{thm}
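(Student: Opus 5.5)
We reduce the left-hand side to a sum of spectral flows of short paths, each computed by Lemma \ref{lemma:sf_small_perturbations}. Write $F_t=\chi(T_t)$. A Riesz-continuous path is in particular Wahl-continuous, since $t\mapsto\phi_n(T_t)$ is norm-continuous for $\phi_n\in C_c\subset C_\infty(i\R)$. Hence, by Proposition \ref{prop:KK_and_DK_flow_compatible}, $\Sf_{r,s+1}(\{T_t\})=\Kubota_A^{r,s+1}([F_\bullet\otimes\rho])$. Moreover $t\mapsto F_t$ is norm-continuous, and $\one+F_t^2\in\End_A^0(X)$ depends norm-continuously on $t$.

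Fix a partition and lifts $J_j$ as in Definition \ref{defn:spectral_flow_Phillips}, with $J_0=F_0$ and $J_n=F_1$. We replace $F_\bullet$ by a new norm-continuous path $G_\bullet$ on $[0,1]$ that passes through the skew-adjoint unitaries $J_j$ at the nodes $t_j$:
\[
G_t:=F_t+\tfrac{t_{j+1}-t}{t_{j+1}-t_j}(J_j-F_{t_j})+\tfrac{t-t_j}{t_{j+1}-t_j}(J_{j+1}-F_{t_{j+1}}),\qquad t\in[t_j,t_{j+1}].
\]
The added term is a norm-continuous path of compact Clifford anti-linear skew-adjoint operators, so $q(G_t)=q(F_t)$. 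The straight-line homotopy $F_t+\lambda(G_t-F_t)$, $\lambda\in[0,1]$, therefore consists of essential unitaries with unitary endpoints. It is a Wahl-continuous family of Fredholm operators (cf.\ Corollary \ref{cor:Wahl_family_to_KK_htpy} and Remark \ref{rk:Wahl_and_Kas_topology}), and by (H) of Proposition \ref{prop:sf_properties} it gives $\Sf(\{F_t\})=\Sf(\{G_t\})$. We also need $\Sf(\{T_t\})=\Sf(\{F_t\})$. This holds because $\chi(F_\bullet)$ is homotopic to $F_\bullet$ via $(1-\lambda)F_\bullet+\lambda\chi(F_\bullet)$, which stays essentially unitary. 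Alternatively, one can note directly that the Kasparov class $[F_\bullet\otimes\rho]$ is already the class used in Proposition \ref{prop:KK_and_DK_flow_compatible}.

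Since each $G_{t_j}=J_j$ is invertible, the concatenation property (C) of Proposition \ref{prop:Path_additivity_KK}, applied repeatedly after reparametrising $[t_j,t_{j+1}]$ to $[0,1]$, gives
\[
\Sf(\{G_t\}_{t\in[0,1]})=\sum_{j=0}^{n-1}\Sf(\{G_t\}_{t\in[t_j,t_{j+1}]}).
\]
For each piece, $G_{t_j}=J_j$ and $G_{t_{j+1}}=J_{j+1}$ are skew-adjoint unitaries. For $t\in[t_j,t_{j+1}]$ the partition hypothesis gives
\[
\|q(G_t)-q(J_j)\|=\|q(F_t)-q(F_{t_j})\|<1<\sqrt2.
\]
For the other endpoint, the triangle inequality gives
\[
\|q(G_t)-q(J_{j+1})\|\le\|q(F_t)-q(F_{t_j})\|+\|q(F_{t_j})-q(F_{t_{j+1}})\|<2,
\]
which is \emph{not} below $\sqrt2$. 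This is the main obstacle.

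To overcome it, I would first refine the partition, which is allowed because $\Sf^{\mathrm{Ph}}$ is independent of the partition and $t\mapsto q(F_t)$ is uniformly continuous. Choose the partition so that $\|q(F_t)-q(F_{t'})\|<\sqrt2/2$ whenever $t,t'\in[t_j,t_{j+1}]$. Then both distances in the previous paragraph are $<\sqrt2$.

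Lemma \ref{lemma:sf_small_perturbations} now applies to each piece and yields $\Sf(\{G_t\}_{[t_j,t_{j+1}]})=\relind_{r,s+1}(J_{j+1},J_j)$. Summing over $j$ gives exactly $\Sf^{\mathrm{Ph}}_{r,s+1}(\{T_t\})$.
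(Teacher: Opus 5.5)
Your proposal is correct and is essentially the paper's proof: the paper also perturbs $F_t=\chi(T_t)$ by the compact correction $\sum_j\varphi_j(t)(J_j-F_{t_j})$ (your hat functions are a particular partition of unity), uses compactness of this perturbation to keep the $\KKR$-class, splits the path using concatenation additivity, and applies Lemma \ref{lemma:sf_small_perturbations} on each subinterval. You also noticed that the partition condition of Definition \ref{defn:spectral_flow_Phillips} only gives $\|q(G_t)-q(J_{j+1})\|<2$, and you fixed this by refining the partition so each subinterval has small Calkin-diameter; this makes explicit a step the paper passes over with ``by construction''.
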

\begin{proof}
We recall that $F_t := \chi(T_t)$. 
Choose a partition $0 = t_0 < t_1 < \dots < t_n = 1$ such that 
$\big\| q(F_t) - q(F_{t_j}) \big\| < 1$ for all $t \in [t_{j},t_{j+1}]$ and $j=0,\ldots,n-1$. 
For each $j=1,\ldots,n-1$, choose a skew-adjoint unitary $J_j \in \End_A^{r,s}(X)$ such that $q(F_{t_j}) = q(J_j)$, and let $J_0 := F_0$ and $J_n := F_1$. 
Define the norm-continuous path $\{\hat{F}_t\}_{t\in[0,1]} \subset \End_A^{r,s}(X)$ of Real skew-adjoint Fredholm operators by 
\[
\hat{F}_t := F_t + \sum_{j=1}^{n-1} \varphi_j(t) (J_j - F_{t_j}) ,
\]
where $\{\varphi_j\}_{j=0}^n$ is a partition of unity on $[0,1]$ with $\varphi_j(t_j)=1$. 
Then $\hat{F}_{t_j} = J_j$ is invertible for all $j=0,\ldots,n$ (and the endpoints $\hat{F}_0=F_0$ and $\hat{F}_1=F_1$ remain unchanged). 
Since $\hat{F}_t - F_t$ is compact, norm-continuous, and vanishes at the endpoints, we see that $\hat{F}_\bullet - F_\bullet$ is compact. 
Hence the $\KKR$-classes $\class{\hat{F}_\bullet}$ and $\class{F_\bullet}$ coincide, and from Proposition \ref{prop:KK_and_DK_flow_compatible} we obtain $\Sf_{r,s+1}\big( \{F_t\}_{t\in[0,1]} \big) = \Sf_{r,s+1}\big( \{\hat{F}_t\}_{t\in[0,1]} \big)$. 
Using the additivity of the spectral flow under concatenation of paths (Proposition \ref{prop:Path_additivity_KK}), we then have 
\[
\Sf_{r,s+1}\big( \{F_t\}_{t\in[0,1]} \big) 
= \Sf_{r,s+1}\big( \{\hat{F}_t\}_{t\in[0,1]} \big) 
= \bigoplus_{j=0}^{n-1} \Sf_{r,s+1}\big( \{\hat{F}_t\}_{t\in[t_j,t_{j+1}]} \big) .
\]
Finally, by construction, each path $\{\hat{F}_t\}_{t\in[t_j,t_{j+1}]}$ consists of ``small perturbations'', so 
from Lemma \ref{lemma:sf_small_perturbations} we know that its spectral flow is given by the relative index of $\hat{F}_{t_{j+1}} = J_{j+1}$ and $\hat{F}_{t_j} = J_j$:
\[
\Sf_{r,s+1}\big( \{\hat{F}_t\}_{t\in[t_j,t_{j+1}]} \big) = \relind_{r,s+1} ( J_{j+1} , J_{j} ) .
\qedhere 
\]
\end{proof}

\subsection{Comparison with real Hilbert spaces} \label{subsec:Sf_Real_HS}

Using our analytic description of the spectral flow, we can now relate our $\DK$-valued spectral flow 
to the spectral flow on real Hilbert spaces considered in \cite[\S5-6]{BCLR}. 
Recall also from \S\ref{subsec:HSpace_J0J1_index} the description of the relative index on real Hilbert spaces in terms of Clifford modules from 
\cite[\S4]{BCLR}.

As discussed in Remark \ref{Rk:BLCR_sign_error}, our $\DK$-valued spectral flow has a normalisation that is 
inconsistent with the Clifford module valued spectral flow from \cite{BCLR}. So while there is a global 
minus sign, this only affects the case where the range is $KO_0(\R)$ or $KO_4(\R)$.

\begin{cor}
Let $\calH$ be an ungraded Real Hilbert space with an ample and ungraded representation of $\Cl_{r,s}$. 
Let $\{T_t\}_{t\in [0,1]}$ be a Riesz-continuous path of Real skew-adjoint Fredholm operators 
in $\Reg_\C^{r,s}(\calH)$ with invertible endpoints. 
Then, under the isomorphism $\Upsilon_\C^{r,s+1} \colon \DK(\Cl_{r,s+1} ) \to  \calM_{r,s+1}/ \calM_{r,s+2} \cong \KO_{s+2-r}(\R)$ from 
Proposition \ref{prop:Fred_pair_index_agrees_with_hspace}, we have 
\[
   \Sf_{r,s+1}\big( \{T_t\}_{t\in[0,1]} \big) \xmapsto{\Upsilon_\C^{r,s+1}} - \Sf^\calH_{r,s+2} \big( \{T_t\}_{t\in[0,1]} \big) \in \calM_{r,s+1}/ \calM_{r,s+2} \cong \KO_{s+2-r}(\R),
\]
where $\Sf^\calH_{r,s+2}$ denotes the Clifford module valued spectral flow from~\cite{BCLR}.
\end{cor}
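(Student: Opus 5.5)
The argument combines the Phillips formula (Theorem \ref{thm:Phillips_Riesz}) with the comparison of relative indices on Hilbert spaces (Proposition \ref{prop:Fred_pair_index_agrees_with_hspace}). Both sides are additive under concatenation of paths, so the Phillips decomposition reduces the statement to a single relative index term.

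\textbf{Step 1.} Fix a normalising function $\chi$ with $\chi(T_0)$ and $\chi(T_1)$ unitary, and set $F_t=\chi(T_t)$. Choose a partition $0=t_0<\dots<t_n=1$ and lifts $J_j\in\calJ^{r,s}(\calH)$ as in Definition \ref{defn:spectral_flow_Phillips}. Theorem \ref{thm:Phillips_Riesz} then gives
\[
\Sf_{r,s+1}\big(\{T_t\}\big)=\bigoplus_{j=0}^{n-1}\relind_{r,s+1}(J_{j+1},J_j).
\]
Since $\Upsilon_\C^{r,s+1}$ is a group homomorphism, Proposition \ref{prop:Fred_pair_index_agrees_with_hspace} sends the right-hand side to $\sum_j \Ind^\calH_{r,s+2}(J_{j+1},J_j)$. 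By the antisymmetry of the Hilbert space relative index, this equals $-\sum_j \Ind^\calH_{r,s+2}(J_j,J_{j+1})$.

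\textbf{Step 2.} Next, identify $\Sf^\calH_{r,s+2}(\{T_t\})$ with $\sum_j \Ind^\calH_{r,s+2}(J_j,J_{j+1})$. I would use the following properties of the BCLR spectral flow.
\begin{itemize}
\item It is invariant under compact perturbations and homotopies that keep the endpoints invertible.
\item It is additive under concatenation.
\item It is given by the normalisation recalled in Remark \ref{Rk:BLCR_sign_error}: $\Sf^\calH_{r,s+2}(\{(1-t)J_0+tJ_1\})=\Ind^\calH_{r,s+2}(J_0,J_1)$.
\end{itemize}
First, replace $T_t$ by the norm-continuous path $F_t$; this is harmless because $\Sf^\calH$ depends only on the path of phases. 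Then modify $F_t$ to the path $\hat F_t$ from the proof of Theorem \ref{thm:Phillips_Riesz}; this is a compact perturbation vanishing at the endpoints. Split $\hat F$ at the points $t_j$. On each piece, use the straight-line homotopy argument of Lemma \ref{lemma:sf_small_perturbations} to deform to the segment from $J_j$ to $J_{j+1}$. That lemma's estimate keeps $q(F_{s,t})$ invertible, so the homotopy consists of Fredholm operators and $\Sf^\calH$ is unchanged.

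\textbf{Step 3.} Combining Steps 1 and 2 gives the image $-\Sf^\calH_{r,s+2}(\{T_t\})$.

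\textbf{Main obstacle.} The hardest part is Step 2. It requires that the BCLR spectral flow has these axiomatic properties for Riesz-continuous paths (or their norm-continuous phases) with the stated normalisation. The sign must be tracked carefully against Remark \ref{Rk:BLCR_sign_error}: BCLR's straight-line normalisation is $\Ind^\calH(J_0,J_1)$, whereas ours is $\relind(J_1,J_0)$, and this mismatch is exactly what produces the global minus sign. One further point needs checking: in the small-perturbation regime, the hypothesis of Proposition \ref{prop:Fred_pair_index_agrees_with_hspace} holds because $\|q(J_j)-q(J_{j+1})\|<1<2$.
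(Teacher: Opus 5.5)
Your argument is correct. Its skeleton is exactly the paper's: the Phillips formula, then Proposition~\ref{prop:Fred_pair_index_agrees_with_hspace} term by term, then antisymmetry. The only difference is how you identify $\Sf^\calH_{r,s+2}(\{T_t\})$ with $\sum_j \Ind^\calH_{r,s+2}(J_j,J_{j+1})$.

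\emph{The paper's route.} It chooses $\chi$ so that $\chi(T_{t_j})$ is already a skew-adjoint unitary on $\Ker(T_{t_j})^\perp$. It then takes $J_j$ to be that operator completed by an arbitrary Clifford anti-linear skew-adjoint unitary on the finite-dimensional kernel. With these kernel-adapted lifts, the identity is exactly \cite[Theorem 6.4]{BCLR}, and nothing more needs proof.

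\emph{Your route.} You keep arbitrary lifts and re-derive the identity on the BCLR side by rerunning the proof of Theorem~\ref{thm:Phillips_Riesz}. This requires knowing that $\Sf^\calH$:
\begin{itemize}
\item is homotopy invariant for paths with invertible endpoints;
\item is additive under concatenation;
\item is normalised on straight lines as in Remark~\ref{Rk:BLCR_sign_error};
\item is unchanged when $T_t$ is replaced by $\chi(T_t)$.
\end{itemize}
All of these are part of BCLR's axiomatic treatment rather than something the present paper proves. In return, your argument shows the formula does not depend on the choice of lifts, and it would work for any spectral flow with these properties. The paper's route is shorter but relies on Hilbert-space features: finite-dimensional kernels, and a spectral gap that lets $\chi(T_{t_j})$ be unitary off the kernel.

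One detail to fix. To apply Lemma~\ref{lemma:sf_small_perturbations} on $[t_j,t_{j+1}]$, you need both $\|q(F_t)-q(F_{t_j})\|<\sqrt2$ and $\|q(F_t)-q(F_{t_{j+1}})\|<\sqrt2$. The Phillips partition condition only bounds the distance to the left endpoint by $1$, which gives just $<2$ for the right endpoint. Refine the partition so that $\|q(F_t)-q(F_{t'})\|<1$ for all $t,t'$ in each subinterval. This is harmless, because both sides of the identity are independent of refinement.
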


\begin{proof}
We can choose a sufficiently fine partition $0 = t_0 < \cdots < t_n = 1$ and a normalising function $\chi$ such that, for each $j=0,\ldots,n$, $\chi(T_{t_j})$ is a skew-adjoint unitary on the complement of $\Ker(T_{t_j})$. 
Choose $J_j$ to be skew-adjoint unitaries obtained by complementing $\chi(T_{t_j})$ on $\Ker(T_{t_j})^\perp$ by arbitrary skew-adjoint unitaries in $\End^{r,s}\big( \Ker(T_{t_j}) \big)$. 
From Theorem \ref{thm:Phillips_Riesz}, Proposition \ref{prop:Fred_pair_index_agrees_with_hspace}, and \cite[Theorem 6.4]{BCLR} we then have the equalities 
\begin{align*}
\Upsilon_\C^{r,s+1} \big( \Sf_{r,s+1}\big( \{T_t\}_{t\in[0,1]} \big) \big) 
&= \Upsilon_\C^{r,s+1} \Big( \bigoplus_{j=0}^{n-1} \relind_{r,s+1}(J_{j+1},J_{j}) \Big)  \\
&= \bigoplus_{j=0}^{n-1} \relindH_{r,s+2}(J_{j+1},J_{j})  \\
&= -\bigoplus_{j=0}^{n-1} \relindH_{r,s+2}(J_j,J_{j+1}) 
= - \Sf^\calH_{r,s+2} \big( \{T_t\}_{t\in[0,1]} \big) .
\qedhere 
\end{align*}
\end{proof}

\subsection{The Wahl formula for Wahl-continuous paths} \label{subsec:analytic_Wahl}

In this subsection, we will adapt Wahl's approach to the analytic spectral flow on complex $C^*$-modules \cite[\S3, \S8]{Wahl07} (see also the exposition in \cite[Appendix A]{vdDungen24}) to our setting of Real $C^*$-modules. 

\begin{defn}
Let $T \in \Reg_A^{r,s}(X)$ be a Real skew-adjoint operator. 
A \emph{trivialising operator for $T$} is a (densely-defined) Real skew-symmetric operator $\B \in \Reg_A^{r,s}(X)$ such that $\B$ is relatively $T$-compact 
(i.e., $\Dom(\B) \subset \Dom(T)$ and $\B(T\pm\one)^{-1}$ is compact) and $T+\B$ is invertible. 
\end{defn}

If $T \in \Reg_A^{r,s}(X)$ has a trivialising operator $ \B \in \Reg_A^{r,s}(X)$, if follows from \cite[Proposition A.7]{vdDungen23} that $T+\B$ is also regular and skew-adjoint. 
Moreover, by \cite[Proposition A.11]{vdDungen23}, $T$ is Fredholm and $\class{T} = \class{T+\B} = 0 \in \KKR(\Cl_{s+1,r}, A)$. 

If $\B_0$ and $\B_1$ are two trivialising operators for $T$, then we can choose a normalising function $\chi$ such that $\chi(T+\B_0)$ and $\chi(T+B_1)$ are skew-adjoint unitaries. 
Moreover, by \cite[Proposition A.9]{vdDungen23}, $\chi(T+\B_0) - \chi(T+B_1)$ is compact. 
Hence we can define 
\begin{equation*}
\label{eq:ind}
\ind_{r,s+1}(T,\B_1,\B_0) := \relind_{r,s+1}\big( \chi(T+\B_1) ,\, \chi(T+B_0) \big) .
\end{equation*}

\begin{defn}
Let $\Omega$ be a compact Hausdorff space and  $\{T_\omega\}_{\omega\in\Omega}$ a Wahl-continuous family  of Real skew-adjoint operators in $\Reg_A^{r,s}(X)$. 
A \emph{trivialising family for $\{T_\omega\}_{\omega\in\Omega}$} is a family $\{\B_\omega\}_{\omega\in\Omega}$ of Real skew-symmetric operators in $\End_A^{r,s}(X)$ such that $\B_\bullet$ is a trivialising operator for $T_\bullet$. 

We say \emph{there exist locally trivialising families} for $\{T_\omega\}_{\omega\in\Omega}$ if for each $\omega\in\Omega$ there exist a compact neighbourhood $O_\omega$ of $\omega$ and a trivialising family for $\{T_{\omega'}\}_{\omega' \in O_\omega}$.
\end{defn}
We note that the existence of locally trivialising families for $\{T_\omega\}_{\omega\in\Omega}$ then implies that $T_\bullet$ is Fredholm (using compactness of $\Omega$). 

\begin{defn}[Analytic spectral flow II] \label{defn:spectral_flow_Wahl}
Let $\{T_t\}_{t\in[0,1]}$ be a Wahl-continuous path of Real skew-adjoint Fredholm operators in $\Reg_A^{r,s}(X)$ with invertible endpoints. 
Assume there exist locally trivialising families for $\{T_t\}_{t\in[0,1]}$. 
Let $0 = t_0 < t_1 < \cdots < t_n = 1$ be such that there is a trivialising family $\{\B^j_t\}_{t\in[t_j,t_{j+1}]}$ of $\{T_t\}_{t\in[t_j,t_{j+1}]}$ for each $j=0,\ldots,n-1$. 
Then we define 
\begin{equation*}
\Sf_{r,s+1}^\mathrm{W} \big( \{T_t\}_{t\in[0,1]} \big) 
:= \ind_{r,s+1}\big(T_0,\B^0_0, 0\big) + \sum_{j=1}^{n-1} \ind_{r,s+1}\big(T_{t_j},\B^{j}_{t_j},\B^{j-1}_{t_j}\big) + \ind_{r,s+1}\big(T_1,0, \B^{n-1}_1\big) .
\end{equation*}
\end{defn}
For ease of notation, we will sometimes write $\B^{-1}_0 = \B^n_1 = 0$, so that we can express the analytic spectral flow as a single sum from $j=0$ to $n$:
\begin{equation*}
\Sf_{r,s+1}^\mathrm{W} \big( \{T_t\}_{t\in[0,1]} \big) 
:= \sum_{j=0}^{n} \ind_{r,s+1}\big(T_{t_j},\B^{j}_{t_j},\B^{j-1}_{t_j}\big) .
\end{equation*}

\begin{lemma}
The analytic spectral flow $\Sf_{r,s+1}^\mathrm{W}$ is well-defined. 
\end{lemma}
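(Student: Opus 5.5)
We must show independence of two choices: the partition and the trivialising families on each subinterval. The argument mirrors the one for $\Sf^\mathrm{Ph}$, with the additivity of the relative index (Lemma \ref{lemma:addition_for_Ind_J0J1}) replaced by a cocycle property of $\ind_{r,s+1}$.

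First we check that each term is defined. For each $j$, $\B^j_{t_j}$ and $\B^{j-1}_{t_j}$ are trivialising operators for $T_{t_j}$: evaluating the trivialising operator $\B^j_\bullet$ of $T_\bullet$ at a point preserves relative compactness and invertibility. By \cite[Proposition A.9]{vdDungen23}, $\chi(T+\B_1)-\chi(T+\B_0)$ is compact, so the pair $(\chi(T+\B_1),\chi(T+\B_0))$ is Fredholm and the relative index is defined. At the endpoints $T_0$ and $T_1$ are invertible, so $0$ is an admissible trivialising operator.

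The key algebraic identity is the cocycle relation. For three trivialising operators $\B_0,\B_1,\B_2$ of the same $T$,
\[
\ind_{r,s+1}(T,\B_2,\B_0)=\ind_{r,s+1}(T,\B_2,\B_1)+\ind_{r,s+1}(T,\B_1,\B_0),
\qquad \ind_{r,s+1}(T,\B,\B)=0 .
\]
All three unitaries $\chi(T+\B_k)$ have the same image in $\calQ_A(X)$. Hence the hypotheses of Lemma \ref{lemma:addition_for_Ind_J0J1}(2) hold trivially, and the identity is exactly additivity in the relative Van Daele group. We will need a consistent normalising function $\chi$ making all the relevant $\chi(T+\B)$ unitary. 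Since only finitely many invertible operators occur, we can choose $\chi$ with $\chi=\pm i$ outside a small enough interval. We also need independence of $\chi$, which follows from Lemma \ref{lemma:addition_for_Ind_J0J1}(4) via a straight-line homotopy of such normalising functions.

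Independence of the partition reduces to refinement: we show that inserting a point $t'\in(t_j,t_{j+1})$ and restricting $\B^j$ to both halves does not change the sum. The only new term is $\ind(T_{t'},\B^j_{t'},\B^j_{t'})=0$, so this is immediate. Independence of the families then reduces, after passing to a common refinement, to comparing two families $\B^j$ and $\B'^j$ on the same interval $[t_j,t_{j+1}]$. By the cocycle relation it suffices to prove the main claim:
\[
\ind(T_{t_j},\B'^j_{t_j},\B^j_{t_j})=\ind(T_{t_{j+1}},\B'^j_{t_{j+1}},\B^j_{t_{j+1}}) .
\]
Granting this, all the correction terms telescope in the sum over $j$, as in the proof for $\Sf^\mathrm{Ph}$.

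The main obstacle is this claim, i.e.\ that $t\mapsto\ind(T_t,\B'_t,\B_t)$ is constant along an interval. We would prove it by working over $C([t_j,t_{j+1}],A)$. There $\B_\bullet$ and $\B'_\bullet$ are trivialising operators for $T_\bullet$. For a suitable $\chi$, which exists because $T_\bullet+\B_\bullet$ and $T_\bullet+\B'_\bullet$ are invertible, the operators $\chi(T_t+\B_t)$ and $\chi(T_t+\B'_t)$ form families of skew-adjoint unitaries. By \cite[Proposition A.9]{vdDungen23} applied to the family operator, their difference is compact on $C([t_j,t_{j+1}],X)$. The families are norm- or strongly continuous, with $q$-distance $0<2$ uniformly in $t$. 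Corollary \ref{cor:Ind(J0,J1)_htpy_invariant} then gives constancy of the relative index. The delicate point is verifying its strong-continuity hypotheses for $\chi(T_t+\B_t)$. This follows from Lemma \ref{lemma:strong_cts_regular}, since $T_\bullet+\B_\bullet$ is a regular operator on the family module. Alternatively, one can use the evaluation maps $\ev_{t}$ and their homotopy invariance in $\KKR$ directly.
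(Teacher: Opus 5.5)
Your proposal is correct and follows the paper's proof. Each term is defined via \cite[Proposition A.9]{vdDungen23}, partition independence comes from refinement with an inserted vanishing term, and independence of the trivialising families comes from additivity of the relative index together with constancy of $\ind_{r,s+1}(T_t,\A^j_t,\B^j_t)$ along each segment, so the correction terms telescope. The only difference is that you obtain this constancy directly from Corollary \ref{cor:Ind(J0,J1)_htpy_invariant}, whereas the paper rewrites the term as the spectral flow of a straight-line path via (N) and then applies (H). Both rest on the same $\KKR$-homotopy of the straight-line path, so this is a difference of citation rather than of substance.
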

\begin{proof}
We already observed that, for each $j$, $\chi(T_{t_j}+\B^{j}_{t_j}) - \chi(T_{t_j}+\B^{j-1}_{t_j})$ is compact, so that $\ind_{r,s+1}\big(T_{t_j},\B^{j}_{t_j},\B^{j-1}_{t_j}\big)$ is well-defined. 
To show that the definition is independent of the choice of partition, it suffices to show that we can always add an intermediate point $t_j < t' < t_{j+1}$. 
Choosing the trivialising families on $[t_j,t']$ and $[t',t_{j+1}]$ to be simply obtained from restricting a trivialising family $\{\B^j_t\}_{t\in[t_j,t_{j+1}]}$, we then merely add a vanishing term $\ind_{r,s+1}\big(T_{t'},\B^{j}_{t'},\B^j_{t'}\big)$ to the sum. 

Finally, we show that the definition is independent of the choice of locally trivialising families. 
So suppose we also have trivialising families $\{\A^j_t\}_{t\in[t_j,t_{j+1}]}$ of $\{T_t\}_{t\in[t_j,t_{j+1}]}$. 
By additivity of the relative index we have 
\begin{multline} \label{eq:sf_Wahl_welldef}
\ind_{r,s+1}\big(T_{t_j},\B^{j}_{t_j},\B^{j-1}_{t_j}\big) \\
= \ind_{r,s+1}\big(T_{t_j},\B^{j}_{t_j},\A^{j}_{t_j}\big) + \ind_{r,s+1}\big(T_{t_j},\A^{j}_{t_j},\A^{j-1}_{t_j}\big) + \ind_{r,s+1}\big(T_{t_j},\A^{j-1}_{t_j},\B^{j-1}_{t_j}\big) .
\end{multline}
By the normalisation of the spectral flow (Proposition \ref{prop:sf_normalisation}), 
$\ind_{r,s+1}\big(T_{t_j},\A^j_{t_j},\B^j_{t_j}\big)$ equals the spectral flow of the straight line path from $\chi(T_{t_j}+\B^j_{T_j})$ to $\chi(T_{t_j}+\A^j_{T_j})$. 
Since both $\{T_t+\A^j_t\}_{t\in[t_j,t_{j+1}]}$ and $\{T_t+\B^j_t\}_{t\in[t_j,t_{j+1}]}$ are invertible paths, it then follows from the homotopy invariance of the spectral flow that 
\[
\ind_{r,s+1}\big(T_{t_j},\A^j_{t_j},\B^j_{t_j}\big) = \ind_{r,s+1}\big(T_{t_{j+1}},\A^j_{t_{j+1}},\B^j_{t_{j+1}}\big) .
\]
Hence the first and third terms on the right hand side of Eq.\ \eqref{eq:sf_Wahl_welldef} cancel after summing over $j$, and we conclude 
\begin{align*}
\bigoplus_{j=0}^{n} \ind_{r,s+1}\big(T_{t_j},\B^{j}_{t_j},\B^{j-1}_{t_j}\big)
&= \bigoplus_{j=0}^{n} \ind_{r,s+1}\big(T_{t_j},\A^{j}_{t_j},\A^{j-1}_{t_j}\big) .
\qedhere
\end{align*}
\end{proof}

\begin{thm} \label{thm:sf_Wahl}
Let $\{T_t\}_{t\in[0,1]}$ be a Wahl-continuous path of Real skew-adjoint Fredholm operators in $\Reg_A^{r,s}(X)$ with invertible endpoints. 
Assume there exist locally trivialising families for $\{T_t\}_{t\in[0,1]}$. 
Then 
\[
\Sf_{r,s+1}^\mathrm{W}\big( \{T_t\}_{t\in[0,1]} \big) = \Sf_{r,s+1}\big( \{T_t\}_{t\in[0,1]} \big) .
\]
\end{thm}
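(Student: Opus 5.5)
The strategy is to reduce the statement to properties of the abstract spectral flow already established: concatenation (Proposition \ref{prop:Path_additivity_KK}), homotopy invariance and the zero axiom (Proposition \ref{prop:sf_properties}), and normalisation (Proposition \ref{prop:sf_normalisation}). The abstract spectral flow will be split along the partition $0=t_0<\dots<t_n=1$, and on each piece the operator will be deformed to an invertible path by switching on the trivialising family.

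\textbf{Step 1: a path with invertible nodes.} Concatenation needs invertible operators at the partition points, so the path first has to be modified at the nodes. For each $j$, consider on $[t_j,t_{j+1}]$ the path $\{T_t+\B^j_t\}$, which is Wahl-continuous and consists of invertible operators, since $\B^j_\bullet$ is a trivialising operator for $T_\bullet$ there. Its spectral flow vanishes by (Z). Connect $T_{t_j}$ to $T_{t_j}+\B^j_{t_j}$ by the straight path $T_{t_j}+\lambda\B^j_{t_j}$, $\lambda\in[0,1]$. This path is a relatively compact perturbation of $T_{t_j}$, so it is Fredholm; Wahl continuity in $\lambda$ should follow as in \cite[\S A]{vdDungen23}, but the endpoint $T_{t_j}$ may fail to be invertible when $1\le j\le n-1$.

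\textbf{Step 2: work at the bounded level.} To get around this, I would pass to normalised operators $F_t=\chi(T_t)$, as in the proof of Theorem \ref{thm:Phillips_Riesz}. Choose $\chi$ so that each $\chi(T_{t_j}+\B^{j}_{t_j})$ and $\chi(T_{t_j}+\B^{j-1}_{t_j})$ is a skew-adjoint unitary. Since $\chi(T_{t_j}+\B)-\chi(T_{t_j})$ is compact \cite[Proposition A.9]{vdDungen23}, insert compact perturbations localised near each $t_j$ using a partition of unity. The modified path $\hat F$ then passes through $\chi(T_{t_j}+\B^{j-1}_{t_j})$ just before $t_j$ and through $\chi(T_{t_j}+\B^j_{t_j})$ just after. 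Compact perturbations vanishing at the endpoints do not change the class in $\KKR(\Cl_{s+1,r},SA)$, so by Proposition \ref{prop:KK_and_DK_flow_compatible} the spectral flow is unchanged. Around $t_j$ I would make $\hat F$ consist of three pieces:
\begin{itemize}
\item a segment homotopic to $\chi(T_t+\B^{j-1}_t)$, which is invertible;
\item the straight line from $\chi(T_{t_j}+\B^{j-1}_{t_j})$ to $\chi(T_{t_j}+\B^j_{t_j})$, whose difference of endpoints is compact, so it is a Fredholm path;
\item a segment homotopic to $\chi(T_t+\B^j_t)$, which is again invertible.
\end{itemize}

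\textbf{Step 3: sum up.} Apply concatenation (C) to split the spectral flow of $\hat F$. Each invertible segment contributes zero by (Z), after a homotopy (H) justified by the invertible family $T_t+\B^j_t$. Each straight-line segment contributes, by (N),
\[
\relind_{r,s+1}\big(\chi(T_{t_j}+\B^j_{t_j}),\chi(T_{t_j}+\B^{j-1}_{t_j})\big)=\ind_{r,s+1}\big(T_{t_j},\B^j_{t_j},\B^{j-1}_{t_j}\big).
\]
At the endpoints, $\B^{-1}_0=\B^n_1=0$ and $T_0,T_1$ are invertible, so the terms $j=0$ and $j=n$ have exactly the same form. Summing over $j=0,\dots,n$ gives $\Sf^{\mathrm W}_{r,s+1}$.

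\textbf{Main obstacle.} The hardest part is making Step 2 rigorous in the Wahl (rather than Riesz) setting. One has to produce a single bounded path, continuous in the Kasparov topology (Remark \ref{rk:Wahl_and_Kas_topology}), that is operator-homotopic through Kasparov modules over $SA$ to $\chi(T_\bullet)\otimes\rho$. My first attempt would be to show that $\chi(T_\bullet+\lambda\B^j_\bullet)$, $\lambda\in[0,1]$, is a homotopy of Kasparov modules over $C([t_j,t_{j+1}],A)$, using Corollary \ref{cor:Wahl_family_to_KK_htpy} and the relative compactness of $\B^j_\bullet$. I would then glue these homotopies via Lemma \ref{lemma:MV}-type Mayer--Vietoris arguments, which reduces everything to the local identity on each subinterval.
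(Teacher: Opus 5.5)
This is essentially the paper's proof, which makes the same replacement at the unbounded level. On each $[t_j,t_{j+1}]$ it uses homotopy invariance (via the family $T_t+\lambda\B^j_t$ that you propose to normalise) to pass to the straight line $T_{t_j}\to T_{t_j}+\B^j_{t_j}$, the invertible path $\{T_t+\B^j_t\}$ and the straight line back. It then concludes with (C), (Z), a homotopy of the node pieces to straight lines, and (N), for a $\chi$ making all $\chi(T_{t_j}+\B^{j}_{t_j})$ and $\chi(T_{t_j}+\B^{j-1}_{t_j})$ skew-adjoint unitaries.

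The one point to repair is your bounded-level Step 2. Since $\chi(T_t)-\chi(T_{t_j})$ need not be compact for $t\neq t_j$, a compact perturbation can only route the path through $\chi(T_{t_j}+\B^{j\mp1}_{t_j})$ after you first freeze the path at $t_j$ by a reparametrisation. Also, the local homotopies must keep the possibly non-invertible nodes $T_{t_j}$ fixed (contract the square $(t,\lambda)\mapsto T_t+\lambda\B^j_t$) so that they glue, because Lemma \ref{lemma:MV} is not available at such nodes.
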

\begin{proof}
Let $0 = t_0 < t_1 < \dots < t_n = 1$ be such that there is a trivialising family $\{\B^j_t\}_{t\in[t_j,t_{j+1}]}$ of $\{T_t\}_{t\in[t_j,t_{j+1}]}$ for each $j=0,\ldots,n-1$. 
On each segment $[t_j,t_{j+1}]$, we can use homotopy invariance of the spectral flow to replace the path $\{T_t\}_{t\in[t_j,t_{j+1}]}$ by the concatenation of the following three paths: first the straight line path from $T_{t_j}$ to $T_{t_j} + \B^j_{t_j}$, then the invertible path $\{T_t+\B^j_t\}_{t\in[t_j,t_{j+1}]}$, and finally the straight line path from $T_{t_{j+1}} + \B^j_{t_{j+1}}$ to $T_{t_{j+1}}$. 
Concatenating all segments together, the new path therefore consists alternatingly of piecewise linear segments from $T_{t_j} + \B^{j-1}_{t_j}$ to $T_{t_j} + \B^j_{t_j}$ and invertible segments $\{T_t+\B^j_t\}_{t\in[t_j,t_{j+1}]}$. 
The spectral flow of the invertible segments vanishes. 
The piecewise linear segments consist of relatively compact perturbations of the operator $T_{t_j}$, so we may again use homotopy invariance to replace these segments by straight line paths. 
Using the path additivity property (Proposition \ref{prop:Path_additivity_KK}), the spectral flow can therefore be written as 
\[
\Sf_{r,s+1}\big( \{T_t\}_{t\in[0,1]} \big) 
= \bigoplus_{j=0}^n \Sf_{r,s+1}\big( (1-t) (T_{t_j} + \B^{j-1}_{t_j}) + t (T_{t_j} + \B^j_{t_j}) \big) .
\]
We may assume that a normalising function $\chi$ is chosen such that $\chi\big( T_{t_j} + \B^{j-1}_{t_j} \big)$ and $\chi\big( T_{t_j} + \B^j_{t_j} \big)$ are skew-adjoint unitaries. 
By the normalisation property (Proposition \ref{prop:sf_normalisation}), the spectral flow between them is then given by the relative index, and we obtain 
\begin{align*}
\Sf_{r,s+1}\big( \{T_t\}_{t\in[0,1]} \big) 
&= \bigoplus_{j=0}^n \relind_{r,s+1}\big( \chi(T_{t_j} + \B^{j}_{t_j}) ,\, \chi(T_{t_j} + \B^{j-1}_{t_j}) \big) \\
&= \bigoplus_{j=0}^n \ind_{r,s+1} \big( T_{t_j} , \B^{j}_{t_j} , \B^{j-1}_{t_j} \big) 
= \Sf_{r,s+1}^\mathrm{W}\big( \{T_t\}_{t\in[0,1]} \big) .
\qedhere 
\end{align*}
\end{proof}

Finally, we compare the above result with the Phillips formula for Riesz-continuous paths from Theorem \ref{thm:Phillips_Riesz}. 
The following proposition shows that, for \emph{Riesz}-continuous paths $\{T_t\}$ with invertible endpoints, locally trivialising families always exist for the \emph{normalised} path $\{\chi(T_t)\}$, 
and both versions 
of the analytic spectral flow agree. 

\begin{prop}
Let $\{T_t\}_{t\in[0,1]}$ be a Riesz-continuous path of Real skew-adjoint Fredholm operators in $\Reg_A^{r,s}(X)$ with invertible endpoints. 
Choose a normalising function $\chi$ such that $\chi(T_0)$ and $\chi(T_1)$ are skew-adjoint unitaries. 
Then there exist locally trivialising families for $\{\chi(T_t)\}_{t\in[0,1]}$, and 
\[
\Sf_{r,s+1}^\mathrm{Ph}\big( \{T_t\}_{t\in[0,1]} \big) = \Sf_{r,s+1}^\mathrm{W}\big( \{\chi(T_t)\}_{t\in[0,1]} \big) .
\]
\end{prop}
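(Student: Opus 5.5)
The idea is to reuse the data of Definition \ref{defn:spectral_flow_Phillips} to build explicit bounded trivialising families. Write $F_t := \chi(T_t)$, which is norm-continuous by Riesz-continuity. Choose a partition $0=t_0<\dots<t_n=1$ with $\|q(F_t)-q(F_{t_j})\|<1$ for all $t\in[t_j,t_{j+1}]$. By Lemma \ref{lemma:pointwise_skew_unitary}, pick skew-adjoint unitaries $J_j\in\End_A^{r,s}(X)$ with $J_j-F_{t_j}\in\End_A^0(X)$, taking $J_0=F_0$ and $J_n=F_1$.

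On the segment $[t_j,t_{j+1}]$ we want a trivialising family. The obvious first attempt is the compact, Real, skew-adjoint, Clifford anti-linear operator $\B^j_t := J_j - F_{t_j}$, so that $F_t+\B^j_t = J_j + (F_t - F_{t_j})$. This need not be invertible, since only $q(F_t-F_{t_j})$ is small. I would therefore refine the partition so that the norm, and not only the Calkin norm, satisfies $\|F_t-F_{t_j}\|<1$ on each segment. This is possible by uniform norm-continuity of $t\mapsto F_t$ on $[0,1]$.

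With that refinement, $J_j+(F_t-F_{t_j})$ is invertible because $J_j$ is unitary. Compactness is automatic: $\B^j_\bullet$ is a constant compact family, so $\B^j_\bullet$ is compact on $C([t_j,t_{j+1}],X)$ by Lemma \ref{lemma:strong_cts_regular}(2). It follows that $\B^j_\bullet$ is a trivialising operator for $F_\bullet|_{[t_j,t_{j+1}]}$, and locally trivialising families exist. We also need the family $\{F_t\}$ to be Wahl-continuous and Fredholm. This holds because norm-continuous bounded families are Wahl-continuous, and each $F_t$ is Fredholm.

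Next we compute $\Sf^{\mathrm W}$ for this choice. At an interior node $t_j$ we have $\B^j_{t_j}=J_j-F_{t_j}$ and $\B^{j-1}_{t_j}=J_{j-1}-F_{t_{j-1}}$, so
\[
F_{t_j}+\B^j_{t_j}=J_j, \qquad F_{t_j}+\B^{j-1}_{t_j}=J_{j-1}+(F_{t_j}-F_{t_{j-1}})=:K_j .
\]
At the ends, $F_0+\B^0_0=J_0=F_0$, so the $j=0$ term is $\relind(\chi(F_0),\chi(F_0))=0$, and similarly for the last term. One must also handle the inner normalising function $\chi$ applied to $F+\B$. Here I would choose $\chi$ with $\chi(J)=J$ for skew-adjoint unitaries; alternatively, I would note that $\chi(K_j)$ and $\mathrm{sgn}(K_j)$ are homotopic through skew-adjoint unitaries up to compacts, and invoke Lemma \ref{lemma:addition_for_Ind_J0J1}(4) together with Corollary \ref{cor:Ind(J0,J1)_htpy_invariant}. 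This reduces the $j$-th term to $\relind_{r,s+1}(J_j,\mathrm{sgn}(K_j))$.

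The key step is showing $\relind(J_j,\mathrm{sgn}K_j)=\relind(J_j,J_{j-1})$. I would use additivity (Lemma \ref{lemma:addition_for_Ind_J0J1}(2)) to split this as
\[
\relind(J_j,J_{j-1})+\relind(J_{j-1},\mathrm{sgn}K_j).
\]
The second term vanishes by Lemma \ref{lemma:addition_for_Ind_J0J1}(3): $\|K_j-J_{j-1}\|<1$, so the path $K_j^{(s)}=J_{j-1}+s(F_{t_j}-F_{t_{j-1}})$ stays invertible, and taking signs gives a norm path of skew-adjoint unitaries from $J_{j-1}$ to $\mathrm{sgn}K_j$ whose Calkin images stay within distance $2$ of $q(J_j)$. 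The Calkin-norm hypotheses needed for additivity follow from $\|q(K_j)-q(J_j)\|=\|q(J_{j-1})-q(F_{t_{j-1}})\|=0$ together with closeness bounds; these may require a further refinement of the partition so that all the relevant distances are below $1$.

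Summing over $j$ then yields $\sum_j\relind(J_j,J_{j-1})$, which after reindexing is exactly $\Sf^{\mathrm{Ph}}$. Since the Phillips sum is independent of the partition, the refinement is harmless. The main obstacle I expect is bookkeeping the normalising function on $F+\B$, which is bounded but not unitary, together with verifying the Calkin-distance conditions at each node. As a fallback, both sides equal $\Sf_{r,s+1}(\{T_t\})$ by Theorems \ref{thm:Phillips_Riesz} and \ref{thm:sf_Wahl}, using that $\{\chi(T_t)\}$ and $\{T_t\}$ give the same $\KKR$-class. This settles the equality once existence of the trivialising families is established.
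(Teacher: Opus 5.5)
Your existence argument is the same as the paper's: a constant compact family $J-\chi(T_{t'})$, with $J$ a skew-adjoint unitary lift of $q(\chi(T_{t'}))$ from Lemma \ref{lemma:pointwise_skew_unitary}, and invertibility coming from norm-closeness to the unitary $J$. Your fallback for the equality is exactly how the paper finishes: both sides equal $\Sf_{r,s+1}$ by Theorems \ref{thm:Phillips_Riesz} and \ref{thm:sf_Wahl}, using that $\{T_t\}$ and $\{\chi(T_t)\}$ give the same $\KKR$-class. So the proposal is complete via that route.

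Your direct term-by-term comparison is a genuinely different argument and can be made to work, but as written it miscounts the endpoint at $t_n=1$. Only the $j=0$ term vanishes, because $\B^0_0=J_0-F_0=0$. At the last node the incoming family is $\B^{n-1}_1=J_{n-1}-F_{t_{n-1}}$, which is generally nonzero. The $j=n$ term is therefore $\ind_{r,s+1}(F_1,0,\B^{n-1}_1)=\relind_{r,s+1}(J_n,\mathrm{sgn}(K_n))$ with $K_n=J_{n-1}+(F_1-F_{t_{n-1}})$, which has the same form as the interior terms. This term is what produces the last Phillips summand $\relind_{r,s+1}(J_n,J_{n-1})$. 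If it were zero, as you claim ("similarly for the last term"), you would recover only $n-1$ of the $n$ terms of $\Sf^{\mathrm{Ph}}_{r,s+1}$.

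Once that is corrected, the remaining steps are simpler than you feared.
\begin{itemize}
\item By the definition of $\ind_{r,s+1}$, the inner normalising function is chosen so that $\chi(F+\B)$ is unitary. Hence $\chi(J_j)=J_j$ and $\chi(K_j)=\mathrm{sgn}(K_j)$, and no extra homotopy is needed.
\item Since $q(K_j)=q(J_j)$, also $q(\mathrm{sgn}(K_j))=q(J_j)$. Both Calkin hypotheses of Lemma \ref{lemma:addition_for_Ind_J0J1}(2) then reduce to $\|q(J_j)-q(J_{j-1})\|<1$, which already holds, so no further refinement of the partition is needed.
\item Write $\delta_j:=\|F_{t_j}-F_{t_{j-1}}\|<1$. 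The spectrum of $K_j$ lies within $\delta_j$ of $\{\pm i\}$, so $\|\mathrm{sgn}(K_j)-J_{j-1}\|\le\|\mathrm{sgn}(K_j)-K_j\|+\|K_j-J_{j-1}\|\le 2\delta_j<2$. Lemma \ref{lemma:addition_for_Ind_J0J1}(3) therefore applies directly, and you do not need the somewhat muddled homotopy justification.
\end{itemize}
Summing over $j=1,\dots,n$ then gives $\Sf^{\mathrm{Ph}}_{r,s+1}$.

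Compared with the paper, this route avoids $\KKR$-theory and the abstract spectral flow altogether, using only additivity and norm-triviality of the relative index. The paper's route takes two lines but depends on both comparison theorems.
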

\begin{proof}
Since the path $\{\chi(T_t)\}_{t\in[0,1]}$ is norm-continuous, we know from Lemma \ref{lemma:pointwise_skew_unitary} that, for each $t_0\in[0,1]$, there exists a skew-adjoint unitary $J_{t_0}$ such that $\B_{t_0} := J_{t_0} - \chi(T_{t_0})$ is compact. 
By norm-continuity, it follows that $\chi(T_t) + \B_{t_0}$ is invertible on some compact neighbourhood $\Omega_0$ of $t_0$, so that the constant family 
$\{\B_{t_0}\}_{t\in \Omega_0}$ provides a trivialising family on this neighbourhood. 
Hence both Definitions \ref{defn:spectral_flow_Phillips} and \ref{defn:spectral_flow_Wahl} of the analytic spectral flow apply (where the latter is applied to $\{\chi(T_t)\}$), and they are equal by Theorems \ref{thm:Phillips_Riesz} and \ref{thm:sf_Wahl}. 
\end{proof}

\begin{remark}
The above proposition only shows that, for a Riesz-continuous path $\{T_t\}_{t\in[0,1]}$ of Real skew-adjoint Fredholm operators in $\Reg_A^{r,s}(X)$ with invertible endpoints, there exist locally trivialising families for the \emph{normalised} path $\{\chi(T_t)\}$. 
We leave it as an open question whether or not there also exist locally trivialising families for the Riesz-continuous path $\{T_t\}_{t\in[0,1]}$ itself. 
\end{remark}

\section{The Robbin--Salamon theorem} \label{sec:Robbin-Salamon}

The Robbin--Salamon theorem~\cite{RS} relates the spectral flow of a family of (unbounded) self-adjoint Fredholm operators $\{D_t\}$ on a complex Hilbert space 
to the Fredholm index of the operator $\partial_t + D_\bullet$. 
Many generalisations of this result have been 
investigated, particularly from the viewpoint of the Kasparov product in unbounded $\KK$-theory~\cite{KL13, vdDungen19, BCLR, vdDungen23, vdDungen24}. 
In this section, we will present a similar result in the setting of Real $C^*$-modules. 

We start with the following observation. 
Consider a Wahl-continuous family $\{T_t\}_{t\in[0,1]}$ of Real skew-adjoint Fredholm operators in $\Reg^{r,s}_A(X)$ with invertible endpoints. 
Recall from Proposition \ref{prop:KK_and_DK_flow_compatible} that the spectral flow of $\{T_t\}_{t\in[0,1]}$ 
is obtained from the $\KKR$-class of $T_\bullet$ via the Roe isomorphism $\Roe_{SA}^{r+1,s+1}: \KKR(\Cl_{s+1,r}, SA)\xrightarrow{\simeq} \DK( SA \otimes \Cl_{r+1,s+1} )$ 
composed with Bott periodicity: 
\[
\Sf_{r,s+1} ( \{T_t\}_{t\in[0,1]} ) 
= \beta_{\DK}^{-1}\circ \Roe_{SA}^{r+1,s+1} \big( \class{T_\bullet} \big) .
\]
We can use Theorem \ref{prop:Roe_Kubota_Bott_compatibility} to interchange the order of the Roe isomorphism and Bott periodicity, at the price of  two additional Clifford generators. 
Namely $\calM_{1,1}^{\DK} \circ   \Roe^{r+1,s+1}_{SA} \circ \altBott_{\KK} = \beta_{\DK} \circ \Roe^{r+1,s+2}_A$, which is equivalent to the statement 
$\calM_{1,1}^{\DK} \circ \beta_{DK}^{-1} \circ \Roe_{SA}^{r+1,s+1} = \Roe_{A}^{r+1,s+2} \circ \altBott_{\KK}^{-1}$,
where $\altBott_{\KK}$ is the alternative version of Bott periodicity (Corollary \ref{cor:KK_Bott_alternative}) 
and $\mathcal{M}_{1,1}^{\DK} \colon \DK(B) \xrightarrow{\simeq} \DK(B\hox\Cl_{1,1})$ is the Clifford stability isomorphism 
(Lemma \ref{lemma:DK_Clifford_stability}). 
Put another way, the following diagram commutes:
\begin{equation} \label{eq:Roe_Bott}
\xymatrixcolsep{4pc}
\xymatrix{
  \qquad \class{T_\bullet} \qquad \ar@{|->}[r] \ar@{}[d]|{\vin} & \quad \Index_{r+1,s+1}(T_\bullet) \quad \ar@{|->}[r] \ar@{}[d]|{\vin} & \quad \Sf_{r,s+1} ( \{T_t\}_{t\in[0,1]} ) \ar@{}[d]|{\vin} \\
  \KKR\big( \Cl_{s+1,r} , SA \big) \ar[dr]_{\altBott_{\KK}^{-1}} \ar[r]^{\Roe_{SA}^{r+1,s+1}} & \DK\big( SA \otimes \Cl_{r+1,s+1} \big) \ar[r]^{\beta_{\DK}^{-1}} & \quad \DK\big( A \otimes \Cl_{r,s+1} \big) \ar[d]^{\mathcal{M}_{1,1}^{\DK}} \\
  & \KKR\big( \Cl_{s+2,r} , A \big) \ar[r]^{\Roe^{r+1,s+2}_{A}} & \DK\big( A \otimes \Cl_{r+1,s+2} \big). \\
 }
\end{equation}
Since the Roe isomorphism of a $\KKR$-class is given by the Fredholm index (Theorem \ref{thm:index-of-Fred}), 
this means that the element $\mathcal{M}_{1,1}^{\DK} \big( \Sf_{r,s+1} ( \{T_t\}_{t\in[0,1]} ) \big)$ (appearing at the bottom right of the above diagram) can be computed as the Fredholm index of an operator representing the $\KKR$-class $\altBott_{\KK}^{-1}\big( \class{T_\bullet} \big)$. 
Our goal in this section is then to find an \emph{explicit} skew-adjoint Fredholm operator $(T_\bullet+\partial)_+ \in \Reg_A^{r,s+1}\big( L^2(\R,X_A)^{\oplus2} \big)$, 
such that $\class{(T_\bullet+\partial)_+} = \altBott_{\KK}^{-1}\big( \class{T_\bullet} \big)$, and therefore 
\[
   \calM_{1,1}^{\DK} \big( \Sf_{r,s+1}\big( \{T_t\}_{t\in[0,1]} \big) \big) 
   = \Index_{r+1,s+2}\big( ( T_\bullet + \partial)_+ \big) 
   \in \DK( A \otimes \Cl_{r+1,s+2} ).
\]

\begin{remark}
In the complex setting, the appearance of the Clifford stability isomorphism $\calM_{1,1}^{\DK} : \DK(B ) \xrightarrow{\simeq} \DK( B \hox \Cl_{1,1} )$ can be avoided. 
Indeed, for complex Clifford algebras (without real structures) we have the isomorphism 
\(
\Cl_{(s+2)+r} \cong \Cl_{s+r} \hox \Cl_2 \cong \Cl_{s+r} \hox M_2(\C) .
\)
Using stability of $\KK$-theory, we have $\KK\big( \Cl_{(s+2)+r} , A \big) \cong \KK\big( \Cl_{s+r} , A \big)$. 
The Kasparov product $\class{T_\bullet} \otimes_{C_0(\R)} [\slashed{\partial}] \in \KK\big( \Cl_{(s+2)+r} , A \big)$ can then explicitly be described by a Fredholm operator representing a class in $\KK\big( \Cl_{s+r} , A \big)$, whose Fredholm index coincides with the spectral flow of $\{T_t\}_{t\in [0,1]}$. In our Real setting, however, $\Cl_{s+2,r}$ and $\Cl_{s,r}$ are \emph{not} isomorphic (instead, we only have $\Cl_{s+2,r} \cong \Cl_{s+1,r-1}$ if $r\geq1$), and the Clifford stability map $\calM_{1,1}^{\DK}$ is needed to take care of the two additional Clifford generators at a later stage. 
\end{remark}

We recall from Corollary \ref{cor:KK_Bott_alternative} that $\altBott_{\KK}^{-1}$ can be described by taking the Kasparov product over $C_0(\R)$ with the class $[\slashed{\partial}] \in \KKR\big( \C_0(\R) \otimes \Cl_{1,0} , \C \big)$, 
which is represented by the unbounded Kasparov module of the standard Dirac operator on the real line:
\[
   \Big(  C_0^1(\R)\otimes \Cl_{1,0}, \, L^2(\R) \otimes \exterior \C, \, \partial_t \otimes \rho \Big) . 
\]
Our first task is therefore to explicitly compute the Kasparov product $\altBott_{\KK}^{-1}\big( \class{T_\bullet} \big) = \class{T_\bullet} \otimes_{C_0(\R)} [\slashed{\partial}]$.

\subsection{The Kasparov product}

In order to ensure we can explicitly compute the Kasparov product $\altBott_{\KK}^{-1}\big( \class{T_\bullet} \big) = \class{T_\bullet} \otimes_{C_0(\R)} [\slashed{\partial}]$, we will need to impose additional assumptions on the path $\{T_t\}_{t\in[0,1]}$, so that we may adapt the approach of \cite{KL13,vdDungen19}. 

\begin{assump}\label{RS_assumptions}
Let $X_A$ be a full $C^*$-module with an ungraded and ample $\Cl_{r,s}$-representation 
generated by $\{e_1,\ldots, e_r, f_1, \ldots f_s\}$.
Let $\{T_t\}_{t\in [0,1]}$ be a Wahl-continuous path of (bounded or unbounded) Real skew-adjoint Fredholm operators in $\Reg_A^{r,s}(X)$ with invertible endpoints. 
We assume: 
\begin{enumerate}
\item 
The domain $W := \Dom(T_t)$ is independent of $t\in[0,1]$, and the inclusion $W_A \hookrightarrow X_A$ is compact (where $W_A$ is equipped with the graph norm of $T_{t_0}$, for some $t_0\in[0,1]$). 
\item 
The resolvents $(\one + T_t)^{-1} \in \End_A^0(X)$ depend norm-continuously on $t\in[0,1]$. 
\end{enumerate}
\end{assump}

We extend the path $\{T_t\}_{t\in[0,1]}$ from the unit interval $[0,1]$ to the real line $\R$ 
by setting $T_t := T_1$ for $t\geq 1$ and $T_t := T_0$ for $t \leq 0$. 
We then denote by $T_\bullet$ the corresponding family operator on the $C^*$-module $C_0(\R,X)_{C_0(\R,A)}$. 

Consider now the $C^*$-module over $A$ given by 
\[
L^2(\R, X)_A  = C_0(\R,X)_{C_0(\R,A)} \otimes_{C_0(\R)} L^2(\R) .
\]
Since $\{T_t\}$ is Wahl-continuous, we obtain a regular skew-adjoint operator $T_\bullet \equiv T_\bullet \otimes \one$ on $L^2(\R, X)_A$. 
Under the identification $L^2(\R, X)_A \cong X_A \otimes L^2(\R)$, we also obtain a regular skew-adjoint operator $\partial_t \equiv \one \otimes \partial_t$ on $L^2(\R, X)_A$. 

Recall that $\gamma_1,\gamma_2,\rho_1,\rho_2$ denote the generators of the graded Clifford algebra $\Cl_{2,2} \simeq \End\big(\exterior \C^2\big)$.
On $L^2(\R, X)_A \otimes \exterior \C^2$, we then consider the representation of $\Cl_{s+2,r}$ given by the generators
\[
   \big\{ \one \otimes \gamma_1, \one\otimes \gamma_2, f_1\otimes \rho_1, \ldots, f_s \otimes \rho_1, e_1\otimes \rho_1, \ldots, e_r\otimes \rho_1 \big\}.
\]

\begin{prop}[cf.\ \cite{vdDungen19}] \label{prop:Dirac_product}
Let $\{T_t\}_{t\in [0,1]}$ satisfy Assumption \ref{RS_assumptions}. 
Then the operator $T_\bullet \otimes \rho_1 + \partial_t \otimes \rho_2$ is a Real odd self-adjoint Fredholm operator in $\Reg_A^{s+2,r}\big( L^2(\R, X)_A \otimes \exterior\C^2 \big)$, and 
\[
\big[ T_\bullet \otimes \rho_1 + \partial_t \otimes \rho_2 \big] 
= \altBott_{\KK}^{-1} \big( \class{T_\bullet} \big) 
\in \KKR(\Cl_{s+2,r},A).
\]
\end{prop}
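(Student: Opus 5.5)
The plan is to verify, in order, that $D := T_\bullet\otimes\rho_1 + \partial_t\otimes\rho_2$ is a Real, odd, self-adjoint, regular, Clifford anti-linear Fredholm operator, and then to identify its class with $\class{T_\bullet}\otimes_{C_0(\R)}[\slashed{\partial}]$. For the identification I would use Kucerovsky's criterion for unbounded Kasparov products, following the strategy of \cite{KL13, vdDungen19}. By Corollary \ref{cor:KK_Bott_alternative} this product equals $\altBott_{\KK}^{-1}(\class{T_\bullet})$.

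\emph{Algebraic properties.} Since $T_t$ is ungraded and skew-adjoint and $\rho_1$ is odd and skew-adjoint, the rule \eqref{eq:zed-two-tensors} gives $(T_\bullet\otimes\rho_1)^* = (-T_\bullet)\otimes(-\rho_1) = T_\bullet\otimes\rho_1$, which is odd. The same computation applies to $\partial_t\otimes\rho_2$, since $\partial_t$ is skew-adjoint. Reality holds because $T_t$, $\partial_t$ and $\rho_j$ are Real. Anti-commutation with $\one\otimes\gamma_{1}$, $\one\otimes\gamma_2$ is immediate. Anti-commutation with $f_k\otimes\rho_1$ and $e_j\otimes\rho_1$ follows from $T_tf_k=-f_kT_t$ combined with the graded sign rule, and for the $\partial_t$ term from the fact that $f_k$ commutes with $\partial_t$ while $\rho_1\rho_2=-\rho_2\rho_1$.

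\emph{Self-adjointness and regularity.} This is the main obstacle, because Assumption \ref{RS_assumptions} gives only continuity in $t$, and the anticommutator of the two summands, which formally involves $\partial_tT_t$, need not make sense. I would first reduce to a nicer path. By (1) and the closed graph theorem, $t\mapsto T_t\in\End_A(W,X)$ is bounded with uniformly equivalent graph norms. By (2), I would then perturb the path within the class of paths satisfying Assumption \ref{RS_assumptions} to one that is piecewise $C^1$ as a map $[0,1]\to\End_A(W,X)$ and constant near the endpoints. This can be done by writing $T_t = T_{t_0} + (T_t - T_{t_0})$ and mollifying in $t$ the resolvent-continuous perturbation. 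The straight-line homotopy between the two paths gives a Wahl-continuous family with invertible endpoints, so $\class{T_\bullet}$ is unchanged by Corollary \ref{cor:Wahl_family_to_KK_htpy}; the corresponding operators $D$ are likewise homotopic as Kasparov classes. For the smoothed path, on $\Dom D := L^2(\R,W)\cap H^1(\R,X)$ the anticommutator $\{T_\bullet\otimes\rho_1,\partial_t\otimes\rho_2\}$ equals $\pm\dot T_\bullet\otimes\rho_1\rho_2$, which is relatively bounded by $T_\bullet\otimes\rho_1$. The sum theorem for weakly anticommuting regular operators (\cite[\S2]{vdDungen19}, cf.\ Lesch–Mesland) then gives that $D$ is self-adjoint and regular on this domain.

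\emph{Fredholmness and the product.} For Fredholmness I would show that $\varphi(D)$ is compact for $\varphi\in C_c(\R)$ supported near $0$. Choose a cutoff $\eta\in C_c(\R)$ equal to $1$ on a neighbourhood of $[0,1]$. Then $\eta(D\pm i)^{-1}$ is compact because $L^2([-R,R],W)\cap H^1\hookrightarrow L^2([-R,R],X)$ is compact: this uses the compact inclusion $W\hookrightarrow X$ together with a vector-valued Rellich argument. Off the support of $\eta$ the path is constant and equal to $T_0$ or $T_1$, both invertible, so $D^2\ge c>0$ there up to the cutoff commutator terms, which are compact. Combining these gives Fredholmness via Proposition \ref{prop:Fredholm_properties}. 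Finally, I would check Kucerovsky's three conditions, with $(C_0(\R)\otimes\Cl_{s+1,r},\,C_0(\R,X)\otimes\exterior\C,\,T_\bullet\otimes\rho)$ as the first factor and $\partial_t\otimes\rho$ as the second. The connection condition holds because $[\partial_t\otimes\rho_2, f]$ is bounded for $f\in C_c^1(\R,X)$. The domain condition holds because $\Dom D\subset\Dom(T_\bullet\otimes\rho_1)$. The positivity condition holds because $\langle (T_\bullet\otimes\rho_1)x, Dx\rangle + \langle Dx, (T_\bullet\otimes\rho_1)x\rangle \ge -C\langle x,x\rangle$, which follows from the relative bound on the anticommutator established above. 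This identifies $[D]$ with $\class{T_\bullet}\otimes_{C_0(\R)}[\slashed{\partial}] = \altBott_{\KK}^{-1}(\class{T_\bullet})$.
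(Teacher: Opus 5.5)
\textbf{There is a genuine gap: the smoothing reduction.} It affects both halves of the statement. The proposition is about $D=T_\bullet\otimes\rho_1+\partial_t\otimes\rho_2$ built from the \emph{given} path, and asserts that this operator is self-adjoint, regular and Fredholm. You prove these properties only for the operator $D'$ built from a mollified path $T'_\bullet$, and then assert that $D$ and $D'$ are homotopic as Kasparov classes. This is circular in two ways:
\begin{itemize}
\item $[D]$ is only defined once $D$ is known to be regular, self-adjoint and Fredholm.
\item Any homotopy from $D'$ to $D$ passes through operators $D'+\lambda(T_\bullet-T'_\bullet)\otimes\rho_1$ built from non-smooth paths, which are exactly the operators whose self-adjointness you set out to avoid.
\end{itemize}

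The smoothing step is also not available from Assumption \ref{RS_assumptions}. The closed graph theorem gives $T_t\in\End_A(W,X)$ for each fixed $t$. But norm-continuity of the resolvents in $\End_A(X)$ does not give norm-continuity of $t\mapsto T_t$ in $\End_A(W,X)$, nor the uniform bounds you claim. For example, $T_t=\mathrm{diag}\big(in(1+\tfrac12\sin nt)\big)_{n\geq1}$ on $\ell^2$ has constant domain $W=\{x:\sum_n n^2|x_n|^2<\infty\}$ and Lipschitz resolvents. Yet $\|T_t-T_s\|_{W\to X}\asymp\sup_n|\sin nt-\sin ns|$, which does not tend to $0$ as $s\to t$. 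So a mollified path need not be close to $T_\bullet$ in $\End_A(W,X)$. Nothing then ensures that the straight-line homotopy consists of skew-adjoint Fredholm operators with domain $W$, let alone that it is Wahl-continuous with invertible endpoints.

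\textbf{What is missing} is an argument at the level of the original operator. If you additionally assume norm-continuity of $t\mapsto T_t$ in $\End_A(W,X)$, you can repair your route with a Kato--Rellich step:
\begin{itemize}
\item For the smoothed path, $\|T'_\bullet u\|\leq\sqrt2\,\|D'u\|+C\|u\|$. The factor $\sqrt2$ is independent of the smoothing; only $C$ depends on $\dot T'$.
\item Hence $(T_\bullet-T'_\bullet)\otimes\rho_1$ has relative bound $<1$ with respect to $D'$ once $\sup_t\|T_t-T'_t\|_{W\to X}$ is small.
\item This gives self-adjointness and regularity of $D$ on $\Dom D'$, and makes the straight line $D'+\lambda(T_\bullet-T'_\bullet)\otimes\rho_1$ a genuine homotopy.
\end{itemize}
With that added, the rest of your outline is a reasonable self-contained alternative for a $C^1$ path: the sum theorem for weakly anticommuting operators, local compactness plus invertibility at infinity, and Kucerovsky's criterion.

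The paper does not smooth at all. It applies the arguments of \cite[Proposition 3.16, Theorem 4.3, Theorem 5.15]{vdDungen19} directly to the given path, with the two matrices replaced by the Real generators $\rho_1,\rho_2$. This gives self-adjointness and regularity, a parametrix, and the identification with the Kasparov product.

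\textbf{Two smaller points.}
\begin{itemize}
\item The cutoff commutator $[D,\eta]=\eta'\otimes\rho_2$ is bounded but not compact. Only its composition with a resolvent, or with the inverse of the constant-coefficient operator at infinity, is compact.
\item In Kucerovsky's connection condition the test vectors must be $W$-valued, because $D$ applied to $x\otimes\xi$ also produces $T_\bullet x$.
\end{itemize}
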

\begin{proof}
We easily see that the Real regular self-adjoint operators $T_\bullet \otimes \rho_1$ and $\partial_t \otimes \rho_2$ anti-commute with the generators of $\Cl_{s+2,r}$. 
In the complex setting (without real structures), it was shown in \cite[Proposition 3.16]{vdDungen19} that the operator
\[
T_\bullet \otimes \begin{pmatrix} 0 & -1 \\ 1 & 0 \end{pmatrix} + \partial_t \otimes \begin{pmatrix} 0 & -i \\ -i & 0 \end{pmatrix} 
\]
is regular and self-adjoint. 
In our Real setting, the same argument applies, if we replace the two matrices by two (anti-commuting) Real skew-adjoint Clifford generators $\rho_1$ and $\rho_2$. 
Thus $T_\bullet \otimes \rho_1 + \partial_t \otimes \rho_2$ is regular and self-adjoint. 
Moreover, this operator is also Fredholm, as we may similarly construct a (left or right) parametrix via the same argument as in \cite[Theorem 4.3]{vdDungen19} (see also \cite[Lemma 2.3]{AW11} for the case of the real line). 
Finally, it follows from the same argument as in \cite[Theorem 5.15]{vdDungen19} that 
the $\KK$-class $\big[ T_\bullet \otimes \rho_1 + \partial_t \otimes \rho_2 \big]$ 
represents the internal Kasparov product over $C_0(\R)$ of $\class{T_\bullet}$ with $[\slashed{\partial}]$:
\[
\big[ T_\bullet \otimes \rho_1 + \partial_t \otimes \rho_2 \big] 
= \class{T_\bullet} \otimes_{C_0(\R)} [\slashed{\partial}] 
= \altBott_{\KK}^{-1} \big( \class{T_\bullet} \big) .
\qedhere 
\]
\end{proof}

\subsection{Spectral flow and the Fredholm index}

Thus far, we have found an explicit representative $T_\bullet \otimes \rho_1 + \partial_t \otimes \rho_2$ for the $\KKR$-class $\altBott_{\KK}^{-1} \big( \class{T_\bullet} \big)$. 
It remains to describe this class in terms of our $\DK$-valued Fredholm index. 
For this purpose, we need to rewrite the operator in the form 
\[
T_\bullet \otimes \rho_1 + \partial_t \otimes \rho_2 = (T_\bullet+\partial)_+ \otimes \rho ,
\]
where $(T_\bullet+\partial)_+$ is an \emph{ungraded} skew-adjoint Fredholm operator which anti-commutes with the generators of an \emph{ungraded} Clifford algebra $\Cl_{r,s+1}$. 
It will then follow that the $\KKR$-class of $T_\bullet \otimes \rho_1 + \partial_t \otimes \rho_2$ coincides (under the Roe isomorphism) with the Fredholm index of $(T_\bullet+\partial)_+$:
\[
\Index_{r+1,s+2}\big( (T_\bullet+\partial)_+ \big) 
= \Roe_A^{r+1,s+2} \big( \class{(T_\bullet+\partial)_+} \big) 
= \Roe_A^{r+1,s+2} \big( [ T_\bullet \otimes \rho_1 + \partial_t \otimes \rho_2 ] \big) .
\]

\begin{defn}
Let $\{T_t\}_{t\in [0,1]}$ satisfy Assumption \ref{RS_assumptions}. 
The (densely-defined) operator $( T_\bullet + \partial)_+$ on the ungraded $C^*$-module $L^2(\R, X_A)^{\oplus 2}$ is defined by 
\[
  ( T_\bullet + \partial)_+ := \begin{pmatrix}  \partial_t & T_\bullet  \\  T_\bullet & -\partial_t  \end{pmatrix} 
   = T_\bullet \otimes \sigma_1 + \partial_t \otimes \sigma_3 .
\]
\end{defn}
The operator $( T_\bullet + \partial)_+$ anti-commutes with an ungraded $\Cl_{r,s+1}$-representation generated by 
\[
  \big\{ e_1 \otimes \sigma_1, \ldots, e_r \otimes \sigma_1, f_1 \otimes \sigma_1, \ldots, f_s \otimes \sigma_1, \one \otimes (-i\sigma_2) \big\}.
\]
Furthermore, via a similar argument as in the proof of Proposition \ref{prop:Dirac_product}, we see that $( T_\bullet + \partial)_+$ is Real, regular, skew-adjoint, and Fredholm. 
In particular, we have a well-defined Fredholm index 
\(
    \Index_{r+1,s+2}\big( ( T_\bullet + \partial)_+ \big) \in \DK(A \otimes \Cl_{r+1,s+2}) .
\)

\begin{thm} \label{thm:RS}
Let $\{T_t\}_{t\in [0,1]}$ satisfy Assumption \ref{RS_assumptions}. Then 
\[
   \calM_{1,1}^{\DK} \big( \Sf_{r,s+1}\big( \{T_t\}_{t\in[0,1]} \big) \big) 
   = \Index_{r+1,s+2}\big( ( T_\bullet + \partial)_+ \big) 
   \in \DK( A \otimes \Cl_{r+1,s+2} ).
\]
\end{thm}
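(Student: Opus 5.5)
The proof will chase the diagram \eqref{eq:Roe_Bott} and then identify the Kasparov class of $(T_\bullet+\partial)_+$. First, by Proposition \ref{prop:KK_and_DK_flow_compatible}, $\Sf_{r,s+1}(\{T_t\}) = \beta_{\DK}^{-1}\circ\Roe_{SA}^{r+1,s+1}(\class{T_\bullet})$. Next, the compatibility of Roe, Bott and Clifford stability from Theorem \ref{prop:Roe_Kubota_Bott_compatibility}, in the form $\calM_{1,1}^{\DK}\circ\beta_{\DK}^{-1}\circ\Roe_{SA}^{r+1,s+1} = \Roe_A^{r+1,s+2}\circ\altBott_{\KK}^{-1}$, gives
\[
\calM_{1,1}^{\DK}\big(\Sf_{r,s+1}(\{T_t\})\big) = \Roe_A^{r+1,s+2}\big(\altBott_{\KK}^{-1}(\class{T_\bullet})\big).
\]
Proposition \ref{prop:Dirac_product} identifies $\altBott_{\KK}^{-1}(\class{T_\bullet})$ with $[T_\bullet\otimes\rho_1+\partial_t\otimes\rho_2]\in\KKR(\Cl_{s+2,r},A)$. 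Theorem \ref{thm:index-of-Fred}, applied to the skew-adjoint Fredholm operator $(T_\bullet+\partial)_+\in\Reg_A^{r,s+1}(L^2(\R,X)^{\oplus2})$, gives $\Index_{r+1,s+2}((T_\bullet+\partial)_+)=\Roe_A^{r+1,s+2}(\class{(T_\bullet+\partial)_+})$. It therefore suffices to prove
\[
\class{(T_\bullet+\partial)_+} = [T_\bullet\otimes\rho_1+\partial_t\otimes\rho_2] \in \KKR(\Cl_{s+2,r},A).
\]

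The core of the proof is a unitary equivalence of Kasparov modules. By Corollary \ref{cor:skew_Fred_to_KasMod}, $\class{(T_\bullet+\partial)_+}$ is represented on $L^2(\R,X)\otimes\C^2\otimes\exterior\C$ by $(T_\bullet\otimes\sigma_1+\partial_t\otimes\sigma_3)\otimes\rho$, with the $\Cl_{s+2,r}$ generators
\[
\one\otimes\gamma,\quad \one\otimes(-i\sigma_2)\otimes\rho,\quad f_k\otimes\sigma_1\otimes\rho,\quad e_j\otimes\sigma_1\otimes\rho .
\]
We will write down an explicit Real, even unitary isomorphism $\C^2\otimes\exterior\C\cong\exterior\C^2$. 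It should send $\sigma_1\otimes\rho\mapsto\rho_1$, $\sigma_3\otimes\rho\mapsto\rho_2$, $\one\otimes\gamma\mapsto\gamma_1$ and $(-i\sigma_2)\otimes\rho\mapsto\gamma_2$. Each of these target elements is Real. Their squares agree: $(\sigma_1\otimes\rho)^2=-1$, $(\sigma_3\otimes\rho)^2=-1$, and $((-i\sigma_2)\otimes\rho)^2=(-1)(-1)=1$. They also pairwise anticommute and have the correct parity. Hence they generate a Real graded copy of $\Cl_{2,2}\cong M_4(\C)$ acting on the four-dimensional space. Irreducibility of this representation, together with uniqueness of the Real graded irreducible module of $\Cl_{2,2}$, then yields the required unitary. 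Under it, the operator $(T_\bullet\otimes\sigma_1+\partial_t\otimes\sigma_3)\otimes\rho$ becomes $T_\bullet\otimes\rho_1+\partial_t\otimes\rho_2$, and the generators match those specified before Proposition \ref{prop:Dirac_product}.

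One technical point remains, namely that the class in Corollary \ref{cor:skew_Fred_to_KasMod} uses a bounded transform $\chi(\cdot)$, whereas Proposition \ref{prop:Dirac_product} uses the unbounded operator. Functional calculus commutes with the unitary equivalence, and $\chi(S\otimes\rho)=\chi(S)\otimes\rho$ for skew-adjoint $S$ with an appropriately matched normalising function. Combined with Proposition \ref{prop:Fred_to_KasMod}, this gives equality of the two bounded classes.

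I expect the main obstacle to be the careful bookkeeping of signs and Real structures in this Clifford identification. A wrong sign in one generator, for example $\gamma_2$ versus $-\gamma_2$, could change the orientation and hence flip the class. If necessary, I would fix such a sign by an explicit orientation-preserving automorphism of $\Cl_{s+2,r}$, or by checking consistency with the conventions behind $\calM_{1,1}^{\DK}$ and $\altBott_{\KK}$ in Theorem \ref{prop:Roe_Kubota_Bott_compatibility}.
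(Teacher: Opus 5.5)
Your strategy is the paper's: the same diagram chase through \eqref{eq:Roe_Bott} (Theorem~\ref{prop:Roe_Kubota_Bott_compatibility}), Proposition~\ref{prop:Dirac_product} and Theorem~\ref{thm:index-of-Fred}. It reduces to the same identification $\gamma_1\cong\one\otimes\gamma$, $\gamma_2\cong -i\sigma_2\otimes\rho$, $\rho_1\cong\sigma_1\otimes\rho$, $\rho_2\cong\sigma_3\otimes\rho$ of $\exterior\C^2$ with $\C^2\otimes\exterior\C$.

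The step that fails as you justify it is the existence of an \emph{even} intertwining unitary. Graded irreducible modules of $\Cl_{2,2}\cong\End(\exterior\C^2)$ are unique only up to parity shift. By Schur, the intertwiner between your two $4$-dimensional modules is unique up to a scalar. It is even or odd according to whether the two grading operators are the same multiple of the volume element $\gamma_1\gamma_2\rho_1\rho_2$. Checking that each generator is odd only tells you that each grading is $\pm\gamma_1\gamma_2\rho_1\rho_2$.

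If the signs disagree, the intertwiner is odd. The same module with reversed grading represents the negative class, so you would get $\class{(T_\bullet+\partial)_+}=-[T_\bullet\otimes\rho_1+\partial_t\otimes\rho_2]$. No automorphism of $\Cl_{s+2,r}$ chosen afterwards can repair this without changing the statement. The ``sign bookkeeping'' you flag is therefore the actual content of this step, not an optional adjustment.

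The paper's proof supplies exactly this check:
$\gamma_1\gamma_2\rho_1\rho_2\cong(-i\sigma_2\sigma_1\sigma_3)\otimes\gamma\rho^3=(-\one)\otimes(-\gamma\rho)=\one\otimes\gamma\rho$.
This is the grading of $\C^2\otimes\exterior\C$, and it must be matched with the grading of $\exterior\C^2$ written in the same generators.

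Be aware that the latter is convention-dependent. For example, on $\exterior\C\hox\exterior\C$ with $\gamma_1=\gamma\hox 1$, $\rho_1=\rho\hox 1$, $\gamma_2=1\hox\gamma$, $\rho_2=1\hox\rho$, the grading is $\gamma_1\rho_1\gamma_2\rho_2=-\gamma_1\gamma_2\rho_1\rho_2$. So the convention must be the one under which Proposition~\ref{prop:Dirac_product} identifies $[T_\bullet\otimes\rho_1+\partial_t\otimes\rho_2]$ with $\altBott_{\KK}^{-1}(\class{T_\bullet})$.

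With that one-line verification in place of the uniqueness claim, your argument is complete. Your remark that $\chi(S\otimes\rho)=\chi(S)\otimes\rho$ for matched normalising functions is correct and is left implicit in the paper.
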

\begin{proof}
We claim that the unbounded Kasparov module 
\[
   \Big( \Cl_{s+2, r}, \, L^2(\R, X_A)^{\oplus 2} \otimes \exterior \C, \, ( T_\bullet + \partial)_+   \otimes \rho \Big)
\]
represents the same class in $\KKR(\Cl_{s+2,r}, A)$ as 
\[
  \Big( \Cl_{s+2, r}, \, L^2(\R, X_A) \otimes \exterior \C^2, \,  T_\bullet \otimes \rho_1 + \partial_t \otimes \rho_2  \Big) .
\]
We can identify $\exterior \C^2 \cong \C^2 \otimes \exterior \C$ (where $\C^2$ is ungraded and $\exterior \C$ has the standard grading) and correspondingly obtain $\Cl_{2,2} \cong M_2(\C) \otimes \Cl_{1,1}$ via 
\[
\gamma_1 \cong \one \otimes \gamma , \quad 
\gamma_2 \cong -i\sigma_2 \otimes \rho , \quad 
\rho_1 \cong \sigma_1 \otimes \rho , \quad 
\rho_2 \cong \sigma_3 \otimes \rho . 
\]
(Note that the computation $\gamma_1 \gamma_2 \rho_1 \rho_2 \cong \Id \otimes \gamma\rho$ shows that the $\Z_2$-gradings on both sides are indeed compatible under this identification.) 
Under this identification, we may rewrite 
\[
T_\bullet \otimes \rho_1 + \partial_t \otimes \rho_2 
\cong \big( T_\bullet \otimes \sigma_1 + \partial_t \otimes \sigma_3 \big) \otimes \rho 
= ( T_\bullet + \partial)_+ \otimes \rho .
\]
Hence from Proposition \ref{prop:Dirac_product} we have that 
\[
   \class{( T_\bullet + \partial)_+} 
   = \big[ T_\bullet \otimes \rho_1 + \partial_t \otimes \rho_2 \big] 
   = \altBott_{\KK}^{-1}\big( \class{T_\bullet} \big) \in \KKR(\Cl_{s+2,r}, A).
\]
From Theorem \ref{thm:index-of-Fred}, commutativity of  the diagram \eqref{eq:Roe_Bott} and Definition \ref{def:DK_spec_flow}, 
we conclude that 
\begin{align*}
   \Index_{r+1,s+2}\big( ( T_\bullet + \partial)_+ \big)  
   &= \Roe_A^{r+1,s+2}\big( \class{( T_\bullet + \partial)_+} \big)
   = \Roe_A^{r+1,s+2} \circ \altBott_{\KK}^{-1}\big( \class{T_\bullet} \big) \\
   &= \mathcal{M}_{1,1}^{\DK} \circ \beta_{\DK}^{-1} \circ \Roe_{SA}^{r+1,s+1} \big( \class{T_\bullet} \big) 
   = \mathcal{M}_{1,1}^{\DK} \big( \Sf_{r,s+1}\big( \{T_t\}_{t\in[0,1]} \big) \big) .
   \qedhere 
\end{align*}
\end{proof}

\section{Examples from physics} \label{sec:Physics_examples}

\subsection{Free fermion systems with Altland--Zirnbauer  symmetries} \label{subsec:AMZ}

We follow a description of free fermionic Hamiltonians and ground states used by Zirnbauer et al.~\cite{KZ16, AMZ}, 
which is based on the Hartree--Fock--Bogoliubov  mean-field approximation. 
We are typically interested in models of  (superconducting) electrons, more generally fermions, 
which we model mathematically via vectors  in 
a complex Hilbert space $\calV$. The anti-particles (electron holes) are modelled 
via the dual space $\calV^*$. Key properties of superconducting systems can be approximated 
via the dynamics of a self-adjoint Hamiltonian $H$ on the Hilbert space $\calH = \calV \oplus \calV^*$. 
The complex Hilbert space $\calH$  is a Real Hilbert space via the real structure $\psi^\rs = \Gamma \psi$, 
where $\Gamma = \begin{pmatrix} 0 & \mathcal{R}^{-1} \\ \mathcal{R} & 0 \end{pmatrix}$ is 
a self-adjoint antiunitary and $\mathcal{R}: \calV \to \calV^*$ is the 
Riesz isomorphism. We therefore obtain a real structure on $\calB(\calH)$ via 
$\mathrm{Ad}_\Gamma$. 

The  dynamics of a physical system is modelled by a Hamiltonian $H=H^*$ on $\calH$. In particular, 
$H$ gives rise to a time evolution unitary $U_t = e^{itH}$, which should be 
invariant under the real structure $\Gamma$. The relation $\Gamma U_t \Gamma = U_t$ for all $t$ 
is equivalent to the relation $\Gamma H \Gamma = -H$. We call any 
Hamiltonian satisfying $\Gamma H \Gamma = -H$ a Bogoliubov--de Gennes (BdG) Hamiltonian. 
A BdG Hamiltonian then induces a dynamics on  the anti-symmetric Fock space $\exterior \calV$, 
which gives a mathematical description of the many-body  system and ground state. Because 
the dynamics on $\exterior \calV$ are determined by $H$ on $\calH$, the model is a free fermionic 
approximation of many-body phenomena.

Altland and Zirnbauer described fundamental symmetries of such free fermionic systems~\cite{AZ97}, which are described by unitary or 
anti-unitary operators that commute with the BdG Hamiltonian $H$. A simple mathematical description of these symmetries was then 
noted by Kennedy and Zirnbauer.

\begin{prop}[{\cite[{\S}2]{KZ16}, \cite[{\S}3.1--3.2]{AMZ}}] \label{prop:KZ_symmetries}
Let $H$ be a BdG Hamiltonian on $(\calH,\Gamma)$.
If $H$ has Altland--Zirnbauer symmetries, then there are 
Real mutually anti-commuting skew-adjoint unitaries $\{\kappa_j\}_{j=1}^n \subset \calB(\calH)$,
such that $\kappa_j iH  = -  iH \kappa_j$ for all $j=1,\ldots, n$. The 
 integer  $n \in \{0,\ldots, 7\}$ is determined by the free fermionic symmetry.
\end{prop}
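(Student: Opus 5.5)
The plan is to reduce the statement to the classification of the Altland--Zirnbauer symmetry classes, as recorded in \cite{AZ97,KZ16}. An AZ symmetry is given by some combination of three types of operators. The first is a time-reversal operator $T$, which is antiunitary with $T^2=\pm\one$ and commutes with $H$. The second is a particle-number symmetry $Q$, a unitary generating a $U(1)$-charge. The third is a spin-rotation or chiral symmetry. In the BdG picture every symmetry is implemented on $\calH=\calV\oplus\calV^*$ and commutes with the real structure $\Gamma$, since it comes from a symmetry of $\calV$ lifted to the Nambu space.

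First we rephrase everything in terms of $iH$. The BdG condition $\Gamma H\Gamma=-H$ says exactly that $iH$ is a Real skew-adjoint operator, i.e. $(iH)^\rs=iH$. An antiunitary symmetry $T$ commuting with $\Gamma$ and with $H$ can be composed with $\Gamma$ to get the unitary $\Gamma T$. Because $T$ is antiunitary, this unitary anti-commutes with $iH$, and it is Real. A unitary charge symmetry $Q=e^{i\theta N}$ gives the Real operator $i\cdot(\text{charge operator})$, which commutes with $H$. Here the charge operator is $\one\oplus(-\one)$ on $\calV\oplus\calV^*$, so this Real operator is a Real skew-adjoint unitary $\kappa$. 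Since it commutes with $H$, it anti-commutes with $iH$ exactly when it is combined with $i$ and this is accounted for by the grading.

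Second, we assemble these operators into mutually anti-commuting Real skew-adjoint unitaries. This is the key algebraic step, and it is where the main work lies. Given two Real operators $\kappa_1,\kappa_2$ that are skew-adjoint unitaries anti-commuting with $iH$ but commuting with each other, we replace them by $\kappa_1$ and $\kappa_1\kappa_2\kappa'$, using auxiliary generators where needed, following the Kennedy--Zirnbauer procedure \cite[\S2]{KZ16}. This procedure passes to the commutant of the $U(1)$ or $SU(2)$ symmetry, turning complex-linear or quaternionic structures into additional Clifford generators. Whenever the square of a symmetry is $+\one$ rather than $-\one$, we multiply by a commuting complex structure that is already present, for instance $i$ times the charge. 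This converts self-adjoint unitaries into skew-adjoint ones.

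Third, we proceed class by class. For each of the ten AZ classes we list the resulting $\kappa_j$ and check the anti-commutation relations directly. In classes D, DIII, AII, CII, C, CI, AI, BDI one obtains $n=0,1,\ldots,7$ respectively, with $n$ taken modulo $8$. The complex classes are handled by allowing $\kappa$ equal to $i\,Q$, which we treat separately.

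The main obstacle is the reduction in the second step, together with the bookkeeping that keeps every generator Real under $\mathrm{Ad}_\Gamma$. I would import the explicit table of generators from \cite[\S3.1--3.2]{AMZ} and verify the relations $\kappa_j\kappa_k+\kappa_k\kappa_j=-2\delta_{jk}$, $\kappa_j^\rs=\kappa_j$ and $\kappa_j iH=-iH\kappa_j$ line by line, rather than rederiving the classification.
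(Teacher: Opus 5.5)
The paper gives no argument for this proposition beyond the citations to \cite{KZ16} and \cite{AMZ}. Your final fallback, importing the table of pseudo-symmetries from \cite[\S3.1--3.2]{AMZ}, is therefore the paper's route. The trouble is the mechanism you sketch before that. It goes wrong exactly where you make it concrete, namely at the charge.

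With $Q=\one\oplus(-\one)$ on $\calV\oplus\calV^*$ one has $\Gamma Q\Gamma=-Q$, so $iQ$ is indeed a Real skew-adjoint unitary. But charge conservation $[Q,H]=0$ means $iQ$ \emph{commutes} with $iH$. Multiplying by the scalar $i$ changes nothing, and there is no grading to appeal to, since $\calH$ and the $\kappa_j$ are ungraded here. So $iQ$ can never be one of the $\kappa_j$.

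This also makes your treatment of the complex classes (``allowing $\kappa$ equal to $iQ$'') false. In classes A and AIII, $iQ$ survives as a central complex structure commuting with $iH$ and with any pseudo-symmetries. That is what makes them complex classes, and the count $n\in\{0,\ldots,7\}$ refers to the eight real ones.

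Your recipe for squares equal to $+\one$ also fails for the example you give. Since $[T,Q]=0$, the unitary $\Gamma T$ \emph{anti}-commutes with $iQ$. Hence $(iQ\,\Gamma T)^2=(\Gamma T)^2=T^2$, and the square is unchanged. Flipping it would need a Real complex structure commuting with both $iH$ and $\Gamma T$.

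The actual role of the charge is the opposite one. For $T^2=-\one$ (class AII), take $\kappa_1=\Gamma T$ and $\kappa_2=iQ\,\Gamma T$. These are anti-commuting Real skew-adjoint unitaries, each anti-commuting with $iH$. Moreover $iQ=-\kappa_2\kappa_1$ is recovered from them, so no information is lost.

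Your second step is also not a valid procedure as stated. If $\kappa_1,\kappa_2$ commute and both anti-commute with $iH$, then $\kappa_1\kappa_2$ is a Real self-adjoint unitary \emph{commuting} with $iH$, i.e.\ an ordinary unitary symmetry. The right move is to restrict to its two eigenspaces, on which $\kappa_2=\mp\kappa_1$ is redundant. Forming $\kappa_1\kappa_2\kappa'$ with an unspecified $\kappa'$ does not achieve anything.

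More generally, the Kennedy--Zirnbauer reduction by unitary symmetries passes to multiplicity spaces. In the quaternionic cases it further passes to a Morita-equivalent enlarged space, and the resulting generators live there rather than literally in $\calB(\calH)$. Class C shows this is unavoidable. The Real skew-adjoint operators commuting with the $SU(2)$-action form a non-abelian Lie algebra. An invertible $\kappa$ anti-commuting with $X$ and $Y$ commutes with $XY$ and $YX$, hence with $[X,Y]$. So no fixed operator on the original Nambu space can anti-commute with every $SU(2)$-invariant $iH$, even though $n=4$ there. The line-by-line verification you propose must therefore be carried out on the reduced space, as in the cited sources.
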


There is considerable mathematical and physical interest in studying the topological properties 
of many-body ground states with a spectral gap property, which we call a gapped ground state,  
see~\cite{Kubota25} for recent developments.
In our free fermionic model with BdG Hamiltonian $H$, the ground state  on $\exterior \calV$  
will be gapped if $0 \notin\spec(H)$. If $H$ is a 
gapped (invertible) BdG Hamiltonian, $iH$ is a Real, skew-adjoint, and invertible operator on $\calH$ that 
anti-commutes with ungraded Clifford generators $\{\kappa_1, \ldots, \kappa_n\}$.
We can therefore describe a topological obstruction between pairs of symmetric and gapped BdG Hamiltonians 
(with the same symmetry) via the $\DK$-valued relative index.

\begin{assump} \label{assump:BdG_assump}
Let $H_0$ and $H_1$ be invertible BdG Hamiltonians on $(\calH, \Gamma)$ with the same free fermionic symmetry. 
We assume that there is a $C^*$-algebra of observables $A \subset \Mult(A) \subset \calB(\calH)$
such that the 
ungraded Clifford generators 
$\{\kappa_1,\ldots, \kappa_n\} \subset \Mult(A)$ and $\mathrm{Ad}_{\Gamma}$ 
gives a well-defined real structure on $\Mult(A)$. Furthermore, we assume that $H_0$ and $H_1$ are $A$-comparable, 
meaning that $J_{iH_0} = iH_0 |iH_0|^{-1}, J_{iH_1} =iH_1 |iH_1|^{-1} \in \Mult(A)$ with 
$\big\| q(J_{iH_0}) - q(J_{iH_1}) \big\|_{\calQ_A} < 2$.
\end{assump}

The following is then immediate by the definition of the relative index.

\begin{prop}[cf. \cite{Scaglione}]
Let $H_0$ and $H_1$ be invertible BdG Hamiltonians on $(\calH, \Gamma)$ satisfying Assumption \ref{assump:BdG_assump}. 
Then 
the relative topological phase
\[
   \Ind(H_0, H_1) =  \relind_{0,n+1}\big( J_{iH_0}, J_{iH_1} \big) \in \DK( A \otimes \Cl_{0,n+1} )
\]
is well-defined.
\end{prop}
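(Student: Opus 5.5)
The statement reduces to Definition \ref{def:Ind_of_pair_of_complex_structures_DK}, applied with $X_A = A_A$ (so $\End_A(X)=\Mult(A)$ and $\End_A^0(X)=A$), with the ungraded Clifford algebra $\Cl_{0,n}$ generated by $\{\kappa_1,\ldots,\kappa_n\}$ (so $r=0$, $s=n$). All that has to be checked is that $(J_{iH_0},J_{iH_1})$ is a Fredholm pair of Real skew-adjoint unitaries in $\End_A^{0,n}(A)=\Mult(A)^{0,n}$, which is the set $\calJ_A^{0,n}$ of Definition \ref{defn:complex-structure}.

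Set $J_j := J_{iH_j} = iH_j|iH_j|^{-1}$ for $j=0,1$, and check the required properties in turn.

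\emph{Skew-adjoint unitary.} Since $H_j$ is self-adjoint and invertible, $iH_j$ is skew-adjoint and invertible, and $|iH_j| = |H_j|$ commutes with $H_j$. Hence $J_j = i\,\mathrm{sgn}(H_j)$, which satisfies $J_j^* = -J_j = J_j^{-1}$.

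\emph{Real.} The real structure is $\mathrm{Ad}_\Gamma$ with $\Gamma$ antiunitary. From $\Gamma H_j\Gamma = -H_j$ and the antilinearity of $\Gamma$ we get $\Gamma(iH_j)\Gamma = iH_j$, so $iH_j$ is Real. Continuous functional calculus with a real-coefficient function such as $|x|^{-1}$ commutes with $\mathrm{Ad}_\Gamma$; alternatively, $\Gamma|H_j|\Gamma = |{-H_j}| = |H_j|$. Either way $J_j^{\frakr}=J_j$.

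\emph{Clifford anti-linear.} Proposition \ref{prop:KZ_symmetries} gives that each $\kappa_k$ anticommutes with $iH_j$, so $\kappa_k$ commutes with $H_j^2$. It therefore commutes with $|H_j|^{-1}=(H_j^2)^{-1/2}$, and so $\kappa_k$ anticommutes with $J_j$.

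\emph{Fredholm pair.} Assumption \ref{assump:BdG_assump} places $J_j$ in $\Mult(A)$ and gives $\|q(J_0)-q(J_1)\|_{\calQ_A}<2$.

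With these properties in hand, Lemma \ref{lem:Fred_pair_via_rel_DK} shows that the relative Van Daele class is well-defined. Applying excision together with Morita invariance (here the trivial Morita equivalence of $A$ with itself) yields $\relind_{0,n+1}(J_0,J_1)\in\DK(A\otimes\Cl_{0,n+1})$. By Definition \ref{def:Ind_of_pair_of_complex_structures_DK} this class does not depend on the choice of basepoint $J_{\mathrm{ref}}$; one may take $J_{\mathrm{ref}}=J_1$.

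The only delicate point is the hypotheses: the definition formally asks for an ample Clifford representation on a full module, and for $X_A=A_A$ ampleness may fail. I would handle this in one of two ways. Either observe that ampleness is not used in the definition of the relative index itself, since it enters only through the Calkin algebra and Lemma \ref{lem:AMZ-OSU-iso}, neither of which requires it. Or stabilise: replace $A$ by $A\otimes\calK$ and $J_j$ by $J_j\oplus J_{\mathrm{ref}}$, then invoke Morita invariance. The first option is simpler, so I would use it.
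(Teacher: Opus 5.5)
Your proposal is correct and is exactly the argument the paper has in mind when it says the statement is immediate from Definition~\ref{def:Ind_of_pair_of_complex_structures_DK}: you take $X_A=A_A$ with $\Cl_{0,n}$ generated by the $\kappa_k$, and you make explicit what the paper leaves implicit, namely that $J_{iH_0},J_{iH_1}$ are Real, Clifford anti-linear skew-adjoint unitaries in $\Mult(A)$ forming a Fredholm pair; your remark that ampleness plays no role here is also right. One small correction: that Definition does not itself assert independence of $J_{\mathrm{ref}}$ (the paper only remarks that one may take $J_0$ or $J_1$ as basepoint), so you should not cite it for that, though the statement does not need it.
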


If $H_0, H_1 \in \Mult(A)$ satisfy Assumption \ref{assump:BdG_assump}, the straight-line path $iH_t = (1-t) iH_0 + t iH_1$ is a norm-continuous path of skew-adjoint 
Fredholm Hamiltonians that have the same free fermionic symmetry. 
If we further assume that $H_0^2 = H_1^2 = \one$,  $\one - H_t^2 \in A$ for all $t$ and $\| q(H_0) - q(H_1) \|_{\calQ_A} < \sqrt{2}$, 
then 
\[
    \Ind(H_0, H_1) = \Sf_{r,s+1}\big( (1-t)iH_0 + t iH_1 \big)
\]
by Lemma \ref{lemma:sf_small_perturbations}.
If $ \Ind(H_0, H_1)$ is non-trivial, then   the path $H_t \in \Mult(A)$ must fail to be invertible  at one or more points. Thus the $\DK$-spectral 
flow gives a precise   meaning to the physical idea that if $H_0$ and $H_1$ have differing topological phases (as measured by 
the relative index $\Ind(H_0, H_1)$), then the path $H_t$ connecting 
the two gapped systems will have a ``topological gap closing''.

\subsection{Spectral flow and ``topological gap filling'' on systems with defects}

We first consider a discrete half-space system $\calH = \ell^2(\Z^{d-1} \times \N, \C^m)$, where 
$\ell^2(\Z^{d-1} \times \{0\}, \C^m)$ describes the codimension-$1$ boundary.  Under mild assumptions, we can assume that the boundary 
system can be modelled by a $C^*$-algebra $C$, and the half-space Hamiltonian $H \in \calB\big( \ell^2(\Z^{d-1}\times \N, \C^m) \big)$ 
is an  adjointable operator on  the Hilbert 
$C^*$-module $\ell^2(\N, C)_{C}$, where  matrix degrees of freedom are absorbed into the algebra $C$. The 
compact endomorphisms $\End_{C}^0\big( \ell^2(\N, C)) \big) \cong C\otimes \calK\big(\ell^2(\N)\big)$ describe 
operators localised near the   boundary, and the quotient 
$q(H) \in \calQ_{C}\big( \ell^2(\N, C ) \big)$ can often 
be faithfully represented on the boundary-free (bulk) Hilbert space $\ell^2(\Z^d, \C^m)$.

More generally, we model a $d$-dimensional system with a 
 boundary or defect by a $C^*$-algebra $C$ using a countably generated Hilbert $C^*$-module $E_C$, 
 where the defect Hamiltonian $H \in \End_C(E)$. See~\cite{ProdanGpoid, BInterface} for a more comprehensive framework and examples.

We assume that  $E_C$ has a real structure and the defect Hamiltonian ${H} = {H}^* = -H^\rs \in \End_C(E)$ 
is such that  $iH$ anti-commutes with the 
generators $\{\kappa_1,\ldots, \kappa_n\} \subset \End_C(E)$ of an ample and ungraded $\Cl_{0,n}$-representation 
(cf.  Proposition \ref{prop:KZ_symmetries}).
The Hamiltonian of the bulk system without defect is given by $q({H}) \in \calQ_C(E)$, which we assume is 
gapped/invertible. 
We denote by $J_{q(iH)} = q({iH})| q({iH})|^{-1} \in \calQ_C(E)$ the corresponding skew-adjoint unitary in the quotient.
The adaptation of Lemma  \ref{lemma:bdry_cayley_nice} in the appendix to the present setting is the following.

\begin{prop}[{cf. \cite[{\S}5]{AMZ}, \cite[{\S}6]{BKR}}] \label{prop:bulk-boundary-DK-Fredholm}
Let  ${H}_0$ be a basepoint \emph{invertible} Hamiltonian in $\End_C(E)$ with the same free fermionic symmetries as ${H}$. 
Then under the composition 
\begin{align*}
   &\hat{\delta}:\DK(\calQ_C(E) \otimes \Cl_{0,n+1}) \xrightarrow{\delta} \DK( \End_C^0(E) \otimes \Cl_{1,n+1} ) 
   \xrightarrow{\simeq} \KKR( \Cl_{n+1,0}, C), \\
  &\hat{\delta}\big( [J_{q(iH)} \otimes \rho] - [ J_{q(iH_0)} \otimes \rho] \big) = 
  \class{ iH } = \big[ \big( \Cl_{n+1, 0}, \, E_C \otimes \exterior \C, \, \chi\big(i {H} \big) \otimes \rho \big) \big]
\end{align*}
with Clifford generators $\{\kappa_1 \otimes \rho, \ldots, \kappa_n \otimes \rho, \one \otimes \gamma\}$. 
\end{prop}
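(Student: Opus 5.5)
The plan is to identify $\hat{\delta}\big( [J_{q(iH)} \otimes \rho] - [ J_{q(iH_0)} \otimes \rho] \big)$ with the Fredholm index of $iH$ from Definition \ref{defn:index-of-Fred}, and then invoke Theorem \ref{thm:index-of-Fred}. We work with $r=0$, $s=n$, $X_A = E_C$, and ungraded Clifford generators $\{\kappa_1,\ldots,\kappa_n\}$. The first step is to check that $iH$ is a Real skew-adjoint Fredholm operator in $\End_C^{0,n}(E)$, and that $J := J_{iH_0} = iH_0|iH_0|^{-1}$ is an admissible basepoint skew-adjoint unitary.

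\begin{itemize}
\item Since $q(iH)$ is invertible, $iH$ is Fredholm.
\item $J$ is Real and skew-adjoint, and it anti-commutes with the $\kappa_j$. This holds because $iH_0$ does, and $|iH_0|$ commutes with the $\kappa_j$, being a function of $(iH_0)^2$.
\end{itemize}

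Because $r=0$, the map $\sigma^{0,n}_{J\otimes\rho}$ only involves the projection $P^{0,n}$ in $\End_C(E)\otimes\Cl_{0,n+1}$. We read the class $[J_{q(iH)}\otimes\rho] - [J_{q(iH_0)}\otimes\rho]$ through the presentation of \S\ref{sec:Van_Daele_skew_unitaries}, i.e.\ after applying $(q\otimes\one)\circ\sigma^{0,n}_{J\otimes\rho}$.

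The second step is to replace $J_{q(iH)}$ by $q(\chi(iH))$ for a normalising function $\chi$. Both are skew-adjoint unitaries in $\calQ_C(E)$ that anti-commute with the $\kappa_j$, and both are odd functions of $q(iH)$. The straight-line path $\lambda\mapsto (1-\lambda)q(\chi(iH)) + \lambda J_{q(iH)}$ stays invertible, since it equals $g_\lambda(q(iH))$ with $g_\lambda(ix)$ in $i\R$ and nonvanishing on the spectrum of $q(iH)$, which avoids $0$. Taking the phase of this path gives a homotopy of Clifford anti-linear skew-adjoint unitaries. Applying $\sigma^{0,n}_{J\otimes\rho}$ yields a homotopy of odd self-adjoint unitaries in $\calQ_C(E)\otimes\Cl_{0,n+1}$, so the Van Daele classes agree. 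Hence the input class is exactly
\[
\big[(q\otimes\one)\circ\sigma^{0,n}_{J\otimes\rho}(\chi(iH)\otimes\rho)\big] - \big[q(J)\otimes\rho\big],
\]
which is the class appearing in Definition \ref{defn:index-of-Fred}.

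The third step applies the composition. By Definition \ref{defn:index-of-Fred}, $\calM^{\DK}_E\circ\delta$ of this class is $\Index_{1,n+1}(iH)\in\DK(C\otimes\Cl_{1,n+1})$. By Theorem \ref{thm:index-of-Fred}, this equals $\Roe_C^{1,n+1}(\class{iH})$. We interpret the unnamed isomorphism $\DK(\End^0_C(E)\otimes\Cl_{1,n+1})\xrightarrow{\simeq}\KKR(\Cl_{n+1,0},C)$ as $(\Roe_C^{1,n+1})^{-1}\circ\calM^{\DK}_E$, so we obtain $\hat\delta(\cdots)=\class{iH}$. By Corollary \ref{cor:skew_Fred_to_KasMod}, $\class{iH}$ is represented by $(\Cl_{n+1,0}, E_C\otimes\exterior\C, \chi(iH)\otimes\rho)$ with the stated generators.

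The main obstacle is bookkeeping rather than analysis. First, we must make precise that the unnamed isomorphism in the statement is the inverse Roe map after Morita invariance, presumably as in Lemma \ref{lemma:bdry_cayley_nice} in the appendix. Second, the basepoint $J_{q(iH_0)}$ in the quotient must lift to the genuine unitary $J_{iH_0}$. This lift is what makes the relative-to-basepointed passage, and independence of the basepoint via Lemma \ref{lem:index-sinh-cosh}, legitimate.
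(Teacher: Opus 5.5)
Your argument is correct, but it takes a different route from the one the paper indicates.

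\textbf{The paper's route.} The paper presents the proposition as the adaptation of Lemma \ref{lemma:bdry_cayley_nice} (i.e.\ \cite[Proposition 5.9]{BKR}) to the Clifford anti-linear setting. There the unnamed isomorphism is read as the inverse Cayley map $\Cayley^{-1}$, combined with $\tau_{\Cl_{1,0}}^{-1}$, the Clifford shuffle and Morita invariance. One then checks directly that $\Cayley^{-1}\circ\delta$ sends a Calkin class $[q(x)]-[q(e)]$ to the Kasparov module of the lift $x$, i.e.\ that it computes $\Phi^{-1}$.

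\textbf{Your route.} You read the unnamed map as $(\Roe_C^{1,n+1})^{-1}\circ\calM^{\DK}_E$. You recognise $\calM^{\DK}_E\circ\delta$ of the input class, with basepoint $J=J_{iH_0}$, as $\Index_{1,n+1}(iH)$ from Definition \ref{defn:index-of-Fred}, and conclude by Theorem \ref{thm:index-of-Fred}.

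\textbf{Comparison.} Your approach makes the proposition an immediate corollary of the index theorem, with no new computation. The paper's approach expresses $\hat\delta$ through the Cayley transform, which is what links it to the unbounded bulk--boundary computations in the cited literature. The two readings of the unnamed isomorphism coincide by Theorem \ref{thm:Roe_and_Cayley_comparison}. You should cite that theorem so your conclusion does not depend on which identification the statement intends.

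\textbf{Two small remarks.}
\begin{enumerate}
\item Your homotopy in the second step is unnecessary. Since $\chi$ is odd and non-decreasing and $q(\chi(iH))^2=-\one$, you get $\chi=i\,\mathrm{sgn}$ on $\spec(q(iH))$. Hence $q(\chi(iH))=J_{q(iH)}$ exactly.
\item Definition \ref{defn:index-of-Fred} and the Morita map $\calM^{\DK}_E$ require $E_C$ to be full. This is implicit in the statement's claim that the second arrow is an isomorphism. Otherwise you would need an extra $\iota_\ast$, as in Remarks \ref{remarks:Cayley_subtleties}.
\end{enumerate}
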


Note that for ease of reading we have suppressed the Clifford shuffle map, 
\[
[J_{q(iH)} \otimes \rho] - [ J_{q(iH_0)} \otimes \rho] = \big[ \sigma_{J_{q(iH_0)}\otimes \rho}^{0,n}( J_{q(iH)} \otimes \rho) \big] - [ J_{q(iH_0)}\otimes \rho] 
\in \DK(\calQ_C(E) \otimes \Cl_{0,n+1}).
\]
In many cases of interest (such as $E_C = \calH_C$, the standard $C^*$-module), the 
boundary map $\delta: \DK(\calQ_C(E) \otimes \Cl_{0,n+1}) \to \DK( \End_C^0(E) \otimes \Cl_{1,n+1} ) $ 
is an isomorphism.

We now consider a spectral flow interpretation of the class $\class{i{H}} \in \KKR( \Cl_{n+1,0}, C ) $ when 
$C \cong C^*(\Z) \otimes A$ and   $E_C \cong (C^*(\Z) \otimes X)_{C^*(\Z) \otimes A}$, which 
occurs when the system is translation invariant in a direction along the defect.
As a Real algebra, the Fourier transform  \cite{AKR} gives
\[
   C^*(\Z) \otimes A \cong C(i \mathbb{T}, A), \qquad 
   C(i \mathbb{T}, A)^\rs = \big\{ f \in C(\mathbb{T}, A) \, \big|\, f^{\rs}(k) = f(-k)^{\rs_A} \big\}
\]
and we consider $E_C \cong C( i \mathbb{T}, X)_{C(i\mathbb{T}, A)}$ with defect Hamiltonian 
$H \cong \{H_k\}_{k \in \mathbb{T}} \in \End_C(E) \cong \End_{C( i \mathbb{T}, A)}\big( C( i \mathbb{T}, X) \big)$. 
If $q(H) \in \calQ_{C( i \mathbb{T}, A)}\big( C( i \mathbb{T}, X) \big)$ is invertible and 
 $\mathbb{T} \ni k\mapsto H_k \in \End_A(X)$ is Wahl-continuous with invertible endpoints, 
  we can consider the spectral flow 
 of the loop of skew-adjoint Fredholm operators 
$\{iH_k\}_{k \in \mathbb{T}} \subset  \End_A^{0,n}( X )$.

We can write the $\KK$-class from Proposition \ref{prop:bulk-boundary-DK-Fredholm} 
as
\[
    \class{iH_\bullet} 
  = \Big[ \Big( \Cl_{n+1, 0}, \, C\big( i \mathbb{T} ,  X_A \big)_{C( i \mathbb{T}, A) }   \otimes \exterior \C, \, \chi\big(i {H_\bullet} \big) \otimes \rho \Big) \Big] \in \KKR( \Cl_{n+1, 0}, C(i \mathbb{T}, A) ).
\]
By identifying $\mathbb{T}$ with the periodic interval $[-\pi,\pi]$, we can consider the path $\R \ni t \mapsto \tilde{H}_t$ on $\R$ by taking 
$\tilde{H}_t = H_{-\pi} = H_\pi$ for all $t \in \R \setminus (-\pi,\pi)$. However, the identity $(i\tilde{H}_t)^\rs = i\tilde{H}_{-t}$ implies 
that  $\tilde{H}_\bullet $ is a Real  operator on the \emph{dual} suspension, $S^{0,1}X_{S^{0,1}A}$, 
where $S^{0,1}Y = C_0(\R, Y)$ as a complex space with real structure $f^{\rs_{0,1}}(t) = f(-t)^{\rs_Y}$.
So we have $\class{i\tilde{H}_\bullet} \in \KKR( \Cl_{n+1, 0}, S^{0,1}A)$ and instead define the \emph{dual} spectral flow 
\[
  {\Sf}^\text{dual}_{2,n+1} ( iH_\bullet) := \beta_{DK} \big( \Index_{1,n+1} ( i\tilde{H}_\bullet  ) \big)
  \in \DK( S^{1,1} A \otimes \Cl_{2,n+1} ) \cong \DK( A \otimes \Cl_{2,n+1} ),
\]
where we have used Theorem \ref{thm:DK_Bott_isos}.

\begin{cor} \label{cor:gap_filling_real}
Let  $H_\bullet, H_\bullet^\mathrm{triv} \in C( i\mathbb{T}, \End_A(X) )$  be 
Fredholm Hamiltonians  of the same symmetry type such that $H_\bullet^\mathrm{triv}$  is invertible. 
If ${\Sf}^\text{dual}_{2,n+1} ( iH_\bullet) \in \DK( A \otimes \Cl_{2,n+1} )$ is non-trivial, then 
the bulk $K$-theory class $[J_{q(iH)} \otimes \rho] - [ J_{q(iH^\mathrm{triv})}\otimes \rho] \in  \DK\big( \calQ_{C(i \mathbb{T}, A)}(C( i \mathbb{T}, X)) \otimes \Cl_{0,n+1}\big)$ 
is non-trivial.
\end{cor}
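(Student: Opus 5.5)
I will argue by contraposition: assume the bulk class $x := [J_{q(iH)} \otimes \rho] - [J_{q(iH^\mathrm{triv})}\otimes \rho]$ vanishes in $\DK\big( \calQ_{C(i\mathbb{T},A)}(C(i\mathbb{T},X)) \otimes \Cl_{0,n+1}\big)$, and deduce that ${\Sf}^\text{dual}_{2,n+1}(iH_\bullet)$ is trivial. The whole argument is a chain of group homomorphisms, so a trivial input forces a trivial output.

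First I apply Proposition \ref{prop:bulk-boundary-DK-Fredholm} with $C = C(i\mathbb{T},A)$, $E_C = C(i\mathbb{T},X)$ and basepoint $H_0 = H^\mathrm{triv}_\bullet$, which is invertible and of the same symmetry type. This gives $\hat\delta(x) = \class{iH_\bullet} \in \KKR(\Cl_{n+1,0}, C(i\mathbb{T},A))$. Since $\hat\delta$ is the composition of the boundary map $\delta$ with the isomorphism from $\DK$ to $\KKR$, it is a homomorphism, so $x=0$ gives $\class{iH_\bullet}=0$.

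Next I transport this class to the dual suspension. Reparametrising $\mathbb{T}\cong[-\pi,\pi]$ and extending $H$ by the constant $H_{\pm\pi}$ outside $(-\pi,\pi)$ yields $\class{i\tilde H_\bullet}\in \KKR(\Cl_{n+1,0},S^{0,1}A)$. I need this class to be the image of $\class{iH_\bullet}$ under a $\KKR$-homomorphism. The natural route is the Real $*$-homomorphism $S^{0,1}A \to C(i\mathbb{T},A)$ obtained by collapsing $\R\setminus(-\pi,\pi)$ to the point $\pm\pi$; it is compatible with the real structures $f(t)\mapsto f(-t)^\rs$. The associated restriction/pullback on $\KKR$ in the appropriate direction should send $\class{iH_\bullet}$ to $\class{i\tilde H_\bullet}$ up to the addition of the degenerate constant part coming from the invertible endpoints. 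The argument is the same as the restriction step in Lemma \ref{lemma:MV}: use the module $\{f\}$ interpolating between the two pictures to build a homotopy.

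Making this comparison precise is the main obstacle, because the direction of functoriality is delicate: restricting to an open subset is covariant in the algebra only after subtracting the class of the constant invertible operator at the basepoint, and that constant class is trivial. If the direct comparison proves awkward, I will instead split $C(i\mathbb{T},A) \cong S^{0,1}A \oplus A$ in $\KKR$ using the basepoint $\pm\pi$, where $H_{\pm\pi}$ is invertible. Then $\class{iH_\bullet}$ decomposes as $\class{i\tilde H_\bullet}$ plus the class of the constant invertible operator, and the latter vanishes.

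Finally, ${\Sf}^\text{dual}_{2,n+1}(iH_\bullet)$ is obtained from $\class{i\tilde H_\bullet}$ by applying the Roe isomorphism (Theorem \ref{thm:index-of-Fred}), which gives $\Index_{1,n+1}(i\tilde H_\bullet)$, and then the Bott isomorphism of Theorem \ref{thm:DK_Bott_isos}. Both are homomorphisms, so a vanishing bulk class yields a vanishing dual spectral flow. Taking the contrapositive proves the claim.
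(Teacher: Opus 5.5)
Your argument is the paper's: contraposition, then Proposition \ref{prop:bulk-boundary-DK-Fredholm} to get $\class{iH_\bullet}=0$, then passage to $\class{i\tilde H_\bullet}$, followed by the index (Roe) and Bott maps. The paper asserts the step $\class{iH_\bullet}=0\Rightarrow\class{i\tilde H_\bullet}=0$ without justification. Your second route supplies it correctly: the extension $0\to S^{0,1}A\to C(i\mathbb{T},A)\to A\to 0$, evaluated at the invertible point $\pm\pi$, is split, so $\iota_*$ is injective, and $\iota_*\class{i\tilde H_\bullet}=\class{iH_\bullet}$ by a Lemma \ref{lemma:MV}-type homotopy. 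By contrast, the ``collapsing'' homomorphism of your first route does not by itself give a map in the direction you need.
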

For ease of reading we have suppressed the Clifford shuffle map, 
\[
[J_{q(iH)} \otimes \rho] - [ J_{q(iH^\mathrm{triv})} \otimes \rho] = \big[ \sigma_{J_{q(iH^\mathrm{triv})}\otimes \rho}^{0,n}( J_{q(iH)} \otimes \rho) \big] - [ J_{q(iH^\mathrm{triv})}\otimes \rho].
\]
\begin{proof}
Triviality of $[J_{q(H)} \otimes \rho] - [ J_{q(H^\mathrm{triv})} \otimes \rho]$ implies that $\class{iH_\bullet}$ and 
therefore $\class{i\tilde{H}_\bullet}$ is trivial by Proposition  \ref{prop:bulk-boundary-DK-Fredholm}. The contrapositive then gives the result.
\end{proof}

In many cases of interest, the map 
\[
[J_{q(iH)} \otimes \rho] - [ J_{q(iH^\mathrm{triv})})\otimes \rho] \mapsto \hat{\delta}\big( [J_{q(iH)} \otimes \rho] - [ J_{q(iH^\mathrm{triv})})\otimes \rho] \big) = \class{ i H_\bullet}
\]
implements an isomorphism of groups. In such a setting with $0 \notin \spec(H_k)$ for all $k \in \mathbb{T}$, 
$\class{i H_\bullet}$, ${\Sf}^\text{dual}_{2,n+1} ( iH_\bullet)$  and the bulk $\DK$-class will be trivial. Put another way, 
a non-trivial bulk class implies that the defect Hamiltonian  
must have a  ``topological gap filling'' in its spectrum for $\class{i H_\bullet}$ to be non-trivial.
Such a property is an example of the $K$-theoretic 
bulk-defect correspondence of  topological materials with Altland--Zirnbauer symmetries.

\begin{example}[Complex algebras and spectral flow]
Let us briefly consider the complex setting, where $S^{1,0}A \cong S^{0,1}A \cong SA$ and we do not need to distinguish 
between the dual suspension and spectral flow. 
We take a defect Hamiltonian $H_\bullet \in \End_{C(\mathbb{T}, A)}\big( C(\mathbb{T}, X) \big)$ with $X_A$ a countably 
generated $C^*$-module. 
If $d$ is even and $q(H_\bullet)$ is invertible, the 
  projection $P_\text{bulk} = P_{(-\infty,0)} (q(H_\bullet) )$ defines the 
bulk $K$-theory class $\big[P_\text{bulk} \big] \in K_0\big( C(\mathbb{T}, \calQ_A(X) ) \big)$,  where 
\[
   \hat{\delta}([P_\text{bulk} ]) = - \big[ H_\bullet \big] =  -\Big[ \Big( \Cl_{0,1}, \, C\big( \mathbb{T}, X_A \big)_{C(\mathbb{T}, A)} \otimes \exterior \C, \, 
    \chi\big( H_\bullet \big) \otimes \gamma \Big) \Big] \in  \KK(\Cl_{0,1}, C(\mathbb{T}, A) ) 
\]
and $\hat{\delta} : K_0\big( C( \mathbb{T}, \calQ_A(X) )\big) \xrightarrow{\delta} K_{-1}\big( C( \mathbb{T}, \End_A^0(X) ) \big) 
\xrightarrow{\simeq} \KK (\Cl_{0,1},  C(\mathbb{T}, A) )$. 
We assume that $k\mapsto H_k$ is Wahl-continuous and extend 
$H_\bullet$ on $C(\mathbb{T}, \End_A(X))$ to an operator $\tilde{H}_\bullet$ on $SX_{SA}$. As 
a path of self-adjoint Fredholm operators, the complex spectral flow
$\Sf( \tilde{H}_\bullet ) \in \DK(A \otimes \Cl_{0+1} ) \cong K_0(A)$ is well-defined (cf. \S\ref{subsec:Sf_complex}).
If $0 \notin \spec(H_k)$ for all $k \in \mathbb{T}$, then $[H_\bullet]$ and $\Sf(\tilde{H}_\bullet)$ are trivial. If $\End_A^0(X)$ is a stable $C^*$-algebra, 
$\hat{\delta}$ is an isomorphism and the bulk $K$-theory class $[P_\text{bulk} ] \in K_0\big( C(\mathbb{T}, \calQ_A(X) ) \big)$ is also trivial. 
So we also have a $K$-theoretic bulk-defect correspondence in the complex setting.
\end{example}

We have worked with discrete systems in this section, though expect 
analogous results to also hold for continuous models following the framework of~\cite{KelStoiber}.

\appendix

\section{Problems with extending the ABS isomorphism}   \label{sec:ABS}

The Atiyah--Bott--Shapiro (ABS) isomorphism $\calM_{r,s}/\calM_{r,s+1} \cong \KO_{s+1-r}(\R)$ provides an elegant description of 
the real $K$-theory of a point~\cite{ABS}, where $\calM_{r,s}$ denotes the Grothendieck group of 
the semigroup of unitary equivalence classes of  (ungraded) finite-dimensional representations of $\Cl_{r,s}$. 
The ABS isomorphism also gives a natural generalisation of the analytic Fredholm index 
to skew-adjoint Fredholm operators on a Hilbert space that anti-commute with the generators of a $\Cl_{r,s}$-representation~\cite{AS69}.

The isomorphism $\KKR(\C, A) \cong \KO_0(A^\rs)$ for ungraded Real $C^*$-algebras $A$ gives a description of real $K$-theory in degree $0$ via 
finitely generated and projective $A$-modules~\cite[{\S}6, Theorem 3]{Kasparov80}. 
For higher degrees of real $K$-theory,  it is tempting to try and extend the Atiyah--Bott--Shapiro approach on Hilbert spaces to give 
a presentation of $\KO_{s+1-r}(A^\rs)$ via finite projective $A$-modules with a Clifford action. 
In this short section we explain that the naive approach towards such a generalisation unfortunately fails. 

Suppose that $A$ is a unital Real $C^*$-algebra.  Let $E$ and $F$ be finite
projective right $A$-modules with adjointable left actions
$\varphi_E \colon \Cl_{r,s}\to \End_A(E)$,
$\varphi_F \colon \Cl_{r,s}\to \End_A(F)$ of an ungraded Clifford algebra $\Cl_{r,s}$. 
We say that $E$ and $F$ are equivalent if the modules
$\overline{\varphi_E(1_{\Cl_{r,s}})E}$ and $\overline{\varphi_F(1_{\Cl_{r,s}})F}$ 
are $\Cl_{r,s}$-equivariantly unitarily equivalent.

\begin{defn*}
\label{defn:grotty-group}
For a unital Real $C^*$-algebra $A$, 
we define $\calM_{r,s}(A^\rs)$ to be the Grothendieck group of the 
abelian semigroup of $\Cl_{r,s}$-equivariant unitary equivalence
classes of (ungraded) finite projective Real right $A$-modules with a left (ungraded)
$\Cl_{r,s}$-action. 

If $A$ is a nonunital algebra with minimal unitization $A^\sim$, we define 
\[
\widetilde{\calM}_{r,s}(A^\rs) := \Ker \big(q:\calM_{r,s}((A^\rs)^\sim)\to\calM_{r,s}(\R) \big),
\] 
where $q([E])=[E/E\cdot A]$.
\end{defn*}

Observe that, for $A^\rs=\R$, we obtain $\widetilde{\calM}_{r,s}(\R) = \calM_{r,s}(\R) = \calM_{r,s}$. 
In analogy with the ABS isomorphism $\calM_{r,s}/\calM_{r,s+1} \cong \KO_{s+1-r}(\R)$, we can then define 
\[
\widetilde{\KO}_{s+1-r}(A^\rs) := \widetilde{\calM}_{r,s}(A^\rs)/ \widetilde{\calM}_{r,s+1}(A^\rs).
\]
Unfortunately, this definition of $\widetilde{\KO}_{s+1-r}(A^\rs)$ does not agree with ${\KO}_{s+1-r}(A^\rs)$ for arbitrary Real $C^*$-algebras. 

\begin{example*}
Let $r=0$ and $s=2$. 
Then, for two anti-commuting skew-adjoint Clifford generators $f_1,f_2$, we immediately obtain a third $f_3=f_1f_2$. 
So for every $C^*$-algebra $A$ and every $A$-module $X$ carrying a $\Cl_{0,2}$ representation, $X$ also carries a $\Cl_{0,3}$ representation. 
Hence 
\[
\widetilde{\KO}_{3}(A^\rs) = \widetilde{\calM}_{0,2}(A^\rs)/ \widetilde{\calM}_{0,3}(A^\rs) = 0 \quad\text{for every Real $C^*$-algebra $A$.}
\]
But of course, there exist Real $C^*$-algebras $A$ (e.g., $A=C_0(\R)$) with $\KO_3(A^\rs) \neq 0$. 
\end{example*}

This example shows that, while the ABS picture of real $K$-theory is valid for the scalar algebra $\R$ (i.e., for the $K$-theory of a point), it is not valid in general.
As far as the authors know, the only way to salvage the above approach is by bringing in additional information 
in the form of an operator $F \in \Mult(A \otimes \calK)$ such that $q(F) \in \calQ_{A\otimes \calK}$ is invertible and 
anti-commutes with the $\Cl_{r,s}$-generators, as in \cite[Appendices]{HR}. 
But this puts us in the realm of Kasparov's $\KK$-theory, and 
so we do not obtain a `simpler' picture of real $K$-theory via the ABS approach.

\section{Kasparov theory with real structures} \label{sec:RealKK}

We give a brief overview of Kasparov's $\KKR$-theory and $\KKR$-groups for $C^*$-algebras and $C^*$-modules 
with a real structure. Further details can be found in~\cite{Blackadar,Kasparov80}.

Let $(B, \frakr_B)$ be a Real $C^*$-algebra. 
A (possibly $\Z_2$-graded) complex Hilbert $C^*$-module $Y_B$ is called \emph{Real}, if there is an anti-linear map $\mathfrak{r}_Y:Y_B\to Y_B$,
called the \emph{real structure}, 
such that 
\begin{align*}
(y^{\rs_Y})^{\rs_Y} &= y , & 
y^{\mathfrak{r}_Y} \cdot b^{\mathfrak{r}_B} &= (y \cdot b)^{\mathfrak{r}_Y} , & 
& \text{and} & 
( y_1^{\mathfrak{r}_Y} \mid y_2^{\mathfrak{r}_Y} )_B &= \big(( y_1 \mid y_2 )_B\big)^{\mathfrak{r}_B} .
\end{align*}
The real structure on the $C^*$-module induces 
a real structure $\mathfrak{r}$ on $\End_B(Y)$ 
via $S^\mathfrak{r} y = \big( S(y^{\mathfrak{r}_Y} ) \big)^{\mathfrak{r}_Y}$. 
Representations of Real algebras 
$\pi: A \to \End_B(Y)$ should be compatible with this real 
structure,  $\pi(a^{\mathfrak{r}_A}) = \pi(a)^\mathfrak{r}$ for all $a \in A$.
When it is clear on which space/algebra a real structure is acting, we will often omit the subscript and write $y^{\rs}$ instead of $y^{\rs_Y}$.

\begin{defn}
\label{defn:real-kasmod}
Let $A$ and $B$ be $\Z_2$-graded Real $C^*$-algebras.
A Real  Kasparov module $(A, {}_\pi{Y}_B, F)$ 
consists of
\begin{enumerate}
\item a Real and
$\Z_2$-graded $C^*$-module ${Y}_B$, 
\item a Real and $\Z_2$-graded $*$-homomorphism $\pi:A \to \End_B(Y)$, 
\item an odd element $F = F^* = F^\rs \in \End_B(Y)$ such that, for all $a \in A$,
\begin{equation}
    [\pi(a), F]_\pm \in \End_B^0(Y) \quad \text{and} \quad  \pi(a)(\one - F^2)  \in \End_B^0(Y).
\label{eq:compact-zero}
\end{equation}
Here the graded commutator is given for $a \in A$ with homogeneous degree $\mathrm{deg}(a) \in \{0,1\}$ by $[\pi(a), F]_\pm = \pi(a) F - (-1)^{\mathrm{deg}(a)} F \pi(a)$. 
\end{enumerate}
If all the operators in \eqref{eq:compact-zero} are zero, then the Kasparov module is called degenerate.
\end{defn}
We will often omit the representation $\pi:A\to \End_B(Y)$ if the context is clear. 

Two Real Kasparov modules $(A, {}_{\pi_0}Y^{(0)}_B, F_0)$ and $(A, {}_{\pi_1}Y^{(1)}_B, F_1)$ are unitarily equivalent if there is a Real 
even unitary  $U: Y^{(0)}_B \to Y^{(1)}_B$ 
such that  $UF_0 U^* = F_1$ and $U\pi_0(a) U^* = \pi_1(a)$ for all $a\in A$. 
Two Real Kasparov modules are homotopic if there is a Real  Kasparov module 
$(A, \tilde{Y}_{B \otimes C([0,1])}, F)$ such that the evaluation at $0$ and $1$ yields Real Kasparov modules 
that are unitarily equivalent to $(A, Y^{(0)}_B, F_0)$ and $(A, Y^{(1)}_B, F_1)$ respectively.
Homotopy classes  of Real Kasparov modules yields an abelian group, $\KKR(A,B)$, 
where the  group operation is by direct sum and the zero class is represented by degenerate Kasparov modules~\cite{Blackadar,Kasparov80}.
Note that  $\KKR(A, B)$ will depend on the choice of real structure for $A$ and $B$.

Let $(A, Y_B, F)$ be a Real  Kasparov module. 
If we ignore the real structures, we obtain a complex  Kasparov module. 
If we restrict the Real $C^*$-module $Y_B$ to the 
elements fixed under $\rs$, we obtain a real (small r) $C^*$-module $Y_{B^{\rs}}^{\rs}$. 
Similarly, the Real left action of $A$ becomes a real left action 
$\pi: A^{\rs} \to \End_{B^{\rs}}( Y^{\rs} )$. 
We do not lose any information by restricting Real Kasparov modules to real $C^*$-modules 
and algebras. Indeed, 
real Kasparov modules can be complexified to obtain Real Kasparov modules and 
so, for fixed real structures on $A$ and $B$, $\KKR(A, B) \cong K\KO(A^{\rs}, B^{\rs})$. 
If the algebra $B$ is trivially graded, we can also 
consider real $K$-theory, where $\KKR(\Cl_{r,s}, B) \cong \KKO(Cl_{r,s}, B^{\rs}) \cong \KO_{r-s}(B^{\rs})$.

We will also consider unbounded representatives of Kasparov modules, which are constructed from 
 a  densely-defined, closed, and right $A$-linear operator $D: \Dom(D) \subset Y_B \to Y_B$. 
We say $D$ is Real and write $D^\rs = D$ if $(\Dom (D))^{\rs_Y} = \Dom (D)$ and 
$(Dy^{\rs_Y})^{\rs_Y} = Dy$ for all $y \in \Dom (D)$.
We say that $D$ 
is \emph{regular} if $D^*$ is densely-defined and the operator $\one +D^*D:\Dom (D^*D) \to Y_B$ has dense range.  
Self-adjoint and regular operators admit a continuous functional calculus. 
If $D$ is regular, $\one +D^*D$ has a bounded and positive inverse in $\End_B(Y)$ with 
$\Ran\big( (\one +D^*D)^{-1} \big) \subset \Dom(D)$ and dense in $Y_B$~\cite[Lemma 9.2]{Lance}.

\begin{defn}
Let $A$ and $B$ be $\Z_2$-graded Real $C^*$-algebras.
An unbounded Real Kasparov module $(\calA, {}_\pi{Y}_B, D)$ 
consists of
\begin{enumerate}
\item a Real and $\Z_2$-graded $C^*$-module ${Y}_B$, 
\item a Real and $\Z_2$-graded $*$-homomorphism $\pi:A \to \End_B(Y)$, 
\item a (densely-defined) odd regular self-adjoint operator $D=D^\rs$ and a dense $*$-subalgebra 
$\calA\subset A$ such that, for all $a\in \calA\subset A$, 
it holds that $\pi(a)\Dom(D)\subset\Dom(D)$ and we have
\begin{align*}
  & [D,\pi(a)]_\pm \,\in\, \End_B(Y)\quad\mbox{and}\quad \pi(a)(1+D^2)^{-1/2}\,\in\, \End_B^0(Y).
\end{align*}
\end{enumerate}
\end{defn}

If $(\calA, {}_\pi{Y}_B, D)$  is an unbounded Real Kasparov module, then 
$(A, X_B, D(1+D^2)^{-1/2})$ is a Real Kasparov module~\cite{BJ}.  
In {\S}\ref{sec:Fredholm_Real} we consider more general methods to obtain Kasparov modules 
from bounded or unbounded Fredholm operators via a normalising function.

\paragraph{Kasparov products}
The internal and external Kasparov products are bilinear and functorial maps
\begin{align*}
  \KKR(A,B)\times \KKR(B,C) &\to \KKR(A,C), & (x,y) &\mapsto x \otimes_B y , \\
  \KKR(A,B)\times \KKR(C,D) &\to \KKR(A\hox C,B\hox D), & (w,z) &\mapsto w \otimes z .
\end{align*}
The group $\KKR(A,A)$ becomes a ring under the internal Kasparov product, with the identity element given by $\Id_A=[(A,A_A,0)]$. 
The external Kasparov product with $\Id_A$ yields a map 
\[
\tau_A \colon \KKR(B,C) \to \KKR(B\hox A,C\hox A) , 
\qquad 
x \mapsto x \otimes \Id_A .
\]
This map allows us to define a more general pairing, referred to as \emph{the internal product over $C$}: 
\begin{align*}
\KKR(A_1,B_1\hox C) \times \KKR(C\hox A_2,B_2) &\to \KKR(A_1\hox A_2,B_1\hox B_2) , \\
(x,y) &\mapsto x \otimes_C y := ( x \otimes \Id_{A_2} ) \otimes_{B_1\hox C\hox A_2} ( {\Id_{B_1}} \otimes y ) .
\end{align*}

\paragraph{Stability and Morita invariance}
Let $\calK$ denote the algebra of compact operators on a Real separable Hilbert space. 
We recall that Real $\KK$-theory is \emph{stable}:
\[
\KKR(A,B) \cong \KKR(A \hox \calK,B) \cong \KKR(A,B \hox \calK) .
\]
In the special case of the  finite-dimensional Real vector space 
$\exterior\C^n$, where  $\Cl_{n,n} \cong \End\big( \exterior\C^n \big)$, we use the special notation 
\begin{equation}
\label{eq:Clifford_Morita_KK}
\calM_{n,n}^{\KK} \colon \KKR(A,B) \xrightarrow{\simeq} \KKR(A,B\hox\Cl_{n,n})  ,
\end{equation}
which we will sometimes refer to as \emph{Clifford stability}. 
Since $\Id_{\Cl_{r,s}} \hat{\otimes} \Id_{\Cl_{s,r}} = \Id_{\Cl_{r+s,r+s}}$, it follows from Clifford stability that also 
\[
\tau_{\Cl_{r,s}} \colon \KKR(A,B) \xrightarrow{\simeq} \KKR(A\hox\Cl_{r,s},B\hox\Cl_{r,s}) , 
\qquad 
x \mapsto x \otimes \Id_{\Cl_{r,s}}
\]
is an isomorphism. 
Furthermore, for any $r,s\in\N$, we may define the \emph{Clifford shuffle isomorphisms} 
$\shuffle^{r,s}_{\KK} \colon \KKR( A \hox \Cl_{s,r}, B)  \xrightarrow{\simeq} \KKR( A, B \hox \Cl_{r,s} )$ 
via the composition 
\begin{multline}   \label{eq:Clifford_shuffle}
  \shuffle^{r,s}_{\KK} \colon 
  \KKR( A \hox \Cl_{s,r}, B) \xrightarrow{ \tau_{\Cl_{r,s}} } 
  \KKR( A \hox \Cl_{s,r}  \hox \Cl_{r,s}, B \hox \Cl_{r,s} ) \\
    \xrightarrow{\simeq} \KKR( A \hox \Cl_{r+s,r+s}, B \hox \Cl_{r,s} ) \xrightarrow{\simeq} %\xrightarrow{ \calM_{r+s,r+s}^{\KK} } 
    \KKR( A, B \hox \Cl_{r,s} ) .
\end{multline}
The maps $\shuffle^{r,s}_{\KK}$ can be computed explicitly on cycles, see Lemma \ref{lem:cliff-shuffle}.

Now let $Y_B$ be a full right $B$-module. 
Then the element 
\[
\big[ \big( \End^0_B(Y), \, Y_B, \, 0 \big) \big] \in \KKR\big( \End^0_B(Y) , B \big) ,
\]
is a \emph{$\KK$-equivalence} (i.e., it is invertible under the internal Kasparov product). 
The \emph{Morita invariance isomorphism} $\calM_Y^{\KK}$ is implemented by taking the Kasparov product with the above element:
\begin{equation} \label{eq:Morita_invariance}
\calM_Y^{\KK} \colon \KKR(A, \End_B^0(Y)) \xrightarrow{\simeq} \KKR(A,B) , 
\quad 
\calM_Y^{\KK} (x)  := x \otimes_{\End_B^0(Y)} \big[ \big( \End^0_B(Y), Y_B, 0 \big) \big] .
\end{equation}

\paragraph{Bott periodicity}
The Bott periodicity isomorphism in Real $\KK$-theory can also be implemented by taking Kasparov products. 
\begin{prop}[{\cite[{\S}5]{Kasparov80}}] \label{prop:KK_Bott}
The unbounded Kasparov modules
\begin{align*}
    \beta &= \Big( \C, \, \big(C_0(\R) \otimes \Cl_{1,0}\big)_{C_0(\R) \otimes \Cl_{1,0}} , \, x \otimes \gamma \Big), \quad 
  (x f)(x) = xf(x), \quad [\beta] \in \KKR(\C, C_0(\R)\otimes \Cl_{1,0}) \nonumber \\
       \slashed{\partial} &= \Big( C^1_0(\R) \otimes \Cl_{1,0}, \, L^2(\R) \otimes \exterior{\C}, \, \partial_x \otimes \rho \Big), \quad 
   [\slashed{\partial}] \in \KKR(C_0(\R) \otimes \Cl_{1,0}, \C),  
\end{align*}
are mutually inverse $\KK$-equivalences. We therefore have group isomorphisms 
\begin{align*}
  &\beta_{\KK}: \KKR(A, B) \to \KKR( A, C_0(\R) \hox B \hox \Cl_{1,0}),  &&\beta_{\KK}(\cdot) = (\cdot) \, \hat\otimes \, [\beta], \\
  &\beta_{\KK}^{-1}: \KKR(A , C_0(\R) \hox B \hox \Cl_{1,0}) \to \KKR( A , B),  &&\beta_{\KK}^{-1}(\cdot) = (\cdot) \, \hat\otimes_{C_0(\R)\hox\Cl_{1,0}} \,  [\slashed{\partial}].
\end{align*}
The map $\beta_{\KK}$ is called the \emph{Bott isomorphism}. 
\end{prop}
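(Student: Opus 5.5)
This is Kasparov's Bott periodicity theorem in the Real setting, so the plan is to verify that the two classes are mutually inverse under the Kasparov product:
\[
[\beta]\otimes_{C_0(\R)\otimes\Cl_{1,0}}[\slashed{\partial}] = \Id_{\C}, \qquad [\slashed{\partial}]\otimes_{\C}[\beta] = \Id_{C_0(\R)\otimes\Cl_{1,0}} .
\]
The stated group isomorphisms $\beta_{\KK}$ and $\beta_{\KK}^{-1}$ then follow from associativity and functoriality of the product: $\beta_{\KK}$ is given by external multiplication with $[\beta]$, and $\beta_{\KK}^{-1}$ by the product over $C_0(\R)\hox\Cl_{1,0}$ with $[\slashed\partial]$. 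The real structures are on $C_0(\R)$ by pointwise conjugation and on $\Cl_{1,0}$ as in the conventions. First I would check that both triples are Real unbounded Kasparov modules.
\begin{itemize}
\item For $\beta$, the operator $x\otimes\gamma$ is odd, self-adjoint and Real, and $(1+x^2)^{-1/2}$ lies in $C_0(\R)$, so its resolvent is compact on the module $C_0(\R)\otimes\Cl_{1,0}$.
\item For $\slashed\partial$, the operator $-i\partial_x$ is not Real, but $\partial_x\otimes\rho$ is Real, odd and self-adjoint, because $\partial_x$ is Real and skew-adjoint and $\rho$ is Real and skew-adjoint. It graded-commutes with $\gamma$, which is the generator of $\Cl_{1,0}$. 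Commutators with $f\in C^1_0(\R)$ are bounded, and $f(1-\partial_x^2)^{-1/2}$ is compact by Rellich.
\end{itemize}

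For the first product, $[\beta]\otimes[\slashed\partial]$, I would use the Kucerovsky (or Connes--Skandalis) criterion for unbounded representatives. The balanced tensor product $(C_0(\R)\otimes\Cl_{1,0})\hox_{C_0(\R)\otimes\Cl_{1,0}}(L^2(\R)\otimes\exterior\C)$ is just $L^2(\R)\otimes\exterior\C$, and the candidate operator is $x\otimes\gamma+\partial_x\otimes\rho$ on $\exterior\C\cong\C^2$. This is a Real form of the harmonic oscillator: its square is $-\partial_x^2+x^2+[\text{const}]\otimes\gamma\rho$. So it has compact resolvent and one-dimensional kernel spanned by the Gaussian $e^{-x^2/2}$, placed in the even or odd part depending on the sign. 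The Gaussian is Real. Hence the product class equals the class of the finite-rank module $(\C,\ker,0)$, which is $\Id_\C\in\KKR(\C,\C)$, after checking the kernel sits in the even part (otherwise one gets $-\Id$, and I would need to adjust the sign convention). Kucerovsky's connection and positivity conditions are routine here, since the operators act on different tensor legs.

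For the reverse product I would use Atiyah's rotation trick rather than a direct computation. Set $z=[\slashed\partial]\otimes_\C[\beta]\in\KKR(C_0(\R)\otimes\Cl_{1,0},C_0(\R)\otimes\Cl_{1,0})$. The product is graded-commutative over $\C$, and there is a flip homotopy on $C_0(\R^2)$ via a rotation by $\pi/2$ in $SO(2)$; this rotation respects the real structures because it commutes with conjugation. Using these,
\[
z\otimes[\beta]=[\beta]\otimes([\beta]\otimes[\slashed\partial])=[\beta]
\]
up to the flip. Tensoring then gives $z=z\otimes_\C([\beta]\otimes[\slashed\partial])$, which I would rearrange by associativity to $z=\Id$. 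The main obstacle is tracking signs from the graded flip of the two $\Cl_{1,0}$ factors under the Real structure. The rotation must be by an element of $SO(2)$ acting on both $\R$ and the Clifford generators compatibly, so that the induced automorphism of $C_0(\R^2)\hox\Cl_{2,0}$ is Real and homotopic to the identity. I would verify this explicitly via the path $\cos\theta\,\gamma_1+\sin\theta\,\gamma_2$, which consists of Real self-adjoint generators.
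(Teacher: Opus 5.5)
Your overall route is the standard Kasparov/Atiyah one: the harmonic oscillator for $[\beta]\otimes[\slashed\partial]$, and the rotation trick for the reverse product. The paper gives no proof of its own and only cites Kasparov.

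\textbf{The first half is fine.} On $L^2(\R)\otimes\exterior\C$ the operator $x\gamma+\partial_x\rho$ has off-diagonal entries $x\pm\partial_x$. Its square is $x^2-\partial_x^2\mp 1$ on the even and odd summands. Its kernel is spanned by the Real vector $(e^{-x^2/2},0)$ in the even summand, so you get $+\Id_\C$ with the paper's conventions. One small correction: $x$ and $\partial_x$ act on the same $L^2(\R)$ leg, not on different legs. Kucerovsky's positivity condition holds because $\{x\otimes\gamma,\partial_x\otimes\rho\}=-\gamma\rho$ is bounded.

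\textbf{The second half has a genuine gap.} Write $B=C_0(\R)\otimes\Cl_{1,0}$ and $z=[\slashed\partial]\otimes_\C[\beta]$. Your displayed chain, read as $[\beta]\otimes_B z=[\beta]$, is pure associativity. It holds for any right inverse of $[\beta]$, so it cannot force $z=\Id_B$. Likewise, no rearrangement by associativity turns $z=z\otimes_\C\Id_\C$ into $z=\Id_B$. The actual content is the following:
\[
\Id_B=([\beta]\hox\Id_B)\otimes_{B\hox B}([\slashed\partial]\hox\Id_B),\qquad z=(\Id_B\hox[\beta])\otimes_{B\hox B}([\slashed\partial]\hox\Id_B),
\]
and $[\beta]\hox\Id_B=\sigma_*(\Id_B\hox[\beta])$ for the graded flip $\sigma$ of $B\hox B\cong C_0(\R^2)\hox\Cl_{2,0}$. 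So you must show that $\sigma_*$ fixes $\Id_B\hox[\beta]$.

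This is where your key claim fails. The flip $\sigma$ is induced by $(x_1,x_2)\mapsto(x_2,x_1)$ together with $\gamma_1\leftrightarrow\gamma_2$, a reflection of determinant $-1$. It is not a rotation by $\pi/2$, and the $SO(2)$-path you describe never reaches it.

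What is true is $\sigma=\alpha_{R_{\pi/2}}\circ\alpha_r$. Here $R_\theta$ are the rotation automorphisms, which are Real and homotopic to the identity as you say. The map $r$ is the reflection $x_2\mapsto -x_2$, $\gamma_2\mapsto-\gamma_2$ acting on the second tensor factor only. The missing step is
\[
(\alpha_r)_*(\Id_B\hox[\beta])=\Id_B\hox[\beta],
\]
which holds because $r$ sends $x\otimes\gamma$ to $(-x)\otimes(-\gamma)=x\otimes\gamma$. This is exactly where the Clifford factor compensates for the orientation reversal: the sign problem you flagged but did not resolve. With this step, $z=\Id_B$, and the isomorphisms $\beta_{\KK}^{\pm1}$ follow by associativity as you say.
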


We will also find the following alternative version of Bott periodicity useful. 

\begin{cor} \label{cor:KK_Bott_alternative}
We have group isomorphisms 
\begin{align*}
\altBott_{\KK} &\colon \KKR(A\hox \Cl_{1,0}, B) \to \KKR( A, C_0(\R) \hox B ) , \\
\altBott_{\KK}^{-1} &\colon \KKR(A , C_0(\R) \hox B) \to \KKR( A \hox \Cl_{1,0}, B) 
\end{align*}
satisfying 
\[
\altBott_{\KK}^{-1}(\cdot) 
= \beta_{\KK}^{-1} \circ \tau_{\Cl_{1,0}} (\cdot) 
= (\cdot) \otimes_{C_0(\R)} [\slashed{\partial}] .
\]
\end{cor}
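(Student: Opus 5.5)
The plan is to define $\altBott_{\KK}$ as a composite of isomorphisms already available, and then read off the formula for its inverse. Concretely, set
\[
\altBott_{\KK} := \tau_{\Cl_{1,0}}^{-1}\circ \beta_{\KK} ,
\]
and check that the target spaces line up. By Proposition \ref{prop:KK_Bott}, $\beta_{\KK}$ maps $\KKR(A\hox\Cl_{1,0},B)$ isomorphically onto
\[
\KKR\big(A\hox\Cl_{1,0},\,C_0(\R)\hox B\hox\Cl_{1,0}\big).
\]
By Clifford stability, $\tau_{\Cl_{1,0}}\colon \KKR(A,C_0(\R)\hox B)\to \KKR(A\hox\Cl_{1,0},C_0(\R)\hox B\hox\Cl_{1,0})$ is an isomorphism (it is the case $(r,s)=(1,0)$ of the statement following \eqref{eq:Clifford_Morita_KK}). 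Both $\beta_{\KK}$ and $\tau_{\Cl_{1,0}}$ are group isomorphisms, so $\altBott_{\KK}$ is one too, with inverse $\altBott_{\KK}^{-1}=\beta_{\KK}^{-1}\circ\tau_{\Cl_{1,0}}$. This is the first claimed identity.

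For the second identity, write $\beta_{\KK}^{-1}(\cdot)=(\cdot)\otimes_{C_0(\R)\hox\Cl_{1,0}}[\slashed\partial]$ and unfold $\tau_{\Cl_{1,0}}(x)=x\otimes \Id_{\Cl_{1,0}}$ for $x\in\KKR(A,C_0(\R)\hox B)$. We then need
\[
(x\otimes\Id_{\Cl_{1,0}})\otimes_{C_0(\R)\hox B\hox\Cl_{1,0}}(\Id_B\otimes[\slashed\partial]) \;=\; x\otimes_{C_0(\R)}[\slashed\partial],
\]
where the right-hand side is the internal product over $C_0(\R)$. In the notation of the appendix this takes $A_1=A$, $B_1=B$, $C=C_0(\R)$, $A_2=\Cl_{1,0}$ and $B_2=\C$. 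With these choices the definition of the internal product over $C$ gives exactly $(x\otimes\Id_{\Cl_{1,0}})\otimes(\Id_B\otimes[\slashed\partial])$. The graded tensor factors need reordering ($B\hox C_0(\R)$ versus $C_0(\R)\hox B$), and this is handled by the flip isomorphisms, which are compatible with Kasparov products by associativity and commutativity of the external product.

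I expect the main obstacle to be this bookkeeping of graded tensor orderings, in particular checking that the flip of $\Cl_{1,0}$ past $B$ introduces no sign. To settle it I would use that flip isomorphisms of graded tensor products induce $\KK$-equivalences commuting with external products, as in Kasparov's associativity and commutativity theorems \cite{Kasparov80}.
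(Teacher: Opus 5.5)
Your proof is correct and is essentially the argument the paper intends. The paper gives no separate proof and treats the statement as an immediate consequence of Proposition \ref{prop:KK_Bott}: compose $\beta_{\KK}$ with the inverse of the Clifford-stability isomorphism $\tau_{\Cl_{1,0}}$, then unfold the internal product over $C=C_0(\R)$ with $A_2=\Cl_{1,0}$ and $B_2=\C$, exactly as you do. The obstacle you anticipate does not arise: the only reordering needed is the flip $C_0(\R)\hox B\cong B\hox C_0(\R)$, which carries no sign because $C_0(\R)$ is trivially graded, and no flip of $\Cl_{1,0}$ past $B$ ever occurs.
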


\section{Van Daele \texorpdfstring{$K$}{K}-theory} \label{sec:appendix_DK}

We review Van Daele $K$-theory as it was 
first considered in~\cite{vanDaele1, vanDaele2} and 
then further developed in~\cite{BKR,  Kellendonk15, Kellendonk19, Kubota15a, JosephMeyer, Roe04}.

\subsection{Basic definition and properties}

\begin{defn}
Let $B$ be a complex $C^*$-algebra. We say that $B$ has a balanced $\Z_2$-grading 
if $B$ contains an odd self-adjoint unitary (OSU). That is, there is an odd element $e$ satisfying $e=e^* = e^{-1}$ 
(in particular, $B$ must be unital). 
If $B$ has a real structure $\rs$, we also require $e^{\rs} = e$.
\end{defn}

If $B$ is unital and not balanced graded, we replace $B$ by $B \hox \Cl_{1,1}$, which has the OSU $\one \otimes \gamma$.

Extending the grading and real structure of $B$ to $M_n(B)$ entrywise, we 
let $V(B)$ denote the disjoint union $\bigsqcup_k\pi_0\big(\mbox{\rm OSU}(M_k(B))\big)$ 
with $\mbox{\rm OSU}(M_k(B))$ the set of odd self-adjoint unitaries in $M_k(B)$. 
Then $V(B)$  is an abelian semigroup under direct summation,
$[x]+[y] = [x\oplus y]$, and we denote by $GV(B)$ the Grothendieck completion. 
The semigroup homomorphism $d:V(B)\to \N$ taking the 
value $k$ on $M_k(B)$ induces a group
homomorphism $d:GV(B)\to\Z$.

\begin{defn} 
\label{defn:vD-K}
For a balanced $C^*$-algebra $B$, we define
the Van Daele $K$-theory group $\DK(B):=\Ker(d:GV(B)\to\Z)$. 

If $B$ is not unital then we set
$\DK(B)=\Ker(q_*:\DK(B^\sim)\to \DK(\C))$ where $q:B^\sim\to\C$ quotients the minimal unitisation $B^\sim$ by the ideal $B$.
\end{defn}

We denote elements of $\DK(A)$ as formal differences of odd self-adjoint unitaries, $[x]-[y]$. 
It is easy to see from the definition that  $\DK(B) \cong DK\big( M_n(B) \big)$ for any $n \geq 1$.  We 
will review the more general Morita invariance of Van Daele $K$-theory in {\S}\ref{sec:DK_Morita}.

\paragraph{Clifford stability}

For a balanced graded algebra $B$, we could also consider the semigroup $V(B \hox \Cl_{1,1})$. 
The following shows that this leads to a consistent definition and gives a basic Clifford stability of $\DK$-theory.

\begin{lemma}[{\cite[Lemma 2.3]{BKR}}] \label{lemma:DK_Clifford_stability}
Let $B$ be balanced graded.
The map 
\begin{equation*} \label{eq:DK_clifford_stability}
  \DK(B) \ni [ x] - [y] \xmapsto{\calM_{1,1}^{\DK}} \big[ \tfrac{1}{2}( x+y) \hox \one + \tfrac{1}{2}(x-y) \hox \gamma \rho ] - [ \one \hox \gamma ] 
     \in \DK(B \hox \Cl_{1,1})
\end{equation*}
furnishes a natural  isomorphism $\calM_{1,1}^{\DK}: \DK(B) \xrightarrow{\simeq} \DK( B \hox \Cl_{1,1})$. 
The $n$-fold composition gives a natural isomorphism $\calM_{n,n}^{\DK}: \DK(B) \xrightarrow{\simeq} \DK(B \hox \Cl_{n,n} )$.
\end{lemma}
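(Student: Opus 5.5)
To prove Lemma \ref{lemma:DK_Clifford_stability}, I would first check that the formula produces odd self-adjoint unitaries. I would then build an explicit inverse, or reduce to a known stability statement, and finally pass to the non-unital case and iterate.

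\textbf{Step 1: the formula gives odd self-adjoint unitaries.} Let $x,y\in M_k(B)$ be odd self-adjoint unitaries and put
\[
z = \tfrac12(x+y)\hox \one + \tfrac12(x-y)\hox\gamma\rho .
\]
Since $\gamma\rho$ is even and $(\gamma\rho)^*=\rho^*\gamma=-\rho\gamma=\gamma\rho$, it is a self-adjoint unitary; in the matrix picture it is $\gamma\rho=\sigma_3$. Under $\Cl_{1,1}\cong M_2(\C)$ with this diagonal grading, $z=\mathrm{diag}(x,y)$. It is therefore an odd self-adjoint unitary in $M_k(B)\hox\Cl_{1,1}$, and it is Real because $x$, $y$, $\gamma$, $\rho$ are all Real. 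Homotopies of $x$ or $y$ give homotopies of $z$, and direct sums are respected. So the assignment $([x],[y])\mapsto [z]-[\one\hox\gamma]$ is additive in $x$ and anti-additive in $y$ up to the fixed basepoint. It remains to check that it descends to $\DK(B)$: replacing $(x,y)$ by $(x\oplus w,\,y\oplus w)$ should not change the class. This amounts to showing that $[\mathrm{diag}(w,w)]=[\one\hox\gamma]$ in the $\Cl_{1,1}$-picture. I would obtain this by rotating $\mathrm{diag}(w,w)$ to $w\hox\gamma$ and then, by a rotation in the $\gamma,\rho$ plane, to the basepoint. This uses the standard fact that $w\hox\gamma$ and $\one\hox\gamma$ are homotopic after stabilising by $(-w)$; in balanced $B$ the elements $w$ and $-w$ sum to a trivial class.

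\textbf{Step 2: bijectivity.} The cleanest route is to compare with a known isomorphism rather than build an inverse by hand. I would write $B\hox\Cl_{1,1}\cong B\hox\End(\exterior\C)\cong M_2(B)$ with the grading twisted by $\mathrm{diag}(1,-1)$, using the balancing element $e$ to untwist it. I expect this to be the main obstacle: an explicit even unitary $\mathrm{diag}(\one,e)$-type conjugation should identify $M_2(B)$ with twisted grading with $M_2(B)$ with standard grading. Under that identification $z$ maps to $\mathrm{diag}(x,\,e y e)$ or similar, and the basepoint maps to a trivial class. The map then becomes $[x]-[y]\mapsto [x]-[y]$ composed with the matrix stability $\DK(B)\cong\DK(M_2(B))$, which is an isomorphism. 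I would carry out this conjugation explicitly to confirm that the basepoint $\one\hox\gamma$ corresponds to $[e]-[e]$-type data, and hence that the correction term is consistent.

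\textbf{Step 3: naturality, the non-unital case, and iteration.} Naturality in $B$ is immediate, since the formula is built from the algebraic operations. For non-unital $B$, I would apply the unital result to $B^\sim$ and to $\C$ and use naturality with respect to $q$ to restrict to the kernels. Iterating $n$ times and using $\Cl_{1,1}^{\hox n}\cong\Cl_{n,n}$ gives $\calM_{n,n}^{\DK}$.
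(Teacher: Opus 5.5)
Your strategy is the right one. Step~2 is exactly the mechanism behind this lemma: the $e$-dependent graded isomorphism $\psi_e\colon B\hox\Cl_{1,1}\to M_2(B)$ that the paper records in \eqref{eq-psi}, right after quoting the statement without proof. However, two steps fail as written.

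\textbf{Step~1.} The element $w\hox\gamma$ is not an odd self-adjoint unitary. Both tensor factors are odd, so $w\hox\gamma$ is \emph{even}, with $(w\hox\gamma)^*=-w\hox\gamma$ and $(w\hox\gamma)^2=-\one$. No path of OSUs can pass through it, and stabilising by $-w$ does not repair this. The fact you need follows at once from the Koszul sign, $(\one\hox\gamma)(w\hox\one)=-(w\hox\one)(\one\hox\gamma)$. Hence
\[
t\mapsto \cos t\,(w\hox\one)+\sin t\,(\one\hox\gamma),\qquad t\in[0,\tfrac{\pi}{2}],
\]
is a path of Real OSUs from $z(w,w)=w\hox\one$ to $\one\hox\gamma$, and no stabilisation is needed.

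\textbf{Step~2.} The identification with $M_2(B)$ is not the naive one.
\begin{itemize}
\item The graded product forces $b\hox c\mapsto b\otimes\sigma_3^{\deg b}c$, which lands in $M_2(B)$ with grading twisted by $\mathrm{Ad}_{\sigma_3}$. Only then does $\mathrm{Ad}_{\mathrm{diag}(\one,e)}$ untwist it; this unitary intertwines the two gradings but is not even.
\item The resulting images are $x\hox\one\mapsto\mathrm{diag}(x,-exe)$, $\one\hox\gamma\mapsto\left(\begin{smallmatrix}0&e\\ e&0\end{smallmatrix}\right)$ and $z(x,y)\mapsto\mathrm{diag}(x,-eye)$. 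Your guess $\mathrm{diag}(x,eye)$ is what the naive identification would give.
\item The sign is what makes the basepoint term work. The matrix $\left(\begin{smallmatrix}0&e\\ e&0\end{smallmatrix}\right)$ commutes with $\mathrm{diag}(y,eye)$ but anticommutes with $\mathrm{diag}(y,-eye)$. It is therefore homotopic to $\mathrm{diag}(y,-eye)$.
\item Consequently your class maps to $[x\oplus(-eye)]-[y\oplus(-eye)]$, which is $[x]-[y]$ under $\DK(M_2(B))\cong\DK(B)$.
\end{itemize}
With the normalisation $\psi_e(\one\hox\rho)=\left(\begin{smallmatrix}0&e\\ -e&0\end{smallmatrix}\right)$ used in \eqref{eq-psi}, you get $\mathrm{diag}(y,-exe)$ instead and the composite is $-\mathrm{id}$. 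Either way it is an isomorphism. Because this computation is done on representatives, it gives well-definedness and bijectivity simultaneously, so the Step~1 stabilisation argument is not needed at all.

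\textbf{Step~3.} The non-unital part is moot, since a balanced algebra is unital by definition. As stated it would also fail, because neither $B^\sim$ nor $\C$ is balanced.
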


We remark that the isomorphism from $\psi_e: B\hox \Cl_{1,1} \xrightarrow{\simeq} M_2(B)$ with entrywise grading on $M_2(B)$ requires a choice of OSU in $e \in B$, 
where
  \begin{equation}\label{eq-psi}
\psi_e(x\hox 1_{1,1}) = \begin{pmatrix} x & 0 \\ 0 & (-1)^{\mathrm{deg}(x)}exe \end{pmatrix} ,\quad
\psi_e(\one \hox \gamma) = \begin{pmatrix} 0 & e \\ e & 0 \end{pmatrix} ,\quad
\psi_e(\one \hox \rho) = \begin{pmatrix} 0 & e \\ -e & 0 \end{pmatrix} .
\end{equation}
Identifying  $\gamma \cong \sigma_1$ and $\rho \cong -i\sigma_2$, we can therefore  write
\begin{equation} \label{eq:M_clifford11_notation}
   \calM_{1,1}^{\DK}( [x] - [y] ) = \big[ \tfrac{1}{2}( x+y) \hox \one + \tfrac{1}{2}(x-y) \hox \gamma \rho ] - [ \one \hox \gamma ] =
    \left[\begin{pmatrix} x & 0\\ 0 & y\end{pmatrix}\right]-\left[\begin{pmatrix} 0 & \one \\ \one & 0\end{pmatrix}\right] 
   \in \DK(B \hox \Cl_{1,1}).
\end{equation}

\paragraph{Relative \texorpdfstring{$\DK$}{DK}-theory}
It is often more natural to 
consider larger unitisations of non-unital algebras and work with relative $K$-theory. 
For  a balanced graded algebra $B$ with a closed, two-sided, and graded ideal $I$ we define
the relative Van Daele group, using homotopy classes in $\mbox{\rm OSU}(M_n(B))$, by
\[
\DK(B,\, B/I):=\{[x]-[y]:\,\, x,y\in \mbox{\rm OSU}(M_n(B)),\ \
x-y\in M_n(I)\}.
\]
By \cite[Proposition 2.4]{BKR}, there is an excision isomorphism 
\begin{equation} \label{eq:DK_excision}
\exc \colon \DK(B, B/I) \xrightarrow{\simeq} \DK(I) .
\end{equation}
We also note the following that we use frequently.

\begin{lemma}
\label{lem:relative_DK}
Let $I$ be an ideal in a balanced graded $C^*$-algebra $B$, and $x,y\in M_n(B)$ odd self-adjoint unitaries with $\Vert q(x)-q(y)\Vert_{B/I}<2$ (where $q\colon B \to B/I$ denotes the quotient map). 
Then there is a well-defined relative class $[x]-[y]\in \DK(B,B/I)$.
\end{lemma}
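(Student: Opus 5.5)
The plan is to reduce to the case where $x$ and $y$ differ by an element of $M_n(I)$, via a homotopy of $y$ that stays inside the odd self-adjoint unitaries. This mirrors the proof of Lemma \ref{lemma:close_J_are_unitary_equiv} and Lemma \ref{lemma:homotopic_to_lifting}, with skew-adjoint unitaries replaced by odd self-adjoint unitaries. The condition $\|q(x)-q(y)\|<2$ says that $q(x)$ and $q(y)$ are close in $M_n(B/I)$.

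\emph{Step 1: conjugate in the quotient.} Set $Z=\tfrac12(q(x)+q(y))$. This element is odd and self-adjoint, and satisfies $q(x)Z=Zq(y)$ and $q(y)Z=Zq(x)$. A direct computation gives
\[
\one - Z^2=\tfrac14\,(q(x)-q(y))^2 ,
\]
so $\|\one-Z^2\|<1$ and $Z$ is invertible. Then $w=Z|Z|^{-1}$ is an odd self-adjoint unitary with $w\,q(y)\,w=q(x)$; here we use that $Z^2$ commutes with $q(x)$ and $q(y)$. Applying the same construction to the straight-line path $Z_t=q(y)+\tfrac t2\,(q(x)-q(y))$, $t\in[0,1]$, gives a norm-continuous path of odd self-adjoint unitaries $w_t$. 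Put $u_t=w_tq(y)$. Then $u_0=\one$, $u_t$ is an even unitary (odd times odd), $u_t$ is Real when $x$ and $y$ are, and $u_1q(y)u_1^*=q(x)$.

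\emph{Step 2: lift and conjugate.} The path $u_t$ is a path of even unitaries in $M_n(B/I)$ starting at $\one$, so it lies in the identity component. Even Real unitaries in the identity component lift to even Real unitaries $\tilde u_t$ in $M_n(B)$ (lift products of exponentials of even skew-adjoint Real elements), and we may take the lift to be a continuous path with $\tilde u_0=\one$. Define
\[
\tilde y_t=\tilde u_t\, y\, \tilde u_t^*.
\]
This is a homotopy of odd self-adjoint unitaries from $y$ to $\tilde y:=\tilde y_1$, and $q(\tilde y)=u_1q(y)u_1^*=q(x)$. Hence $x-\tilde y\in M_n(I)$, so $[x]-[\tilde y]$ is a class in $\DK(B,B/I)$ by definition. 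We then declare $[x]-[y]:=[x]-[\tilde y]$.

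\emph{Step 3: well-definedness.} This is the main obstacle. Suppose $\tilde y'$ is another such homotopic replacement. Then $\tilde y$ and $\tilde y'$ are homotopic through odd self-adjoint unitaries, and both agree with $q(x)$ modulo $I$. What we need is a homotopy that keeps $x-\tilde y_t\in M_n(I)$ throughout. The homotopy $\tilde y\sim y\sim\tilde y'$ only has quotients within distance $<2$ of $q(x)$, not equal to it. I would handle this in the same way as in Step 2: apply the $Z|Z|^{-1}$ construction uniformly along the homotopy, in a two-parameter version. This produces a family of even unitaries $v_s$ in the quotient conjugating $q(\tilde y_s)$ back to $q(x)$, with $v$ trivial at both endpoints. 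After lifting, this gives a homotopy inside the relative pairs.

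The same two-parameter argument shows that the class depends only on the homotopy classes of $x$ and $y$ subject to the $<2$ condition. It also yields the compatibility needed later, for instance in Lemma \ref{lem:Fred_pair_via_rel_DK}.
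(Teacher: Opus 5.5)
Your Steps 1 and 2 are exactly the paper's proof. The paper runs the straight line from $q(x)$ towards $\tfrac12(q(x)+q(y))$, takes $w_t=\mathrm{sgn}(Z_t)$, lifts the endpoint of the even unitary path $v_t=w_tq(x)$ to an even unitary $\tilde v\sim_h\one$, and sets $\tilde y=\tilde v^*y\tilde v$. It then simply takes $[x]-[\tilde y]$ as the class, and has nothing corresponding to your Step 3.

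That step does not work as written, for two reasons.

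\emph{The lift of a loop need not be a loop.} Path lifting applied to $v_s$ (with $v_0=v_1=\one$) gives $\tilde v_s$ with $\tilde v_0=\one$. But $\tilde v_1$ is only some even unitary in $\one+M_n(I)$, and it need not be connected to $\one$ inside $\one+M_n(I)$. So you only get a homotopy of relative pairs from $(x,\tilde y)$ to $(x,\tilde v_1\tilde y'\tilde v_1^*)$, not to $(x,\tilde y')$.

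\emph{The premise must be restricted.} If ``such'' means any odd self-adjoint unitary $\tilde y'\sim_h y$ with $x-\tilde y'\in M_n(I)$, the relative class really can change. Take $B=C(D^2)\otimes\Cl_1$, $I=C_0(\mathrm{int}\,D^2)\otimes\Cl_1$, and $x=y=(2p_0-\one)\otimes\gamma$ with $p_0$ a constant rank-one projection. Let $\tilde y'=(2p'-\one)\otimes\gamma$, where $p'=p_0$ on $\partial D^2$ and $p'$ clutches to the Bott bundle. Then $\tilde y'\sim_h y$ and $x-\tilde y'\in M_2(I)$, but $[x]-[\tilde y']\neq 0$, whereas the construction gives $[x]-[y]=0$. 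So your claim that the homotopy $\tilde y\sim y\sim\tilde y'$ keeps its quotient within distance $<2$ of $q(x)$ is not automatic. It holds for replacements produced by your construction, whose quotient path is $w_tq(y)w_t$, but it needs an argument.

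For replacements produced by your construction, independence is easy and needs no two-parameter argument. The only choice is the lift of the canonical path $u_t$. Let $\tilde u_t,\tilde u_t'$ be two lifts with $\tilde u_0=\tilde u_0'=\one$, and set $W_t:=\tilde u_t'\tilde u_t^*$. This is a path of even unitaries in $\one+M_n(I)$ from $\one$ to $W_1$, and $\tilde y'=W_1\tilde yW_1^*$. Hence $t\mapsto(x,W_t\tilde yW_t^*)$ is a homotopy of relative pairs from $(x,\tilde y)$ to $(x,\tilde y')$.

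If one lifts only the endpoint, as the paper does, two choices differ by an even unitary $W\in\one+M_n(I)$ that may not be connected to $\one$ there. In that case one needs the stable fact that conjugation by such a $W$ does not change the Van Daele class: inner automorphisms by even unitaries act trivially on $\DK$, via the rotation homotopy from $\mathrm{diag}(W,W^*)$ to $\one$.
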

\begin{proof}
We ignore matrices for the proof.
Analogous to the proof of Lemma  \ref{lemma:close_J_are_unitary_equiv}, we take a path of self-adjoint odd operators 
$[0,1] \ni t \mapsto Z_t = q(x) + \frac{t}{2}(q(y) -q(x) ) \in B / I$, which is such that 
\[
  \big\| \one - Z_t^2 \big\|_{B/I} =  \big| \tfrac{t}{2}( 1 - \tfrac{t}{2} ) \big|  \, \big\| (q(x) - q(y))^2 \big\|_{B/I} < 1
\]
and so is invertible for all $t \in [0,1]$. We further note that $Z_0 = q(x)$  and $Z_1 =\frac{1}{2}(q(x)+q(y)) \in B / I$ is such 
that 
\[
  Z_1 q(x)  Z_1^{-1} =  q(y),  \qquad [Z_t^2, q(x)] = [Z_t^2, q(y) ] = 0.
\]
Therefore there is a  path of OSUs $[0,1]\ni t \mapsto w_t q(x) w_t^* \in B/I$ with $w_t = \mathrm{sgn}(Z_t)$ and 
such that $w_0 q(x) w_0^* = q(x)$ and $w_1 q(x) w_1^* = q(y)$. 

We now let $v_t = w_t q(x)$, which is a path of even unitaries from $v_0 = q(x)^2 = \one$ to $v_1 =w_1 q(x)$. 
Hence $v_1 \in B/ I$ is in the connected component of $\one$ in $B/ I$ and so lifts to an even unitary $\tilde{v} \in B$ 
that is homotopic to $\one \in B$ via a path of even unitaries in $B$. 
We let $\tilde{y} = \tilde{v}^* y \tilde{v}$, which is homotopic to $y$ by a path of OSUs in $B$. We further note that 
\[
   q( \tilde{y} ) = v_1^* q(y) v_1 = ( w_1 q(x) )^* q(y) w_1 q(x) = q(x) w_1^* q(y) w_1 q(x) = q(x)
\]
and so $\tilde{y} - x \in I$. Hence we have a well-defined class $[\tilde{y} ] - [x]  = [y] - [x] \in \DK(B, B/I)$ 
as $\tilde{y}$ is homotopic to $y$.
\end{proof}

Analogous to  Lemma \ref{lemma:homotopic_to_lifting}, we also have lifting results for odd self-adjoint unitaries.

\begin{lemma}[{\cite[Proposition 4.7]{vanDaele1}}]  \label{lemma:OSU_lifting}
Let $x_0, x_1 \in \mbox{\rm OSU}(B/ I)$  and 
suppose that $x_0 \in A/ I$ can be lifted to an odd self-adjoint unitary $\wt{x}_0 \in \mbox{\rm OSU}(B)$. 
\begin{enumerate}
  \item If $\|x_0 - x_1\|_{A/I} < 2$, then $x_1$ can also be lifted to an odd self-adjoint unitary $\wt{x}_1 \in \mbox{\rm OSU}(B)$.
  \item If $x_0 \sim_h x_1$ in $ \mbox{\rm OSU}(B/I)$, then there is a path $[0,1] \ni t \mapsto \wt{x}_t \in \mbox{\rm OSU}(B)$ 
  such that $q(\wt{x}_t) = x_t$.
\end{enumerate}
\end{lemma}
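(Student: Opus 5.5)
The natural route is to adapt the proof of Lemma \ref{lemma:homotopic_to_lifting} from skew-adjoint to odd self-adjoint unitaries, using the quotient-level construction already carried out in the proof of Lemma \ref{lem:relative_DK}. (Throughout, $A/I$ in the statement should be read as $B/I$.)

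For part (1), since $\|x_0 - x_1\|<2$, I would set $Z_t = x_0 + \tfrac{t}{2}(x_1-x_0)$ in $B/I$. These are odd and self-adjoint, and the estimate $\|\one - Z_t^2\| \le \tfrac14\|x_0-x_1\|^2<1$ shows they are invertible. Put $w_t=\mathrm{sgn}(Z_t)$. Since $Z_t^2$ commutes with $x_0$ and $x_1$, and $Z_1x_0=x_1Z_1$, we get $w_1x_0w_1^*=x_1$. Define the even unitaries $v_t=w_tx_0$. They run from $v_0=\one$ to $v_1=w_1x_0$, so $v_1$ lies in the identity component of the even unitary group of $B/I$. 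Even unitaries in the identity component lift: write $v_1$ as a product of exponentials $e^{ih_k}$ with $h_k$ even and self-adjoint, then lift each $h_k$ to an even self-adjoint element of $B$. For the Real case, take Real lifts by averaging $h$ with the image of $h$ under the real structure, adjusted so that $e^{ih}$ stays Real; this requires the same care as in Lemma \ref{lemma:homotopic_to_lifting}. This produces an even unitary $\tilde v\in B$ with $q(\tilde v)=v_1$. Then set
\[
\tilde x_1 := (\tilde x_0\tilde v)\,\tilde x_0\,(\tilde x_0\tilde v)^*.
\]
This is an odd self-adjoint unitary, because $\tilde x_0\tilde v$ is an odd unitary and conjugating the odd element $\tilde x_0$ by it gives an odd element. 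Its image is
\[
q(\tilde x_1)=x_0v_1x_0v_1^*x_0 = x_0w_1x_0x_0x_0w_1^*x_0 = x_0w_1x_0w_1^*x_0 = x_0x_1x_0.
\]
This is not $x_1$, so the conjugator needs correcting. Use $\tilde v\tilde x_0\tilde v^*$ directly: since $v_1^*=x_0w_1^*$, we have $q(\tilde v^*\tilde x_0\tilde v)=x_0w_1^*x_0w_1x_0$. The cleanest choice is to lift $w_1$ itself, via the odd unitary $w_1 = v_1x_0$. Take $\tilde w := \tilde v\tilde x_0$, which is an odd unitary lifting $w_1$, and set $\tilde x_1:=\tilde w\tilde x_0\tilde w^*$. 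This is odd, self-adjoint and unitary, with $q(\tilde x_1)=w_1x_0w_1^*=x_1$. Checking these algebraic identities is routine.

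For part (2), a single lift is not enough; we need a continuous path of lifts. I would run the construction of part (1) continuously along the homotopy. First subdivide $[0,1]$ so that $\|x_t - x_{t_j}\|<2$ on each piece $[t_j,t_{j+1}]$. On such a piece, $t\mapsto v_t^{(j)} = \mathrm{sgn}\big(\tfrac12(x_{t_j}+x_t)\big)x_{t_j}$ is a norm-continuous path of even unitaries starting at $\one$. The main obstacle is lifting this path of unitaries to a continuous path in $B$, not just lifting its endpoint. I would handle it by viewing the path as a single unitary in $C([t_j,t_{j+1}],B/I)$, namely the quotient of $C([t_j,t_{j+1}],B)$ by $C([t_j,t_{j+1}],I)$, and observing that it is connected to $\one$: the straight-line contraction in $s$ of $\mathrm{sgn}\big(\tfrac12(x_{t_j}+x_{t_j+s(t-t_j)})\big)$ stays invertible by the same estimate as in part (1). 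Identity-component unitaries of this quotient lift, so we obtain a continuous even unitary path $\tilde v_t$. Set $\tilde x_t := (\tilde v_t\tilde x_{t_j})\tilde x_{t_j}(\tilde v_t\tilde x_{t_j})^*$, starting from the lift $\tilde x_{t_j}$ obtained at the previous step. At $t=t_j$ we have $\tilde v_{t_j}$ lifting $\one$, and we may normalise $\tilde v_{t_j}=\one$ by multiplying by $\tilde v_{t_j}^{-1}$, which is still a lift of $\one$. This makes consecutive pieces agree at the nodes, and concatenating them gives the desired path.
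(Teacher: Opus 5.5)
Your proof is correct. The paper does not prove this lemma itself; it quotes it from \cite[Proposition 4.7]{vanDaele1}. Your part (1) is the same mechanism the paper uses in the proof of Lemma \ref{lem:relative_DK} and in Lemma \ref{lemma:homotopic_to_lifting}(1): join $x_0$ to $x_1$ through $w_tx_0w_t^*$ with $w_t=\mathrm{sgn}(Z_t)$, lift the even unitary $v_1=w_1x_0$ from the identity component, and conjugate.

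Your final choice $\tilde w\tilde x_0\tilde w^*$ simplifies to $\tilde v\tilde x_0\tilde v^*$, the same conjugation as $\tilde y=\tilde v^*y\tilde v$ in Lemma \ref{lem:relative_DK}. The false start came from transplanting the formula of Lemma \ref{lemma:homotopic_to_lifting}, where $u_t=-J_0w_t$ multiplies on the left, so the lift of $w_1$ there is $\tilde J_0\tilde u_1$. With your convention $v_t=w_tx_0$, the lift of $w_1$ is $\tilde v\tilde x_0$.

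Part (2) is where you genuinely go beyond the paper's analogue. Lemma \ref{lemma:homotopic_to_lifting}(2) only iterates part (1) over a partition, so it lifts the endpoint. That would not give the continuous path of lifts asserted here. Two steps in your argument supply the continuity:
\begin{itemize}
\item you lift the whole path $t\mapsto v^{(j)}_t$ as a single identity-component unitary of $C([t_j,t_{j+1}],B)/C([t_j,t_{j+1}],I)\cong C([t_j,t_{j+1}],B/I)$;
\item you renormalise by $\tilde v_{t_j}^{-1}$ so that consecutive pieces agree at the nodes.
\end{itemize}
An equivalent shortcut: $v^{(j)}_t$ is the principal square root of the even unitary $x_tx_{t_j}$. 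Hence $\|v^{(j)}_t-\one\|<\sqrt2$, and a single continuous logarithm $-i\log v^{(j)}_t$ can be lifted instead of a product of exponentials.

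The one point to state more precisely is the Real lift. For a Real unitary $u$ near $\one$, $h=-i\log u$ satisfies $h^{\frakr}=-h$. So you should replace an even self-adjoint lift $k$ by $\tfrac12(k-k^{\frakr})$, which makes $e^{i\tilde h}$ Real, even and unitary.
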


\paragraph{Basepointed \texorpdfstring{$\DK$}{DK}-theory}
We will occasionally consider Van Daele $K$-theory relative to a choice of basepoint~\cite{vanDaele1}.
For a balanced graded algebra $B$ and $e \in B$ an odd self-adjoint unitary, 
we let $V_e(B) =  \bigcup_{k}\pi_0\big(\mbox{\rm OSU}(M_{k}(B))\big)$ where we embed 
$M_k(B)$ into $M_{k+1}(B)$ via $x\mapsto x\oplus e$. 
Van Daele's $K$-theory group with a basepoint is defined as the Grothendieck
group $\DK_e(B) = GV_e(B)$. If $e$ is homotopic in ${\rm OSU}(B)$ to $-e$, then 
$V_e(B)$ is already an abelian group with $-[x] = [-exe]$. So in this setting 
(which can always be achieved by passing to matrices), the Grothendieck completion is unnecessary.

The group $\DK_e(B)$ does not depend on the choice of $e$ up to isomorphism~\cite[Proposition 2.12]{vanDaele1}.  
Furthermore, for any choice of basepoint $e$, $\DK_e(B)\cong \DK(B)$~\cite[{\S}2.1.1]{BKR}, and  we can 
pass back and forth between the relative and basepointed description of Van Daele $K$-theory.

\subsection{Bott periodicity}

The Bott map in Van Daele $K$-theory can be expressed in terms of suspensions and Clifford algebras. 
Recall that the suspension of a $C^*$-algebra $B$ is given by 
\[
   SB := C_0\big( (0,1) , B \big) = \big\{ f \in C([0,1], B) \mid f(0) = f(1) = 0 \big\} ,
\]
with the grading and real structure inherited from $B$. 
We will freely make use of the isomorphism
  $S(B \hox \Cl_{r,s} ) \cong SB \hox \Cl_{r,s}$.

We also consider the general suspension $S^{p,q} B := C_0(\R^{p,q},  B)$, which 
denotes the complex $C^*$-algebra $C_0(\R^{p+q}, B)$ with real structure 
$f^{{\rs}_{p,q}}(x,y) = f(x, -y)^{\rs_B}$ for $x \in \R^p$, $y \in \R^q$. 
We note that $SB \cong S^{1,0}B$. 

\begin{thm}[{\cite[{\S}2]{vanDaele2}, \cite[Proposition 5.8]{Kubota15a}}]  \label{thm:DK_Bott_isos}
Let $B$ be a $\Z_2$-graded and $\sigma$-unital $C^*$-algebra $B$. Then there are natural isomorphisms,
\[
  \beta_{\DK}: \DK_e( B) \xrightarrow{\simeq} \DK_{\one \hox \gamma}( SB \hox \Cl_{1,0} ), \qquad 
  \beta^S_{\DK}:\DK(B)\to \DK(S^{1,1} B).
\]
\end{thm}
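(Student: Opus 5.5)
The plan is to construct both isomorphisms on the level of explicit odd self-adjoint unitaries and then reduce bijectivity to known results. This is reasonable because the statement is attributed to Van Daele \cite{vanDaele2} and Kubota \cite{Kubota15a}.

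First, I construct $\beta_{\DK}$. Fix an odd self-adjoint unitary $e$ in (a matrix algebra over) $B$. For an odd self-adjoint unitary $x$ in $M_k(B)$, consider the loop
\[
t\mapsto \cos(\pi t)\, x\hox 1_{1,0} + \sin(\pi t)\, \one\hox\gamma \in M_k(B^\sim)\hox\Cl_{1,0}, \qquad t\in[0,1].
\]
Since $x\hox 1$ and $\one\hox\gamma$ are anticommuting odd self-adjoint unitaries, this is an odd self-adjoint unitary for every $t$. One may need to conjugate or rotate so that the endpoints equal the basepoint $\one\hox\gamma$; the cleanest choice is $t\mapsto \cos(\pi t)\,\one\hox\gamma+\sin(\pi t)\,x\hox 1$ and its analogue for $e$. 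With that choice, $\beta_{\DK}([x]-[e])$ is defined as the class of this loop, rebased, in $\DK_{\one\hox\gamma}(SB\hox\Cl_{1,0})$. The construction is compatible with direct sums and with homotopies of $x$, so it gives a well-defined natural homomorphism.

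Second, I prove bijectivity. For that I would compare with Kasparov theory: via the isomorphism $\Phi$ of Lemma~\ref{lemma:Phi} (Roe), Van Daele $K$-theory is identified with $\KKR$ for stable algebras, and there $\beta_{\DK}$ should correspond to the external product with the Bott class $[\beta]$ of Proposition~\ref{prop:KK_Bott}. Invertibility then follows from Kasparov's Bott periodicity. Alternatively, one can cite the direct proof \cite[\S2]{vanDaele2} or the approach of Kubota \cite[Prop.~5.8]{Kubota15a}, who shows that $\DK$ is a homology theory satisfying the hypotheses of a uniqueness argument.

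Third, for $\beta^S_{\DK}$ I would compose. By Clifford stability (Lemma~\ref{lemma:DK_Clifford_stability}), $\DK(B)\cong\DK(B\hox\Cl_{1,1})$. Then $S^{1,1}B=S^{0,1}S^{1,0}B$, and the dual suspension $S^{0,1}$ should absorb a $\Cl_{0,1}$ factor in the same way that $S$ absorbs $\Cl_{1,0}$ in $\beta_{\DK}$. This amounts to the same loop construction, now using the odd generator $\one\hox\rho$ and the reflected real structure $t\mapsto -t$ so that the loop is Real.

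The main obstacle is the identification of the explicit loop map with the Kasparov Bott product, including signs and Real structures. I would handle this by checking it on the generator over $B=\Cl_{1,0}$ or $\C$ and then invoking naturality together with the $\KKR$ module structure.
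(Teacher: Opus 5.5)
The paper does not prove this statement. It quotes it from \cite[\S2]{vanDaele2} and \cite[Proposition~5.8]{Kubota15a}, and afterwards only records an explicit formula for $\beta_{\DK}$. Your fallback of citing those sources for bijectivity is therefore exactly what the paper does. The self-contained route you actually sketch, however, has gaps, and the main one sits at the crux of the argument.

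\textbf{The construction of the map.} The path $p_x(t)=\cos(\pi t)\,\one\hox\gamma+\sin(\pi t)\,x\hox 1$ is not a loop, since $p_x(1)=-\one\hox\gamma$. The $e$-path has to be used to close it, and there are two ways to do this.
\begin{itemize}
\item Note that $p_x-p_e=\sin(\pi t)(x-e)\hox 1$ vanishes at $t=0,1$. So $[p_x]-[p_e]$ is a relative class, which excises into $\DK(SB\hox\Cl_{1,0})$.
\item Alternatively, take the paper's genuine loop $\mathrm{Ad}_{\nu(x,t)}\circ\mathrm{Ad}_{\nu(e,-t)}(\one\hox\gamma)$, using that $\mathrm{Ad}_{\nu(x,t)}(\one\hox\gamma)=p_x(t)$.
\end{itemize}

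\textbf{The comparison with $\KKR$.} $\Phi_B$ of Lemma~\ref{lemma:Phi} does not identify $\DK$ of a stable algebra with $\KKR$. It identifies $\KKR(\C,B)$ with $\DK(\calQ_{B\hox\calK})$, the Van Daele group of a Calkin algebra.
\begin{itemize}
\item To reach $\DK(B)$ you need a boundary map. Using $\delta$, i.e.\ the Roe map $\Roe_B$, is circular, because $\delta=\partial\circ\beta_{\DK}$ by \eqref{eq:additional-bdry}.
\item The non-circular comparison is Kubota's map $\Kubota_B=\calM^{\DK}\circ\partial\circ(\iota_\ast)^{-1}\circ\Phi_{SB}$. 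It uses only $\partial$, Lemma~\ref{lemma:SQ_and_QS} and Morita invariance.
\end{itemize}

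\textbf{The intertwining step.} You correctly flag the identification of $\beta_{\DK}$ with Kasparov's Bott map as the main obstacle, but checking a generator does not settle it. Agreement at $B=\C$ or $\Cl_{1,0}$, together with naturality under $*$-homomorphisms, does not force agreement for all $B$. It would do so only if you already knew that $\beta_{\DK}$ commutes with arbitrary $\KKR$-morphisms, and for the explicitly defined loop map that is essentially the claim itself.

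What works is a direct computation on Calkin representatives.
\begin{itemize}
\item Every class in $\DK(\calQ_{B\hox\calK})$ has the form $[q(T)]$.
\item $\mathrm{Ad}_{\nu(T,t)}(\one\hox\gamma)=\cos(\pi t)\,\one\hox\gamma+\sin(\pi t)\,T\hox\one$ is literally the Kasparov representative of $\beta_{\KK}[T]$; this is Theorem~\ref{prop:Roe_Kubota_Bott_compatibility}(1).
\item Transport this through $\iota_\ast$, $\partial$ and Morita invariance, using \eqref{eq:Bott_commutator} and Lemma~\ref{lemma:DK_Morita_compat_new}(2).
\end{itemize}
This gives $\Kubota_{SB\hox\Cl_{1,0}}\circ\beta_{\KK}=\beta_{\DK}\circ\Kubota_B$, cf.\ \eqref{eq:Graded_Bott_compat}, so $\beta_{\DK}$ is a composite of isomorphisms.

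\textbf{The map $\beta^S_{\DK}$.} The bookkeeping needs correcting. A Bott-type map adds a suspension together with a Clifford generator: $S^{1,0}$ with $\Cl_{1,0}$, and $S^{0,1}$ with $\Cl_{0,1}$.
\begin{itemize}
\item The composite should therefore be $\DK(B)\to\DK(SB\hox\Cl_{1,0})\to\DK(S^{1,1}B\hox\Cl_{1,1})\cong\DK(S^{1,1}B)$, with Clifford stability applied last rather than first.
\item Replacing the $\Cl_{1,0}$ in $B\hox\Cl_{1,1}$ by $S^{1,0}$ directly would shift the $\KR$-degree by two.
\item The dual loop should use the self-adjoint $i\rho$, which satisfies $(i\rho)^{\rs}=-i\rho$, parametrised symmetrically so that $t\mapsto -t$ makes it Real. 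This defines the middle map.
\end{itemize}
The middle map's bijectivity still needs its own argument, for instance the Bott element in $\KKR(\C,C_0(\R^{0,1})\otimes\Cl_{0,1})$ together with the same intertwining. The proposal does not supply this.
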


We will primarily consider $\beta_{\DK}$ and describe this map in the relative setting with $B$ balanced graded.
Then for $[x]-[e]\in \DK( B)$ we have
\begin{align*}
  \beta_{\DK} ( [x] - [e] ) &= [ z(x, e, t) ] - [ \one \hox \gamma ] \in \DK( SB \hox \Cl_{1,0} ), \\
   z(x,e,t) &=  \frac{1}{2} \mathrm{Ad}_{( \one + e\hox \gamma )} \circ \mathrm{Ad}_{ \nu( x, t)} \circ \mathrm{Ad}_{ \nu( e, -t)} (\one \hox \gamma),  
   \quad \nu(y, t) = \cos\big( \tfrac{\pi}{2} t \big) + (y \hox \gamma)\sin \big( \tfrac{\pi}{2} t \big).
\end{align*}
Note that any odd self-adjoint unitary $y \hox \one \in B \hox \Cl_{1,0}$ anti-commutes with $\one \hox \gamma$, which implies
\begin{align*}
   \mathrm{Ad}_{\nu(y\hox \one, t)}( \one \hox \gamma) 
   &= (\one \hox \gamma) \big(\cos( \tfrac{\pi}{2} t \big) - (y \hox \gamma)\sin (\tfrac{\pi}{2} t) \big)  \big( \cos ( \tfrac{\pi}{2} t  ) - (y \hox \gamma) \sin ( \tfrac{\pi}{2} t)  \big) \\
   &= (\one \hox \gamma) \exp\big( -\pi t ( y \hox \gamma)  \big).
\end{align*}
Hence we can rewrite 
\begin{align} \label{eq:DK_Bott_graded_comp}
    \beta_{\DK} ( [x] - [e] ) &= \big[ \tfrac{1}{2} \mathrm{Ad}_{( \one + e\hox \gamma )} (\one \hox \gamma) \exp( -\pi t(x \hox \gamma)) \exp( \pi t( e \hox \gamma)) \big] 
      - [ \one \hox \gamma]    \nonumber \\
      &= \big[ ( e \hox \one) \exp( -\pi t(x \hox \gamma)) \exp( \pi t( e \hox \gamma)) \big] - [ e \hox \one].  
\end{align}
as $e\hox \one$ and $\one \hox \gamma$ are homotopic as OSUs with $\tfrac{1}{2} \mathrm{Ad}_{( \one + e\hox \gamma )}$ a map that 
switches basepoints.

\subsection{Boundary maps} \label{subsec:DK_bdry}

We also review some results concerning extensions and boundary maps. 
Let $I$ be a graded ideal in a balanced graded algebra $B$ and consider the short exact sequence,
\begin{equation} \label{eq:graded_generic_SES}
    0 \to I \to B \to B/ I \to 0.
\end{equation}
As considered in~\cite[{\S}4]{vanDaele1}, there is a boundary  map 
$\partial: \DK\big( S(B/I) \big) \to \DK( I )$. 

\begin{prop}[{cf. \cite[Proposition 4.8]{vanDaele1}}]  \label{prop:suspension_bdry_graded}
Let $\{{x}_t\}_{t \in [0,1]}$ be a path of odd self-adjoint elements in $B$ such that 
${x}_0$ and $ {x}_1$ are odd self-adjoint unitaries in $B$, $\{q({x}_t)\}_{t\in [0,1]}$ is a path of 
odd self-adjoint unitaries in $B/I$, and $\big\| q( {x}_0) - q( {x}_t) \big\|_{B/I} < 2$ for all $t \in [0,1]$. 
Then there is a well-defined element 
$[ q({x}_\bullet) ] - [ q(x_0) ] \in \DK\big( S(B/I ) \big)$, and 
the  boundary map $\partial: \DK\big( S(B/I) \big) \to \DK( I )$ is such that 
\[
    \partial\big( [ q(  {x}_\bullet ) ] - [q(x_0)] \big)  = \exc \big( [x_1] - [x_0] \big), \qquad 
    \exc: \DK( B, B/I ) \xrightarrow{\simeq} \DK(I).
\]
\end{prop}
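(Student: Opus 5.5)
The plan is to identify the van Daele boundary map with the boundary map of an explicit extension and compute it on a lift. First I will check that $[q(x_\bullet)]-[q(x_0)]$ defines a class in $\DK(S(B/I))$. The path $t\mapsto q(x_t)$ consists of odd self-adjoint unitaries in $B/I$, starts at $q(x_0)$, and ends at $q(x_1)$. The estimate $\|q(x_0)-q(x_t)\|<2$ lets me use Lemma \ref{lem:relative_DK} uniformly in $t$. Concretely, the unitaries $w_t=\mathrm{sgn}\big(\tfrac12(q(x_0)+q(x_t))\big)$ conjugate $q(x_0)$ to $q(x_t)$ and depend norm-continuously on $t$.

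This yields a class in the unitisation picture over the suspension. I read $q(x_\bullet)$ relative to the constant basepoint $q(x_0)$. Since both endpoints $q(x_0),q(x_1)$ lift to OSUs, I replace the path by one that is constant equal to $q(x_0)$ near $t=0$ and near $t=1$. At $t=1$ I do this by appending the homotopy $w_{1-\lambda}^*\,q(x_1)\,w_{1-\lambda}$ back to $q(x_0)$, which is again a loop-type element. Together these give an element of $\DK(S(B/I))$ in the sense of Definition \ref{defn:vD-K}, with $S(B/I)^\sim$ unitised by the basepoint.

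Next I will use the description of $\partial$ from \cite[{\S}4]{vanDaele1}: lift the suspended OSU to a path of OSUs in $B$ and read off the endpoint discrepancy. The key step is to produce a lift $\{\wt x_t\}$ of $\{q(x_t)\}$ by OSUs in $B$. I will apply Lemma \ref{lemma:OSU_lifting}(2) with $\wt x_0=x_0$. Then $q(\wt x_1)=q(x_1)$, so $\wt x_1-x_1\in I$. By the very construction of $\partial$ as a connecting map, $\partial$ sends the class of the path to $\exc([\wt x_1]-[x_0])$ when the path is basepointed at $x_0$. The equality I must justify is precisely this identification of van Daele's $\partial$ with the "lift and compare endpoints" recipe. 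I would verify it by writing $\partial$ as the composite of the Bott-type isomorphism with the excision for the mapping-cone sequence $0\to SI\to C_{q}\to S(B/I)\to 0$, and checking it on the lifted path.

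It remains to pass from $\wt x_1$ to $x_1$. Here $x_1$ and $\wt x_1$ are OSUs in $B$ with the same image in $B/I$, so the straight-line homotopy between them has constant image modulo $I$. By Lemma \ref{lem:relative_DK} and the excision isomorphism \eqref{eq:DK_excision} it suffices to show $[\wt x_1]-[x_0]=[x_1]-[x_0]$ in $\DK(B,B/I)$. I expect the main obstacle here: $[x_1]-[\wt x_1]$ is a class whose image in $\DK(I)$ need not vanish a priori. I must show it is absorbed, i.e. that the lift's endpoint depends only on the homotopy class of the path.

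I would handle this by choosing the lift differently. I apply Lemma \ref{lemma:OSU_lifting}(2) to the concatenated loop (path, then the return homotopy via $w$) and use that $x_1$ itself is a prescribed lift at $t=1$. Any two lifts with fixed endpoints differ by an element whose class is the image of a loop in $I$-unitaries, hence zero in the relative group. If this fails, I will instead prove the formula with $\wt x_1$ and note that the hypothesis $\|q(x_0)-q(x_t)\|<2$ lets me build the lift explicitly as $\wt x_t=\wt w_t x_0\wt w_t^*$, where $\wt w_t$ is a norm-continuous lift of $w_t$. Any discrepancy between $\wt x_1$ and $x_1$ is then controlled by Lemma \ref{lemma:close_J_are_unitary_equiv}-type arguments.
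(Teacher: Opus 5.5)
The step that fails is the one you flag yourself: identifying the endpoint $\wt{x}_1$ of your OSU lift with the prescribed $x_1$. Up to that point the outline is reasonable. Lifting $q(x_\bullet)$ through OSUs starting at $x_0$, and reading off $\partial$ as $\exc([\wt{x}_1]-[x_0])$ with the relative class of Lemma~\ref{lem:relative_DK}, is the right way to compute $\partial$. Neither of your repairs works, however:
\begin{itemize}
\item Lemma~\ref{lemma:OSU_lifting}(2) gives no control over the terminal point of the lift.
\item The remark that two lifts with the same endpoints differ by a loop of $I$-unitaries presupposes that an OSU lift ending at $x_1$ exists, which is exactly what is in question. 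The given $x_t$ are merely odd self-adjoint, so they do not join $\wt{x}_1$ to $x_1$ inside $\mathrm{OSU}(B)$.
\item $\wt{x}_1$ and $x_1$ agree only modulo $I$, not in norm, so no argument in the style of Lemma~\ref{lemma:close_J_are_unitary_equiv} applies.
\end{itemize}
The paper's proof has the same hole: it asserts that Lemma~\ref{lemma:OSU_lifting} produces a lift $y_\bullet$ with $y_0=x_0$ \emph{and} $y_1=x_1$, which that lemma does not provide. Separately, your closing homotopy $\lambda\mapsto w_{1-\lambda}^*q(x_1)w_{1-\lambda}$ runs from $q(x_0)$ to $q(x_0)q(x_1)q(x_0)$, since $w_0=q(x_0)$, so it does not close up the path. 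The return path should instead use the unitaries $\mathrm{sgn}\big(q(x_0)+\tfrac{\lambda}{2}(q(x_1)-q(x_0))\big)$ attached to the fixed pair, as in Lemma~\ref{lem:relative_DK}.

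This hole cannot be filled, because the identity is false under the stated hypotheses. The left-hand side depends only on the path $t\mapsto q(x_t)$. Now replace $x_1$ by any OSU $x_1'$ with $x_1'-x_1\in I$, and append the segment $(1-s)x_1+sx_1'$ to $x_\bullet$; this segment is odd, self-adjoint, and has constant image $q(x_1)$. All hypotheses still hold, but the right-hand side changes by $\exc([x_1']-[x_1])$. Applied to the straight line between any two OSUs $x,x'$ with $x'-x\in I$, the statement would give $\exc([x']-[x])=\partial(0)=0$, hence $\DK(I)=0$ by surjectivity of $\exc$.

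Concretely, take $B=\calB(\calH)\otimes\Cl_{1,0}$ and $I=\calK(\calH)\otimes\Cl_{1,0}$. Let $x_0=\one\otimes\gamma$, $x_1=(\one-2e)\otimes\gamma$ with $e$ a rank-one projection, and $x_t=(\one-2te)\otimes\gamma$. Then $q(x_\bullet)$ is constant, so the left side vanishes. On the other hand, $\Upsilon_{\calK}^{1,0}$ sends $\exc([x_1]-[x_0])$ to $[\one-e]-[\one]=-[e]\neq0$.

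Your argument, and the paper's, becomes correct if one assumes a lift of $q(x_\bullet)$ through OSUs of $B$ with endpoints exactly $x_0$ and $x_1$, for instance that every $x_t$ is an OSU. The formula with $\wt{x}_1$ in place of $x_1$ is what your first two steps actually establish, but it is not the stated one. Note also that in the application, Proposition~\ref{prop:concrete_Kubota_computation}, $q(F_t)$ is constant. So what is needed there is a formula for $\partial\circ(\iota_\ast)^{-1}$ on the class of $F_\bullet$ in the Calkin algebra of the suspended module, which remembers the endpoints, rather than on the pointwise loop in $S(B/I)$.
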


\begin{proof}
We are given a path of OSUs $q(x_t)$ such that $\big\| q( {x}_0) - q( {x}_t) \big\|_{B/I} < 2$ and where $q(x_0)$ and $q(x_1)$ have 
 OSU lifts. Applying Lemma \ref{lemma:OSU_lifting}, 
there is a path $t\mapsto y_t \in \mbox{\rm OSU}(B)$ such that $q(y_t) = q(x_t)$ and where $y_0 = x_0$, $y_1 = x_1$. 
Furthermore,  because $\| q(x_1) - q(x_0) \| < 2$, $y_1$ is homotopic in $\mbox{\rm OSU}(B)$ to 
$\tilde{y}_1$ with $q(x_0) = q( \tilde{y}_1)$ (cf. the proof of Lemma \ref{lem:relative_DK}).  We can therefore consider the path of OSUs 
\[
  \tilde{x}: [0,1] \to \mbox{\rm OSU}(B), \qquad  
  \tilde{x}_t = \begin{cases}  {y}_{2t}, & t \in \big[ 0, \tfrac{1}{2} \big] \\ v_{2t-1}  {y}_1 v_{2t-1}^*, & t \in \big[ \tfrac{1}{2}, t\big] \end{cases}, 
\]
where $t\mapsto v_t y_1 v_t^*$ is the path connecting $y_1$ to $\tilde{y}_1$. Hence we have a path $t\mapsto q(\tilde{x}_t) \in \mbox{\rm OSU}(B/I)$ such that 
$q(\tilde{x}_0) = q( \tilde{x}_1)$ and the class $[q(\tilde{x}_\bullet) ] - [q(x_0)] \in \DK( S (B/I) )$ is a well-defined 
(where $q(\tilde{x}_0)$ is the constant path). The paths $t \mapsto q(\tilde{x}_t)$ and  $t \mapsto \tilde{x}_t$ satisfy the 
hypothesis of~\cite[Proposition 4.8]{vanDaele1}, which we apply to compute the boundary map 
\[
  \partial\big( [ q(  \tilde{x}_\bullet ) ] - [q(x_0)] \big) = \exc \big( [\tilde{x}_1] - [ \tilde{x}_0 ] \big) 
    = \exc \big( [x_1] - [ x_0 ] \big)  .  \qedhere
\]
\end{proof}

By composing Bott periodicity $\beta_{\DK}:\DK(B/I)\to \DK\big(S(B/I)\hox\Cl_{1,0}\big)$ with the 
 boundary map 
$\partial: \DK\big( S(B/I\hox\Cl_{1,0}) \big) \to \DK( I\hox\Cl_{1,0} )$, we obtain an additional boundary map
\begin{equation}
\delta:\DK(B/I)\to \DK( I\hox\Cl_{1,0} ).
\label{eq:additional-bdry}
\end{equation}
Comparing with complex $K$-theory,  $\partial: \DK\big( S(B/I) \big) \to \DK( I )$ is related to the \emph{index map} 
$\partial^K \colon K_1(B/I) \simeq K_0\big(S(B/I)\big) \to K_0(I)$, whereas  $\delta:\DK(B/I)\to \DK( I\hox\Cl_{1,0} )$ 
is more closely akin to the exponential map $\exp^K \colon K_0(B/I) \to K_1(I)$. 

\begin{lemma}[{\cite[Proposition 3.4]{vanDaele2}}]  \label{lemma:delta_formula}
\label{lem:other-bdry}
The boundary map $\delta = \partial \circ \beta_{\DK} \colon  \DK(B/I) \to \DK(I\hat\otimes \Cl_{1,0})$ 
of the short exact sequence $0\to I\to B\to B/I\to 0$ is given by 
\begin{equation*} 
\delta([x_1]-[x_2]) = [Y_1] - [Y_2], \qquad   
Y_i =   - \exp(\pi \tilde{x}_i  \hox  \gamma)(1  \hox \gamma),
\end{equation*}
where $\tilde{x}_i \in B$ is an odd self-adjoint lift of $x_i$ with $\| \tilde{x}_i\|=1$ and $\gamma$ is the odd 
generator of $\Cl_{1,0}$. 
\end{lemma}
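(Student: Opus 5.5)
The plan is to compute $\delta=\partial\circ\beta_{\DK}$ directly on a representative $[x_1]-[x_2]$ with basepoint, using the explicit Bott formula \eqref{eq:DK_Bott_graded_comp} and then Proposition \ref{prop:suspension_bdry_graded} for $\partial$. By additivity, and because classes may be written relative to a common basepoint, it suffices to treat $[x]-[e]$ with $e$ an OSU of $B/I$ that lifts to an OSU $\tilde e$ of $B$ (pass to $M_2$ and use $e=\one\hox\gamma$-type basepoints if needed).

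First, apply \eqref{eq:DK_Bott_graded_comp} in $(B/I)\hox\Cl_{1,0}$: $\beta_{\DK}([x]-[e])=[z_t]-[e\hox\one]$ with $z_t=(e\hox\one)\exp(-\pi t(x\hox\gamma))\exp(\pi t(e\hox\gamma))$, a loop of OSUs in $S((B/I)\hox\Cl_{1,0})$ based at $e\hox\one$. Next, lift: take an odd self-adjoint lift $\tilde x$ of $x$ with $\|\tilde x\|\le 1$ and set $\tilde z_t=(\tilde e\hox\one)\exp(-\pi t(\tilde x\hox\gamma))\exp(\pi t(\tilde e\hox\gamma))$. Since $(\tilde x\hox\gamma)^2=-\tilde x^2\hox 1$ (sign from the graded tensor product), $\exp(-\pi t\,\tilde x\hox\gamma)=\cos(\pi t\tilde x)-\sin(\pi t\tilde x)\hox\gamma$-type expressions show $\tilde z_t$ is odd self-adjoint, a lift of $z_t$, with $\tilde z_0=\tilde e\hox\one$ and $\tilde z_1=(\tilde e\hox\one)\exp(-\pi\tilde x\hox\gamma)\exp(\pi\tilde e\hox\gamma)=-(\tilde e\hox\one)\exp(-\pi\tilde x\hox\gamma)$, which is unitary because $\cos(\pi\tilde x),\sin(\pi\tilde x)$ make $\exp(\pi\tilde x\hox\gamma)$ unitary (functional calculus, $\tilde x$ self-adjoint).

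The main obstacle is that Proposition \ref{prop:suspension_bdry_graded} requires $\tilde z_t$ to be OSUs at the endpoints and $\|q(\tilde z_0)-q(\tilde z_t)\|<2$ along the path; the first holds as shown, but the norm condition may fail globally. I would handle this by subdividing $[0,1]$ so each piece is within distance $<2$ of its initial point, using that $\partial$ is additive under concatenation of loops (and lifting intermediate points via Lemma \ref{lemma:OSU_lifting}), so that the excision classes telescope to $\exc([\tilde z_1]-[\tilde z_0])$. Then $\partial\beta_{\DK}([x]-[e])=\exc([-(\tilde e\hox\one)\exp(-\pi\tilde x\hox\gamma)]-[\tilde e\hox\one])$.

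Finally, rewrite in the stated form: conjugating by the even unitary path switching $\tilde e\hox\one$ to $\one\hox\gamma$ (the basepoint change $\tfrac12\mathrm{Ad}_{\one+e\hox\gamma}$ already used in deriving \eqref{eq:DK_Bott_graded_comp}), and using $(\one\hox\gamma)\exp(-\pi\tilde x\hox\gamma)=\exp(\pi\tilde x\hox\gamma)(\one\hox\gamma)$, the class becomes $[-\exp(\pi\tilde x\hox\gamma)(\one\hox\gamma)]-[-\exp(\pi\tilde e\hox\gamma)(\one\hox\gamma)]$, the second term equalling $[\one\hox\gamma]$ since $\exp(\pi\tilde e\hox\gamma)=-\one$. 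Independence of the lift follows since lifts of norm $\le1$ form a convex set, giving a norm-continuous homotopy of $Y$; the difference $Y_1-Y_2$ lies in $I\hox\Cl_{1,0}$ modulo the identification by excision, giving the formula.
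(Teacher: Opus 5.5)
Your overall plan (reduce to $[x]-[e]$ with $e$ liftable, write $\beta_{\DK}([x]-[e])$ as an explicit loop, lift it, and read off $\partial$ as an excision class of the endpoints of the lift) is the right one. The argument breaks because the loop you lift is not a loop of odd self-adjoint unitaries. From $(\tilde x\hox\gamma)^2=-\tilde x^2\hox\one$ one gets $\exp(s\,\tilde x\hox\gamma)=\cos(s\tilde x)\hox\one+\sin(s\tilde x)\hox\gamma$, hence
\[
\tilde z_1=-\tilde e\cos(\pi\tilde x)\hox\one+\tilde e\sin(\pi\tilde x)\hox\gamma,\qquad \tilde z_1^*=-\cos(\pi\tilde x)\tilde e\hox\one+\sin(\pi\tilde x)\tilde e\hox\gamma .
\]
So $\tilde z_1$ is self-adjoint only if $\tilde e$ commutes with $\cos(\pi\tilde x)$ and $\sin(\pi\tilde x)$. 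That holds modulo $I$ but not for a general lift. Then $[\tilde z_1]$ is not a Van Daele class, and Proposition \ref{prop:suspension_bdry_graded} cannot be applied.

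The problem already appears in the quotient: $z_t-z_t^*=-\tfrac12\sin(2\pi t)\,(ex-xe)\hox\gamma$, so $z_t$ is an OSU only when $xe=ex$. The source is the simplification in \eqref{eq:DK_Bott_graded_comp} that you quote. It moves $\exp(\cdot\,x\hox\gamma)$ past $\exp(\cdot\,e\hox\gamma)$, which is legitimate only if $xe=ex$. The correct expression is $\mathrm{Ad}_{\nu(x,t)}\circ\mathrm{Ad}_{\nu(e,-t)}(\one\hox\gamma)=(\one\hox\gamma)\exp(-\tfrac{\pi}{2}t\,x\hox\gamma)\exp(\pi t\,e\hox\gamma)\exp(-\tfrac{\pi}{2}t\,x\hox\gamma)$.

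Your final step has the same defect. Conjugating by the basepoint-switching unitary $\exp(\pm\tfrac{\pi}{4}\tilde e\hox\gamma)$ does not commute with $\exp(-\pi\tilde x\hox\gamma)$. So it does not turn $-(\tilde e\hox\one)\exp(-\pi\tilde x\hox\gamma)$ into $-\exp(\pi\tilde x\hox\gamma)(\one\hox\gamma)$.

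The repair is to lift the correct loop and skip the basepoint switch altogether. Conjugation by the even unitary $\tfrac{1}{\sqrt2}(\one+e\hox\gamma)$ is connected to $\one$ and does not change the class. Take $e$ to be the image of an OSU $\tilde e$ of $B$, which exists since $B$ is balanced. Take any odd self-adjoint lift $\tilde x$, and set $\tilde X=\tilde x\hox\gamma$ and $\tilde E=\tilde e\hox\gamma$. Both are skew-adjoint and anticommute with $\one\hox\gamma$, so
\[
\tilde w_t:=(\one\hox\gamma)\exp(-\tfrac{\pi}{2}t\tilde X)\exp(\pi t\tilde E)\exp(-\tfrac{\pi}{2}t\tilde X)=V_t(\one\hox\gamma)V_t^*,\qquad V_t=\exp(\tfrac{\pi}{2}t\tilde X)\exp(-\tfrac{\pi}{2}t\tilde E).
\]
Thus $\tilde w_t$ is an OSU for every $t$, with no norm condition on $\tilde x$, and it lifts the Bott loop. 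Since $\exp(\pi\tilde E)=-\one$, the endpoints are $\tilde w_0=\one\hox\gamma=Y_e$ and $\tilde w_1=-(\one\hox\gamma)\exp(-\pi\tilde X)=-\exp(\pi\tilde X)(\one\hox\gamma)=Y_x$, already in the stated form.

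With an OSU lift along the whole loop, the lifting formula for $\partial$ applies directly. This is \cite[Proposition 4.8]{vanDaele1}, to which the proof of Proposition \ref{prop:suspension_bdry_graded} reduces; the norm bound there only serves to manufacture the lifts. It gives $\delta([x]-[e])=\exc([Y_x]-[Y_e])$, and additivity finishes.

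In particular your subdivision step is unnecessary. As written it would not work anyway: the sub-paths are not loops, and $\tilde z_{t_{j+1}}-\tilde z_{t_j}\notin I$, so there are no excision classes to telescope.
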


For the short exact sequence from Eq. \eqref{eq:graded_generic_SES}, 
naturality of  $\beta_{\DK}$ implies that the diagram
\begin{equation}
  \xymatrix{
     {}\ar[r]&\DK(S^{1,1}B/I)\ar@{.>}[dr]^\delta \ar[r]^{\partial^{1,1}}\ar[d]^{\beta_{\DK}} & \DK(S^{0,1}I) \ar[r]\ar[d]^{\beta_{\DK}}  & \DK(S^{0,1}B) \ar[r] \ar[d]^{\beta_{\DK}} &{} \\
     {} \ar[r]&\DK(S^{2,1}B/I \hox \Cl_{1,0}) \ar[r]^{\partial^{2,1}} & \DK(S^{1,1}I \hox \Cl_{1,0}) \ar[r]  & \DK(S^{1,1}B  \hox \Cl_{1,0}) \ar[r]  &{}
      }
\label{eq:delta-defined}
\end{equation}
commutes,  $\delta=\beta_{\DK}\circ\partial^{1,1}=\partial^{2,1}\circ\beta_{\DK}$. 
An analogous argument applies to the suspension isomorphism $\beta_{\DK}^S$. Hence, 
suitably interpreted, we  have the relations 
\begin{align} \label{eq:Bott_commutator}
    &\partial \circ \beta_{\DK} = \beta_{\DK} \circ \partial, 
    &&\delta \circ \beta_{\DK} = \beta_{\DK} \circ \delta \nonumber  \\
    &\partial \circ \beta_{\DK}^S = \beta_{\DK}^S \circ \partial, 
    &&\delta \circ \beta_{\DK}^S = \beta_{\DK}^S \circ \delta.
\end{align}

\subsection{\texorpdfstring{$C^*$}{C*}-modules and Morita invariance} \label{sec:DK_Morita}
\label{app:Morita_invariance}

To compare Van Daele $K$-theory with $\KK$-theory, we will also consider 
Van Daele $K$-theory in the setting of Hilbert $C^*$-modules.

We let  $Y_B$ be a $\Z_2$-graded and countably generated full 
 (Real) $C^*$-module. 
 In the case that $Y_B = \hat{\calH} \hox B$ is the standard ($\Z_2$-graded) $C^*$-module,  we use the isomorphisms 
  $\End_B^0(\hat{\calH} \hox B) \cong B \hox \calK$, $\End_B(\hat{\calH} \hox B) \cong \Mult( B \hox \calK)$, 
  and $\calQ_B( \hat{\calH} \hox B) \cong \calQ_{B \hox \calK}$, where $\calK = \calK(\hat{\calH})$ denotes the  
  compact operators on a $\Z_2$-graded Hilbert space. 
  We will freely use the isomorphism   $\End_{SB}^0(SY) \cong S \End_B^0(Y)$.
  
Of particular interest to us will be the Van Daele $K$-theory groups and boundary maps associated to the short exact sequence 
\[
   0 \to \End_B^0(Y) \to \End_B(Y) \to \calQ_B(Y) \to 0 ,
\]
with $Y_B$ a $\Z_2$-graded and countably generated $C^*$-module over a $\sigma$-unital  $C^*$-algebra $B$. 
When  $\End_B^0(Y)$ is stable, the boundary map is an isomorphism, which is a consequence of the following.

\begin{lemma}[{\cite[Lemma 5.10, proof of Theorem 5.11]{Kubota15a}}] 
\label{lemma:SQ_and_QS}
Let $Y_B$ be a countably generated and $\Z_2$-graded $C^*$-module such that 
$\End_B^0(Y)$ is stable. Then $\DK( \End_B(Y) ) = \{0\}$ and the 
embedding $\iota: S\calQ_{B}(Y) \hookrightarrow \calQ_{SB}(SY)$ induces an isomorphism 
$\iota_\ast : \DK( S\calQ_{B}(Y) ) \xrightarrow{\simeq} \DK( \calQ_{SB}(SY) )$.
\end{lemma}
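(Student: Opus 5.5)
The plan is to establish the two claims in turn. First, $\DK(\End_B(Y))=0$ by an Eilenberg swindle. Second, $\iota_\ast$ is an isomorphism by comparing the six-term (or long exact) sequences of two extensions via the five lemma.

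\emph{Step 1: vanishing of $\DK(\End_B(Y))$.} Since $\End_B^0(Y)$ is stable, Kasparov stabilisation gives $Y_B\cong Y_B\oplus \hat\calH_B \cong Y^{\oplus\infty}_B$ as graded Real modules. This yields even Real isometries $V_1,V_2,\ldots\in\End_B(Y)$ with $\sum_k V_kV_k^*=\one$ strictly and $V_j^*V_k=\delta_{jk}$. For an odd self-adjoint unitary $x\in M_n(\End_B(Y))$, the element $\Sigma(x)=\sum_k V_kxV_k^*$ is again an OSU. The even unitary shift $W$ then gives the relation $x\oplus\Sigma(x)\sim_h \Sigma(x)$. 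This relation is obtained by the standard rotation homotopy, using even Real unitaries in $M_2$, so gradings and real structures are respected. Applying this to formal differences $[x]-[y]$ with $d$-degree zero forces every class to vanish; the basepoint terms are handled identically.

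\emph{Step 2: the comparison of extensions.} Consider the two short exact sequences
\[
0\to S\End_B^0(Y)\to S\End_B(Y)\to S\calQ_B(Y)\to 0,\qquad
0\to \End_{SB}^0(SY)\to \End_{SB}(SY)\to \calQ_{SB}(SY)\to 0 .
\]
The ideals agree, since $\End_{SB}^0(SY)\cong S\End_B^0(Y)$. The natural inclusions $S\End_B(Y)\hookrightarrow \End_{SB}(SY)$ and $\iota$ then give a morphism of extensions. Naturality of the $\DK$ boundary maps $\partial,\delta$ from \S\ref{subsec:DK_bdry} yields a commuting ladder of long exact sequences, with vertical maps the identity on the ideal terms. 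By the five lemma it suffices that the middle terms have vanishing $\DK$ in all relevant degrees, i.e.\ after tensoring with $\Cl_{p,q}$ and suspending as needed to run the exact sequence.

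\emph{Step 3: vanishing of the middle terms, which is the main obstacle.} For $\End_{SB}(SY)$, the module $SY_{SB}$ still satisfies $SY\cong SY\oplus \hat\calH_{SB}$, so Step 1 applies. For $S\End_B(Y)$, the group $\DK(S\End_B(Y))$ is a Bott/suspension shift of $\DK(\End_B(Y)\hox\Cl)$ by Theorem \ref{thm:DK_Bott_isos}. The swindle of Step 1 works verbatim with Clifford coefficients, since the $V_k$ are even and commute with $\Cl_{p,q}$. The delicate point is ensuring the long exact sequence in $\DK$ holds in the form needed, with Clifford-degree shifts matching on both rows. I would derive it from the boundary maps $\partial$ and $\delta$ together with the Bott relations \eqref{eq:Bott_commutator}, checking that the ladder squares commute there. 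If that is awkward, an alternative is to read the vanishing of the middle terms directly as the statement that $\partial$ is an isomorphism for both extensions, and conclude that $\iota_\ast=\partial^{-1}\circ\partial$.
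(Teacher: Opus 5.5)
Your proposal is correct and follows the paper's route. The paper cites Kubota's Lemma~5.10 for the vanishing of $\DK$ of the multiplier algebras where you run an Eilenberg swindle. Both arguments then compare the two extensions, so that the boundary maps are isomorphisms and $\iota_\ast=\partial^{-1}\circ\partial$ by naturality. You are in fact more careful than the paper in treating $S\End_B(Y)$ and $\End_{SB}(SY)=\Mult(S\End_B^0(Y))$ separately.

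Step~1 needs two small repairs. First, obtain the even Real isometries $V_k$ from $\End_B(Y)=\Mult(\End_B^0(Y))\cong\Mult(\End_B^0(Y)\hox\calK)\supseteq\one\hox\calB(\calH)$, since $Y\cong Y\oplus\hat\calH_B$ fails when $Y$ is not full. Second, $x\oplus\Sigma(x)$ and $\Sigma(x)$ live in different matrix sizes, so read your relation through the isomorphism $\mathrm{Ad}_U\colon M_2(\End_B(Y))\to\End_B(Y)$. Then check by a rotation homotopy that $\mathrm{Ad}_U$ induces the canonical identification on $\Ker d$.
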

\begin{proof}
If $\End_B^0(Y)$ is stable, then $\End_B(Y) \cong \Mult\big( \End_A^0(B) \big)$ is $K$-contractible by~\cite[Lemma 5.10]{Kubota15a}.
There is a canonical identification $\End_{SB}^0(SY) = S \End_B^0(Y)$ and we compare
the short-exact sequences 
\[
  \xymatrix{
     0  \ar[r] & S\End_B^0(Y) \ar[r] \ar@{=}[d]  & S \End_B(Y) \ar[r] \ar[d] & S \calQ_{B}(Y) \ar[r] \ar[d] & 0 \\
     0 \ar[r] & \End_{SB}^0(SY) \ar[r] & \End_{SB}(SY) \ar[r] & \calQ_{SB}(SY) \ar[r] & 0.
 }
\]
Then $\{0\} = \DK( \End_{SB}(SY)) = \DK(S \End_B(Y))$ by the first statement  and the  induced map 
$\iota_\ast : \DK( S \calQ_{B}(Y) ) \to \DK( \calQ_{SB}(SY) )$ is 
an isomorphism.
\end{proof}

Using 
the inclusion $\iota:S \calQ_{B}(Y)\to \calQ_{SB}(SY)$  from Lemma \ref{lemma:SQ_and_QS} 
and the commutativity of $\partial$ with $\beta_{\DK}^S$ from  Eq. \eqref{eq:Bott_commutator}, the diagram 
\[
  \xymatrix{
    \DK( S \calQ_{B}(Y) ) \ar[r]^{\iota_\ast}  \ar[drrrr]_{  \partial } & \DK( \calQ_{SB}(SY) ) \ar[rr]^{ \hspace{-.7cm} \partial \circ \beta_{\DK}^S} &  &
    \DK( S^{0,1} \End_{SB}^0(SY) ) \ar@{=}[r] & \DK( S^{1,1} \End_B^0(Y) ) \ar[d]^{(\beta_{\DK}^{S})^{-1} } \\
    & & &  & \DK( \End_B^0(Y) ).
  }
\]
commutes. When the context is clear and $\End_A^0(Y)$ is stable (e.g. for $Y_B$ the standard module $\hat{\calH}_B$), 
we will simply write 
\begin{align} \label{eq:partial_delta_iota_compat}
      &\partial \circ \iota_\ast = \partial, &&\delta \circ \iota_\ast = \delta
\end{align}
as the maps from $\DK( S\calQ_B(Y)) \cong \DK( \calQ_{SB}(SY) )$ to  $\DK( \End^0_B(Y))$ and 
$\DK( S \End_B^0(Y) \hox \Cl_{1,0})$ respectively.

If $Y_B$ is full, then by \cite[Lemma 2.6]{BKR} there exists a  Morita invariance isomorphism 
\begin{equation} \label{eq:DK_Morita}
\calM_Y^{\DK} \colon \DK( \End_B^0(Y) ) \xrightarrow{\simeq} \DK( B ) 
\end{equation}
For our applications to the Fredholm index  and spectral flow, we require compatibility of the Morita invariance with the Bott map 
and other structures in Van Daele $K$-theory. 

\begin{lemma}[{cf. \cite[Lemma 2.6]{BKR}}]   \label{lemma:DK_Morita_compat_new}
Let $Y_B$ be a countably generated and full $\Z_2$-graded $C^*$-module. 
\begin{enumerate}
 \item For any stabilisation isometry $\nu: Y_B \to \hat{\calH}_B$, the induced homomorphism 
$\mathrm{Ad}_\nu : \End_B^0(B) \to B \hox \calK$ is such that 
\[
    \calM^{\DK}_{\hat{\calH}_B} \circ \big(\mathrm{Ad}_\nu \big)_\ast = \calM^{\DK}_{Y} 
    : \DK\big( \End_B^0(Y) \big) \xrightarrow{\simeq} \DK( B).
\]
 \item Using the identification $S \End_B^0(Y) \hox \Cl_{1,0} \cong \End_{SB}^0(SY) \hox \Cl_{1,0}$, 
 \[
     \beta_{\DK} \circ \calM_{Y}^{\DK}  = \calM_{SY}^{\DK} \circ \beta_{\DK} 
     : \DK\big( \End_B^0(Y) \big) \xrightarrow{\simeq} \DK\big( SB \hox \Cl_{1,0} \big) .
 \]
\end{enumerate}
\end{lemma}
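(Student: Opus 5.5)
The plan is to reduce both statements to the concrete description of the Morita isomorphism from \cite[Lemma 2.6]{BKR}, and then use naturality of Van Daele $K$-theory under graded $*$-homomorphisms.

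Recall how $\calM_Y^{\DK}$ is built. Since $Y_B$ is full and countably generated, Kasparov stabilisation gives an even (Real) isometry $\nu: Y_B \to \hat{\calH}_B$ with $\nu^*\nu=\one$. The corner embedding $\mathrm{Ad}_\nu:\End_B^0(Y)\to \End_B^0(\hat{\calH}_B)\cong B\hox\calK$ is a full corner inclusion. Likewise $B\hookrightarrow B\hox\calK$ (corner at a rank one even projection) is a full corner inclusion, and it induces the stability isomorphism $\DK(B)\cong \DK(B\hox\calK)$. The map $\calM_Y^{\DK}$ is $(\text{stab}_*)^{-1}\circ(\mathrm{Ad}_\nu)_*$.

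\textbf{Part (1).}
1. Show that $(\mathrm{Ad}_\nu)_*$ is independent of the choice of $\nu$. Given two stabilisation isometries $\nu,\nu'$, conjugate by the rotation $2\times 2$ matrix mixing $\nu\oplus 0$ and $0\oplus\nu'$ inside $M_2(\Mult(B\hox\calK))$. The rotation is an even unitary, so this is a homotopy of $*$-homomorphisms into $M_2(B\hox\calK)$ from $\mathrm{Ad}_\nu\oplus 0$ to $0\oplus\mathrm{Ad}_{\nu'}$.
2. Identify $\calM^{\DK}_{\hat{\calH}_B}$ itself: for $Y=\hat{\calH}_B$ it is $(\text{stab}_*)^{-1}$, using the identity isometry.
3. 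Conclude (1) from functoriality, $(\text{stab}_*)^{-1}\circ(\mathrm{Ad}_\nu)_* = \calM^{\DK}_{\hat{\calH}_B}\circ(\mathrm{Ad}_\nu)_*$.

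The one care point is that $\DK$ is defined through unitisations and basepoints. The homotopy has to be carried out on $M_n(\End_B^0(Y)^\sim)$ with the OSU basepoint preserved, for instance by adjoining $\one\hox\gamma$ if balancing is needed.

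\textbf{Part (2).}
1. Use naturality of $\beta_{\DK}$ (Theorem~\ref{thm:DK_Bott_isos}) with respect to the graded $*$-homomorphisms $\mathrm{Ad}_\nu$ and $\text{stab}$. This gives $\beta_{\DK}\circ(\mathrm{Ad}_\nu)_* = (S\mathrm{Ad}_\nu\hox\one)_*\circ\beta_{\DK}$, and similarly for $\text{stab}$.
2. Check that under $S\End_B^0(Y)\cong\End^0_{SB}(SY)$ the suspended map $S\mathrm{Ad}_\nu$ is precisely $\mathrm{Ad}_{S\nu}$, where $S\nu: SY\to S\hat{\calH}_B\cong\hat{\calH}_{SB}$ is a stabilisation isometry for $SY_{SB}$. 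Likewise $S(\text{stab})$ is the stabilisation for $SB$.
3. By part (1), the resulting map $(\text{stab}_*)^{-1}\circ(\mathrm{Ad}_{S\nu})_*$ computes $\calM^{\DK}_{SY}$, tensored with $\Cl_{1,0}$. Here one should note that tensoring with the unital algebra $\Cl_{1,0}$ commutes with corner embeddings.

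The main obstacle is Step~1 of part (2): naturality of the explicit Bott map for non-unital algebras. The formula for $\beta_{\DK}$ uses a basepoint $e$ in the unitisation, and $\mathrm{Ad}_\nu$ is non-unital, so $\mathrm{Ad}_\nu(\one)=\nu\nu^*$. My first approach is to extend $\mathrm{Ad}_\nu$ to unitisations by sending $\one\mapsto\one$, so that the basepoint $e\oplus$ (complement) is carried along consistently. I would then check, using the formula \eqref{eq:DK_Bott_graded_comp}, that the complement contributes the trivial loop $\exp(-\pi t e\hox\gamma)\exp(\pi t e\hox\gamma)=\one$.
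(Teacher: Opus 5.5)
Your argument is correct, but it is organised differently from the paper's proof.

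\textbf{The paper's route.} The paper never starts from an abstract factorisation of $\calM^{\DK}_Y$. It takes the concrete formula from \cite{BKR}: after Clifford stabilisation to $\Cl_{2,2}$, a class is represented by OSUs differing from $\one\hox\gamma_1$ by a finite-rank operator, conjugated by a stabilisation unitary $W$, with the complement filled by the canonical OSU $e\hox\one$. Part (1) is then the observation that $W(a\oplus e\hox\one)W^*=\nu a\nu^*+W(0\oplus e\hox\one)W^*$, and the filler is absorbed by the basepoint convention. Part (2) is a side-by-side computation of both composites with the explicit Bott formula \eqref{eq:DK_Bott_graded_comp}. There the filler block contributes $e^{-\pi t(e\hox\gamma_2)}e^{\pi t(e\hox\gamma_2)}=\one$, and $\mathrm{Ad}_W$, $\mathrm{Ad}_V$ agree on $\End^0_{SB}(SY)^\sim\hox\Cl_{2,2}$.

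\textbf{Your route.} You take $\calM^{\DK}_Y=(\mathrm{stab}_*)^{-1}\circ(\mathrm{Ad}_\nu)_*$ as the starting point. Your part (1) then reduces to independence of the isometry via the rotation homotopy, a point the paper simply inherits from \cite{BKR}. Your part (2) reduces to naturality of $\beta_{\DK}$ (Theorem~\ref{thm:DK_Bott_isos}) together with $S\mathrm{Ad}_\nu=\mathrm{Ad}_{S\nu}$.

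The ``obstacle'' you isolate, the complement contributing a trivial loop, is exactly the paper's key computation. With your unital extension $\one\mapsto\one$ it is in fact automatic, since a unital graded $*$-homomorphism commutes with the exponential formula.

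\textbf{What each buys.} Your approach is more conceptual and avoids finite-rank representatives and the $\Cl_{2,2}$ bookkeeping. It does lean on stability, homotopy invariance and naturality of $\DK$ as black boxes. Your opening ``recall'' is doing real work: identifying the \cite{BKR} formula (stabilisation unitary plus filler) with corner embedding followed by inverse stability is essentially what the paper's part (1) computation verifies. The paper's approach stays at the level of explicit representatives, in line with its stated aim of working with concrete formulas.
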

\begin{proof}
(1) As $\End_B^0(Y)$ is non-unital, we  
 consider odd self-adjoint unitaries in $\End_B^0(Y)^\sim \hox \Cl_{1,1}$. We therefore take the 
 OSUs 
$x, y \in \End_B^0(Y)^\sim \hox \Cl_{1,1}$ such that $x-y \in \End_B^0(Y)\hox \Cl_{1,1}$. Note that we can consider elements in 
$\End_B(Y) \hox \Cl_{1,1}$ as acting on $Y_B \hox \Cl_{1,1}$.
Using Lemma \ref{lemma:DK_Clifford_stability}, we will apply Clifford stability $\calM_{1,1}^{\DK}$
and show the result for classes in  $\DK( \End_B^0(Y) \hox \Cl_{1,1} \hox \Cl_{1,1})$ where, 
recalling Eq. \eqref{eq:M_clifford11_notation}, we will use the condensed notation
\[
  \calM_{1,1}^{\DK}( [ x] - [y]) =   \left[\begin{pmatrix} x & 0\\ 0 & y\end{pmatrix}\right]-\left[\begin{pmatrix} 0 & \one \\ \one & 0\end{pmatrix}\right] 
     =: [ x \oplus y ] - [ \one \hox \gamma_1 ].
\]
Following~\cite[page 9]{BKR}, we can represent the class $[ x \oplus y ] - [ \one \hox \gamma_1 ] = [ \tilde{x} \oplus \tilde{y} ] - [ \one \hox \gamma_1 ]$ 
 such that $\tilde{x} \oplus \tilde{y} - \one \hox \gamma_1$ is finite-rank, i.e. an element of $M_l(B) \hox \Cl_{2,2}$ for some $l \in \N$. 

To take the Morita map, we consider a stabilisation isometry  $\nu: Y_B \to \hat{\calH}_B$, 
which is equivalent to a choice of  (even) unitary stabilisation isomorphism 
$W: Y_B \hox \Cl_{2,2} \oplus \hat{\calH} \hox B \hox \Cl_{2,2} \xrightarrow{\simeq} \hat{\calH} \hox B \hox \Cl_{2,2}$. The $C^*$-module $\hat{\calH} \hox B \hox \Cl_{2,2}$ 
has a canonical odd self-adjoint unitary $e  \hox \one$, where $e \in \calU(\hat{\calH})$ is the OSU that identifies $e:\hat{\calH}^{0}\xrightarrow{\simeq} \hat{\calH}^{1}$. 
Then following the definition of $\calM^{\DK}_{Y}$ \cite{BKR},
\begin{align*}
   \calM^{\DK}_{Y}\big( [ x \oplus y ] &- [ \one \hox \gamma_1 ] \big)  = 
   \left[ W \begin{pmatrix} \tilde{x}\oplus \tilde{y} & 0 \\ 0 & e\hox \one \end{pmatrix} W^* \right] 
   -    \left[ W\begin{pmatrix} \one \hox \gamma_1 & 0 \\  0 & e \hox \one   \end{pmatrix} W^* \right] \\
   &= \left[ \nu \big( \tilde{x}\oplus \tilde{y} \big) \nu^* +W(0\oplus (e \hox \one))W^* \right] - 
     \left[ \nu ( \one \hox \gamma_1 ) \nu^* +W(0\oplus (e \hox \one))W^*  \right] \\
     &= \left[ \nu \big( \tilde{x}\oplus \tilde{y} \big) \nu^*  \right] - 
     \left[ \nu ( \one \hox \gamma_1 ) \nu^*   \right] \\
   &= \calM^{\DK}_{\hat{\calH}_B }  \circ ( \mathrm{Ad}_\nu )_\ast  \big( [ x \oplus y ] - [ \one \hox \gamma_1 ] \big) ,
\end{align*}
where we have used that $x \mapsto x \oplus e$ is the inductive map that defines $\DK_e(B)$. 

For part (2), we directly check that $\beta_{\DK} \circ \calM_{Y}^{\DK}  = \calM_{SY}^{\DK} \circ \beta_{\DK}$. 
Following the same procedure as part (1) of the proof, 
\begin{align*}
   \calM^{\DK}_Y \big( [ x \oplus y ] - [ \one \hox \gamma_1 ] \big)  &= 
   \left[ W \begin{pmatrix} \tilde{x}\oplus \tilde{y} & 0 \\ 0 & e\hox \one \end{pmatrix} W^* \right] 
   -    \left[ W\begin{pmatrix} \one \hox \gamma_1 & 0 \\  0 & e \hox \one   \end{pmatrix} W^* \right] 
\end{align*}
We now apply the Bott map, where
\begin{align*}
    &\beta_{\DK} \circ \calM_{\DK}\big( [ x \oplus y ] - [ \one \hox \gamma_1 ] \big)  \\
     &\hspace{.2cm}  = \left[ W \begin{pmatrix} \one \hox \gamma_1 & 0 \\  0 & e \hox \one   \end{pmatrix} 
     \exp\left( -\pi t \begin{pmatrix} \tilde{x} \oplus \tilde{y} & 0 \\  0 & e \hox \one   \end{pmatrix} \hox \gamma_2 \right) 
     \exp\left( \pi t \begin{pmatrix} \one \hox \gamma_1 & 0 \\  0 & e \hox \one   \end{pmatrix} \hox \gamma_2 \right) W^* \right] \\
     &\qquad \quad      - \left[ W \begin{pmatrix} \one \hox \gamma_2 & 0 \\  0 & e \hox \one   \end{pmatrix} W^* \right] \\
   &\hspace{.2cm}  = \left[ W \begin{pmatrix} \one \hox \gamma_1 & 0 \\ 0 & e \hox \one \end{pmatrix} 
   \begin{pmatrix} e^{ -\pi t(\tilde{x}\oplus \tilde{y} \hox \gamma_2) } e^{ \pi t(\one \hox \gamma_1\gamma_2) } & \hspace{10pt} 0 \\
    \hspace{-70pt} 0 & \hspace{-60pt} e^{ -\pi t ( e\hox \gamma_2)} e^{ \pi t ( e\hox \gamma_2) }\end{pmatrix} W^* \right] 
        - \left[ W \begin{pmatrix} \one \hox \gamma_2 & 0 \\  0 & e \hox \one   \end{pmatrix} W^* \right] \\
      &\hspace{.2cm}  = \left[ W \begin{pmatrix} \one \hox \gamma_1 & 0 \\ 0 & e \hox \one \end{pmatrix} 
   \begin{pmatrix} e^{ -\pi t(\tilde{x}\oplus \tilde{y} \hox \gamma_2) } e^{ \pi t(\one \hox \gamma_1\gamma_2) } & 0 \\ 0 & \one \end{pmatrix} W^* \right] 
           - \left[ W \begin{pmatrix} \one \hox \gamma_2 & 0 \\  0 & e \hox \one   \end{pmatrix} W^* \right],   
\end{align*}
which gives an element in $\DK( S\End_B^0(Y) \hox \Cl_{3,2} )$. Note that we have extended $W$ such that $\mathrm{Ad}_W$ 
acts trivially on the extra $\Cl_{1,0}$-generator that comes from the Bott map.

To compute $\calM_{SY}^{\DK} \circ \beta_{\DK}$, we take 
$ [ \tilde{x} \oplus \tilde{y} ] - [ \one \hox \gamma_1] \in \DK( \End_B^0(Y) \hox \Cl_{2,2} )$ and apply
the Bott map, 
\begin{align*}
   \beta_{\DK} \big(  [ \tilde{x} \oplus \tilde{y} ] - [ \one \hox \gamma_1] ) &= \big[ z( \tilde{x} \oplus \tilde{y}, \one \hox \gamma_1, t) \big] - [ \one \hox \gamma_2 ] \\
     &= \big[ (\one \hox \gamma_1) \exp\big( -\pi t( \tilde{x}\oplus \tilde{y} \hox \gamma_2 ) \big) \exp\big( \pi t ( \one \hox \gamma_1\gamma_2) \big) \big] 
     - [ \one \hox \gamma_2] .
\end{align*}
The function $z( \tilde{x} \oplus \tilde{y}, \one \hox \gamma_1, t)$ is an OSU in $S \End_{B}^0(Y)^\sim \hox \Cl_{3,2} \cong \End_{SB}^0(SY)^\sim \hox \Cl_{3,2}$. 
Because $\tilde{x} \oplus \tilde{y} - \one \hox \gamma_1$ is finite-rank, $z( \tilde{x} \oplus \tilde{y}, \one \hox \gamma_1, t) - \one \hox \gamma_2$ is also 
finite rank for all $t$. So for a stabilisation unitary $V: SY_{SB} \hox \Cl_{3,2} \oplus \hat{\calH} \hox SB \hox \Cl_{3,2} \xrightarrow{\simeq} \hat{\calH} \hox SB \hox \Cl_{3,2}$, 
\begin{align*}
  \calM^{\DK}_{SY} \circ \beta_{\DK} \big(  [ \tilde{x} \oplus \tilde{y} ] - [ \one \hox \gamma_1] )  
     &= \left[ V\begin{pmatrix} z( \tilde{x} \oplus \tilde{y}, \one \hox \gamma_1, t) & 0 \\ 0 & e\hox \one \end{pmatrix} V^* \right] 
    - \left[ V \begin{pmatrix} \one \hox \gamma_2 & 0 \\ 0 & e \hox \one \end{pmatrix} V^* \right] \\
 &\hspace{-3.5cm}= \left[ V \begin{pmatrix} (\one \hox \gamma_1) e^{ -\pi t( \tilde{x}\oplus \tilde{y} \hox \gamma_2 ) } e^{ \pi t ( \one \hox \gamma_1\gamma_2) }& 0 \\
   0 & e \hox \one \end{pmatrix} V^* \right] 
      - \left[ V \begin{pmatrix} \one \hox \gamma_2 & 0 \\ 0 & e \hox \one \end{pmatrix} V^* \right] \\
 &\hspace{-3.5cm}=  \left[ V \begin{pmatrix} \one \hox \gamma_1 & 0 \\ 0 & e \hox \one \end{pmatrix} 
   \begin{pmatrix} e^{-\pi t( \tilde{x}\oplus \tilde{y} \hox \gamma_2 )} e^{ \pi t ( \one \hox \gamma_1\gamma_2) } & 0 \\ 0 & \one \end{pmatrix} V^* \right]  
    - \left[ V \begin{pmatrix} \one \hox \gamma_2 & 0 \\  0 & e \hox \one   \end{pmatrix} V^* \right].
\end{align*}
Note that we can consider the element $e$ the same in both $\hat{\calH} \hox B \hox \Cl_{2,2}$ and $\hat{\calH} \hox SB \hox \Cl_{3,2}$ as it only 
acts non-trivially on $\hat{\calH}$.  
Finally, because $\End_{SB}^0(SY) \cong S\End_{B}^0(Y)$, the operations $\mathrm{Ad}_W$ and $\mathrm{Ad}_V$ will agree for elements 
in $\End_{SB}^0(SY)^\sim \hox \Cl_{2,2} \cong \big(S\End_{B}^0(Y)\big)^\sim \hox \Cl_{2,2}$. The result then follows.
\end{proof}

\subsection{Isomorphisms with real and complex \texorpdfstring{$K$}{K}-theory} \label{subsec:appendix_DK_KR_isos}

For an ungraded Real $C^*$-algebra $A$, there are isomorphisms $\Upsilon_A^{r,s} \colon \DK( A \otimes \Cl_{r,s} ) \xrightarrow{\simeq} \KR_{1+s-r}(A) \simeq \KO_{1+s-r}(A^\rs)$. 
These isomorphisms are natural and so are compatible with other natural transformations in $K$-theory and $\DK$-theory.
The description of these isomorphisms below can also be found in~\cite{vanDaele1} and~\cite[{\S}5]{Kellendonk15}. 

We will mainly be interested in the isomorphisms of Van Daele $K$-theory with degree-zero and degree-one (real or complex) $K$-theory, and we will present these isomorphisms in detail. 
For completeness, we will also include a description of the isomorphisms with the higher degree $K$-theory groups, where we use the picture of real $K$-theory due to Boersema and Loring \cite{BL}. 

\begin{prop}[\textbf{\boldmath $\KR_0$ and $K_0$}] \label{prop:iso_DK_even}
Let $A$ be a trivially graded (Real) $C^*$-algebra. 
Then we have an isomorphism $\Upsilon_A^{1,0} \colon \DK( A \otimes \Cl_{1,0}) \xrightarrow{\simeq} \KR_0(A)$ given explicitly by 
\[
\big[ (2p-1) \otimes \gamma \big] - \big[ (2q-1) \otimes \gamma \big] \xmapsto{\simeq} [p] - [q] , 
\]
where $p,q \in M_n(A^\sim)$ are (Real) projections such that $p-q \in M_n(A)$. 
In the absence of real structures, we obtain the isomorphism $\Upsilon_A^{1,0} \colon \DK( A \otimes \Cl_{1}) \xrightarrow{\simeq} K_0(A)$ with even complex $K$-theory. 
\end{prop}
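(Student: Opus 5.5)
The plan is to reduce both groups to the same Grothendieck-group picture. For a balanced algebra $B$, $\DK(B)$ is the kernel of the dimension map on $GV(B)$. We match odd self-adjoint unitaries in $M_n(A^\sim\otimes\Cl_{1,0})$ with projections in $M_n(A^\sim)$, and then pass to the kernels of the respective maps to the scalars.

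\textbf{Step 1 (unital level).} Assume first that $A$ is unital. Since $A$ is trivially graded and $\Cl_{1,0}=\mathrm{span}\{1,\gamma\}$ with $\gamma$ odd, the odd part of $M_n(A\otimes\Cl_{1,0})$ is $M_n(A)\otimes\gamma$. Because $\gamma=\gamma^*=\gamma^{\rs}$ and $\gamma^2=1$, an element $h\otimes\gamma$ is a (Real) odd self-adjoint unitary exactly when $h=h^*=h^{-1}$ (and $h=h^{\rs}$). That is, $h=2p-1$ for a (Real) projection $p\in M_n(A)$.

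This bijection is a homeomorphism, it respects direct sums, and it is compatible with the matrix size. Hence homotopy classes of OSUs correspond to homotopy classes of projections, and homotopic projections are unitarily equivalent. So $V(A\otimes\Cl_{1,0})\cong\bigsqcup_n\pi_0(\mathrm{Proj}(M_n(A)))$ as graded semigroups, with the grading given by the size $n$. On Grothendieck groups this gives $GV(A\otimes\Cl_{1,0})\cong G\big(\bigsqcup_n\pi_0\mathrm{Proj}M_n(A)\big)$, compatibly with the maps $d$ to $\Z$.

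\textbf{Step 2 (the main obstacle).} The kernel of $d$ must be identified with $K_0(A)$. In $\DK$ we only have $[x]-[y]$ with $x,y$ of the same size $n$; in the projection picture these are $[p]-[q]$ with $p,q\in M_n(A)$. I would show that the assignment $[p]-[q]\mapsto[p]-[q]\in K_0(A)$ is well defined. Two projections of the same size that are stably homotopic become homotopic after adding a common projection such as $1_k$, and the Grothendieck relation in $GV$ allows adding $[(2\cdot 1_k-1)\otimes\gamma]$ to both entries. So the map respects the relations.

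Surjectivity follows because every element of $K_0(A)$ can be written as $[p]-[1_n]$ with $p\in M_m(A)$; padding both projections with zeros puts them at a common size. Injectivity: if $[p]=[q]$ in $K_0(A)$, then $p\oplus 1_k\sim_h q\oplus 1_k$ after enlarging matrices, and this homotopy gives equality of the corresponding differences in $GV$. The delicate point is that $\DK$ stabilises by arbitrary OSUs rather than by the fixed basepoint, so equal size must be tracked carefully. The padding with zeros should correspond to adding $(-1)\otimes\gamma$, which is itself an OSU, so this works.

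\textbf{Step 3 (non-unital case and naturality).} For non-unital $A$, both sides are kernels of the maps induced by $A^\sim\to\C$. The unital isomorphism from Steps 1–2 is natural in unital $*$-homomorphisms, since functional calculus commutes with them. It therefore restricts to an isomorphism of these kernels, giving $[(2p-1)\otimes\gamma]-[(2q-1)\otimes\gamma]\mapsto[p]-[q]$ whenever $p-q\in M_n(A)$.

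The complex case is identical once the real structures are dropped.
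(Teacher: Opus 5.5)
Your proposal is correct and follows the paper's proof. You identify Real odd self-adjoint unitaries in $M_n(A)\otimes\Cl_{1,0}$ with $(2p-1)\otimes\gamma$ for Real projections $p$, pass to homotopy classes and Grothendieck groups, and handle the non-unital case via the unitisation $A^\sim\to\C$. Your Step 2 also spells out the identification of $\Ker(d)$ with $K_0(A)$, where padding by zeros corresponds to adding $(-1)\otimes\gamma$; the paper compresses this into ``taking the Grothendieck completion''. One small correction: well-definedness there is automatic from the universal property, and your stable-homotopy argument is really the injectivity step.
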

\begin{proof}
Let us first assume $A$ is unital. 
Then odd self-adjoint unitaries in $M_n(A) \otimes \Cl_{1,0}$ must be of the 
form $x \otimes \gamma$ with $x = x^* = x^\frakr = x^{-1} \in M_n(A)$. 
Hence $x = 2p-\one$ for some projection $p \in M_n(A)$. 
Since $(2p_0 - \one) \otimes \gamma \sim_h (2p_1 - \one) \otimes \gamma$ if and only $p_0 \sim_h p_1$, 
we obtain a semigroup isomorphism 
\[
   \bigsqcup_k\pi_0\big(\mbox{\rm OSU}(M_k(B))\big) \ni (2p- \one) \otimes \gamma \mapsto p \in   \bigsqcup_k \mbox{\rm Proj}(M_k(B)) / \sim_h .
\]
Taking the Grothendieck completion, we obtain the isomorphism $\Upsilon_A^{1,0} \colon \DK( A \otimes \Cl_{1,0}) \xmapsto{\simeq} \KR_0(A)$. 

If $A$ is non-unital, we apply the above argument to the minimal unitisation $A^\sim$ and use the fact that both $\DK(A \otimes \Cl_{1,0})$ and 
$\KR_0(A)$ are defined via the quotient $A^\sim \to \C$. 

In the absence of real structures, we can assume that the generator of $\Cl_1$ is a self-adjoint unitary, 
and the same construction then yields the isomorphism $\DK( A \otimes \Cl_1 ) \cong K_0(A)$.
\end{proof}

\begin{prop}[\textbf{\boldmath $\KR_{1}$ and $K_1$}] \label{prop:iso_DK_odd}
Let $A$ be a trivially graded (Real) $C^*$-algebra. 
Then we have an isomorphism $\Upsilon_A^{1,1} \colon \DK( A \otimes \Cl_{1,1}) \xrightarrow{\simeq} \KR_1(A)$ given explicitly by 
\[
\big[ \tfrac{1}{2}(u+u^*) \otimes \gamma + \tfrac{1}{2}(u-u^*) \otimes \rho \big] - [ \one \otimes \gamma ] \xmapsto{\simeq} [u] , 
\]
where $u \in M_n(A^\sim)$ is a (Real) unitary. 
In the absence of real structures, we obtain the isomorphism $\Upsilon_A^{1,1} \colon \DK( A \otimes \Cl_{2}) \xrightarrow{\simeq} K_1(A)$ with odd complex $K$-theory. 
\end{prop}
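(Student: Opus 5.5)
The plan is to mirror the proof of Proposition \ref{prop:iso_DK_even}: treat the unital case first, show that odd self-adjoint unitaries in $M_n(A)\otimes\Cl_{1,1}$ are exactly the elements $\tfrac12(u+u^*)\otimes\gamma+\tfrac12(u-u^*)\otimes\rho$ for a unitary $u$, and then pass to Grothendieck groups and unitisations.

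For the unital case, I first use $\Cl_{1,1}\cong M_2(\C)$ with $\gamma\cong\sigma_1$, $\rho\cong -i\sigma_2$ and the standard grading, where even elements are diagonal and odd elements are off-diagonal. A general odd element of $M_n(A)\otimes\Cl_{1,1}$ is then $a\otimes\gamma+b\otimes\rho=\begin{pmatrix}0&a-b\\a+b&0\end{pmatrix}$. Self-adjointness forces $a=a^*$ and $b=-b^*$, so the element is $\begin{pmatrix}0&u^*\\u&0\end{pmatrix}$ with $u=a+b$. Such an element squares to $\one$ if and only if $u$ is unitary.

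Next I check the real structures. Since $\gamma$ and $\rho$ are Real and the real structure on $M_2(\C)$ is entrywise, the element is Real precisely when $u^\rs=u$. This gives a bijection $u\mapsto x_u$ from Real unitaries in $M_n(A)$ onto Real odd self-adjoint unitaries in $M_n(A)\otimes\Cl_{1,1}$. The bijection is a homeomorphism and respects direct sums, so it induces a semigroup isomorphism $\bigsqcup_n\pi_0(U(M_n(A)))\cong V(A\otimes\Cl_{1,1})$. The basepoint $\one\otimes\gamma$ corresponds to $u=\one$.

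The main obstacle is passing from this semigroup isomorphism to the group statement $\Ker(d)\cong\KR_1(A)$. The semigroup of homotopy classes of unitaries is not the group $\KR_1(A)$ unless one stabilises modulo the trivial unitaries $\one_n$. I would do this by sending $[x_u]-[x_v]$ to $[u]-[v]$ with $u,v$ of the same size, so that $d$ vanishes, and identifying this with $[uv^*]$. Well-definedness needs two ingredients: the standard fact that $[u]+[v]=[uv]$ in $K_1$ (via the rotation homotopy $u\oplus v\sim uv\oplus\one$), and the observation that on the $\DK$ side, adding $[x_\one]-[x_\one]$ corresponds to adding $\one$. Every element of $\Ker(d)$ can be written as $[x_u]-[x_\one]$ after adding $[x_{v^*}]$ to both terms, using $[x_v]+[x_{v^*}]=2[x_\one]$, which again comes from $v\oplus v^*\sim\one$. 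Hence the map is surjective. It is injective because stable homotopy of $x_u\oplus x_{\one_k}$ and $x_{\one}\oplus x_{\one_k}$ transports to stable homotopy of unitaries. The real structure is carried along because all the homotopies used can be chosen Real.

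For non-unital $A$, I apply the unital case to $A^\sim$ and note that both groups are defined as kernels of the maps induced by $A^\sim\to\C$ (respectively $A^\sim\to\R$ on fixed points), and that $\Upsilon$ is natural, so it restricts to an isomorphism of the kernels. The complex statement follows by the same argument with the real structures ignored.
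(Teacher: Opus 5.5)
Your proposal is correct and follows the paper's proof. Both identify Real odd self-adjoint unitaries in $M_n(A)\otimes\Cl_{1,1}$ with $\begin{pmatrix}0&u^*\\u&0\end{pmatrix}$ for Real unitaries $u$ via $\gamma\cong\sigma_1$, $\rho\cong -i\sigma_2$, pass to Grothendieck groups, and treat the non-unital and complex cases as in Proposition~\ref{prop:iso_DK_even}, with the only difference being that you spell out the $\Ker(d)$ versus $\KR_1(A)$ bookkeeping the paper leaves implicit.

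One small expository slip in that bookkeeping: surjectivity is immediate from $[x_w]-[x_\one]\mapsto[w]$. Your reduction to the form $[x_u]-[x_\one]$ is what actually drives injectivity, and it uses the homeomorphism $u\mapsto x_u$ in the direction unitaries $\to$ odd self-adjoint unitaries, not the reverse direction you state.
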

\begin{proof}
We identify $\Cl_{1,1} \cong M_2(\C)$ with the Real generators $\gamma \cong \sigma_1$ and $\rho \cong -i \sigma_2$. 
If $A$ is unital, 
any Real odd self-adjoint unitary in $A \otimes \Cl_{1,1}$ is of the form 
\[
\tfrac{1}{2}(u+u^*) \otimes \gamma + \tfrac{1}{2}(u - u^*) \otimes \rho 
\cong \begin{pmatrix} 0 & u^* \\ u & 0 \end{pmatrix} ,
\]
and the map $\left(\begin{smallmatrix} 0 & u^* \\ u & 0 \end{smallmatrix}\right) \mapsto u \in \calU(A)$ 
induces the isomorphism $\Upsilon_A^{1,1} \colon \DK( A \otimes \Cl_{1,1}) \xrightarrow{\simeq} \KR_1(A)$, 
where $\KR_1(A)$ is given by homotopy classes of Real unitaries. 
The non-unital and complex cases follow as in the proof of Proposition \ref{prop:iso_DK_even}. 
\end{proof}

\paragraph{\boldmath $\KR_{2}$}
We consider $\DK(A \otimes \Cl_{0,1})$, using the presentation of complex Clifford algebras 
$\Cl_1 \cong \C \oplus \C$ with grading $\Gamma( \alpha, \beta) = (\beta, \alpha)$. We obtain the Real Clifford 
algebra $\Cl_{0,1}$ by taking $\Cl_1 \cong \C\oplus \C$ with real structure $(\alpha, \beta)^{\frakr_{0,1}} = (\ol{\beta}, \ol{\alpha})$ 
(in particular, we have $\rho \sim (i, -i)$). We have that odd self-adjoint unitaries in $A \otimes \Cl_1$ are of the form 
$x=(2p - \one) \otimes (1,-1)$. If we impose $x^{\frakr_A \otimes \frakr_{0,1}} =x$, then 
$(2p - \one)^\rs = -(2p- \one)$. 
In~\cite{BL}, Boersema and Loring characterise the group $\KO_{2}(A^\rs)$ via equivalence classes of self-adjoint unitaries 
such that $u^\rs = -u$. In particular, the map $(2p - \one) \otimes (1,-1) \mapsto 2p - \one$ will give an 
isomorphism $DK(A \otimes \Cl_{0,1}) \cong \KO_2(A^\rs)$.

\paragraph{\boldmath $\KR_{-1}$ and $\KR_3$ }
To consider $\DK(A \otimes \Cl_{2,0})$, we take $\Cl_2 \cong M_2(\C)$ with the real structure $\frakr_{2,0} = \mathrm{Ad}_{\sigma_1} \circ \mathfrak{c}$, 
where $\mathfrak{c}$ denotes entrywise complex conjugation. We find that 
$\sigma_1^{\rs_{2,0}} = \sigma_1$, $\sigma_2^{\rs_{2,0}} = \sigma_2$, and $\Cl_{2}^{\rs_{2,0}} \cong C\ell_{2,0}$. 
Any odd element in $A \otimes \Cl_2$ can be written as $x = a \otimes \sigma_1 + b \otimes \sigma_2$. If $x= x^* = x^{-1}$, then 
$a$ and $b$ are self-adjoint and $u = a+ib \in A$ is unitary. 
Imposing the condition $x^{\frakr_{2,0}} = x$, we obtain that $a^\rs = a$ and $b^\rs = b$ and hence 
$u^\rs = u^*$. To summarise, odd self-adjoint unitaries in $A \otimes \Cl_{2,0}$ can be described in terms of 
unitaries $u \in A$ such that $u^\rs = u^*$. Boersema and Loring characterise  $\KO_{-1}(A^\rs)$  by classes of unitaries 
$u \in M_n(A)$ such that $u^* = u^\rs$.\footnote{Note that instead of a real structure Boersema and Loring use an anti-multiplicative 
involution $a\mapsto a^{\tau} \in A$. We obtain this picture by taking $\tau = \ast \circ \frakr$.} 
Hence the map $a \otimes \sigma_1 + b \otimes \sigma_2 \mapsto a+ ib$ will furnish an isomorphism 
$\DK(A \otimes \Cl_{2,0} ) \cong \KO_{-1}(A^\rs)$. 

We can similarly consider the real structure $\frakr_{0,2}= \mathrm{Ad}_{-i\sigma_2} \circ \mathfrak{c}$ on $\Cl_2 \cong M_2(\C)$, which 
has Real elements $i \sigma_1$ and $-i \sigma_2$, whence $\Cl_2^{\frakr_{0,2}} = C\ell_{0,2}$. So if the odd self-adjoint unitary 
$x= a\otimes \sigma_1 + b \otimes \sigma_2$ is invariant under $\frakr_{0,2}$, it follows that $a^\rs = -a$ and $b^\rs = -b$. Therefore 
the unitary $u= a+ib$ is such that $u^\rs = -u^*$. We can therefore describe classes of $\DK(A \otimes \Cl_{0,2})$ via unitaries 
in $M_{2m}(A)$ with $u^\rs = -u^*$. This condition can be equivalently formulated as requiring $u \in M_{2m}(A)$ to be such that 
\[
   u^{\tilde{\rs}} = u^*, \qquad  \begin{pmatrix} a & b \\ c & d \end{pmatrix}^{\tilde{\rs} } = \begin{pmatrix} d^\rs & -c^\rs \\ -b^\rs & a^\rs \end{pmatrix},
\]
which recovers the description of $\KO_3(A^\rs)$ from~\cite{BL}.

\paragraph{\boldmath Higher $\KR$-groups}
To consider groups $\KR_4$ and higher, we can instead use  the K\"{u}nneth formula for real $K$-theory~\cite{Boersema02}, 
where $\KO_n(A^\rs) \cong KO_{n-4}( A^\rs \otimes \mathbb{H})$ and $\mathbb{H} \simeq M_2(\C)^{\frakr_{0,2}}$ is considered 
as an ungraded real $C^*$-algebra. So we may repeat the previous constructions for a description of $\KR_4$, $\KR_5$, and $\KR_6$. 
Indeed, our description of $\KO_3(A^\rs)$ above can be equivalently formulated via $\KO_{-1}(A^\rs \otimes \mathbb{H})$.

\paragraph{\boldmath Compatibility with $K$-theory operations}

We also check that our isomorphisms behave well with respect  to Bott periodicity and Morita invariance in  $K$-theory.

\begin{lemma} \label{lemma:DK-complexK-respects-Bott}
The following diagram commutes,
\[
   \xymatrix{
      \DK(A \otimes \Cl_{1,0} ) \ar[rr]^{\beta_{DK} } \ar[d]_{\Upsilon^{1,0}_A} & &  \DK( SA \otimes \Cl_{2,0} ) \ar[d]^{\Upsilon^{2,0}_{SA}} \\
      \KO_0(A^\rs) \ar[rr]^{\beta^K}  & & KO_{-1}(SA^{\rs}) & \hspace{-1.3cm} \cong KO_0(S^{1,1}A^\rs).
   }
\]
Replacing $A$ with $S^{s,r}A$ shows that $\Upsilon^{r+2,s}_A \circ \beta_{DK}=\beta^K\circ\Upsilon_A^{r+1,s}$ for all $r,s \geq 0$.
\end{lemma}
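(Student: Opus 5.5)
Both vertical maps and both horizontal maps are explicit, so I plan to chase a generator $[(2p-\one)\otimes\gamma]-[(2q-\one)\otimes\gamma]$ of $\DK(A\otimes\Cl_{1,0})$ around the square. Naturality of all four maps in $A$, together with the usual unitisation argument, lets me first take $A$ unital and reduce to a single class $[(2p-\one)\otimes\gamma]-[e]$ with basepoint $e=(2q-\one)\otimes\gamma$. By additivity I may then take $q=0$, i.e.\ basepoint $e=-\one\otimes\gamma$, after adding a trivial summand; the difference $[p]-[q]$ then appears through the relative description.

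First I compute $\beta_{\DK}$ using formula \eqref{eq:DK_Bott_graded_comp}. Write $x=(2p-\one)\otimes\gamma_1$ and let $\gamma_2$ be the new Bott generator, so $x\hox\gamma_2=(2p-\one)\otimes\gamma_1\gamma_2$. Since $(\gamma_1\gamma_2)^2=-1$ and $\gamma_1\gamma_2$ is Real, we get $\exp(-\pi t\,x\hox\gamma_2)=\cos(\pi t)-\sin(\pi t)(2p-\one)\otimes\gamma_1\gamma_2$. This equals $\exp(-\pi t(2p-\one)\,\gamma_1\gamma_2)$, which splits along $p$ and $\one-p$ as rotations by $\mp\pi t$ in the plane $\gamma_1\gamma_2$. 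The product with the basepoint factor $\exp(\pi t\,e\hox\gamma_2)$ then collapses to $\one-p+p\exp(-2\pi t\gamma_1\gamma_2)$ up to sign conventions. Multiplying by $e\hox\one$ gives an explicit OSU loop in $SA\otimes\Cl_{2,0}$ of the form $a_t\otimes\gamma_1+b_t\otimes\gamma_2$, with $a_t,b_t$ Real and self-adjoint.

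Next I apply $\Upsilon^{2,0}_{SA}$ as described in the $\KR_{-1}$ paragraph. I identify $\Cl_{2,0}\cong M_2(\C)$ with $\gamma_1\mapsto\sigma_1$, $\gamma_2\mapsto\sigma_2$ and real structure $\mathrm{Ad}_{\sigma_1}\circ\mathfrak c$, and send $a\otimes\sigma_1+b\otimes\sigma_2$ to $u=a+ib$. For the loop above this should give $u_t=(\one-p)+p\,e^{\pm 2\pi i t}$, up to an overall constant unitary coming from the basepoint, which is removed by subtracting the class of $e$. The result is the standard Bott loop $f_p(t)=(\one-p)+e^{2\pi i t}p$, satisfying $u^\rs=u^*$ because $p$ is Real and $\overline{e^{2\pi i t}}=e^{-2\pi i t}$.

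Finally I verify that $\beta^K([p])$ is represented by this loop under $\KO_{-1}(SA^\rs)\cong\KO_0(S^{1,1}A^\rs)$; this is the standard definition of the Bott map in Boersema--Loring's unitary picture. The main obstacle is bookkeeping: getting the sign of the exponent, and hence possibly an inverse loop, to match the paper's conventions for $\beta^K$ and $\Upsilon^{2,0}$. I expect to fix any discrepancy by noting that the relevant orientation conventions are chosen precisely so that this square commutes. For the general statement, I replace $A$ by $S^{s,r}A$ and use the Clifford-to-suspension identifications underlying the definitions of $\Upsilon^{r+1,s}$, so that both sides reduce to the case already proved; naturality of all maps makes the identifications commute.
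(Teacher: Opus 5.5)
Your route is the paper's route: evaluate both composites on $[(2p-\one)\otimes\gamma]-[(2q-\one)\otimes\gamma]$ using \eqref{eq:DK_Bott_graded_comp}, the formula $a\otimes\sigma_1+b\otimes\sigma_2\mapsto a+ib$ for $\Upsilon^{2,0}_{SA}$, and $\beta^K([p])=[e^{2\pi i t p}]$. Your reductions (unital $A$, basepoint $-\one\otimes\gamma$) are fine.

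\textbf{The gap: the orientation of the loop.} Both rows are isomorphisms, so commutativity versus commutativity up to $-1$ is the entire content of the lemma. You cannot settle this by saying the conventions are ``chosen precisely so that this square commutes''. Once you fix $\gamma_1\mapsto\sigma_1$, $\gamma_2\mapsto\sigma_2$, every map in the square is determined, and the sign must be computed.

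Computed with your choices, it comes out wrong. With $\gamma_1\gamma_2\mapsto i\sigma_3$ and $e=-\one\otimes\gamma_1$,
\[
(e\hox\one)\exp(-\pi t\,x\hox\gamma_2)\exp(\pi t\,e\hox\gamma_2)=-\sigma_1\exp(-2\pi i t p\,\sigma_3)=\begin{pmatrix} 0 & -e^{2\pi i t p}\\ -e^{-2\pi i t p} & 0\end{pmatrix}.
\]
Equivalently $a_t=-(\one-p)-p\cos 2\pi t$ and $b_t=p\sin 2\pi t$, so $a_t+ib_t=-\big((\one-p)+pe^{-2\pi i t}\big)$. Removing the basepoint $-\one$ leaves $[e^{-2\pi i t p}]=-\beta^K([p])$, not the loop $f_p$. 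With your identifications the square anticommutes, and your claim that $\Upsilon^{2,0}_{SA}$ returns $f_p$ is false.

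\textbf{The fix.} Use the other Real graded identification $\Cl_{1,0}\hox\Cl_{1,0}\cong\Cl_{2,0}$, namely $\gamma_1\mapsto\sigma_2$ and $\gamma_2\mapsto\sigma_1$. This is legitimate because both $\sigma_j$ are Real for $\mathrm{Ad}_{\sigma_1}\circ\mathfrak{c}$. Then $\gamma_1\gamma_2\mapsto -i\sigma_3$, and the same expression becomes $-\sigma_2\exp(2\pi i t p\,\sigma_3)$. Its $a+ib$ entry is $-ie^{2\pi i t p}$, and the basepoint $-\sigma_2$ maps to $-i$. Since constant scalar unitaries satisfy $u^\rs=u^*$, you obtain exactly
\[
[e^{2\pi i t p}]=\beta^K\circ\Upsilon^{1,0}_A\big([(2p-\one)\otimes\gamma]-[-\one\otimes\gamma]\big).
\]

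\textbf{Do not lean on the paper's display for this step.} The paper's proof states $e\sim\sigma_1$, $\gamma\sim\sigma_2$ and simply records the lower-left entry of the $\beta_{\DK}$-image as $e^{2\pi i t p}$. With $e=\sigma_1$ and $x=(2p-\one)\sigma_1$, formula \eqref{eq:DK_Bott_graded_comp} actually gives lower-left entry $e^{2\pi i t(\one-p)}=e^{2\pi i t}e^{-2\pi i t p}$. That is the inverse loop up to a scalar loop, so the paper's stated identification produces the same sign problem. The explicit check with the swapped identification is what establishes the lemma as stated.
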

\begin{proof}
We take $\big[ (2p-\one) \otimes \gamma \big] - \big[ (2q-\one) \otimes \gamma \big] \in \DK ( A \otimes \Cl_{1,0}  )$ 
and use the presentation $\Cl_{2,0} \cong M_2(\C)$ 
with generators $\sigma_1$ and $\sigma_2$ and real structure $\frakr_{2,0}= \mathrm{Ad}_{\sigma_1} \circ \mathfrak{c}$. 
Then using Eq. \eqref{eq:DK_Bott_graded_comp} with $e \sim \sigma_1$ and $\gamma \sim \sigma_2$, we have 
\begin{align*}
   \Upsilon^{2,0}_{SA}\circ \beta_{\DK} \big( \big[ (2p-\one) \otimes \gamma \big] - \big[ (2q-\one) \otimes \gamma\big] \big) 
   &= \Upsilon^{2,0}_{SA} \left( \left[ \begin{pmatrix} 0 & e^{-2\pi it p} \\ e^{2\pi i tp} & 0 \end{pmatrix} \right] - 
    \left[ \begin{pmatrix} 0 & e^{-2\pi it q} \\ e^{2\pi i tq} & 0 \end{pmatrix} \right]  \right) \\
   &= \big[ e^{2\pi i t p}  \big] - \big[ e^{2\pi i t q}  \big] ,
\end{align*}
where $(e^{2\pi i t p} )^* = (e^{2\pi i t p})^\rs$ and so we indeed obtain a class in $KO_{-1}(SA)$. 
On the other hand 
\begin{align*}
   \beta_K \circ \Upsilon_{A}^{1,0} \big( \big[ (2p-\one) \otimes \gamma \big] - \big[ (2q-\one) \otimes \gamma \big] \big) 
   &= \beta_K \big( [p] - [q] \big) 
   =    \big[ e^{2\pi i t p}  \big] - \big[ e^{2\pi i t q}  \big] . \qedhere
\end{align*}
\end{proof}

We also consider compatibility of the isomorphisms $\Upsilon_{A}^{r,s}$ with the Morita isomorphism in $\DK$ and $K$-theory. 
In the real case, Morita invariance in $\KR$-theory is easiest understood in the language of $\KKR$-theory and the map 
$\calM^{\KK}_Y$ from Eq. \eqref{eq:Morita_invariance}. Hence one  instead considers compatibility of 
Morita invariance in $\KKR$ with the isomorphisms between $\DK$ and $\KKR$-theory considered in Appendix \ref{sec:appendix_KK_DK_isos} below. 
These isomorphisms are connected to the graded Cayley  isomorphism $\mathfrak{C}_B: \KKR(\Cl_{1,0}, B) \xrightarrow{\simeq} \DK(B)$ from~\cite{BKR} 
(see Appendix \ref{subsec:Cayley}), where it was shown that $\mathfrak{C}_B \circ \calM_Y^{\KK} = \calM_Y^{\DK} \circ \mathfrak{C}_{\End_B^0(Y)}$ 
in~\cite[Lemma 4.13]{BKR}. 
Therefore we will restrict our attention to the complex setting and take a full ungraded $C^*$-module $X_A$ 
with stabilisation unitary $W_X: X_A \oplus \calH_A \to \calH_A$. 
We take a representative $[a] - [b] \in K_j \big( \End_A^0(X) \big)$ such that $a-b \in \End_A^0(X)$ is finite-rank. 
Then $\calM_X^K: K_j\big( \End_A^0(X) \big) \xrightarrow{\simeq} K_j(A)$ is such that
\begin{equation} \label{eq:Complex_K_Morita}
  \calM_X^K\big( [a] - [b] \big) = \left[ W_X \begin{pmatrix} a & 0 \\ 0 & I_j \end{pmatrix} W_X^* \right] - \left[ W_X \begin{pmatrix} b & 0 \\ 0 & I_j \end{pmatrix} W_X^* \right] , \qquad 
  I_j = \begin{cases} 0, & j = 0, \\ \one, & j=1. \end{cases}
\end{equation}

\begin{lemma} \label{lemma:Upsilon_Morita_commute_complex}
Let $X_A$ be a full and ungraded complex $C^*$-module, then 
\[
     \calM_X^K \circ \Upsilon_{\End_A^0(X)}^{1+0} = \Upsilon_A^{1+0} \circ \calM_{X\otimes \Cl_{1,0}}^{\DK}, \qquad 
      \calM_X^K \circ \Upsilon_{\End_A^0(X)}^{1+1} = \Upsilon_A^{1+1} \circ \calM_{X \otimes \C^2}^{\DK}.
\]
\end{lemma}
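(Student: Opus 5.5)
The strategy is to verify both identities on explicit representatives, using the concrete formulas for the two Morita maps: Eq. \eqref{eq:Complex_K_Morita} for $\calM_X^K$, and the formula from the proof of Lemma \ref{lemma:DK_Morita_compat_new}(1) for $\calM^{\DK}$. Both Morita maps are given by conjugation with a stabilisation unitary after padding with a basepoint. So the problem reduces to checking that the explicit $\Upsilon$ maps of Propositions \ref{prop:iso_DK_even} and \ref{prop:iso_DK_odd} commute with this padding and with conjugation. Since all maps are homomorphisms, it suffices to check generators. Matrix amplification is absorbed into $X$ by replacing $X$ with $X^{\oplus n}$.

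\textbf{Even case.} Take a class $[(2p-\one)\otimes\gamma]-[(2q-\one)\otimes\gamma]\in\DK(\End_A^0(X)\otimes\Cl_{1})$.
\begin{enumerate}
\item Choose representatives with $p,q\in\End_A^0(X)^\sim$ and $p-q$ finite rank. This is possible by the density argument cited from \cite{BKR}, applied in the complex setting.
\item Apply $\calM^{\DK}_{X\otimes\Cl_{1,0}}$. Here the graded module $X\otimes\Cl_{1,0}$ embeds in the standard graded module via $W_X\otimes\one$, and the basepoint OSU on the complement can be taken to be $(-\one)\otimes\gamma = (2\cdot 0-\one)\otimes\gamma$.
\item The padded elements are therefore $W_X\bigl((2p-\one)\oplus(-\one)\bigr)W_X^*\otimes\gamma = \bigl(2\,W_X(p\oplus 0)W_X^*-\one\bigr)\otimes\gamma$.
\item Applying $\Upsilon_A^{1+0}$ gives $[W_X(p\oplus0)W_X^*]-[W_X(q\oplus0)W_X^*]$. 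This is exactly $\calM_X^K([p]-[q])$ with $I_0=0$.
\end{enumerate}

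\textbf{Odd case.} Identify $\Cl_{1+1}\cong M_2(\C)$, so that $\End_A^0(X)\otimes\Cl_2$ acts on $X\otimes\C^2$. A generator is then $\left[\left(\begin{smallmatrix}0&u^*\\u&0\end{smallmatrix}\right)\right]-\left[\left(\begin{smallmatrix}0&\one\\\one&0\end{smallmatrix}\right)\right]$, and we may again take $u-\one$ finite rank.
\begin{enumerate}
\item Stabilise $X\otimes\C^2$ using $W_X\otimes\one_{\C^2}$, with basepoint $\left(\begin{smallmatrix}0&\one\\\one&0\end{smallmatrix}\right)$ on the complement.
\item The padded OSU has off-diagonal entry $W_X(u\oplus\one)W_X^*$.
\item Applying $\Upsilon_A^{1+1}$ gives $[W_X(u\oplus\one)W_X^*]$, which is $\calM_X^K([u])$ with $I_1=\one$. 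The basepoint term maps to $[\one]=0$.
\end{enumerate}

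\textbf{Main obstacle.} The substantive step is justifying that $\calM^{\DK}$ may be computed with a stabilisation of the form $W_X\otimes\one$ and these particular basepoints, rather than the generic unitary $W$ and the OSU $e\hox\one$ used in Lemma \ref{lemma:DK_Morita_compat_new}. I would handle this in two parts:
\begin{itemize}
\item \emph{Independence of the stabilisation unitary.} This follows from part (1) of Lemma \ref{lemma:DK_Morita_compat_new}: any two stabilisation isometries are related by an adjoint action, and in the stable algebra that action is homotopic to the identity on $\DK$.
\item \emph{Independence of the basepoint.} This follows from the basepoint independence of $\DK_e$ quoted from \cite{vanDaele1, BKR}. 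Concretely, $e\hox\one$ and $(-\one)\otimes\gamma$ (respectively $\sigma_1$) are homotopic OSUs on $\hat\calH_A$, and the padding map $x\mapsto x\oplus e$ is compatible with such a homotopy.
\end{itemize}
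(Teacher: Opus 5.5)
Your proposal follows the paper's proof essentially line by line. You take representatives with finite-rank differences, pad on the complement by $(-\one)\otimes\gamma$ in the even case and by $\left(\begin{smallmatrix}0&\one\\ \one&0\end{smallmatrix}\right)$ in the odd case, conjugate by the stabilisation built from $W_X$, and read off $\Upsilon$ to land on Eq.~\eqref{eq:Complex_K_Morita}.

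The one thing to repair is your justification of the basepoint step in the even case.

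\emph{The homotopy you claim does not exist.} On the standard graded module over $A\otimes\Cl_1$, the identification $\hat{\calH}\hox\Cl_1\cong\calH\otimes\Cl_1$ turns the canonical OSU $e\hox\one$ into $(2P-\one)\otimes\gamma$, where $P$ is a projection whose range and complement are both infinite. OSUs in $\End_A(\calH_A)\otimes\Cl_1$ are exactly the elements $(2Q-\one)\otimes\gamma$ with $Q$ a projection. No norm-continuous path of projections joins $P\neq 0$ to $0$, since nearby projections are unitarily equivalent. So $e\hox\one$ is not homotopic to $(-\one)\otimes\gamma$.

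\emph{The substitution is still legitimate, for a different reason.} Work in the relative groups; Eq.~\eqref{eq:Complex_K_Morita} does this implicitly anyway, since $W_X(p\oplus 0)W_X^*$ need not lie in $(A\otimes\calK)^\sim$. There, padding $x$ and $y$ by the \emph{same} OSU $z$ only adds the degenerate pair $[z]-[z]=0$. Hence $[W(x\oplus z)W^*]-[W(y\oplus z)W^*]$ does not depend on $z$. Equivalently, padding by $2P-\one$ and applying $\Upsilon$ gives $[W_X(p\oplus P)W_X^*]-[W_X(q\oplus P)W_X^*]$, which is the same relative $K_0$-class as the one with $P=0$.

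The paper's proof makes the same move, asserting that $\one\otimes\gamma$ is homotopic to $-\one\otimes\gamma$. This is contradicted by its own Proposition~\ref{prop:iso_DK_even}, so the same caveat applies there.

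In the odd case no argument is needed. Under $\hat{\calH}_A\cong\calH_A\otimes\C^2$, your basepoint $\sigma_1$ is literally $e\hox\one$.
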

\begin{proof}
We take $[x] - [y] \in \DK\big( \End_A^0(X) \otimes \Cl_{1+s} \big)$
such that $x-y \in \End_A^0(X) \otimes \Cl_{1+s}$ is finite-rank and $s \in \{0,1\}$. 
We also fix an ungraded stabilisation unitary $W_X: X_A \oplus \calH_A \to \calH_A$. 

For the even case, we write $x= (2p-\one) \otimes \gamma$, $y = (2q-\one) \otimes \rho$, and take 
the graded stabilisation unitary 
${W}_{1+0}: (X_A \otimes \Cl_1) \oplus (\calH_A \otimes \Cl_{1}) \to  \calH_A \otimes \Cl_1$. 
Using that $\calH_A \otimes \Cl_1$ has a canonical odd self-adjoint unitary $\one \otimes \gamma$ which 
is homotopic to $-\one \otimes \gamma$, 
we can write
\begin{align*}
  &\Upsilon^{1+0}_A \circ \calM_{X \otimes \Cl_{1}}^{\DK} \big( [(2p-\one)\otimes \gamma] - [ (2q-\one) \otimes \gamma ] \big) \\
    &= \Upsilon_{A}^{1+0} \left( \left[ {W}_{1+0} \big( (2p  - 1) \otimes \gamma \oplus (-\one) \otimes \gamma\big)  {W}^*_{1+0}  \right] - 
    \left[  {W}_{1+0} \big( (2q  - 1) \otimes \gamma \oplus (-\one) \otimes \gamma\big)  {W}^*_{1+0}   \right]  \right) \\
    &= \Upsilon_A^{1+0}  \left( \left[ W_X\begin{pmatrix} 2p-\one & 0 \\ 0 & -\one  \end{pmatrix} W_{X}^* \otimes \gamma \right]  
    -     \left[ W_X\begin{pmatrix} 2q-\one & 0 \\ 0 & -\one \end{pmatrix} W_{X}^* \otimes \gamma \right]  \right) \\
  &=   \left( \left[ W_X\begin{pmatrix} p & 0 \\ 0 & 0  \end{pmatrix} W_{X}^*\right]  
    -     \left[ W_X\begin{pmatrix} q & 0 \\ 0 & 0  \end{pmatrix} W_{X}^*\right]  \right) \\
   &= \calM_X^K \circ \Upsilon^{1+0}_{\End_A^0(X)} \big( [(2p-\one)\otimes \gamma] - [ (2q-\one) \otimes \gamma ] \big).
\end{align*}
For the odd case, $\End_A^0(X) \otimes \Cl_{1,1}$ acts on the graded space  $X_A \otimes \C^2$ and we take the graded 
stabilisation unitary $ {W}_{1+1}: (X_A \otimes \C^2) \oplus \hat\calH_A \to \hat\calH_A$ with $\hat\calH_A \cong \calH_A \otimes \C^2$. 
We write 
$[x] - [y]  = \big[ \tfrac{1}{2}(x+x^*) \otimes \gamma + \tfrac{1}{2}(x-x^*) \otimes \rho\big] - \big[ \tfrac{1}{2}(y+y^*) \otimes \gamma + \tfrac{1}{2}(y-y^*) \otimes \rho\big] 
= \left[\left( \begin{smallmatrix} 0 & U^* \\ U & 0 \end{smallmatrix} \right) \right] - \left[ \left(\begin{smallmatrix} 0 & V^* \\ V & 0 \end{smallmatrix} \right) \right] $ 
and argue analogously to the even case, 
\begin{align*}
  &\Upsilon^{1+1}_A \circ \calM_{X \otimes \C^2}^{\DK}\left( \left[ \begin{pmatrix} 0 & U^* \\ U & 0 \end{pmatrix}  \right] 
    - \left[ \begin{pmatrix} 0 & V^* \\ V & 0 \end{pmatrix} \right] \right) \\
    &= \Upsilon_{A}^{1+1} \left( \left[  {W}_{1+1} \left( \begin{pmatrix} 0 & U^* \\ U & 0 \end{pmatrix} \oplus \begin{pmatrix} 0 & \one \\ \one & 0 \end{pmatrix} \right)  {W}^*_{1+1}  \right] - 
    \left[  {W}_{1+1} \left( \begin{pmatrix} 0 & V^* \\ V & 0 \end{pmatrix} \oplus \begin{pmatrix} 0 & \one \\ \one & 0 \end{pmatrix} \right)  {W}^*_{1+1}  \right] \right) \\
    &= \Upsilon_A^{1+1}  \left( \left[ \begin{pmatrix} 0 & W_X( U^* \oplus \one) W_X^* \\ W_X(U \oplus\one)W_X^*  & 0 \end{pmatrix} \right]  
    -   \left[ \begin{pmatrix} 0 & W_X( U^* \oplus \one) W_X^* \\ W_X(U \oplus\one)W_X^*  & 0 \end{pmatrix}  \right]   \right) \\
  &=  \left[ W_X\begin{pmatrix} U & 0 \\ 0 & \one  \end{pmatrix} W_{X}^*\right]  
    -     \left[ W_X\begin{pmatrix} V & 0 \\ 0 & \one  \end{pmatrix} W_{X}^*\right]   \\
   &= \calM_X^K \circ \Upsilon^{1+1}_{\End_A^0(X)} \left( \left[ \begin{pmatrix} 0 & U^* \\ U & 0 \end{pmatrix}  \right] 
    - \left[ \begin{pmatrix} 0 & V^* \\ V & 0 \end{pmatrix} \right] \right).  \qedhere
\end{align*}
\end{proof}

\section{Isomorphisms between \texorpdfstring{$\KKR$}{KKR} and \texorpdfstring{$\DK$}{DK}} \label{sec:appendix_KK_DK_isos}

For a $\sigma$-unital and $\Z_2$-graded Real $C^*$-algebra $B$, the isomorphism $\KKR(\Cl_{1,0}, B) \cong \DK(B)$ has been 
established in various forms~\cite{Roe04, Kubota15a, BKR}. Here we examine some of these maps, their compatibility with each 
other as well as other natural maps in $\KKR$ and $\DK$-theory.

\subsection{The Roe and Kubota isomorphisms} \label{subsec:bdd_KK_to_DK}

Much of the content of this section can also be found in~\cite[{\S}5]{Kubota15a}. 
We first review the $\Z_2$-graded isomorphisms and then consider the setting of 
skew-adjoint and Clifford anti-linear operators.

Using that $\DK\big( \Mult(B \hox\calK) \big) = 0$ by Lemma \ref{lemma:SQ_and_QS}, 
Roe \cite{Roe04} and Kubota \cite{Kubota15a} separately define isomorphisms from $\KKR$ to $\DK$ by composing the 
isomorphism $\Phi_B: \KKR(\C, B) \xrightarrow{\simeq} \DK( \calQ_{B \hox \calK}) $ from  Lemma \ref{lemma:Phi} 
with the Van Daele boundary maps.

\begin{thm}[{\cite[{\S}2]{Roe04}, \cite[Theorem 5.11]{Kubota15a}}] \label{thm:graded_Roe_and_Kubota}
Let $B$ be a $\Z_2$-graded and $\sigma$-unital $C^*$-algebra. There are natural 
isomorphisms $\Roe_B: \KKR(\C, B) \to \DK( B \hox \Cl_{1,0} )$ and 
$\Kubota_B: \KKR( \C, SB) \to \DK( B)$ defined by the composition of isomorphisms, 
\begin{align*}
    &\Roe_B: \KKR(\C, B) \xrightarrow{ \Phi_B } \DK( \calQ_{B \hox \calK} ) \xrightarrow{\delta} \DK( B \hox \calK \hox \Cl_{1,0} ) 
        \xrightarrow{ \calM^{\DK} } \DK( B \hox \Cl_{1,0} ). \\
    &\Kubota_B: \KR( \C, SB) \xrightarrow{ \Phi_{SB}  } \DK( \calQ_{ SB \hox \calK}  ) 
  \xrightarrow{(\iota_\ast)^{-1}} \DK\big( S \calQ_{B \hox \calK} \big)  \xrightarrow{\partial} \DK( B \hox \calK ) 
  \xrightarrow{ \calM^{\DK} }   \DK( B).
\end{align*}
\end{thm}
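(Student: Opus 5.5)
The theorem asserts that $\Roe_B$ and $\Kubota_B$ are well-defined isomorphisms. Since each is a composition, it suffices to check that every arrow is an isomorphism and that the domains and codomains match up. Naturality then follows from naturality of each constituent map.

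For $\Roe_B$ the steps are as follows.
\begin{enumerate}
\item $\Phi_B$ is an isomorphism by Lemma \ref{lemma:Phi}, once a basepoint odd self-adjoint unitary in $\Mult(B\hox\calK)$ is fixed. One exists after tensoring with $\Cl_{1,1}$ if needed, using the Clifford stability of Lemma \ref{lemma:DK_Clifford_stability}. The basepointed and relative pictures are identified as in Remark \ref{rmk:Phi-revealed}.
\item The boundary map $\delta=\partial\circ\beta_{\DK}$ of the extension
\[
0\to B\hox\calK\to \Mult(B\hox\calK)\to\calQ_{B\hox\calK}\to 0
\]
fits into the six-term exact sequence in Van Daele $K$-theory. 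By Lemma \ref{lemma:SQ_and_QS}, $\DK(\Mult(B\hox\calK))=0$, since $B\hox\calK$ is stable; the same holds for its suspensions and Clifford twists, because $\Mult(B\hox\calK)\hox\Cl_{r,s}$ is again a multiplier algebra of a stable algebra. Exactness then forces $\delta$ to be an isomorphism.
\item $\calM^{\DK}$ is the Morita isomorphism of Eq. \eqref{eq:DK_Morita} for the full module $\hat\calH_B$.
\end{enumerate}

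For $\Kubota_B$ the chain is analogous.
\begin{enumerate}
\item $\Phi_{SB}$ is an isomorphism by Lemma \ref{lemma:Phi}, applied to $SB$, which is $\sigma$-unital.
\item $\iota_\ast$ is an isomorphism by Lemma \ref{lemma:SQ_and_QS} with $Y=\hat\calH_B$.
\item The index map $\partial:\DK(S\calQ_{B\hox\calK})\to\DK(B\hox\calK)$ is an isomorphism by the same exactness argument, using $\DK(S\Mult(B\hox\calK))=0$.
\item The Morita map is an isomorphism as before.
\end{enumerate}

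Naturality is checked as follows. For a graded Real $*$-homomorphism $\phi:B\to B'$, one uses the induced maps on $\hat\calH\hox B$, the multiplier algebras and the Calkin algebras. The boundary maps commute with these morphisms of extensions. Morita invariance is natural by Lemma \ref{lemma:DK_Morita_compat_new}. For $\Phi$, naturality is immediate from the defining formula $[q(S)]$.

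The main obstacle is the vanishing $\DK(\Mult(B\hox\calK))=0$ in the Real graded setting with a possibly unbalanced grading, together with the identification of the six-term exact sequence for $\DK$ (van Daele's boundary maps, via Eq. \eqref{eq:delta-defined}). I would cite \cite[Lemma 5.10]{Kubota15a}, whose Eilenberg swindle works verbatim with real structures, by choosing the isometries implementing the swindle to be Real and even.
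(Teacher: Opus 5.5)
Your proposal is correct and is essentially the paper's route: the paper gives no separate proof, but in citing Roe and Kubota it indicates exactly your mechanism. Namely, $\Phi_B$ is an isomorphism by Lemma~\ref{lemma:Phi}; the vanishing $\DK(\Mult(B\hox\calK))=0$ from Lemma~\ref{lemma:SQ_and_QS} (Kubota's Lemma~5.10) forces the boundary maps $\delta$ and $\partial$ to be isomorphisms; the same lemma gives $\iota_\ast$; and $\calM^{\DK}$ is Morita invariance. The only imprecision is in your naturality paragraph: a degenerate $\phi\colon B\to B'$ need not extend to a map $\Mult(B\hox\calK)\to\Mult(B'\hox\calK)$, so functoriality of the Calkin picture should go through the module $\hat{\calH}_B\otimes_\phi B'$ and a stabilisation isometry into $\hat{\calH}_{B'}$, though this does not affect the isomorphism claims.
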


Next we incorporate skew-adjoint Fredholm operators on ungraded $C^*$-modules $X_A$ that 
anti-commute with the generators of an ungraded and ample $\Cl_{r,s}$-representation. 
Recalling Corollary \ref{cor:skew_Fred_to_KasMod}, this information defines a class 
in $\KKR( \Cl_{s+1, r}, A)$ with $A$ ungraded. Our aim is to understand the isomorphisms 
$\Roe$ and $\Kubota$ in this setting. To this end, we will use the Clifford shuffle isomorphisms 
$\shuffle^{r,s}_{\KK} : \KKR( A \hox \Cl_{s,r}, B)  \xrightarrow{\simeq} \KKR( A, B \hox \Cl_{r,s} )$ from Eq. \eqref{eq:Clifford_shuffle}.

\begin{defn} \label{def:Cl-indexed_Roe_Kubota_defn}
Let $A$ be an ungraded $\sigma$-unital Real $C^*$-algebra.
\begin{enumerate}
 \item We define the Roe isomorphism $\Roe^{r+1,s+1}_A: \KKR( \Cl_{s+1, r}, A) \to \DK( A \otimes \Cl_{r+1, s+1} ) $ as the composition 
 of isomorphisms
 \[
   \Roe_A^{r+1,s+1}:   \KKR( \Cl_{s+1, r}, A) \xrightarrow{\shuffle_{\KK}^{r,s+1} } \KKR(\C, A \otimes \Cl_{r,s+1} ) 
   \xrightarrow{ \Roe_{A \otimes \Cl_{r,s+1} } } \DK( A \otimes \Cl_{r+1,s+1}).
 \]
 \item We define the Kubota isomorphism $\Kubota_A^{r,s+1}: \KKR(\Cl_{r,s+1}, SA) \to \DK(A \otimes \Cl_{r,s+1})$, as the composition 
 of isomorphisms
 \[
   \Kubota_A^{r,s+1}:   \KKR( \Cl_{s+1, r}, SA) \xrightarrow{\shuffle_{\KK}^{r,s+1} } \KKR(\C, SA \otimes \Cl_{r,s+1} ) 
   \xrightarrow{ \Kubota_{A \otimes \Cl_{r,s+1} } } \DK( A \otimes \Cl_{r,s+1}).
 \]
\end{enumerate}
\end{defn}

Taking a minimal even projection $P^{r,s} \in \End_A(X)\otimes \Cl_{r,s+1}$ and using the maps 
\begin{align*}
       &\sigma_{\one \otimes \gamma}^{r,s}: \Reg^{s,r}_A(X \otimes \C^2)   \to \Reg_A(X) \otimes \Cl_{r+1,s+1}, \qquad \text{(Eq. \eqref{eq:sigma_e-rs_defn})},  \\
   &\sigma_{J_\mathrm{ref} \otimes \rho}^{r,s}: \Reg^{r,s}_A(X) \otimes \Cl_{0,1} \to \Reg_A(X) \otimes \Cl_{r,s+1}, \qquad \text{(Eq. \eqref{eq:skew_sigma^{r,s}})} , 
\end{align*}  
 the Roe and Kubota isomorphisms can be made explicit.
\begin{prop} \label{prop:Concrete_Roe_computation}
Let $X_A$ be full, and let $F \in \End_A^{r,s}(X)$ be Real and skew-adjoint such that $\one + F^2 \in \End_A^0(X)$. 
Let $\class{F} := [F\otimes\rho] \in \KKR(\Cl_{s+1,r},A)$ denote the resulting $\KK$-class (cf. Corollary \ref{cor:skew_Fred_to_KasMod}).
Then 
\begin{align*}
  \Roe^{r+1,s+1}_A\big( \class{F} \big) &= \calM_X^{\DK} \big( \big[ \sigma^{r,s}_{\one \otimes \gamma} \big( - \cosh( \pi \chi(T) ) \otimes \gamma - \sinh( \pi \chi(T) ) \otimes \rho \big) \big] - \big[ \one \otimes \gamma \big]  \big) \\
  &\hspace{-2.5cm} =  \calM_{X}^{\DK} \big( \big[ -P^{r,s}(\cosh(\pi \chi(T)) \otimes \gamma + \sinh( \pi \chi(T)) \otimes \rho ) - (\one - P^{r,s})(\one \otimes \gamma) \big] 
   - \big[ \one \otimes \gamma \big] \big)
\end{align*}
where $\calM_X^{\DK} : \DK( \End_A^0(X) \otimes \Cl_{r+1,s+1} ) \to \DK( A \otimes \Cl_{r+1,s+1} )$ is the Morita isomorphism.
\end{prop}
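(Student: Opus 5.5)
The plan is to reduce the statement to results already proved in the body of the paper, since by Definition \ref{def:Cl-indexed_Roe_Kubota_defn} the map $\Roe^{r+1,s+1}_A$ is precisely $\calM^{\DK}_{\calH_A}\circ\delta\circ\Phi_A^{r,s+1}$ (after identifying $\DK(\calQ_{A\otimes\Cl_{r,s+1}\otimes\calK})\cong\DK(\calQ_{A\otimes\calK}\otimes\Cl_{r,s+1})$). I read $\chi(T)$ in the statement as $F$ itself. Since $F$ is bounded and $\one+F^2$ is compact, $F$ is already essentially unitary, so no normalising function is needed; one may also take any $\chi$ with $\chi(F)-F$ compact, as in Corollary \ref{cor:skew_Fred_to_KasMod}.

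First, I will invoke Lemma \ref{lemma:Calkin_class}. It says that, after choosing a stabilisation unitary $U\colon X_A\oplus\calH_A\to\calH_A$ and a basepoint $J\in\End^{r,s}_A(\calH_A)$, the class $\Phi_A^{r,s+1}(\class{F})$ is the shuffled Calkin class $(\sigma^{r,s}_{q(J)\otimes\rho})_\ast\big([q(U(F\oplus J)U^*)\otimes\rho]-[q(J)\otimes\rho]\big)$. Second, I will use Lemma \ref{lemma:sigma^rs_commutes_with_boundary} to move the shuffle past $\delta$, so that $\sigma^{r,s}_{\one\otimes\gamma}$ is applied after the boundary map. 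Third, I will compute $\delta$ via Lemma \ref{lemma:delta_formula}, exactly as in the proof of Lemma \ref{lem:index-sinh-cosh}. The power series identity $\exp(\pi G\otimes\rho\gamma)=\cosh(\pi G)\otimes\gamma+\sinh(\pi G)\otimes\rho$, with $G=U(F\oplus J)U^*$, then yields the representative $-\cosh(\pi G)\otimes\gamma-\sinh(\pi G)\otimes\rho$. On the $J$ summand this is $\one\otimes\gamma$, since $J^2=-\one$ gives $\cosh(\pi J)=\cos\pi=-\one$ and $\sinh(\pi J)=0$.

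Finally, I will pass from $\calH_A$ back to $X_A$. I will choose the Clifford representation on $\calH_A$ to commute with the projection onto $X_A$, so that $P^{r,s}_{\calH_A}$ compresses to $P^{r,s}_X$. The $J$ part then contributes only the basepoint $\one\otimes\gamma$, and the difference from the basepoint lies in the corner $\mathrm{Ad}_{\nu\otimes1}$ of $\End^0_A(X)\otimes\Cl_{r+1,s+1}$. Lemma \ref{lemma:DK_Morita_compat_new}(1), adapted to ungraded $X_A$ tensored with the graded Clifford algebra, gives $\calM^{\DK}_{\calH_A}\circ(\mathrm{Ad}_{\nu\otimes1})_\ast=\calM^{\DK}_X$. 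The second displayed formula is just the definition \eqref{eq:sigma_e-rs_defn} of $\sigma^{r,s}_{\one\otimes\gamma}$ written out.

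The main obstacle is the bookkeeping in this last step. I need to check that the compression of $P^{r,s}_{\calH_A}$ to $X_A$ is a minimal even projection of the correct type. I also need to check that the complement $(\one-P^{r,s})(\one\otimes\gamma)$ and the $J$ summand together form exactly the basepoint, so that the class is supported in the $X$-corner. This is the same argument as the end of the proof of Theorem \ref{thm:index-of-Fred}, and I would essentially reproduce it with $\chi(T)$ replaced by $F$.
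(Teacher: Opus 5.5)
Your proposal is correct and follows the paper's route: the paper proves this proposition by pointing to the proof of Theorem \ref{thm:index-of-Fred}, which is the same chain you describe, namely Lemma \ref{lemma:Calkin_class}, Lemma \ref{lemma:sigma^rs_commutes_with_boundary}, the $\cosh$/$\sinh$ evaluation of Lemma \ref{lemma:delta_formula}, and Lemma \ref{lemma:DK_Morita_compat_new}(1) via $\mathrm{Ad}_{\nu\otimes 1}$ (with $\chi(T)$ read as $F$, or as $\chi(F)$). One small correction: expanding \eqref{eq:sigma_e-rs_defn} gives $+(\one-P^{r,s})(\one\otimes\gamma)$, which agrees with your observation that the $J$-summand contributes $+\one\otimes\gamma$; the minus sign on that term in the second displayed line is therefore a typo (with it, the element would not differ from $\one\otimes\gamma$ by a compact), so that line is not literally ``the definition written out''.
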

\begin{proof}
This statement was proven in Theorem \ref{thm:index-of-Fred}. 
\end{proof}

\begin{prop} \label{prop:concrete_Kubota_computation}
Let $F_\bullet \in \End_{SA}^{r,s}(SX)$ be skew-adjoint such that $\one + F_\bullet^2 \in \End_{SA}^0(SX)$. 
Assume also that $F_0, F_1 \in\End_A^{r,s}(X)$ are skew-adjoint unitaries 
with $F_0 - F_t \in \End_A^0(X)$ for all $t\in [0,1]$. 
The resulting $\KK$-class $\class{F_\bullet}\in \KK(\Cl_{s+1,r},SA)$ is such that
\begin{align*}
   {\Kubota}^{r,s+1}_A\big( \class{F_\bullet} \big) 
   &= \calM_{X}^{\DK}  \circ \exc_X \big( [\sigma^{r,s}_{F_0 \otimes \rho}(F_1\otimes \rho)] - [ F_0 \otimes \rho  ]\big)  \\
   &\hspace{-0cm}= \calM_{X}^{\DK}  \circ \exc_X \big(  \big[P^{r,s}( F_1 \otimes \rho) + (\one - P^{r,s})( F_0 \otimes \rho ) \big] - [ F_0 \otimes \rho  ]  \in \DK(A \otimes \Cl_{r,s+1} ),
\end{align*}
where $\calM_X^{\DK} : \DK( \End_A^0(X) \otimes \Cl_{r,s+1} ) \to \DK( A \otimes \Cl_{r,s+1} )$ is the Morita isomorphism.
\end{prop}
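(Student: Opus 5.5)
The plan is to unwind $\Kubota_A^{r,s+1}$ step by step on the explicit cycle $F_\bullet\otimes\rho$. First apply the Clifford shuffle via Lemma \ref{lem:cliff-shuffle}(2), after stabilising as in Lemma \ref{lemma:Calkin_class}. Choose a stabilisation unitary $U\colon X_A\oplus\calH_A\to\calH_A$ and a basepoint $J\in\End_A^{r,s}(\calH_A)$. The degenerate summand $J\otimes\rho$ does not change the class, so we may replace $F_t$ by $U(F_t\oplus F_0')U^*$, where $F_0'$ is a suitable skew-adjoint unitary on $\calH_A$. Taking the basepoint of the shuffle to be $\tilde F_0:=U(F_0\oplus F_0')U^*$ (itself a skew-adjoint unitary), we obtain
\[
\shuffle^{r,s+1}_{\KK}\class{F_\bullet}=\big[\big(\C,\,S\calH_{SA}\otimes\Cl_{r,s+1},\,\sigma^{r,s}_{\tilde F_0\otimes\rho}(\tilde F_\bullet\otimes\rho)\big)\big].
\]
Then $\Phi_{SA\otimes\Cl_{r,s+1}}$ sends this to the class $[q(\sigma^{r,s}_{\tilde F_0\otimes\rho}(\tilde F_\bullet\otimes\rho))]-[q(\tilde F_0\otimes\rho)]$ in the Calkin algebra of $S\calH_{SA}$.

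Next, use the hypothesis $F_t-F_0\in\End_A^0(X)$. It gives $q(\tilde F_t)=q(\tilde F_0)$ for all $t$, so the path $t\mapsto q(\sigma(\tilde F_t\otimes\rho))$ is a loop of odd self-adjoint unitaries in $S\calQ_A(\calH_A)\otimes\Cl_{r,s+1}$ (after subtracting the constant basepoint). Hence the class lies in the image of $\iota_\ast$, and $(\iota_\ast)^{-1}$ is realised by this path viewed in $\DK(S(\calQ\otimes\Cl_{r,s+1}))$. Now apply Proposition \ref{prop:suspension_bdry_graded} with lift $x_t=\sigma^{r,s}_{\tilde F_0\otimes\rho}(\tilde F_t\otimes\rho)$. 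This lift is odd and self-adjoint. Its endpoints $x_0=\tilde F_0\otimes\rho$ and $x_1=\sigma(\tilde F_1\otimes\rho)$ are odd self-adjoint unitaries, since $F_0$ and $F_1$ are unitaries and $\sigma$ preserves OSUs by Lemma \ref{lem:AMZ-OSU-iso}. Moreover $\|q(x_0)-q(x_t)\|=0<2$. The proposition then gives $\partial(\cdots)=\exc([x_1]-[x_0])$ in $\DK(\End^0_A(\calH_A)\otimes\Cl_{r,s+1})$.

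Finally, the unitary $U$ is removed. The difference $x_1-x_0$ is supported on the $X$-summand, and $P^{r,s}$ commutes with the projection onto $X$ (as in the proof of Theorem \ref{thm:index-of-Fred}). So $[x_1]-[x_0]$ is the image under $(\mathrm{Ad}_{\nu\otimes1})_\ast$ of $[\sigma^{r,s}_{F_0\otimes\rho}(F_1\otimes\rho)]-[F_0\otimes\rho]$ in the relative group for $X$. Naturality of excision with respect to $\mathrm{Ad}_\nu$, together with Lemma \ref{lemma:DK_Morita_compat_new}(1), yields $\calM^{\DK}_{\calH_A}\circ\exc_{\calH_A}\circ(\mathrm{Ad}_\nu)_\ast=\calM^{\DK}_X\circ\exc_X$, which is the claimed formula.

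The main obstacle is the identification of $(\iota_\ast)^{-1}$ and the application of $\partial$. One must check that the basepoints and the sign convention of Proposition \ref{prop:suspension_bdry_graded} match the orientation of the loop $t\in[0,1]$. One must also check that the Clifford shuffle map $\sigma$ with the basepoint $\tilde F_0$ can be used consistently at every stage, so that the basepoint change in Lemma \ref{lemma:Calkin_class} (from $J$ to $\tilde F_0$) does not alter the class. I would settle the basepoint issue first by noting that any two skew-adjoint unitary basepoints in $\End^{r,s}_A(\calH_A)$ give the same relative class via Lemma \ref{lemma:close_J_are_unitary_equiv} after stabilising.
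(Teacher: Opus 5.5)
Your route is the paper's: stabilise with $U$, take $U(F_0\oplus J)U^*$ (the paper's $J_{\mathrm{ref}}$) as basepoint for the shuffle, compute $\Phi$ via Lemma \ref{lemma:Calkin_class}, apply $\partial$ through Proposition \ref{prop:suspension_bdry_graded}, and remove $U$ with $(\mathrm{Ad}_\nu)_\ast$ and Lemma \ref{lemma:DK_Morita_compat_new}(1). However, the reason you give for the $(\iota_\ast)^{-1}$ step is false, and taken literally it yields $\Kubota^{r,s+1}_A(\class{F_\bullet})=0$. Since $q(\tilde F_t)=q(\tilde F_0)$ for every $t$, the pointwise path $t\mapsto(q_{\calH_A}\otimes\one)\big(\sigma^{r,s}_{\tilde F_0\otimes\rho}(\tilde F_t\otimes\rho)\big)$ is \emph{constant} at the basepoint $q_{\calH_A}(\tilde F_0)\otimes\rho$. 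So ``this path minus the constant basepoint'' is $0$ in $\DK\big(S\calQ_A(\calH_A)\otimes\Cl_{r,s+1}\big)$, and $\partial$ of it is $0$. That contradicts the generally nonzero $\exc([x_1]-[x_0])$ you obtain in the next sentence. The error is conflating $\calQ_{SA}(S\calH_{SA})$ with $S\calQ_A(\calH_A)$. The quotient by $\End^0_{SA}(S\calH_{SA})=C_0((0,1),\End^0_A(\calH_A))$ does not kill endpoint values. Hence $q_{S\calH}(x_\bullet)\neq q_{S\calH}(x_0)$, because $x_t-x_0$ does not tend to $0$ in norm as $t\to1$: $x_1-x_0$ is compact but nonzero. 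For paths extending to $[0,1]$, the class in $\calQ_{SA}(S\calH_{SA})$ remembers the triple $(x_0,\,q(x_\bullet),\,x_1)$, with $x_0,x_1$ taken in $\End_A(\calH_A)\otimes\Cl_{r,s+1}$. The map $\iota$ only reaches triples whose endpoints both equal the basepoint. All the information sits in $x_1$, which your loop throws away.

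To actually compute $(\iota_\ast)^{-1}$ you have to move that endpoint back. Since $\DK\big(\End_A(\calH_A)\otimes\Cl_{r,s+1}\big)=0$ (Lemma \ref{lemma:SQ_and_QS}), after adding basepoint summands and re-stabilising there is a path $w$ of odd self-adjoint unitaries in $\End_A(\calH_A)\otimes\Cl_{r,s+1}$ from $x_1$ to $x_0$. Sliding the right endpoint along $w$ is a homotopy in $\calQ_{SA}(S\calH_{SA})$ to $\iota$ of the genuinely non-constant loop $t\mapsto q(w_t)$ based at $q(x_0)$. Van Daele's boundary formula then applies with the OSU lift $w$. Because $w$ runs from $x_1$ to $x_0$, you must reconcile the orientation with the sign conventions of Proposition \ref{prop:suspension_bdry_graded} and of \eqref{eq:partial_delta_iota_compat} to land on $\exc([x_1]-[x_0])$ rather than its negative. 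That is precisely the check you postponed, and the constant-loop shortcut does not avoid it. Equivalently, if you quote Proposition \ref{prop:suspension_bdry_graded} as a black box, as the paper does, its class $[q(x_\bullet)]-[q(x_0)]$ must be read as the class of $q_{S\calH}(x_\bullet)$. That class depends on the lift through its endpoint $x_1$; it is not the pointwise loop. A minor point: independence of the basepoint also comes from this vanishing of $\DK\big(\End_A(\calH_A)\otimes\Cl_{r,s+1}\big)$. It does not come from Lemma \ref{lemma:close_J_are_unitary_equiv}, which only compares skew-adjoint unitaries at distance less than $2$.
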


\begin{proof}
Recalling that $\Kubota_A^{r,s+1}$ uses the map $\Phi_{SA}^{r,s+1}$, we fix a stabilisation unitary $U \colon X_A \oplus \calH_A \to \calH_A$. 
As explained in the proof of Lemma \ref{lemma:Calkin_class}, we can fix a 
basepoint skew-adjoint unitary $J \in \End_{A}^{r,s}(\calH_A)$. 
We also have a skew-adjoint unitary $J_\mathrm{ref} = U ( F_0 \oplus J ) U^* \in \End_A^{r,s}(\calH_A)$, and 
the constant path $t \mapsto J_\mathrm{ref} =  U ( F_0 \oplus J ) U^*$  also gives a basepoint skew-adjoint unitary in $\End_{SA}^{r,s}(\calH_{SA})$.
Using Lemma \ref{lemma:Calkin_class}, we  compute 
\begin{align*}
   \Kubota^{r,s+1}_A\big( \class{F_\bullet} \big)  &= \calM^{\DK} \circ \partial \circ (\iota_\ast)^{-1} \circ \Phi_{SA}^{r,s+1} \big( \class{F_\bullet} \big) \\
     &=  \calM^{\DK} \circ \partial  \big( \big[ (q_{\calH_A} \otimes \one) \circ \sigma_{J_\mathrm{ref} \otimes \rho}^{r,s}  \big( U (F_\bullet \oplus J) U^* \otimes \rho \big)   \big]  
        - \big[ q_{\calH_A} \big(U (F_0 \oplus J) U^*  \big) \otimes \rho \big] \big) \\
     &=  \calM^{\DK}_{\calH_A} \circ \exc_{\calH_{A}} \big( \big[ \sigma^{r,s}_{J_\mathrm{ref}\otimes \rho} \big( U (F_1 \oplus J) U^* \otimes \rho \big)  \big] 
     - \big[    U (F_0 \oplus J) U^*  \otimes \rho \big] \big) \\
     &= \calM^{\DK}_{\calH_A} \circ \exc_{\calH_{A} } \circ (\mathrm{Ad}_{\nu})_\ast \big(  [\sigma^{r,s}_{F_0 \otimes \rho}(F_1\otimes \rho)] - [ F_0 \otimes \rho  ] \big),
\end{align*}
where we have used Lemma \ref{lemma:sigma^rs_commutes_with_boundary}, Proposition \ref{prop:suspension_bdry_graded},  
taken away a trivial direct summand, and where 
$\nu \colon X_A \hookrightarrow \calH_A$ is the isometry corresponding to the stabilisation isomorphism $U \colon X_A\oplus\calH_A \xrightarrow{\simeq} \calH_A$. 
We can then apply part (1) of Lemma \ref{lemma:DK_Morita_compat_new} to obtain 
\begin{align*}
  {\Kubota}^{r,s+1}_A\big( \class{F_\bullet} \big) &= \calM^{\DK}_{\calH_A} \circ \exc_{\calH_{A} } \circ (\mathrm{Ad}_{\nu})_\ast 
  \big(  [\sigma^{r,s}_{F_0 \otimes \rho}(F_1\otimes \rho)] - [ F_0 \otimes \rho  ] \big)  \\
&   = \calM^{\DK}_{X } \circ \exc_{X } \big(  [\sigma^{r,s}_{F_0 \otimes \rho}(F_1\otimes \rho)] - [ F_0 \otimes \rho  ] \big).  \qedhere
\end{align*}
\end{proof}

To relate  $\Roe^{r+1,s+1}_{A}$ with $\Kubota^{r,s+1}_{A}$, we use the Bott map $\beta_{\DK}$ in Van Daele $K$-theory (see Theorem \ref{thm:DK_Bott_isos})
and the alternative Bott map $\altBott_{\KK}$ in $\KK$-theory (see Corollary \ref{cor:KK_Bott_alternative}).

\begin{thm}[{cf. \cite[Page 22]{Kubota15a}}] 
\label{prop:Roe_Kubota_Bott_compatibility}
\begin{enumerate}
  \item The isomorphism $\Phi_B: \KKR(\C, B) \to \DK( \calQ_{B\hox \calK} )$ is 
  compatible with the Bott map in $\KKR$ and $\DK$ in the following sense:
  \[
       \Phi_{SB \hox \Cl_{1,0}}  \circ \beta_{\KK} = \iota_\ast \circ \beta_{\DK} \circ \Phi_B,
  \]
  where $\iota_\ast : \DK_{\one \hox \gamma}(S\calQ_{B \hox \calK} \hox \Cl_{1,0}) \xrightarrow{\simeq} \DK_{\one \hox \gamma}( \calQ_{SB \hox \calK} \hox \Cl_{1,0}  )$.
  \item The isomorphism $\Phi_A^{r,s+1}: \KKR(\Cl_{s+1,r}, A) \to \DK( \calQ_{A\hox \calK}\otimes \Cl_{r,s+1} )$ is 
  compatible with the Bott map in $\KKR$ and $\DK$ in the following sense:
  \[
     \calM_{1,1}^{\DK} \circ  \Phi_{SA}^{r,s}  \circ \altBott_{\KK} = \iota_\ast \circ \beta_{\DK} \circ \Phi_A^{r,s+1},
  \]
  where 
  $\iota_\ast : \DK(S\calQ_{A}(X) \hox \Cl_{r+1,s+1}) \xrightarrow{\simeq} \DK( \calQ_{SA}(SX) \hox \Cl_{r+1,s+1}  )$.

  \item The Roe isomorphism $\Roe^{r+1,s+1}_A: \KKR(\Cl_{s+1,r}, A) \to \DK(A \otimes \Cl_{r+1,s+1})$ is 
  compatible with the Bott map in the following sense:
   \[
     \calM_{1,1}^{\DK} \circ   \Roe^{r+1,s}_{SA} \circ \altBott_{\KK} = \beta_{\DK} \circ \Roe^{r+1,s+1}_A.
   \]

  \item The Kubota isomorphism $\Kubota_A^{r,s+1}: \KKR(\Cl_{s+1,r}, SA) \to \DK(A \otimes \Cl_{r,s+1})$ is compatible with 
  the Bott map and $\Roe^{r+1,s+1}_A$  in the following sense: 
  \[
   \calM_{1,1}^{\DK} \circ  \Kubota_{SA}^{r,s} \circ \altBott_{\KK} = \beta_{\DK} \circ \Kubota^{r,s+1}_A = \Roe^{r+1,s+1}_{SA}.
  \]
\end{enumerate}
\end{thm}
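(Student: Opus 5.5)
We prove the four parts in order, each building on the previous one. The plan is to verify the $\Z_2$-graded statement (1) on explicit cycles, then transport it through the Clifford shuffle to get (2). Parts (3) and (4) should then follow formally from (2) together with the naturality of the boundary maps recorded in \eqref{eq:Bott_commutator} and \eqref{eq:partial_delta_iota_compat}.

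For (1), represent a class in $\KKR(\C,B)$ by a normalised module $(\C,\hat\calH_B,S)$ with $S$ an odd self-adjoint operator, $\one-S^2$ compact and $\|S\|\le1$. The external product with $[\beta]$ is then represented, after normalising $x\otimes\gamma$, by the operator
\[
\cos(\tfrac{\pi}{2}\theta(x))\,S\hox 1 + \sin(\tfrac{\pi}{2}\theta(x))\,\one\hox\gamma
\]
on $C_0(\R)\hox\hat\calH_B\hox\Cl_{1,0}$, where $\theta:\R\to(-1,1)$ is a homeomorphism. This is the standard product formula, and the operator is homotopic to the Kasparov product. Reparametrising $\R\cong(0,1)$, the image of this operator in $\calQ_{SB\hox\calK}\hox\Cl_{1,0}$ is a path of odd self-adjoint unitaries. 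We compare it with the explicit formula \eqref{eq:DK_Bott_graded_comp} for $\beta_{\DK}([q(S)]-[e])$, which has the form $(e\hox\one)\exp(-\pi t\, q(S)\hox\gamma)\exp(\pi t\, e\hox\gamma)$. Both are loops of odd self-adjoint unitaries based at the basepoint, and both rotate $q(S)$ through $\one\hox\gamma$. We expect an explicit homotopy through such loops, for instance by conjugating with the rotation $\nu(\cdot,t)$, after which the two constant factors at the basepoint agree up to the basepoint-switching map $\tfrac12\mathrm{Ad}_{\one+e\hox\gamma}$. Since these loops lie in $S\calQ_{B\hox\calK}$, the comparison holds after applying $\iota_\ast$.

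For (2), we use $\Phi^{r,s+1}_A=\Phi_{A\otimes\Cl_{r,s+1}}\circ\shuffle^{r,s+1}_{\KK}$ and $\altBott_{\KK}=\beta_{\KK}\circ\tau_{\Cl_{1,0}}^{-1}$, the latter up to Clifford stability. Three compatibilities are needed:
\begin{itemize}
\item the shuffle commutes with the Kasparov product by $[\beta]$;
\item the shuffle $\shuffle^{r,s+1}$ composed with $\tau_{\Cl_{1,0}}$ agrees with $\calM^{\KK}_{1,1}\circ\shuffle^{r,s}$, once one $\Cl_{1,0}$ is absorbed into $\Cl_{r+1,s+1}$;
\item $\Phi$ intertwines $\calM^{\KK}_{1,1}$ with $\calM^{\DK}_{1,1}$.
\end{itemize}
The third holds because both are implemented by tensoring with $\exterior\C$ and adding a degenerate summand, which we check on the representatives of Lemma \ref{lemma:Calkin_class}. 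Applying (1) with $B=A\otimes\Cl_{r,s+1}$ then gives (2).

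For (3), we compose (2) with $\calM^{\DK}\circ\delta$. This uses that $\delta$ commutes with $\beta_{\DK}$ by \eqref{eq:Bott_commutator} and with $\iota_\ast$ by \eqref{eq:partial_delta_iota_compat}, that Morita invariance commutes with $\beta_{\DK}$ by Lemma \ref{lemma:DK_Morita_compat_new}(2), and that $\calM^{\DK}_{1,1}$ is natural and so commutes with $\delta$ and $\calM^{\DK}$.

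For (4), the equality $\beta_{\DK}\circ\Kubota^{r,s+1}_A=\Roe^{r+1,s+1}_{SA}$ comes from the definitions. We have $\Roe_{SA}=\calM\circ\delta\circ\Phi_{SA}$ and $\Kubota_A=\calM\circ\partial\circ\iota_\ast^{-1}\circ\Phi_{SA}$, and $\delta=\beta_{\DK}\circ\partial$ by \eqref{eq:delta-defined}; again we use \eqref{eq:partial_delta_iota_compat} and Lemma \ref{lemma:DK_Morita_compat_new}(2). The first equality of (4) then follows by applying (3) to $SA$ and cancelling $\beta_{\DK}$.

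The main obstacle is the explicit homotopy in (1). The $\KK$ product representative and Van Daele's Bott element $z(x,e,t)$ are written in different conventions: the $\KK$ side rotates toward $\one\hox\gamma$ over $\R$, while the Van Daele side is a product of two exponentials based at $e$. Keeping the basepoint and the sign of the orientation consistent is delicate, and a sign mismatch would show up as a global $-1$. We would first test the homotopy on $B=\C$, where $[S]$ is a generator, to fix the orientation.
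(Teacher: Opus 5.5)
Your treatment of (2)--(4) matches the paper. (2) is (1) transported through $\Phi^{r,s+1}_A=\Phi_{A\otimes\Cl_{r,s+1}}\circ\shuffle^{r,s+1}_{\KK}$ using exactly your three compatibilities. In the paper these appear as $\calM^{\DK}_{1,1}\circ\Phi_B=\Phi_{B\hox\Cl_{1,1}}\circ\calM^{\KK}_{1,1}$ and $\calM^{\KK}_{1,1}\circ\shuffle^{r,s}_{\KK}\circ\tau_{\Cl_{1,0}}^{-1}=\shuffle^{r,s+1}_{\KK}$; note that $\altBott_{\KK}=\tau_{\Cl_{1,0}}^{-1}\circ\beta_{\KK}$, in that order. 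Parts (3) and (4) are then formal consequences of \eqref{eq:Bott_commutator}, \eqref{eq:partial_delta_iota_compat} and Lemma \ref{lemma:DK_Morita_compat_new}(2). The only difference is the first equality of (4). The paper gets it directly from (2):
\[
\calM^{\DK}_{1,1}\circ\Kubota^{r,s}_{SA}\circ\altBott_{\KK}=\calM^{\DK}\circ\partial\circ\iota_\ast^{-1}\circ\iota_\ast\circ\beta_{\DK}\circ\Phi^{r,s+1}_{SA}=\calM^{\DK}\circ\delta\circ\Phi^{r,s+1}_{SA}=\Roe^{r+1,s+1}_{SA}.
\]
Your detour through (3) over $SA$, followed by cancelling $\beta_{\DK}$, is also valid. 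It does need the extra commutation of $\beta_{\DK}$ with $\calM^{\DK}_{1,1}$.

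The gap is in (1), the only non-formal step, which you leave as an expected homotopy. The picture you describe would also not work. The product operator $\cos(\tfrac\pi2\theta(x))S\hox1+\sin(\tfrac\pi2\theta(x))\one\hox\gamma$ is not a based loop: its limits at $\mp\infty$ are $\mp\one\hox\gamma$. So its image does not lie in $\iota\big((S\calQ_{B\hox\calK})^\sim\hox\Cl_{1,0}\big)$, and there is no homotopy ``through such loops'' to $z(q(S),e,t)$. You point at the right tool, $\nu$, but the missing observation is the following identity, valid for any odd self-adjoint unitary $y$:
\[
\mathrm{Ad}_{\nu(y,t)}(\one\hox\gamma)=(\one\hox\gamma)\exp\big(-\pi t\,(y\hox\gamma)\big)=\cos(\pi t)\,\one\hox\gamma+\sin(\pi t)\,y\hox\one .
\]
This says the Bott product of a cycle is literally the $\nu$-rotation of the basepoint. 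Likewise, the inner factor $\mathrm{Ad}_{\nu(e,-t)}(\one\hox\gamma)=\cos(\pi t)\,\one\hox\gamma-\sin(\pi t)\,e\hox\one$ of $z$ is the Bott product of the degenerate cycle $(\C,\hat\calH_B,-e)$. Conjugating this factor by $\nu(T,t)$ is exactly what closes the path into a loop.

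The paper therefore argues on the $\KK$ side, in four steps:
\begin{enumerate}
\item It lifts $\iota_\ast\beta_{\DK}\Phi_B([T])$ to $\mathrm{Ad}_{\nu(T,t)}\circ\mathrm{Ad}_{\nu(e,-t)}(\one\hox\gamma)$.
\item It reads this lift as a Kasparov cycle via $\Phi^{-1}_{SB\hox\Cl_{1,0}}$.
\item It discards the constant basepoint switch $\tfrac12\mathrm{Ad}_{(\one+e\hox\gamma)}$ and the degenerate $e$-rotation.
\item It is left with $\mathrm{Ad}_{\nu(T,t)}(\one\hox\gamma)=\cos(\pi t)\hox\gamma+\sin(\pi t)T\hox\one$, which is the product representative.
\end{enumerate}
Your concern about orientation is well placed. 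This operator runs from $+\one\hox\gamma$ to $-\one\hox\gamma$, the opposite direction to your normalisation of $x\otimes\gamma$. So the identification $\R\cong(0,1)$ has to be fixed consistently with the convention for $[\beta]$.
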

\begin{proof}
(1) The argument is given in~\cite[Page 22]{Kubota15a}, which we review for completeness. 
Given the class $\big[ (\C, \hat{\calH} \hox B, T ) \big]$, the external product with the Bott element can be represented by 
the Kasparov module 
\[
    \big( \C, \, \hat{\calH} \hox SB_{SB} \hox \Cl_{1,0}, \, \cos(\pi t) \hox \gamma +  T \sin(\pi t) \hox 1 \big),
\]
see~\cite[Proposition 9.2]{Wahl07} for example. We will compare $\beta_{\KK}([T])$ with 
$\Phi_{SB \hox \Cl_{1,0}}^{-1} \circ \iota_\ast \circ \beta_{\DK} \circ \Phi_B ([T])$, where 
\[
   \beta_{\DK} \circ \Phi_B([T]) = 
    \big[  \frac{1}{2} \mathrm{Ad}_{( \one + e\hox \gamma )} \circ \mathrm{Ad}_{ \nu( q(T), t)} \circ \mathrm{Ad}_{ \nu( e, -t)} (\one \hox \gamma) \big] 
    \in \DK_{\one \hox \gamma}( S\calQ_{B \hox \calK} \hox \Cl_{1,0} ).
\]
The map $\iota_\ast$ takes us to an element in $\DK_{\one \hox \gamma}( \calQ_{SB \hox \calK} \hox \Cl_{1,0} )$. The inverse map then takes 
us to the $\KK$-class
\begin{align*}
   \Phi_{SB \hox \Cl_{1,0}}^{-1} \circ \iota_\ast \circ \beta_{\DK} \circ \Phi_B ([T]) 
   &= \big[ \big( \C, \, \hat{\calH} \hox SB \hox \Cl_{1,0}, \,  \mathrm{Ad}_{ \nu( T, t)} \circ \mathrm{Ad}_{ \nu( e, -t)} (\one \hox \gamma) \big) \big] \\
   &= \big[ \big( \C, \, \hat{\calH} \hox SB \hox \Cl_{1,0}, \,  \mathrm{Ad}_{ \nu( T, t)} ( (\one \hox \gamma) \exp( \pi t ( e \hox \gamma) ) \big) \big] \\
   &= \big[ \big( \C, \, \hat{\calH} \hox SB \hox \Cl_{1,0}, \,  \mathrm{Ad}_{ \nu( T, t)}  (\one \hox \gamma)  \big) \big] \\
   &= \big[ \big( \C, \, \hat{\calH} \hox SB \hox \Cl_{1,0}, \,  \cos(\pi t) \hox \gamma + \sin(\pi t) T \hox \one \big) \big]  \\
   &= \beta_{\KK}([T]) ,
\end{align*}
as required.

(2)  We recall that $\Phi_{A}^{r,s+1} = \Phi_{A \otimes \Cl_{r,s+1}} \circ \shuffle_{\KK}^{r,s+1}$. 
The isomorphisms $\calM_{1,1}^{\DK}$ and $\calM_{1,1}^{\KK}$ are natural and so
$\calM_{1,1}^{\DK} \circ \Phi_B = \Phi_{B \hox \Cl_{1,1}} \circ \calM_{1,1}^{\KK}$. The Clifford shuffle commutes
 with the Bott map in $\KK$ and so 
 \begin{align*}
     \calM_{1,1}^{\DK} \circ  \Phi_{SA}^{r,s}  \circ \altBott_{\KK} &= 
      \calM_{1,1}^{\DK} \circ \Phi_{SA \otimes \Cl_{r,s} } \circ \shuffle_{\KK}^{r,s} \circ (\tau_{\Cl_{1,0}})^{-1} \circ \beta_{\KK} \\
       &= \Phi_{SA \otimes \Cl_{r+1,s+1}}  \circ \calM_{1,1}^{\KK} \circ \shuffle_{\KK}^{r,s} \circ (\tau_{\Cl_{1,0}})^{-1} \circ \beta_{\KK} \\
       &= \Phi_{SA \otimes \Cl_{r+1,s+1}}  \circ  \shuffle_{\KK}^{r,s+1}   \circ \beta_{\KK} \\
       &=  \Phi_{SA \otimes \Cl_{r+1,s+1}}  \circ \beta_{\KK}  \circ  \shuffle_{\KK}^{r,s+1}  \\
         &= \iota_\ast \circ \beta_{\DK} \circ \Phi_{A \otimes \Cl_{r,s+1} } \circ \shuffle_{\KK}^{r,s+1} \qquad \text{(part (1))} \\
      &= \iota_\ast \circ \beta_{\DK} \circ \Phi_A^{r,s+1},
 \end{align*}
 where the identity $ \calM_{1,1}^{\KK} \circ \shuffle_{\KK}^{r,s} \circ (\tau_{\Cl_{1,0}})^{-1} =  \shuffle_{\KK}^{r,s+1}$ is an easy check.

(3) Using part (2), we compute 
\begin{align*}
   \calM_{1,1}^{\DK} \circ  \Roe_{{SA}}^{r+1,s} \circ \altBott_{\KK}  &=   \calM_{1,1}^{\DK} \circ \calM^{\DK} \circ  \delta \circ \Phi_{SA}^{r,s} \circ \altBott_{\KK}  \\
   &=  \calM^{\DK} \circ  \delta  \circ  \calM_{1,1}^{\DK} \circ  \Phi_{SA}^{r,s} \circ \altBott_{\KK} \\
  &=    \calM^{\DK}\circ   \delta \circ \iota_\ast \circ \beta_{\DK} \circ \Phi_{A}^{r,s+1}  \qquad \text{(part (2))} \\
  &=    \calM^{\DK} \circ  \delta\circ \beta_{\DK} \circ \Phi_{A}^{r,s+1}  \qquad \text{(Eq. \eqref{eq:partial_delta_iota_compat})} \\
  &=    \calM^{\DK} \circ  \beta_{\DK} \circ \delta \circ \Phi_{A}^{r,s+1}  \qquad  \text{(Eq. \eqref{eq:Bott_commutator})} \\
  &=   \beta_{\DK} \circ \calM^{\DK} \circ \delta \circ \Phi_{A}^{r,s+1}  \qquad \text{(Lemma \ref{lemma:DK_Morita_compat_new}, part (2))} \\
  &= \beta_{\DK} \circ \Roe^{r+1,s+1}_{A} . \qquad \text{(Definition of $\Roe$)}.
\end{align*}

(4)  Analogously to part (3), we find that 
\begin{align*}
  \calM_{1,1}^{\DK} \circ  \Kubota_{SA}^{r,s} \circ \altBott_{\KK} &= \calM_{1,1}^{\DK} \circ \calM^{\DK} \circ   \partial \circ (\iota_\ast)^{-1} \circ \Phi_{S^2 A}^{r,s} \circ \altBott_{\KK}  \\
  &= \calM^{\DK} \circ   \partial \circ (\iota_\ast)^{-1}  \circ \calM_{1,1}^{\DK} \circ  \Phi_{S^2 A}^{r,s} \circ \altBott_{\KK} \\
   &= \calM^{\DK} \circ   \partial \circ (\iota_\ast)^{-1} \circ \iota_* \circ \beta_{\DK} \circ \Phi_{SA}^{r,s+1}  \qquad \text{(part (2))} \\
   &= \calM^{\DK} \circ \partial \circ \beta_{\DK} \circ \Phi_{SA}^{r,s+1} \\
   &= \calM^{\DK} \circ \delta \circ \Phi_{SA}^{r,s+1} = \Roe_{SA}^{r+1,s+1} \qquad \text{(Definition of $\delta$ and $\Roe$)},
\end{align*}
which we then compare to 
\begin{align*}
   \beta_{\DK} \circ \Kubota_A^{r, s+1}
   &= \beta_{\DK} \circ \calM^{\DK} \circ  \partial \circ (\iota_\ast)^{-1} \circ \Phi_{S A}^{r,s+1}  \\
   &= \calM^{\DK} \circ \beta_{\DK} \circ \partial \circ (\iota_\ast)^{-1} \circ \Phi_{S A}^{r,s+1}  \qquad \text{(Lemma \ref{lemma:DK_Morita_compat_new}, part (2))} \\
   &= \calM^{\DK} \circ \partial \circ \beta_{\DK}\circ (\iota_\ast)^{-1} \circ \Phi_{S A}^{r,s+1} \qquad \text{(Eq. \eqref{eq:Bott_commutator})} \\
   &= \calM^{\DK} \circ \delta \circ   (\iota_\ast)^{-1} \circ \Phi_{S A}^{r,s+1} = \Roe_{SA}^{r+1,s+1}  
   \qquad \text{(Definition of $\delta$, $\Roe$, and Eq. \eqref{eq:partial_delta_iota_compat})} 
\end{align*}
and we are done.
\end{proof}

Using part (1) of Theorem \ref{prop:Roe_Kubota_Bott_compatibility}, the same argument as in the proof of parts (3) and (4)  (using $\beta_{\KK}$ 
rather than $\altBott_{\KK}$)
also shows 
\begin{equation} \label{eq:Graded_Bott_compat}
    \Roe_{SB \hat\otimes \Cl_{1,0} } \circ \beta_{\KK} =\beta_{\DK} \circ \Roe_{B} , \qquad \qquad 
    \Kubota_{SB \hat\otimes \Cl_{1,0}} \circ \beta_{\KK} = \beta_{\DK} \circ \Kubota_B  = \Roe_{SB}
\end{equation}
for any $\Z_2$-graded algebra $B$.

\subsection{The Cayley isomorphism} \label{subsec:Cayley}

We  complete our discussion by describing an isomorphism between $\KKR$-theory and $\DK$-theory via the Cayley transform studied in~\cite{BKR}. 
This isomorphism is well-adapted to computations with unbounded Kasparov modules. 
We first review the isomorphism for graded algebras.

\begin{thm}[{\cite[Theorem 4.15]{BKR}}] \label{thm:Graded_Cayley_iso}
Let $\big( \Cl_{1,0}, Y_B, D\big)$ be an unbounded Kasparov module such that $D=D^*=D^\rs$ and 
$D$ anti-commutes with the odd self-adjoint unitary $e \in \End_B(Y)$ that generates the $\Cl_{1,0}$-representation. 
There is an isomorphism 
\begin{align*}
  &\mathfrak{C}_B: \KKR(\Cl_{1,0}, B) \to  \DK( B ), 
  \end{align*}
  which on the class of the cycle $\big( \Cl_{1,0}, Y_B, D\big)$ is given by
  \begin{align*}
  &\mathfrak{C}_B\big( [ ( \Cl_{1,0}, \, Y_B, \, D ) ]\big) = \calM_{Y}^{\DK} \circ \exc_Y\big( \big[ e(D+e)(D-e)^{-1} \big] - [e] \big).
\end{align*}
If $B$ is balanced graded and $x, y \in M_n(B)$ are odd self-adjoint unitaries, then the inverse isomorphism 
$\mathfrak{C}_B^{-1}: \DK(B) \to \KKR(\Cl_{1,0}, B)$ is given by 
\[
    \mathfrak{C}_B^{-1}\big( [x] - [y] \big) = \big[ \big( \Cl_{1,0}, \, \ol{(x-y)B_B^n}, \, y( x+y)(x-y)^{-1} \big) \big],	
\]
where the $\Cl_{1,0}$-representation is generated by $y$.
\end{thm}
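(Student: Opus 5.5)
The statement cited as \cite[Theorem 4.15]{BKR} has three parts to establish: the Cayley transform is an admissible odd self-adjoint unitary, the map $\mathfrak{C}_B$ is a well-defined homomorphism, and the displayed formula is its inverse. I would argue in that order.

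\textbf{Step 1: the Cayley transform is an admissible OSU.} Set $U = e(D+e)(D-e)^{-1}$. Since $D$ is odd, self-adjoint and anti-commutes with $e$, we have $(D\pm e)^2 = D^2 + \one$. Hence $D\pm e$ are invertible regular operators with
\[
(D-e)^{-1} = (D-e)(D^2+\one)^{-1}.
\]
From this one checks directly that $U$ is odd, self-adjoint, unitary and Real. Expanding gives
\[
U - e = 2e\,e\,(D-e)^{-1} = 2(D-e)^{-1}.
\]
Because $(\Cl_{1,0},Y_B,D)$ is an unbounded Kasparov module, $(\one+D^2)^{-1/2}$ is compact, so $U-e\in\End_B^0(Y)$. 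Lemma~\ref{lem:relative_DK} (with $q(U)=q(e)$) then gives a relative class $[U]-[e]\in \DK(\End_B(Y),\calQ_B(Y))$. Applying $\exc_Y$ and the Morita isomorphism $\calM_Y^{\DK}$ produces an element of $\DK(B)$.

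\textbf{Step 2: $\mathfrak{C}_B$ is a well-defined homomorphism.} Additivity is immediate from direct sums. For invariance under homotopy, I would pass to bounded transforms: bounded Kasparov modules $(\Cl_{1,0},Y_B,F)$ with $F$ anti-commuting with $e$ can be normalised so that $\one-F^2$ is compact. One then shows that the assignment $F\mapsto e(F+e)(F-e)^{-1}$, or equivalently the path $e\cos(\pi s)+F\sin(\pi s)$ adjusted appropriately, yields the same class as the unbounded Cayley transform. The justification is that $\chi(D)$ and the bounded transform of $D$ are joined by an operator homotopy with compact errors. Homotopies of cycles over $C([0,1],B)$ give norm paths of the associated relative classes, so the result descends to $\KKR$. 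Degenerate cycles have invertible $F$ with $F^2=\one$ and so map to zero. Naturality of $\calM^{\DK}$ and $\exc$ handles changes of module.

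\textbf{Step 3: the inverse formula.} Given OSUs $x,y$ with $x-y\in M_n(B)$, the operator $D=y(x+y)(x-y)^{-1}$ is the formal inverse Cayley transform. I would check four things:
\begin{itemize}
\item $(x-y)B^n$ is invariant under $x$ and $y$;
\item $D$ is densely defined, self-adjoint and regular on $\overline{(x-y)B^n}$;
\item $D$ anti-commutes with $y$;
\item $D$ has compact resolvent, since $(x-y)$ is a compact element generating the module.
\end{itemize}
A short algebraic computation then shows that $y(D+y)(D-y)^{-1}=x$, so $\mathfrak{C}_B\circ\mathfrak{C}_B^{-1}=\mathrm{id}$ after identifying the Morita and excision maps on the submodule. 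For the other composition, I would use that every class in $\KKR(\Cl_{1,0},B)$ is represented, after stabilisation, by a cycle on $\hat\calH_B$ with compact resolvent. Its Cayley transform $U$ then recovers the cycle on $\overline{(U-e)\cdot}$ up to adding degenerate summands.

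\textbf{Main obstacle.} The main obstacle is Step 2, together with surjectivity/injectivity in Step 3. Rather than redoing these by hand, I would try first to show that $\mathfrak{C}_B$ agrees with the Roe/Kubota-type composition of Theorem~\ref{thm:graded_Roe_and_Kubota}, via Bott periodicity as in Proposition~\ref{prop:Kubota_Roe_Cayley_compat}. That identification would make it an isomorphism for free, leaving the explicit inverse formula as a right inverse that must then be the inverse.
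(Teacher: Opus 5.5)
The paper gives no proof of this statement; it imports it from \cite[Theorem 4.15]{BKR}, so the question is whether your sketch works on its own. It does not yet: Step~2 has a genuine gap, and your fallback is circular.

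What is already correct: your Step~1, and the operator algebra in Step~3. Indeed $U-e=2(D-e)^{-1}$ is compact, $y(D+y)(D-y)^{-1}=x$ on $\overline{(x-y)B^n}$, and conversely $e(U+e)(U-e)^{-1}=D$.

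Step~2 is exactly what turns a formula on unbounded cycles into a map on $\KKR(\Cl_{1,0},B)$, whose relations are bounded homotopies and degenerate cycles. The bounded formula you propose does not do this. If $\one-F^2$ is compact, then $(F-e)^{-1}\equiv\tfrac12(F-e)$ modulo compacts, so $q\big(e(F+e)(F-e)^{-1}\big)=q(F)\neq q(e)$. Thus $e(F+e)(F-e)^{-1}$ is a lift of $q(F)$ (the data fed into $\Phi_B$), not a compact perturbation of $e$. Identifying whatever relative class you attach to it via Lemma~\ref{lem:relative_DK} with $[U]-[e]$ is a real computation that you do not supply. The operator homotopy between $\chi(D)$ and the bounded transform of $D$ does not bear on it.

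Your fallback is circular within this paper. Proposition~\ref{prop:Kubota_Roe_Cayley_compat} and Theorem~\ref{thm:Roe_and_Cayley_comparison} treat $\mathfrak{C}_B$ as an already-defined isomorphism. The proof of the latter also uses \cite[Lemma 4.13]{BKR} and Lemma~\ref{lemma:bdry_cayley_nice}, which are both statements about $\mathfrak{C}^{-1}$.

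The missing idea is that the unbounded Cayley transform is a norm-continuous function of the bounded transform $F=D(\one+D^2)^{-1/2}$. Since $(\one+D^2)^{-1}=\one-F^2$ and $D(\one+D^2)^{-1}=F(\one-F^2)^{1/2}$,
\[
e(D+e)(D-e)^{-1}=e+2(D-e)(\one+D^2)^{-1}=(2F^2-\one)e+2F(\one-F^2)^{1/2}.
\]
The right-hand side is an odd self-adjoint unitary for every odd self-adjoint $F$ with $\|F\|\le 1$ anticommuting with $e$. It differs from $e$ by a compact as soon as $\one-F^2$ is compact. Using this expression to define the map on normalised bounded cycles gives you:
\begin{itemize}
\item homotopy invariance, by applying it over $C([0,1],B)$ and evaluating at the endpoints;
\item vanishing on degenerate cycles, since $F^2=\one$ gives exactly $e$;
\item unitary invariance and additivity;
\item agreement with your Step~1 on unbounded representatives.
\end{itemize}

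Injectivity still needs more than the operator identity $e(U+e)(U-e)^{-1}=D$. The inverse formula is stated for odd self-adjoint unitaries in $M_n(B)$ with $B$ balanced, whereas $[U]-[e]$ is first pushed through $\exc_Y$ and $\calM_Y^{\DK}$. Moreover, the module $\overline{(x-y)B^n}$ need not be full. So you must establish that the inverse construction is compatible with excision, with Morita equivalence, and with non-full modules; this is the role of \cite[Lemmas 4.9, 4.10, 4.13]{BKR}. Alternatively, you could give a cycle-level comparison with a known isomorphism, such as a suitably Clifford-shuffled Roe map. That comparison would use a homotopy like the one between $-\exp(\pi\chi(\cdot))$ and the Cayley transform in the proof of Theorem~\ref{thm:Roe_and_Cayley_comparison}, but carried out without invoking $\mathfrak{C}_B$.
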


\begin{remarks} \label{remarks:Cayley_subtleties}
\begin{enumerate}
  \item The inverse map $\mathfrak{C}_B^{-1}$ can also be explicitly written down when $B$ is not balanced graded, though it is slightly more involved, 
see~\cite[Lemma 4.10]{BKR}.
 \item   If the $C^*$-module $Y_B$ is not full, then   $\calM_{Y}^{\DK} \circ \exc_Y\big( \big[ e(D+e)(D-e)^{-1} \big] - [e] \big) \in \DK( I_Y)$, 
 where $I_Y = \mathrm{span}\{ \scal<y_1|y_2> \mid y_1,y_2\in Y \}$ is an ideal in $B$. In such a case, we instead define 
 $\mathfrak{C}_B = \iota_\ast \circ \calM_{Y}^{\DK} \circ \exc_Y\big( \big[ e(D+e)(D-e)^{-1} \big] - [e] \big)$ with $\iota: I_Y \hookrightarrow{B}$. 
 \item  The maps $\calC_e(D) = e(D+e)(D-e)^{-1}$ and $\wt{\calC}_y(x) = y( x+y)(x-y)^{-1}$ are not complete inverses of each other. 
 If $x, y \in B$ are OSUs, then  $\calC_y\big( \wt{\calC}_y( x) \big)$ is the restriction of $x$ to $I = \ol{B(x-y)B}$, the ideal 
 arising from the  $C^*$-module  $\ol{(x-y)B_B}$, which is often not full.
 However, $\iota_\ast \big( [x|_{I} ] - [y|_{I} ] \big) = [ x] - [y] \in \DK(B)$ by~\cite[Lemma 4.9]{BKR} and  we  
 recover all relevant $K$-theoretic information. 
\end{enumerate}
\end{remarks}

We have two aims for the remainder of this appendix. The first is
to define the Cayley isomorphism for Clifford anti-linear Fredholm operators. 
The second is to show that the Roe and Cayley isomorphisms agree.

Let $T \in \Reg_A^{r,s}(X)$ be an unbounded Real skew-adjoint Fredholm operator on a full $C^*$-module $X_A$, with an 
ample $\Cl_{r,s}$-representation generated by $\{e_1,\ldots,e_r,f_1,\ldots,f_s\}$. 
Then Corollary \ref{cor:skew_Fred_to_KasMod} provides the Real Kasparov module 
  \[
     \Big( \Cl_{s+1, r}, \, X_A \otimes \exterior \C, \, \chi(T) \otimes \rho \Big)
  \]
with the $\Cl_{s+1, r}$-representation generated by 
$\big\{ \one \otimes \gamma, f_1 \otimes \rho, \ldots, f_s \otimes \rho, e_1 \otimes \rho, \ldots, e_r \otimes \rho \big\}$. 
We denote the corresponding $\KK$-class by $\class{T} \in \KKR( \Cl_{s+1, r}, A)$. 
If $(T+ \one)^{-1} \in \End_A^0(X)$, then $\big( \Cl_{s+1, r},  \, X_A \otimes \exterior \C, \, T \otimes \rho \big)$ is an unbounded
Kasparov module representing the class $\class{T}$. 
We will adapt the graded Cayley transform of Theorem \ref{thm:Graded_Cayley_iso} to unbounded Kasparov modules of this form. 
Since $A$ is an ungraded $C^*$-algebra, we   replace $A$ by $A \otimes \Cl_{1,1}$ by the 
Clifford stability isomorphism $\calM_{1,1}^{\KK}$ of Eq.\ \eqref{eq:Clifford_Morita_KK}.
We also need to move the Clifford algebra $\Cl_{s,r}$ via the Clifford shuffle isomorphism $\shuffle_{\KK}^{r,s}$ of Eq.\ \eqref{eq:Clifford_shuffle}. 

\begin{defn}
\label{defn:cayley-r-s}
We define $\mathfrak{C}^{r+1,s+1}_A:\KKR(\Cl_{s+1,r},A)\to \DK(A\ox\Cl_{r+1,s+1})$ by the composition 
\[
\mathfrak{C}^{r+1,s+1}_A=\mathfrak{C}_{A\ox\Cl_{r+1,s+1}}\circ \shuffle^{r,s}_{\KK} \circ \calM_{1,1}^{\KK} .
\]
\end{defn}

To explicitly compute  $\mathfrak{C}^{r+1,s+1}_A$, we will again use the map 
\[
   \sigma_e^{r,s}: \Reg^{s,r}_B(Y) \ni x \mapsto P^{r,s}(x \hox 1_{r,s} ) + (\one - P^{r,s})(e \hox 1_{r,s} ) \to \Reg_B(Y) \hox \Cl_{r,s},
\]
which sends odd self-adjoint unitaries to odd self-adjoint unitaries. 

\begin{prop} \label{prop:CliffordCayley_computation}
Let $T \in \Reg_A^{r,s}(X)$ be an unbounded Real skew-adjoint Fredholm operator.
\begin{enumerate}
  \item The operator $U_T = (T-\one)(T+\one)^{-1}$ is a  Real unitary in $\End_A(X)$ such that $e_j U_T = U_T^* e_j$ and 
$f_k U_T = U_T^* f_k$ for all $j=1,\ldots,r,\,k=1,\ldots,s$.
  \item The odd self-adjoint unitaries $\tfrac12( U_T + U_T^* ) \otimes \gamma + \tfrac12( U_T - U_T^* ) \otimes \rho, \, \one \otimes \gamma \in \End_A(X) \otimes \Cl_{1,1}$ 
  anti-commute with the  $\Cl_{s,r}$-generators $\{f_1\otimes \rho, \ldots, e_r \otimes \rho\}$.  The Van Daele class 
  \[
     \calM_{X }^{\DK} \circ \exc_{X } \big(  \big[ \sigma^{r,s}_{1 \otimes \gamma} \big(\tfrac12( U_T + U_T^* ) \otimes \gamma + \tfrac12( U_T - U_T^* ) \otimes \rho\big)  \big] - [ \one \otimes\gamma] \big)  \in \DK(A \otimes \Cl_{r+1,s+1} )
  \]
    is well-defined and, if $T$ has compact resolvent, coincides with $\mathfrak{C}_A^{r+1,s+1}( \class{T} )$.
\end{enumerate}
\end{prop}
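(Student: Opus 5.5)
For part (1) I will work with the functional calculus of the regular skew-adjoint operator $T$. The function $z\mapsto (z-1)(z+1)^{-1}$ is unimodular on $i\R$, so $U_T=(T-\one)(T+\one)^{-1}$ is a unitary in $\End_A(X)$. Since $T$ is skew-adjoint, $\one\pm T$ are invertible with $(\one+T)^{-1}\in\End_A(X)$. Reality follows from $T^\rs=T$ and $\one^\rs=\one$, because the real structure is anti-linear and multiplicative and the coefficients $\pm1$ are real.

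For the Clifford relation, take a generator $c\in\{e_j,f_k\}$. It preserves $\Dom T$ and satisfies $cT=-Tc$, so $c(T\pm\one)=(-T\pm\one)c$. Hence $c(T+\one)^{-1}=(\one-T)^{-1}c$, and
\[
cU_T=(-T-\one)(\one-T)^{-1}c=(T+\one)(T-\one)^{-1}c .
\]
Using $T^*=-T$ gives $U_T^*=(T+\one)(T-\one)^{-1}$, which is exactly what we need.

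For part (2), write $W=\tfrac12(U_T+U_T^*)\otimes\gamma+\tfrac12(U_T-U_T^*)\otimes\rho$, which corresponds to $\left(\begin{smallmatrix}0&U_T^*\\U_T&0\end{smallmatrix}\right)$ under $\Cl_{1,1}\cong M_2(\C)$. It is an odd self-adjoint unitary because $U_T$ is unitary. The anticommutation with $c\otimes\rho$ follows directly from part (1). First, $c\otimes\rho$ anticommutes with $\one\otimes\gamma$ because $\rho\gamma=-\gamma\rho$. Second, $\tfrac12(U_T+U_T^*)$ commutes with $c$ and $\tfrac12(U_T-U_T^*)$ anticommutes with $c$, while $\rho$ commutes with $\rho$ and anticommutes with $\gamma$; combining these signs gives anticommutation term by term.

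Well-definedness then needs $W-\one\otimes\gamma\in\End_A^0(X)\otimes\Cl_{1,1}$. Here I use $\one-U_T=2(\one+T)^{-1}$. Under the compact resolvent hypothesis this is compact, so Lemma \ref{lem:AMZ-OSU-iso}(3), together with the fact that $\sigma^{r,s}_{\one\otimes\gamma}$ fixes $\one\otimes\gamma$, gives a relative class. For a general Fredholm $T$, where only $q(U_T)$ relates nicely to $q(\one)$, I would instead check $\|q(W)-q(\one\otimes\gamma)\|<2$ using Lemma \ref{lemma:bdd_unbdd_Fred_condition}(2), which gives $\|q((\one+T)^{-1})\|<1$. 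Then Lemma \ref{lem:relative_DK} and excision apply.

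Finally, to identify the class with $\mathfrak{C}_A^{r+1,s+1}(\class{T})$, I unwind Definition \ref{defn:cayley-r-s}. First, $\calM_{1,1}^{\KK}$ turns the class into the unbounded cycle $T\otimes\rho$ on $X\otimes\exterior\C$, with $\Cl_{1,0}$ generated by $\one\otimes\gamma$. Next, $\shuffle^{r,s}_{\KK}$ is computed as in Lemma \ref{lem:cliff-shuffle}(1), but with $P^{r,s}$ applied to the unbounded operator; the complement carries the degenerate unbounded piece $(\one-P^{r,s})$. Then I apply Theorem \ref{thm:Graded_Cayley_iso} with $e=\one\otimes\gamma$ and $D=\sigma^{r,s}_{\one\otimes\gamma}(T\otimes\rho)$. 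Computing $e(D+e)(D-e)^{-1}$ on the $P^{r,s}$-part via the $M_2$ picture should give $\left(\begin{smallmatrix}0&U_T^*\\U_T&0\end{smallmatrix}\right)$, possibly up to a swap $U_T\leftrightarrow U_T^*$ or a sign, which I will fix by conventions. On the complement, the degenerate part should give exactly $\one\otimes\gamma$ if I choose its unbounded representative suitably (for instance zero), making the Cayley transform equal to $e$. Because the Cayley transform commutes with compressions by the even projection $P^{r,s}$, which commutes with $e$, this yields $\sigma^{r,s}_{\one\otimes\gamma}(W)$.

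I expect the main obstacle to be this last step. Specifically, I need to verify that the unbounded Clifford shuffle produces precisely $P^{r,s}(T\otimes\rho)$ plus a degenerate summand compatible with Theorem \ref{thm:Graded_Cayley_iso}, and I need to track the sign and orientation conventions so that the Cayley transform gives $U_T$ rather than $-U_T^*$. Lemma \ref{lemma:DK_Morita_compat_new} then lets me pass the Morita map through, exactly as in the proof of Theorem \ref{thm:index-of-Fred}.
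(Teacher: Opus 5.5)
Your arguments for part (1), for the anticommutation relations, and for well-definedness (via $\one-U_T=2(\one+T)^{-1}$, Lemma~\ref{lemma:bdd_unbdd_Fred_condition}(2) and Lemma~\ref{lem:relative_DK}) are the paper's. Your overall plan for the last claim is also the paper's: unwind Definition~\ref{defn:cayley-r-s} and compute the graded Cayley transform with $e=\one\otimes\gamma$, $D=T\otimes\rho$ in the $M_2(\C)$ picture. As sketched, however, that final step has two defects.

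\emph{First, the computation has to be carried out.} A sign or a swap cannot be ``fixed by conventions''. A swap would replace $U_T$ by $U_T^*=U_{-T}$, so you would be computing the class of $-T$. A sign would not even give a relative class, since in the compact-resolvent case $\|q(-W)-q(\one\otimes\gamma)\|=2$. With $\gamma\cong\sigma_1$ and $\rho\cong-i\sigma_2$ the computation gives exactly $W$:
\[
(\one\otimes\gamma)(T\otimes\rho+\one\otimes\gamma)(T\otimes\rho-\one\otimes\gamma)^{-1}
=\begin{pmatrix}0&\one\\ \one&0\end{pmatrix}
\begin{pmatrix}0&\one-T\\ T+\one&0\end{pmatrix}
\begin{pmatrix}0&(T-\one)^{-1}\\ -(T+\one)^{-1}&0\end{pmatrix}
=\begin{pmatrix}0&U_T^*\\ U_T&0\end{pmatrix}.
\]

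\emph{Second, your treatment of the complement of $P^{r,s}$ fails.} The zero operator is not admissible on $(\one-P^{r,s})(\cdots)$. The $\Cl_{1,0}$-generator is not compact there, by ampleness, so this is not a Kasparov module. Moreover $\calC_e(0)=e\,e\,(-e)^{-1}=-e$, not $e$. In fact no operator $D'$ satisfies $\calC_e(D')=e$, since that would force $D'+e=D'-e$. So the Cayley transform of a cycle on the full shuffled module can never literally equal $\sigma^{r,s}_{\one\otimes\gamma}(W)$.

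The fix is as follows.
\begin{enumerate}
\item Use the compressed form of the shuffle (second line of Lemma~\ref{lem:cliff-shuffle}(1)). It holds verbatim for the unbounded cycle $T\otimes\rho$, because the shuffle only takes products with cycles having zero operator.
\item Apply Theorem~\ref{thm:Graded_Cayley_iso} on $P^{r,s}(\cdots)$ with generator $P^{r,s}(\one\otimes\gamma\hox 1_{r,s})$. Since $P^{r,s}$ commutes with both $\one\otimes\gamma\hox 1_{r,s}$ and $T\otimes\rho\hox 1_{r,s}$, this yields $P^{r,s}(W\hox 1_{r,s})$.
\item Pass to the full module by adding the basepoint $(\one-P^{r,s})(\one\otimes\gamma\hox 1_{r,s})$ on the complement. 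This is the stabilisation map of basepointed $\DK$-theory (cf.\ Lemma~\ref{lem:AMZ-OSU-iso}(3)), combined with Lemma~\ref{lemma:DK_Morita_compat_new}(1) for the Morita maps, and it does not change the class.
\end{enumerate}
This is what the paper's proof does implicitly when it writes $\sigma^{r,s}_{\one\otimes\gamma}$ around the graded Cayley transform.
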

\begin{proof}
Part (1) mostly follows from the discussion in \cite[Example 5.1]{BKR}. We  check  the anti-commutation with Clifford generators, where 
\begin{align*}
   f_k ( T - \one)(T+ \one)^{-1} &= (-T - \one)f_k (T + \one)^{-1} = (-T - \one)(-T + \one)^{-1} f_k \\ 
   &= (T+ \one)(T-\one)^{-1} f_k = U_T^* f_k.
\end{align*}
The same argument will show that $e_j U_T = U_T^* e_j$. 

For part (2), the properties $e_j U_T = U_T^* e_j$ and $f_k U_T = U_T^* f_k$ imply that 
the odd self-adjoint unitary  $\tfrac12( U_T + U_T^* ) \otimes \gamma + \tfrac12( U_T - U_T^* ) \otimes \rho \in \End_A(X) \otimes \Cl_{1,1}$ anti-commutes 
with $\{f_1\otimes \rho, \ldots, e_r \otimes \rho\}$, the generators of a graded $\Cl_{s,r}$-representation. 
The identity $\one - U_T  =   (T+ \one - T + \one) (T+\one)^{-1}  = 2  (T+ \one)^{-1}$ implies that 
\[
\big\| q_X \big(\one -  U_T  \big) \big\|_{\calQ_{A}(X)}  = 2  \big\| q_X\big( (T + \one)^{-1} \big) \big\|_{\calQ_{A}(X)} < 2
\]
as $T$ is skew-adjoint and Fredholm (Lemma \ref{lemma:bdd_unbdd_Fred_condition}). This inequality can then be used 
to show that 
\[
  \big\|  \tfrac12 q_X( U_T + U_T^* ) \otimes \gamma + \tfrac12 q_X( U_T - U_T^* ) \otimes \rho  - q_X( \one) \otimes \gamma \big\|_{\calQ_{A}(X) \otimes \Cl_{1,1} } < 2,
\]
which in turn implies that the Van Daele class 
\[
\exc_{X } \big(  \big[ \sigma^{r,s}_{1 \otimes \gamma} \big(\tfrac12( U_T + U_T^* ) \otimes \gamma + \tfrac12( U_T - U_T^* ) \otimes \rho\big)  \big] - [ \one \otimes\gamma] \big)  \in \DK( \End_A^0(X) \otimes \Cl_{r+1,s+1} )
\]
is well-defined by Lemma \ref{lem:relative_DK}.

Suppose now that $T$ has compact resolvent and $\class{T}$ is represented by an unbounded Kasparov module. 
Applying the graded map from Definition \ref{defn:cayley-r-s}, $\mathfrak{C}_A^{r+1,s+1}(\class{T})$ is given by 
\begin{align*}
      \calM_{X }^{\DK} \circ \exc_{X  } \big( \big[ \sigma_{\one \otimes \gamma}^{r,s}\big( (\one \otimes \gamma)( T \otimes \rho + \one \otimes \gamma)( T \otimes \rho - \one \otimes \gamma)^{-1} \big) \big] 
     - [ \one \otimes \gamma ] .
\end{align*}
Using the isomorphism $\Cl_{1,1} \cong M_2(\C)$ with $\gamma \cong \sigma_1$ and $\rho \cong -i \sigma_2$, we have that 
\begin{align}
   (\one \otimes \gamma)(T \otimes \rho + \one \otimes \gamma)(T \otimes \rho - \one \otimes \gamma)^{-1} 
   &= 
      \begin{pmatrix} 0 & \one \\ \one & 0 \end{pmatrix} \begin{pmatrix} 0 & -T + \one \\ T + \one & 0 \end{pmatrix} 
       \begin{pmatrix} 0 & -T - \one \\ T - \one & 0 \end{pmatrix}^{-1}  \nonumber \\
     &= \begin{pmatrix} 0 & (T+\one)(T-\one)^{-1} \\ (T-\one)(T+\one)^{-1} & 0 \end{pmatrix} \nonumber\\
     &= \begin{pmatrix} 0 & U_T^* \\ U_T & 0 \end{pmatrix}  \nonumber \\
     &= \tfrac12( U_T + U_T^* ) \otimes \gamma + \tfrac12( U_T - U_T^* ) \otimes \rho,  \label{eq:form-real}   
\end{align}
which gives the result.
\end{proof}

\begin{remark} \label{remark:Cayley_subtleties_continue}
If $\big( \Cl_{r,s+1}, \, \tilde{X}_A \otimes \exterior{\C} , \, T \otimes \rho\big)$ is an unbounded Kasparov module, where 
$\tilde{X}_A$ is \emph{not} full, then taking the ideal $I_{\tilde{X}} = \mathrm{span}\{ \scal<x_1|x_2> \mid x_1,x_2\in \tilde{X} \}$ and 
$\iota: I_{\tilde{X}} \hookrightarrow A$, we have 
\begin{align*}
  & \mathfrak{C}^{r+1,s+1}_A( \class{T} ) 
  =   \iota_\ast \circ   \calM_{X }^{\DK} \circ \exc_{X } \big(  \big[ \sigma^{r,s}_{1 \otimes \gamma} \big(\tfrac12( U_T + U_T^* ) \otimes \gamma + \tfrac12( U_T - U_T^* ) \otimes \rho\big)  \big] - [ \one \otimes\gamma] \big)  
\end{align*}
cf. Remarks \ref{remarks:Cayley_subtleties}.
\end{remark}

\begin{cor}
Let $A$ be a Real ungraded $C^*$-algebra such that $\{e_1,\ldots, e_r, f_1,\ldots, f_s\} \subset \Mult(A)$ are generators of an ungraded $\Cl_{r,s}$-representation. 
Then every element in $\DK( A \otimes \Cl_{r+1,s+1})$ can be represented by an equivalence class 
\[
  \exc_A \big( \big[ \sigma_{1 \otimes \gamma}^{r,s} \big( \tfrac12( V + V^* ) \otimes \gamma + \tfrac12( V - V^* ) \otimes \rho \big) \big] - \big[ \one \otimes \gamma \big] \big),
\]
where $V= V^\rs \in \Mult(A)$ is unitary, $e_j V = V^* e_j$, and $f_k V = V^* f_k$ for all 
$j=1,\ldots, r, \, k=1,\ldots, s$ and $\exc_A: \DK( \Mult(A) \otimes \Cl_{r+1,s+1}, \calQ_A \otimes \Cl_{r+1,s+1} ) \xrightarrow{\simeq} \DK(A \otimes \Cl_{r+1,s+1} )$ 
is the excision isomorphism.
\end{cor}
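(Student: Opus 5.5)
The plan is to read the statement as a surjectivity result for the Clifford-indexed Cayley transform of Proposition \ref{prop:CliffordCayley_computation}. First I write a general class in $\DK(A\otimes\Cl_{r+1,s+1})$ in relative form. Then I use the excision isomorphism $\exc_A$ together with the Clifford shuffle bijection of Lemma \ref{lem:AMZ-OSU-iso} to move the representative into the Clifford anti-linear picture. In that picture the off-diagonal computation \eqref{eq:form-real} exhibits it in the required form.

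\textbf{Step 1 (relative representative).} Using excision \eqref{eq:DK_excision}, every element of $\DK(A\otimes\Cl_{r+1,s+1})$ is $\exc_A([x]-[y])$. Here $x,y$ are odd self-adjoint unitaries in $\Mult(A)\otimes\Cl_{r+1,s+1}$, possibly after passing to matrices, with $x-y\in A\otimes\Cl_{r+1,s+1}$. By the homotopy and lifting arguments of Lemma \ref{lem:relative_DK} and Lemma \ref{lemma:OSU_lifting}, I may take the basepoint to be $y=\one\otimes\gamma$, at the cost of adding a degenerate summand.

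\textbf{Step 2 (Clifford shuffle).} View $\Mult(A)\otimes\Cl_{1,1}$ as carrying the graded $\Cl_{s,r}$-representation generated by $\{f_k\otimes\rho, e_j\otimes\rho\}$. Parts (1) and (2) of Lemma \ref{lem:AMZ-OSU-iso} identify $\Mult(A)\otimes\Cl_{r+1,s+1}$ with $M_{2^{r+s}}$ of the compressed corner $P^{r,s}(\cdots)P^{r,s}$. The same lemma gives a bijection between odd self-adjoint unitaries in that corner and $\Cl_{s,r}$-anti-linear odd self-adjoint unitaries in $\Mult(A)\otimes\Cl_{1,1}$. I therefore aim to write $x=\sigma^{r,s}_{\one\otimes\gamma}(z)$ with $z$ a $\Cl_{s,r}$-anti-linear odd self-adjoint unitary and $z-\one\otimes\gamma$ compact. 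I first rotate the complementary part $(\one-P^{r,s})x$ to $(\one-P^{r,s})(\one\otimes\gamma)$ by a homotopy. This is legitimate because both are odd self-adjoint unitaries in the complementary corner that agree modulo $A$, so the homotopy can be arranged within the relative group.

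\textbf{Step 3 (unitary form).} Identify $\Cl_{1,1}\cong M_2(\C)$ via $\gamma\cong\sigma_1$ and $\rho\cong -i\sigma_2$. An odd self-adjoint unitary then has the form $\left(\begin{smallmatrix}0&V^*\\ V&0\end{smallmatrix}\right)$ with $V$ unitary, exactly as in Proposition \ref{prop:iso_DK_odd}, and Reality gives $V=V^\rs$. Anti-commuting with $f_k\otimes\rho$ and $e_j\otimes\rho$ translates entrywise into $f_kV=V^*f_k$ and $e_jV=V^*e_j$. Reading \eqref{eq:form-real} backwards, this is $z=\tfrac12(V+V^*)\otimes\gamma+\tfrac12(V-V^*)\otimes\rho$, which gives the asserted representative.

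\textbf{Main obstacle.} The difficulty is the matrix size: Step 1 a priori gives $V\in M_n(\Mult(A))$, and the statement asks for $V\in\Mult(A)$ itself. I would first try to absorb $M_n$ into the Clifford structure. Via Clifford stability (Lemma \ref{lemma:DK_Clifford_stability}), $M_{2^m}(\Mult(A))\otimes\Cl_{r+1,s+1}\cong\Mult(A)\otimes\Cl_{r+1,s+1}\hox\Cl_{m,m}$. The extra $\Cl_{m,m}$ generators can then be re-shuffled through $P^{r,s}$ so that the final compressed corner is again one copy of $\Mult(A)\otimes\Cl_{1,1}$. Concretely, I would rerun Step 2 with $\Cl_{s+m,r+m}$ and collapse the extra generators using the isomorphism of Lemma \ref{lem:AMZ-OSU-iso}(2).

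If this absorption fails, the fallback is to use that $\Mult(A)$ contains the ample Clifford generators by hypothesis, and to look inside $\Mult(A)$ for isometries $s_1,s_2$ commuting with them, with $s_1s_1^*+s_2s_2^*=\one$. Conjugating by these would realise $M_2(\Mult(A))\cong\Mult(A)$ Clifford-equivariantly. I expect this to be the delicate point, since such isometries need not exist in general. If neither route works, I would prove the weakened version with $V\in M_n(\Mult(A))$ carrying the diagonal Clifford action and state precisely what is missing.
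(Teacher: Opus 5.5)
Your overall route matches the paper's: take a relative representative, move to the $\Cl_{s,r}$-anti-linear picture via Lemma~\ref{lem:AMZ-OSU-iso}, then read off $V$ from the off-diagonal form. Your Step~3 is correct, including the translation of anti-commutation with $f_k\otimes\rho$, $e_j\otimes\rho$ into $f_kV=V^*f_k$, $e_jV=V^*e_j$.

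\textbf{The gap in Step~2.} There you try to put the given representative $x$ itself into the form $\sigma^{r,s}_{\one\otimes\gamma}(z)$ by rotating $(\one-P^{r,s})x$ to $(\one-P^{r,s})(\one\otimes\gamma)$. This fails for two reasons.
\begin{itemize}
\item A general odd self-adjoint unitary $x$ need not commute with $P^{r,s}$, so $(\one-P^{r,s})x$ is not an odd self-adjoint unitary of the complementary corner.
\item Even when it is, two odd self-adjoint unitaries that agree modulo $A$ are not in general homotopic through such unitaries. The obstruction is exactly their relative class, which can be non-zero, so ``rotating it away'' changes the element you started with.
\end{itemize}
The complementary part must not be discarded. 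Instead:
\begin{itemize}
\item use part~(2) of Lemma~\ref{lem:AMZ-OSU-iso} to transport the whole relative class to odd self-adjoint unitaries in $M_k\big(P^{r,s}(\Mult(A)\otimes\Cl_{r+1,s+1})P^{r,s}\big)$, so the complement becomes extra matrix entries over the corner;
\item apply part~(1) to obtain $\Cl_{s,r}$-anti-linear odd self-adjoint unitaries in $M_k(\Mult(A)\otimes\Cl_{1,1})$;
\item apply part~(3) to return via $\sigma^{r,s}_{\one\otimes\gamma}$ without changing the class.
\end{itemize}
This is the paper's proof.

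\textbf{Your ``main obstacle''.} The route above necessarily produces $V\in M_k(\Mult(A))$ with the diagonal Clifford action, and you should not try to remove the matrices. Neither of your absorption attempts works in general:
\begin{itemize}
\item Absorbing $M_{2^m}$ via Clifford stability and reshuffling $\Cl_{m,m}$ through $P^{r,s}$ would need $m$ further pairs of mutually anti-commuting self-adjoint and skew-adjoint Real unitaries in $\Mult(A)$, anti-commuting with the $e_j,f_k$. These need not exist.
\item Clifford-commuting isometries $s_1,s_2$ with $s_1s_1^*+s_2s_2^*=\one$ need not exist either.
\end{itemize}
In fact the version with $V\in\Mult(A)$ is literally false. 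Take $r=s=0$ and $A=C(S^1)$ with pointwise complex conjugation. Then $\DK(A\otimes\Cl_{1,1})\cong\KO^{-1}(S^1)\cong\Z_2\oplus\Z_2$, whereas the only Real unitaries in $A$ are $\pm\one$, which produce at most two classes.

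So the statement has to be read, as usual in $K$-theory, up to matrix amplification, equivalently after replacing $A$ by $M_n(A)$ with the diagonal Clifford action. Your ``weakened version'' is the correct theorem. It is also what the paper's argument actually establishes, since it passes through $M_k(P^{r,s}(\cdots)P^{r,s})$.
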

\begin{proof}
The map $\mathfrak{C}_A^{r+1,s+1}$ is an isomorphism and its range are equivalence classes of the form given by the  statement. 
By part (2) of Lemma \ref{lem:AMZ-OSU-iso}, any element in $\DK( A \otimes \Cl_{r+1,s+1})$ can be respresented by equivalence classes 
of odd self-adjoint unitaries (OSUs) in $M_k\big( P^{r,s}( \Mult(A) \hox \Cl_{r+1,s+1} )P^{r,s} \big)$. Furthermore, part (1) of 
 Lemma \ref{lem:AMZ-OSU-iso} says that there is a bijective correspondence 
between OSUs in $P^{r,s}( \Mult(A) \hox \Cl_{r+1,s+1} )P^{r,s}$ and $\Cl_{s,r}$-anti-linear OSUs in $\Mult(A) \hox \Cl_{1,1}$. 
Any OSU in $\Mult(A) \hox \Cl_{1,1}$ is of the form 
$x = \tfrac12( V + V^* ) \otimes \gamma + \tfrac12( V - V^* ) \otimes \rho \in \Mult(A) \otimes \Cl_{1,1}$ 
with $V = V^\rs$ unitary. Taking the $\Cl_{s,r}$-generators as $\{f_1 \otimes \rho, \ldots, f_s \otimes \rho, e_1 \otimes \rho, \ldots, e_r \otimes \rho \}$,  
the requirement that $x$ is $\Cl_{s,r}$-anti-linear is equivalent to the condition that $e_j V = V^* e_j$ and $f_k V = V^* f_k$ for all 
$j=1,\ldots, r, \, k=1,\ldots, s$. Applying the map $\sigma_{\one \otimes \gamma}^{r,s}$, the result follows from part (3) of Lemma \ref{lem:AMZ-OSU-iso}.
\end{proof}

We also remark that by considering $M_n(A)$ instead of $A$ for $n$ sufficiently large, we can guarantee that 
$M_n(\Mult(A))$ contains the ungraded $\Cl_{r,s}$-generators $\{e_1,\ldots, e_r, f_1,\ldots, f_s \}$.

\begin{prop} \label{prop:CliffordCayley_inverse_representative}
Let $V=V^\rs$ be a unitary in $\Mult(A)$ such that $e_j V = V^* e_j$ and $f_k V = V^* f_k$ for all 
$j=1,\ldots, r, \, k=1,\ldots, s$. Define the operator 
\[
   \calC^u(V) : \Ran( V- \one) \to A_A, \qquad \calC^u(V) = - (V + \one)(V- \one)^{-1}.
\]
Then $\calC^u(V)$ extends to a regular, Real and skew-adjoint operator on $\ol{(V-\one)A}_A$ that anti-commutes 
with $\{e_1,\ldots, e_r, f_1,\ldots, f_s\}$. If $\one - V \in A$, then 
\[
\Big( \Cl_{s+1,r}, \  \ol{(V-\one)A}_A \otimes \exterior \C, \  \calC^u(V) \otimes \rho \Big) 
\]	
is an unbounded Kasparov module 
whose class $\class{\calC^u(V)} \in \KKR( \Cl_{s+1, r}, A)$ represents the isomorphism 
$(\mathfrak{C}_A^{r+1,s+1})^{-1}\big( \big[ \sigma_{\one \otimes \gamma}^{r,s}\big( \tfrac12(V+V^*) \otimes \gamma + \tfrac12(V-V^*) \otimes \rho \big) \big]  - [\one \otimes \gamma]  \big)$.
\end{prop}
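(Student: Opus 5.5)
The plan is to reduce everything to the graded inverse Cayley map of Theorem \ref{thm:Graded_Cayley_iso} and then undo the Clifford shuffle and Clifford stability steps in Definition \ref{defn:cayley-r-s}.

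\textbf{Step 1: analytic properties of $\calC^u(V)$.} Write $\calC^u(V) = -(V+\one)(V-\one)^{-1}$, which is the inverse Cayley transform of $V$. On $\Ran(V-\one)$ this operator is skew-adjoint because $V$ is unitary. The standard argument (cf.\ \cite[Example 5.1]{BKR} and Remarks \ref{remarks:Cayley_subtleties}) shows that it is closed, densely defined and regular on $\ol{(V-\one)A}_A$. Indeed, its resolvent is
\[
(\calC^u(V)+\one)^{-1} = -\tfrac12(V-\one),
\]
which is an honest adjointable operator. It is Real because $V^\rs=V$. For anti-commutation, use $e_jV=V^*e_j$ to get
\[
e_j(V\pm\one) = (V^*\pm\one)e_j = \pm V^*(\one\pm V)e_j .
\]
From this, $e_j(V+\one)(V-\one)^{-1} = -(V+\one)(V-\one)^{-1}e_j$, and $e_j$ preserves $\Ran(V-\one)$. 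The same holds for the $f_k$. If $\one-V\in A$, the resolvent $-\tfrac12(V-\one)$ is compact on $\ol{(V-\one)A}_A$. Hence Corollary \ref{cor:skew_Fred_to_KasMod}, in its unbounded compact-resolvent form, gives the unbounded Kasparov module with the generators listed there.

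\textbf{Step 2: compute the Cayley image.} Apply Proposition \ref{prop:CliffordCayley_computation}(2) to $T=\calC^u(V)$ on $X=\ol{(V-\one)A}_A$. The key identity is that the Cayley transform returns $V$, restricted to $X$:
\[
U_T=(T-\one)(T+\one)^{-1}=V|_X .
\]
This is checked directly from $T+\one = -2(V-\one)^{-1}$ and $T-\one = -2V(V-\one)^{-1}$. Since $X$ need not be full, we use Remark \ref{remark:Cayley_subtleties_continue}. It gives $\mathfrak{C}_A^{r+1,s+1}(\class{T})$ as
\[
\iota_\ast\circ\calM_X^{\DK}\circ\exc_X\big(\big[\sigma^{r,s}_{\one\otimes\gamma}(x_{V|_X})\big]-[\one\otimes\gamma]\big),
\]
where $x_V=\tfrac12(V+V^*)\otimes\gamma+\tfrac12(V-V^*)\otimes\rho$.

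\textbf{Step 3: identify this with the class in $A$.} We must show that this equals $\exc_A\big([\sigma(x_V)]-[\one\otimes\gamma]\big)$. Split $A_A$ into $X$ and its "complement". Off $X$ the unitary $V$ acts as $\one$, so $x_V$ agrees with $\one\otimes\gamma$ there and contributes trivially. The comparison then follows from \cite[Lemma 4.9]{BKR}, i.e.\ Remarks \ref{remarks:Cayley_subtleties}(3), together with compatibility of Morita invariance with the inclusion $X\hookrightarrow A_A$ (Lemma \ref{lemma:DK_Morita_compat_new}(1)).

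Since $\mathfrak{C}_A^{r+1,s+1}$ is an isomorphism, applying its inverse yields the claim.

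\textbf{Main obstacle.} I expect Step 3 to be the hardest. Lemma 4.9 of \cite{BKR} is stated for the graded setting, so it has to be adapted past the Clifford shuffle $\sigma^{r,s}_{\one\otimes\gamma}$. The plan is to use that $P^{r,s}$ commutes with the projection onto $X\otimes\Cl_{r+1,s+1}$, because the Clifford generators preserve $X$. With that, the graded lemma applies inside $P^{r,s}(\Mult(A)\hox\Cl_{r+1,s+1})P^{r,s}$ via Lemma \ref{lem:AMZ-OSU-iso}.
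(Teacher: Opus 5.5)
Your proposal is correct and follows essentially the same route as the paper. Like the paper, you compute $\mathfrak{C}_A^{r+1,s+1}(\class{\calC^u(V)})$ via Proposition~\ref{prop:CliffordCayley_computation} and Remark~\ref{remark:Cayley_subtleties_continue}, use $U_{\calC^u(V)}=V|_{X}$, move $\iota_\ast$ past $(\sigma^{r,s}_{\one\otimes\gamma})_\ast$ and finish with \cite[Lemma 4.9]{BKR}; the one difference is that the paper gets regularity by recognising $\calC^u(V)\otimes\rho$ as the graded inverse Cayley transform of $\tfrac12(V+V^*)\otimes\gamma+\tfrac12(V-V^*)\otimes\rho$ relative to $\one\otimes\gamma$ and citing \cite[Lemma 4.5]{BKR}, rather than arguing directly from the resolvent.

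Two small repairs are needed. First, since $V^*-\one=V^*(\one-V)$, your identity should read $e_j(V\pm\one)=\pm V^*(V\pm\one)e_j$; with the sign as written, the lower case gives commutation rather than anti-commutation, although your stated conclusion is right. Second, $X=\ol{(V-\one)A}$ is generally not complemented in $A_A$, so there is no projection onto $X\otimes\Cl_{r+1,s+1}$; use instead that the Clifford generators, and hence $P^{r,s}$, leave $X$ invariant, which is exactly what makes restriction to $X$ (or to $\ol{A(V-\one)A}$) commute with $\sigma^{r,s}_{\one\otimes\gamma}$.
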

\begin{proof}
The inverse Cayley transform from Theorem \ref{thm:Graded_Cayley_iso} applied to the $\Cl_{s,r}$-anti-linear odd self-adjoint unitaries 
$\tfrac12(V+V^*) \otimes \gamma + \tfrac12(V-V^*) \otimes \rho$ and $\one \otimes \gamma \in \Mult(A) \otimes \Cl_{1,1}$  gives the self-adjoint operator 
\begin{align*}
  &(\one \otimes \gamma) \big( \tfrac12(V+V^*) \otimes \gamma + \tfrac12(V-V^*) \otimes \rho + \one\otimes \gamma\big) 
  \big( \tfrac12(V+V^*) \otimes \gamma + \tfrac12(V-V^*) \otimes \rho - \one\otimes \gamma\big)^{-1} \\
  &\qquad = - (V+\one)(V-\one)^{-1} \otimes \rho.
\end{align*}
Once again, this equality is easiest understood via the isomorphism $\Cl_{1,1} \cong M_2(\C)$ with 
$\one\otimes \gamma \cong \sigma_1$, 
$\one \otimes \rho \cong -i\sigma_2$, and 
\begin{align*}
  &(\one \otimes \gamma) \big( \tfrac12(V+V^*) \otimes \gamma + \tfrac12(V-V^*) \otimes \rho + \one\otimes \gamma\big) 
  \big( \tfrac12(V+V^*) \otimes \gamma + \tfrac12(V-V^*) \otimes \rho - \one\otimes \gamma\big)^{-1} \\
  &\qquad \qquad = \begin{pmatrix} 0 & \one \\ \one & 0 \end{pmatrix} \begin{pmatrix} 0 & V^*+\one \\ V+\one & 0 \end{pmatrix} 
  \begin{pmatrix} 0 & (V-\one)^{-1} \\ (V^*- \one)^{-1} & 0 \end{pmatrix}  \\
  &\qquad \qquad =  \begin{pmatrix} 0 & (V+\one)(V-\one)^{-1} \\ (V^*+\one)(V^*-\one)^{-1} & 0 \end{pmatrix} \\
  &\qquad \qquad =  \begin{pmatrix} 0 & (V+\one)(V-\one)^{-1} \\ -(V+\one)(V-\one)^{-1} & 0 \end{pmatrix} 
  = \calC^u(V) \otimes \rho.
\end{align*}
By~\cite[Lemma 4.5]{BKR}, $\calC^u(V) \otimes \rho$ is self-adjoint and regular, which implies that $\calC^u(V)$ is 
skew-adjoint and regular on $\ol{(V-\one)A}_A$. Because $e_j V = V^* e_j$ and $f_k V = V^* f_k$, the Clifford 
representation $\Cl_{r,s}$ in $\Mult(A) =\End_A(A)$ is also well-defined on $\ol{(V-\one)A}_A$ as 
\[
    f_k (V-\one)\psi = (V^*-\one) f_k \psi = (\one-V) V^* f_k\psi, \quad \psi \in A_A,
\]
and $f_k$ (and $e_j$) preserve $\Ran(V-\one)$. Similarly, 
\begin{align*}
  f_k (V+\one)(V-\one)^{-1} = (V^*+ \one)(V^*-\one)^{-1} f_k = - (V+\one)(V-\one)^{-1} f_k,
\end{align*}
which shows that  $\calC^u(V) \in \Reg^{r,s}_A\big( \ol{(V-\one)A} \big)$.
 If $\one - V \in A$ then 
\begin{align*}
  \big( \one + \calC^u(V) \big)^{-1} = \big( (V-\one -(V+\one))(V-\one)^{-1} \big)^{-1} 
  = - \tfrac{1}{2} (V-\one) \in A
\end{align*}
and $\calC^u(V)$ has compact resolvent. Applying Corollary \ref{cor:skew_Fred_to_KasMod},
$\big(  \Cl_{s+1,r}, \, \ol{(V-\one)A}_A \otimes \exterior \C, \,  \calC^u(V) \otimes \rho \big)$ is an unbounded 
Kasparov module. 
To show $\class{\calC^u(V)}$ represents $(\mathfrak{C}_A^{r+1,s+1})^{-1}$, we apply Proposition 
\ref{prop:CliffordCayley_computation} to this unbounded Kasparov module. However, $\tilde{X}_A = \ol{(V-\one)A}_A$ is 
not full in general, so using Remark \ref{remark:Cayley_subtleties_continue},
\[
     \mathfrak{C}_A^{r+1,s+1} \big( \class{ \calC^u(V) } \big) = 
     \iota_\ast \circ \calM^{\DK}_{\tilde{X}} \circ  \exc_{\tilde{X}} \big( 
     \big[  \sigma_{\one \otimes \gamma}^{r,s}(\tfrac12( U_{\calC^u(V)} + U_{\calC^u(V)}^* ) \otimes \gamma + \tfrac12( U_{\calC^u(V)} - U_{\calC^u(V)}^* ) \otimes \rho) \big] - [\one \otimes \gamma] \big)  
\]
with $\iota: I_{V} = \ol{A(V - \one)A} \hookrightarrow A$.
We can then check directly 
\begin{align} \label{eq:Cayley_of_cayley_identity}
   U_{\calC^u(V)} &= (\calC^u(V) - \one)( \calC^u(V) + \one)^{-1}  \nonumber \\
   &= \big( (-V -\one -  V + \one)(V-\one)^{-1} \big) \big( (-V - \one + V - \one )(V- \one)^{-1} \big)^{-1} 
     =  V|_{\tilde{X}} 
\end{align}
and so 
\begin{align*}
    \mathfrak{C}_A^{r+1,s+1} \big( \class{ \calC^u(V) } \big) 
    &=    \iota_\ast \circ (\sigma_{\one \otimes \gamma}^{r,s})_\ast  \big(   \big[   \tfrac12( V|_{I_V}  + (V|_{I_V})^* ) \otimes \gamma + \tfrac12( V|_{I_V} - (V|_{I_V})^* ) \otimes \rho \big] - [\one \otimes \gamma] \big)  \\
    &= (\sigma_{\one \otimes \gamma}^{r,s})_\ast \circ \iota_\ast  \big(   \big[   \tfrac12( V|_{I_V}  + (V|_{I_V})^* ) \otimes \gamma + \tfrac12( V|_{I_V} - (V|_{I_V})^* ) \otimes \rho ) \big] - [\one \otimes \gamma] \big) \\
    &= \big[ \sigma_{\one \otimes \gamma}^{r,s}(\tfrac12(V+V^*) \otimes \gamma + \tfrac12(V-V^*) \otimes \rho) \big] - [\one \otimes \gamma]
\end{align*}
where the last line is~\cite[Lemma 4.9]{BKR}.
\end{proof}

We will now compare the Cayley isomorphism $\mathfrak{C}_A^{r+1,s+1}$ with the Roe isomorphism 
$\Roe_A^{r+1,s+1}$ from Definition \ref{def:Indexed_KK_isomorphisms}.
As a first step, we recall the following.

\begin{lemma}[{\cite[Proposition 5.9]{BKR}}]  \label{lemma:bdry_cayley_nice}
Let $e \in \End_B(\hat{\calH}_B)$ be an odd self-adjoint unitary, and let $x \in \End_B(\hat{\calH}_B)$ be odd, self-adjoint, and such that
$q(x) \in \calQ_{B \hox \calK}$ is an odd self-adjoint unitary. Then the composition of isomorphisms
\begin{align*}
    &\DK( \calQ_{B \hox \calK}) \xrightarrow{\delta} \DK( B \hox \calK \hox \Cl_{1,0}) \xrightarrow{ \mathfrak{C}^{-1}_{B \hat\otimes \calK \hat\otimes \Cl_{1,0}} } 
   \KKR( \Cl_{1,0}, B \hox \calK \hox \Cl_{1,0} ) \\
    &\qquad  \xrightarrow{\tau_{\Cl_{1,0}}^{-1}} \KKR( \C,  B \hox \calK) \xrightarrow{\calM^{\KK}_{\hat\calH_B}} \KKR(\C, B),
\end{align*}
maps $[q(x)] - [q(e)] \in \DK(\calQ_{B \hox \calK})$ to the equivalence class of the Kasparov module 
$\big( \C, \, \hat{\calH}_B, \, x \big)$. That is, the composition computes $\Phi_{B}^{-1}$ from Lemma \ref{lemma:Phi}.
\end{lemma}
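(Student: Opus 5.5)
The plan is to compute each map in the composition explicitly on the class $[q(x)]-[q(e)]$ and then recognise the final Kasparov module. Since every arrow is an isomorphism and $\Phi_B$ is an isomorphism (Lemma \ref{lemma:Phi}, read in the relative picture of Remark \ref{rmk:Phi-revealed}), it suffices to show that the composite sends $[q(x)]-[q(e)]$ to $[(\C,\hat\calH_B,x)]$.

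\emph{Step 1: the boundary map.} Normalise $x$ so that $\|x\|\le 1$. We may do this by replacing $x$ with $\chi(x)$ for a suitable odd function, which changes neither $q(x)$ nor the $\KK$-class. Lemma \ref{lemma:delta_formula} then gives
\[
\delta\big([q(x)]-[q(e)]\big)=[Y_x]-[Y_e], \qquad Y_y=-\exp(\pi y\hox\gamma)(\one\hox\gamma).
\]
As in the proof of Lemma \ref{lemma:sigma^rs_commutes_with_boundary}, we have $Y_e=\one\hox\gamma$ and $Y_x=-\cos(\pi x)\hox\gamma-\sin(\pi x)\hox 1$. Moreover $Y_x-Y_e$ lies in $B\hox\calK\hox\Cl_{1,0}$, because $\one-x^2$ is compact.

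\emph{Step 2: the inverse Cayley map.} Apply the formula in Theorem \ref{thm:Graded_Cayley_iso} with basepoint $y=\one\hox\gamma$. This yields the unbounded module
\[
\big(\Cl_{1,0},\ \overline{(Y_x-Y_e)(B\hox\calK\hox\Cl_{1,0})},\ D\big), \qquad D=(\one\hox\gamma)(Y_x+Y_e)(Y_x-Y_e)^{-1}.
\]
Since $x$ commutes with $\cos(\pi x)$ and $\sin(\pi x)$, the computation reduces to functional calculus of the single operator $x$, tensored with the $2\times 2$ algebra generated by $\gamma$. Writing $c=\cos(\pi x)$ and $s=\sin(\pi x)$, the pair $Y_x\pm Y_e$ is built from $\one\mp c$ and $s$. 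A half-angle computation then gives $D=g(x)\hox(\text{odd generator})$ with $g(\lambda)=\pm\cot(\pi\lambda/2)$, or its reciprocal. This function sends $\pm1\mapsto 0$ and blows up at $\lambda=0$, so $D$ is only defined on the support of the region where $\one-x^2\neq 0$. I expect this to be the main obstacle: the module $\overline{(Y_x-Y_e)\cdots}$ is generally not full, and $D$ is unbounded with a non-dense kernel structure. I would handle it using Remarks \ref{remarks:Cayley_subtleties} and \cite[Lemma 4.9]{BKR} to pass to the full module. After that, the bounded transform of $D$ is homotopic to a function $h(x)$ with $h$ odd and $h(\pm1)=\pm1$, via a straight-line homotopy of normalising functions in the style of Proposition \ref{prop:Fred_to_KasMod}.

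\emph{Step 3: removing the extra Clifford generator and applying Morita invariance.} The resulting module has the form $(\Cl_{1,0}, Z\hox\Cl_{1,0}, h(x)\hox\gamma')$, with the $\Cl_{1,0}$ action by the remaining generator. Applying $\tau_{\Cl_{1,0}}^{-1}$ strips off the $\Cl_{1,0}$ factor and gives $(\C, Z, h(x))$, where $Z$ is the corner of $B\hox\calK$ cut out by $\one-x^2$. Adding a degenerate complement, namely $e$ on the orthogonal part, identifies this with $(\C, B\hox\calK, h(x))$ as a module over $B\hox\calK$. Taking the Kasparov product with $\hat\calH_B$ (Morita invariance $\calM^{\KK}_{\hat\calH_B}$) then gives $(\C,\hat\calH_B,h(x))$. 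This is homotopic to $(\C,\hat\calH_B,x)$ through $(1-t)x+t\,h(x)$, which is a valid path since $\one-((1-t)x+th(x))^2$ remains compact. This completes the identification with $\Phi_B^{-1}([q(x)]-[q(e)])$.
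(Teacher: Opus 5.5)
Your route (compute $\delta$, invert the Cayley transform, strip off $\Cl_{1,0}$, apply Morita) is an unpacking of \cite[Proposition 5.9]{BKR}. The paper simply cites that proposition and then does the same Morita step you do. The gap is that the decisive computation in Step 2 is guessed rather than carried out, and the guess is wrong.

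Take your $Y_x=-c\hox\gamma-s\hox 1$ and $Y_e=\one\hox\gamma$, where $c=\cos(\pi x)$ is even and $s=\sin(\pi x)$ is odd, so $s\hox 1$ anticommutes with $\one\hox\gamma$. The graded sign rules give $(Y_x-Y_e)^2=2(\one+c)\hox 1$ and $(Y_x+Y_e)(Y_x-Y_e)=2s\hox\gamma$. Hence
\[
D=(\one\hox\gamma)(Y_x+Y_e)(Y_x-Y_e)^{-1}=-s(\one+c)^{-1}\hox 1=-\tan(\pi x/2)\hox 1
\]
on $\overline{(Y_x-Y_e)(B\hox\calK\hox\Cl_{1,0})}=Z\hox\Cl_{1,0}$, where $Z=\overline{(\one+c)(B\hox\calK)}$. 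Three things follow.
\begin{itemize}
\item $D$ has the form $g(x)\hox 1$. An odd $g(x)$ tensored with an odd generator would be even, so neither your proposed form nor the operator $h(x)\hox\gamma'$ in Step 3 can occur.
\item $g$ vanishes at $\lambda=0$ and blows up at $\lambda=\pm1$, the reverse of what you wrote. This is exactly what makes $(\one+D^2)^{-1/2}=\cos(\pi x/2)$ compact on $Z$.
\item With your $\cot$ you would get $|\sin(\pi x/2)|$ instead, which is not compact on $Z$, so you would not even have a Kasparov module.
\end{itemize}

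More seriously, the sign is wrong. The bounded transform is $-\sin(\pi x/2)$, so $h(\pm1)=\mp1$, the opposite of what you assert. Your path $(1-t)x+t\,h(x)$ then breaks down at $t=\tfrac12$: there the operator is compact, so $\one-(\cdot)^2$ is not.

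With the formulas exactly as displayed in Lemma~\ref{lemma:delta_formula} and Theorem~\ref{thm:Graded_Cayley_iso}, your route produces $(\C,\hat\calH_B,-x)$. For graded $B$ this is not harmless. Conjugation by the grading operator identifies $(\C,E,-F)$ with the pushforward of $(\C,E,F)$ along the grading automorphism of $B$, and that pushforward can act by $-1$. For example, on the Bott class in $\KKR(\C,C_0(\R)\otimes\Cl_{1,0})\cong\Z$ it coincides with the reflection $t\mapsto -t$. So the sign must be fixed by matching the conventions for $\delta$ and $\mathfrak{C}$ against those of \cite{BKR}. That bookkeeping is what the cited proposition supplies, and your proposal asserts it rather than checking it.

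Finally, the ``degenerate complement'' in Step~3 does not exist in general. For general $B$, $Z$ need not be complemented in $B\hox\calK$, since there are no support projections in $\Mult(B\hox\calK)$. Remarks~\ref{remarks:Cayley_subtleties} and \cite[Lemma 4.9]{BKR} concern $\DK$-classes, not this $\KK$-level comparison. You can repair this step by the homotopy given by the module $\{\xi\in C([0,1],B\hox\calK):\xi(0)\in Z\}$ with constant operator $-\sin(\pi x/2)$. It is a Kasparov module because $\one-\sin^2(\pi x/2)=\cos(\pi x/2)\cos(\pi x/2)^*$ with $\cos(\pi x/2)\in Z$, which is a rank-one operator there.
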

\begin{proof}
The cited result shows that the composition $\tau_{\Cl_{1,0}}^{-1} \circ \mathfrak{C}^{-1}\circ \delta( [q(x)] - [q(e)] )$ is represented 
by the Kasparov module $\big( \C, \, (B \hox \calK)_{B \hox \calK}, \, x \big)$. The Kasparov product with 
$\big( B \hox \calK, \,  \hat{\calH}_B, \, 0 \big)$ implements $\calM^{\KK}$ and gives the result.
\end{proof}

\begin{thm} \label{thm:Roe_and_Cayley_comparison}
The maps $\Roe_{A}^{r+1,s+1}: \KKR(\Cl_{s+1,r}, A) \to \DK(A \otimes \Cl_{r+1, s+1} )$ and 
$\mathfrak{C}_{A}^{r+1,s+1}: \KKR(\Cl_{s+1,r}, A) \to \DK(A \otimes \Cl_{r+1, s+1} )$ are the same isomorphism.
\end{thm}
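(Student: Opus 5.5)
Since both maps are isomorphisms defined on all of $\KKR(\Cl_{s+1,r},A)$, it suffices to check that they agree on a generating set of representatives. Every class in $\KKR(\Cl_{s+1,r},A)$ can be represented by an unbounded Kasparov module $\big(\Cl_{s+1,r},\, X_A\otimes\exterior\C,\, T\otimes\rho\big)$ with $T\in\Reg_A^{r,s}(X)$ Real, skew-adjoint and with compact resolvent. One way to see this is to take the inverse Cayley representative $\class{\calC^u(V)}$ of Proposition \ref{prop:CliffordCayley_inverse_representative}, which exhausts $\KKR(\Cl_{s+1,r},A)$ since $\mathfrak{C}_A^{r+1,s+1}$ is surjective onto the classes of the Corollary above. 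For such $T$, the Roe side is computed by Theorem \ref{thm:index-of-Fred} together with Lemma \ref{lem:index-sinh-cosh}:
\[
\Roe_A^{r+1,s+1}(\class{T})=\calM_X^{\DK}\Big(\Big[-P^{r,s}\begin{pmatrix}0&e^{-\pi\chi(T)}\\ e^{\pi\chi(T)}&0\end{pmatrix}-(\one-P^{r,s})\begin{pmatrix}0&\one\\ \one&0\end{pmatrix}\Big]-\Big[\begin{pmatrix}0&\one\\ \one&0\end{pmatrix}\Big]\Big).
\]
The Cayley side is computed by Proposition \ref{prop:CliffordCayley_computation}(2), which gives the same expression with $-e^{\pi\chi(T)}$ replaced by $U_T=(T-\one)(T+\one)^{-1}$, after which $\calM_X^{\DK}\circ\exc_X$ is applied.

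The key step is therefore to compare the two odd self-adjoint unitaries $\mathrm{offdiag}(-e^{-\pi\chi(T)},-e^{\pi\chi(T)})$ and $\mathrm{offdiag}(U_T^*,U_T)$. Both are unitarisations in $\End_A^0(X)^\sim\otimes\Cl_{1,1}$ of the form $f(T)$ with $f\in C_\infty(i\R)$ unimodular, $f(\pm i\infty)=1$. Indeed, $\chi(\pm i\infty)=\pm i$, so $-e^{\pi\chi}\to 1$, while $U_T\to1$ at infinity. We take the functional-calculus homotopy $f_\lambda(T)$ with
\[
f_\lambda(ix)=\exp\big(i\,[(1-\lambda)\theta_0(x)+\lambda\theta_1(x)]\big),
\]
where $\theta_0,\theta_1$ are the continuous arguments of $-e^{\pi\chi(ix)}$ and of $(ix-1)/(ix+1)$. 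We normalise both so that $\theta_j(0)=\pi$ and $\theta_j$ is odd about $0$ modulo $2\pi$, with both tending to $0$ (respectively $2\pi$) as $x\to\pm\infty$. Then $-e^{\pi\chi(ix)}=e^{i\pi(\chi(ix)/i+1)}$ has argument running from $0$ to $2\pi$, and $(ix-1)/(ix+1)$ likewise winds once in the same direction, so the linear interpolation of arguments is legitimate.

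The conditions needed along the homotopy are the following. Each $f_\lambda$ satisfies $f_\lambda(-z)=\overline{f_\lambda(z)}$, which gives the Clifford relation $e_jU=U^*e_j$, and the Real condition. Compactness of $f_\lambda(T)-\one$ follows from the compact resolvent and $f_\lambda(\pm i\infty)=1$. Norm continuity in $\lambda$ is clear. The map $\sigma^{r,s}_{\one\otimes\gamma}$ then carries this to a homotopy of odd self-adjoint unitaries with compact difference from $\one\otimes\gamma$. Since excision identifies the absolute class in $\DK(\End_A^0(X)^\sim\ldots)$ with the relative class, the two $\DK$-classes coincide.

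The main obstacle will be this winding/sign bookkeeping. We have to confirm that $-e^{\pi\chi}$ and the Cayley transform wind in the same direction, rather than with opposite orientation, which would produce a sign discrepancy. We must also verify that the representative produced by Lemma \ref{lem:index-sinh-cosh} matches the excision picture used for $\mathfrak{C}$. If the orientations disagree, we would instead route through Lemma \ref{lemma:bdry_cayley_nice}, which identifies $\tau^{-1}\circ\mathfrak{C}^{-1}\circ\delta$ with $\Phi^{-1}$, combined with the Clifford shuffle compatibilities. In that route the remaining work is checking that $\shuffle_{\KK}^{r,s}\circ\calM_{1,1}^{\KK}$ intertwines with $\shuffle^{r,s+1}_{\KK}$ as in the proof of Theorem \ref{prop:Roe_Kubota_Bott_compatibility}(2).
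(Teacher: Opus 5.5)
Your main route fails at the orientation claim, and no other homotopy can repair it. Write $\chi(ix)=ic(x)$, with $c$ non-decreasing from $-1$ to $1$.

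\emph{The two symbols wind in opposite directions.} The function $-e^{\pi\chi(ix)}=e^{i\pi(c(x)+1)}$ has argument increasing from $0$ to $2\pi$. By contrast,
\[
\frac{ix-1}{ix+1}=-e^{-2i\arctan x}=e^{i(\pi-2\arctan x)}
\]
has argument decreasing from $2\pi$ to $0$. For example, at $x=1$ the Cayley symbol equals $i$, while $-e^{\pi\chi(ix)}$ lies in the closed lower half-plane for every $x>0$.

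\emph{Your interpolation is not admissible.} With your normalisation $\theta_0(0)=\theta_1(0)=\pi$, continuity forces $\theta_0(\pm\infty)=\pi\pm\pi$ but $\theta_1(\pm\infty)=\pi\mp\pi$. Hence $f_\lambda(\pm i\infty)=e^{\mp2\pi i\lambda}\neq1$ for $0<\lambda<1$, and $f_\lambda(T)-\one$ is not compact.

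\emph{No homotopy exists at all.} The two symbols have degrees $+1$ and $-1$, also within the class $f(-z)=\overline{f(z)}$. The class of $f(T)$ depends only on this homotopy class of the symbol. So $-e^{\pi\chi(T)}$ is homotopic to $U_T^*=(T+\one)(T-\one)^{-1}$, not to $U_T$.

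\emph{Consequence for the two representatives.} The Roe representative you quote, $\left(\begin{smallmatrix}0&-e^{-\pi\chi(T)}\\-e^{\pi\chi(T)}&0\end{smallmatrix}\right)$, is therefore homotopic to $\left(\begin{smallmatrix}0&U_T\\U_T^*&0\end{smallmatrix}\right)$. That element represents the \emph{inverse} of the Cayley class of $\left(\begin{smallmatrix}0&U_T^*\\U_T&0\end{smallmatrix}\right)$. Already for $r=s=0$ in the complex case, $\Upsilon^{1,1}$ sends them to $[U_T^*]=-[U_T]$. So comparing these two formulas directly cannot prove the theorem; taken at face value it gives the opposite sign. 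This points to an orientation convention among the quoted formulas that has to be reconciled. One candidate is the order $\tilde{x}\hox\gamma$ versus $\gamma\hox\tilde{x}$ in Lemma \ref{lemma:delta_formula}, which decides whether Lemma \ref{lem:index-sinh-cosh} puts $-e^{\pi\chi(T)}$ or $-e^{-\pi\chi(T)}$ in the lower-left entry. In particular, your fallback is not just another road to the same conclusion. Since the orientations do disagree, the explicit and abstract computations contradict each other rather than complement each other.

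\emph{What carries the paper's proof.} The identification in the paper is abstract. It shows $(\mathfrak{C}_A^{r+1,s+1})^{-1}\circ\Roe_A^{r+1,s+1}=\mathrm{Id}$ by writing $\Roe_A^{r+1,s+1}=\calM^{\DK}\circ\delta\circ\Phi_{A\otimes\Cl_{r,s+1}}\circ\shuffle^{r,s+1}_{\KK}$ and combining three ingredients:
\begin{enumerate}
\item the Morita compatibility $\calM^{\KK}_{\hat{\calH}_B}\circ\mathfrak{C}^{-1}_{B\hox\calK}=\mathfrak{C}^{-1}_B\circ\calM^{\DK}_{\hat{\calH}_B}$ from \cite[Lemma 4.13]{BKR};
\item the identity $(\shuffle^{r,s}_{\KK})^{-1}\circ(\calM^{\KK}_{1,1})^{-1}=(\shuffle^{r,s+1}_{\KK})^{-1}\circ\tau^{-1}_{\Cl_{1,0}}$;
\item Lemma \ref{lemma:bdry_cayley_nice}, which identifies $\calM^{\KK}\circ\tau^{-1}_{\Cl_{1,0}}\circ\mathfrak{C}^{-1}\circ\delta$ with $\Phi^{-1}$.
\end{enumerate}
Since both maps are already known to be isomorphisms, this one-sided identity is a complete proof. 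That is your fallback, and it should be your main argument. Besides the shuffle identity, you also need the Morita compatibility of $\mathfrak{C}$, which you do not mention.

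The paper also contains an explicit computation of $\Roe\circ(\mathfrak{C}^{r+1,s+1}_A)^{-1}$ in the spirit of your main route, but it cannot be used to patch yours. Granting its claim that $\pi\chi(ix)-2i\tan^{-1}(ix)\in C_0(i\R)$, its path $u_t$ tends to $-e^{\pm i\pi(1-2t)}\neq1$ at $\pm i\infty$ for $0<t<1$. That is the same orientation obstruction.

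A minor point: the representatives $\calC^u(V)$ live on $\overline{(V-\one)A}_A$, which is generally not full. So $\calM^{\DK}_X$ must be replaced by $\iota_\ast\circ\calM^{\DK}_{\tilde{X}}$, as in Remark \ref{remark:Cayley_subtleties_continue}.
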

\begin{proof}
The result will follow if we can show that
\[
   (\mathfrak{C}_{A}^{r+1,s+1})^{-1} \circ  \Roe_A^{r+1,s+1} = \mathrm{Id}, \qquad 
    \Roe_A^{r+1, s+1} \circ (\mathfrak{C}_{A}^{r+1,s+1})^{-1} = \mathrm{Id}
\]
with $\mathrm{Id}$ the identity automorphism. It also suffices to work with the standard module 
and we fix the class 
$\class{F} = \big[ \big( \Cl_{s+1,r}, \, \calH_A \otimes \exterior \C, \, F \otimes \rho \big) \big] \in \KKR(\Cl_{s+1,r}, A)$
with $F=F^\rs = -F^* \in \End^{r,s}_A(\calH_A)$ such that $\one + F^2 \in \End_A^0(\calH_A)$.
Therefore if we fix a basepoint skew-adjoint unitary $J_0 \in \End_A^{r,s}(\calH_A)$, then 
\begin{align*}
   (\mathfrak{C}_{A}^{r+1,s+1})^{-1} \circ  \Roe_A^{r+1,s+1} (\class{F})
     &= (\shuffle_{\KK}^{r,s})^{-1} \circ \big(\calM_{1,1}^{\KK}\big)^{-1} \circ  \mathfrak{C}_{A \otimes \Cl_{r+1,s+1}}^{-1} \circ \calM_{\hat\calH_B}^{\DK} \circ \delta \circ \Phi_A^{r,s+1} (\class{F} ) \\
     &\hspace{-2cm}= (\shuffle_{\KK}^{r,s})^{-1} \circ \big(\calM_{1,1}^{\KK}\big)^{-1}  \circ  \calM_{\hat\calH_B}^{\KK} \circ \mathfrak{C}^{-1}_{A \otimes \calK \otimes \Cl_{r+1,s+1}} \circ \delta 
     \circ \Phi_{A \otimes \Cl_{r,s+1}} \circ \shuffle_{\KK}^{r,s+1} ( \class{F} ) \\
     &\hspace{-2cm}= (\shuffle_{\KK}^{r,s+1})^{-1} \circ \calM_{\hat\calH_B}^{\KK}  \circ \tau_{\Cl_{1,0}}^{-1} \circ \mathfrak{C}^{-1}_{A \otimes \calK \otimes \Cl_{r+1,s+1}} \circ \delta 
     \circ \Phi_{A \otimes \Cl_{r,s+1}} \circ \shuffle_{\KK}^{r,s+1}( \class{F} ) \\
     &\hspace{-2cm}= (\shuffle_{\KK}^{r,s+1})^{-1} \circ \Phi_{A \otimes \Cl_{r,s+1}}^{-1} \circ   \Phi_{A \otimes \Cl_{r,s+1}} \circ \shuffle_{\KK}^{r,s+1}( \class{F} ) \\
     &\hspace{-2cm} = \class{F},
\end{align*}
where we have used that $\calM_{\hat\calH_B}^{\KK} \circ \mathfrak{C}_{B\hox \calK}^{-1} = \mathfrak{C}_B^{-1} \circ \calM_{\hat\calH_B}^{\DK}$ 
from~\cite[Lemma 4.13]{BKR}, 
 $(\shuffle_{\KK}^{r,s})^{-1} \circ \big(\calM_{1,1}^{\KK}\big)^{-1}  =  (\shuffle_{\KK}^{r,s+1})^{-1}  \circ \tau_{\Cl_{1,0}}^{-1}$, 
and Lemma \ref{lemma:bdry_cayley_nice}.

For the other direction, we take $V \in A+ \one$ unitary with 
$e_j V= V^* e_j$ and $f_k V = V^* f_k$ for all $j,k$.  
Applying $(\mathfrak{C}_{A}^{r+1,s+1})^{-1}$ we have the Fredholm operator $\calC^u(V)$ on $\tilde{X}_A = \ol{(V-\one)A}_A$, which is not full in general. 
To apply Proposition \ref{prop:Concrete_Roe_computation}, 
we let $I_V = \ol{A(V- \one)A}$ and $\iota: I_V\hookrightarrow A$ (cf. Remarks \ref{remarks:Cayley_subtleties}). 
Then for an appropriate normalising function $\chi$, Proposition \ref{prop:Concrete_Roe_computation} and Lemma \ref{lem:index-sinh-cosh} give that
\begin{align*}
   &\Roe_A^{r+1, s+1} \circ (\mathfrak{C}_{A}^{r+1,s+1})^{-1} \Big( \big[ \sigma_{\one \otimes \gamma}^{r,s}(\tfrac12(V+V^*) \otimes \gamma + \tfrac12(V-V^*) \otimes \rho) \big] - [\one \otimes \gamma] \Big) \\
   &\hspace{2cm} = 
   \Roe_A^{r+1, s+1} \Big( \big[ \big(  \Cl_{s+1, r}, \, \ol{(V-\one)A}_A \otimes \exterior \C, \, \calC^u(V) \otimes \rho \big) \big] \Big) \\
   &\hspace{2cm} = \iota_\ast \circ \calM^{\DK}_{\tilde{X}} 
    \Big(    \big[ \sigma_{\one \otimes \gamma}^{r,s}(- \cosh\big( \pi \chi(\calC^u(V)) \big) \otimes \gamma - \sinh\big( \pi \chi(\calC^u(V)) \big) \otimes \rho )\big] - [ \one \otimes \gamma] \Big).
\end{align*}
Adapting \cite[Corollary 3.6]{BKR} to the skew-adjoint setting, we have for $ix\in i\R$,
\[
(u+1)(u-1)^{-1}=x,\qquad u=e^{-2i\tan^{-1}(ix)+i\pi}=-e^{-2i\tan^{-1}(ix)}=(x+1)(x-1)^{-1}.
\]
So for any normalising function $\chi:i\R\to[-i,i]$, $\pi\chi(ix)-2i\tan^{-1}(ix)\in C_0(i\R)$ and we have a homotopy
\[
u_t=-e^{(1-t)\pi \chi(ix)-2it\tan^{-1}(ix)},
\]
which gives a homotopy between 
 $-\exp( \pi \chi(\calC^u(V)) )$ and $(\calC^u(V) - \one)( \calC^u(V) + \one)^{-1} \in \End_A^0(\tilde{X})^\sim$.
Recalling Eq. \eqref{eq:Cayley_of_cayley_identity}, $(\calC^u(V) - \one)( \calC^u(V) + \one)^{-1} =V|_{\tilde{X}}$.  
Letting $W_{\calC^u(V)} = e^{ \pi \chi(\calC^u(V)) }$, we find that
\begin{align*}
  & \iota_\ast \circ \calM^{\DK}_{\tilde{X}} \circ (\sigma_{\one \otimes \gamma}^{r,s})_\ast \big( \big[ - \cosh\big( \pi \chi(\calC^u(V)) \big) \otimes \gamma - \sinh\big( \pi \chi(\calC^u(V)) \big) \otimes \rho \big] - [ \one \otimes \gamma] \big)  \\
    &\quad  = (\sigma_{\one \otimes \gamma}^{r,s})_\ast \circ  \iota_\ast  \circ \calM^{\DK}_{\tilde{X}} \big(  \big[ - \tfrac12\big( W_{\calC^u(V)} + W_{\calC^u(V)}^* \big) \otimes \gamma - \tfrac12\big( W_{\calC^u(V)} - W_{\calC^u(V)}^* \big)\otimes \rho \big] - [ \one \otimes \gamma] \big) \\
  &\quad = (\sigma_{\one \otimes \gamma}^{r,s})_\ast\circ   \iota_\ast \big( \big[ \tfrac12( V|_{I_V}  + (V|_{I_V})^* ) \otimes \gamma + \tfrac12( V|_{I_V} - (V|_{I_V})^* ) \otimes \rho \big] - [\one \otimes \gamma] \big) \\
  &\quad = \big[  \sigma_{\one \otimes \gamma}^{r,s}(\tfrac12(V+V^*) \otimes \gamma + \tfrac12(V-V^*) \otimes \rho) \big] - [ \one \otimes \gamma],
\end{align*}
with the last equality by~\cite[Lemma 4.9]{BKR}.
\end{proof}

Recalling Proposition \ref{prop:Roe_Kubota_Bott_compatibility}, we also have that 
$ \calM_{1,1}^{\DK} \circ   \mathfrak{C}^{r+1,s}_{SA} \circ \altBott_{\KK} = \beta_{\DK} \circ \mathfrak{C}^{r+1,s+1}_A$. 
Building upon Eq. \eqref{eq:Graded_Bott_compat}, we lastly state   the compatibility of the $\Z_2$-graded Cayley, Roe, and Kubota isomorphisms.

\begin{prop} \label{prop:Kubota_Roe_Cayley_compat}
Let $B$ be a $\Z_2$-graded Real $C^*$-algebra. Then the Kubota isomorphism $\Kubota_B: \KKR(\C, SB) \to \DK(B)$ is such that 
\[
   \Kubota_B = \beta_{\DK}^{-1} \circ \Roe_{SB} = \mathfrak{C}_{B} \circ \altBott_{\KK}^{-1}.
\]
\end{prop}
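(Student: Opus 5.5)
The first equality $\Kubota_B = \beta_{\DK}^{-1}\circ \Roe_{SB}$ is a rearrangement of Eq.~\eqref{eq:Graded_Bott_compat}. That identity gives $\beta_{\DK}\circ\Kubota_B = \Roe_{SB}$, so we only apply the inverse Bott isomorphism $\beta_{\DK}^{-1}\colon \DK(SB\hox\Cl_{1,0})\to\DK(B)$ from Theorem~\ref{thm:DK_Bott_isos}. The content is therefore in the second equality. There I would prove $\beta_{\DK}\circ\mathfrak{C}_B = \Roe_{SB}\circ\altBott_{\KK}$ as maps $\KKR(\Cl_{1,0},B)\to \DK(SB\hox\Cl_{1,0})$, and then compose with the inverses of the isomorphisms $\beta_{\DK}$ and $\altBott_{\KK}$.

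To establish this identity I would reduce to the Clifford-indexed statements already proved. The graded Cayley map $\mathfrak{C}_B$ should agree with a graded Roe-type map $\KKR(\Cl_{1,0},B)\to\DK(B)$. This is the graded analogue of Theorem~\ref{thm:Roe_and_Cayley_comparison}, and its proof transfers: Lemma~\ref{lemma:bdry_cayley_nice} identifies $\calM^{\KK}\circ\tau_{\Cl_{1,0}}^{-1}\circ\mathfrak{C}^{-1}\circ\delta$ with $\Phi^{-1}$. Concretely, I would write
\[
\mathfrak{C}_B = (\calM_{1,1}^{\DK})^{-1}\circ\Roe_{B\hox\Cl_{0,1}}\circ\Sigma^{0,1}_{\KK},
\]
where $\Sigma^{0,1}_{\KK}\colon\KKR(\Cl_{1,0},B)\to\KKR(\C,B\hox\Cl_{0,1})$ is the Clifford shuffle. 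I would check this by applying Lemma~\ref{lemma:bdry_cayley_nice} to $B\hox\Cl_{0,1}$, using $\Cl_{1,0}\hox\Cl_{0,1}\cong\Cl_{1,1}$ and the naturality $\calM^{\KK}_{\hat\calH}\circ\mathfrak{C}^{-1}=\mathfrak{C}^{-1}\circ\calM^{\DK}$ from \cite[Lemma 4.13]{BKR}.

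With this in hand, I would apply Eq.~\eqref{eq:Graded_Bott_compat}, $\beta_{\DK}\circ\Roe_{B'} = \Roe_{SB'\hox\Cl_{1,0}}\circ\beta_{\KK}$ with $B'=B\hox\Cl_{0,1}$, together with naturality of $\calM_{1,1}^{\DK}$ under $\beta_{\DK}$. This gives
\[
\beta_{\DK}\circ\mathfrak{C}_B=(\calM_{1,1}^{\DK})^{-1}\circ\Roe_{SB\hox\Cl_{1,1}}\circ\beta_{\KK}\circ\Sigma^{0,1}_{\KK}.
\]
The remaining task is to identify $(\calM_{1,1}^{\DK})^{-1}\circ\Roe_{SB\hox\Cl_{1,1}}\circ\beta_{\KK}\circ\Sigma^{0,1}_{\KK}$ with $\Roe_{SB}\circ\altBott_{\KK}$. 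Here I would use three facts: naturality of $\Roe$ under Clifford stability, $\calM_{1,1}^{\DK}\circ\Roe = \Roe\circ\calM_{1,1}^{\KK}$; the fact that the Clifford shuffle commutes with $\beta_{\KK}$; and the definition $\altBott_{\KK} = (\tau_{\Cl_{1,0}})^{-1}\circ\beta_{\KK}$ up to Clifford stability (Corollary~\ref{cor:KK_Bott_alternative}). Together these reduce the claim to the bookkeeping identity $\calM_{1,1}^{\KK}\circ\Sigma^{0,0}\circ\tau_{\Cl_{1,0}}^{-1} = \Sigma^{0,1}_{\KK}$, which was already invoked in part (2) of Theorem~\ref{prop:Roe_Kubota_Bott_compatibility}.

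The main obstacle is the Clifford bookkeeping in the first reduction step, expressing $\mathfrak{C}_B$ as a shuffled Roe map. Signs and orientation conventions of the Clifford generators could introduce an automorphism, for example the inverse map $[x]-[y]\mapsto[y]-[x]$. To settle this I would first test the identity on an explicit cycle: an unbounded $(\Cl_{1,0},Y_B,D)$ with compact resolvent, computing both sides with Lemma~\ref{lemma:delta_formula} and the homotopy $-e^{\pi\chi}\simeq$ Cayley transform from the proof of Theorem~\ref{thm:Roe_and_Cayley_comparison}. Only after that would I assemble the abstract diagram chase.
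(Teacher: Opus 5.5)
Your proposal is correct and follows essentially the route the paper indicates. The paper states this proposition without a separate proof, pointing to Eq.~\eqref{eq:Graded_Bott_compat} for the first equality and to the Roe--Cayley identification (Theorem~\ref{thm:Roe_and_Cayley_comparison}, whose graded form comes from Lemma~\ref{lemma:bdry_cayley_nice} and \cite[Lemma 4.13]{BKR}) transported through the Bott compatibility of Theorem~\ref{prop:Roe_Kubota_Bott_compatibility} for the second, which is exactly your reduction, including the Clifford-orientation bookkeeping that the paper also leaves implicit.
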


%%%%%%%%%%%%%%%%%%%%%%%%%%%%%%%%%%%%%%%%%%%%%%%%%%%%%%%%%%%%%%%%%%%%%
%%%%%%%%%%%%%%%%%%%%%%%%%%%%%%%%%%%%%%%%%%%%%%%%%%%%%%%%%%%%%%%%%%%%%

\section{List of isomorphisms in \texorpdfstring{$\KKR$}{KKR}-theory and \texorpdfstring{$\DK$}{DK}-theory} \label{appendix:isos}

% \begin{table}[t]
% \caption{List of isomorphisms in $\KKR$-theory and $\DK$-theory\label{table:isos}}
\begin{tabular}{|@{$\,$}l|l@{$\colon$}c@{$\;\rightarrow \;$}c|l|}
\hline 
\textbf{Description} & \textbf{Symbol} & \textbf{Source} & \textbf{Target} & \textbf{Reference} \\
\hline 
\multicolumn{5}{|@{$\,$}l|}{\emph{Isomorphisms within $\KKR$-theory}} \\
\hline 
Clifford shuffle & $\shuffle^{r,s}_{\KK}$ & $\KKR(A\hox\Cl_{s,r},B)$ & $\KKR(A,B\hox\Cl_{r,s})$ & Eq.\ \eqref{eq:Clifford_shuffle} \\
%\hline 
Morita invariance & $\calM_Y^{\KK}$ & $\KKR(A,\End_B^0(Y))$ & $\KKR(A,B)$ & Eq.\ \eqref{eq:Morita_invariance} \\
Morita invariance & $\calM_X^{K}$ & $K(\End_A^0(X))$ & $K(A)$ & Eq.\ \eqref{eq:Complex_K_Morita}\\
%\hline 
Clifford stability & $\calM_{n,n}^{\KK}$ & $\KKR(A,B)$ & $\KKR(A,B\hox\Cl_{n,n})$ & Eq.\ \eqref{eq:Clifford_Morita_KK} \\
%\hline 
Bott periodicity & $\beta_{\KK}$ & $\KKR(A, B)$ & $\KKR( A, SB \hox \Cl_{1,0})$ & Proposition \ref{prop:KK_Bott} \\
%\hline 
Alternative Bott & $\altBott_{\KK}$ & $\KKR(A \hox \Cl_{1,0}, B)$ & $\KKR( A, SB)$ & Corollary \ref{cor:KK_Bott_alternative} \\
\hline 
\multicolumn{5}{|@{$\,$}l|}{\emph{Homomorphisms within $\DK$-theory}} \\
\hline 
Clifford shuffle & $(\sigma_e^{r,s})_\ast $ & $\DK_e( B )$ & $\DK_e( B \hox \Cl_{r,s} )$  & Corollary \ref{cor:sigma_also_gives_DK_hom} \\
Boundary map & $\partial$ & $\DK\big( S(B/I) \big)$ & $\DK(I)$ & Proposition \ref{prop:suspension_bdry_graded} \\
%\hline 
Boundary map & $\delta$ & $\DK(B/I)$ & $\DK(I\hox\Cl_{1,0})$ & Lemma \ref{lem:other-bdry} \\
\hline 
\multicolumn{5}{|@{$\,$}l|}{\emph{Isomorphisms within $\DK$-theory}} \\
\hline 
Excision & $\exc_Y$ & $\DK\big( \End_B(Y) , \calQ_B(Y) \big)$ & $\DK\big( \End_B^0(Y) \big)$  & Eq.\ \eqref{eq:DK_excision} \\
%\hline 
Morita invariance & $\calM_Y^{\DK}$ & $\DK\big(\End_B^0(Y)\big)$ & $\DK(B)$ & Eq.\ \eqref{eq:DK_Morita} \\
%\hline 
Clifford stability & $\calM_{n,n}^{\DK}$ & $\DK(B)$ & $\DK(B\hox\Cl_{n,n})$ & Lemma \ref{lemma:DK_Clifford_stability} \\
%\hline
Bott periodicity & $\beta_{\DK}$ & $\DK(B)$ & $\DK( SB \hox \Cl_{1,0})$ & Theorem \ref{thm:DK_Bott_isos} \\
%\hline 
\hline 
\multicolumn{5}{|@{$\,$}l|}{\emph{Isomorphisms between $\KKR$- and $\DK$-theory}} \\
\hline 
Roe & $\Roe^{r+1,s+1}_A$ &  $\KKR( \Cl_{s+1, r}, A)$ & $\DK( A \otimes \Cl_{r+1, s+1} ) $ & Definition \ref{def:Cl-indexed_Roe_Kubota_defn}\\
%\hline 
Kubota & $\Kubota_A^{r,s+1}$ & $\KKR(\Cl_{s+1,r}, SA)$ & $\DK(A \otimes \Cl_{r,s+1})$ & Definition \ref{def:Cl-indexed_Roe_Kubota_defn} \\
Cayley & $\Cayley_A^{r+1,s+1}$ & $\KKR( \Cl_{s+1, r}, A)$ & $\DK(A \otimes \Cl_{r+1,s+1})$ & Definition \ref{defn:cayley-r-s} \\
\hline 
\end{tabular}
% \end{table}

%%%%%%%%%%%%%%%%%%%%%%%%%%%%%%%%%%%%%%%%%%%%%%%%%%%%%%%%%%%%%%%%%%%%%
%%%%%%%%%%%%%%%%%%%%%%%%%%%%%%%%%%%%%%%%%%%%%%%%%%%%%%%%%%%%%%%%%%%%%
%%%%%%%%%%%%%%%%%%%%%%%%%%%%%%%%%%%%%%%%%%%%%%%%%%%%%%%%%%%%%%%%%%%%%

\end{document}